\numberwithin{equation}{section} 
\DeclareMathOperator{\fG}{fG}
\DeclareMathOperator{\SV}{SV}
\DeclareMathOperator{\Comp}{Comp}
\DeclareMathOperator{\scL}{scL}
\DeclareMathOperator{\FV}{FV}
\DeclareMathOperator{\hs}{hs}
\DeclareMathOperator{\dqb}{dqb}
\DeclareMathOperator{\we}{we}
\DeclareMathOperator{\cL}{cL}
\DeclareMathOperator{\qdp}{qp}
\DeclareMathOperator{\bq}{b}
\newtheorem{thm}{Theorem}[section]
\newtheorem{prop}[thm]{Proposition}
\newtheorem{lem}[thm]{Lemma}
\newtheorem{cor}[thm]{Corollary}
\theoremstyle{remark}
\newtheorem{rem}[thm]{Remark}
\theoremstyle{definition}
\newtheorem{definition}[thm]{Definition}
\title{Colored Line Ensembles for Stochastic Vertex Models}
\author{Amol Aggarwal and Alexei Borodin}
\date{}
\begin{document}

\begin{abstract} 

In this paper we assign a family of $n$ coupled line ensembles to any $U_q (\widehat{\mathfrak{sl}}_{n+1})$ colored stochastic fused vertex model, which satisfies two properties. First, the joint law of their top curves coincides with that of the colored height functions for the vertex model. Second, the $n$ line ensembles satisfy an explicit Gibbs property prescribing their laws if all but a few of their curves are conditioned upon. We further describe several examples of such famlies of line ensembles, including the ones for the colored stochastic six-vertex and $q$-boson models. The appendices (which may be of independent interest) include an explanation of how the $U_q (\widehat{\mathfrak{sl}}_{n+1})$ colored stochastic fused vertex model degenerates to the log-gamma polymer, and an effective rate of convergence of the colored stochastic six-vertex model to the colored ASEP.

\end{abstract}	
	
	\maketitle
	
	\tableofcontents

	\section{Introduction} 
	
	\label{Introduction}	
	
	\subsection{Preface}

	Over the past twenty-five years, a striking interplay has materialized between equilibrium random surface models and out-of-equilibrium stochastic growth systems. One of the first such correspondences is due to Jockusch--Propp--Shor \cite{RTT} in 1998, who showed an equality in law between the facet edge for a uniformly random domino tiling of the Aztec diamond, and the height function of a certain discrete-time totally asymmetric simple exclusion process (TASEP). Using ideas of Rost \cite{NBPP}, they proved a hydrodynamical limit theorem for the latter TASEP, which together with their matching result implied that the limiting trajectory for the domino tiling facet boundary is a circle. 
	
	Such correspondences have also been fruitful in reverse (namely, to study stochastic growth models through random surfaces), starting with the work of Pr\"{a}hofer--Spohn \cite{SIDP}. They analyzed the polynuclear growth (PNG) model by introducing an associated \emph{line ensemble}, which is a sequence of random curves (that may be viewed as level lines of a surface model), whose top curve coincides in law with the PNG height function. Using the solvability of this line ensemble through the framework of determinantal point processes, they (and also subsequently Johansson \cite{DPGDP}) showed that its fluctuations converge to a scaling limit called the Airy line ensemble (which they introduced in \cite{SIDP} as a determinantal point process with the extended Airy correlation kernel), now known to be a universal object in the Kardar--Parisi--Zhang (KPZ) universality class \cite{DSGI}. From this, they deduced that the PNG height fluctuations converge to its top curve, the Airy$_2$ process. 
	
	The combinatorial underpinnings behind the two matchings described above were originally quite different. The first was based on the shuffling algorithm introduced by Elkies--Kuperberg--Larsen--Propp \cite{AMT}, to sample random domino tilings of the Aztec diamond. The second was based on the Robinson--Schensted--Knuth (RSK) correspondence, which was also used by Baryshnikov \cite{GQ}, O'Connell--Yor \cite{AT,PTRW}, and Warren \cite{II} to produce line ensembles associated with various models of last passage percolation. Borodin--Petrov \cite{NNDP} later explained that both can be viewed as special cases of a natural family $(2 + 1)$-dimensional Markov chains (whose first forms date back to Borodin--Ferrari \cite{AGRS}) on the Schur processes of Okounkov--Reshetikhin \cite{CFPALG}. 
	
	Line ensembles have also been introduced for certain random polymers at positive temperature, based on a geometric lift of the RSK correspondence due to Kirillov \cite{TC} and Noumi--Yamada \cite{TCBA} (which, due to work of Matveev--Petrov \cite{RCRP}, can also be thought of as a special case of certain $(2 + 1)$-dimensional Markov chains on the $q$-Whittaker procesess of Borodin--Corwin \cite{P}). These include for the O'Connell--Yor polymer and KPZ equation through works of O'Connell--Warren \cite{DPQL,MSE}, Corwin--Hammond \cite{LE}, and Nica \cite{IDL}, as well as for the log-gamma polymer through works of Corwin--O'Connell--Sepp\"{a}l\"{a}inen--Zygouras \cite{TCF}, Johnston--O'Connell \cite{SLNP}, and Wu \cite{TDLEE}. For the (single-species) asymmetric simple exclusion process (ASEP) and stochastic six-vertex model, line ensembles were produced in a different way (which will be closer to the direction of this paper) by Borodin--Bufetov--Wheeler \cite{SSVMP}, Corwin--Dimitrov \cite{TFSVM}, and Bufetov--Petrov \cite{FSSF}. They first used the Yang--Baxter equation to match the height functions of these systems with specific marginals of the Hall--Littlewood measure, and then interpreted the latter measure as a line ensemble.
	
	All of the above line ensembles admit explicit \emph{Gibbs properties} describing their laws if all but a few of their curves are conditioned upon. Starting with the paper \cite{PLE} of Corwin--Hammond, such \emph{Gibbsian line ensembles} have emerged as fundamental instruments for probabilistically analyzing the associated stochastic growth models. For example, work of Hammond \cite{RLE,MCPWP,PRPW,ERDP} used them to study the on-scale polymer geometry of Brownian last passage percolation in detail. Later, Matetski--Quastel--Remenik \cite{TFP} and Dauvergne--Ortmann--Vir\'{a}g \cite{TL} provided the full space-time scaling limit for TASEPs and last passage percolation models under arbitrary initial data, and the latter \cite{TL} showed that this limit can be described entirely through the Airy line ensemble. 
	
	Gibbsian line ensembles have also been used to understand the fine probabilistic structure of non-determinantal models in the KPZ universality class, such as the KPZ equation, (single-species) ASEP and stochastic six-vertex model, and log-gamma polymer. Results in this direction include proofs of tightness and correlation bounds by Corwin--Hammond--Ghosal \cite{LE,ECT}, Wu \cite{TDLEE}, and Barraquand--Corwin--Dimitrov \cite{TFSVM,STEL}, as well as polymer path properties (possibly under large deviation events) by Das--Zhu \cite{LCDP}, Wu \cite{BRLE,EL}, and Ganguly--Hegde--Zhang \cite{UTEL,BLPM}. More recent work of Aggarwal--Huang \cite{SCLE} established that the Airy line ensemble is the unique line ensemble satisfying the Gibbs property for non-intersecting Brownian bridges, whose top curve is approximately parabolic. This (together with the above-mentioned tightness frameworks) could potentially lead to a systematic way of proving that discrete stochastic growth models converge to their scaling limit, whenever such models can be associated with a Gibbsian line ensemble. 
	
	This activity leads to the (closely related) questions of, (a) in what generality can Gibbsian line ensembles be associated with a stochastic growth model, and (b) what is the mechanism that enables their appearance? The purpose of this paper is to work towards these questions. 
	
	We consider the colored stochastic fused vertex models, associated with the affine quantum group $U_q (\widehat{\mathfrak{sl}}_{n+1})$, introduced by Kuniba--Mangazeev--Maruyama--Okado \cite{SRM} and studied in detail by Borodin--Wheeler \cite{CSVMST}. Our main result is that any such model can be associated with a family of line ensembles satisfying two properties. The first is that their top curves coincide in law with the colored height functions of the stochastic vertex model (\Cref{lmu2} and \Cref{lmu2fused}); the second is that they satisfy an explicit Gibbs property (\Cref{conditionl} and \Cref{conditionlfused}). Our arguments generalize those in \cite{SSVMP,TFSVM,FSSF}, by using the Yang--Baxter equation underlying these vertex models to match their colored height functions to marginals of certain measures on compositions (\Cref{fgsv} and \Cref{fgsvfused}); by their definitions, the latter can be interpreted as families of line ensembles with explicit Gibbs properties (\Cref{lmu} and \Cref{lmufused}). In a sense, this pinpoints the Yang--Baxter equation as the algebraic source for Gibbsian line ensembles associated with the integrable stochastic vertex models studied here.\footnote{There also exist stochastic systems satisfying the Yang--Baxter equation, which are not special cases of our $U_q (\widehat{\mathfrak{sl}}_{n+1})$ stochastic fused vertex model. These include ones considered by Cantini \cite{SDHR} and Chen--de Gier--Hiki--Sasamoto--Usui \cite{LCDP}, as well as ones with boundary conditions, studied for example by Barraquand--Borodin--Corwin--Wheeler \cite{SVMHH}, He \cite{BCFHS,SIHSIM}, and Yang \cite{SMHSVMS}. It would be interesting to investigate whether Gibbsian line ensembles can be associated with these models, too. For the half-space log-gamma polymer, this has been done by Barraquand--Corwin--Das \cite{EHSP} (using the geometric RSK correspondence).}
	
	We have several reasons for operating at the level of the $U_q (\widehat{\mathfrak{sl}}_{n+1})$ stochastic fused vertex models. The first is their scope; they are fairly general objects that degenerate to most systems proven to be in the KPZ universality class (though not all of them, such as the non-nearest neighbor exclusion processes considered by Quastel--Sarkar \cite{CEPEFP}). See \Cref{qmodel} for a (not entirely complete) list of degenerations to known models;\footnote{Many of these degenerations were previously discussed by Borodin--Gorin--Wheeler \cite[Figure 2]{SVMP}, but that work does not explain how to degenerate the colored vertex model to the log-gamma polymer. We address this point in \Cref{VertexPolymer} below.} all of them should be associated with Gibbsian line ensembles, obtained by taking the appropriate specializations or limits of our most general ones for the stochastic fused vertex model. While we will not describe these line ensembles in detail for all of the models depicted in \Cref{qmodel}, we will do so for a few examples (such as the colored stochastic six-vertex and discrete-time $q$-boson models) in \Cref{ExampleL} and \Cref{L2} below.

	\begin{figure}
		\begin{center} 	
			\begin{tikzpicture}[
				>=stealth,
				scale = .35
				]
				
				\node (cfsvm) at (0, 10) [scale = 1]{\framebox{Colored fused stochastic vertex model}}; 
				\node (cssvm) at (12.5, 6.5) [scale = .8]{\framebox{\parbox{2.8cm}{Colored stochastic six-vertex model}}};
				\node (cep) at (10.5, 2.5) [scale = .8]{\framebox{\parbox{2cm}{Colored ASEP/SSEP}}};
				\node (cste) at (17, 2.5) [scale = .8]{\framebox{\parbox{2.85cm}{Colored stochastic telegraph equation}}}; 
				\node (ct1) at (7, -2) [scale = .8]{\framebox{\parbox{1.2cm}{Colored TASEP}}};
				\node (ahe) at (16, -2.5) [scale = .8]{\framebox{Additive stochastic heat equation}};
				\node (ct2) at (-.5, 5.5) [scale  = .8]{\framebox{\parbox{1.2cm}{Colored $q$-Hahn TASEP}}};
				\node (ct3) at (5.5, 4) [scale = .8]{\framebox{\parbox{1.25cm}{Colored $q$-boson}}};
				\node (qp) at (-19, 6.5) [scale = .8]{\framebox{$q$-PNG}};
				\node (p) at (-19, 4) [scale = .8]{\framebox{PNG}};
				\node (qcp) at (-14, 5) [scale = .8]{\framebox{\parbox{1.9cm}{Colored $q$-PushTASEP}}};
				\node (cp) at (-14, 1) [scale = .8]{\framebox{\parbox{2cm}{Colored PushTASEP}}};
				\node (bp) at (-2, 2) [scale = .8]{\framebox{Beta polymer}};
				\node (swp) at (-2.75, -.5) [scale = .8]{\framebox{Strict-weak polymer}};
				\node (p2) at (0, -3) [scale=.8]{\framebox{O'Connell--Yor polymer}};
				\node (lp) at (3, -6.5) [scale=.8]{\framebox{\parbox{1.8cm}{Brownian last passage percolation}}};
				\node (lg) at (-9, -1) [scale = .8]{\framebox{\parbox{1.3cm}{Log-gamma polymer}}};
				\node (e) at (-3, -6.5) [scale = .8]{\framebox{KPZ equation}};
				\node (f) at (0, -10) [scale = .8]{\framebox{KPZ fixed point / directed landscape}};
				\draw[->] (cfsvm) -- (cssvm);
				\draw[->] (cssvm) -- (cep);
				\draw[->] (cssvm) -- (cste);
				\draw[->] (cfsvm) -- (qcp);
				\draw[->] (qcp) -- (cp);
				\draw[->] (cfsvm) -- (qp);
				\draw[->] (cfsvm) -- (ct2);
				\draw[->] (cfsvm) -- (ct3);
				\draw[->] (qp) -- (p);
				\draw[->] (cep) -- (ct1);
				\draw[->] (cep) -- (ahe);
				\draw[->] (cste) -- (ahe);
				\draw[->] (ct2) -- (ct3);
				\draw[->] (ct3) -- (ct1);
				\draw[->] (ct2) -- (bp);
				\draw[->] (bp) -- (swp);
				\draw[->] (swp) -- (p2);
				\draw[->] (ct3) -- (p2);
				\draw[->] (p2) -- (e);
				\draw[->] (p2) -- (lp);
				\draw[->] (e) -- (f);
				\draw[->] (lp) -- (f);
				\draw[->] (cfsvm) -- (lg);
				\draw[->] (lg) -- (p2);
				
			\end{tikzpicture}
			\end{center} 

			\caption{ \label{qmodel} Depicted above are various degenerations of the $U_q (\widehat{\mathfrak{sl}}_{n+1})$ colored fused stochastic vertex model.} 
			
		\end{figure}
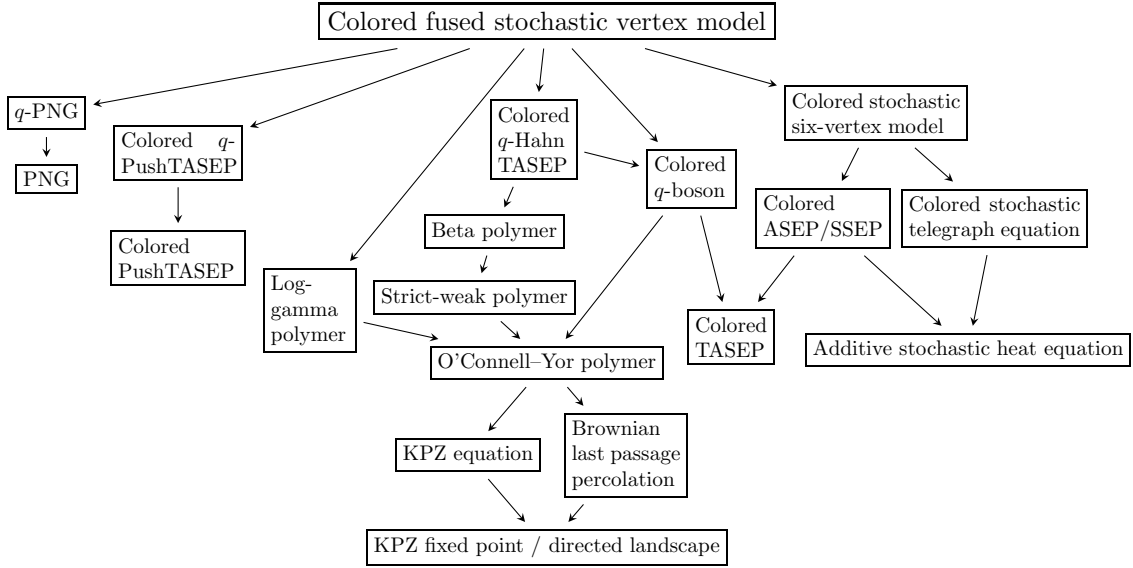 
	
	The second is that for $n > 1$ these models enable us to access \emph{colored}, also called \emph{multi-species}, systems (in which some particles may have a higher priority than others). Prior to this work, we were unaware of Gibbsian line ensembles associated with any example of a multispecies model. A new effect arises here; when the model comprises $n > 1$ species, we associate not one but a family of $n$ coupled line ensembles, called a \emph{colored line ensemble}, with the multi-species system. The top curve in the $c$-th ensemble, jointly over all $c \in [1, n]$, of the family coincides in law with the height function tracking particles in the model of color at least $c$. The full colored line ensemble further satisfies an explicit Gibbs property that prescribes the joint law of all $n$ ensembles in the family, upon conditioning on all but a few of their curves. This provides a potential way of asymptotically analyzing colored systems. We refrain from pursuing such probabilistic studies in this paper and instead point to the forthcoming work of Aggarwal--Corwin--Hegde \cite{PP} that will use the colored line ensembles introduced here to analyze the scaling limit of the multi-species ASEP and stochastic six-vertex model. 
	
	Before continuing, let us briefly comment on $(2+1)$-dimensional Markov chains. As mentioned previously, they have been prevalent in many prior studies on line ensembles but at first may not seem to make a direct presence here. However, such dynamics do implicitly underly the proofs behind our matching statements (\Cref{fgsv} and \Cref{fgsvfused}), which involve several sequences of applications of the Yang--Baxter equation to move every vertex of an $M \times N$ rectangle across a lattice. Starting from a frozen (or ``empty'') configuration, this lattice gets randomly transformed every time a vertex is moved through it. The process of moving one vertex at time induces a $(2+1)$-dimensional Markovian evolution on the lattice, which in the uncolored ($n=1$) case was studied in works of Bufetov--Mucciconi--Petrov \cite{FSSF,RFSVM,SPDQ} under the name \emph{Yang--Baxter bijectivization}. The resulting dynamics are quite general and encapsulate the RSK correspondence as a special case \cite[Section 5]{SPDQ}. An interesting question is to introduce colors ($n > 1$) in these bijectivization dynamics, and to investigate whether different (possibly nonsymmetric\footnote{One nonsymmetric RSK algorithm was introduced by Mason \cite{DFA} and studied by Haglund--Mason--Remmel \cite{PNA}.}) RSK-type correspondences arise.
	
	We now proceed to give a more detailed sense of our results. To keep the notation as light as possible in this introductory section, we will not state them in fullest generality here. Instead, we only describe a fairly special case of our results that is still new, namely, for the $q=0$ colored stochastic six-vertex model. For the most general versions of our results, we refer to \Cref{lmu2} and \Cref{conditionl} (for the colored stochastic six-vertex model), and to \Cref{lmu2fused} and \Cref{conditionlfused} (for the colored stochastic fused vertex model). For further examples and degenerations, we refer to \Cref{ExampleL} (for other special cases of the stochastic six-vertex model) and \Cref{L2} (for the colored discrete time $q$-boson model). 
	
	Throughout this work, for any real numbers $a, b \in \mathbb{R}$ with $a \le b$, we write $\llbracket a, b \rrbracket = [a, b] \cap \mathbb{Z}$.

	\subsection{Colored Stochastic Six-Vertex Model}
	
	\label{ModelVertex}
	
	The colored stochastic six-vertex model is a certain probability measure on colored six-vertex ensembles on $\mathbb{Z}_{> 0}^2$; we begin by defining the latter. To that end, a \emph{colored six-vertex arrow configuration} is a quadruple $(a, b; c, d) \in \mathbb{Z}_{\ge 0}^4$ of nonnegative integers, which we view as an assignment of directed up-right arrows to a vertex $v \in \mathbb{Z}_{> 0}^2$, as follows. We assume that each of the four edges incident to $v$ accomodates one arrow, and that each arrow is labeled by a nonnegative integer, called a \emph{color}; edges occupied by an arrow of color $0$ are typically viewed as unoccupied (so arrows of color $0$ are ignored). We then interpret the integers $a$, $b$, $c$, and $d$ of the arrow configuration as the colors of the arrows vertically entering $v$, horizontally entering $v$, vertically exiting $v$, and horizontally exiting $v$, respectively; see the left side of \Cref{arrows} for an example. We will typically impose that $\{ a, b \} = \{ c, d \}$ as multi-sets, a restriction known as \emph{arrow conservation}; it indicates that an arrow of any color entering $v$ must also exit $v$.	

		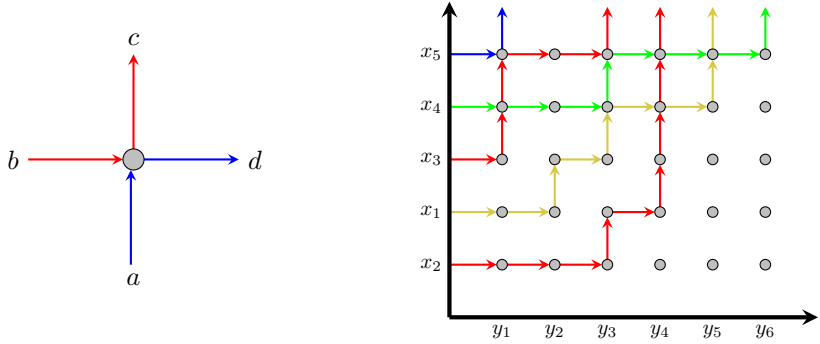
\begin{figure}
		\begin{center} 	
			\begin{tikzpicture}[
				>=stealth,
				scale = .7
				]
				\draw[->,red, thick] (0, .2) -- (0, 2); 
				\draw[->,blue, thick] (-.05,-2) -- (-.05, -.2);
				\draw[->,red, thick] (-2, 0) -- (-.2, 0);
				\draw[->,blue, thick] (.2, 0) -- (2, 0); 
				
				\draw[] (0, 2) circle [radius = 0] node[above]{$c$};
				\draw[] (0, -2) circle [radius = 0] node[below]{$a$};
				\draw[] (-2, 0) circle [radius = 0] node[left]{$b$};
				\draw[] (2, 0) circle [radius = 0] node[right]{$d$};
				
				\filldraw[fill=gray!50!white, draw=black] (0, 0) circle [radius=.2];
				
				\draw[->, thick, red] (6, -2) -- (6.9, -2);
				
				\draw[->, thick, yellow!80!black] (6, -1) -- (6.9, -1);
				
				\draw[->, thick, red] (6, 0) -- (6.9, 0);
				
				\draw[->, thick, green] (6, 1) -- (6.9, 1);

				\draw[->, thick, blue] (6, 2) -- (6.9, 2);
				
				\draw[->, thick, red] (7.1, -2) -- (7.9, -2);
				\draw[->, thick, red] (8.1, -2) -- (8.9, -2);
				\draw[->, thick, red] (9, -1.9) -- (9, -1.1);
				
				\draw[->, thick, yellow!80!black] (7.1, -1) -- (7.9, -1);
				\draw[->, thick, yellow!80!black] (8, -.9) -- (8, -.1);
				\draw[->, thick, red] (9.1, -1) -- (9.9, -1);
				\draw[->, thick, red] (10, -.9) -- (10, -.1);
				
				\draw[->, thick, red] (7, .1) -- (7, .9);
				\draw[->, thick, yellow!80!black] (8.1, 0) -- (8.9, 0);
				\draw[->, thick, yellow!80!black] (9, .1) -- (9, .9);
				\draw[->, thick, red] (10, .1) -- (10, .9);
				
				\draw[->, thick,red] (7, 1.1) -- (7, 1.9);
				\draw[->, thick, green] (7.1, 1) -- (7.9, 1);
				\draw[->, thick, green] (8.1, 1) -- (8.9, 1);
				\draw[->, thick, green] (9, 1.1) -- (9, 1.9);
				\draw[->, thick, yellow!80!black] (9.1, 1) -- (9.9, 1);
				\draw[->, thick, red] (10, 1.1) -- (10, 1.9);
				\draw[->, thick, yellow!80!black] (10.1, 1) -- (10.9, 1);
				\draw[->, thick, yellow!80!black] (11, 1.1) -- (11, 1.9);
				
				\draw[->, thick, blue] (7, 2.1) -- (7, 2.9);
				\draw[->, thick, red] (7.1, 2) -- (7.9, 2);
				\draw[->, thick, red] (8.1, 2) -- (8.9, 2);
				\draw[->, thick, red] (9, 2.1) -- (9, 2.9);
				\draw[->, thick, green] (9.1, 2) -- (9.9, 2);
				\draw[->, thick, red] (10, 2.1) -- (10, 2.9);
				\draw[->, thick, green] (10.1, 2) -- (10.9, 2);
				\draw[->, thick, yellow!80!black] (11, 2.1) -- (11, 2.9);
				\draw[->, thick, green] (11.1, 2) -- (11.9, 2);
				\draw[->, thick, green] (12, 2.1) -- (12, 2.9);
				
				\draw[->, ultra thick] (6, -3) -- (13, -3);
				\draw[->, ultra thick] (6, -3) -- (6, 3);
				
				\filldraw[fill=gray!50!white, draw=black] (7, -2) circle [radius=.1];
				\filldraw[fill=gray!50!white, draw=black] (8, -2) circle [radius=.1];
				\filldraw[fill=gray!50!white, draw=black] (9, -2) circle [radius=.1];
				\filldraw[fill=gray!50!white, draw=black] (10, -2) circle [radius=.1];
				\filldraw[fill=gray!50!white, draw=black] (11, -2) circle [radius=.1];
				\filldraw[fill=gray!50!white, draw=black] (12, -2) circle [radius=.1];
				
				\filldraw[fill=gray!50!white, draw=black] (7, -1) circle [radius=.1];
				\filldraw[fill=gray!50!white, draw=black] (8, -1) circle [radius=.1];
				\filldraw[fill=gray!50!white, draw=black] (9, -1) circle [radius=.1];
				\filldraw[fill=gray!50!white, draw=black] (10, -1) circle [radius=.1];
				\filldraw[fill=gray!50!white, draw=black] (11, -1) circle [radius=.1];
				\filldraw[fill=gray!50!white, draw=black] (12, -1) circle [radius=.1];
				
				\filldraw[fill=gray!50!white, draw=black] (7, 0) circle [radius=.1];
				\filldraw[fill=gray!50!white, draw=black] (8, 0) circle [radius=.1];
				\filldraw[fill=gray!50!white, draw=black] (9, 0) circle [radius=.1];
				\filldraw[fill=gray!50!white, draw=black] (10, 0) circle [radius=.1];
				\filldraw[fill=gray!50!white, draw=black] (11, 0) circle [radius=.1];
				\filldraw[fill=gray!50!white, draw=black] (12, 0) circle [radius=.1];
				
				\filldraw[fill=gray!50!white, draw=black] (7, 1) circle [radius=.1];
				\filldraw[fill=gray!50!white, draw=black] (8, 1) circle [radius=.1];
				\filldraw[fill=gray!50!white, draw=black] (9, 1) circle [radius=.1];
				\filldraw[fill=gray!50!white, draw=black] (10, 1) circle [radius=.1];
				\filldraw[fill=gray!50!white, draw=black] (11, 1) circle [radius=.1];
				\filldraw[fill=gray!50!white, draw=black] (12, 1) circle [radius=.1];
				
				\filldraw[fill=gray!50!white, draw=black] (7, 2) circle [radius=.1];
				\filldraw[fill=gray!50!white, draw=black] (8, 2) circle [radius=.1];
				\filldraw[fill=gray!50!white, draw=black] (9, 2) circle [radius=.1];
				\filldraw[fill=gray!50!white, draw=black] (10, 2) circle [radius=.1];
				\filldraw[fill=gray!50!white, draw=black] (11, 2) circle [radius=.1];
				\filldraw[fill=gray!50!white, draw=black] (12, 2) circle [radius=.1];
				
				\draw[] (6, -1) circle [radius = 0] node[left, scale = .8]{$x_1$};
				\draw[] (6, -2) circle [radius = 0] node[left, scale = .8]{$x_2$};
				\draw[] (6, 0) circle [radius = 0] node[left, scale = .8]{$x_3$};
				\draw[] (6, 1) circle [radius = 0] node[left, scale = .8]{$x_4$};
				\draw[] (6, 2) circle [radius = 0] node[left, scale = .8]{$x_5$};
				
				\draw[] (7, -3) circle [radius = 0] node[below, scale = .8]{$y_1$};
				\draw[] (8, -3) circle [radius = 0] node[below, scale = .8]{$y_2$};
				\draw[] (9, -3) circle [radius = 0] node[below, scale = .8]{$y_3$};
				\draw[] (10, -3) circle [radius = 0] node[below, scale = .8]{$y_4$};
				\draw[] (11, -3) circle [radius = 0] node[below, scale = .8]{$y_5$};
				\draw[] (12, -3) circle [radius = 0] node[below, scale = .8]{$y_6$};
			\end{tikzpicture}
		\end{center}
		\caption{\label{arrows} Shown to the left is a vertex with arrow configuration $(a, b; c, d) = (2, 1; 1, 2)$, where red and blue are colors $1$ and $2$, respectively. Shown to the right is a colored model on the quadrant.} 
	\end{figure}

	A \emph{domain} is a subset $\mathcal{D} \subseteq \mathbb{Z}^2$, and a \emph{colored six-vertex ensemble} on a domain $\mathcal{D} \subset \mathbb{Z}^2$ is an assignment of an arrow configuration to each vertex of $\mathcal{D}$ in such a way that neighboring arrow configurations are \emph{consistent}; this means that, if $v_1, v_2 \in \mathcal{D}$ are two adjacent vertices, then there is an arrow of color $c \in \mathbb{Z}_{\ge 0}$ to $v_2$ in the configuration at $v_1$ if and only if there is an arrow of color $c$ from $v_1$ in the configuration at $v_2$. Observe in particular that the arrows in a colored six-vertex ensemble form colored up-right directed paths connecting vertices of $\mathcal{D}$. 
	
	\emph{Boundary data} for a colored six-vertex ensemble is prescribed by dictating which points on the boundary of a domain are entrance (or exit) sites for a path of a given color. If the domain $\mathcal{D}$ is a rectangle or quadrant, we will typically restrict to the case when paths only enter horizontally through the west boundary of $\mathcal{D}$; see the right side of \Cref{arrows} for a depiction. Given a function $\sigma : \llbracket 1, N \rrbracket \rightarrow \mathbb{Z}_{\ge 0}$, we say that a colored six-vertex ensemble on the rectangle domain $\mathcal{D}_{M;N} = \llbracket 1, M \rrbracket \times \llbracket 1, N \rrbracket$ has \emph{$\sigma$-entrance data} if the following holds. For each $j \in \llbracket 1, N \rrbracket$, one path of color $\sigma (j)$ horizontally enters $\mathcal{D}_{M;N}$ from the site $(0, j)$ on the $y$-axis, and no path horizontally enters $\mathcal{D}_{M;N}$ from any site on the $x$-axis.
	
	Associated with any six-vertex ensemble $\mathcal{E}$ on a domain $\mathcal{D} \subseteq \mathbb{Z}^2$ are \emph{height functions} $\mathfrak{h}_{\ge c}^{\leftarrow} : \mathbb{Z}^2 \rightarrow \mathbb{Z}$, which for any integer $c \ge 1$ are defined as follows. For any vertex $v = (i, j) \in \mathbb{Z}^2$, let $\mathfrak{h}_{\ge c}^{\leftarrow} (v)$ denote the number of paths of color at least $c$ in $\mathcal{E}$ that pass to the left of (equivalently, above) $v$, namely, that do not intersect the vertical ray (pointing south) connecting $( i+1 / 2, j + 1 / 2)$ to $(i + 1 / 2, -\infty )$.

	The \emph{colored stochastic six-vertex model} is a probability measure on colored six-vertex ensembles on $\mathbb{Z}_{> 0}^2$ that depends on two infinite sequences of real parameters $\bm{x} = (x_1, x_2, \ldots)$ and $\bm{y} = (y_1, y_2, \ldots)$. We view $x_j$ as associated with the $j$-th row and $y_i$ as associated with the $i$-th column, so $\bm{x}$ and $\bm{y}$ are called \emph{row rapidities} and \emph{column rapidities}, respectively. The specific forms of these probability measures are expressed through weights $R_{y_i / x_j} (a, b, c, d)$ associated with each vertex $v = (i, j) \in \mathbb{Z}_{> 0}^2$. In addition to depending on the arrow configuration $(a, b, c, d)$ at $v$, this vertex weight will also be governed by several parameters. The first is the \emph{quantization parameter} $q$, which is fixed throughout the model. The second is the \emph{spectral variable} $z = z_{i,j} = x_j^{-1} y_i$, which is given by the ratio of the column and row rapidities at the vertex $v = (i, j)$. Given this notation, we define the following vertex weights originally introduced in \cite{TSTECA,QMGS}.
	
	\begin{definition}
		
		\label{rzabcd} 
		
		For any complex number $z \in \mathbb{C}$ and integers $a, b, c, d \ge 0$, define the vertex weight $R_z (a, b, c, d)$ as follows. For $i < j$, set (see \Cref{rz})
		\begin{flalign}
			\label{rzij} 
			\begin{aligned}
				& R_z (i, i; i, i) = 1; \qquad R_z (j, i; j,i) = \displaystyle\frac{q(1-z)}{1-qz}; \qquad R_z (i, j; i, j) = \displaystyle\frac{1-z}{1-qz}; \\
				& \qquad \qquad  R_z (j, i; i, j) = \displaystyle\frac{1-q}{1-qz}; \qquad R_z (i, j; j, i) = \displaystyle\frac{z(1-q)}{1-qz}.
			\end{aligned} 
		\end{flalign}
		
		\noindent If $(a, b, c, d)$ is not of the above form for some $0 \le i < j$, then set $R_z (a, b; c, d) = 0$. 
	\end{definition}
	
	\begin{rem}
		\label{1rsum}
		
		These $R_z$ weights are \emph{stochastic} in that the sum of all weights with a fixed pair of incoming arrows is equal to $1$, namely, $\sum_{c, d \ge 0} R_z (a, b; c, d) = 1$ for each $z \in \mathbb{C}$ and $a, b \ge 0$.   
		
	\end{rem}

	\begin{figure} 
		\begin{center}
			\begin{tikzpicture}[
				>=stealth,
				scale = .75
				]
				
				\draw[-, black] (-5, 3.1) -- (7.5, 3.1);
				\draw[-, black] (-5, -2.5) -- (7.5, -2.5);
				\draw[-, black] (-5, -1.1) -- (7.5, -1.1);
				\draw[-, black] (-5, -.4) -- (7.5, -.4);
				\draw[-, black] (-5, 2.4) -- (7.5, 2.4);
				\draw[-, black] (7.5, -2.5) -- (7.5, 3.1);
				\draw[-, black] (-5, -2.5) -- (-5, 3.1);
				\draw[-, black] (5, -2.5) -- (5, 2.4);
				\draw[-, black] (-2.5, -2.5) -- (-2.5, 3.1);
				\draw[-, black] (2.5, -2.5) -- (2.5, 2.4);
				\draw[-, black] (0, -2.5) -- (0, 2.4);			
				
				\draw[->, thick, blue] (3.75, .1) -- (3.75, 1) -- (4.65, 1);
				\draw[->, thick, red] (2.85, 1) -- (3.75, 1) -- (3.75, 1.9); 
				
				\draw[->, thick, blue] (-1.25, .1) -- (-1.25, 1.9);
				\draw[->, thick,  red] (-2.15, 1) -- (-.35, 1);
				
				\draw[->, thick, blue] (.35, 1) -- (2.15, 1);
				\draw[->, thick, red] (1.25, .1) -- (1.25, 1.9);
				
				\draw[->, thick, blue] (5.35, 1) -- (6.25, 1) -- (6.25, 1.9);
				\draw[->, thick, red] (6.25, .1) -- (6.25, 1) -- (7.15, 1); 
				
				\draw[->, thick, red] (-3.75, .1) -- (-3.75, 1.9);
				\draw[->, thick, red] (-4.65, 1) -- (-2.85, 1);		
				
				\filldraw[fill=gray!50!white, draw=black] (-2.85, 1) circle [radius=0] node [black, right = -1, scale = .75] {$i$};
				\filldraw[fill=gray!50!white, draw=black] (-.35, 1) circle [radius=0] node [black, right = -1, scale = .75] {$i$};
				\filldraw[fill=gray!50!white, draw=black] (2.15, 1) circle [radius=0] node [black, right = -1, scale = .75] {$j$};
				\filldraw[fill=gray!50!white, draw=black] (4.65, 1) circle [radius=0] node [black, right = -1, scale = .75] {$j$};
				\filldraw[fill=gray!50!white, draw=black] (7.15, 1) circle [radius=0] node [black, right = -1, scale = .75] {$i$};
				
				\filldraw[fill=gray!50!white, draw=black] (5.35, 1) circle [radius=0] node [black, left = -1, scale = .75] {$j$};
				\filldraw[fill=gray!50!white, draw=black] (2.85, 1) circle [radius=0] node [black, left = -1, scale = .75] {$i$};
				\filldraw[fill=gray!50!white, draw=black] (.35, 1) circle [radius=0] node [black, left = -1, scale = .75] {$j$};
				\filldraw[fill=gray!50!white, draw=black] (-2.15, 1) circle [radius=0] node [black, left = -1, scale = .75] {$i$};
				\filldraw[fill=gray!50!white, draw=black] (-4.65, 1) circle [radius=0] node [black, left = -1, scale = .75] {$i$};
				
				\filldraw[fill=gray!50!white, draw=black] (-3.75, 1.9) circle [radius=0] node [black, above = -1, scale = .75] {$i$};
				\filldraw[fill=gray!50!white, draw=black] (-1.25, 1.9) circle [radius=0] node [black, above = -1, scale = .75] {$j$};
				\filldraw[fill=gray!50!white, draw=black] (1.25, 1.9) circle [radius=0] node [black, above = -1, scale = .75] {$i$};
				\filldraw[fill=gray!50!white, draw=black] (3.75, 1.9) circle [radius=0] node [black, above = -1, scale = .75] {$i$};
				\filldraw[fill=gray!50!white, draw=black] (6.25, 1.9) circle [radius=0] node [black, above = -1, scale = .75] {$j$};
				
				\filldraw[fill=gray!50!white, draw=black] (-3.75, .1) circle [radius=0] node [black, below = -1, scale = .75] {$i$};
				\filldraw[fill=gray!50!white, draw=black] (-1.25, .1) circle [radius=0] node [black, below = -1, scale = .75] {$j$};
				\filldraw[fill=gray!50!white, draw=black] (1.25, .1) circle [radius=0] node [black, below = -1, scale = .75] {$i$};
				\filldraw[fill=gray!50!white, draw=black] (3.75, .1) circle [radius=0] node [black, below = -1, scale = .75] {$j$};
				\filldraw[fill=gray!50!white, draw=black] (6.25, .1) circle [radius=0] node [black, below = -1, scale = .75] {$i$};	
				
				\filldraw[fill=gray!50!white, draw=black] (-3.75, .1) circle [radius=0] node [black, below = -1, scale = .75] {$i$};
				\filldraw[fill=gray!50!white, draw=black] (-1.25, .1) circle [radius=0] node [black, below = -1, scale = .75] {$j$};
				\filldraw[fill=gray!50!white, draw=black] (1.25, .1) circle [radius=0] node [black, below = -1, scale = .75] {$i$};
				\filldraw[fill=gray!50!white, draw=black] (3.75, .1) circle [radius=0] node [black, below = -1, scale = .75] {$j$};
				\filldraw[fill=gray!50!white, draw=black] (6.25, .1) circle [radius=0] node [black, below = -1, scale = .75] {$i$};
				
				\filldraw[fill=gray!50!white, draw=black] (-3.75, 2.75) circle [radius=0] node [black] {$i \ge 0$};
				\filldraw[fill=gray!50!white, draw=black] (2.5, 2.75) circle [radius=0] node [black] {$j > i \ge 0$}; 
				
				\filldraw[fill=gray!50!white, draw=black] (-3.75, -.75) circle [radius=0] node [black] {$(i, i; i, i)$};
				\filldraw[fill=gray!50!white, draw=black] (-1.25, -.75) circle [radius=0] node [black] {$(j, i; j, i)$};
				\filldraw[fill=gray!50!white, draw=black] (1.25, -.75) circle [radius=0] node [black] {$(i, j; i, j)$};
				\filldraw[fill=gray!50!white, draw=black] (3.75, -.75) circle [radius=0] node [black] {$(j, i; i, j)$};
				\filldraw[fill=gray!50!white, draw=black] (6.25, -.75) circle [radius=0] node [black] {$(i, j; j, i)$};
				
				\filldraw[fill=gray!50!white, draw=black] (-3.75, -1.8) circle [radius=0] node [black] {$1$};
				\filldraw[fill=gray!50!white, draw=black] (-1.25, -1.8) circle [radius=0] node [black] {$\displaystyle\frac{q (1 - z)}{1 - qz}$};
				\filldraw[fill=gray!50!white, draw=black] (1.25, -1.8) circle [radius=0] node [black] {$\displaystyle\frac{1 - z}{1 - qz}$};
				\filldraw[fill=gray!50!white, draw=black] (3.75, -1.8) circle [radius=0] node [black] {$\displaystyle\frac{1 - q}{1 - qz}$};
				\filldraw[fill=gray!50!white, draw=black] (6.25, -1.8) circle [radius=0] node [black] {$\displaystyle\frac{z (1 - q)}{1 - qz}$};
			\end{tikzpicture}
		\end{center}
		
		\caption{\label{rz} The $R_z$ weights are depicted above.}
	\end{figure}

	Now let us describe how to sample a random colored six-vertex ensemble on $\mathbb{Z}_{> 0}^2$, using the $R_z$ weights from \eqref{rzabcd}. We will first define probability measures $\mathbb{P}_n$ on the set of colored six-vertex ensembles whose vertices are all contained in triangles of the form $\mathbb{T}_n = \{ (x, y) \in \mathbb{Z}_{> 0}^2: x + y \le n \}$, and then we will take a limit as $n$ tends to infinity to obtain the vertex models in infinite volume. The first measure $\mathbb{P}_0$ is supported by the empty ensemble (that has no paths).
	
	For each integer $n \ge 1$, we will define $\mathbb{P}_{n + 1}$ from $\mathbb{P}_n$ through the following Markovian update rules. Use $\mathbb{P}_n$ to sample a colored six-vertex ensemble $\mathcal{E}_n$ on $\mathbb{T}_n$. This yields arrow configurations for all vertices in the triangle $\mathbb{T}_{n - 1}$. To extend this to a colored six-vertex ensemble on $\mathbb{T}_{n + 1}$, we must prescribe arrow configurations to all vertices $(x, y)$ on the complement $\mathbb{T}_n \setminus\mathbb{T}_{n - 1}$, which is the diagonal $\mathbb{D}_n = \big\{ (x, y) \in \mathbb{Z}_{> 0}^2: x + y = n \big\}$. Since any incoming arrow to $\mathbb{D}_n$ is an outgoing arrow from $\mathbb{D}_{n - 1}$, $\mathcal{E}_n$ and the initial data prescribe the first two coordinates $(a, b)$ of the arrow configuration to each vertex in $\mathbb{D}_n$. Thus, it remains to explain how to assign the second two coordinates $(c, d)$ of the arrow configuration at any vertex $(i, j) \in \mathbb{D}_n$, given its first two coordinates $(a, b)$. This is done by producing $(c, d)$ from $(a, b)$ according to the transition probability 
	\begin{flalign}
		\label{configurationprobabilities} 
		& \mathbb{P}_n \big[ (c, d)  \big| (a, b) \big] = R_{y_i / x_j} (a, b; c, d).
	\end{flalign}
	
	\noindent We assume that the parameters $(\bm{x}; \bm{y}; q)$ are chosen so that the probabilities \eqref{configurationprobabilities} are all nonnegative; the stochasticity of the $R_z$ weights (\Cref{1rsum}) then ensures that \eqref{configurationprobabilities} indeed defines a probability measure.
	
	Choosing $(c, d)$ according to the above transition probabilities yields a random colored six-vertex ensemble $\mathcal{E}_{n + 1}$, now defined on $\mathbb{T}_{n + 1}$; the probability distribution of $\mathcal{E}_{n + 1}$ is then denoted by $\mathbb{P}_{n + 1}$. Taking the limit as $n$ tends to $\infty$ yields a probability measure on colored six-vertex ensembles $\mathcal{E}$ on the quadrant. We refer to it as the \emph{colored stochastic six-vertex model}; observe that it may also be sampled on any rectangle $\mathcal{D} \subset \mathbb{Z}^2$ in the same way as it was above on the quadrant.

	\subsection{Colored Line Ensembles}
	
	\label{EnsembleCoupled} 
	
	In this section we introduce terminology for colored families of line ensembles. We first define the notion of a line ensemble. Those that we consider here will be discrete, and their paths will be non-increasing, which is related to the fact that the associated stochastic models we analyze are discrete. By taking certain limit degenerations, one can obtain continuous line ensembles associated with non-discrete stochastic systems, but we will not pursue that in this work.
	
	\begin{definition} 
		
		\label{l} 
		
		Fix an interval $I \subseteq \mathbb{Z}$. A (discrete, down-right) \emph{line ensemble} (on $I$) is an infinite sequence $(\mathsf{L}_1, \mathsf{L}_2, \ldots )$ of functions $\mathsf{L}_k : I \rightarrow \mathbb{Z}$ such that 
		\begin{flalign}
			\label{lki2} 
			\mathsf{L}_k (i) \ge \mathsf{L}_{k+1} (i); \qquad \mathsf{L}_k (i) \ge \mathsf{L}_k (i+1), 
		\end{flalign} 
		
		\noindent for each $(k, i) \in \mathbb{Z}_{>0} \times I$ (where we must have $i+1 \in I$ in the second inequality of \eqref{lki2}). We call this line ensemble \emph{simple} if $\mathsf{L}_k (i) - \mathsf{L}_k (i+1) \in \{ 0, 1 \}$ for all $(k, i) \in \mathbb{Z}_{>0} \times I$ with $i+1 \in I$. 
	
	\end{definition} 
		
		We next define colored families of line ensembles, which are sequences of line ensembles whose differences are also line ensembles. 
		
	\begin{definition} 
	
	\label{cl} 
		
		Fix an integer $n \ge 1$ and an interval $I \subseteq \mathbb{Z}$. A \emph{colored family of line ensembles}, which we often abbreviate to a \emph{colored line ensemble}, on $I$ is a sequence $\bm{\mathsf{L}} = \big( \bm{\mathsf{L}}^{(1)}, \bm{\mathsf{L}}^{(2)}, \ldots , \bm{\mathsf{L}}^{(n)} \big)$ of line ensembles $\bm{\mathsf{L}}^{(c)} = \big( \mathsf{L}_1^{(c)}, \mathsf{L}_2^{(c)}, \ldots \big)$, such that $\bm{\Lambda}^{(c)} = \big( \Lambda_1^{(c)}, \Lambda_2^{(c)}, \ldots \big)$ is a line ensemble for each $c \in \llbracket 1, n \rrbracket$, where 
		\begin{flalign*}
			\Lambda_k^{(c)} (i) = \mathsf{L}_k^{(c)} (i) - \mathsf{L}_k^{(c+1)} (i), \qquad \text{for each $(k, i) \in \mathbb{Z}_{>0} \times I$}.
		\end{flalign*}
		
		\noindent Here, we have for convenience defined the constant function $\mathsf{L}_k^{(n+1)} : I \rightarrow \mathbb{Z}$ by setting $\mathsf{L}_k^{(n+1)} (i) = 0$ for any $(k, i) \in \mathbb{Z}_{>0} \times I$. We further call  $\bm{\mathsf{L}}$ \emph{simple} if $\bm{\mathsf{L}}^{(c)}$ is simple for each $c \in \llbracket 1, n \rrbracket$. 
		
	\end{definition}
	
	\begin{figure} 
		
		\begin{center}
			\begin{tikzpicture}[
				>=stealth, 
				scale = .65]{	
					
					\draw[dotted] (16, 0) -- (23, 0) node[right, scale = .7]{$0$};
					\draw[dotted] (16, 1) -- (23, 1) node[right, scale = .7]{$1$};
					\draw[dotted] (16, 2) -- (23, 2) node[right, scale = .7]{$2$};
					\draw[dotted] (16, 3) -- (23, 3) node[right, scale = .7]{$3$};
					\draw[dotted] (16, 4) -- (23, 4) node[right, scale = .7]{$4$};
					\draw[dotted] (16, 0) node[below = 1, scale = .7]{$0$} -- (16, 4);
					\draw[dotted] (17, 0) node[below = 1, scale = .7]{$1$} -- (17, 4);
					\draw[dotted] (18, 0) node[below = 1, scale = .7]{$2$} -- (18, 4);
					\draw[dotted] (19, 0) node[below = 1, scale = .7]{$3$} -- (19, 4);
					\draw[dotted] (20, 0) node[below = 1, scale = .7]{$4$} -- (20, 4);
					\draw[dotted] (21, 0) node[below = 1, scale = .7]{$5$} -- (21, 4);
					\draw[dotted] (22, 0) node[below = 1, scale = .7]{$6$} -- (22, 4);
					\draw[dotted] (23, 0) node[below = 1, scale = .7]{$7$} -- (23, 4); 
					
					\draw[dotted] (16, 5) -- (23, 5) node[right, scale = .7]{$0$};
					\draw[dotted] (16, 6) -- (23, 6) node[right, scale = .7]{$1$};
					\draw[dotted] (16, 7) -- (23, 7) node[right, scale = .7]{$2$};
					\draw[dotted] (16, 5) node[below = 1, scale = .7]{$0$} -- (16, 7);
					\draw[dotted] (17, 5) node[below = 1, scale = .7]{$1$} -- (17, 7);
					\draw[dotted] (18, 5) node[below = 1, scale = .7]{$2$} -- (18, 7);
					\draw[dotted] (19, 5) node[below = 1, scale = .7]{$3$} -- (19, 7);
					\draw[dotted] (20, 5) node[below = 1, scale = .7]{$4$} -- (20, 7);
					\draw[dotted] (21, 5) node[below = 1, scale = .7]{$5$} -- (21, 7);
					\draw[dotted] (22, 5) node[below = 1, scale = .7]{$6$} -- (22, 7);
					\draw[dotted] (23, 5) node[below = 1, scale = .7]{$7$} -- (23, 7);
					
					\draw[ thick, blue] (16, 7.1) -- (19, 7.1) -- (20, 6.1) -- (22, 6.1) -- (23, 5.1);
					\draw[ thick, blue] (16, 7) -- (18, 7) -- (19, 6) -- (22, 6) -- (23, 5);
					\draw[ thick, blue] (16, 6.9) -- (18, 6.9) -- (19, 5.9) -- (21, 5.9) -- (22, 4.9) -- (23, 4.9);

					\draw[ thick, violet] (16, 4.1) -- (17, 4.1) -- (18, 3.1) -- (19, 3.1) -- (20, 2.1) -- (21, 2.1) -- (22, 1.1) -- (23, .1);
					\draw[ thick, violet] (16, 4) -- (17, 4) -- (18, 3) -- (19, 2) -- (21, 2) -- (22, 1) -- (23, 0);
					\draw[ thick, violet] (16, 3.9) -- (17, 2.9) -- (18, 2.9) -- (19, 1.9) -- (20, 1.9) -- (21, .9) -- (22, -.1) -- (23, -.1);
					
					\draw[] (19, 7.1) circle [radius = 0] node[above, scale = .65]{$\mathsf{L}_1^{(2)}$};
					\draw[] (19, 6.5) circle [radius = 0] node[scale = .65]{$\mathsf{L}_2^{(2)}$};
					\draw[] (19, 5.9) circle [radius = 0] node[below, scale = .65]{$\mathsf{L}_3^{(2)}$};
					
					\draw[] (19, 3.1) circle [radius = 0] node[above, scale = .65]{$\mathsf{L}_1^{(1)}$};
					\draw[] (19, 2.5) circle [radius = 0] node[scale = .65]{$\mathsf{L}_2^{(1)}$};
					\draw[] (19, 1.9) circle [radius = 0] node[below, scale = .65]{$\mathsf{L}_3^{(1)}$};

					\draw[dotted] (26, 0) -- (33, 0) node[right, scale = .7]{$0$};
					\draw[dotted] (26, 1) -- (33, 1) node[right, scale = .7]{$1$};
					\draw[dotted] (26, 2) -- (33, 2) node[right, scale = .7]{$2$};
					\draw[dotted] (26, 3) -- (33, 3) node[right, scale = .7]{$3$};
					\draw[dotted] (26, 4) -- (33, 4) node[right, scale = .7]{$4$};
					\draw[dotted] (26, 0) node[below = 1, scale = .7]{$0$} -- (26, 4);
					\draw[dotted] (27, 0) node[below = 1, scale = .7]{$1$} -- (27, 4);
					\draw[dotted] (28, 0) node[below = 1, scale = .7]{$2$} -- (28, 4);
					\draw[dotted] (29, 0) node[below = 1, scale = .7]{$3$} -- (29, 4);
					\draw[dotted] (30, 0) node[below = 1, scale = .7]{$4$} -- (30, 4);
					\draw[dotted] (31, 0) node[below = 1, scale = .7]{$5$} -- (31, 4);
					\draw[dotted] (32, 0) node[below = 1, scale = .7]{$6$} -- (32, 4);
					\draw[dotted] (33, 0) node[below = 1, scale = .7]{$7$} -- (33, 4); 
					
					\draw[dotted] (26, 5) -- (33, 5) node[right, scale = .7]{$0$};
					\draw[dotted] (26, 6) -- (33, 6) node[right, scale = .7]{$1$};
					\draw[dotted] (26, 7) -- (33, 7) node[right, scale = .7]{$2$};
					\draw[dotted] (26, 5) node[below = 1, scale = .7]{$0$} -- (26, 7);
					\draw[dotted] (27, 5) node[below = 1, scale = .7]{$1$} -- (27, 7);
					\draw[dotted] (28, 5) node[below = 1, scale = .7]{$2$} -- (28, 7);
					\draw[dotted] (29, 5) node[below = 1, scale = .7]{$3$} -- (29, 7);
					\draw[dotted] (30, 5) node[below = 1, scale = .7]{$4$} -- (30, 7);
					\draw[dotted] (31, 5) node[below = 1, scale = .7]{$5$} -- (31, 7);
					\draw[dotted] (32, 5) node[below = 1, scale = .7]{$6$} -- (32, 7);
					\draw[dotted] (33, 5) node[below = 1, scale = .7]{$7$} -- (33, 7);
					
					\draw[ thick, blue] (26, 7.1) -- (29, 7.1) -- (30, 6.1) -- (32, 6.1) -- (33, 5.1);
					\draw[ thick, blue] (26, 7) -- (28, 7) -- (29, 6) -- (32, 6) -- (33, 5);
					\draw[ thick, blue] (26, 6.9) -- (28, 6.9) -- (29, 5.9) -- (31, 5.9) -- (32, 4.9) -- (33, 4.9);

					\draw[ thick, violet] (26, 4.1) -- (27, 4.1) -- (28, 3.1) -- (29, 3.1) -- (30, 2.1) -- (32, 2.1) -- (33, .1);
					\draw[ thick, violet] (26, 4) -- (27, 4) -- (28, 3) -- (29, 2) -- (30, 2) -- (31, 1) -- (32, 1) -- (33, 0);
					\draw[ thick, violet] (26, 3.9) -- (27, 2.9) -- (28, 2.9) -- (29, 1.9) -- (30, 1.9) -- (31, .9) -- (32, -.1) -- (33, -.1);
					
					\draw[] (29, 7.1) circle [radius = 0] node[above, scale = .65]{$\widehat{\mathsf{L}}_1^{(2)}$};
					\draw[] (29, 6.5) circle [radius = 0] node[scale = .65]{$\widehat{\mathsf{L}}_2^{(2)}$};
					\draw[] (29, 5.9) circle [radius = 0] node[below, scale = .65]{$\widehat{\mathsf{L}}_3^{(2)}$};
					
					\draw[] (29, 3.1) circle [radius = 0] node[above, scale = .65]{$\widehat{\mathsf{L}}_1^{(1)}$};
					\draw[] (29, 2.5) circle [radius = 0] node[scale = .65]{$\widehat{\mathsf{L}}_2^{(1)}$};
					\draw[] (29, 1.9) circle [radius = 0] node[below, scale = .65]{$\widehat{\mathsf{L}}_3^{(1)}$};
					
				}
			\end{tikzpicture}
		\end{center}
		
		\caption{\label{1lmu} Shown above are two colored line ensembles. To the left, $\bm{\mathsf{L}}$ is simple; to the right, $\widehat{\bm{\mathsf{L}}}$ is not simple and is $\llbracket 1, 2 \rrbracket \times \llbracket 5, 6 \rrbracket$-compatible with $\bm{\mathsf{L}}$.}
		
	\end{figure}
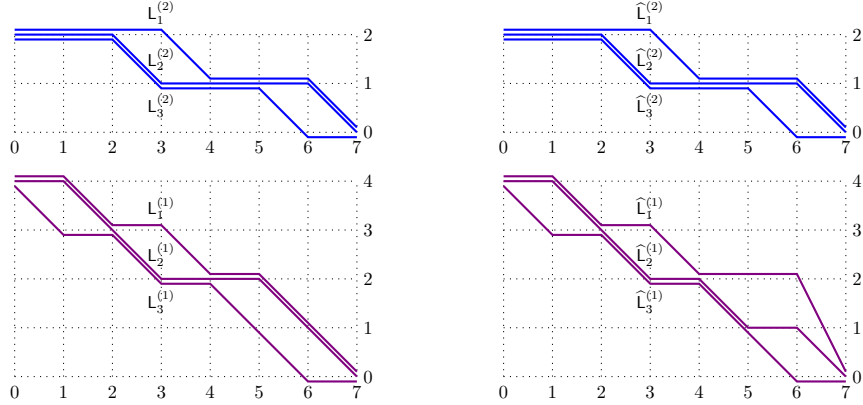 
	
	See \Cref{1lmu} for examples. Before proceeding, we require the notion of compatibility for colored line ensembles. 
	
	\begin{definition} 
		
	\label{lluvij} 
	
	Fix colored line ensembles $\bm{\mathsf{L}} = \big( \bm{\mathsf{L}}^{(1)}, \bm{\mathsf{L}}^{(2)}, \ldots , \bm{\mathsf{L}}^{(n)} \big)$ and $\bm{\mathsf{l}} = \big( \bm{\mathsf{l}}^{(1)}, \bm{\mathsf{l}}^{(2)}, \ldots , \bm{\mathsf{l}}^{(n)} \big)$ on an interval $I \subseteq \mathbb{Z}$, and integers $j \ge i \ge 1$ and $u, v \in I$ with $u \le v$. We say that $\bm{\mathsf{l}}$ is \emph{$\llbracket i, j \rrbracket \times \llbracket u, v \rrbracket$-compatible} with $\bm{\mathsf{L}}$ if $\mathsf{L}_k^{(c)} (m) = \mathsf{l}_k^{(c)} (m)$ for each $c \in \llbracket 1, n \rrbracket$ and $(k, m) \in (\mathbb{Z}_{> 0} \times I) \setminus \big( \llbracket i, j \rrbracket \times \llbracket u, v \rrbracket \big)$. 

	\end{definition} 

	Observe under the notation of \Cref{lluvij} that there are only finitely many colored line ensembles that are $\llbracket i, j \rrbracket \times \llbracket u, v \rrbracket$-compatible with a given one.

	\subsection{Colored Line Ensembles for the $q=0$ Stochastic Six-Vertex Model}
	
	\label{Vertexq02} 
	
	In this section we state a special case of our main results (see \Cref{L0} and \Cref{L0Fused} below for the more general ones), by associating a colored family of line ensembles to the colored stochastic six-vertex model at $q = 0$. This is provided by the following theorem, which is proven in \Cref{Lq0Example}. Its first part indicates that the top curves of the line ensembles in the colored family have the same joint law as the colored height functions for the stochastic six-vertex model. Its third part provides a Gibbs property (which is well posed by its second part) for the colored line ensemble. In what follows, we fix integers $M, N, n \ge 1$; real numbers $x, y \in (0, 1)$ with $y < x$; and a function $\sigma : \llbracket 1, N \rrbracket \rightarrow \llbracket 1, n \rrbracket$. We also define the rectangle $\mathcal{D}_{M;N} = \llbracket 1, M \rrbracket \times \llbracket 1, N \rrbracket \subset \mathbb{Z}^2$.

	\begin{thm} 
		
		\label{ensemblevertexq0} 
		
		Sample a colored six-vertex ensemble $\mathcal{E}$ on $\mathcal{D}_{M;N}$ according to the stochastic six-vertex model with $q = 0$; all parameters of $\bm{x}$ equal to $x$ and of $\bm{y}$ equal to $y$; and $\sigma$-entrance data. For each $c \in \llbracket 1, n \rrbracket$ define $H_c : \llbracket 0, M + N \rrbracket \rightarrow \mathbb{Z}$ by setting
		\begin{flalign*} 
			H_c (k) = \mathfrak{h}_{\ge c}^{\leftarrow} (M, k), \quad \text{if $k \in \llbracket 0, N \rrbracket$}; \qquad H_c (k) = \mathfrak{h}_{\ge c}^{\leftarrow} (M + N - k, N), \quad \text{if $k \in \llbracket N, M + N \rrbracket$},
		\end{flalign*}
	
		\noindent  where $\mathfrak{h}_{\ge c}^{\leftarrow}$ is the colored height function with respect to $\mathcal{E}$.  There exists a random simple colored line ensemble $\bm{\mathsf{L}} = \big( \bm{\mathsf{L}}^{(1)}, \bm{\mathsf{L}}^{(2)}, \ldots , \bm{\mathsf{L}}^{(n)} \big)$ on $\llbracket 0, M + N \rrbracket$ satisfying the following properties.  
		
		\begin{enumerate}
			\item The joint law of the functions $\big( \mathsf{L}_1^{(1)}, \mathsf{L}_1^{(2)}, \ldots , \mathsf{L}_1^{(n)} \big)$ is the same as that of $(H_1, H_2, \ldots , H_n)$.
			
			\item For any integers $c \in \llbracket 1, n \rrbracket$; $k \ge 1$; and $m \in \llbracket 1, M + N \rrbracket$ such that $\mathsf{L}_k^{(c+1)} (m) > \mathsf{L}_{k+1}^{(c+1)} (m)$, we almost surely have 
			\begin{flalign}
				\label{lkcq0} 
				\mathsf{L}_k^{(c)} (m-1) - \mathsf{L}_k^{(c)} (m) = \mathsf{L}_k^{(c+1)} (m-1) - \mathsf{L}_k^{(c+1)} (m). 
			\end{flalign}
			\item \label{3q0} Fix integers $j \ge i \ge 0$ and $u, v \in \llbracket 0, M + N \rrbracket$ such that $v \ge u$ and $N \notin \llbracket u, v \rrbracket$. Condition on the curves $\mathsf{L}_k^{(c)} (m)$ for all $c \in \llbracket 1, n \rrbracket$ and $(k, m) \notin \llbracket i, j \rrbracket \times \llbracket u, v \rrbracket$. Then the law of $\bm{\mathsf{L}}$ is uniform over all simple colored line ensembles $\bm{\mathsf{l}}$ that are $\llbracket i, j \rrbracket \times \llbracket u, v \rrbracket$-compatible with $\bm{\mathsf{L}}$ such that the following holds. For any integers $c \in \llbracket 1, n \rrbracket$; $k \ge 1$; and $m \in \llbracket 1, M + N \rrbracket$ such that $\mathsf{l}_k^{(c+1)} (m) > \mathsf{l}_{k+1}^{(c+1)} (m)$, we have
			\begin{flalign*}
				\mathsf{l}_k^{(c)} (m-1) - \mathsf{l}_k^{(c)} (m) = \mathsf{l}_k^{(c+1)} (m-1) - \mathsf{l}_k^{(c+1)} (m).
			\end{flalign*}
		\end{enumerate}	
	\end{thm}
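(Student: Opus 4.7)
The plan is to derive this statement as the $q = 0$ specialization of the general framework developed in the body of the paper. To construct the colored line ensemble $\bm{\mathsf{L}}$, I would invoke the matching result \Cref{lmu2}, whose proof rests on the Yang--Baxter identity \Cref{fgsv}. That identity rewrites the joint distribution of the colored height functions $(H_1, \ldots, H_n)$ as the top-curve marginal of an explicit measure on tuples of compositions obtained by permuting row and column rapidities across the lattice. Repackaging that measure via \Cref{lmu} produces a colored line ensemble $\bm{\mathsf{L}}$ on $\llbracket 0, M+N \rrbracket$ whose first curves $\big( \mathsf{L}_1^{(1)}, \ldots, \mathsf{L}_1^{(n)} \big)$ are jointly distributed as $(H_1, \ldots, H_n)$; this establishes property (1). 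The domain is $\llbracket 0, M+N\rrbracket$ because the colored height functions $H_c$ are read off along the north-east staircase boundary of $\mathcal{D}_{M;N}$, parametrized by $k \in \llbracket 0, N\rrbracket$ on the vertical leg and $k \in \llbracket N, M+N\rrbracket$ on the horizontal leg, with the turn occurring exactly at $k = N$.

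For property (2), I would examine the support of this measure at $q=0$. The weight $R_z(j,i;j,i) = q(1-z)/(1-qz)$ vanishes when $q=0$, so the measure is supported on configurations avoiding the corresponding local vertex pattern. Translating this vanishing through the line ensemble dictionary of \Cref{lmu}: the event $\mathsf{L}_k^{(c+1)}(m) > \mathsf{L}_{k+1}^{(c+1)}(m)$ corresponds to a path of color $c+1$ passing through the relevant position, and forbidding the $(j,i;j,i)$ vertex translates precisely to the identity \eqref{lkcq0}. Simplicity of $\bm{\mathsf{L}}$ holds because in the $q=0$ six-vertex model at most one arrow of any fixed color exits horizontally from each vertex, so each curve drops by $0$ or $1$ at each step.

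For property (3), I would invoke the general conditional-law statement \Cref{conditionl}, which, upon conditioning on all curves outside a box $\llbracket i, j \rrbracket \times \llbracket u, v \rrbracket$, expresses the conditional distribution on the interior as proportional to a product of local weights derived from the $R_z$ values. Since all row rapidities equal $x$ and all column rapidities equal $y$, the spectral variable $z = y/x$ is constant across every vertex of the rectangle, so the total weight of any admissible interior configuration is a monomial in the $q=0$ weights $\{1, 1-z, z\}$ (the fourth weight vanishes and produces the support restriction from step 2). The assumption $N \notin \llbracket u, v \rrbracket$ ensures that the rectangle lies on a single straight segment of the staircase boundary, so that a conservation argument applies: the number of vertices of each admissible type is determined by the boundary data specified through conditioning. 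Hence all admissible configurations carry the same product weight, and the conditional law is uniform as claimed.

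The main obstacle is the bookkeeping in the last step, namely verifying that every simple colored line ensemble $\bm{\mathsf{l}}$ that is $\llbracket i,j\rrbracket \times \llbracket u,v\rrbracket$-compatible with $\bm{\mathsf{L}}$ and satisfies the analogue of \eqref{lkcq0} is assigned exactly the same product of $R_z$ weights. This requires unpacking, for each vertex in the box, how the local increments of the curves $\mathsf{l}_k^{(c)}$ translate into one of the five arrow configurations of \Cref{rzabcd}, and showing that any trade between the ``swap'' type and ``pass-through'' type vertices is forced by the boundary, i.e., the differences of the multiplicities vanish. The condition $N \notin \llbracket u, v \rrbracket$ is essential here, since without it the box straddles the corner of the staircase and the weight ceases to be configuration-independent.
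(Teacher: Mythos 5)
Your overall strategy---deriving the theorem as the $q = 0$ specialization of \Cref{lmu2} for part (1) and of \Cref{conditionl} for part (3), together with a $q=0$ support analysis for part (2)---matches the paper's route, which cites \Cref{lmu2} and \Cref{q0l}. Part (1) is fine.

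There is, however, a genuine gap in parts (2) and (3): you analyze the vanishing of the $R$-weight $R_z(j,i;j,i)$ at $q = 0$, but this is a weight for the colored stochastic six-vertex model, not for the line-ensemble measure. The measure $\mathbb{P}_{\scL}^{\sigma}$ you must analyze is (via \Cref{llmu0} and \Cref{lmu}) the pushforward of $\mathbb{P}_{\fG}^{\sigma}$, which is built from the partition functions $f$ and $G$ of \Cref{fg} and hence from the $L$-weights of \Cref{lzdefinition}. The $R$-weights enter only as the weight of the ``cross'' used to establish the Yang--Baxter matching \Cref{fgsv}; once the matching is proven, the local weights that govern the line ensemble are the $L_{x;s}$, $L_{y;s}$ weights appearing in \Cref{conditionl}. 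Consequently, vanishing of $R_z(j,i;j,i)$ constrains only the top-curve trajectory (which mirrors the six-vertex height functions $(H_c)_c$), whereas the identity \eqref{lkcq0} is an almost-sure constraint on every curve $k \ge 1$. Neither \Cref{lmu} nor \Cref{le2} provides a mechanism to transport an $R$-weight support restriction to curves at depth $k \ge 2$.

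The correct ingredient is the $q = 0$ degeneration of the $L$-weights. From \eqref{l2x} one has $L_{1;0}(\bm{A}, b; \bm{C}, d) = 1 - \mathbbm{1}_{d > 0} \cdot \mathbbm{1}_{C_{[d+1,n]} > 0}$, so the weight vanishes exactly when an arrow of positive color $d$ exits horizontally while arrows of strictly higher color exit vertically; pushed through the dictionary \Cref{le2}, this is equivalent to the failure of \eqref{lkcq0}---that is the content of \Cref{lweight0e}, which the paper uses at every depth $k$. Similarly, for part (3) your ``monomial in $\{1, 1-z, z\}$'' claim is directed at the wrong weights. After applying \Cref{conditionl}, the right observation is $L_{x;0}(\bm{A}, b; \bm{C}, d) = x^{\mathbbm{1}_{d>0}} L_{1;0}(\bm{A}, b; \bm{C}, d)$ with $L_{1;0} \in \{0, 1\}$, so the interior weight is a power of $x$ (or $y$) times a $0$/$1$ indicator. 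The hypothesis $N \notin \llbracket u, v \rrbracket$ is precisely what makes the exponent of $x$ (resp.\ $y$) equal a height-function difference that is fixed by the conditioning, so that the monomial factors into the normalizing constant and uniformity over the nonzero-weight configurations follows. This is a cleaner version of the conservation idea you gesture at, and it is the argument the paper carries out in the proof of \Cref{q0l}.
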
 
	
	Let us make several comments on this theorem. First, the Gibbs property for the line ensemble $\bm{\mathsf{L}}$ (\Cref{3q0} of \Cref{ensemblevertexq0}) does not depend on the initial data $\sigma$ for the stochastic six-vertex model; $\sigma$ instead will eventually appear as a sort of boundary condition for $\bm{\mathsf{L}}$. One cannot directly use this fact to obtain line ensembles for the single-color ($n=1$ case of the) stochastic six-vertex model under general initial data. Indeed, since $0$ is not in the range of $\sigma$, each site on the west boundary $\partial_{\we} \mathcal{D}_{M;N} = \{ 0 \} \times \llbracket 1, N \rrbracket$ of $\mathcal{D}_{M;N}$ is an entrance site for a path of some positive color. Thus, $\sigma$ necessarily gives rise to step (wedge) initial data if $n = 1$. However, one can instead pass to a $n=2$ color stochastic six-vertex model; use $\sigma$ to prescribe an arbitrary boundary condition for where the color $2$ arrows enter along $\partial_{\we} \mathcal{D}_{M;N}$ (having the color $1$ arrows enter at all other sites of $\partial_{\we} \mathcal{D}_{M;N}$); and then project to the color $2$ arrows\footnote{We emphasize, however, that the Gibbs property for $\bm{\mathsf{L}}$ does not seem to persist under this projection, that is, $\bm{\mathsf{L}}^{(2)}$ alone does not satisfy a Gibbs property (though $\bm{\mathsf{L}}^{(1)}$ does; see \Cref{lmerge} and \Cref{lmergefused} below). From this perspective, to treat general initial data (even only for stochastic vertex models with a single color), one must pass to a $n \ge 2$ colored line ensemble.} to yield a single-color stochastic six-vertex model with general boundary conditions along $\partial_{\we} \mathcal{D}_{M;N}$.

	Second, if $n=1$, the constraint that \eqref{lkcq0} holds whenever $\mathsf{L}_k^{(c+1)} (m) > \mathsf{L}_{k+1}^{(c+1)} (m)$ is irrelevant, since $\mathsf{L}_k^{(2)} (m) = 0$ for all $k \ge 1$ and $m \in \llbracket 0, M + N \rrbracket$. The Gibbs property for $\bm{\mathsf{L}}$ then becomes that of non-intersecting, down-right, discrete random paths conditioned to remain ordered. For $n \ge 2$ colors, this constraint is present and must be taken into account. Similar constraints have implicitly (in the language of vertex models) appeared previously in the context of stationary measures for colored interacting particle systems (see the $q=0$ case of \cite[Section 4.2]{CIPSR}). Their presence in our colored line ensembles therefore suggests that the latter may be useful in proving convergence to local stationarity \cite{CIPSR} or the stationary horizon \cite{SHG,SLM} for colored stochastic vertex models. An alternative explanation for the constraint \eqref{lkcq0} (by examining the law of the line ensembles $\big( \bm{\mathsf{L}}^{(2)}, \bm{\mathsf{L}}^{(3)} , \ldots , \bm{\mathsf{L}}^{(c)} \big)$ upon conditioning on the first one $\bm{\mathsf{L}}^{(1)}$) is found in forthcoming work \cite{PP}, where it will be used to prove scaling limit results for the multi-species asymmetric simple exclusion process and colored stochastic six-vertex model.

	\subsection{Outline}
	
	The remainder of this work is organized as follows. In \Cref{ZFunction} we recall the Yang--Baxter equation and certain families of (non)symmetric functions similar to those in \cite{CSVMST}. We use them in \Cref{Line} to produce probability measures related to the colored stochastic six-vertex model. In \Cref{L0} we reinterpret these results to associate colored line ensembles with the stochastic six-vertex model, special cases of which are analyzed in \Cref{ExampleL}. We then generalize this framework to the fused setting in \Cref{Fusion}, producing the associated colored line ensembles for stochastic fused vertex models in \Cref{L0Fused}. Finally, we explain these colored line ensembles in the example of the multi-species discrete time $q$-boson model in \Cref{L2}. The appendices are not directly related to line ensembles and instead include results about degenerating colored vertex models to other systems (along the lines of \Cref{qmodel}). Specifically, in \Cref{VertexPolymer} we explain how to degenerate the colored stochastic fused vertex model to the log-gamma polymer. In \Cref{Converge} we provide an effective rate of convergence to the colored stochastic six-vertex model to the colored ASEP.
	
	\subsection{Notation} 
	
	\label{AB}
		
		For any integers $n \ge 1$ and $i \in \llbracket 1, n \rrbracket$, we let $\bm{e}_i = \bm{e}_i^{(n)} \in \mathbb{R}^n$ denote the coordinate vector whose $i$-th entry is equal to $1$ and whose remaining entries are $0$; we also let $\bm{e}_0 = \bm{e}_0^{(n)} \in \mathbb{R}^n$ denote the vector with all entries equal to $0$. We denote the entries of any vector $\bm{X} \in \mathbb{R}^n$ by $\bm{X} = (X_1, X_2, \ldots , X_n)$, and we set $|\bm{X}| = \sum_{k=1}^n X_k$. For any integers $1 \le i \le j \le n$, we also denote $X_{[i,j]} = \sum_{k=i}^j X_k$. We further write $\bm{X} \ge \bm{Y}$ for any $\bm{X}, \bm{Y} \in \mathbb{R}^n$ if $X_i \ge Y_i$ for each $i \in \llbracket 1, n \rrbracket$. For any $k$-tuple $\bm{w} = (w_1, w_2, \ldots , w_k)$, let $\overleftarrow{w} = (w_k, w_{k-1}, \ldots , w_1)$ denote the order reversal of $\bm{w}$. Throughout this work, we fix a real number $q \in \mathbb{R}$. For any complex number $a \in \mathbb{C}$, we also denote the $q$-Pochhammer symbol $(a; q)_k = \prod_{j=0}^{k-1} (1 - aq^j)$ for each integer $k \ge 0$ and $(a; q)_k = \prod_{j=1}^{-k} (1 - a q^{-j})$ for each integer $j < 0$.
		
		A \emph{signature} is a sequence $\lambda = (\lambda_1, \lambda_2, \ldots , \lambda_{\ell})$ of integers such that $\lambda_1 \ge \lambda_2 \ge \cdots \ge \lambda_{\ell} \ge 0$. A \emph{composition} $\mu = (\mu_1, \mu_2, \ldots , \mu_{\ell}) \in \mathbb{Z}_{\ge 0}^{\ell}$ of some integer $K \ge 0$ is an $\ell$-tuple of nonnegative integers such that $\sum_{j = 1}^{\ell} \mu_j = K$ (in particular, any signature is a composition). The integer $\ell = \ell (\mu)$ is called the \emph{length} of $\mu$, and $K = |\mu|$ is its \emph{size}. Given a composition $\mu$, we let $\mathfrak{m}_k (\mu) = \# \big\{ j \in \llbracket 1, \ell \rrbracket : \mu_j = k \big\}$ denote the multiplicity of $k$ in $\mu$, for any integer $k \ge 0$; we also let $\mathfrak{m}_{\le k} (\mu) = \sum_{j=0}^k \mathfrak{m}_j (\mu)$ denote the number of entries in $\mu$ that are at most equal to $k$.

	\subsection*{Acknowledgements}
	
	The authors thank Ivan Corwin, Milind Hegde, and Shirshendu Ganguly for very valuable conversations. Amol Aggarwal was partially supported by a Packard Fellowship for Science and Engineering, a Clay Research Fellowship, NSF grant DMS-1926686, and the IAS School of Mathematics. Alexei Borodin was partially supported by the NSF grants DMS-1664619, DMS-1853981, and the Simons Investigator program.

	\section{Yang--Baxter Equation and Partition Functions} 
	
	\label{ZFunction}
	
	In this section we collect (largely from \cite{CSVMST}) several results on the Yang--Baxter equation and certain families of (non)symmetric functions. In \Cref{RLL} we recall a certain family of weights and the Yang--Baxter equation they satisfy. In \Cref{FunctionsZ} we provide notation for partition functions and height functions. This will be used to define certain (non)symmetric functions $f$ and $G$ in \Cref{FunctionfG}, whose properties we recall in \Cref{IdentitiesfG}. Throughout this section, we fix an integer $n \ge 1$.

	\subsection{Yang--Baxter Equation} 
	
	\label{RLL}
	
	In this section we introduce further classes of weights (in addition to the $R_z$ ones given by \Cref{rzabcd}), denoted by $L_{z;s}$ and $\widehat{L}_{z;s}$, and state the Yang--Baxter equation that they satisfy.
	
	Associated with an $L$-weight is a \emph{colored higher spin arrow configuration}, which is a quadruple $(\bm{A}, b; \bm{C}, d)$ with $b, d \in \llbracket 0, n \rrbracket$ and $\bm{A}, \bm{C} \in \mathbb{Z}_{\ge 0}^n$. We view this as an assignment of directed up-right colored arrows to a vertex $v \in \mathbb{Z}^2$; the horizontal edges incident to $v$ again accommodate one arrow,\footnote{We will remove this restriction in \Cref{FusedPath} below, through fusion.} but now the vertical edges incident to $v$ can accommodate arbitrarily many arrows. In particular, $b$ and $d$ denote the colors of the arrows horizontally entering and exiting $v$, respectively, and $A_k$ and $C_k$ denote the number of arrows of color $k$ vertically entering and exiting $v$, respectively, for each $k \in \llbracket 0, n \rrbracket$. In what follows, for any $i, j \in \llbracket 0, n \rrbracket$ and $\bm{X} \in \mathbb{R}^n$, we set
	\begin{flalign*}
		 \bm{X}_i^+ = \bm{X} + \bm{e}_i; \qquad \bm{X}_j^- = \bm{X} - \bm{e}_j; \qquad \bm{X}_{ij}^{+-} = \bm{X} + \bm{e}_i - \bm{e}_j.
	\end{flalign*}

	\begin{figure} 
		\begin{center}
			
			\begin{tikzpicture}[
				>=stealth,
				scale = .9
				]	
				\draw[-, black] (-7.5, 3.1) -- (7.5, 3.1);
				\draw[-, black] (-7.5, -2.1) -- (7.5, -2.1);
				\draw[-, black] (-7.5, -1.1) -- (7.5, -1.1);
				\draw[-, black] (-7.5, -.4) -- (7.5, -.4);
				\draw[-, black] (-7.5, 2.4) -- (7.5, 2.4);
				\draw[-, black] (-7.5, -2.1) -- (-7.5, 3.1);
				\draw[-, black] (7.5, -2.1) -- (7.5, 3.1);
				\draw[-, black] (-5, -2.1) -- (-5, 2.4);
				\draw[-, black] (5, -2.1) -- (5, 3.1);
				\draw[-, black] (-2.5, -2.1) -- (-2.5, 2.4);
				\draw[-, black] (2.5, -2.1) -- (2.5, 2.4);
				\draw[-, black] (0, -2.1) -- (0, 3.1);
				\draw[->, thick, blue] (-6.3, .1) -- (-6.3, 1.9); 
				\draw[->, thick, green] (-6.2, .1) -- (-6.2, 1.9); 
				\draw[->, thick, blue] (-3.8, .1) -- (-3.8, 1) -- (-2.85, 1);
				\draw[->, thick, green] (-3.7, .1) -- (-3.7, 1.9);
				\draw[->, thick, blue] (-1.35, .1) -- (-1.35, 1.9);
				\draw[->, thick, green] (-1.25, .1) -- (-1.25, 1.9);
				\draw[->, thick,  orange] (-2.15, 1.1) -- (-1.15, 1.1) -- (-1.15, 1.9);
				
				\draw[->, thick, red] (.35, 1) -- (1.15, 1) -- (1.15, 1.9);
				\draw[->, thick, blue] (1.25, .1) -- (1.25, 1.9);
				\draw[->, thick, green] (1.35, .1) -- (1.35, 1.1) -- (2.15, 1.1);
				\draw[->, thick, blue] (3.65, .1) -- (3.65, 1) -- (4.65, 1);
				\draw[->, thick, green] (3.75, .1) -- (3.75, 1.9);
				\draw[->, thick, orange] (2.85, 1.1) -- (3.85, 1.1) -- (3.85, 1.9); 
				\draw[->, thick, red] (5.35, 1) -- (7.15, 1); 
				\draw[->, thick, blue] (6.2, .1) -- (6.2, 1.9);
				\draw[->, thick, green] (6.3, .1) -- (6.3, 1.9); 
				\filldraw[fill=gray!50!white, draw=black] (-2.85, 1) circle [radius=0] node [black, right = -1, scale = .7] {$i$};
				\filldraw[fill=gray!50!white, draw=black] (2.15, 1) circle [radius=0] node [black, right = -1, scale = .7] {$j$};
				\filldraw[fill=gray!50!white, draw=black] (4.65, 1) circle [radius=0] node [black, right = -1, scale = .7] {$i$};
				\filldraw[fill=gray!50!white, draw=black] (7.15, 1) circle [radius=0] node [black, right = -1, scale = .7] {$i$};
				\filldraw[fill=gray!50!white, draw=black] (5.35, 1) circle [radius=0] node [black, left = -1, scale = .7] {$i$};
				\filldraw[fill=gray!50!white, draw=black] (2.85, 1) circle [radius=0] node [black, left = -1, scale = .7] {$j$};
				\filldraw[fill=gray!50!white, draw=black] (.35, 1) circle [radius=0] node [black, left = -1, scale = .7] {$i$};
				\filldraw[fill=gray!50!white, draw=black] (-2.15, 1) circle [radius=0] node [black, left = -1, scale = .7] {$i$};
				\filldraw[fill=gray!50!white, draw=black] (-6.25, 1.9) circle [radius=0] node [black, above = -1, scale = .7] {$\bm{A}$};
				\filldraw[fill=gray!50!white, draw=black] (-3.75, 1.9) circle [radius=0] node [black, above = -1, scale = .7] {$\bm{A}_i^-$};
				\filldraw[fill=gray!50!white, draw=black] (-1.25, 1.9) circle [radius=0] node [black, above = -1, scale = .7] {$\bm{A}_i^+$};
				\filldraw[fill=gray!50!white, draw=black] (1.25, 1.9) circle [radius=0] node [black, above = -1, scale = .65] {$\bm{A}_{ij}^{+-}$};
				\filldraw[fill=gray!50!white, draw=black] (3.75, 1.9) circle [radius=0] node [black, above = -1, scale = .65] {$\bm{A}_{ji}^{+-}$};
				\filldraw[fill=gray!50!white, draw=black] (6.25, 1.9) circle [radius=0] node [black, above = -1, scale = .7] {$\bm{A}$};
				\filldraw[fill=gray!50!white, draw=black] (-6.25, .1) circle [radius=0] node [black, below = -1, scale = .7] {$\bm{A}$};
				\filldraw[fill=gray!50!white, draw=black] (-3.75, .1) circle [radius=0] node [black, below = -1, scale = .7] {$\bm{A}$};
				\filldraw[fill=gray!50!white, draw=black] (-1.25, .1) circle [radius=0] node [black, below = -1, scale = .7] {$\bm{A}$};
				\filldraw[fill=gray!50!white, draw=black] (1.25, .1) circle [radius=0] node [black, below = -1, scale = .7] {$\bm{A}$};
				\filldraw[fill=gray!50!white, draw=black] (3.75, .1) circle [radius=0] node [black, below = -1, scale = .7] {$\bm{A}$};
				\filldraw[fill=gray!50!white, draw=black] (6.25, .1) circle [radius=0] node [black, below = -1, scale = .7] {$\bm{A}$};
				\filldraw[fill=gray!50!white, draw=black] (-3.75, 2.75) circle [radius=0] node [black] {$1 \le i \le n$};
				\filldraw[fill=gray!50!white, draw=black] (2.5, 2.75) circle [radius=0] node [black] {$1 \le i < j \le n$}; 
				\filldraw[fill=gray!50!white, draw=black] (6.25, 2.75) circle [radius=0] node [black] {};
				\filldraw[fill=gray!50!white, draw=black] (-6.25, -.75) circle [radius=0] node [black, scale = .8] {$(\bm{A}, 0; \bm{A}, 0)$};
				\filldraw[fill=gray!50!white, draw=black] (-3.75, -.75) circle [radius=0] node [black, scale = .8] {$\big( \bm{A}, 0; \bm{A}_i^-, i \big)$};
				\filldraw[fill=gray!50!white, draw=black] (-1.25, -.75) circle [radius=0] node [black, scale = .8] {$\big( \bm{A}, i; \bm{A}_i^+, 0 \big)$};
				\filldraw[fill=gray!50!white, draw=black] (1.25, -.75) circle [radius=0] node [black, scale = .8] {$\big( \bm{A}, i; \bm{A}_{ij}^{+-}, j \big)$};
				\filldraw[fill=gray!50!white, draw=black] (3.75, -.75) circle [radius=0] node [black, scale = .8] {$\big( \bm{A}, j; \bm{A}_{ji}^{+-}, i \big)$};
				\filldraw[fill=gray!50!white, draw=black] (6.25, -.75) circle [radius=0] node [black, scale = .8] {$(\bm{A}, i; \bm{A}, i)$};
				\filldraw[fill=gray!50!white, draw=black] (-6.25, -1.6) circle [radius=0] node [black, scale = .8] {$\displaystyle\frac{1 - s x q^{A_{[1, n]}}}{1 - sx}$};
				\filldraw[fill=gray!50!white, draw=black] (-3.75, -1.6) circle [radius=0] node [black, scale = .75] {$\displaystyle\frac{x (1 - q^{A_i}) q^{A_{[i+1, n]}}}{1-sx}$};
				\filldraw[fill=gray!50!white, draw=black] (-1.25, -1.6) circle [radius=0] node [black, scale = .8] {$\displaystyle\frac{1 - s^2 q^{A_{[1, n]}}}{1-sx}$};
				\filldraw[fill=gray!50!white, draw=black] (1.25, -1.6) circle [radius=0] node [black, scale = .75] {$\displaystyle\frac{x (1 - q^{A_j}) q^{A_{[j+1,n]}}}{1-sx}$};
				\filldraw[fill=gray!50!white, draw=black] (3.75, -1.6) circle [radius=0] node [black, scale = .75] {$\displaystyle\frac{s (1 - q^{A_i}) q^{A_{[i+1,n]}}}{1-sx}$};
				\filldraw[fill=gray!50!white, draw=black] (6.25, -1.6) circle [radius=0] node [black, scale = .75] {$\displaystyle\frac{(x-sq^{A_i})q^{A_{[i+1,n]}}}{1-sx}$};
			\end{tikzpicture}
		\end{center}	
		\caption{\label{lz} Depicted above are the $L_{x;s}$ weights.} 
	\end{figure}

	\begin{definition}
		
		\label{lzdefinition}
		
		Fix complex numbers $x, s \in \mathbb{C}$; define the $L_{x;s} = L_{x;s}^{(n)}$ vertex weight as follows. For any $i \in \llbracket 1, n \rrbracket$ and $\bm{A} \in \mathbb{Z}_{\ge 0}^n$, set
		\begin{flalign}
			\label{lzi}
			\begin{aligned}
				& L_{x;s} (\bm{A}, 0; \bm{A}, 0) = \displaystyle\frac{1 - sx q^{A_{[1,n]}}}{1-sx}; \qquad L_{x;s} (\bm{A}, 0; \bm{A}_i^-, i) = \displaystyle\frac{x (1 - q^{A_i}) q^{A_{[i + 1, n]}}}{1-sx}; \\ 
				& L_{x;s} (\bm{A}, i; \bm{A}_i^+, 0) = \displaystyle\frac{1-s^2 q^{A_{[1,n]}}}{1-sx}; \qquad L_{x;s} (\bm{A}, i; \bm{A}, i) = \displaystyle\frac{(x-sq^{A_i}) q^{A_{[i+1,n]}}}{1-sx}.
			\end{aligned} 
		\end{flalign} 
		
		\noindent Moreover, for any $1 \le i < j \le n$, set
		\begin{flalign}
			\label{lzij}
			L_{x;s} (\bm{A}, i; \bm{A}_{ij}^{+-}, j) = \displaystyle\frac{x (1 - q^{A_j}) q^{A_{[j + 1, n]}}}{1-sx}; \qquad L_{x;s} (\bm{A}, j; \bm{A}_{ji}^{+-}, i) = \displaystyle\frac{s(1-q^{A_i}) q^{A_{[i+1,n]}}}{1-sx}.
		\end{flalign}	
		
		\noindent We also set $L_{x;s} (\bm{A}, b; \bm{C}, d) = 0$ if $(\bm{A}, b; \bm{C}, d)$ is not of the above form (with $\bm{A}, \bm{C} \in \mathbb{Z}_{\ge 0}^n$); see \Cref{lz} for a depiction. Also define a normalization $\widehat{L}_{x;s} = \widehat{L}_{x;s}^{(n)}$ of the $L_{x;s}$ weights, by setting 
		\begin{flalign}
			\label{lzij2} 
			\widehat{L}_{x;s} (\bm{A}, b; \bm{C}, d) = \displaystyle\frac{1-sx}{x-s} \cdot L_{x;s} (\bm{A}, b; \bm{C}, d),
		\end{flalign}
		
		\noindent for any $b, d \in \llbracket 0, n \rrbracket$ and $\bm{A}, \bm{C} \in \mathbb{Z}_{\ge 0}^n$. In particular, we have $\widehat{L}_x (\bm{e}_0, i; \bm{e}_0, i) = 1$, for any $i \in \llbracket 1, n \rrbracket$. 
	\end{definition}

	The following proposition indicates that the $R$-weights and $L$-weights from \Cref{rzabcd} and \Cref{lzdefinition} satisfy the Yang--Baxter equation. It was originally due to \cite{QMGS,TSTECA,ERT} (though we adopt the notation of \cite{CSVMST}), but it can also be verified directly from the explicit forms of these weights. 
	
	\begin{lem}[{\cite[Proposition 2.3.1]{CSVMST}}]
		
		\label{rrr1} 
		
		Fix any complex numbers $s, x, y, z \in \mathbb{C}$ with $x, y \ne 0$, and indices $i_1, j_1, k_1, i_3, j_3, k_3 \in \llbracket 0, n \rrbracket$. We have  
		\begin{flalign}
			\label{rrrijk}
			\begin{aligned}
				\displaystyle\sum_{0 \le i_2, j_2, k_2 \le n} & R_{y / x} (i_1, j_1; i_2, j_2) R_{z / x} (k_1, j_2; k_2, j_3) R_{z / y} (k_2, i_2; k_3, i_3) \\
				& = \displaystyle\sum_{0 \le i_2, j_2, k_2 \le n} R_{z / y} (k_1, i_1; k_2, i_2) R_{z / x} (k_2, j_1; k_3, j_2) R_{y / x} (i_2, j_2; i_3, j_3).
			\end{aligned}
		\end{flalign}
		
		\noindent Further fixing integer sequences $\bm{K}_1, \bm{K}_3 \in \mathbb{Z}_{\ge 0}^n$, we have 
		\begin{flalign}
			\label{llrijk}
			\begin{aligned}
				\displaystyle\sum_{i_2, j_2, \bm{K}_2} & R_{y / x} (i_1, j_1; i_2, j_2) L_{x;s} (\bm{K}_1, j_2; \bm{K}_2, j_3) L_{y;s} (\bm{K}_2, i_2; \bm{K}_3, i_3) \\
				& = \displaystyle\sum_{i_2, j_2, \bm{K}_2} L_{y;s} (\bm{K}_1, i_1; \bm{K}_2, i_2) L_{x;s} (\bm{K}_2, j_1; \bm{K}_3, j_2) R_{y / x} (i_2, j_2; i_3, j_3),
			\end{aligned}
		\end{flalign}
		
		\noindent and 
		\begin{flalign}
			\label{llrijk2}
			\begin{aligned}
				\displaystyle\sum_{i_2, j_2, \bm{K}_2} & R_{y / x} (i_1, j_1; i_2, j_2) \widehat{L}_{x;s} (\bm{K}_1, j_2; \bm{K}_2, j_3) L_{y;s} (\bm{K}_2, i_2; \bm{K}_3, i_3) \\
				& = \displaystyle\sum_{i_2, j_2, \bm{K}_2} L_{y;s} (\bm{K}_1, i_1; \bm{K}_2, i_2) \widehat{L}_{x;s} (\bm{K}_2, j_1; \bm{K}_3, j_2) R_{y / x} (i_2, j_2; i_3, j_3),
			\end{aligned}
		\end{flalign}
		
		\noindent where in both equations $i_2, j_2$ are ranged over $\llbracket 0, n \rrbracket$, and $\bm{K}_2$ is ranged over $\mathbb{Z}_{\ge 0}^n$.
		
	\end{lem}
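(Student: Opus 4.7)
My plan is to reduce each of the three identities to a finite case analysis via arrow conservation, and then verify the resulting polynomial identities directly from the explicit weight formulas in \Cref{rzabcd} and \Cref{lzdefinition}. The key observation is that each of $R_z$, $L_{x;s}$, and $\widehat{L}_{x;s}$ preserves the multiset of colors incident to its vertex: $R_z(a, b; c, d)$ vanishes unless $\{a, b\} = \{c, d\}$ as multisets, while the nonzero cases of $L_{x;s}$ listed in \eqref{lzi}--\eqref{lzij} all satisfy $\bm{C} + \bm{e}_d = \bm{A} + \bm{e}_b$ (recalling that $\bm{e}_0 = \bm{0}$). Propagating these constraints across the three vertices on either side of each identity shows that, for fixed external labels, only finitely many interior label assignments produce nonzero summands.

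For \eqref{rrrijk}, arrow conservation forces $\{i_1, j_1, k_1\} = \{i_3, j_3, k_3\}$ as multisets, and it suffices to split into subcases by how many of the three input colors are distinct. In each subcase one enumerates the handful of surviving triples $(i_2, j_2, k_2)$ on either side and reduces to a single rational identity in $(x, y, z)$, which one verifies by substitution from \eqref{rzij}. This is the classical Yang--Baxter equation for the $U_q(\widehat{\mathfrak{sl}}_{n+1})$ stochastic $R$-matrix.

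For \eqref{llrijk}, I would organize the case analysis by the input data $(i_1, j_1, \bm{K}_1)$ and output data $(i_3, j_3, \bm{K}_3)$. Conservation forces $\bm{K}_1 + \bm{e}_{i_1} + \bm{e}_{j_1} = \bm{K}_3 + \bm{e}_{i_3} + \bm{e}_{j_3}$, which determines $\bm{K}_2$ once the intermediate pair $(i_2, j_2)$ is fixed; only a bounded number of such pairs contribute on either side. Each case then reduces to a rational identity in $(x, y, s, q)$ whose verification uses only the explicit forms \eqref{rzij}, \eqref{lzi}, and \eqref{lzij}. The identity \eqref{llrijk2} is not independent: by the definition \eqref{lzij2}, replacing $L_{x;s}$ by $\widehat{L}_{x;s}$ multiplies each side of \eqref{llrijk} by the identical scalar $(1-sx)(x-s)^{-1}$ depending only on $x$ and $s$, and this factor cancels.

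The main obstacle is the sheer bookkeeping required for \eqref{llrijk}: the $L_{x;s}$ weights in \eqref{lzi}--\eqref{lzij} carry factors $q^{A_{[k,n]}}$ whose exponents depend sensitively on the relative ordering of the colors $i_1, j_1$ and the entries of $\bm{K}_1$, so tracking these $q$-powers consistently across all cases is where the algebra becomes delicate. In practice the cleanest execution of the proof is to defer the verification to \cite[Proposition 2.3.1]{CSVMST}, whose argument carries out exactly this case analysis (building on the earlier computations of \cite{QMGS,TSTECA,ERT}).
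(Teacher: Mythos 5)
Your proposal is correct and essentially matches the paper's treatment, which cites \cite[Proposition 2.3.1]{CSVMST} and remarks that the identities can be verified directly from the explicit weight formulas. Your observation that \eqref{llrijk2} follows from \eqref{llrijk} by pulling the arrow-independent scalar $(1-sx)(x-s)^{-1}$ out of each side and cancelling is a correct and clean simplification that the paper leaves implicit.
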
 
	
	It will often be useful to interpret such equations diagrammatically; the diagrammatic interpretation of \eqref{rrrijk} is given by
	\begin{center}		
		\begin{tikzpicture}[
			>=stealth,
			auto,
			style={
				scale = 1
			}
			]
			
			\draw[->, thick, green] (-.87, -.5) -- (0, 0);
			\draw[->, thick, red] (-.87, .5) -- (0, 0);
			\draw[->, thick, blue] (.87, -1.5) -- (.87, -.5); 
			
			\draw[] (-.87, .5) circle[radius = 0]  node[left, scale = .95]{$x$};
			\draw[] (-.87, -.5) circle[radius = 0]  node[left, scale = .95]{$y$};
			\draw[] (.87, -1.5) circle[radius = 0]  node[below, scale = .95]{$z$};
			
			\draw[gray, dashed] (0, 0) -- (.87, -.5); 
			\draw[gray, dashed] (0, 0) -- (.87, .5); 
			\draw[gray, dashed] (.87, -.5) -- (.87, .5);
			
			\draw[->, thick, red] (.87, .5) -- (1.87, .5); 
			\draw[->, thick, blue] (.87, -.5) -- (1.87, -.5); 
			\draw[->, thick, green] (.87, .5) -- (.87, 1.5);

			\draw[] (3.87, .5) circle[radius = 0]  node[left, scale = .95]{$x$};
			\draw[] (3.87, -.5) circle[radius = 0]  node[left, scale = .95]{$y$};
			\draw[] (4.87, -1.5) circle[radius = 0]  node[below, scale = .95]{$z$};
			
			\draw[->, thick, blue] (4.87, -1.5) -- (4.87, -.5); 
			\draw[->, thick, red] (3.87, .5) -- (4.87, .5); 
			\draw[->, thick, green] (3.87, -.5) -- (4.87, -.5); 
			\draw[->, thick, green] (4.87, .5) -- (4.87, 1.5); 
			\draw[->, thick, blue] (5.74, 0) -- (6.61, -.5); 
			\draw[->, thick, red] (5.74, 0) -- (6.61, .5); 
			
			\draw[gray, dashed] (4.87, -.5) -- (4.87, .5);
			\draw[gray, dashed] (4.87, -.5) -- (5.74, 0); 
			\draw[gray, dashed] (4.87, .5) -- (5.74, 0);

			\filldraw[fill=white, draw=black] (2.75, 0) circle [radius=0] node[scale = 2]{$=$};
			
			\filldraw[fill=white, draw=black] (-.44, -.275) circle [radius=0] node[below, scale = .7]{$i_1$};
			\filldraw[fill=white, draw=black] (.44, .275) circle [radius=0] node[above, scale = .7]{$i_2$};
			\filldraw[fill=white, draw=black] (1.45, .5) circle [radius=0] node[above, scale = .7]{$i_3$};
			\filldraw[fill=white, draw=black] (-.44, .275) circle [radius=0] node[above, scale = .7]{$j_1$};
			\filldraw[fill=white, draw=black] (.44, -.275) circle [radius=0] node[below, scale = .7]{$j_2$};
			\filldraw[fill=white, draw=black] (1.45, -.5) circle [radius=0] node[above, scale = .7]{$j_3$};
			\filldraw[fill=white, draw=black] (.87, -1) circle [radius=0] node[right, scale = .7]{$k_1$};
			\filldraw[fill=white, draw=black] (.87, 0) circle [radius=0] node[right, scale = .7]{$k_2$};
			\filldraw[fill=white, draw=black] (.87, 1) circle [radius=0] node[right, scale = .7]{$k_3$};
			
			\filldraw[fill=white, draw=black] (4.32, -.5) circle [radius=0] node[above, scale =.8]{$i_1$};
			\filldraw[fill=white, draw=black] (5.35, -.225) circle [radius=0] node[below, scale = .7]{$i_2$};
			\filldraw[fill=white, draw=black] (6.05, .225) circle [radius=0] node[above, scale = .7]{$i_3$};
			\filldraw[fill=white, draw=black] (4.32, .5) circle [radius=0] node[above, scale = .7]{$j_1$};
			\filldraw[fill=white, draw=black] (5.35, .225) circle [radius=0] node[above, scale = .7]{$j_2$};
			\filldraw[fill=white, draw=black] (6.05, -.225) circle [radius=0] node[below, scale = .7]{$j_3$};
			\filldraw[fill=white, draw=black] (4.87, -1) circle [radius=0] node[left, scale = .7]{$k_1$};
			\filldraw[fill=white, draw=black] (4.87, 0) circle [radius=0] node[left, scale = .7]{$k_2$};
			\filldraw[fill=white, draw=black] (4.87, 1) circle [radius=0] node[left, scale = .7]{$k_3$};

		\end{tikzpicture}
		
	\end{center}
	
	\noindent where on each side of the equation is a family of vertices, and we view the weight of each family as the product of the weights of its constituent vertices. Along the solid edges the colors are fixed, and along the dashed ones they are summed over. The equations \eqref{llrijk} and \eqref{llrijk2} similarly have diagrammatic interpretations (which we do not depict here).

	\subsection{Height Functions and Partition Functions}
	
	\label{FunctionsZ}
	
	In this section we introduce several partition functions (that is, sums of weights of colored path ensembles), which will be of use to us. Similarly to the notion of a colored six-vertex ensemble from \Cref{ModelVertex}, a \emph{colored higher spin path ensemble} on a domain $\mathcal{D} \subseteq \mathbb{Z}^2$ is a consistent assignment of a colored higher spin arrow configuration $\big( \bm{A}(v), b(v); \bm{C}(v), d(v) \big)$ to each vertex $v \in \mathcal{D}$. 
	
	Associated with a colored higher spin path ensemble are \emph{height functions}, which count how many paths of specified colors are to the right of (equivalently, below) or to the left of (equivalently, above) a given location. More specifically, given a colored higher spin path ensemble $\mathcal{E}$ on a domain $\mathcal{D} \subseteq \mathbb{Z}^2$, for any integer $c \ge 0$, define $\mathfrak{h}_c^{\rightarrow} : \mathbb{Z}^2 \rightarrow \mathbb{Z}$ and $\mathfrak{h}_c^{\leftarrow} : \mathbb{Z}^2 \rightarrow \mathbb{Z}$ as follows. For any $(i, j) \in \mathbb{Z}^2$, set $\mathfrak{h}_c^{\rightarrow} (i, j)$ to be the number of arrows of color $c$ in $\mathcal{E}$ that intersect the vertical ray from $(i + 1 / 2, j + 1 / 2)$ to $(i + 1 / 2, -\infty)$; similarly set $\mathfrak{h}_c^{\leftarrow} (i, j)$ to be the number of arrows of color $c$ in $\mathcal{E}$ that do not intersect the vertical ray from $(i + 1 / 2, j + 1 / 2)$ and $(i + 1 / 2, -\infty)$. Further define $\mathfrak{h}_{\ge c}^{\rightarrow} : \mathbb{Z}^2 \rightarrow \mathbb{Z}_{\ge 0}$ and $\mathfrak{h}_{\ge c}^{\leftarrow} : \mathbb{Z}^2 \rightarrow \mathbb{Z}_{\ge 0}$ by setting $\mathfrak{h}_{\ge c}^{\rightarrow} (i, j) = \sum_{k=c}^{\infty} \mathfrak{h}_{\ge c}^{\rightarrow} (i, j)$ and $\mathfrak{h}_{\ge c}^{\leftarrow} (i, j) = \sum_{k=c}^{\infty} \mathfrak{h}_{\ge c}^{\leftarrow} (i, j)$, for each $(i, j) \in \mathbb{Z}^2$. Since the colored stochastic six-vertex model gives rise to a random colored six-vertex ensemble on $\mathcal{D}$, it also gives rise to a family of random height functions.

	We next introduce notation for weights of path ensembles on negative half-strips of the form $\mathbb{Z}_{\le 0} \times \llbracket 1, N \rrbracket$ (which will frequently be the domain for our models). Observe in what follows that, in the ``bulk'' $\mathbb{Z}_{< 0} \times \llbracket 1, N \rrbracket$ of the half-strip, we take the spin $s$ to be generic and use the weights $L_{x;s}$ (or $\widehat{L}_{x;s}$). However, on the $y$-axis boundary of the half-strip, we take $s = 0$ and use the weights $L_{x;0}$. This will later be relevant for producing stochastic matchings, such as \Cref{fgsv} below. 
	
	\begin{definition}
		
		\label{dwe} 
		
		Fix an integer $N \ge 1$; a complex number $s \in \mathbb{C}$; a sequence of complex numbers $\bm{x} = (x_1, x_2, \ldots , x_N)$; and a colored higher spin path ensemble $\mathcal{E}$ on $\mathcal{D}_N = \mathbb{Z}_{\le 0} \times \llbracket 1, N \rrbracket$, whose arrow configuration at any vertex $v \in \mathcal{D}_N$ is denoted by $\big( \bm{A}(v), b(v); \bm{C}(v), d(v) \big)$. Set 
		\begin{flalign}
			\label{lele} 
			\begin{aligned} 
			& L_{\bm{x}; s} (\mathcal{E}) = \displaystyle\prod_{k = 1}^{\infty} \displaystyle\prod_{j=1}^N L_{z_j; s} \big(\bm{A}(-k, j), b(-k, j); \bm{C}(-k, j), d(-k, j) \big) \\
			& \qquad \qquad \quad \times \displaystyle\prod_{j=1}^N L_{x_j; 0} \big( \bm{A} (0, j), b(0, j); \bm{C} (0, j), d(0, j) \big); \\
			& \widehat{L}_{\bm{x}; s} (\mathcal{E}) = \displaystyle\prod_{k=1}^{\infty} \displaystyle\prod_{j=1}^N \widehat{L}_{x_j; s} \big(\bm{A}(-k, j), b(-k, j); \bm{C}(-k, j), d(-k, j) \big) \\
			& \qquad \qquad \quad \times \displaystyle\prod_{j=1}^N L_{x_j; 0} \big( \bm{A} (0, j), b(0, j); \bm{C} (0, j), d(0, j) \big) .
			\end{aligned} 
		\end{flalign}
	
		\noindent The above notation implicitly assumes that, in each infinite product, all but finitely many factors are equal to $1$; this will always be the case below.
		
	\end{definition}

	We next have the following definition for certain types of compositions; below, we recall from \Cref{AB} the notation $X_{[i,j]} = \sum_{k=i}^j X_k$ for any $\bm{X} \in \mathbb{R}^n$.

	\begin{definition}
		\label{compositionj} 
		
		Let $N \ge 0$ be an integer and $\bm{\ell} = (\ell_1, \ell_2, \ldots , \ell_n) \in \mathbb{Z}_{\ge 0}^n$ be a composition of $N$. A composition $\mu = (\mu_1, \mu_2, \ldots , \mu_N)$ is called \emph{\bm{$\ell$}-colored} if $\mu_i \ge \mu_j$ whenever $\ell_{[1,c-1]} + 1 \le i \le j \le \ell_{[1,c]}$, for each $c \in \llbracket 1, n \rrbracket$; we then denote the signature $\mu^{(c)} = \big(\mu_{\ell_{[1,c-1]}+1}, \mu_{\ell_{[1,c-1]} + 2}, \ldots , \mu_{\ell_{[1,c]}} \big)$. 
		
		If $\mu$ is $\bm{\ell}$-colored for some $\bm{\ell} \in \mathbb{Z}_{\ge 0}^n$, then we call $\mu$ an \emph{$n$-composition}, and we write $\mu = \big( \mu^{(1)} \mid \mu^{(2)} \mid \cdots \mid \mu^{(n)} \big)$. Let $\Comp_n$ denote the set of $n$-compositions, and let $\Comp_n (N) \subseteq \Comp_n$ denote the set of $n$-compositions of length $N$. For any $n$-composition $\mu \in \Comp_n$, and integers $k \ge 0$ and $c \in \llbracket 1, n \rrbracket$, define the (sums of) multiplicities $\mathfrak{m}_k^{\ge c} (\mu) = \sum_{i=c}^n \mathfrak{m}_k \big( \mu^{(i)} \big)$ and $\mathfrak{m}_{\le k}^{\ge c} (\mu) = \sum_{i=c}^n \mathfrak{m}_{\le k} \big( \mu^{(i)} \big)$. 
			
	\end{definition}
	
	\begin{rem} 
		
	\label{mumuistate} 
	
	Any $n$-composition $\mu \in \Comp_n (N)$ indexes a family of $N$ colored arrows vertically exiting the row $\mathbb{Z}_{\le 0}$, in which $\mathfrak{m}_k \big( \mu^{(c)} \big)$ arrows of color $c$ exit through site $-k$, for all integers $c \in \llbracket 1, n \rrbracket$ and $k \ge 0$.
	
	\end{rem}

	\subsection{Nonsymmetric Functions}
	
	\label{FunctionfG}
	
	In this section we define a family of nonsymmetric functions $f$, and of symmetric ones $G$, as partition functions for the vertex model with weights given by \Cref{lzdefinition}. They are similar to those from \cite[Definition 3.5.1]{CSVMST} and \cite[Definition 4.4.1]{CSVMST}, respectively.

	\begin{definition}
		\label{fg} 
		
		Fix an integer $N \ge 0$; two $n$-compositions $\mu = \big( \mu^{(1)} \mid \mu^{(2)} \mid \cdots \mid \mu^{(n)} \big)$ and $\nu = \big( \nu^{(1)} \mid \nu^{(2)} \mid \cdots \mid \mu^{(n)} \big)$; and a function  $\sigma : \llbracket 1, N \rrbracket \rightarrow \llbracket 1, n \rrbracket$. 
	
		If $\ell (\mu) = \ell (\nu) + N$, then let $\mathfrak{P}_f (\mu/\nu; \sigma)$ denote the set of colored higher spin path ensembles on $\mathcal{D}_N = \mathbb{Z}_{\le 0} \times \llbracket 1, N \rrbracket$ with the following boundary data.  
		
		\begin{enumerate}
			\item For each $j \in \llbracket 1, N \rrbracket$, an arrow of color $\sigma (j)$ horizontally enters $\mathcal{D}_N$ through\footnote{This means that, for sufficiently large $i$, each edge between $(-i-1, j)$ and $(-i, j)$ contains an arrow of color $\sigma(j)$.} $(-\infty, j)$. 
			\item For each $k \ge 0$ and $c \in \llbracket 1, n \rrbracket$, $\mathfrak{m}_k \big( \nu^{(c)} \big)$ arrows of color $c$ vertically enter $\mathcal{D}_N$ through $(-k, 1)$. 
			\item For each $k \ge 0$ and $c \in \llbracket 1, n \rrbracket$, $\mathfrak{m}_k \big(\mu^{(c)} \big)$ arrows of color $c$ vertically exit $\mathcal{D}_N$ through $(-k, N)$.  
		\end{enumerate}
		
		\noindent See the left side of \Cref{fgfunctions} for a depiction when $\mu = (7, 5 \mid 5, 4, 1 \mid 3, 2, 2)$; $\nu = (\emptyset \mid 6 \mid 6, 5)$; and $\big( \sigma (1), \sigma(2), \sigma(3), \sigma(4), \sigma(5) \big) = (1, 3, 2, 1, 2)$. There, red, green, and blue are colors $1$, $2$, and $3$, respectively.
		
		Similarly, if $\ell (\mu) = \ell (\nu)$, then let $\mathfrak{P}_G (\mu / \nu)$ denote the set of colored higher spin path ensembles on $\mathcal{D}_N = \mathbb{Z}_{\le 0} \times \llbracket 1, N \rrbracket$, with the following boundary data. 
		
		\begin{enumerate} 
			\item For each integer $j \in \llbracket 1, N \rrbracket$, no arrow horizontally enters or exits $\mathcal{D}_N$ through the $j$-th row.
			\item For each $k \ge 0$ and $c \in \llbracket 1, n \rrbracket$, $\mathfrak{m}_k \big( \mu^{(c)} \big)$ arrows of color $c$ vertically enter $\mathcal{D}_N$ through $(-k, 1)$.
			\item For each $k \ge 0$ and $c \in \llbracket 1, n \rrbracket$, $\mathfrak{m}_k \big( \nu^{(c)} \big)$ arrows of color $c$ vertically exit $\mathcal{D}_N$ through $(-k, N)$.
		\end{enumerate} 
		
		\noindent  See the right side of \Cref{fgfunctions} for a depiction when $\mu = (7, 5 \mid 7, 6 \mid 6, 4)$ and $\nu = (5, 2 \mid 5, 1 \mid 3, 2)$.  
		
		For any complex number $s \in \mathbb{C}$ and sequence of complex numbers $\bm{x} = (x_1, x_2, \ldots , x_N)$, let
		\begin{flalign}
			\label{fgmunu} 
			f_{\mu / \nu; s}^{\sigma} (\bm{x}) = \displaystyle\sum_{\mathfrak{P}_f (\mu/ \nu; \sigma)} \widehat{L}_{\bm{x}; s}(\mathcal{E}); \qquad G_{\mu/\nu; s} (\bm{x}) = \displaystyle\sum_{\mathfrak{P}_G (\mu/\nu)} L_{\bm{x}; s} (\mathcal{E}).
		\end{flalign}
		
		\noindent If $\nu = \emptyset$ is empty, we write $f_{\mu; s}^{\sigma} (\bm{x}) = f_{\mu/\emptyset; s}^{\sigma} (\bm{x})$ and $G_{\mu; s} (\bm{x}) = G_{\mu/0^N; s} (\bm{x})$. If $N = 1$, we may write $f_{\mu/\nu; s}^{\sigma(1)}$ in place of $f_{\mu/\nu; s}^{\sigma}$. 
	\end{definition}
	
	Observe that the quantity $\widehat{L}_{\bm{x}; s} (\mathcal{E})$ appearing as the summand in \eqref{fgmunu} defining $f_{\mu/\nu; s}^{\sigma}$ is bounded, since all but finitely many of the vertices in any ensemble $\mathcal{E} \in \mathfrak{P}_f (\mu/\nu; \sigma)$ have arrow configurations of the form $(\bm{e}_0, i; \bm{e}_0, i)$ for some integer $i \in \llbracket 1, n \rrbracket$, and we have $\widehat{L}_{x_j} (\bm{e}_0, i; \bm{e}_0, i) = 1$ by \eqref{lzij2} and \eqref{lzi}. Similarly, $L_{\bm{x}; s} (\mathcal{E})$ appearing as the summand in \eqref{fgmunu} defining $G_{\mu/\nu; s}$ is bounded, since all but finitely many vertices in any $\mathcal{E} \in \mathfrak{P}_G (\mu/\nu)$ have arrow configurations of the form $(\bm{e}_0, 0; \bm{e}_0, 0)$, and we have $L_x (\bm{e}_0, 0; \bm{e}_0, 0) = 1$ by \eqref{lzi}.

	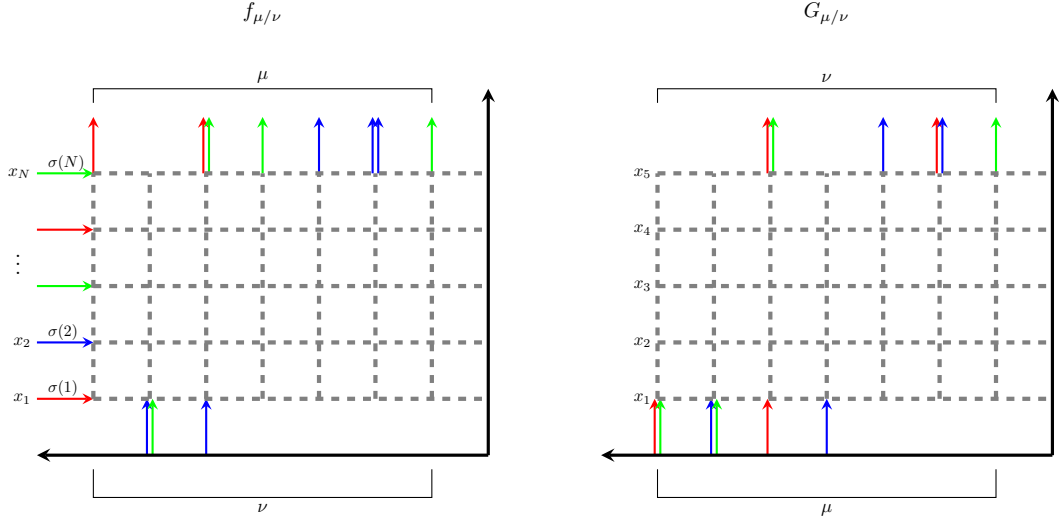
\begin{figure}
		\begin{center}
			\begin{tikzpicture}[
				>=stealth, 
				scale = .75]{				
					\draw[->, thick, red] (0, 1) node[above = 4, right = 2, black,  scale = .6]{$\sigma(1)$} -- (1, 1);
					\draw[->, thick, blue] (0, 2) node[above = 4, right = 2, black,  scale = .6]{$\sigma(2)$} -- (1, 2);
					\draw[->, thick, green] (0, 3) -- (1, 3);
					\draw[->, thick, red] (0, 4) -- (1, 4);
					\draw[->, thick, green] (0, 5) node[above = 4, right = 2, black,  scale = .6]{$\sigma(N)$} -- (1, 5);
					
					\draw[->, thick, blue] (1.95, 0) -- (1.95, 1);
					\draw[->, thick, green] (2.05, 0) -- (2.05, 1);
					\draw[->, thick, blue] (3, 0) -- (3, 1);
					
					\draw[->, thick, red] (1, 5) -- (1, 6);
					\draw[->, thick, red] (2.95, 5) -- (2.95, 6); 
					\draw[->, thick, green] (4, 5) -- (4, 6);
					\draw[->, thick, blue] (5.95, 5) -- (5.95,6);
					\draw[->, thick, blue] (5, 5) -- (5, 6);
					\draw[->, thick, blue] (6.05, 5) -- (6.05, 6);
					\draw[->, thick, green] (3.05, 5) -- (3.05, 6); 
					\draw[->, thick, green] (7, 5) -- (7, 6);
					
					\draw[ultra thick, gray, dashed] (1, 1) -- (1, 5);
					\draw[ultra thick, gray, dashed] (2, 1) -- (2, 5);
					\draw[ultra thick, gray, dashed] (3, 1) -- (3, 5);
					\draw[ultra thick, gray, dashed] (4, 1) -- (4, 5);
					\draw[ultra thick, gray, dashed] (5, 1) -- (5, 5);
					\draw[ultra thick, gray, dashed] (6, 1) -- (6, 5);
					\draw[ultra thick, gray, dashed] (7, 1) -- (7, 5);
					
					\draw[ultra thick, gray, dashed] (1, 1) -- (8, 1);
					\draw[ultra thick, gray, dashed] (1, 2) -- (8, 2);
					\draw[ultra thick, gray, dashed] (1, 3) -- (8, 3);
					\draw[ultra thick, gray, dashed] (1, 4) -- (8, 4);
					\draw[ultra thick, gray, dashed] (1, 5) -- (8, 5);
					
					\draw[->, very thick] (8, 0) -- (8, 6.5);
					\draw[->, very thick] (8, 0) -- (0, 0);
					
					\draw[]  (0, 1) circle [radius = 0] node[left, scale = .65]{$x_1$};
					\draw[]  (0, 2) circle [radius = 0] node[left, scale = .65]{$x_2$};
					\draw[]  (-.15, 3.5) circle [radius = 0] node[left, scale = .8]{$\vdots$};
					\draw[]  (0, 5) circle [radius = 0] node[left, scale = .65]{$x_N$};
					
					\draw[-] (1, -.25) -- (1, -.75) -- (7, -.75) -- (7, -.25);
					\draw[-] (1, 6.25) -- (1, 6.5) -- (7, 6.5) -- (7, 6.25);
					
					\draw[] (4, -.75) circle[radius = 0] node[below, scale = .7]{$\nu$};
					\draw[] (4, 6.5) circle[radius = 0] node[above, scale = .7]{$\mu$};
					\draw[] (4, 7.5) circle[radius = 0] node[above, scale = .8]{$f_{\mu / \nu}$};
					
					\draw[->, thick, red] (10.95, 0) -- (10.95, 1);
					\draw[->, thick, red] (12.95, 0) -- (12.95, 1);
					\draw[->, thick, blue] (11.95, 0) -- (11.95, 1);
					\draw[->, thick, blue] (14, 0) -- (14, 1);
					\draw[->, thick, green] (11.05, 0) -- (11.05, 1); 
					\draw[->, thick, green] (12.05, 0) -- (12.05, 1);
					
					\draw[->, thick, red] (12.95, 5) -- (12.95, 6); 
					\draw[->, thick, red] (15.95, 5) -- (15.95,6);
					\draw[->, thick, blue] (15, 5) -- (15, 6);
					\draw[->, thick, blue] (16.05, 5) -- (16.05, 6);
					\draw[->, thick, green] (13.05, 5) -- (13.05, 6); 
					\draw[->, thick, green] (17, 5) -- (17, 6);
					
					\draw[ultra thick, gray, dashed] (11, 1) -- (11, 5);
					\draw[ultra thick, gray, dashed] (12, 1) -- (12, 5);
					\draw[ultra thick, gray, dashed] (13, 1) -- (13, 5);
					\draw[ultra thick, gray, dashed] (14, 1) -- (14, 5);
					\draw[ultra thick, gray, dashed] (15, 1) -- (15, 5);
					\draw[ultra thick, gray, dashed] (16, 1) -- (16, 5);
					\draw[ultra thick, gray, dashed] (17, 1) -- (17, 5);
					
					\draw[ultra thick, gray, dashed] (11, 1) -- (18, 1);
					\draw[ultra thick, gray, dashed] (11, 2) -- (18, 2);
					\draw[ultra thick, gray, dashed] (11, 3) -- (18, 3);
					\draw[ultra thick, gray, dashed] (11, 4) -- (18, 4);
					\draw[ultra thick, gray, dashed] (11, 5) -- (18, 5);
					
					\draw[->, very thick] (18, 0) -- (18, 6.5);
					\draw[->, very thick] (18, 0) -- (10, 0);
					
					\draw[]  (11, 1) circle [radius = 0] node[left, scale = .65]{$x_1$};
					\draw[]  (11, 2) circle [radius = 0] node[left, scale = .65]{$x_2$};
					\draw[]  (11, 3) circle [radius = 0] node[left, scale = .65]{$x_3$};
					\draw[]  (11, 4) circle [radius = 0] node[left, scale = .65]{$x_4$};
					\draw[]  (11, 5) circle [radius = 0] node[left, scale = .65]{$x_5$};

					\draw[-] (11, -.25) -- (11, -.75) -- (17, -.75) -- (17, -.25);
					\draw[-] (11, 6.25) -- (11, 6.5) -- (17, 6.5) -- (17, 6.25);
					
					\draw[] (14, -.75) circle[radius = 0] node[below, scale = .7]{$\mu$};
					\draw[] (14, 6.5) circle[radius = 0] node[above, scale = .7]{$\nu$};
					\draw[] (14, 7.5) circle[radius = 0] node[above, scale = .8]{$G_{\mu / \nu}$};
				}
			\end{tikzpicture}
		\end{center}		
		\caption{\label{fgfunctions} Depicted to the left and right are vertex models for $f_{\mu / \nu; s}^{\sigma}$ and $G_{\mu / \nu; s}$, respectively.} 
		
	\end{figure}

	\subsection{Properties of $f$ and $G$} 
	
	\label{IdentitiesfG}
	
	In this section we provide properties (that are minor variants of those in \cite{CSVMST}) of the $f$ and $G$ functions from \Cref{fg}. The first is the symmetry of $G$ in its arguments; we omit its proof, which follows quickly from the Yang--Baxter equation \eqref{llrijk} (see also \cite[Definition 4.4.1]{CSVMST} or \cite[Proposition 4.7]{HSVMSRF}).
	
	\begin{lem} 
		
	\label{gsymmetric} 
	
	Adopt the notation of \Cref{fg}, and let $\varsigma : \llbracket 1, M \rrbracket \rightarrow \llbracket 1, M \rrbracket$ denote a permutation. We have $G_{\mu/\nu; s} (\bm{y}) = G_{\mu/\nu; s} \big( \varsigma(\bm{y}) \big)$, where $\varsigma(\bm{y}) = (y_{\varsigma(1)}, y_{\varsigma(2)}, \ldots , y_{\varsigma(M)} \big)$. 
	
	\end{lem}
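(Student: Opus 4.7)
The plan is to reduce the general claim to invariance under an adjacent transposition $\varsigma = \tau_k$ exchanging $y_k \leftrightarrow y_{k+1}$, since adjacent transpositions generate the symmetric group $S_M$. For such a transposition, I would run the classical Yang--Baxter ``train'' argument: attach an auxiliary $R_{y_{k+1}/y_k}$-vertex to the lattice model defining $G_{\mu/\nu;s}$ between the two rows with rapidities $y_k$ and $y_{k+1}$, and then commute it across the strip using the $RLL$ identity \eqref{llrijk}.

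More concretely, by the boundary data defining $G_{\mu/\nu;s}$ in \Cref{fg}, no arrow horizontally enters or exits any row of $\mathcal{D}_N$. Hence placing an $R_{y_{k+1}/y_k}$-vertex just to the right of column $0$ joining rows $k$ and $k+1$ has both incoming horizontal arrows of color $0$, so by the first entry of \eqref{rzij} it contributes the factor $R_{y_{k+1}/y_k}(0,0;0,0) = 1$ to every summand in \eqref{fgmunu}. Moreover, the observation following \eqref{fgmunu} guarantees that every $\mathcal{E} \in \mathfrak{P}_G(\mu/\nu)$ is frozen, in the sense that all of its vertices carry the configuration $(\bm{e}_0, 0; \bm{e}_0, 0)$, for column indices $\le -i$ once $i$ is large enough; thus the same auxiliary $R$-vertex placed at such a column also has weight $1$.

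Now iteratively apply \eqref{llrijk} with $x = y_k$ and $y = y_{k+1}$ to commute the auxiliary $R$-vertex one column at a time from the far right of the strip to a frozen column on the far left. Each such application swaps the $R$-vertex past an adjacent pair of $L$-vertices, one in each of rows $k$ and $k+1$, and exchanges their spectral variables, so after the $R$-vertex has traversed the active portion of the strip the two rows are weighted by $L_{y_{k+1};s}$ and $L_{y_k;s}$ respectively. Summing the resulting equality of products of vertex weights over all configurations allowed by the boundary data $(\mu/\nu)$ yields
\[
G_{\mu/\nu;s}(y_1,\ldots,y_k,y_{k+1},\ldots,y_M) = G_{\mu/\nu;s}(y_1,\ldots,y_{k+1},y_k,\ldots,y_M),
\]
which is the desired adjacent-swap invariance.

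The only potential obstacle is the semi-infinite nature of $\mathcal{D}_N$, but this causes no difficulty: since each ensemble in $\mathfrak{P}_G(\mu/\nu)$ is frozen outside of a finite range of columns, the train of Yang--Baxter swaps terminates after finitely many steps and all partition sums involved are finite. This is essentially the same symmetry argument used in \cite[Proposition 4.7]{HSVMSRF} cited by the authors.
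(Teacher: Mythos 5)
Your argument is correct and is precisely the one the paper intends: the paper omits the proof, noting that it follows quickly from the Yang--Baxter equation \eqref{llrijk} and pointing to the same external reference \cite[Proposition 4.7]{HSVMSRF}. The reduction to adjacent transpositions, the weight-one observation $R_z(0,0;0,0)=1$ at both the right boundary and in the frozen far-left region, and the remark that the train terminates after finitely many applications (since the active region of any $\mathcal{E} \in \mathfrak{P}_G(\mu/\nu)$ is bounded) are all exactly what is needed. One small bookkeeping point: the right-hand side of \eqref{llrijk}, with $R_{y/x}$ to the right of the column, has $L_{y;s}$ in the \emph{lower} row (it receives the vertical input $\bm{K}_1$) and $L_{x;s}$ in the upper one; so to make the picture with the $R$-vertex at the far right match the model for $G_{\mu/\nu;s}(\bm{y})$ — which has $L_{y_k;s}$ in row $k$ below $L_{y_{k+1};s}$ in row $k+1$ — you should take $y = y_k$, $x = y_{k+1}$, i.e.\ attach $R_{y_k/y_{k+1}}$ rather than $R_{y_{k+1}/y_k}$. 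This is cosmetic, since $R_z(0,0;0,0)=1$ for any $z$ and the final identity is unaffected, but the sides of \eqref{llrijk} need to be matched to the correct orientation when filling in the details.
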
    
	
	The second is a \emph{branching identity}; we omit its proof, which is very similar to that of \cite[Proposition 4.2.1]{CSVMST} (and quickly follows from ``cutting'' the vertex models shown in \Cref{fgfunctions} at the line $\{ y = k \}$).
	
	\begin{lem}
		
		\label{ffgg} 
		
		Adopt the notation of \Cref{fg}; let $\ell = \ell (\nu)$; and fix $k \in \llbracket 1, N \rrbracket$. We have  
		\begin{flalign*}
			& f_{\mu / \nu; s}^{\sigma} (\bm{x}) = \displaystyle\sum_{\kappa \in \Comp_n (\ell + k)} f_{\kappa / \nu; s}^{\sigma |_{\llbracket 1,k \rrbracket}} \big(\bm{x}_{[1, k]} \big) f_{\mu / \kappa; s}^{\sigma |_{\llbracket k+1,N \rrbracket}} \big( \bm{x}_{[k+1, N]} \big); \\ 
			& G_{\mu / \nu; s} (\bm{x}) = \displaystyle\sum_{\kappa \in \Comp_n (\ell)} G_{\mu / \kappa; s} \big( \bm{x}_{[1, k]} \big) G_{\kappa / \nu; s} \big( \bm{x}_{[k+1, N]} \big).
		\end{flalign*}
		
		\noindent Here, we have defined the variable sets $\bm{x}_{[1, k]} = (x_1, x_2, \ldots , x_k)$ and $\bm{x}_{[k+1,N]} = (x_{k+1}, x_{k+2}, \ldots , x_N)$. For any interval $I = \big\llbracket i_0 + 1, i_0 + |I| \big\rrbracket \subset \llbracket 1, N \rrbracket$, we have also defined the function $\sigma |_I : \big\llbracket 1, |I| \big\rrbracket \rightarrow \llbracket 1, n \rrbracket$ by setting $\sigma |_I (i) = \sigma (i + i_0)$ for each $i \in \big\llbracket 1, |I| \big\rrbracket$. 
		
	\end{lem}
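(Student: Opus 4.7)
The plan is to establish both identities by cutting the half-strip $\mathcal{D}_N = \mathbb{Z}_{\le 0}\times \llbracket 1,N\rrbracket$ along the horizontal line $y=k+\tfrac12$, writing every admissible path ensemble as a concatenation of ensembles on the two sub-strips $\mathcal{D}_N^- = \mathbb{Z}_{\le 0}\times\llbracket 1,k\rrbracket$ and $\mathcal{D}_N^+ = \mathbb{Z}_{\le 0}\times\llbracket k+1,N\rrbracket$. Fix an ensemble $\mathcal{E}\in\mathfrak{P}_f(\mu/\nu;\sigma)$. Its restrictions $\mathcal{E}^-$ and $\mathcal{E}^+$ to these two sub-strips are each colored higher spin path ensembles, and consistency across the cut is entirely encoded by recording, for each column index $-j\le 0$ and each color $c\in\llbracket 1,n\rrbracket$, the number $\mathfrak{m}_j\bigl(\kappa^{(c)}\bigr)$ of arrows of color $c$ crossing the vertical edge between $(-j,k)$ and $(-j,k+1)$. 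This data defines an $n$-composition $\kappa=\bigl(\kappa^{(1)}\mid\cdots\mid\kappa^{(n)}\bigr)$, and conversely any such $\kappa$ together with any pair $\mathcal{E}^-\in\mathfrak{P}_f(\kappa/\nu;\sigma|_{\llbracket 1,k\rrbracket})$ and $\mathcal{E}^+\in\mathfrak{P}_f(\mu/\kappa;\sigma|_{\llbracket k+1,N\rrbracket})$ glues uniquely back to an element of $\mathfrak{P}_f(\mu/\nu;\sigma)$.

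Next I would verify the length constraint on $\kappa$. In $\mathcal{D}_N^-$, arrow conservation gives that the total number of arrows exiting through the top of $\mathcal{D}_N^-$ equals the total entering from the bottom plus the total entering horizontally from the left, namely $\ell(\nu)+k=\ell+k$. Thus $\kappa\in\Comp_n(\ell+k)$, matching the index set of the claimed sum. For the $G$ identity there are no horizontal entries, so the analogous count gives $\ell(\kappa)=\ell(\mu)=\ell(\nu)=\ell$, hence $\kappa\in\Comp_n(\ell)$.

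With the bijection in hand, the identity reduces to multiplicativity of the weight. By \eqref{lele}, both $\widehat{L}_{\bm{x};s}(\mathcal{E})$ and $L_{\bm{x};s}(\mathcal{E})$ are products over vertices of $\mathcal{D}_N$, so they split as
\begin{equation*}
\widehat{L}_{\bm{x};s}(\mathcal{E}) \;=\; \widehat{L}_{\bm{x}_{[1,k]};s}(\mathcal{E}^-)\,\widehat{L}_{\bm{x}_{[k+1,N]};s}(\mathcal{E}^+),
\end{equation*}
and similarly for $L_{\bm{x};s}$. Crucially, the boundary factors $L_{x_j;0}$ on the column $\{0\}\times\llbracket 1,N\rrbracket$ partition cleanly between the two halves according to whether $j\le k$ or $j>k$, so the spectral parameters $\bm{x}_{[1,k]}$ and $\bm{x}_{[k+1,N]}$ appear in the correct factor with the correct boundary convention. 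Summing over the bijection then yields
\begin{equation*}
f_{\mu/\nu;s}^{\sigma}(\bm{x}) \;=\; \sum_{\kappa\in\Comp_n(\ell+k)} \Bigl(\sum_{\mathcal{E}^-}\widehat{L}_{\bm{x}_{[1,k]};s}(\mathcal{E}^-)\Bigr)\Bigl(\sum_{\mathcal{E}^+}\widehat{L}_{\bm{x}_{[k+1,N]};s}(\mathcal{E}^+)\Bigr),
\end{equation*}
which is the claimed branching identity; the identical argument, without the horizontal bookkeeping, gives the $G$ statement.

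The proof is essentially bookkeeping rather than substantive, so there is no real obstacle; the only point that warrants care is the length count producing $\ell+k$ (not $\ell$) for the $f$ case, which comes from the $k$ horizontal arrows entering $\mathcal{D}_N^-$ from rows $1,\ldots,k$ that are absent from the $G$ setup.
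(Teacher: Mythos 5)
Your proof is correct and takes essentially the same route the paper indicates: the paper omits its own proof but states that it ``quickly follows from `cutting' the vertex models shown in \Cref{fgfunctions} at the line $\{y = k\}$,'' which is precisely the decomposition you carry out. You have additionally supplied the details the paper leaves implicit — the bijection via the $n$-composition $\kappa$ recording the crossing data, the length count $\ell(\kappa)=\ell+k$ (respectively $\ell$) by arrow conservation, and the clean factorization of the vertex-weight product including the $L_{x_j;0}$ boundary column — all of which are the right points to check.
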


	The third is a \emph{Cauchy identity}. Its proof is similar to \cite[Proposition 4.5.1]{CSVMST}, following as a consequence of the Yang--Baxter equation \eqref{llrijk2}, though we include it here (since some results below, such as \Cref{fgsv}, will amount to mild modifications of it).

	\begin{lem}
		
		\label{fg2} 
		
		Fix integers $n, M, N \ge 1$; sequences of complex numbers $\bm{x} = (x_1, x_2, \ldots , x_N)$ and $\bm{y} = (y_1, y_2, \ldots , y_M)$; and a function $\sigma : \llbracket 1, N \rrbracket \rightarrow \llbracket 1, n \rrbracket$. If
		\begin{flalign}
			\label{sxsy} 
			\displaystyle\max_{\substack{1 \le i \le M \\ 1 \le j \le N}} \bigg| \displaystyle\frac{1 - sx_j}{x_j - s} \bigg| \cdot \bigg| \displaystyle\frac{y_i - s}{1 - sy_i} \bigg| < 1,
		\end{flalign}
	
		\noindent then  
		\begin{flalign*}
			\displaystyle\sum_{\mu \in \Comp_n (N)} f_{\mu; s}^{\sigma} (\bm{x}) G_{\mu; s} (\bm{y}) = \displaystyle\prod_{i=1}^M \displaystyle\prod_{j=1}^N \displaystyle\frac{x_j - q y_i}{x_j - y_i}.
		\end{flalign*}
	
	\end{lem}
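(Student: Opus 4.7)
My approach adapts the standard Yang--Baxter ``train argument'' used in the proof of \cite[Proposition 4.5.1]{CSVMST}, based on the Yang--Baxter equation \eqref{llrijk2}.

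First, I realize $\sum_{\mu \in \Comp_n(N)} f_{\mu; s}^{\sigma}(\bm{x}) \, G_{\mu; s}(\bm{y})$ as the partition function $Z$ of a single combined vertex model on the half-strip $\mathbb{Z}_{\le 0} \times \llbracket 1, M + N \rrbracket$, obtained by stacking the $f$-model (rows $1, \ldots, N$, with weights $\widehat{L}_{x_j; s}$ in the bulk and $L_{x_j; 0}$ at column $0$, and with horizontal arrows of colors $\sigma(1), \ldots, \sigma(N)$ entering from $-\infty$) below the $G$-model (rows $N + 1, \ldots, M + N$, with weights $L_{y_i; s}$ in the bulk and $L_{y_i; 0}$ at column $0$, and with no horizontal arrows entering or exiting). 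The vertical boundary data is empty at the bottom of the strip, and equals $0^N$ (all $N$ outgoing arrows concentrated at column $0$) at the top. Summing over the interfacial composition $\mu \in \Comp_n(N)$ at the boundary between rows $N$ and $N + 1$ yields the LHS of the desired identity, by the same stacking mechanism already implicit in \Cref{ffgg}.

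Second, I would attach $NM$ auxiliary $R$-vertices to the strip, one with spectral parameter $y_i / x_j$ for each pair $(j, i) \in \llbracket 1, N \rrbracket \times \llbracket 1, M \rrbracket$, arranged so as to ``braid'' the $N$ $x$-lines with the $M$ $y$-lines. Initially, I place them on the right of the strip with all horizontal legs forced to color $0$; this is consistent with the right-boundary conditions, since the $L_{x_j; 0}$ and $L_{y_i; 0}$ weights at column $0$ together with the $0^N$ top-exit prevent any horizontal flux from leaking out of the strip to the right. Each $R$-vertex then sits in its trivial configuration with weight $R_z(0, 0; 0, 0) = 1$, so the augmented partition function still equals $Z$. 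I would then repeatedly apply \eqref{llrijk2} to slide each $R$-vertex from the right side of the strip to the left, one column of paired $(\widehat{L}_{x_j; s}, L_{y_i; s})$ vertices at a time, preserving the partition function throughout.

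Once all $R$-vertices are on the left, the colored horizontal inputs at $-\infty$ (namely, $\sigma(j)$ on each $x_j$-line and $0$ on each $y_i$-line) pass through the $R$-array before entering the strip, and their interaction is governed by the $R$-weights from \Cref{rzabcd}. An explicit evaluation of the partition function of this left-side $R$-array, together with the residual ``trivial'' configurations in the strip (using $\widehat{L}_x(\bm{e}_0, i; \bm{e}_0, i) = 1$ as noted after \Cref{lzdefinition}), yields the Cauchy product $\prod_{i, j} (x_j - q y_i) / (x_j - y_i)$, which by Yang--Baxter invariance must equal the original partition function $Z$.

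The main technical obstacle is the rigorous justification of the sliding argument in the infinite-column setting $\mathbb{Z}_{\le 0}$. Each slide of an $R$-vertex across one column perturbs typical contributions by a factor of order $\big|(1 - s x_j)(y_i - s) / \big[(x_j - s)(1 - s y_i)\big]\big|$, arising from the discrepancy between the $\widehat{L}_{x_j; s}$ and $L_{y_i; s}$ normalizations (see \eqref{lzij2}). The hypothesis \eqref{sxsy} is precisely that this ratio is uniformly bounded by $1$ in modulus, ensuring absolute convergence of all relevant infinite sums and legitimizing the formal Yang--Baxter manipulations. Verifying absolute convergence of all sums involved, including the sum over $\mu$ on the LHS and the sums over intermediate $R$-configurations produced by the sliding, is expected to be the most delicate step of the proof.
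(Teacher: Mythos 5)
Your overall strategy is right (use the Yang--Baxter equation to compare the $fG$ partition function with a frozen configuration), and your reading of condition \eqref{sxsy} as a ratio of trivial $\widehat{L}$ and $L$ weights ensuring absolute convergence is exactly correct. However, the specific execution has a genuine gap coming from the orientation of the YBE \eqref{llrijk2}, and it would lead you to the reciprocal of the correct answer.

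\begin{itemize}
\item[] \textbf{Orientation of the cross.} The right-hand side of \eqref{llrijk2} (cross to the right of the column) has $L_{y;s}$ receiving $\bm{K}_1$, i.e.\ the $L_{y;s}$-row is \emph{below} the $\widehat{L}_{x;s}$-row, whereas the left-hand side (cross to the left) has $\widehat{L}_{x;s}$ below. Your starting configuration is the $fG$ strip, which has the $\widehat{L}_{x_j;s}$-rows $1,\ldots,N$ at the bottom, and you then attach the cross on the right. This is the LHS row ordering paired with the RHS cross position, so \eqref{llrijk2} simply does not apply to the column at which you want to begin sliding. The paper avoids this by starting not with the $fG$ strip but with the \emph{opposite} strip ordering ($L_{y_i;s}$-rows $1,\ldots,M$ at the bottom, $\widehat{L}_{x_j;s}$-rows $M+1,\ldots,M+N$ at the top; this is Figure~\ref{z1xy}), which does match the YBE's RHS and, crucially, is \emph{frozen} with partition function $1$ rather than $\sum f_\mu G_\mu$. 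The $fG$ strip appears only \emph{after} the slide, on the left-hand side of the YBE, as part of Figure~\ref{z2xy}.

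\item[] \textbf{The cross weight in the end state.} Even granting the slide, the weight of the left-side $R$-array would be $\prod_{i,j}\frac{x_j-y_i}{x_j-qy_i}$, not $\prod_{i,j}\frac{x_j-qy_i}{x_j-y_i}$ as you assert. Each $R$-vertex in the frozen cross has the color-$\sigma(j)$ arrow on the $x_j$-line and color $0$ on the $y_i$-line, with no swap, so its arrow configuration is of type $(0,\sigma(j);0,\sigma(j))$, i.e.\ $R_z(i,j;i,j)$ with $i<j$, which by \eqref{rzij} equals $\frac{1-z}{1-qz}$ at $z = y_i/x_j$, namely $\frac{x_j-y_i}{x_j-qy_i}$. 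Combined with the frozen strip (weight $1$), your chain of equalities would therefore conclude $Z = \prod_{i,j}\frac{x_j-y_i}{x_j-qy_i}$, the reciprocal of the desired identity. The paper lands on the right answer precisely because the partition-function-$1$ configuration and the $fG$ strip sit on \emph{opposite} sides of the YBE move, so that $1 = \bigl(\prod\frac{x_j-y_i}{x_j-qy_i}\bigr)\cdot\sum f_\mu G_\mu$; the Cauchy product arises by inverting the cross weight, not by reading it off directly.
\end{itemize}

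The underlying obstruction to your plan is that the trivial cross can only be placed on the right (where the $L_{x_j;0}$, $L_{y_i;0}$ column forces all horizontal legs to color $0$), but the $fG$ strip's row ordering is the one compatible with a cross on the \emph{left}, where the cross necessarily intercepts the colored arrows and is therefore not trivial. These two requirements cannot be met simultaneously when starting from $\sum f_\mu G_\mu$; one must instead start from the opposite-ordering, frozen configuration, as the paper does.
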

	
	\begin{proof} 
		
	For each integer $i \in \llbracket 1, n \rrbracket$, let $\ell_i = \# \big\{ \sigma^{-1} (i) \big\}$ denote the number of preimages of $i$ under $\sigma$, and set $\bm{e}_{\bm{\ell}} = (\ell_1, \ell_2, \ldots , \ell_n)$. We begin by considering the partition function $\mathcal{Z}$ for the vertex model shown in \Cref{z1xy}.  
		
		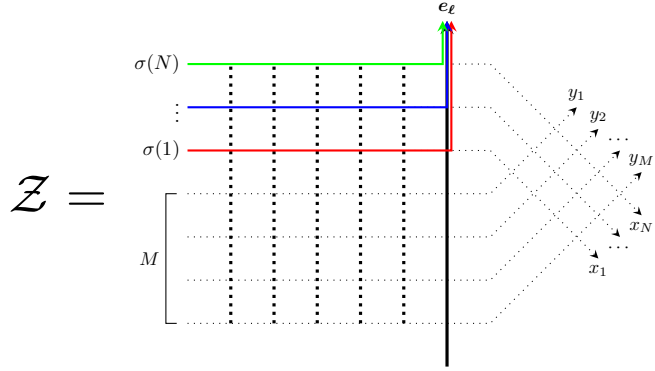
\begin{figure}
			\begin{center}
				\begin{tikzpicture}[
					>=stealth, 
					scale = .575]{
						\draw[] circle [radius = 0] (-3, 3) node[scale = 2]{$\mathcal{Z} =$};
						
						\draw[->, dotted] (0, 0) -- (7, 0) -- (10.5, 3.5) node[above, scale = .7]{$y_M$};
						\draw[->, dotted] (0, 1) -- (7, 1) -- (10, 4) node[above, scale = .7]{$\cdots$};
						\draw[->, dotted] (0, 2) -- (7, 2) -- (9.5, 4.5) node[above, scale = .7]{$y_2$};
						\draw[->, dotted] (0, 3) -- (7, 3) -- (9, 5) node[above, scale = .7]{$y_1$};
						
						\draw[dotted, very thick] (5, 0) -- (5, 6);
						\draw[dotted, very thick] (4, 0) -- (4, 6);
						\draw[dotted, very thick] (3, 0) -- (3, 6);
						\draw[dotted, very thick] (2, 0) -- (2, 6);
						\draw[dotted, very thick] (1, 0) -- (1, 6);
						
						\draw[->, very thick] (6, -1) -- (6, 7);
						
						\draw[->, dotted] (5.1, 4) -- (7, 4) -- (9.5, 1.5) node[below, scale = .7]{$x_1$};
						\draw[->, dotted] (5.1, 5) -- (7, 5) -- (10, 2) node[below, scale = .7]{$\cdots$};
						\draw[->, dotted] (5.1, 6) -- (7, 6) -- (10.5, 2.5) node[below, scale = .7]{$x_N$};
						
						\draw[red, thick, ->] (0, 4) node[left, black, scale = .75]{$\sigma(1)$} -- (6.1, 4) -- (6.1, 7);
						\draw[blue, thick, ->] (0, 5) node[left, black, scale = .75]{$\vdots$} -- (6, 5) -- (6, 7) node[above, black, scale = .7]{$\bm{e}_{\bm{\ell}}$}; 
						\draw[green, thick, ->] (0, 6) node[left, black, scale = .75]{$\sigma(N)$} -- (5.9, 6) -- (5.9, 7);

						\draw[-] (-.25, 0) -- (-.5, 0) -- (-.5, 3) -- (-.25, 3);
						\draw[] (-.5, 1.5) circle [radius = 0] node[left, scale = .7]{$M$};
					}
						\end{tikzpicture}
					\end{center} 
				\caption{\label{z1xy} Shown above is a vertex model used in the proof of \Cref{fg2}.}
				
				\end{figure}
				
				This model consists of three regions that we denote by $\mathcal{R}_1$, $\mathcal{R}_2$, and $\mathcal{R}_3$. The first region $\mathcal{R}_1 = \mathbb{Z}_{\le 0} \times \llbracket 1, M \rrbracket$ constitutes the bottom $M$ rows (weakly) to the left of the $y$-axis. The second region $\mathcal{R}_2 = \mathbb{Z}_{\le 0} \times \llbracket M+1, M+N \rrbracket$ constitutes the remaining $N$ rows (weakly) to the left of the $y$-axis. The third region $\mathcal{R}_3$ is the $M\times N$ ``cross'' to the right of the $y$-axis. Different vertex weights (recall \Cref{rzabcd} and \Cref{lzdefinition}) are used in these regions. In $\mathcal{R}_1$, for each $i \in \llbracket 1, M \rrbracket$, we use the weight $L_{y_i; s}$ at $(-k, i)$ if $k \ge 1$ and $L_{y_i; 0}$ at $(0, i)$; in $\mathcal{R}_2$, for each $j \in \llbracket 1, N \rrbracket$, we use the weight $\widehat{L}_{x_j; s}$ at $(-k, M+j)$ if $k \ge 1$ and $L_{x_j; 0}$ at $(0, M+j)$; and in $\mathcal{R}_3$ we use the weight $R_{y_i/x_j}$ at the intersection of $i$-th column (from the left) and $j$-th row (from the bottom) of the cross.

				The boundary conditions for the model in \Cref{z1xy} are prescribed as follows. The entrance data is defined by having no arrows vertically enter any column of the model; having no arrow horizontally enter through the bottom $M$ rows of the model; and having an arrow of color $\sigma(j)$ enter through the $(M+j)$-th row of the model, for each $j \in \llbracket 1, N \rrbracket$. The exit data is defined by having $\ell_i$ arrows of color $i$ exit the $y$-axis, for each index $i \in \llbracket 1, n \rrbracket$; having no arrows exit through any other column to the left of the $y$-axis; and having no arrows exit the cross to the right of the $y$-axis.
				
				Observe that this vertex model is \emph{frozen}, that is, there is at most one colored higher spin path ensemble with this boundary data with nonzero weight. It is the one in which, for each $j \in \llbracket 1, N \rrbracket$, the path of color $\sigma (j)$ in the $(M+j)$-th row travels horizontally until it reaches the $y$-axis, and then proceeds vertically until it exits $y$-axis. Recalling from \eqref{rzij}, \eqref{lzi}, and \eqref{lzij2} that 
				\begin{flalign*}
					& L_{y_i; s} (\bm{e}_0, 0; \bm{e}_0, 0) = 1; \qquad \qquad \qquad \qquad \qquad \qquad \widehat{L}_{x_j; s} \big( \bm{e}_0, \sigma(j); \bm{e}_0, \sigma(j) \big) = 1; \\ 
					& L_{x_j; 0} \bigg( \displaystyle\sum_{k=1}^{j-1} \bm{e}_k, \sigma(j); \displaystyle\sum_{k=1}^j \bm{e}_k, 0 \bigg) = 1; \qquad \qquad \qquad R_{y_i/x_j} (0, 0; 0, 0) = 1,
				\end{flalign*} 
			
				\noindent it follows that the partition function for the vertex model from \Cref{z1xy} is given by
				\begin{flalign}
					\label{z1}
					\mathcal{Z} = 1.
				\end{flalign} 
				
				Next, by $MN$ sequences of applications of the Yang--Baxter equation \eqref{llrijk2}, the partition function $\mathcal{Z}$ of this vertex model is unchanged if the cross originally in region $\mathcal{R}_3$ is moved to the left of $\mathcal{R}_1 \cup \mathcal{R}_2 = \mathbb{Z}_{\le 0} \times \llbracket 1, M+N \rrbracket$. In particular, $\mathcal{Z}$ is also the partition function of the vertex model in \Cref{z2xy}.
			
				\begin{figure}
						\begin{center}
							\begin{tikzpicture}[
								>=stealth, 
								scale = .575]{
						
						\draw[] circle [radius = 0] (.5, 3.5) node[scale=2]{$\mathcal{Z} = $};
						\draw[dotted] (4.5, 1.5) -- (6.5, 3.5) -- (7, 3.5);
						\draw[dotted] (12, 3.5) -- (13.5, 3.5) node[right, scale = .7]{$y_M$};
						\draw[dotted] (4, 2) -- (6.5, 4.5) -- (7, 4.5);
						\draw[dotted] (12, 4.5) -- (13.5, 4.5) node[right, scale = .7]{$\vdots$};
						\draw[dotted] (3.5, 2.5) -- (6.5, 5.5) -- (7, 5.5);
						\draw[dotted] (12, 5.5) -- (13.5, 5.5) node[right, scale = .7]{$y_2$};
						\draw[dotted] (3, 3) -- (6.5, 6.5) -- (7, 6.5);
						\draw[dotted] (12, 6.5) -- (13.5, 6.5) node[right, scale = .7]{$y_1$};

						\draw[->, red, thick] (3, 4) node[left, black, scale = .6]{$\sigma(1)$} -- (6, .5) -- (7, .5);
						\draw[->, blue, thick] (3.5, 4.5) node[left, black, scale = .6]{$\cdots$} -- (6, 1.5) -- (7, 1.5);
						\draw[->, green, thick] (4, 5) node[left, black, scale = .6]{$\sigma(N)$} -- (6, 2.5) -- (7, 2.5);
						
						\draw[dotted] (12, .5) -- (13.5, .5) node[right, scale = .7]{$x_1$};
						\draw[dotted] (12, 1.5) -- (13.5, 1.5) node[right, scale = .7]{$\vdots$};
						\draw[dotted] (12, 2.5) -- (13.5, 2.5) node[right, scale = .7]{$x_N$};
						
						\draw[->, very thick] (13, 0) -- (13, 7.5);
						
						\draw[-] (4.25, 1.25) -- (4, 1) -- (2.5, 2.5) -- (2.75, 2.75);
						\draw[] (3.25, 1.75) circle [radius = 0] node[below = 3, left, scale = .7]{$M$};
						
						\draw[red, thick, ->] (13.1, 6.5) -- (13.1, 7.5);
						\draw[blue, thick, ->] (13, 6.5) -- (13, 7.5) node[above, scale = .7, black]{$\bm{e}_{\bm{\ell}}$};
						\draw[green, thick, ->]	(12.9, 6.5) -- (12.9, 7.5);
						
						\draw[gray, ultra thick, dashed] (7, .5) -- (7, 6.5);
						\draw[gray, ultra thick, dashed] (8, .5) -- (8, 6.5);
						\draw[gray, ultra thick, dashed] (9, .5) -- (9, 6.5);
						\draw[gray, ultra thick, dashed] (10, .5) -- (10, 6.5);
						\draw[gray, ultra thick, dashed] (11, .5) -- (11, 6.5);
						\draw[gray, ultra thick, dashed] (12, .5) -- (12, 6.5);
						
						\draw[gray, thick, dashed] (7, .5) -- (13, .5);
						\draw[gray, thick, dashed] (7, 1.5) -- (13, 1.5);
						\draw[gray, thick, dashed] (7, 2.5) -- (13, 2.5);
						\draw[gray, thick, dashed] (7, 3.5) -- (13, 3.5);
						\draw[gray, thick, dashed] (7, 4.5) -- (13, 4.5);
						\draw[gray, thick, dashed] (7, 5.5) -- (13, 5.5);
						\draw[gray, thick, dashed] (7, 6.5) -- (13, 6.5);
						
					}
				\end{tikzpicture}
			\end{center}	
		
			\caption{\label{z2xy} Shown above is the vertex model from \Cref{z1xy} after using the Yang--Baxter equation to move the cross to the left of $\mathbb{Z}_{\le 0} \times \llbracket 1, M+N \rrbracket$.} 
			\end{figure} 
			
			This model also consists of three regions $\mathcal{R}_1'$, $\mathcal{R}_2'$, and $\mathcal{R}_3'$. The third $\mathcal{R}_3'$ is an $M \times N$ cross, that is now to the left of $\mathbb{Z}_{\le 0} \times \llbracket 1, M+N \rrbracket$. The second $\mathcal{R}_2' = \mathbb{Z}_{\le 0} \times \llbracket 1, N \rrbracket$ consists of the bottom $N$ rows to the right of the  cross. The first $\mathcal{R}_1'$ consists of the remaining $M$ rows to the right of the cross. Again different vertex weights are used in these regions. In $\mathcal{R}_1'$, for each $i \in \llbracket 1, M \rrbracket$, we use the weight $L_{y_i; s}$ at $(-k, i+N)$ for $k \ge 1$ and $L_{y_i; 0}$ at $(0, i+N)$. In $\mathcal{R}_2'$, for $j \in \llbracket 1, N \rrbracket$, we use the weight $\widehat{L}_{x_j; s}$ at $(-k, j)$ if $k \ge 1$ and $L_{x_j; 0}$ at $(0, j)$. In $\mathcal{R}_3'$, we use the weight $R_{y_i / x_j}$ at the intersection of the $i$-th column and $j$-th row of the cross.		
		
			The boundary data for the model in \Cref{z2xy} is prescribed as follows (it must match that of \Cref{z1xy}). The entrance data is defined by having no arrows vertically enter any column in the model, either in or to the right of the cross, and having an arrow of color $\sigma(j)$ enter through the $j$-th row (from the bottom) of the cross, for each $j \in \llbracket 1, N \rrbracket$. The exit data is defined by having no arrows horizontally exit through any row of the model; having $\ell_i$ arrows of color $i$ exit through the $y$-axis, for each $i \in \llbracket 1, n \rrbracket$; and having no arrows exit through any other column to the left of the $y$-axis.
			
			Let us now analyze this vertex model. Using the fact from \eqref{sxsy} (and \eqref{lzi} and \eqref{lzij2}) that, for any $k \in \llbracket 1, n \rrbracket$,
			\begin{flalign*}
				\displaystyle\max_{\substack{1 \le i \le M \\ 1 \le j \le N}} \bigg| \displaystyle\frac{\widehat{L}_{x_j; s} \big( \bm{e}_0, 0; \bm{e}_0, 0 \big) \cdot L_{y_i; s} (\bm{e}_0, k; \bm{e}_0, k)}{\widehat{L}_{x_j; s} (\bm{e}_0, k; \bm{e}_0, k) \cdot L_{y_i; s} (\bm{e}_0, 0; \bm{e}_0, 0)} \bigg| = \displaystyle\max_{\substack{1 \le i \le M \\ 1 \le j \le N}}\bigg| \displaystyle\frac{1 - sx_j}{x_j - s} \cdot \displaystyle\frac{y_i - s}{1 - sy_i} \bigg| < 1,
			\end{flalign*} 
		
			\noindent it is quickly verified (see the proof of \cite[Theorem 3.2.3]{CSVMST}) that a colored higher spin path ensemble on $\mathbb{Z}_{\le 0} \times \llbracket 1, M+N \rrbracket$ has nonzero weight only if all but finitely many vertices in $\mathcal{R}_1'$ have arrow configurations of the form $(\bm{e}_0, 0; \bm{e}_0, 0)$, and all but finitely many vertices in $\mathcal{R}_2'$ have arrow configurations of the form $(\bm{e}_0, k; \bm{e}_0, k)$ for some $k \in \llbracket 1, n \rrbracket$ (which may depend on the vertex). This means that an arrow must horizontally enter $\mathcal{R}_2'$ through $(-\infty, j)$ for each $j \in \llbracket 1, N \rrbracket$, and no arrow can horizontally enter $\mathcal{R}_1'$. Since each edge of the cross $\mathcal{R}_3'$ can accommodate at most one arrow, it follows that this cross is frozen; the vertex in its $i$-th column and $j$-th row must have arrow configuration $\big( 0, \sigma (j); 0, \sigma(j) \big)$. The weight of $\mathcal{R}_3'$ is therefore
			\begin{flalign}
				\label{productr3} 
				\displaystyle\prod_{i=1}^M \displaystyle\prod_{j=1}^N R_{y_i / x_j} \big( 0, \sigma(j); 0, \sigma(j) \big) = \displaystyle\prod_{i=1}^M \displaystyle\prod_{j=1}^N \displaystyle\frac{x_j - y_i}{x_j - qy_i},
			\end{flalign}
		
			\noindent where in the last equality we used \eqref{rzij}.
			
			The colored higher spin path ensembles in $\mathcal{R}_1'$ and $\mathcal{R}_2'$ can be arbitrary elements of $\mathfrak{P}_G (\mu/0^N)$ and $\mathfrak{P}_f (\mu / \emptyset; \sigma)$ for any $\mu \in \Comp_n (N)$ that is shared between $\mathcal{R}_1'$ and $\mathcal{R}_2'$ (this $n$-composition $\mu$ prescribes the $x$-coordinates where paths in the ensemble vertically exit $\mathcal{R}_2'$ and enter $\mathcal{R}_1'$). Hence, the weight of $\mathcal{R}_1' \cup \mathcal{R}_2'$ is 
			\begin{flalign*}
				\displaystyle\sum_{\mu \in \Comp_n (N)} f_{\mu;s}^{\sigma} (\bm{x}) G_{\mu;s} (\overleftarrow{\bm{y}}),
			\end{flalign*}
			
			\noindent Together with the weight \eqref{productr3} of $\mathcal{R}_3'$ (and the symmetry of $G$ from \Cref{gsymmetric}), it follows that the weight of the vertex model in \Cref{z2xy} is 
			\begin{flalign}
				\label{zsum2} 
				\mathcal{Z} = \displaystyle\prod_{i=1}^M \displaystyle\prod_{j=1}^N \displaystyle\frac{x_j - y_i}{x_j - qy_i} \cdot \displaystyle\sum_{\mu \in \Comp_n (N)} f_{\mu; s}^{\sigma} (\bm{x}) G_{\mu; s} (\bm{y}). 
			\end{flalign}
		
			\noindent The lemma then follows from \eqref{z1} and \eqref{zsum2}. 
	\end{proof}

	\section{Probability Measures and Matchings} 
	
	\label{Line} 
	
	In this section we use the functions $f$ and $G$ from \Cref{fg} to produce probability measures on sequences of compositions, and explain how such measures are related to the stochastic six-vertex model. The former is done in \Cref{AscendingfG}, and the latter is done in \Cref{EqualityCD} and \Cref{ProofE}. Throughout this section, we fix integers $n, M, N \ge 1$; a composition $\bm{\ell} = (\ell_1, \ell_2, \ldots , \ell_n)$ of $N$; a function $\sigma : \llbracket 1, N \rrbracket \rightarrow \llbracket 1, n \rrbracket$, such that for each $i \in \llbracket 1, n \rrbracket$ we have $\ell_i = \# \big\{ \sigma^{-1} (i) \big\}$; a complex number $s \in \mathbb{C}$; and sequences of complex numbers $\bm{x} = (x_1, x_2, \ldots , x_N)$ and $\bm{y} = (y_1, y_2, \ldots , y_M)$, such that \eqref{sxsy} holds.

	\subsection{Ascending $fG$ Measures} 
	
	\label{AscendingfG} 
	
	In this section we introduce probability measures that arise from the branching and Cauchy identities (\Cref{ffgg} and \Cref{fg2}), which are similar to those appearing in \cite[Equation (10.3.1)]{CSVMST}. We begin with the following definition for certain families of compositions.

	\begin{definition} 
		
		\label{mnmu}

		 A sequence $\bm{\mu} = \big( \mu (0), \mu (1), \ldots , \mu (M+N) \big)$ of $n$-compositions is called \emph{$(M; \sigma)$-ascending} if the following hold, using the notation $\mu (i) = \big( \mu^{(1)} (i) \mid \cdots \mid \mu^{(n)} (i) \big)$ below.
	
		\begin{enumerate}
			\item We have $\mu(0) = (\emptyset \mid \cdots \mid \emptyset)$ and $\mu(M+N) = (0^{\ell_1} \mid \cdots \mid 0^{\ell_n})$. 
			\item 
			\begin{enumerate} 
				\item For all $j \in \llbracket 0, N \rrbracket$ and $c \in \llbracket 1, n \rrbracket$, we have $\ell \big( \mu^{(c)} (j) \big) = \sum_{k=1}^j \mathbbm{1}_{\sigma(k) = c}$. Thus, $\ell \big( \mu(j) \big) = j$. 
			\item For all $i \in \llbracket N, M+N \rrbracket$ and $c \in \llbracket 1, n \rrbracket$, we have $\ell \big( \mu^{(c)} (i) \big) = \ell_c$. Thus, $\ell \big( \mu(i) \big) = N$.
			\end{enumerate} 
			\item For any $i \in \llbracket 1, M +N \rrbracket$ and $k \in \mathbb{Z}_{\ge 0}$, there is at most one index $\mathfrak{q} = \mathfrak{q}_{\bm{\mu}} (k, i) \in \llbracket 1, n \rrbracket$ so that 
			\begin{flalign}
				\label{moq} 
				\mathfrak{m}_{\le k-1} \big( \mu^{(\mathfrak{q})} (i) \big) = \mathfrak{m}_{\le k-1} \big( \mu^{(\mathfrak{q})} (i-1) \big) + 1. 
			\end{flalign}
		
			\noindent We set $\mathfrak{q}_{\bm{\mu}} (k, i) = 0$ if no index in $\mathfrak{q} \in \llbracket 1, n \rrbracket$ satisfying \eqref{moq} exists. Moreover, for all $c \in \llbracket 1, n \rrbracket$ with $c \ne \mathfrak{q}_{\bm{\mu}} (k, i)$, we have $\mathfrak{m}_{\le k-1} \big( \mu^{(c)} (i) \big) = \mathfrak{m}_{\le k-1} \big( \mu^{(c)} (i-1) \big)$. 
		\end{enumerate}
		
		\noindent Let us also define the $(M+N)$-tuple $\bm{\mathfrak{q}} (\bm{\mu}) = \big( \mathfrak{q}_{\bm{\mu}} (1, 1), \mathfrak{q}_{\bm{\mu}} (1, 2), \ldots , \mathfrak{q}_{\bm{\mu}} (1, M+N) \big)$. 
	\end{definition} 
	
	\begin{rem} 
		
		\label{mnmurow2}
	
		Given an $(M; \sigma)$-ascending sequence of compositions $\bm{\mu}$ as in \Cref{mnmu}, we will often view the $n$-composition $\mu(i)$ as indexing the positions (in the sense of \Cref{mumuistate}) of the colored arrows exiting the row $\{ y = i \}$, in a vertex model on $\mathbb{Z}_{\le 0} \times \llbracket 1, M + N \rrbracket$ (of the form arising in the dashed part of \Cref{z2xy}; see also \Cref{00lmu}). This gives rise to a colored higher spin path ensemble on $\mathbb{Z}_{\le 0} \times \llbracket 1, M + N \rrbracket$, that we will denote by $\mathcal{E}_{\bm{\mu}}$. In this way, $\mathfrak{q}_{\bm{\mu}} (k, i)$ denotes the color of the arrow in $\mathcal{E}_{\bm{\mu}}$ along the edge connecting $(-k, i)$ to $(1-k, i)$. Therefore, the $(M+N)$-tuple $\bm{\mathfrak{q}} (\bm{\mu})$ records the colors of the arrows (from bottom to top) along the horizontal edges in $\mathcal{E}_{\bm{\mu}}$ joining the $(-1)$-st column to the $0$-th one. 
		
		The boundary data for this ensemble is described as follows. For each $j \in \llbracket 1, N \rrbracket$, it has an arrow of color $\sigma(j)$ horizontally entering the row $\{ y = j \}$, and it has no other arrows horizontally entering or exiting any other row of the model. For each $c \in \llbracket 1, n \rrbracket$, it has $\ell_c$ arrows of color $c$ vertically exiting the $y$-axis $\{ x = 0 \}$, and it has no other arrows horizontally entering or exiting any other column of the model. We denote by $\mathfrak{P}_{\fG} (M; \sigma)$ the set of colored higher spin path ensembles on $\mathbb{Z}_{\le 0} \times \llbracket 1, M + N \rrbracket$ with these boundary conditions, as any $\mathcal{E}_{\bm{\mu}} \in \mathfrak{P}_{\fG} (M; \sigma)$ can be thought of an ensemble from $\mathfrak{P}_G (\mu / \emptyset)$ that is juxtaposed above one from $\mathfrak{P}_f (\mu / \emptyset; \sigma)$ (recall \Cref{fg}), for some $n$-composition $\mu \in \Comp_n (N)$. It is quickly verified that the above procedure is a bijection between $\mathfrak{P}_{\fG} (M; \sigma)$ and $(M; \sigma)$-ascending sequences $\bm{\mu}$ of $n$-compositions. 
		
		See \Cref{00lmu} for a depiction, where there $(n, M, N) = (2, 3, 4)$ and 
		\begin{flalign*} 
			&  \sigma (1) = 1, \quad \sigma(2) = 2, \quad \sigma(3) = 2, \quad \sigma (4) = 1; \qquad \qquad \mu (1) = (2 \mid \emptyset), \quad \mu (2) = (0 \mid 3), \\
			&  \quad \mu (3) = (0 \mid 3, 1), \quad \mu (4) = (3, 0 \mid 3, 0), \quad \mu (5) = (2, 0 \mid 3, 0), \quad \mu (6) = (0, 0 \mid 2, 0).
		\end{flalign*} 
	
	\end{rem}

	\begin{figure} 
		
		\begin{center}
			\begin{tikzpicture}[
				>=stealth, 
				scale = .65]{

					\draw[thick] (23, -.25) -- (23, 7.25);
					
					\draw[black, thick, dotted] (15, .5) -- (15, 6.5);
					\draw[black, thick, dotted] (16, .5) -- (16, 6.5);
					\draw[black, thick, dotted] (17, .5)  -- (17, 6.5);
					\draw[black, thick, dotted] (18, .5) node[below = 3, scale = .65]{$\cdots$} -- (18, 6.5);
					\draw[black, thick, dotted] (19, .5) node[below, scale = .65]{$-4$}  -- (19, 6.5);
					\draw[black, thick, dotted] (20, .5) node[below, scale = .65]{$-3$} -- (20, 6.5);
					\draw[black, thick, dotted] (21, .5) node[below, scale = .65]{$-2$} -- (21, 6.5);
					\draw[black, thick, dotted] (22, .5) node[below, scale = .65]{$-1$} -- (22, 6.5);
					
					\draw[black, thick, dotted] (15, .5) -- (22, .5);
					\draw[black, thick, dotted] (15, 1.5) -- (22, 1.5);
					\draw[black, thick, dotted] (15, 2.5) -- (22, 2.5);
					\draw[black, thick, dotted] (15, 3.5) -- (22, 3.5);
					\draw[black, thick, dotted] (15, 4.5) -- (22, 4.5);
					\draw[black, thick, dotted] (15, 5.5) -- (22, 5.5);
					\draw[black, thick, dotted] (15, 6.5) -- (22, 6.5);
					
					\draw[thick, dotted] (22, 2.5) -- (23, 2.5);
					\draw[thick, dotted] (22, 4.5) -- (23, 4.5);
					\draw[thick, dotted] (22, .5) -- (23, .5);
					\draw[thick, dotted] (22, 1.5) -- (23, 1.5);
					
					\draw[->, red, thick] (14, .5) node[left, black, scale = .75]{$\sigma(1)$} -- (15, .5) -- (21, .5) -- (21, 1.5) -- (22.85, 1.5) -- (22.85, 7.5);
					\draw[->, blue, thick] (14, 1.5) node[left, black, scale = .75]{$\sigma(2)$} -- (15, 1.5) -- (20, 1.5) -- (20, 2.5) -- (22, 2.5) -- (22, 3.5) -- (23.05, 3.5) -- (23.05, 7.5);
					\draw[->, blue, thick] (14, 2.5) node[left, black, scale = .75]{$\cdots$} -- (15, 2.5) -- (20.05, 2.5) -- (20.05, 5.5) -- (21, 5.5) -- (21, 6.5) -- (22, 6.5) -- (23.15, 6.5) -- (23.15, 7.5);
					\draw[->, red, thick] (14, 3.5) node[left, black, scale = .75]{$\sigma(N)$} -- (15, 3.5) -- (19.95, 3.5) -- (19.95, 4.5) -- (21, 4.5) -- (21, 5.5) -- (22.95, 5.5) -- (22.95, 7.5);
				}
			\end{tikzpicture}
		\end{center}
		
		\caption{\label{00lmu} Depicted above is the colored higher spin path ensemble associated with the sequence $\bm{\mu}$ in the example at the end of \Cref{mnmurow2}. Here, red and blue are colors $1$ and $2$, respectively.} 
		
	\end{figure}

	Next we define the following probability measure on sequences of ascending compositions.

	\begin{definition}
		
		\label{measurefg}

		Define the probability measure $\mathbb{P}_{\fG}^{\sigma} = \mathbb{P}_{\fG; n; s; \bm{x}; \bm{y}}^{\sigma}$ on $(M; \sigma)$-ascending sequences of $n$-compositions, by setting
		\begin{flalign}
			\label{fgprobabilitymu} 
			\mathbb{P}_{\fG}^{\sigma} [\bm{\mu}] = \mathcal{Z}_{\bm{x}; \bm{y}}^{-1} \cdot \displaystyle\prod_{j=1}^N f_{\mu (j) / \mu (j-1); s}^{\sigma (j)} (x_j) \displaystyle\prod_{i=N+1}^{M+N} G_{\mu (i-1) / \mu (i); s} (y_{i-N}),
		\end{flalign}
		
		\noindent for each $(M; \sigma)$-ascending sequence $\bm{\mu} = \big( \mu (0), \mu (1), \ldots , \mu (M+N) \big)$, where 
		\begin{flalign}
			\label{zxys}
			\mathcal{Z}_{\bm{x}; \bm{y}} =  \displaystyle\prod_{i=1}^M \displaystyle\prod_{j=1}^N \displaystyle\frac{x_j - q y_i}{x_j -  y_i}.
		\end{flalign}
	
		\noindent Here, we implicitly assume that $s$, $\bm{x}$, and $\bm{y}$ are such that the left side of \eqref{fgprobabilitymu} is nonnegative. The fact that these probabilities sum to one follows from the following lemma. 
		
	\end{definition}

	\begin{lem}
		
		\label{sumfg1}
		
		Under the notation and assumptions of \Cref{measurefg}, we have 
		\begin{flalign*}
			\displaystyle\sum_{\bm{\mu}} \displaystyle\prod_{j=1}^N f_{\mu(j) / \mu(j-1); s}^{\sigma (j)} (x_j) \cdot \displaystyle\prod_{i=N+1}^{M+N} G_{\mu(i-1) / \mu(i); s} (y_{i-N}) = \mathcal{Z}_{\bm{x}; \bm{y}},
		\end{flalign*} 
		
		\noindent where the sum on the left side is over all $(M; \sigma)$-ascending sequences of $n$-compositions $\bm{\mu} = \big( \mu  (0), \mu(1), \ldots , \mu (M+N) \big)$.
	\end{lem}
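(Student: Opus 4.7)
The plan is to telescope the products using the branching identity (\Cref{ffgg}) separately for the $f$-part and the $G$-part of the sum, and then to apply the Cauchy identity (\Cref{fg2}) to collapse the resulting one-parameter sum into $\mathcal{Z}_{\bm{x};\bm{y}}$.

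First, I would fix $\mu \in \Comp_n(N)$ and apply the branching identity for $f$ iteratively at the cuts $k = 1, 2, \ldots, N-1$, with $\mu(0) = \emptyset$ and $\mu(N) = \mu$. This gives
\begin{equation*}
\sum_{\mu(1), \ldots, \mu(N-1)} \prod_{j=1}^N f_{\mu(j)/\mu(j-1); s}^{\sigma(j)}(x_j) = f_{\mu; s}^\sigma(\bm{x}).
\end{equation*}
In parallel, iterating the branching identity for $G$ at the cuts $i = N+1, \ldots, M+N-1$, with $\mu(N) = \mu$ fixed and $\mu(M+N) = (0^{\ell_1} \mid \cdots \mid 0^{\ell_n})$, yields
\begin{equation*}
\sum_{\mu(N+1), \ldots, \mu(M+N-1)} \prod_{i=N+1}^{M+N} G_{\mu(i-1)/\mu(i); s}(y_{i-N}) = G_{\mu/0^N; s}(\bm{y}) = G_{\mu; s}(\bm{y}),
\end{equation*}
using the convention for the empty (zero) index from \Cref{fg}. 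Multiplying these and summing over $\mu \in \Comp_n(N)$ collapses the left-hand side of the lemma to $\sum_{\mu} f_{\mu;s}^\sigma(\bm{x}) \, G_{\mu;s}(\bm{y})$, which by \Cref{fg2} equals $\prod_{i,j} \frac{x_j - qy_i}{x_j - y_i} = \mathcal{Z}_{\bm{x};\bm{y}}$.

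The main point requiring care is to verify that the sum over $(M;\sigma)$-ascending sequences on the left-hand side coincides with the index set produced by the iterated branching sums, since a priori the latter ranges over arbitrary $n$-compositions of the appropriate total size while the former imposes the length-per-color constraints (2a)--(2b) of \Cref{mnmu} together with property (3) governing $\mathfrak{q}_{\bm{\mu}}$. The key observation is that both sets of constraints are automatically enforced by the nonvanishing of the relevant vertex factors: the factor $f_{\kappa/\nu;s}^\tau$ vanishes unless $\ell(\kappa^{(c)}) = \ell(\nu^{(c)}) + \# \tau^{-1}(c)$ for each color $c$, and in any colored higher spin path ensemble every horizontal edge carries at most one arrow, which is precisely property (3). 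Equivalently, as noted in \Cref{mnmurow2}, $(M;\sigma)$-ascending sequences are in bijection with the ensembles in $\mathfrak{P}_{\fG}(M;\sigma)$, so no term is double-counted or missed and the telescoping proceeds without further complication.
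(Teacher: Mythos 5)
Your proposal is correct and follows essentially the same route as the paper's proof: telescope via the branching identity (Lemma~\ref{ffgg}) to reduce to $\sum_\mu f_{\mu;s}^\sigma(\bm{x}) G_{\mu;s}(\bm{y})$, then invoke the Cauchy identity (Lemma~\ref{fg2}). The extra paragraph you add verifying that the $(M;\sigma)$-ascending constraints of \Cref{mnmu} are automatically enforced by the vanishing of the $f$-weights (via the path-ensemble interpretation) is a correct and useful elaboration of a point the paper dispatches briefly by restricting the branching sums to those satisfying "the constraints of \Cref{mnmu}."
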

	
	\begin{proof}
		
		By the branching identity \Cref{ffgg}, we have for any $\mu^{(N)} \in \Comp_n (N)$ that
		\begin{flalign*}
			\displaystyle\sum_{\bm{\mu}^{[0,N-1]}} \displaystyle\prod_{j=1}^N f_{\mu (j) / \mu (j-1); s}^{\sigma (j)} (x_j) = f_{\mu (N); s}^{\sigma} (\bm{x}); \quad \displaystyle\sum_{\bm{\mu}^{[N+1,M+N]}} \displaystyle\prod_{i=N+1}^{M+N} G_{\mu (i-1) / \mu (i); s} (y_{i-N}) = G_{\mu(N); s} (\bm{y}).
		\end{flalign*}
		
		\noindent Here, the sums are over all sequences of $n$-compositions $\bm{\mu}^{[0,N-1]} = \big( \mu(0), \mu(1), \ldots , \mu(N-1) \big)$ and $\bm{\mu}^{[N+1,M+N]} = \big( \mu(N+1), \mu(N+2), \ldots , \mu (M+N) \big)$ satisfying the  constraints of \Cref{mnmu}. Hence,  
		\begin{flalign*}
			\displaystyle\sum_{\bm{\mu}} \displaystyle\prod_{j=1}^N f_{\mu(j) / \mu(j-1); s}^{\sigma (j)} (x_j) \cdot \displaystyle\prod_{i=N+1}^{M+N} G_{\mu(i-1) / \mu(i); s} (y_{i-N}) = \displaystyle\sum_{\mu(N) \in \Comp_n (N)} f_{\mu(N); s}^{\sigma} (\bm{x}) G_{\mu(N); s} (\bm{y}).
		\end{flalign*}
		
		\noindent This, together with the Cauchy identity \Cref{fg2}, yields the lemma.
	\end{proof}

	\subsection{Matching Between Colored Stochastic Six-Vertex Models and $\mathbb{P}_{\fG}^{\sigma}$} 
	
	\label{EqualityCD}
	
	In this section we establish a matching between the law of the $(M+N)$-tuple $\bm{\mathfrak{q}} (\bm{\mu})$ (recall \Cref{mnmu}) associated with a sequence of compositions sampled from $\mathbb{P}_{\fG}^{\sigma}$ (from \Cref{measurefg}), with a certain random variable associated with the stochastic six-vertex model (from \Cref{ModelVertex}). We begin by defining the latter. 
	
	\begin{definition} 
		
	\label{dece}
	
	Let $\mathcal{E}$ denote a six-vertex ensemble on the rectangular domain $\mathcal{D}_{M;N} = \llbracket 1, M \rrbracket \times \llbracket 1, N \rrbracket$. For each integer $i \in \llbracket 1, M \rrbracket$, let $c_i = c_i (\mathcal{E}) \in \llbracket 0, n \rrbracket$ denote the color of the path in $\mathcal{E}$ vertically exiting $\mathcal{D}_{M;N}$ through $(i, N)$; for each integer $j \in \llbracket 1, N \rrbracket$, let $d_j = d_j (\mathcal{E}) \in \llbracket 0, n \rrbracket$ denote the color of the path in $\mathcal{E}$ horizontally exiting $\mathcal{D}_{M;N}$ through $(M, j)$. Then set $\mathfrak{C} (\mathcal{E}) = (c_1, c_2, \ldots,  c_M) \in \llbracket 1, n \rrbracket^M$ and $\mathfrak{D} (\mathcal{E}) = (d_1, d_2, \ldots , d_N) \in \llbracket 1, n \rrbracket^N$. 
	
	\end{definition} 

	We next require notation for the colored stochastic six-vertex model (defined at the end of \Cref{ModelVertex}) with $\sigma$-entrance data introduced in \Cref{ModelVertex}.
		
	\begin{definition} 
		
	\label{probabilityvertex}
	
	Let $\mathbb{P}_{\SV}^{\sigma} = \mathbb{P}_{\SV; \bm{x}; \bm{y}}^{\sigma}$ denote the measure on colored six-vertex ensembles on $\mathcal{D}_{M;N}$ obtained by running the colored stochastic six-vertex model on $\mathcal{D}_{M;N}$ under $\sigma$-entrance data, with weight $R_{y_i / x_j}$ (recall \Cref{rzabcd}) at any vertex $(i, j) \in \mathcal{D}_{M;N}$.
	
	\end{definition} 
	
	We refer to \Cref{zxy2} for depictions of \Cref{dece} and \Cref{probabilityvertex}. The following proposition now provides a matching between the $(M+N)$-tuple $\bm{\mathfrak{q}}(\bm{\mu})$ sampled under $\mathbb{P}_{\fG}^{\sigma}$ of \Cref{measurefg} and the $(M+N)$-tuple $\mathfrak{D}(\mathcal{E}) \cup \overleftarrow{\mathfrak{C}}(\mathcal{E})$ sampled under $\mathbb{P}_{\SV}^{\sigma}$ of \Cref{probabilityvertex}.\footnote{Observe that the left side of \eqref{ekmuk} is independent of $s$, while the right side seems to involve $s$; such a phenomenon had already been observed in the uncolored case in \cite[Proposition 7.14]{RFSVM}, and in the different setting of colored stationary measures \cite[Remark 4.6]{CIPSR}. In our context, this fact will later enable us to freely choose $s$ as we see fit, which will be useful in producing the simplest looking line ensembles.} It is a colored generalization of \cite[Theorem 5.5]{SSVMP}, though its proof is similar. We establish it in \Cref{ProofE} below.

	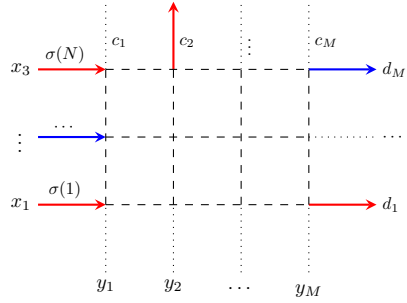
\begin{figure}
		\begin{center}			
			\begin{tikzpicture}[
				>=stealth,
				auto,
				style={
					scale = .9
				}
				]
				\draw[->, thick, red] (-1, 0) -- (0, 0);
				\draw[->, thick, blue] (-1, 1) -- (0, 1);
				\draw[->, thick, red] (-1, 2) -- (0, 2);
				
				\draw[->, thick, red] (3, 0) -- (4, 0);
				\draw[dotted ] (3, 1) -- (4, 1);
				\draw[->, thick, blue] (3, 2) -- (4, 2);
				
				\draw[dotted] (0, -1) -- (0, 0);
				\draw[dotted] (1, -1) -- (1, 0);
				\draw[dotted] (2, -1) -- (2, 0);
				\draw[dotted] (3, -1) -- (3, 0);
				
				\draw[dotted] (0, 2) -- (0, 3);
				\draw[->, thick, red] (1, 2) -- (1, 3);
				\draw[dotted] (2, 2) -- (2, 3);
				\draw[dotted] (3, 2) -- (3, 3);
				
				\draw[-, dashed] (0, 0) -- (3, 0);
				\draw[-, dashed] (0, 1) -- (3, 1);
				\draw[-, dashed] (0, 2) -- (3, 2);
				
				\draw[-, dashed] (0, 0) -- (0, 2);
				\draw[-, dashed] (1, 0) -- (1, 2);
				\draw[-, dashed] (2, 0) -- (2, 2);
				\draw[-, dashed] (3, 0) -- (3, 2);
				
				\draw[] (0, -1) circle[radius = 0] node[below, scale = .75]{$y_1$};
				\draw[] (1, -1) circle[radius = 0] node[below, scale = .75]{$y_2$};
				\draw[] (2, -1) circle[radius = 0] node[below = 2, scale = .75]{$\cdots$};
				\draw[] (3, -1) circle[radius = 0] node[below = 2, scale = .75]{$y_M$};
				
				\draw[] (-1, 0) circle[radius = 0] node[left, scale = .75]{$x_1$};
				\draw[] (-1.125, 1) circle[radius = 0] node[left, scale = .75]{$\vdots$};
				\draw[] (-1, 2) circle[radius = 0] node[left, scale = .75]{$x_3$};
				
				\draw[] (-.6, 0) circle[radius = 0] node[above, scale = .65]{$\sigma(1)$};
				\draw[] (-.6, 1) circle[radius = 0] node[above, scale = .65]{$\cdots$};
				\draw[] (-.6, 2) circle[radius = 0] node[above, scale = .65]{$\sigma(N)$};
				
				\draw[] (4, 0) circle[radius = 0] node[right, scale = .65]{$d_1$};
				\draw[] (4, 1) circle[radius = 0] node[right, scale = .65]{$\cdots$};
				\draw[] (4, 2) circle[radius = 0] node[right, scale = .65]{$d_M$};
				
				\draw[] (0, 2.4) circle[radius = 0] node[right, scale = .65]{$c_1$};
				\draw[] (1, 2.4) circle[radius = 0] node[right, scale = .65]{$c_2$};
				\draw[] (2, 2.4) circle[radius = 0] node[right, scale = .65]{$\vdots$};
				\draw[] (3, 2.4) circle[radius = 0] node[right, scale = .65]{$c_M$};

			\end{tikzpicture}		
		\end{center}	
		\caption{\label{zxy2} Shown above is the entrance and exit data for a colored six-vertex ensemble on $\mathcal{D}_{4;3} = \llbracket 1, 4 \rrbracket \times \llbracket 1, 3 \rrbracket$, under $\sigma$-entrance data for $\big( \sigma(1), \sigma(2), \sigma(3) \big)= (1, 2, 1)$ (with red being color $1$ and blue being color $2$).}	
	\end{figure}

	\begin{prop} 
		
		\label{fgsv}

		 Fix an index sequence $\bm{\mathfrak{q}} = (\mathfrak{q}_1, \mathfrak{q}_2, \ldots , \mathfrak{q}_{M+N}) \in \llbracket 0, n \rrbracket^{M+N}$; and define the $M$-tuple $\mathfrak{C} = (\mathfrak{q}_{M+N}, \mathfrak{q}_{M+N-1}, \ldots,  \mathfrak{q}_{N+1})$ and $N$-tuple $\mathfrak{D} = (\mathfrak{q}_1, \mathfrak{q}_2, \ldots , \mathfrak{q}_N)$. Then,  
		\begin{flalign}
			\label{ekmuk} 
			\mathbb{P}_{\SV}^{\sigma} \Big[ \{ \mathfrak{C} (\mathcal{E}) = \mathfrak{C} \big\} \cap \big\{ \mathfrak{D} (\mathcal{E}) = \mathfrak{D} \big\} \Big] = \mathbb{P}_{\fG}^{\sigma} \big[ \bm{\mathfrak{q}} (\bm{\mu}) = \bm{\mathfrak{q}} \big]. 
		\end{flalign}	
	
		\noindent Here, on the left side of \eqref{ekmuk}, the colored six-vertex ensemble $\mathcal{E}$ is sampled under the colored stochastic six-vertex measure $\mathbb{P}_{\SV; \bm{x}; \bm{y}}^{\sigma}$. On the right side of \eqref{ekmuk}, the $(M; \sigma)$-ascending sequence $\bm{\mu}$ of colored compositions is sampled under the measure $\mathbb{P}_{\fG; n; s; \bm{x}; \bm{y}}^{\sigma}$.
		
	\end{prop}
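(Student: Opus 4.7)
The plan is to extend the argument for the Cauchy identity (\Cref{fg2}) to a conditional matching, by considering the composite cross-plus-strip vertex model of \Cref{z1xy} and \Cref{z2xy} with partially fixed boundary conditions and computing its partition function in two different ways via the Yang--Baxter equation \eqref{llrijk2}.

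I would begin from the \Cref{z1xy}-configuration but specify the colors of the arrows exiting the cross region $\mathcal{R}_3$ through its top and right to be $\mathfrak{C}$ and $\mathfrak{D}$, respectively, rather than forcing the $\bm{e}_{\bm{\ell}}$ exit data at the $y$-axis as in the proof of \Cref{fg2}. Under these boundary conditions, by the stochasticity of the $R_z$-weights (\Cref{1rsum}) and the construction of the stochastic six-vertex model (\Cref{probabilityvertex}), the cross contributes the weight $\mathbb{P}_{\SV}^{\sigma}\bigl[\{\mathfrak{C}(\mathcal{E}) = \mathfrak{C}\} \cap \{\mathfrak{D}(\mathcal{E}) = \mathfrak{D}\}\bigr]$, while the strip $\mathcal{R}_1 \cup \mathcal{R}_2$ is frozen in a unique configuration whose weight is directly computable from the $L_{x_j;0}$ and $L_{y_i;0}$ values at column $0$.

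Next, I would apply $MN$ iterations of the Yang--Baxter equation \eqref{llrijk2} to transport $\mathcal{R}_3$ from the right of the strip to its left, producing the \Cref{z2xy}-configuration with the same total partition function. In this configuration, the cross is frozen (as in the proof of \Cref{fg2}), contributing the weight $\mathcal{Z}_{\bm{x};\bm{y}}^{-1}$ via \eqref{rzij}, while the strip $\mathcal{R}_1' \cup \mathcal{R}_2'$ carries a random colored higher spin path ensemble whose weights match those of the $\mathbb{P}_{\fG}^{\sigma}$-measure (via \Cref{measurefg} and \Cref{mnmurow2}). Imposing that the colors on the horizontal edges between columns $-1$ and $0$ are equal to $\bm{\mathfrak{q}}$ corresponds to the event $\bm{\mathfrak{q}}(\bm{\mu}) = \bm{\mathfrak{q}}$, and summing over $(M;\sigma)$-ascending $\bm{\mu}$ with this constraint yields $\mathcal{Z}_{\bm{x};\bm{y}} \cdot \mathbb{P}_{\fG}^{\sigma}\bigl[\bm{\mathfrak{q}}(\bm{\mu}) = \bm{\mathfrak{q}}\bigr]$. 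Equating the two partition functions (with the $\mathcal{Z}_{\bm{x};\bm{y}}$-factors and any boundary contributions canceling) gives \eqref{ekmuk}.

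The main obstacle will be the geometric identification of the edge colors before and after the Yang--Baxter transport. Specifically, one must verify that the first $N$ entries of $\bm{\mathfrak{q}}$ (on the bottom rows of the strip, carrying $\widehat{L}_{x_j;s}$ weights) correspond to $\mathfrak{D}$, while the remaining $M$ entries (on the top rows, carrying $L_{y_i;s}$ weights) correspond to the order-reversal $\overleftarrow{\mathfrak{C}}$; the reversal arises from the orientation of the $y$-parameter lines in the cross geometry, whereby $y_1$ sits at the top after Yang--Baxter transport and $y_M$ at the bottom. This bookkeeping, together with a careful accounting of the boundary weights at column $0$ of the strip in the \Cref{z1xy} configuration (to confirm that all normalization factors cancel correctly between the two sides), completes the argument.
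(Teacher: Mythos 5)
Your plan shares the paper's core mechanism—transporting the cross across the strip with $MN$ applications of the Yang--Baxter equation, reading off $\mathbb{P}_{\SV}^{\sigma}$ from the cross on one side and $\mathbb{P}_{\fG}^{\sigma}$ from the strip on the other. The direction of the transport is reversed from the paper's proof (which starts with the cross on the \emph{left} of the strip, as in \Cref{fg3}, and moves it right, as in \Cref{fg4}), but that reversal alone is harmless. The genuine gap is in the choice of domain: you insist on the \Cref{z1xy} configuration, whose strip is $\mathbb{Z}_{\le 0}\times\llbracket 1, M+N\rrbracket$ and thus contains the $y$-axis column with $s=0$ weights. The paper's proof works on $\mathbb{Z}_{<0}\times\llbracket 1, M+N\rrbracket$ and places the exit data $\bm{\mathfrak{q}}$ at the \emph{boundary} of column $-1$; the $y$-axis column is introduced only afterward (\Cref{fg5}), and crucially with arrows \emph{absorbed} there, so that the inserted column contributes weight $1$ (since $L_{x;0}(\bm{A}, b; \bm{A}_b^+, 0) = 1$).

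In your setup, by contrast, arrows must \emph{pass through} the $y$-axis column to reach the cross, and the pass-through weight is not $1$: $L_{x_j;0}(\bm{e}_0, i; \bm{e}_0, i) = x_j$ and $L_{y_i;0}(\bm{e}_0, k; \bm{e}_0, k) = y_i$ for $i, k > 0$. On the cross-on-right side, every row $M+j$ carries a color-$\sigma(j)$ arrow through the $y$-axis, so the $y$-axis contributes $\prod_{j=1}^N x_j$. After Yang--Baxter the rows carrying nonzero color through the $y$-axis are exactly those with $\mathfrak{D}_j > 0$ or $\mathfrak{C}_i > 0$, giving a contribution $\prod_{j : \mathfrak{D}_j > 0} x_j \cdot \prod_{i : \mathfrak{C}_i > 0} y_i$. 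These two column-$0$ contributions do \emph{not} cancel unless $\mathfrak{C}$ is identically zero. For instance with $M = N = n = 1$, $\sigma(1) = 1$, and $\bm{\mathfrak{q}} = (0,1)$ (so $\mathfrak{C} = (1)$, $\mathfrak{D} = (0)$), the cross-on-right total is $x_1 \cdot \frac{y_1(1-q)}{x_1 - qy_1}$ while the cross-on-left total is $\frac{y_1^2(1-q)}{x_1 - qy_1}$, a mismatch of $x_1/y_1$. So the step you flag as mere "careful accounting" of the boundary weights at column $0$ is not a cancellation that needs checking; it is a discrepancy that defeats the argument as stated. To fix this, you would need to remove the $y$-axis column from the Yang--Baxter domain entirely (as in \Cref{fg3}), carry the boundary data $\bm{\mathfrak{q}}$ at column $-1$, and then re-introduce the $s=0$ column with \emph{absorbing} arrows to recover the $\mathbb{P}_{\fG}^{\sigma}$ interpretation—which is precisely what the paper does.
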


	Before establishing \Cref{fgsv}, we deduce the following corollary. It equates the joint law of the height functions (recall \Cref{FunctionsZ}) evaluated along the exit sites of an $M \times N$ rectangle, sampled under the colored stochastic six-vertex model, with the joint law of the number of zero entries in a family $\bm{\mu}$ of $n$-compositions, sampled under the $\mathbb{P}_{\fG}^{\sigma}$ measure.
	
	\begin{cor} 
		
		\label{hfg2}
		
		The joint law of all the height functions 
		\begin{flalign}
			\label{hcij}
			\bigcup_{c = 1}^n \big( \mathfrak{h}_{\ge c}^{\rightarrow} (M, 1), \mathfrak{h}_{\ge c}^{\rightarrow} (M, 2), \ldots , \mathfrak{h}_{\ge c}^{\rightarrow} (M, N), \mathfrak{h}_{\ge c}^{\rightarrow} (M-1, N), \ldots , \mathfrak{h}_{\ge c}^{\rightarrow} (0, N) \big),
		\end{flalign} 
	
		\noindent is equal to the joint law of all zero-entry counts 
		\begin{flalign}
			\label{mcj}
			\bigcup_{c = 1}^n \Big( \mathfrak{m}_0^{\ge c} \big(\mu(1) \big), \mathfrak{m}_0^{\ge c} \big( \mu (2) \big), \ldots , \mathfrak{m}_0^{\ge c} \big( \mu (N) \big), \mathfrak{m}_0^{\ge c} \big( \mu(N+1) \big), \ldots , \mathfrak{m}_0^{\ge c} \big( \mu(M+N) \big) \Big).
		\end{flalign} 
	
		\noindent Here, the height functions in \eqref{hcij} are associated with a colored six-vertex ensemble sampled under $\mathbb{P}_{\SV; \bm{x}; \bm{y}}^{\sigma}$, and the zero-entry counts in \eqref{mcj} are associated with a $(M; \sigma)$-ascending sequence of $n$-compositions $\bm{\mu} = \big( \mu (0), \mu(1), \ldots , \mu(M+N) \big)$ sampled under $\mathbb{P}_{\fG; n; s; \bm{x}; \bm{y}}^{\sigma}$.
		 
	\end{cor}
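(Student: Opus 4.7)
The plan is to deduce this corollary directly from \Cref{fgsv} by expressing both collections in \eqref{hcij} and \eqref{mcj} as the same deterministic functions of the exit-color data on the two sides. Concretely, \Cref{fgsv} shows that if $\mathcal{E}\sim\mathbb{P}_{\SV}^{\sigma}$ and $\bm{\mu}\sim\mathbb{P}_{\fG}^{\sigma}$, then the joint law of $\big(\mathfrak{D}(\mathcal{E}),\overleftarrow{\mathfrak{C}(\mathcal{E})}\big)\in\llbracket 0,n\rrbracket^{M+N}$ equals that of $\bm{\mathfrak{q}}(\bm{\mu})=(\mathfrak{q}_1,\ldots,\mathfrak{q}_{M+N})$, under the identification $d_j=\mathfrak{q}_j$ for $j\in\llbracket 1,N\rrbracket$ and $c_k=\mathfrak{q}_{N+M+1-k}$ for $k\in\llbracket 1,M\rrbracket$. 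So it suffices to verify the two pointwise matchings below.

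First, for the height-function side, I will use arrow conservation in $\mathcal{D}_{M;N}$. Since the ray from $(i+\tfrac12,j+\tfrac12)$ southward crosses only horizontal edges at $x=i+\tfrac12$ in rows $\le j$, for $j\in\llbracket 1,N\rrbracket$ the count $\mathfrak{h}_{\ge c}^{\rightarrow}(M,j)$ reduces to the number of east-exit arrows of color $\ge c$ in the first $j$ rows, namely $\#\{k\in\llbracket 1,j\rrbracket:d_k\ge c\}$. For $i\in\llbracket 0,M-1\rrbracket$, arrow conservation on the sub-rectangle $\llbracket i+1,M\rrbracket\times\llbracket 1,N\rrbracket$ (which has no entrance arrows from its south boundary and none from the east of it) gives
\begin{equation*}
\mathfrak{h}_{\ge c}^{\rightarrow}(i,N)=\#\{k\in\llbracket 1,N\rrbracket:d_k\ge c\}+\#\{k\in\llbracket i+1,M\rrbracket:c_k\ge c\}.
\end{equation*}

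Second, for the zero-entry side, I will use arrow conservation in the ensemble $\mathcal{E}_{\bm{\mu}}$ from \Cref{mnmurow2}, applied to the single column $\{0\}\times\llbracket 1,i\rrbracket$. In this column nothing enters from the south of $(0,1)$ and there is no column to the east, so every arrow entering through the horizontal edges $(-1,k)\to(0,k)$ for $k\le i$ must exit vertically through the top of $(0,i)$; by \Cref{mumuistate} the latter count (restricted to colors $\ge c$) is exactly $\mathfrak{m}_0^{\ge c}\big(\mu(i)\big)$. Therefore $\mathfrak{m}_0^{\ge c}\big(\mu(i)\big)=\#\{k\in\llbracket 1,i\rrbracket:\mathfrak{q}_{\bm{\mu}}(1,k)\ge c\}$ for every $i\in\llbracket 1,M+N\rrbracket$.

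Combining these two calculations under the identification from \Cref{fgsv}, one checks directly that $\mathfrak{h}_{\ge c}^{\rightarrow}(M,j)$ matches $\mathfrak{m}_0^{\ge c}\big(\mu(j)\big)$ for $j\in\llbracket 1,N\rrbracket$, and that $\mathfrak{h}_{\ge c}^{\rightarrow}(i,N)$ matches $\mathfrak{m}_0^{\ge c}\big(\mu(N+M-i)\big)$ for $i\in\llbracket 0,M-1\rrbracket$, since $\#\{k\in\llbracket i+1,M\rrbracket:c_k\ge c\}=\#\{m\in\llbracket N+1,N+M-i\rrbracket:\mathfrak{q}_m\ge c\}$ by reversal. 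Since the entire family in \eqref{hcij} is thereby realized as a deterministic function of $\big(\mathfrak{D}(\mathcal{E}),\overleftarrow{\mathfrak{C}(\mathcal{E})}\big)$, and the family in \eqref{mcj} is realized as the same deterministic function of $\bm{\mathfrak{q}}(\bm{\mu})$, the equidistribution statement of \Cref{fgsv} yields the corollary. There is no serious obstacle; the only point requiring care is the bookkeeping needed to line up the indexing of \eqref{hcij}, \eqref{mcj}, and the concatenation of $\mathfrak{D}$ with the reversal of $\mathfrak{C}$ in \Cref{fgsv}.
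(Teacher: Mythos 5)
Your proof is correct and takes essentially the same route as the paper: express both collections as the same deterministic (indicator-sum) function of the exit-color tuple $\big(\mathfrak{D}(\mathcal{E}),\overleftarrow{\mathfrak{C}(\mathcal{E})}\big)$ on one side and of $\bm{\mathfrak{q}}(\bm{\mu})$ on the other, then invoke \Cref{fgsv}. The only difference is that you spell out the arrow-conservation arguments (on the sub-rectangle $\llbracket i+1,M\rrbracket\times\llbracket 1,N\rrbracket$ and on the column $\{0\}\times\llbracket 1,i\rrbracket$ of $\mathcal{E}_{\bm{\mu}}$) that the paper leaves implicit when stating its formulas \eqref{hcmj} and \eqref{m0cmuj}.
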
 
		
	\begin{proof}
		
		Adopt the notation of \Cref{fgsv}, and denote the $M$-tuple $\mathfrak{C} (\mathcal{E}) = (\mathfrak{c}_1, \mathfrak{c}_2, \ldots , \mathfrak{c}_M)$ and $N$-tuple $\mathfrak{D} (\mathcal{E}) = (\mathfrak{d}_1, \mathfrak{d}_2, \ldots , \mathfrak{d}_N)$. Then for each $c \in \llbracket 1, n \rrbracket$, $i \in \llbracket 1, M-1 \rrbracket$, and $j \in \llbracket 1, N \rrbracket$, we have (from the definition of the height function) that
		\begin{flalign}
			\label{hcmj} 
			\mathfrak{h}_{\ge c}^{\rightarrow} (M, j) = \displaystyle\sum_{a=1}^j \mathbbm{1}_{\mathfrak{d}_a \ge c}; \qquad \mathfrak{h}_{\ge c}^{\rightarrow} (M-i, N) = \displaystyle\sum_{a=1}^N \mathbbm{1}_{\mathfrak{d}_a \ge c} + \displaystyle\sum_{b = 1}^i \mathbbm{1}_{\mathfrak{c}_{M-b+1} \ge c}.
		\end{flalign}
		
		\noindent Furthermore, we have 
		\begin{flalign}
			\label{m0cmuj}
			\mathfrak{m}_0^{\ge c} \big( \mu(j) \big) = \displaystyle\sum_{a=1}^j \mathbbm{1}_{\mathfrak{q}_a \ge c}; \qquad \mathfrak{m}_0^{\ge c} \big( \mu(N+i) \big) = \displaystyle\sum_{a=1}^{N+i} \mathbbm{1}_{\mathfrak{q}_a \ge c} = \displaystyle\sum_{a=1}^N \mathbbm{1}_{\mathfrak{q}_a \ge c} + \displaystyle\sum_{b=1}^i \mathbbm{1}_{\mathfrak{q}_{N+b} \ge c}. 
		\end{flalign}
		
		\noindent Since \Cref{fgsv} implies that the $(M+N)$-tuple $( \mathfrak{d}_1, \mathfrak{d}_2, \ldots , \mathfrak{d}_N; \mathfrak{c}_M, \mathfrak{c}_{M-1}, \ldots , \mathfrak{c}_1)$ has the same law as $(\mathfrak{q}_1, \mathfrak{q}_2, \ldots , \mathfrak{q}_N; \mathfrak{q}_{N+1}, \mathfrak{q}_{N+2}, \ldots , \mathfrak{q}_{M+N})$, the lemma follows from \eqref{hcmj} and \eqref{m0cmuj}. 
	\end{proof}
		
	\subsection{Proof of the Matching} 
	
	\label{ProofE}
	
	In this section we establish \Cref{fgsv}. Before proceeding, it will be useful to set some notation. For any sequences $\mathfrak{A} = (a_1, a_2, \ldots , a_M)$, $\mathfrak{B} = (b_1, b_2, \ldots , b_N)$, $\mathfrak{C} = (c_1, c_2, \ldots , c_M)$, and $\mathfrak{D} = (d_1, d_2, \ldots , d_N)$ of indices in $\llbracket 0, n \rrbracket$, define 
		\begin{flalign*}
			R_{\bm{x}; \bm{y}} (\mathfrak{A}, \mathfrak{B}; \mathfrak{C}, \mathfrak{D}) = \displaystyle\sum \displaystyle\prod_{i=1}^M \displaystyle\prod_{j=1}^N R_{y_i / x_j} (v_{i,j}, u_{i,j}; v_{i+1,j}, u_{i,j+1}),
		\end{flalign*} 
		
		\noindent where the sum is over all sequences $(u_{i,j})$ and $(v_{i,j})$ of indices in $\llbracket 0, n \rrbracket$, with $(v_{k,1}, v_{k,N+1}) = (a_k, c_k)$ for each $k \in \llbracket 1, M \rrbracket$ and $(u_{1,k}, u_{M+1,k}) = (b_k, d_k)$ for each $k \in \llbracket 1, N \rrbracket$. See \Cref{zxy} for a depiction.
		
		Setting $\mathfrak{S} (\sigma) = \big( \sigma(1), \sigma(2), \ldots , \sigma (N) \big)$ and $\bm{0}_M = (0, 0, \ldots , 0) \in \mathbb{Z}_{\ge 0}^M$, and recalling the notation of \Cref{dece} and \Cref{probabilityvertex}, observe for any $\mathfrak{C} \in \llbracket 0, n \rrbracket^M$ and $\mathfrak{D} \in \llbracket 0, n \rrbracket^N$ that 
		\begin{flalign}
			\label{zvertex2}
			\mathbb{P}_{\SV; \bm{x}; \bm{y}}^{\sigma} \Big[ \big\{ \mathfrak{C} (\mathcal{E}) = \mathfrak{C} \big\} \cap \big\{ \mathfrak{D} (\mathcal{E}) = \mathcal{D} \big\} \Big] = R_{\bm{x}; \bm{y}} \big( \bm{0}_M, \mathfrak{S} (\sigma); \mathfrak{C}, \mathfrak{D} \big).
		\end{flalign}

	\begin{figure}
		\begin{center}			
			\begin{tikzpicture}[
				>=stealth,
				auto,
				style={
					scale = .9
				}
				]
				\draw[->, thick, red] (-1, 0) -- (0, 0);
				\draw[->, thick, blue] (-1, 1) -- (0, 1);
				\draw[->, thick, red] (-1, 2) -- (0, 2);
				
				\draw[->, thick, green] (3, 0) -- (4, 0);
				\draw[->, thick, blue] (3, 1) -- (4, 1);
				\draw[->, thick, orange] (3, 2) -- (4, 2);
				
				\draw[->, thick, orange] (0, -1) -- (0, 0);
				\draw[->, thick, green] (1, -1) -- (1, 0);
				\draw[->, thick, orange] (2, -1) -- (2, 0);
				\draw[->, thick, blue] (3, -1) -- (3, 0);
				
				\draw[->, thick, red] (0, 2) -- (0, 3);
				\draw[->, thick, blue] (1, 2) -- (1, 3);
				\draw[->, thick, red] (2, 2) -- (2, 3);
				\draw[->, thick, orange] (3, 2) -- (3, 3);
				
				\draw[-, dashed] (0, 0) -- (3, 0);
				\draw[-, dashed] (0, 1) -- (3, 1);
				\draw[-, dashed] (0, 2) -- (3, 2);
				
				\draw[-, dashed] (0, 0) -- (0, 2);
				\draw[-, dashed] (1, 0) -- (1, 2);
				\draw[-, dashed] (2, 0) -- (2, 2);
				\draw[-, dashed] (3, 0) -- (3, 2);
				
				\draw[] (0, -1) circle[radius = 0] node[below, scale = .75]{$y_1$};
				\draw[] (1, -1) circle[radius = 0] node[below, scale = .75]{$y_2$};
				\draw[] (2, -1) circle[radius = 0] node[below = 2, scale = .75]{$y_3$};
				\draw[] (3, -1) circle[radius = 0] node[below = 2, scale = .75]{$y_4$};
				
				\draw[] (-1, 0) circle[radius = 0] node[left, scale = .75]{$x_1$};
				\draw[] (-1, 1) circle[radius = 0] node[left, scale = .75]{$x_2$};
				\draw[] (-1, 2) circle[radius = 0] node[left, scale = .75]{$x_3$};
				
				\draw[] (-.6, 0) circle[radius = 0] node[above, scale = .65]{$b_1$};
				\draw[] (-.6, 1) circle[radius = 0] node[above, scale = .65]{$b_2$};
				\draw[] (-.6, 2) circle[radius = 0] node[above, scale = .65]{$b_3$};
				
				\draw[] (3.56, 0) circle[radius = 0] node[above, scale = .65]{$d_1$};
				\draw[] (3.6, 1) circle[radius = 0] node[above, scale = .65]{$d_2$};
				\draw[] (3.6, 2) circle[radius = 0] node[above, scale = .65]{$d_3$};
				
				\draw[] (0, -.6) circle[radius = 0] node[left, scale = .65]{$a_1$};
				\draw[] (1, -.6) circle[radius = 0] node[left, scale = .65]{$a_2$};
				\draw[] (2, -.6) circle[radius = 0] node[left, scale = .65]{$a_3$};
				\draw[] (3, -.6) circle[radius = 0] node[left, scale = .65]{$a_4$};
				
				\draw[] (0, 2.4) circle[radius = 0] node[left, scale = .65]{$c_1$};
				\draw[] (1, 2.4) circle[radius = 0] node[left, scale = .65]{$c_2$};
				\draw[] (2, 2.4) circle[radius = 0] node[left, scale = .65]{$c_3$};
				\draw[] (3, 2.4) circle[radius = 0] node[left, scale = .65]{$c_4$};		
				
				\draw[] (1.5, 1) circle[radius = 0] node[above, scale = .65]{$u_{3, 2}$};
				\draw[] (2, .5) circle[radius = 0] node[left, scale = .65]{$v_{3, 2}$};
				\draw[] (2.5, 1) circle[radius = 0] node[above, scale = .65]{$u_{4, 2}$};
				\draw[] (2, 1.5) circle[radius = 0] node[left, scale = .65]{$v_{3, 3}$};			
				
				\draw[] (0, -1.5) -- (0, -1.625) -- (3, -1.625) -- (3, -1.5); 
				\draw[] (-2, 0) -- (-2.125, 0) -- (-2.125, 2) -- (-2, 2);
				
				\draw[] (1.5, -1.675) circle[radius = 0] node[below, scale = .7]{$M$};
				\draw[] (-2.175, 1) circle[radius = 0] node[left, scale = .7]{$N$};	
			\end{tikzpicture}		
		\end{center}	
		\caption{\label{zxy} Shown above is a diagrammatic interpretation for $R_{\bm{x}; \bm{y}} (\mathfrak{A}, \mathfrak{B}; \mathfrak{C}, \mathfrak{D})$.}	
	\end{figure}
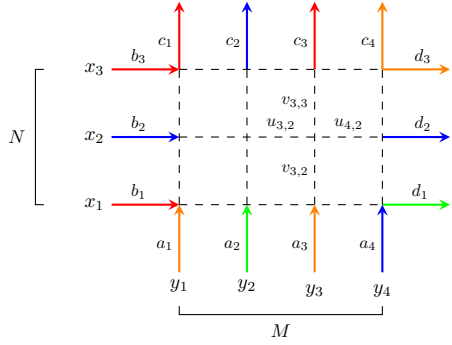

	Now we can establish \Cref{fgsv}. 
	
	\begin{proof}[Proof of \Cref{fgsv}]
		
		The proof of this proposition will be close to that of \Cref{fg2}. We begin by considering the partition function $\mathcal{Z} (\bm{\mathfrak{q}})$ for the vertex model depicted in \Cref{fg3}.
		
		This model consists of three regions, denoted by $\mathcal{R}_1$, $\mathcal{R}_2$, and $\mathcal{R}_3$. The first region $\mathcal{R}_1$ is the $M \times N$ ``cross'' to the left of $\mathbb{Z}_{< 0} \times \llbracket 1, M + N \rrbracket$. The second $\mathcal{R}_2 = \mathbb{Z}_{< 0} \times \llbracket 1, N \rrbracket$ constitutes the bottom $N$ rows to the right of the cross. The third $\mathcal{R}_3 = \mathbb{Z}_{< 0} \times \llbracket N+1, M+N \rrbracket$ constitutes the remaining $M$ rows to the right of the cross. Different vertex weights (recall \Cref{rzabcd} and \Cref{lzdefinition}) are used in these regions. In $\mathcal{R}_1$, we use $R_{y_i / x_j}$ at the intersection of the $i$-th column (from the left) and $j$-th row (from the bottom) of the cross; in $\mathcal{R}_2$, for each $j \in \llbracket 1, N \rrbracket$, we use $\widehat{L}_{x_j; s}$ at each $(-k, j)$; and in $\mathcal{R}_3$, for each $i \in \llbracket 1, M \rrbracket$, we use $L_{y_i; s}$ at each $(-k, i+M)$. 
		
		\begin{figure} 
			
		\begin{center}
			\begin{tikzpicture}[
				>=stealth, 
				scale = .75]{
					
					\draw[] (10, 3.25) circle [radius = 0] node[scale = 2]{$\mathcal{Z} (\bm{\mathfrak{q}}) = $}; 
					\draw[-, very thick] (23, 0) node[below, scale = .7]{$0$} -- (23, 7.25);
					
					\draw[dotted] (14.5, 1.5) -- (16.5, 3.5) -- (17, 3.5);
					\draw[dotted] (22, 3.5) -- (23, 3.5) node[right = 3, scale = .7]{$y_1$};
					\draw[dotted] (14, 2) -- (16.5, 4.5) -- (17, 4.5);
					\draw[->, thick, red] (22, 4.5) -- (23, 4.5) node[right = 5, scale = .7, black]{$y_2$};
					\draw[dotted] (13.5, 2.5) -- (16.5, 5.5) -- (17, 5.5);
					\draw[->, thick, green] (22, 5.5) -- (23, 5.5) node[right = 3, scale = .7, black]{$\vdots$}; 
					\draw[dotted] (13, 3) -- (16.5, 6.5) -- (17, 6.5);
					\draw[dotted] (22, 6.5) -- (23, 6.5) node[right = 3, scale = .7]{$y_M$};
					
					\draw[->, red, thick] (13, 4) node[left, black, scale = .75]{$\sigma(1)$} -- (16, .5) -- (17, .5);
					\draw[->, blue, thick] (13.5, 4.5) node[left, black, scale = .75]{$\cdots$} -- (16, 1.5) -- (17, 1.5);
					\draw[->, green, thick] (14, 5) node[left, black, scale = .75]{$\sigma(N)$} -- (16, 2.5) -- (17, 2.5);
					
					\draw[dotted] (22, .5) -- (23, .5) node[right = 3, scale = .7]{$x_1$};
					\draw[dotted] (22, 1.5) -- (23, 1.5) node[right = 3, scale = .7]{$\vdots$};
					\draw[->, thick, blue] (22, 2.5) -- (23, 2.5) node[right = 3, scale = .7, black]{$x_N$};
					
					\draw[] (22.5, .5) circle [radius = 0] node[above, scale = .7]{$\mathfrak{q}_1$};
					\draw[] (22.5, 1.5) circle [radius = 0] node[above, scale = .7]{$\cdots$};
					\draw[] (22.5, 2.5) circle [radius = 0] node[above, scale = .7]{$\mathfrak{q}_N$};
					\draw[] (22.5, 3.5) circle [radius = 0] node[above, scale = .7]{$\mathfrak{q}_{N+1}$};
					\draw[] (22.5, 4.5) circle [radius = 0] node[left = 1, above, scale = .7]{$\mathfrak{q}_{N+2}$};
					\draw[] (22.5, 5.5) circle [radius = 0] node[above, scale = .7]{$\cdots$};
					\draw[] (22.5, 6.5) circle [radius = 0] node[left = 2, above, scale = .7]{$\mathfrak{q}_{M+N}$};

					\draw[-] (14.25, 1.25) -- (14, 1) -- (12.5, 2.5) -- (12.75, 2.75);
					\draw[] (13.25, 1.75) circle [radius = 0] node[below = 3, left, scale = .7]{$M$};
					
					\draw[gray, ultra thick, dashed] (17, .5) node[below, black, scale = .7]{$-6$} -- (17, 6.5);
					\draw[gray, ultra thick, dashed] (18, .5) node[below, black, scale = .7]{$-5$} -- (18, 6.5);
					\draw[gray, ultra thick, dashed] (19, .5) node[below, black, scale = .7]{$-4$} -- (19, 6.5);
					\draw[gray, ultra thick, dashed] (20, .5) node[below, black, scale = .7]{$-3$} -- (20, 6.5);
					\draw[gray, ultra thick, dashed] (21, .5) node[below, black, scale = .7]{$-2$} -- (21, 6.5);
					\draw[gray, ultra thick, dashed] (22, .5) node[below, black, scale = .7]{$-1$} -- (22, 6.5);
					
					\draw[gray, thick, dashed] (17, .5) -- (22, .5);
					\draw[gray, thick, dashed] (17, 1.5) -- (22, 1.5);
					\draw[gray, thick, dashed] (17, 2.5) -- (22, 2.5);
					\draw[gray, thick, dashed] (17, 3.5) -- (22, 3.5);
					\draw[gray, thick, dashed] (17, 4.5) -- (22, 4.5);
					\draw[gray, thick, dashed] (17, 5.5) -- (22, 5.5);
					\draw[gray, thick, dashed] (17, 6.5) -- (22, 6.5);
				}
			\end{tikzpicture}
		\end{center}
	
		\caption{\label{fg3} Shown above is the vertex model used in the proof of \Cref{fgsv}.} 
		
		\end{figure}
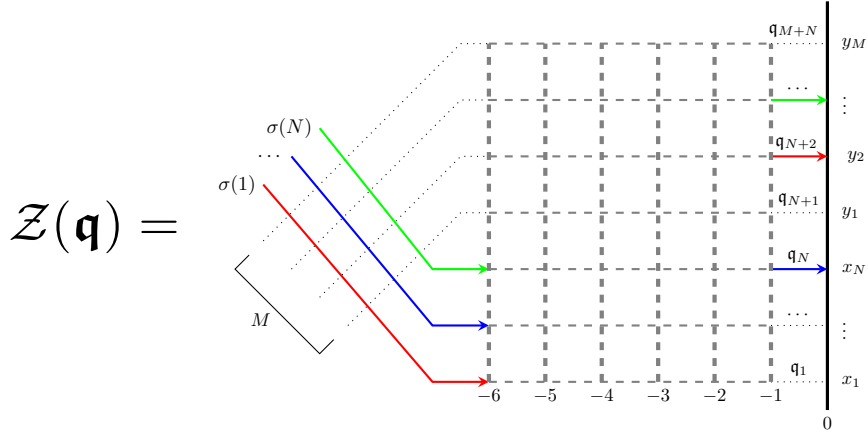

		  The boundary conditions for the model in \Cref{fg3} are prescribed as follows. The entrance data is defined in the same way as for \Cref{z2xy} in the proof of \Cref{fg2}. Specifically, we have no arrows vertically enter any column of the model, either in or to the right of the cross, and we have an arrow of color $\sigma (j)$ enter through the $j$-th row (from the bottom) of the cross, for each $j \in \llbracket 1, N \rrbracket$. The exit data is defined differently, by having an arrow of color $\mathfrak{q}_i$ horizontally exit through $(-1, i)$ (the $i$-th row of the model) for each $i \in \llbracket 1, M+N \rrbracket$, and having no arrow vertically exit through any column of the model.
		
		Let us now analyze the partition function for this vertex model. Under the constraint \eqref{sxsy}, it is quickly verified (as in the proof of \Cref{fg2}) that a colored higher spin path ensemble in $\mathbb{Z}_{< 0} \times \llbracket 1, M+N \rrbracket$ has nonzero weight only if all but finitely many vertices in $\mathcal{R}_2$ have arrow configurations of the form $(\bm{e}_0, i; \bm{e}_0, i)$ for some integer $i \in \llbracket 1, n \rrbracket$, and all but finitely many vertices in $\mathcal{R}_3$ have arrow configurations of the form $(\bm{e}_0, 0; \bm{e}_0, 0)$. This forces the cross $\mathcal{R}_1$ to freeze, with the arrow configuration $\big( 0, \sigma (j); 0, \sigma (j) \big)$ at the intersection of its $i$-th column and $j$-th row, for each $(i, j) \in \llbracket 1, M \rrbracket \times \llbracket 1, N \rrbracket$. By \eqref{rzij}, this means that the weight of $\mathcal{R}_1$ is  
		\begin{flalign}
			\label{r1r} 
			\displaystyle\prod_{i=1}^M \displaystyle\prod_{j=1}^N R_{y_i / x_j} \big( 0, \sigma(j); 0, \sigma(j) \big) = \displaystyle\prod_{i=1}^M \displaystyle\prod_{j=1}^N \displaystyle\frac{x_j - y_i}{x_j - qy_i}.
		\end{flalign}
			
		We next evaluate the partition function $\breve{\mathcal{Z}} (\bm{\mathfrak{q}})$ of $\mathcal{R}_2 \cup \mathcal{R}_3$. To that end, we modify this part of the vertex model shown in \Cref{fg3}, by having all of the colored paths exit vertically through the $y$-axis; see \Cref{fg5} for a depiction. Here, the weights used for the $y$-axis are $L_{x_j; 0}$ at $(0, j)$ for $j \in \llbracket 1, N \rrbracket$ and $L_{y_i; 0}$ at $(0, i + N)$ for $i \in \llbracket 1, M \rrbracket$. Since by \Cref{lzdefinition} we have for each $i \in \llbracket 1, M \rrbracket$, $j \in \llbracket 1, N \rrbracket$, and $\bm{A} \in \mathbb{Z}_{\ge 0}^n$ that 
		\begin{flalign}
			\label{lyi0}
			L_{y_i; 0} (\bm{A}, \mathfrak{q}_{i+N}; \bm{A}_{\mathfrak{q}_{i+N}}^+, 0) = 1; \qquad L_{x_j; 0} (\bm{A}, \mathfrak{q}_j; \bm{A}_{\mathfrak{q}_j}^+, 0) = 1,
		\end{flalign} 
	
		\noindent the weight of the $y$-axis in \Cref{fg5} is $1$. Hence, denoting the partition function of the vertex model depicted in \Cref{fg5} by $\mathcal{Z}' (\bm{\mathfrak{q}})$, we have $\breve{\mathcal{Z}} (\bm{\mathfrak{q}}) = \mathcal{Z}' (\bm{\mathfrak{q}})$. Together with \eqref{r1r}, this yields 
		\begin{flalign}
			\label{zkzk} 
			\mathcal{Z} (\bm{\mathfrak{q}}) = \breve{\mathcal{Z}} (\bm{\mathfrak{q}}) \cdot \displaystyle\prod_{i=1}^M \displaystyle\prod_{j=1}^N \displaystyle\frac{x_j - y_i}{x_j - qy_i} = \mathcal{Z}' (\bm{\mathfrak{q}}) \cdot \displaystyle\prod_{i=1}^M \displaystyle\prod_{j=1}^N \displaystyle\frac{x_j - y_i}{x_j - qy_i}. 
		\end{flalign}

				\begin{figure} 
			
			\begin{center}
				\begin{tikzpicture}[
					>=stealth, 
					scale = .75]{
						
						\draw[] (12.75, 3.5) circle [radius = 0] node[scale = 2]{$\mathcal{Z}' (\bm{\mathfrak{q}}) = $};
						\draw[->, very thick] (23, 0) -- (23, 7.5);
						
						\draw[dotted] (16.5, 3.5) -- (17, 3.5);
						\draw[dotted] (22, 3.5) -- (23, 3.5) node[right = 3, scale = .7]{$y_1$};
						\draw[dotted]  (16.5, 4.5) -- (17, 4.5);
						\draw[->, thick, red] (22, 4.5) -- (22.9, 4.5) node[right = 7, scale = .7, black]{$y_2$} -- (22.9, 7.5);
						\draw[dotted]  (16.5, 5.5) -- (17, 5.5);
						\draw[->, thick, green] (22, 5.5) -- (23, 5.5) node[right = 3, scale = .7, black]{$\vdots$} -- (23.1, 5.5) -- (23.1, 7.5); 
						\draw[dotted] (16.5, 6.5) -- (17, 6.5);
						\draw[dotted] (22, 6.5) -- (23, 6.5) node[right = 3, scale = .7]{$y_M$};
						
						\draw[->, red, thick] (16, .5) node[left, black, scale = .75]{$\sigma(1)$} -- (17, .5);
						\draw[->, blue, thick] (16, 1.5) node[left, black, scale = .75]{$\cdots$} -- (17, 1.5);
						\draw[->, green, thick] (16, 2.5) node[left, black, scale = .75]{$\sigma(N)$} -- (17, 2.5);
						
						\draw[dotted] (22, .5) -- (23, .5) node[right = 3, scale = .7]{$x_1$};
						\draw[dotted] (22, 1.5) -- (23, 1.5) node[right = 3, scale = .7]{$\vdots$};
						\draw[->, thick, blue] (22, 2.5) -- (23, 2.5) node[right = 3, scale = .7, black]{$x_N$} -- (23, 7.5);
						
						\draw[] (22.5, .5) circle [radius = 0] node[above, scale = .7]{$\mathfrak{q}_1$};
						\draw[] (22.5, 1.5) circle [radius = 0] node[above, scale = .7]{$\cdots$};
						\draw[] (22.5, 2.5) circle [radius = 0] node[above, scale = .7]{$\mathfrak{q}_N$};
						\draw[] (22.5, 3.5) circle [radius = 0] node[above, scale = .7]{$\mathfrak{q}_{N+1}$};
						\draw[] (22.5, 4.5) circle [radius = 0] node[left = 1, above, scale = .7]{$\mathfrak{q}_{N+2}$};
						\draw[] (22.5, 5.5) circle [radius = 0] node[above, scale = .7]{$\cdots$};
						\draw[] (22.5, 6.5) circle [radius = 0] node[left = 2, above, scale = .7]{$\mathfrak{q}_{M+N}$};

						\draw[-] (16.25, 3.5) -- (16, 3.5) -- (16, 6.5) -- (16.25, 6.5);
						\draw[] (16, 5) circle [radius = 0] node[left, scale = .7]{$M$};
						
						\draw[gray, ultra thick, dashed] (17, .5) -- (17, 6.5);
						\draw[gray, ultra thick, dashed] (18, .5) -- (18, 6.5);
						\draw[gray, ultra thick, dashed] (19, .5) -- (19, 6.5);
						\draw[gray, ultra thick, dashed] (20, .5) -- (20, 6.5);
						\draw[gray, ultra thick, dashed] (21, .5) -- (21, 6.5);
						\draw[gray, ultra thick, dashed] (22, .5) -- (22, 6.5);
						
						\draw[gray, thick, dashed] (17, .5) -- (22, .5);
						\draw[gray, thick, dashed] (17, 1.5) -- (22, 1.5);
						\draw[gray, thick, dashed] (17, 2.5) -- (22, 2.5);
						\draw[gray, thick, dashed] (17, 3.5) -- (22, 3.5);
						\draw[gray, thick, dashed] (17, 4.5) -- (22, 4.5);
						\draw[gray, thick, dashed] (17, 5.5) -- (22, 5.5);
						\draw[gray, thick, dashed] (17, 6.5) -- (22, 6.5);
					}
				\end{tikzpicture}
			\end{center}
			
			\caption{\label{fg5} Shown above is a modification of the $\mathcal{R}_2 \cup \mathcal{R}_3$ region of the vertex model from \Cref{fg3}, in which all paths exit through the $y$-axis.} 
			
		\end{figure}
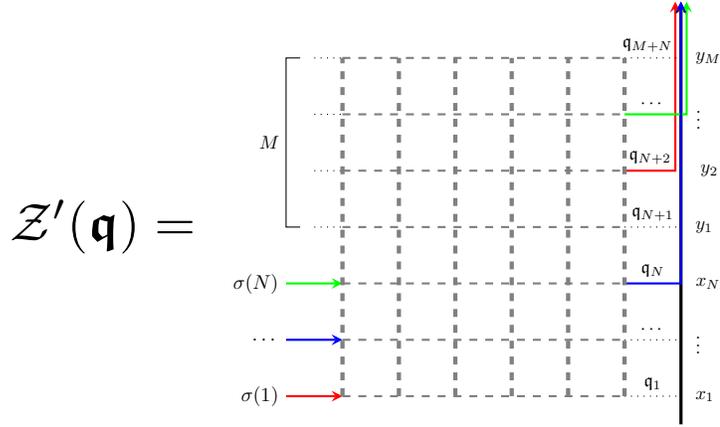 
		
		To evaluate $\mathcal{Z}' (\bm{\mathfrak{q}})$, observe that any colored higher spin path ensemble with boundary data as depicted in \Cref{fg5} is determined by a sequence $\bm{\mu} = \big( \mu(0), \mu(1), \ldots , \mu(M+N) \big)$ of $n$-compositions, where $\mu(i)$ indexes the locations of the colored arrows exiting the $i$-th row $\{ y = i \}$ of the model, for each $i \in \llbracket 0, M+N \rrbracket$ (in the sense of \Cref{mumuistate}). It is quickly verified that any such sequence $\bm{\mu}$ is $(M; \sigma)$-ascending (as in \Cref{mnmu}) and satisfies $\bm{\mathfrak{q}} (\bm{\mu}) = \bm{\mathfrak{q}}$. Moreover \Cref{fg} implies that, for any $j \in \llbracket 1, N \rrbracket$, the $j$-th row of the model in \Cref{fg5} has weight $f_{\mu(j) / \mu(j-1)}^{\sigma(j)} (x_j)$. Simiarly, for any $i \in \llbracket N+1, M + N \rrbracket$, its $i$-th row has weight $G_{\mu(i-1) / \mu(i)} (y_{i-N})$. Thus,
		\begin{flalign*}
			\mathcal{Z}' (\bm{\mathfrak{q}}) = \displaystyle\sum_{\bm{\mu}} \mathbbm{1}_{\bm{\mathfrak{q}} (\bm{\mu}) = \bm{\mathfrak{q}}} \cdot  \displaystyle\prod_{j=1}^N f_{\mu(j) / \mu(j-1); s}^{\sigma(j)} (x_j) \displaystyle\prod_{i=N+1}^{M+N} G_{\mu(i-1) / \mu(i); s} (y_{i-N}),
		\end{flalign*} 
	
		\noindent where the sum is over all $(M; \sigma)$-ascending sequences $\bm{\mu}$ of $n$-compositions. Together with \eqref{zkzk} and \Cref{measurefg}, this gives
		\begin{flalign}
			\label{zkprobability}
			\begin{aligned}
			\mathcal{Z} (\bm{\mathfrak{q}}) & = \displaystyle\prod_{i=1}^M \displaystyle\prod_{j=1}^N \displaystyle\frac{x_j - y_i}{x_j - qy_i} \cdot \displaystyle\sum_{\bm{\mu}} \mathbbm{1}_{\bm{\mathfrak{q}} (\bm{\mu}) = \bm{\mathfrak{q}}} \cdot  \displaystyle\prod_{j=1}^N f_{\mu(j) / \mu(j-1); s}^{\sigma(j)} (x_j) \displaystyle\prod_{i=N+1}^{M+N} G_{\mu(i-1) / \mu(i); s} (y_{i-N}) \\
			& = \mathbb{P}_{\fG}^{\sigma} \big[ \bm{\mathfrak{q}} (\bm{\mu}) = \bm{\mathfrak{q}} \big].
			\end{aligned}
		\end{flalign}
	  
		  Next, by $MN$ sequences of applications of the Yang--Baxter equation \eqref{llrijk2}, the partition function $\mathcal{Z} (\bm{\mathfrak{q}})$ of the model from \Cref{fg3} is unchanged if the cross originally in region $\mathcal{R}_1$ is moved to the right of $\mathcal{R}_2 \cup \mathcal{R}_3 = \mathbb{Z}_{< 0} \times \llbracket 1, M+N \rrbracket$. In particular, $\mathcal{Z} (\bm{\mathfrak{q}})$ is also the partition function of the vertex model in \Cref{fg4}.

		\begin{figure} 
		\begin{center}
			\begin{tikzpicture}[
				>=stealth, 
				scale = .7]{
					\draw[-, very thick] (9, -.5) -- (9, 7);
					
					\draw[very thick, dotted] (-.5, 0) -- (-.5, 6);
					\draw[very thick, dotted] (.5, 0) -- (.5, 6);
					\draw[very thick, dotted] (1.5, 0) -- (1.5, 6);
					\draw[very thick, dotted] (2.5, 0) -- (2.5, 6);
					\draw[very thick, dotted] (3.5, 0) -- (3.5, 6);
					
					\draw[] (-4.75, 3.25) circle [radius = 0] node[scale=2]{$\mathcal{Z} (\bm{\mathfrak{q}}) = $};
					\draw[dotted] (7, 3) -- (7.25, 3.25) -- (9, 3.25) node[right = 3, scale = .8]{$y_1$};
					\draw[] (9, 4.25) circle [radius = 0] node[right = 3, scale = .8]{$y_2$};
					\draw[]  (9, 5.25)circle [radius = 0] node[right = 3, scale = .8]{$\vdots$};
					\draw[dotted] (5.5, 4.5) -- (7.25, 6.25) -- (9, 6.25) node[right = 3, scale = .8]{$y_M$};
					
					\draw[dotted] (6, 2) -- (7.75, .25) -- (9, .25) node[right = 3, scale = .8]{$x_1$};
					\draw[dotted] (6.5, 2.5) -- (7.75, 1.25) -- (9, 1.25) node[right = 3, scale = .8]{$\vdots$};
					\draw[]  (9, 2) circle [radius = 0] node[right = 3, scale = .8]{$x_N$};
					
					\draw[->, red, thick] (-1.5, 4) node[left, black, scale = .9]{$\sigma(1)$} -- (4, 4) -- (4.5, 3.5);
					\draw[->, blue, thick] (-1.5, 5) node[left = 8, black, scale = .9]{$\vdots$} -- (4, 5) -- (5, 4); 
					\draw[->, green, thick] (-1.5, 6) node[left, black, scale = .9]{$\sigma(N)$} -- (4, 6) -- (5.5, 4.5);
					
					\draw[dotted] (-1.5, 0) -- (4, 0) -- (6, 2);
					\draw[dotted] (-1.5, 1) -- (4, 1) -- (5.5, 2.5);
					\draw[dotted] (-1.5, 2) -- (4, 2) -- (5, 3);
					\draw[dotted] (-1.5, 3) -- (4, 3) -- (4.5, 3.5);

					\draw[gray, dashed, thick] (4.5, 3.5) -- (5.5, 4.5); 
					\draw[gray, dashed, thick] (6, 2) -- (7, 3); 
					\draw[->, thick, red] (6.5, 3.5) -- (7, 4) -- (9, 4); 
					\draw[gray, dashed, thick] (5.5, 2.5) -- (6.5, 3.5); 
					\draw[->, thick, green] (6, 4) -- (7, 5) -- (9, 5); 
					\draw[gray, dashed, thick] (5, 3) -- (6, 4);
					
					\draw[gray, dashed, thick] (4.5, 3.5) -- (6, 2);
					\draw[gray, dashed, thick] (5, 4) -- (6.5, 2.5);
					\draw[gray, dashed, thick] (5.5, 4.5) -- (7, 3); 
					\draw[->, thick, blue] (7, 3) -- (8, 2) -- (9, 2);
					
					\draw[-] (-1.75, 0) -- (-2, 0) -- (-2, 3) -- (-1.75, 3);
					\draw[] (-2, 1.5) circle [radius = 0] node[left, scale = .8]{$M$};
					
					\draw[] (8.5, .25) circle [radius = 0] node[above, scale = .7]{$\mathfrak{q}_1$};
					\draw[] (8.5, 1.25) circle [radius = 0] node[above, scale = .7]{$\cdots $};
					\draw[] (8.5, 2) circle [radius = 0] node[above, scale = .7]{$\mathfrak{q}_N$};
					\draw[] (8.5, 3.25) circle [radius = 0] node[above, scale = .7]{$\mathfrak{q}_{N+1}$};
					\draw[] (8.5, 4) circle [radius = 0] node[above, scale = .7]{$\mathfrak{q}_{N+2}$};
					\draw[] (8.5, 5) circle [radius = 0] node[above, scale = .7]{$\cdots$};
					\draw[] (8.4, 6.25) circle [radius = 0] node[above, scale = .7]{$\mathfrak{q}_{M+N}$};
					
				}
			\end{tikzpicture}
		\end{center}
		
		\caption{\label{fg4} Shown above is the vertex model from \Cref{fg3} after using the Yang--Baxter equation to move the cross to the right of $\mathbb{Z}_{< 0} \times \llbracket 1, M + N \rrbracket$.}
		\end{figure}

		This model also consists of three regions $\mathcal{R}_1'$, $\mathcal{R}_2'$, and $\mathcal{R}_3'$. The first $\mathcal{R}_1'$ is an $M \times N$ cross, that is now to the right of $\mathbb{Z}_{< 0} \times \llbracket 1, M+N \rrbracket$. The third $\mathcal{R}_3' = \mathbb{Z}_{\le 0} \times \llbracket 1, M \rrbracket$ consists of the bottom $M$ rows to the left of the  cross. The second $\mathcal{R}_2'$ consists of the remaining $N$ rows to the left of the cross. Again different vertex weights are used in these regions. In $\mathcal{R}_1'$, we use the weight $R_{y_i / x_j}$ at the intersection of the $i$-th column and $j$-th row of the cross, for each $(i, j) \in \llbracket 1, M \rrbracket \times \llbracket 1, N \rrbracket$; in $\mathcal{R}_2'$, for each $j \in \llbracket 1, N \rrbracket$, we use the weight $\widehat{L}_{x_j; s}$ at every $(-k, M+j)$; and in $\mathcal{R}_3'$, for each $i \in \llbracket 1, M \rrbracket$, we use the weight $L_{y_i; s}$ at every $(-k, i)$. 
		
		The boundary data for the model in \Cref{fg4} is prescribed as follows (it must match that of \Cref{fg3}). The entrance data is defined by having no arrows vertically enter any column of the model; no arrow horizontally enter through the bottom $M$ rows of the model; and having an arrow of color $\sigma (j)$ enter through the $(M+j)$-th row of the model, for each $j \in \llbracket 1, N \rrbracket$. The exit data is defined by having no arrows horizontally exit through any column of the model; having an arrow of color $\mathfrak{q}_j$ exit through the $j$-th row of the cross, for each $j \in \llbracket 1, N \rrbracket$; and having an arrow of color $\mathfrak{q}_{i+N}$ exit through the $(M-i+1)$-th column of the cross, for each $j \in \llbracket 1, M \rrbracket$.  
		
		Observe that the $\mathcal{R}_2' \cup \mathcal{R}_3' = \mathbb{Z}_{< 0} \times \llbracket 1, M + N \rrbracket$ part of this vertex model is frozen. The one colored higher spin path ensemble there (with that boundary data) that has nonzero weight is the one in which, for each $j \in \llbracket 1, N \rrbracket$, the path of color $\sigma (j)$ in the $(M+j)$-th row travels horizontally until it reaches the cross. Recalling from \eqref{rzij}, \eqref{lzi}, and \eqref{lzij2} that 
	\begin{flalign*}
	& L_{y_i; s} (\bm{e}_0, 0; \bm{e}_0, 0) = 1, \qquad \widehat{L}_{x_j; s} \big( \bm{e}_0, \sigma(j); \bm{e}_0, \sigma(j) \big) = 1,
\end{flalign*} 

	\noindent it follows that the weight of $\mathcal{R}_2' \cup \mathcal{R}_3'$ in \Cref{fg4} is equal to $1$.  The partition function of $\mathcal{R}_1'$ is given by $R_{\bm{x}; \bm{y}} \big( \bm{0}_M, \mathfrak{S} (\sigma); \mathfrak{C}, \mathfrak{D})$. Together with \eqref{zvertex2}, this gives 
	\begin{flalign*} 
		\mathcal{Z} (\bm{\mathfrak{q}}) = \mathbb{P}_{\SV}^{\sigma} \Big[ \big\{ \mathfrak{C} (\mathcal{E}) = \mathfrak{C} \big\} \cap \big\{ \mathfrak{D} (\mathcal{E}) = \mathfrak{D} \big\} \Big], 
	\end{flalign*} 

	\noindent which together with \eqref{zkprobability} yields the proposition. 	
	\end{proof}

	\begin{rem}
		
		Observe that the proof of \Cref{fgsv} used \eqref{zvertex2}, which required that the $R$-weights from \Cref{rzabcd} were stochastic. It is also possible to formulate a version of \Cref{fgsv} for non-stochastic $R$-weights, in which the stochastic weights of the vertex model describing the left side of \eqref{ekmuk} (equivalently, the stochastic weights of the cross in the proof of \Cref{fgsv}) would be determined by the $R$-weights through the stochasticization procedure of \cite{SSE}. 
		
	\end{rem}

	\begin{rem}

	Our reason for using the domain $\mathbb{Z}_{< 0} \times \llbracket 1, M + N \rrbracket$ above is to avoid having to ``reflect'' the stochastic cross (to direct its paths up-left instead of up-right) in the proof of \Cref{fgsv}, which would have been necessary had we instead used the more standard domain $\mathbb{Z}_{> 0} \times \llbracket 1, M + N \rrbracket$ from previous works \cite{HSVMSRF,SSVMP,CSVMST,RFSVM}. 
	 		
	\end{rem} 

	\begin{rem} 

	It is possible to formulate a generalization of \Cref{fgsv} when its domain $\mathcal{D}_{M;N}$ is not necessarily rectangular but instead ``jagged,'' that is, bounded by an up-left directed path $\mathcal{P}$. In this case, the associated measures $\mathbb{P}_{\fG}$ would no longer necessarily ascending, but would rather have ascents and descents (depending on whether a corresponding step of $\mathcal{P}$ is directed north or west); see \cite[Theorem 5.6]{SSVMP} for such a statement in the colorless ($n=1$) case.
	
	\end{rem}

	\section{Colored Line Ensembles for Colored Six-Vertex Models} 
	
	\label{L0}
	
	In this section we explain how the results of \Cref{Line} can be reformulated in terms of colored line ensembles. We first associate colored line ensembles with ascending sequences of compositions in \Cref{Ensemble} and then discuss properties of random colored line ensembles (associated with random ascending sequences of compositions sampled according to $\mathbb{P}_{\fG}^{\sigma}$) in \Cref{ConditionW}. We then describe color merging properties for these random colored line ensembles in \Cref{ColorL}. Throughout this section, we fix integers $n, M, N \ge 1$; a composition $\bm{\ell} = (\ell_1, \ell_2, \ldots , \ell_n)$ of $N$; a function $\sigma : \llbracket 1, N \rrbracket \rightarrow \llbracket 1, n \rrbracket$, such that for each $i \in \llbracket 1, n \rrbracket$ we have $\ell_i = \# \big\{ \sigma^{-1} (i) \big\}$; a complex number $s \in \mathbb{C}$; and sequences of complex numbers $\bm{x} = (x_1, x_2, \ldots , x_N)$ and $\bm{y} = (y_1, y_2, \ldots , y_M)$, such that \eqref{sxsy} holds.

	\subsection{Colored Line Ensembles and Ascending Sequences} 
	
	\label{Ensemble}
	
	In this section we associate a colored line ensemble (see \Cref{cl}) with a given $(M; \sigma)$-ascending sequence $\bm{\mu}$ of $n$-compositions. This is done through the following definition.    
	
	 \begin{definition} 
	 	
	 \label{lmu} 
	 
	 Let $\bm{\mu} = \big( \mu (0), \mu(1), \ldots , \mu(M+N) \big)$ denote an $(M; \sigma)$-ascending sequence of $n$-compositions. The associated simple colored line ensemble $\bm{\mathsf{L}} = \bm{\mathsf{L}}_{\bm{\mu}} = \big( \bm{\mathsf{L}}^{(1)}, \bm{\mathsf{L}}^{(2)}, \ldots , \bm{\mathsf{L}}^{(n)} \big)$ on $\llbracket 0, M+N \rrbracket$ is defined as follows. For each $c \in \llbracket 1, n \rrbracket$ let $\bm{\mathsf{L}}^{(c)} = \bm{\mathsf{L}}_{\bm{\mu}}^{(c)} = \big( \mathsf{L}_1^{(c)}, \mathsf{L}_2^{(c)}, \ldots \big)$, where for each $k \ge 1$ the function $\mathsf{L}_k^{(c)} = \mathsf{L}_{k;\bm{\mu}}^{(c)} : \llbracket 0, M+N \rrbracket \rightarrow \mathbb{Z}$ prescribed by setting  
	\begin{flalign}
		\label{kclmu}
		\mathsf{L}_k^{(c)} (i) = \ell_{[c,n]}  -\mathfrak{m}_{\le k - 1}^{\ge c} \big( \mu (i) \big), \qquad \text{for each  $i \in \llbracket 0, M+N \rrbracket$}.
	\end{flalign}
	
	\noindent The fact that this defines a simple colored line ensemble follows from \Cref{lmu1} below. We moreover set the differences $\bm{\Lambda}^{(c)} = \big( \Lambda_1^{(c)}, \Lambda_2^{(c)}, \ldots \big)$ of $\bm{\mathsf{L}}$ by 
	\begin{flalign*} 
		\Lambda_k^{(c)} (i) = \mathsf{L}_k^{(c)} (i) - \mathsf{L}_k^{(c+1)} (i), \qquad \text{for each $(k, i) \in \mathbb{Z}_{> 0} \times \llbracket 0, M + N \rrbracket$}, 
	\end{flalign*} 

	\noindent where $\mathsf{L}_k^{(n+1)} : \llbracket 0, M + N \rrbracket \rightarrow \mathbb{Z}$ is  defined by setting $\mathsf{L}_k^{(n+1)} (i) = 0$ for each $(k, i) \in \mathbb{Z}_{>0} \times  \llbracket 0, M + N \rrbracket$.

	\end{definition}

	\begin{figure} 
		
		\begin{center}
			\begin{tikzpicture}[
				>=stealth, 
				scale = .65]{	
					
					\draw[thick] (23, -.25) -- (23, 7.25);
					
					\draw[black, thick, dotted] (15, .5) -- (15, 6.5);
					\draw[black, thick, dotted] (16, .5) -- (16, 6.5);
					\draw[black, thick, dotted] (17, .5)  -- (17, 6.5);
					\draw[black, thick, dotted] (18, .5) node[below = 3, scale = .65]{$\cdots$} -- (18, 6.5);
					\draw[black, thick, dotted] (19, .5) node[below, scale = .65]{$-4$}  -- (19, 6.5);
					\draw[black, thick, dotted] (20, .5) node[below, scale = .65]{$-3$} -- (20, 6.5);
					\draw[black, thick, dotted] (21, .5) node[below, scale = .65]{$-2$} -- (21, 6.5);
					\draw[black, thick, dotted] (22, .5) node[below, scale = .65]{$-1$} -- (22, 6.5);
					
					\draw[black, thick, dotted] (15, .5) -- (22, .5);
					\draw[black, thick, dotted] (15, 1.5) -- (22, 1.5);
					\draw[black, thick, dotted] (15, 2.5) -- (22, 2.5);
					\draw[black, thick, dotted] (15, 3.5) -- (22, 3.5);
					\draw[black, thick, dotted] (15, 4.5) -- (22, 4.5);
					\draw[black, thick, dotted] (15, 5.5) -- (22, 5.5);
					\draw[black, thick, dotted] (15, 6.5) -- (22, 6.5);
					
					\draw[thick, dotted] (22, 2.5) -- (23, 2.5);
					\draw[thick, dotted] (22, 4.5) -- (23, 4.5);
					\draw[thick, dotted] (22, .5) -- (23, .5);
					\draw[thick, dotted] (22, 1.5) -- (23, 1.5);
					
					\draw[->, red, thick] (14, .5) node[left, black, scale = .75]{$\sigma(1)$} -- (15, .5) -- (21, .5) -- (21, 1.5) -- (22.85, 1.5) -- (22.85, 7.5);
					\draw[->, blue, thick] (14, 1.5) node[left, black, scale = .75]{$\sigma(2)$} -- (15, 1.5) -- (20, 1.5) -- (20, 2.5) -- (22, 2.5) -- (22, 3.5) -- (23.05, 3.5) -- (23.05, 7.5);
					\draw[->, blue, thick] (14, 2.5) node[left, black, scale = .75]{$\cdots$} -- (15, 2.5) -- (20.05, 2.5) -- (20.05, 5.5) -- (21, 5.5) -- (21, 6.5) -- (22, 6.5) -- (23.15, 6.5) -- (23.15, 7.5);
					\draw[->, red, thick] (14, 3.5) node[left, black, scale = .75]{$\sigma(N)$} -- (15, 3.5) -- (19.95, 3.5) -- (19.95, 4.5) -- (21, 4.5) -- (21, 5.5) -- (22.95, 5.5) -- (22.95, 7.5);
					
					\draw[-] (14.75, 4.5) -- (14.5, 4.5) -- (14.5, 6.5) -- (14.75, 6.5);
					\draw[] (14.5, 5.5) circle [radius = 0] node[left, scale = .7]{$M$};
					
					\draw[] (23, 0) circle [radius = 0] node[right, scale = .7]{$\mu(0)$};	
					\draw[] (23, 1) circle [radius = 0] node[right, scale = .7]{$\mu(1)$};	
					\draw[] (23, 2) circle [radius = 0] node[right, scale = .7]{$\vdots$};	
					\draw[] (23, 3) circle [radius = 0] node[right, scale = .7]{$\mu(N)$};	
					\draw[] (23, 4) circle [radius = 0] node[right, scale = .7]{$\mu(N+1)$};	
					\draw[] (23, 5) circle [radius = 0] node[right, scale = .7]{$\mu(N+2)$};		
					\draw[] (23, 6) circle [radius = 0] node[right = 2, scale = .7]{$\vdots$};	
					\draw[] (23, 7) circle [radius = 0] node[right = 3, scale = .7]{$\mu(M+N)$};		
					
					\draw[dotted] (26, 0) -- (33, 0) node[right, scale = .7]{$0$};
					\draw[dotted] (26, 1) -- (33, 1) node[right, scale = .7]{$1$};
					\draw[dotted] (26, 2) -- (33, 2) node[right, scale = .7]{$2$};
					\draw[dotted] (26, 3) -- (33, 3) node[right, scale = .7]{$3$};
					\draw[dotted] (26, 4) -- (33, 4) node[right, scale = .7]{$4$};
					\draw[dotted] (26, 0) node[below = 1, scale = .7]{$0$} -- (26, 4);
					\draw[dotted] (27, 0) node[below = 1, scale = .7]{$1$} -- (27, 4);
					\draw[dotted] (28, 0) node[below = 1, scale = .7]{$2$} -- (28, 4);
					\draw[dotted] (29, 0) node[below = 1, scale = .7]{$3$} -- (29, 4);
					\draw[dotted] (30, 0) node[below = 1, scale = .7]{$4$} -- (30, 4);
					\draw[dotted] (31, 0) node[below = 1, scale = .7]{$5$} -- (31, 4);
					\draw[dotted] (32, 0) node[below = 1, scale = .7]{$6$} -- (32, 4);
					\draw[dotted] (33, 0) node[below = 1, scale = .7]{$7$} -- (33, 4); 
					
					\draw[dotted] (26, 5) -- (33, 5) node[right, scale = .7]{$0$};
					\draw[dotted] (26, 6) -- (33, 6) node[right, scale = .7]{$1$};
					\draw[dotted] (26, 7) -- (33, 7) node[right, scale = .7]{$2$};
					\draw[dotted] (26, 5) node[below = 1, scale = .7]{$0$} -- (26, 7);
					\draw[dotted] (27, 5) node[below = 1, scale = .7]{$1$} -- (27, 7);
					\draw[dotted] (28, 5) node[below = 1, scale = .7]{$2$} -- (28, 7);
					\draw[dotted] (29, 5) node[below = 1, scale = .7]{$3$} -- (29, 7);
					\draw[dotted] (30, 5) node[below = 1, scale = .7]{$4$} -- (30, 7);
					\draw[dotted] (31, 5) node[below = 1, scale = .7]{$5$} -- (31, 7);
					\draw[dotted] (32, 5) node[below = 1, scale = .7]{$6$} -- (32, 7);
					\draw[dotted] (33, 5) node[below = 1, scale = .7]{$7$} -- (33, 7);
					
					\draw[ thick, blue] (26, 7.1) -- (29, 7.1) -- (30, 6.1) -- (32, 6.1) -- (33, 5.1);
					\draw[ thick, blue] (26, 7) -- (28, 7) -- (29, 6) -- (32, 6) -- (33, 5);
					\draw[ thick, blue] (26, 6.9) -- (28, 6.9) -- (29, 5.9) -- (31, 5.9) -- (32, 4.9) -- (33, 4.9);

					\draw[ thick, violet] (26, 4.1) -- (27, 4.1) -- (28, 3.1) -- (29, 3.1) -- (30, 2.1) -- (31, 2.1) -- (32, 1.1) -- (33, .1);
					\draw[ thick, violet] (26, 4) -- (27, 4) -- (28, 3) -- (29, 2) -- (31, 2) -- (32, 1) -- (33, 0);
					\draw[ thick, violet] (26, 3.9) -- (27, 2.9) -- (28, 2.9) -- (29, 1.9) -- (30, 1.9) -- (31, .9) -- (32, -.1) -- (33, -.1);
					
					\draw[] (29, 7.1) circle [radius = 0] node[above, scale = .65]{$\mathsf{L}_1^{(2)}$};
					\draw[] (29, 6.5) circle [radius = 0] node[scale = .65]{$\mathsf{L}_2^{(2)}$};
					\draw[] (29, 5.9) circle [radius = 0] node[below, scale = .65]{$\mathsf{L}_3^{(2)}$};
					
					\draw[] (29, 3.1) circle [radius = 0] node[above, scale = .65]{$\mathsf{L}_1^{(1)}$};
					\draw[] (29, 2.5) circle [radius = 0] node[scale = .65]{$\mathsf{L}_2^{(1)}$};
					\draw[] (29, 1.9) circle [radius = 0] node[below, scale = .65]{$\mathsf{L}_3^{(1)}$};
				}
			\end{tikzpicture}
		\end{center}
		
		\caption{\label{0lmu} Shown to the left is a depiction for an ascending sequence $\bm{\mu}$ of $2$-compositions through a colored higher spin path ensemble (where color $1$ is red and color $2$ is blue). Shown to the right are the two associated simple line ensembles $\bm{\mathsf{L}}^{(1)}$ (in purple, as it counts both red and blue paths) and $\bm{\mathsf{L}}^{(2)}$ (in blue).} 
		
	\end{figure}
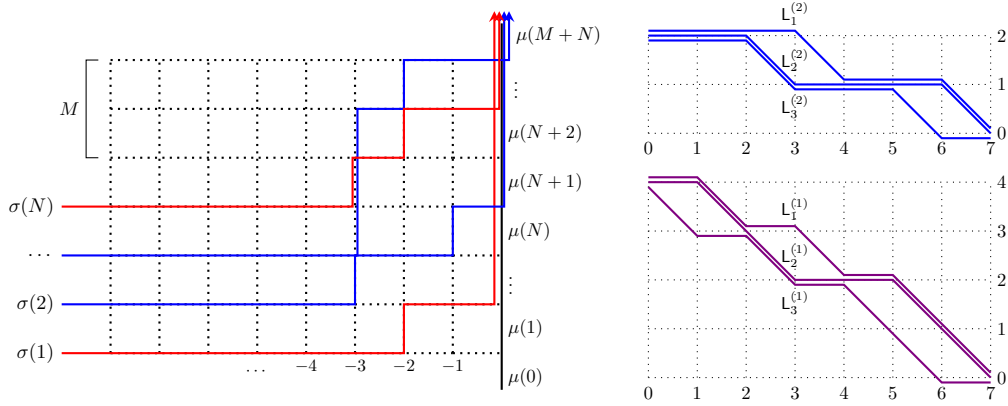

	\begin{lem} 
		
	\label{lmu1} 
	
	Adopting the notation and assumptions of \Cref{lmu}, $\bm{\mathsf{L}}$ is a simple colored line ensemble, which satisfies the following three properties for any $c \in \llbracket 1, n \rrbracket$, $k \in \mathbb{Z}_{>0}$, and $i \in \llbracket 0, M + N \rrbracket$.   
	\begin{enumerate}
		\item We have $\mathsf{L}_1^{(c)} (i) \ge \mathsf{L}_2^{(c)} (i) \ge \cdots $ and $\mathsf{L}_k^{(1)} (i) \ge \mathsf{L}_k^{(2)} (i) \ge \cdots$.  
		\item We have $\Lambda_k^{(c)} (i) - \Lambda_{k+1}^{(c)} (i) = \big( \mathsf{L}_{k}^{(c)} (i) - \mathsf{L}_{k}^{(c+1)} (i) \big) - \big( \mathsf{L}_{k+1}^{(c)} (i) - \mathsf{L}_{k+1}^{(c+1)} (i) \big) = \mathfrak{m}_k \big( \mu^{(c)} (i) \big)$. 
		\item If $i \ge 1$, we have $\mathsf{L}_k^{(c)} (i-1) - \mathsf{L}_k^{(c)} (i) = \mathbbm{1}_{\mathfrak{q}_{\bm{\mu}} (k, i) \ge c}$. 
	\end{enumerate} 

	\end{lem}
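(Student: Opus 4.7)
The plan is to directly verify the three listed properties from the defining formula \eqref{kclmu}, and then read off from them that $\bm{\mathsf{L}}$ is indeed a simple colored line ensemble.

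First I would establish property (1). By \eqref{kclmu}, the difference in the $k$-direction is
\begin{flalign*}
\mathsf{L}_k^{(c)} (i) - \mathsf{L}_{k+1}^{(c)} (i) = \mathfrak{m}_{\le k}^{\ge c} \big( \mu (i) \big) - \mathfrak{m}_{\le k-1}^{\ge c} \big( \mu (i) \big) = \mathfrak{m}_k^{\ge c} \big( \mu (i) \big) \ge 0,
\end{flalign*}
and the difference in the $c$-direction is
\begin{flalign*}
\mathsf{L}_k^{(c)} (i) - \mathsf{L}_k^{(c+1)} (i) = \ell_c - \mathfrak{m}_{\le k-1} \big( \mu^{(c)} (i) \big),
\end{flalign*}
which is nonnegative because property (2) of \Cref{mnmu} implies $\ell \big( \mu^{(c)} (i) \big) \le \ell_c$, so at most $\ell_c$ entries of $\mu^{(c)}(i)$ can be $\le k-1$. (For $c=n$, one uses $\mathsf{L}_k^{(n+1)}(i)=0$ together with the identity above applied to the \emph{extended} composition; the same bound gives the conclusion.)

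Next I would prove properties (2) and (3) by analogous direct expansion. For (2), from \eqref{kclmu},
\begin{flalign*}
\Lambda_k^{(c)} (i) - \Lambda_{k+1}^{(c)} (i) = \big( \mathfrak{m}_k^{\ge c} - \mathfrak{m}_k^{\ge c+1} \big) \big( \mu(i) \big) = \mathfrak{m}_k \big( \mu^{(c)}(i) \big).
\end{flalign*}
For (3), I would compute
\begin{flalign*}
\mathsf{L}_k^{(c)} (i-1) - \mathsf{L}_k^{(c)} (i) = \displaystyle\sum_{j=c}^n \Big( \mathfrak{m}_{\le k-1} \big( \mu^{(j)} (i) \big) - \mathfrak{m}_{\le k-1} \big( \mu^{(j)} (i-1) \big) \Big),
\end{flalign*}
and apply the defining property \eqref{moq} of $\mathfrak{q}_{\bm{\mu}}(k,i)$: exactly one summand equals $1$ (the one with index $\mathfrak{q}=\mathfrak{q}_{\bm{\mu}}(k,i)$, if it lies in $\llbracket c,n \rrbracket$) and all others vanish, yielding $\mathbbm{1}_{\mathfrak{q}_{\bm{\mu}}(k,i) \ge c}$.

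Finally, I would deduce that $\bm{\mathsf{L}}$ is a simple colored line ensemble from the three verified properties. Property (3) immediately gives $\mathsf{L}_k^{(c)}(i-1) - \mathsf{L}_k^{(c)}(i) \in \{0,1\}$, which together with the first half of property (1) shows each $\bm{\mathsf{L}}^{(c)}$ is a simple line ensemble as in \Cref{l}. For each $\bm{\Lambda}^{(c)}$, the second half of property (1) gives $\Lambda_k^{(c)}(i) = \mathsf{L}_k^{(c)}(i) - \mathsf{L}_k^{(c+1)}(i) \ge 0$; property (2) gives $\Lambda_k^{(c)}(i) \ge \Lambda_{k+1}^{(c)}(i)$; and subtracting the two instances of property (3) for $c$ and $c+1$ yields
\begin{flalign*}
\Lambda_k^{(c)} (i-1) - \Lambda_k^{(c)} (i) = \mathbbm{1}_{\mathfrak{q}_{\bm{\mu}}(k,i) = c} \in \{0,1\},
\end{flalign*}
so each $\bm{\Lambda}^{(c)}$ is itself a line ensemble, matching \Cref{cl}. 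There is no significant obstacle here; the entire argument is bookkeeping from the definitions, and the only care required is tracking the uniqueness clause in \eqref{moq} when handling property (3).
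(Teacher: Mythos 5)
Your proof is correct and follows essentially the same route as the paper's: direct bookkeeping from the defining formula \eqref{kclmu}, the inequality $\ell_c \ge \ell(\mu^{(c)}(i)) \ge \mathfrak{m}_{\le k-1}(\mu^{(c)}(i))$ for the ordering in $c$, and the uniqueness clause in \eqref{moq} for property (3). The one small stylistic difference is in property (3): you argue directly from the combinatorial condition \eqref{moq} in Definition \ref{mnmu}, whereas the paper phrases the same fact through the vertex-model picture of Remark \ref{mnmurow2} (arrow conservation for the ensemble $\mathcal{E}_{\bm{\mu}}$) — these are the same argument dressed differently, and your phrasing is arguably more self-contained. Your final identity $\Lambda_k^{(c)}(i-1) - \Lambda_k^{(c)}(i) = \mathbbm{1}_{\mathfrak{q}_{\bm{\mu}}(k,i) = c}$ is stated correctly as $\mathbbm{1}_{\ge c} - \mathbbm{1}_{\ge c+1}$ (the paper's displayed version has an off-by-one typo in the indices but reaches the same conclusion).
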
 
	
	\begin{proof} 
		
		Let us first confirm that the three properties in the lemma hold for $\bm{\mathsf{L}}$. The first follows from the facts that 
		\begin{flalign}
			\label{lkci}
			\begin{aligned}  
			& \mathsf{L}_k^{(c)} (i) - \mathsf{L}_{k+1}^{(c)} (i) = \mathfrak{m}_{\le k}^{\ge c} \big( \mu(i) \big) - \mathfrak{m}_{\le k-1}^{\ge c} \big( \mu(i) \big) = \mathfrak{m}_k^{\ge c} \big( \mu(i) \big) = \displaystyle\sum_{c' = c}^n \mathfrak{m}_k \big( \mu^{(c')} (i) \big) \ge 0; \\
			& \mathsf{L}_k^{(c)} (i) - \mathsf{L}_k^{(c+1)} (i)= \ell_{[c, n]} - \ell_{[c+1, n]} + \mathfrak{m}_{\le k-1}^{\ge c+1} \big( \mu (i) \big) - \mathfrak{m}_{\le k-1}^{\ge c} \big( \mu(i) \big) = \ell_{c} - \mathfrak{m}_{\le k-1} \big( \mu^{(c)} (i) \big) \ge 0,
			\end{aligned} 
		\end{flalign} 
	
		\noindent where in the last bound we used the inequality $\ell_{c} \ge \ell \big(\mu^{(c)} (i) \big) \ge \mathfrak{m}_{\le k-1} \big( \mu^{(c)} (i) \big)$ (by the second property in \Cref{mnmu}). The second property follows from the fact that 
		\begin{flalign*}
			\Lambda_k^{(c)} (i) - \Lambda_{k+1}^{(c)} (i) = \big( \mathsf{L}_{k}^{(c)} & (i) - \mathsf{L}_{k}^{(c+1)} (i) \big) - \big( \mathsf{L}_{k+1}^{(c)} (i) - \mathsf{L}_{k+1}^{(c+1)} (i) \big) \\ 
			& = \Big( \ell_{c} - \mathfrak{m}_{\le k-1} \big( \mu^{(c)} (i) \big) \Big) - \Big( \ell_{c} - \mathfrak{m}_{\le k} \big( \mu^{(c)} (i) \big) \Big) = \mathfrak{m}_k \big( \mu^{(c)} (i) \big) \ge 0,
		\end{flalign*}
		
		\noindent where in the second equality we used the second statement in \eqref{lkci}. The third holds by the equality
		\begin{flalign}
			\label{lkc}  
			\mathsf{L}_k^{(c)} (i-1) - \mathsf{L}_k^{(c)} (i) = \mathfrak{m}_{\le k-1}^{\ge c} \big( \mu(i) \big) - \mathfrak{m}_{\le k-1}^{\ge c} \big( \mu(i-1) \big),
		\end{flalign}
	
		\noindent and the fact that (by arrow conservation) the right side of \eqref{lkc} counts the number of arrows with color at least $c$ in $\mathcal{E}_{\bm{\mu}}$ that horizontally enter the vertex $(k-1, i)$, or equivalently that horizontally exit the vertex $(k, i)$; this is $\mathbbm{1}_{\mathfrak{q}_{\bm{\mu}} (k, i) \ge c}$, by \Cref{mnmurow2}. 
		
		The first and third properties of the lemma verify that each $\bm{\mathsf{L}}^{(c)}$ is a simple line ensemble. Moreover, each $\bm{\Lambda}^{(c)}$ is also a line ensemble, since $\Lambda_k^{(c)} \ge \Lambda_{k+1}^{(c)}$ by the second property in the lemma, and 
		\begin{flalign*} 
			\Lambda_k^{(c)} (i-1) - \Lambda_k^{(c)} (i) & = \mathsf{L}_k^{(c)} (i-1) - \mathsf{L}_k^{(c)} (i) - \big( \mathsf{L}_k^{(c+1)} (i-1) - \mathsf{L}_k^{(c+1)} (i) \big) \\ 
			& =  \mathbbm{1}_{\mathfrak{q}_{\bm{\mu}} (k, i) \ge c - 1} - \mathbbm{1}_{\mathfrak{q}_{\bm{\mu}} (k, i) \ge c} \ge 0,
		\end{flalign*} 
		
		\noindent by the third. This means that $\bm{\mathsf{L}}$ is a simple colored line ensemble. 
	\end{proof}

	\begin{rem} 
		
		As in \Cref{mnmurow2}, we may interpret $\bm{\mu}$ as associated with a colored higher spin path ensemble $\mathcal{E}_{\bm{\mu}} \in \mathfrak{P}_{\fG} (M; \sigma)$ on $\mathbb{Z}_{\le 0} \times \llbracket 1, M + N \rrbracket$. Then $\mathsf{L}_k^{(c)} (i) = \mathfrak{h}_{\ge c}^{\leftarrow} (-k, i)$, where the height function $\mathfrak{h}_{\ge c}^{\leftarrow}$ is with respect to $\mathcal{E}_{\bm{\mu}}$; stated alternatively, $\mathsf{L}_k^{(c)} (i)$ denotes the number of arrows with color at least $c$ that horizontally exit the column $\{ x = -k \}$ strictly above the vertex $(-k, i)$. See \Cref{0lmu} for a depiction when $(n, M, N) = (2, 3, 4)$.  
		
	\end{rem}

	By \Cref{lmu1}, \Cref{lmu}  associates a simple colored line ensemble to a given $(M; \sigma)$-ascending sequence of $n$-compositions. Since the latter are in bijection with colored higher spin path ensembles in $\mathfrak{P}_{\fG} (M; \sigma)$ by \Cref{mnmurow2}, this associates a simple colored line ensemble with any element of $\mathfrak{P}_{\fG} (M; \sigma)$. The following definition is towards the reverse direction; it associates a colored higher spin path ensemble with a simple colored line ensemble $\bm{\mathsf{L}}$.

	\begin{definition} 
		
		\label{le2}
		
		Adopt the notation from \Cref{cl} and assume that $\bm{\mathsf{L}}$ is simple. For any $v = (-k, i) \in \mathbb{Z}_{\le 0} \times \llbracket 1, M + N \rrbracket$, define the arrow configuration $\big( \bm{A}^{\bm{\mathsf{L}}} (v), b^{\bm{\mathsf{L}}} (v); \bm{C}^{\bm{\mathsf{L}}} (v), d^{\bm{\mathsf{L}}} (v) \big)$ as follows. The $n$-tuples $\bm{A}^{\bm{\mathsf{L}}} (v) = \big( A_1^{\bm{\mathsf{L}}} (v), A_2^{\bm{\mathsf{L}}} (v), \ldots , A_n^{\bm{\mathsf{L}}} (v) \big) \in \mathbb{Z}_{\ge 0}^n$ and $\bm{C}^{\bm{\mathsf{L}}} = \big( C_1^{\bm{\mathsf{L}}} (v), C_2^{\bm{\mathsf{L}}} (v), \ldots , C_n^{\bm{\mathsf{L}}} (v) \big) \in \mathbb{Z}_{\ge 0}^n$ are prescribed by setting   
		\begin{flalign*}
			& A_c^{\bm{\mathsf{L}}} (v) = \Lambda_k^{(c)} (i-1) - \Lambda_{k+1}^{(c)} (i-1) = \mathsf{L}_k^{(c)} (i-1) - \mathsf{L}_{k+1}^{(c)} (i-1) - \big( \mathsf{L}_k^{(c+1)} (i-1) - \mathsf{L}_{k+1}^{(c+1)} (i-1) \big); \\ 
			& C_c^{\bm{\mathsf{L}}} (v) = \Lambda_k^{(c)} (i) - \Lambda_{k+1}^{(c)} (i) = \mathsf{L}_k^{(c)} (i) - \mathsf{L}_{k+1}^{(c)} (i) - \big( \mathsf{L}_k^{(c+1)} (i) - \mathsf{L}_{k+1}^{(c+1)} (i) \big),
		\end{flalign*}
	
		\noindent for each $c \in \llbracket 1, n \rrbracket$ (observe that both are nonnegative since $\bm{\Lambda}^{(c)}$ is a line ensemble), and the indices $b^{\bm{\mathsf{L}}} (v), d^{\bm{\mathsf{L}}} (v) \in \llbracket 0, n \rrbracket$ are prescribed by setting 
		\begin{flalign*}
			& b^{\bm{\mathsf{L}}} (v) = \max \big\{ c \in \llbracket 1, n \rrbracket : \mathsf{L}_{k+1}^{(c)} (i-1) - \mathsf{L}_{k+1}^{(c)} (i) = 1 \big\}; \\
			& d^{\bm{\mathsf{L}}} (v) = \max \big\{ c \in \llbracket 1, n \rrbracket : \mathsf{L}_k^{(c)} (i-1) - \mathsf{L}_k^{(c)} (i) = 1 \big\},
		\end{flalign*}
	
		\noindent where the maxima are by definition set to $0$ if such an index $c \in \llbracket 1, n \rrbracket$ does not exist. This assignment of arrow configurations is consistent and satisfies arrow conservation; therefore, it defines a colored higher spin path ensemble $\mathcal{E}^{\bm{\mathsf{L}}}$ associated with the simple colored line ensemble $\bm{\mathsf{L}}$. 
			
		\end{definition} 
	
		The following lemma indicates that the associations from \Cref{lmu} and \Cref{le2} are compatible; we omit its proof, which is a quick verification using the second and third statements of \Cref{lmu1}. 
		
		\begin{lem} 
			
		\label{llmu}

		 If $\mathcal{E}^{\bm{\mathsf{L}}} = \mathcal{E}_{\bm{\mu}}$ for some $(M; \sigma)$-ascending sequence $\bm{\mu}$ of $n$-compositions, then $\bm{\mathsf{L}}$ is associated with $\bm{\mu}$ in the sense of \Cref{lmu}. 
	
		\end{lem}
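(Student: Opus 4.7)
The plan is to derive from $\mathcal{E}^{\bm{\mathsf{L}}} = \mathcal{E}_{\bm{\mu}}$ the two identities of \Cref{lmu1}(2, 3) for $\bm{\mathsf{L}}$ itself, and then to pin down the remaining additive constant by a boundary condition. At each vertex $v = (-k, i) \in \mathbb{Z}_{\le 0} \times \llbracket 1, M+N \rrbracket$ with $k \ge 1$, \Cref{le2} identifies the $c$-th vertical exit count as $C_c^{\bm{\mathsf{L}}}(v) = \Lambda_k^{(c)}(i) - \Lambda_{k+1}^{(c)}(i)$, while the analogous count in $\mathcal{E}_{\bm{\mu}}$ equals $\mathfrak{m}_k\big(\mu^{(c)}(i)\big)$ by the encoding described in \Cref{mnmurow2}; equating the two produces the analog of \Cref{lmu1}(2) for $\bm{\mathsf{L}}$. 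Matching the horizontal exit color $d^{\bm{\mathsf{L}}}(v) = \mathfrak{q}_{\bm{\mu}}(k,i)$ and combining with the simplicity of $\bm{\mathsf{L}}$ plus the monotonicity $\mathsf{L}_k^{(c)}(i-1) - \mathsf{L}_k^{(c)}(i) \ge \mathsf{L}_k^{(c+1)}(i-1) - \mathsf{L}_k^{(c+1)}(i)$ (an immediate consequence of $\bm{\Lambda}^{(c)}$ being a line ensemble) propagates a single ``$1$''-jump downward in $c$, yielding $\mathsf{L}_k^{(c)}(i-1) - \mathsf{L}_k^{(c)}(i) = \mathbbm{1}_{\mathfrak{q}_{\bm{\mu}}(k,i) \ge c}$, which is the analog of \Cref{lmu1}(3).

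Setting $D_k^{(c)}(i) := \mathsf{L}_k^{(c)}(i) - \mathsf{L}_{k; \bm{\mu}}^{(c)}(i)$, the two matches above give $D_k^{(c)}(i) - D_k^{(c)}(i-1) = 0$ and $\big(D_k^{(c)} - D_k^{(c+1)}\big) - \big(D_{k+1}^{(c)} - D_{k+1}^{(c+1)}\big) = 0$, so $D_k^{(c)}$ is constant in $i$ and $D_k^{(c)} - D_k^{(c+1)}$ is constant in $k$. Since the convention $\mathsf{L}_k^{(n+1)} \equiv 0$ gives $D_k^{(n+1)} \equiv 0$, a downward induction on $c$ reduces each $D_k^{(c)}$ to a constant depending only on $c$. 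It remains to show this constant vanishes, for which I would specialize to the corner $i = M+N$: the formula of \Cref{lmu} directly yields $\mathsf{L}_{k;\bm{\mu}}^{(c)}(M+N) = 0$ for every $k \ge 1$ (since $\mu^{(c)}(M+N) = 0^{\ell_c}$), while for $\bm{\mathsf{L}}$ the absence of horizontal arrows of $\mathcal{E}_{\bm{\mu}}$ entering rows $i \in \llbracket N+1, M+N \rrbracket$, combined with $\mathsf{L}_k^{(c)} \ge 0$ and the horizontal-difference identity just established, forces $\mathsf{L}_k^{(c)}(M+N) = 0$ for all $k$ sufficiently large, and hence for all $k$ by the $k$-independence of $D_k^{(c)}$.

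The main obstacle is precisely this last pinning step: the arrow data of $\mathcal{E}^{\bm{\mathsf{L}}}$ a priori only constrains the first differences of $\bm{\mathsf{L}}$ in both $i$ and $c$, so ruling out a global integer shift of any $\bm{\mathsf{L}}^{(c)}$ requires an input from the \emph{global} structure of $\mathcal{E}_{\bm{\mu}}$ (finitely many arrows, all eventually exiting through the north edge at $(0, M+N)$); careful extraction of this input is what turns the otherwise purely local matching of differences into a genuine identification $\bm{\mathsf{L}} = \bm{\mathsf{L}}_{\bm{\mu}}$.
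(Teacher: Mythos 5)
Your matching of first differences is correct and is exactly the ``quick verification using the second and third statements of Lemma \ref{lmu1}'' that the paper has in mind: from $\mathcal{E}^{\bm{\mathsf{L}}} = \mathcal{E}_{\bm{\mu}}$ one reads off $\Lambda_k^{(c)}(i) - \Lambda_{k+1}^{(c)}(i) = \mathfrak{m}_k(\mu^{(c)}(i))$ via $C_c^{\bm{\mathsf{L}}}$, and $\mathsf{L}_k^{(c)}(i-1) - \mathsf{L}_k^{(c)}(i) = \mathbbm{1}_{\mathfrak{q}_{\bm{\mu}}(k,i) \ge c}$ via $d^{\bm{\mathsf{L}}}$ together with simplicity and monotonicity of $\bm{\Lambda}^{(c)}$. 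Your telescoping of $D_k^{(c)}(i)$ down to a constant in $(i,k)$ is also correct.

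The gap is exactly where you flagged it, but your proposed fix does not close it. You invoke $\mathsf{L}_k^{(c)} \ge 0$ to conclude $\mathsf{L}_k^{(c)}(M+N) = 0$ for large $k$, but nonnegativity is not part of Definitions \ref{l} or \ref{cl}: a line ensemble is an infinite sequence of $\mathbb{Z}$-valued functions subject only to the monotonicity inequalities \eqref{lki2}, and a colored line ensemble imposes the further requirement that each $\bm{\Lambda}^{(c)}$ be a line ensemble, with $\mathsf{L}_k^{(n+1)} \equiv 0$ as a bookkeeping convention; none of this bounds $\mathsf{L}_k^{(c)}$ from below. Indeed the map $\bm{\mathsf{L}} \mapsto \mathcal{E}^{\bm{\mathsf{L}}}$ of Definition \ref{le2} depends on $\bm{\mathsf{L}}$ only through first differences in both $i$ and $k$, so adding a uniform integer constant $a$ to every $\mathsf{L}_k^{(c)}$ preserves both the colored-line-ensemble axioms and the associated path ensemble $\mathcal{E}^{\bm{\mathsf{L}}}$. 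Thus the literal statement ``$\mathcal{E}^{\bm{\mathsf{L}}} = \mathcal{E}_{\bm{\mu}}$ $\Rightarrow$ $\bm{\mathsf{L}} = \bm{\mathsf{L}}_{\bm{\mu}}$'' cannot hold without a normalization convention pinning the additive constant (for example, imposing $\mathsf{L}_k^{(c)}(M+N) = 0$ for all sufficiently large $k$, or equivalently $\mathsf{L}_k^{(c)}(0) = \ell_{[c,n]}$ for all $k$). Your argument supplies exactly such a normalization but asserts it rather than derives it.

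What the paper actually requires of this lemma (see its use in the proof of Theorem \ref{conditionl}) is the converse direction $\mathcal{E}^{\bm{\mathsf{L}}_{\bm{\mu}}} = \mathcal{E}_{\bm{\mu}}$, which is an immediate plug-in of Lemma \ref{lmu1}(2, 3) into Definition \ref{le2} and is free of the shift ambiguity; no pinning argument is needed at all in that direction. So the ``main obstacle'' you identify is genuine for the statement as written, but it is dissolved by reading the lemma as asserting compatibility of the two constructions rather than injectivity of $\bm{\mathsf{L}} \mapsto \mathcal{E}^{\bm{\mathsf{L}}}$ on all of $\bm{\mathsf{L}}$-space.
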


	\subsection{Properties of Random Colored Line Ensembles} 
	
	\label{ConditionW}
	
	In this section we discuss some properties of colored line ensembles $\bm{\mathsf{L}}_{\bm{\mu}}$ associated with an $(M; \sigma)$-ascending sequence $\bm{\mu}$ of $n$-compositions sampled from the measure $\mathbb{P}_{\fG}^{\sigma}$ (recall \Cref{measurefg}). Let us first assign notation to this law on colored line ensembles.
	
	\begin{definition} 
		
	\label{llmu0} 
	
	Let $\mathbb{P}_{\scL}^{\sigma} = \mathbb{P}_{\scL; n; s; \bm{x}; \bm{y}}^{\sigma}$ denote the law of a simple colored line ensemble $\bm{\mathsf{L}}_{\bm{\mu}}$ associated with a random $(M; \sigma)$-ascending sequence $\bm{\mu}$ of $n$-compositions (as in \Cref{lmu}) sampled from the measure $\mathbb{P}_{\fG; n; s; \bm{x}; \bm{y}}^{\sigma}$. 
	
	\end{definition}

	The following result, which is a quick consequence of \Cref{hfg2}, provides under this setup a matching in law between the top curves of $\bm{\mathsf{L}}$ (under $\mathbb{P}_{\scL}^{\sigma}$) and the height functions for a colored stochastic six-vertex model (recall \Cref{probabilityvertex}). 
	
	\begin{thm} 
		
	\label{lmu2} 
	 
	 Sample a simple colored line ensemble $\bm{\mathsf{L}}$ on $\llbracket 0, M+N \rrbracket$ from the measure $\mathbb{P}_{\scL; n; s; \bm{x}; \bm{y}}^{\sigma}$, and sample a random colored six-vertex ensemble $\mathcal{E}$ under $\mathbb{P}_{\SV; \bm{x}; \bm{y}}^{\sigma}$.  For each $c \in \llbracket 1, n \rrbracket$, define the function $H_c : \llbracket 0, M + N \rrbracket \rightarrow \mathbb{Z}$ by setting  
	 \begin{flalign*} 
	 	H_c (k) = \mathfrak{h}_{\ge c}^{\leftarrow} (M, k), \quad \text{if $k \in \llbracket 0, N \rrbracket$}; \qquad H_c (k) = \mathfrak{h}_{\ge c}^{\leftarrow} (M+N-k,  N), \quad \text{if $k \in \llbracket N, M+N \rrbracket$},
	\end{flalign*} 

	\noindent where $\mathfrak{h}_{\ge c}^{\leftarrow}$ is the height function associated with $\mathcal{E}$. Then, the joint law of $\big( \mathsf{L}_1^{(1)}, \mathsf{L}_1^{(2)}, \ldots , \mathsf{L}_1^{(n)} \big)$ is the same as that of $(H_1, H_2, \ldots,  H_n)$.
			
	\end{thm}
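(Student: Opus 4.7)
The plan is to reduce the theorem directly to \Cref{hfg2}. By \Cref{llmu0}, the measure $\mathbb{P}_{\scL; n; s; \bm{x}; \bm{y}}^{\sigma}$ is the pushforward of $\mathbb{P}_{\fG; n; s; \bm{x}; \bm{y}}^{\sigma}$ under the deterministic map $\bm{\mu} \mapsto \bm{\mathsf{L}}_{\bm{\mu}}$ from \Cref{lmu}. Specializing \eqref{kclmu} to $k = 1$ (so that $\mathfrak{m}_{\le 0}^{\ge c} = \mathfrak{m}_0^{\ge c}$) expresses the top curves as $\mathsf{L}_1^{(c)}(i) = \ell_{[c,n]} - \mathfrak{m}_0^{\ge c}\big(\mu(i)\big)$, jointly for all $c \in \llbracket 1, n \rrbracket$ and $i \in \llbracket 0, M + N \rrbracket$, as a deterministic functional of the sampled ascending sequence $\bm{\mu}$.

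Next, I would rewrite each $H_c$ in the analogous form. Under the $\sigma$-entrance data, exactly $\ell_{[c,n]}$ paths of color at least $c$ enter $\mathcal{D}_{M;N}$, all through the west boundary. Since every such path either passes to the left of or to the right of any vertex $v$ on the north or east boundary of $\mathcal{D}_{M;N}$, we have $\mathfrak{h}_{\ge c}^{\leftarrow}(v) + \mathfrak{h}_{\ge c}^{\rightarrow}(v) = \ell_{[c,n]}$. Applying this at $v = (M, k)$ for $k \in \llbracket 0, N \rrbracket$ and at $v = (M+N-k, N)$ for $k \in \llbracket N, M+N \rrbracket$ expresses $H_c(k)$ as $\ell_{[c,n]}$ minus the corresponding coordinate of the family \eqref{hcij}.

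Combining the two displays finishes the argument: \Cref{hfg2} provides the joint equality in law of the families \eqref{hcij} and \eqref{mcj}, and subtracting each coordinate from $\ell_{[c,n]}$ is an index-wise deterministic bijection that preserves joint laws. This matches $(H_1, \ldots, H_n)$ with $\big(\mathsf{L}_1^{(1)}, \ldots, \mathsf{L}_1^{(n)}\big)$ on the indices $k \in \llbracket 1, M + N \rrbracket$; at the unindexed endpoints the equality is tautological, since $\mu(0)$ is empty and $\mu(M+N) = (0^{\ell_1} \mid \cdots \mid 0^{\ell_n})$ by the first clause of \Cref{mnmu}, while no path of color $\ge c$ can cross the ray below $(M + 1/2, 1/2)$ and every such path must cross the ray below $(1/2, N + 1/2)$. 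There is no real obstacle to overcome here: all substantive content — the Yang--Baxter-based matching between the colored stochastic six-vertex measure and the $fG$-measure — is already encoded in \Cref{fgsv} and \Cref{hfg2}, and the present theorem amounts to a reinterpretation of that corollary through the bijection of \Cref{lmu}.
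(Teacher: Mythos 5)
Your proof is correct and follows essentially the same route as the paper's own: you invoke the complementation identity $\mathfrak{h}_{\ge c}^{\leftarrow}(v) + \mathfrak{h}_{\ge c}^{\rightarrow}(v) = \ell_{[c,n]}$ along the northeast boundary, specialize \eqref{kclmu} to $k=1$, and pass through \Cref{hfg2}, which is precisely what the paper does. Your explicit verification of the $k=0$ endpoint (the only index not covered by \Cref{hfg2}) is a worthwhile detail the paper glosses over; the observation about $k = M+N$ is redundant since that index is already inside the range of \eqref{hcij}, but it does no harm.
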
 

	\begin{proof}
		
		Since $\mathfrak{h}_{\ge c}^{\leftarrow} (i, j) = \ell_{[c, n]} - \mathfrak{h}_{\ge c}^{\rightarrow} (i, j)$ holds for any integer $c \in \llbracket 1, n \rrbracket$ and vertex $(i, j) \in \big\{ (M, 0), (M, 1), \ldots , (M,N), (M, N-1), \ldots , (0, N) \big\}$ along the northeast boundary of $\llbracket 0, M \rrbracket \times \llbracket 0, N \rrbracket$, this theorem follows from \eqref{kclmu} and \Cref{hfg2}. 
	\end{proof}

	The next theorem explains the effect of conditioning on some of the curves in $\bm{\mathsf{L}}$, if $\bm{\mu}$ is sampled under the $\mathbb{P}_{\scL}^{\sigma}$ measure; see \Cref{lconditionmu} for a depiction. We will use this as a Gibbs property for the line ensemble $\bm{\mathsf{L}}$. In the below, we recall the notion of compatibility for line ensembles from \Cref{lluvij}; the vertex weights $L_{x;s}$ from \Cref{lzdefinition}; the association of a colored line ensemble with an ascending sequence of $n$-compositions from \Cref{lmu}; and the notation from \Cref{le2}.

	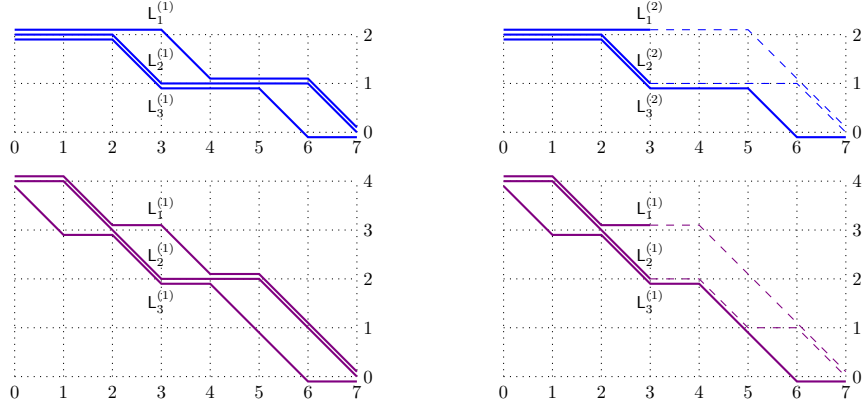
\begin{figure} 
		
		\begin{center}
			\begin{tikzpicture}[
				>=stealth, 
				scale = .65]{	
					
					\draw[dotted] (26, 0) -- (33, 0) node[right, scale = .7]{$0$};
					\draw[dotted] (26, 1) -- (33, 1) node[right, scale = .7]{$1$};
					\draw[dotted] (26, 2) -- (33, 2) node[right, scale = .7]{$2$};
					\draw[dotted] (26, 3) -- (33, 3) node[right, scale = .7]{$3$};
					\draw[dotted] (26, 4) -- (33, 4) node[right, scale = .7]{$4$};
					\draw[dotted] (26, 0) node[below = 1, scale = .7]{$0$} -- (26, 4);
					\draw[dotted] (27, 0) node[below = 1, scale = .7]{$1$} -- (27, 4);
					\draw[dotted] (28, 0) node[below = 1, scale = .7]{$2$} -- (28, 4);
					\draw[dotted] (29, 0) node[below = 1, scale = .7]{$3$} -- (29, 4);
					\draw[dotted] (30, 0) node[below = 1, scale = .7]{$4$} -- (30, 4);
					\draw[dotted] (31, 0) node[below = 1, scale = .7]{$5$} -- (31, 4);
					\draw[dotted] (32, 0) node[below = 1, scale = .7]{$6$} -- (32, 4);
					\draw[dotted] (33, 0) node[below = 1, scale = .7]{$7$} -- (33, 4); 
					
					\draw[dotted] (26, 5) -- (33, 5) node[right, scale = .7]{$0$};
					\draw[dotted] (26, 6) -- (33, 6) node[right, scale = .7]{$1$};
					\draw[dotted] (26, 7) -- (33, 7) node[right, scale = .7]{$2$};
					\draw[dotted] (26, 5) node[below = 1, scale = .7]{$0$} -- (26, 7);
					\draw[dotted] (27, 5) node[below = 1, scale = .7]{$1$} -- (27, 7);
					\draw[dotted] (28, 5) node[below = 1, scale = .7]{$2$} -- (28, 7);
					\draw[dotted] (29, 5) node[below = 1, scale = .7]{$3$} -- (29, 7);
					\draw[dotted] (30, 5) node[below = 1, scale = .7]{$4$} -- (30, 7);
					\draw[dotted] (31, 5) node[below = 1, scale = .7]{$5$} -- (31, 7);
					\draw[dotted] (32, 5) node[below = 1, scale = .7]{$6$} -- (32, 7);
					\draw[dotted] (33, 5) node[below = 1, scale = .7]{$7$} -- (33, 7);
					
					\draw[ thick, blue] (26, 7.1) -- (29, 7.1);
					\draw[dashed, blue] (29, 7.1) -- (31, 7.1) -- (33, 5.1);
					\draw[ thick, blue] (26, 7) -- (28, 7) -- (29, 6);
					\draw[dashed, blue] (29, 6)  -- (32, 6) -- (33, 5);
					\draw[ thick, blue] (26, 6.9) -- (28, 6.9) -- (29, 5.9) -- (31, 5.9) -- (32, 4.9) -- (33, 4.9);
					
					\draw[ thick, violet] (26, 4.1) -- (27, 4.1) -- (28, 3.1) -- (29, 3.1);
					\draw[dashed, violet] (29, 3.1) -- (30, 3.1) -- (33, .1);
					\draw[ thick, violet] (26, 4) -- (27, 4) -- (28, 3) -- (29, 2);
					\draw[dashed, violet] (29, 2) -- (30, 2) -- (31, 1) -- (32, 1) -- (33, 0);
					\draw[ thick, violet] (26, 3.9) -- (27, 2.9) -- (28, 2.9) -- (29, 1.9) -- (30, 1.9) -- (31, .9) -- (32, -.1) -- (33, -.1);
					
					\draw[] (29, 7.1) circle [radius = 0] node[above, scale = .65]{$\mathsf{L}_1^{(2)}$};
					\draw[] (29, 6.5) circle [radius = 0] node[scale = .65]{$\mathsf{L}_2^{(2)}$};
					\draw[] (29, 5.9) circle [radius = 0] node[below, scale = .65]{$\mathsf{L}_3^{(2)}$};
					
					\draw[] (29, 3.1) circle [radius = 0] node[above, scale = .65]{$\mathsf{L}_1^{(1)}$};
					\draw[] (29, 2.5) circle [radius = 0] node[scale = .65]{$\mathsf{L}_2^{(1)}$};
					\draw[] (29, 1.9) circle [radius = 0] node[below, scale = .65]{$\mathsf{L}_3^{(1)}$};

					\draw[dotted] (16, 0) -- (23, 0) node[right, scale = .7]{$0$};
					\draw[dotted] (16, 1) -- (23, 1) node[right, scale = .7]{$1$};
					\draw[dotted] (16, 2) -- (23, 2) node[right, scale = .7]{$2$};
					\draw[dotted] (16, 3) -- (23, 3) node[right, scale = .7]{$3$};
					\draw[dotted] (16, 4) -- (23, 4) node[right, scale = .7]{$4$};
					\draw[dotted] (16, 0) node[below = 1, scale = .7]{$0$} -- (16, 4);
					\draw[dotted] (17, 0) node[below = 1, scale = .7]{$1$} -- (17, 4);
					\draw[dotted] (18, 0) node[below = 1, scale = .7]{$2$} -- (18, 4);
					\draw[dotted] (19, 0) node[below = 1, scale = .7]{$3$} -- (19, 4);
					\draw[dotted] (20, 0) node[below = 1, scale = .7]{$4$} -- (20, 4);
					\draw[dotted] (21, 0) node[below = 1, scale = .7]{$5$} -- (21, 4);
					\draw[dotted] (22, 0) node[below = 1, scale = .7]{$6$} -- (22, 4);
					\draw[dotted] (23, 0) node[below = 1, scale = .7]{$7$} -- (23, 4); 
					
					\draw[dotted] (16, 5) -- (23, 5) node[right, scale = .7]{$0$};
					\draw[dotted] (16, 6) -- (23, 6) node[right, scale = .7]{$1$};
					\draw[dotted] (16, 7) -- (23, 7) node[right, scale = .7]{$2$};
					\draw[dotted] (16, 5) node[below = 1, scale = .7]{$0$} -- (16, 7);
					\draw[dotted] (17, 5) node[below = 1, scale = .7]{$1$} -- (17, 7);
					\draw[dotted] (18, 5) node[below = 1, scale = .7]{$2$} -- (18, 7);
					\draw[dotted] (19, 5) node[below = 1, scale = .7]{$3$} -- (19, 7);
					\draw[dotted] (20, 5) node[below = 1, scale = .7]{$4$} -- (20, 7);
					\draw[dotted] (21, 5) node[below = 1, scale = .7]{$5$} -- (21, 7);
					\draw[dotted] (22, 5) node[below = 1, scale = .7]{$6$} -- (22, 7);
					\draw[dotted] (23, 5) node[below = 1, scale = .7]{$7$} -- (23, 7);
					
					\draw[ thick, blue] (16, 7.1) -- (19, 7.1) -- (20, 6.1) -- (22, 6.1) -- (23, 5.1);
					\draw[ thick, blue] (16, 7) -- (18, 7) -- (19, 6) -- (22, 6) -- (23, 5);
					\draw[ thick, blue] (16, 6.9) -- (18, 6.9) -- (19, 5.9) -- (21, 5.9) -- (22, 4.9) -- (23, 4.9);
					
					\draw[ thick, violet] (16, 4.1) -- (17, 4.1) -- (18, 3.1) -- (19, 3.1) -- (20, 2.1) -- (21, 2.1) -- (22, 1.1) -- (23, .1);
					\draw[ thick, violet] (16, 4) -- (17, 4) -- (18, 3) -- (19, 2) -- (21, 2) -- (22, 1) -- (23, 0);
					\draw[ thick, violet] (16, 3.9) -- (17, 2.9) -- (18, 2.9) -- (19, 1.9) -- (20, 1.9) -- (21, .9) -- (22, -.1) -- (23, -.1);
					
					\draw[] (19, 7.1) circle [radius = 0] node[above, scale = .65]{$\mathsf{L}_1^{(1)}$};
					\draw[] (19, 6.5) circle [radius = 0] node[scale = .65]{$\mathsf{L}_2^{(1)}$};
					\draw[] (19, 5.9) circle [radius = 0] node[below, scale = .65]{$\mathsf{L}_3^{(1)}$};
					
					\draw[] (19, 3.1) circle [radius = 0] node[above, scale = .65]{$\mathsf{L}_1^{(1)}$};
					\draw[] (19, 2.5) circle [radius = 0] node[scale = .65]{$\mathsf{L}_2^{(1)}$};
					\draw[] (19, 1.9) circle [radius = 0] node[below, scale = .65]{$\mathsf{L}_3^{(1)}$};
				}
			\end{tikzpicture}
		\end{center}
		
		\caption{\label{lconditionmu} To the left is a simple colored line ensemble $\bm{\mathsf{L}}$. To the right, we have conditioned on $\mathsf{L}_k^{(c)} (m)$ for $(k, m) \notin \llbracket 1, 2 \rrbracket \times \llbracket 4, 6 \rrbracket$ and resampled the first two curves on $\llbracket 4, 6 \rrbracket$ (shown as dashed).}
		
	\end{figure}

	\begin{thm} 
		
		\label{conditionl}
		
		Sample $\bm{\mathsf{L}} = \bm{\mathsf{L}}_{\bm{\mu}}$ under $\mathbb{P}_{\scL; n; s; \bm{x}; \bm{y}}^{\sigma}$. Fix integers $j > i \ge 0$ and $u, v \in \llbracket 0, M + N \rrbracket$ with $u < v$; set $i_0 = \max \{ i, 1 \}$; and condition on the curves $\mathsf{L}_k^{(c)} (m)$ for all $c \in \llbracket 1, n \rrbracket$ and $(k, m) \in \big( \mathbb{Z}_{> 0} \times \llbracket 0, M+N \rrbracket \big) \setminus \big( \llbracket i+1, j \rrbracket \times \llbracket u, v-1 \rrbracket \big)$. For any simple colored line ensemble $\bm{\mathsf{l}}$ that is $\llbracket i+1, j \rrbracket \times \llbracket u, v-1 \rrbracket$-compatible with $\bm{\mathsf{L}}$, we have
		\begin{flalign}
			\label{llprobability}
			\begin{aligned}
				\mathbb{P} [\bm{\mathsf{L}} = \bm{\mathsf{l}}] & = \mathcal{Z}^{-1} \cdot \displaystyle\prod_{k=i_0}^j \displaystyle\prod_{\substack{m \in \llbracket u, v \rrbracket \\ m \le N}} L_{x_m; s} \big( \bm{A}^{\bm{\mathsf{l}}} (-k, m), b^{\bm{\mathsf{l}}} (-k, m); \bm{C}^{\bm{\mathsf{l}}} (-k, m), d^{\bm{\mathsf{l}}} (-k, m) \big) \\
				& \qquad \times \displaystyle\prod_{k=i_0}^j \displaystyle\prod_{\substack{m \in \llbracket u, v \rrbracket \\ m > N}} L_{y_{m-N}; s} \big( \bm{A}^{\bm{\mathsf{l}}} (-k, m), b^{\bm{\mathsf{l}}} (-k, m); \bm{C}^{\bm{\mathsf{l}}} (-k, m), d^{\bm{\mathsf{l}}} (-k, m) \big).
			\end{aligned}
		\end{flalign}
		
		\noindent Here, the probability on the left side of \eqref{llprobability} is with respect to the conditional law of $\bm{\mathsf{L}}$. Moreover, $\mathcal{Z}$ is a normalizing constant defined so that the sum of the right side of \eqref{llprobability}, over all simple colored line ensembles $\bm{\mathsf{l}}$ that are $\llbracket i+1, j \rrbracket \times \llbracket u, v-1 \rrbracket$-compatible with $\bm{\mathsf{L}}_{\bm{\mu}}$, is equal to $1$.
		
	\end{thm}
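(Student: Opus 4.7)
The plan is to exploit the correspondence from Definition~\ref{lmu} between $(M;\sigma)$-ascending sequences of $n$-compositions and simple colored line ensembles with the appropriate boundary behavior. Since Definition~\ref{llmu0} defines $\mathbb{P}_{\scL}^{\sigma}$ as the pushforward of $\mathbb{P}_{\fG}^{\sigma}$ under this correspondence, and since Lemma~\ref{llmu} together with Definition~\ref{le2} shows that the correspondence is a bijection onto its image, the probability on the left side of \eqref{llprobability} equals $\mathbb{P}_{\fG;n;s;\bm{x};\bm{y}}^{\sigma}\big[\bm{\mu}=\bm{\mu}^{\bm{\mathsf{l}}}\big]$ for the unique ascending sequence $\bm{\mu}^{\bm{\mathsf{l}}}$ with $\bm{\mathsf{L}}_{\bm{\mu}^{\bm{\mathsf{l}}}}=\bm{\mathsf{l}}$ (and both sides vanish otherwise). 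My first step will thus be to expand this pushforward as a pure product of local vertex weights.

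To obtain such a factorization, I will show that each $f_{\mu(j)/\mu(j-1);s}^{\sigma(j)}(x_j)$ and $G_{\mu(i-1)/\mu(i);s}(y_{i-N})$ appearing in Definition~\ref{measurefg} is a single-term partition function. Indeed, given $\mu(j-1), \mu(j), \sigma(j)$ (or $\mu(i-1), \mu(i)$ for $G$), the single-row ensemble is uniquely determined: walking left to right from $-\infty$, the triple $(b,\bm{A},\bm{C})$ at each vertex is fixed, and arrow conservation $\{b\}\sqcup\bm{A}=\{d\}\sqcup\bm{C}$ uniquely determines $d$ among the six vertex types enumerated in \Cref{lzdefinition}. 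Combining these single-term expansions with Definition~\ref{measurefg} yields
\begin{flalign*}
\mathbb{P}[\bm{\mathsf{L}}=\bm{\mathsf{l}}]=\mathcal{Z}_{\bm{x};\bm{y}}^{-1}\prod_{k\ge 1}\prod_{m=1}^{N}\widehat{L}_{x_m;s}\big(\text{config at }(-k,m)\big)\prod_{k\ge 1}\prod_{m=N+1}^{M+N}L_{y_{m-N};s}\big(\text{config at }(-k,m)\big),
\end{flalign*}
where each arrow configuration at $(-k,m)$ is the one determined by $\bm{\mathsf{l}}$ via \Cref{le2}. The boundary factors at $(0,m)$ drop out because $d^{\bm{\mathsf{l}}}(0,m)=0$ in any ensemble in $\mathfrak{P}_{\fG}(M;\sigma)$ (no horizontal arrows exit through the $y$-axis), and one checks from \eqref{lzi} that $L_{x;0}(\bm{A},b;\bm{C},0)=1$ in both permitted vertex types.

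To obtain \eqref{llprobability}, I will then take the ratio of the above formula at $\bm{\mathsf{l}}$ and at another compatible configuration, collecting only those local factors that depend on the resampled values. A direct index check using \Cref{le2} shows that altering $\mathsf{L}_k^{(c)}(m)$ inside $\llbracket i+1,j\rrbracket\times\llbracket u,v-1\rrbracket$ modifies the arrow configuration at $(-k',m')$ only when $k'\in\llbracket i,j\rrbracket$ and $m'\in\llbracket u,v\rrbracket\cap\llbracket 1,M+N\rrbracket$; the $k'=0$ contributions (present only if $i=0$) are again $1$, trimming the effective range to $k'\in\llbracket i_0,j\rrbracket$. Finally, the identity $\widehat{L}_{x;s}=\tfrac{1-sx}{x-s}L_{x;s}$ from \eqref{lzij2} lets me replace each $\widehat{L}_{x_m;s}$ by $L_{x_m;s}$ at the cost of an $\bm{\mathsf{l}}$-independent multiplicative constant that is absorbed into $\mathcal{Z}$, producing \eqref{llprobability}. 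The most delicate ingredient is the single-row uniqueness, which ensures $\mathbb{P}_{\fG}^{\sigma}$ factorizes as a pure product of local vertex weights; once this is in hand, the remainder reduces to bookkeeping of which weights change under resampling inside the rectangle.
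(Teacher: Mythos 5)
Your proposal is correct and follows essentially the same route as the paper's proof: express $\mathbb{P}[\bm{\mathsf{L}}=\bm{\mathsf{l}}]$ as the $\mathbb{P}_{\fG}^{\sigma}$-probability of the corresponding ascending sequence, factor it as a pure product of local $\widehat{L}_{x_m;s}$- and $L_{y_{m-N};s}$-weights (discarding the $k=0$ column, which contributes $1$), observe after conditioning that only the vertices $(-k',m')$ with $k'\in\llbracket i_0,j\rrbracket$ and $m'\in\llbracket u,v\rrbracket$ carry $\bm{\mathsf{l}}$-dependent weights, and absorb the $\widehat{L}$-to-$L$ gauge factor $(1-sx_m)/(x_m-s)$ into $\mathcal{Z}$. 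The only cosmetic difference is that you spell out the single-row uniqueness of $\mathfrak{P}_f$ and $\mathfrak{P}_G$ (which the paper cites implicitly via \eqref{lele}, \eqref{fgmunu}, \eqref{fgprobabilitymu} and Lemma~\ref{llmu}), which is a correct and useful clarification.
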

	
	\begin{proof}
		
		Letting $\bm{\mu}$ be distributed according to $\mathbb{P}_{\fG; n; s; \bm{x}; \bm{y}}^{\sigma}$, \eqref{lele}, \eqref{fgmunu}, and \eqref{fgprobabilitymu} together imply that
		\begin{flalign}
			\label{zmul} 
			\begin{aligned}
			\mathbb{P} [\bm{\mu}] & = \mathcal{Z}_{\bm{x}; \bm{y}}^{-1} \cdot \displaystyle\prod_{k=1}^{\infty} \displaystyle\prod_{m=1}^N \widehat{L}_{x_m; s} \big( \bm{A}_{\mathcal{E}_{\bm{\mu}}} (-k, m), b_{\mathcal{E}_{\bm{\mu}}} (-k, m); \bm{C}_{\mathcal{E}_{\bm{\mu}}} (-k, m), d_{\mathcal{E}_{\bm{\mu}}} (-k, m) \big) \\
			& \qquad \qquad \qquad \times \displaystyle\prod_{k=1}^{\infty} \displaystyle\prod_{m=N+1}^{M+N} L_{y_{m-N}; s} \big( \bm{A}_{\mathcal{E}_{\bm{\mu}}} (-k, m), b_{\mathcal{E}_{\bm{\mu}}} (-k, m); \bm{C}_{\mathcal{E}_{\bm{\mu}}} (-k, m), d_{\mathcal{E}_{\bm{\mu}}} (-k, m) \big),
			\end{aligned} 
		\end{flalign}
	
		\noindent where $\big( \bm{A}_{\mathcal{E}_{\bm{\mu}}} (-k, m), b_{\mathcal{E}_{\bm{\mu}}} (-k, m); \bm{C}_{\mathcal{E}_{\bm{\mu}}} (-k, m), d_{\mathcal{E}_{\bm{\mu}}} (-k, m) \big)$ denotes the arrow configuration in the colored higher spin path ensemble $\mathcal{E}_{\bm{\mu}}$ (recall \Cref{mnmurow2}) at $(-k, m) \in \mathbb{Z}_{\le 0} \times \llbracket 1, M+N \rrbracket$, and we recall the normalization constant $\mathcal{Z}(\bm{x}; \bm{y})$ from \eqref{zxys}.\footnote{Here, to restrict the products on the right side of \eqref{zmul} to terms with $k \ne 0$, we implicitly used the fact that $L_{z;0} (\bm{A}, b; \bm{A}_b^+, 0) = 1$ for any $\bm{A} \in \mathbb{Z}_{\ge 0}^n$ and $b \in \llbracket 0, n \rrbracket$ (see \eqref{lyi0}), and that $d_{\mathcal{E}_{\bm{\mu}}} (0, m) = 0$ for all $m \in \llbracket 1, M + N \rrbracket$.} Next, by \Cref{llmu}, each arrow configuration $\big( \bm{A}_{\mathcal{E}_{\bm{\mu}}} (-k, m), b_{\mathcal{E}_{\bm{\mu}}} (-k, m); \bm{C}_{\mathcal{E}_{\bm{\mu}}} (-k, m), d_{\mathcal{E}_{\bm{\mu}}} (-k, m) \big)$ appearing in \eqref{zmul} coincides with $\big( \bm{A}^{\bm{\mathsf{L}}} (-k, m), b^{\bm{\mathsf{L}}} (-k, m); \bm{C}^{\bm{\mathsf{L}}} (-k, m), d^{\bm{\mathsf{L}}} (-k, m) \big)$. Together with \eqref{zmul} and the fact that $\widehat{L}_{x;s} = (1-sx) (x-s)^{-1} \cdot L_{x;s}$ (by \Cref{lzdefinition}), this yields 
		\begin{flalign}
			\label{lmu0}
			\begin{aligned} 
			\mathbb{P} [\bm{\mathsf{L}} = \bm{\mathsf{l}}] & = \mathcal{Z}_{\bm{x}; \bm{y}}^{-1} \cdot \displaystyle\prod_{k=1}^{\infty} \displaystyle\prod_{m=1}^N \displaystyle\frac{1-sx_m}{x_m - s} \cdot L_{x_m;s} \big( \bm{A}^{\bm{\mathsf{l}}} (-k, m), b^{\bm{\mathsf{l}}} (-k, m); \bm{C}^{\bm{\mathsf{l}}} (-k, m), d^{\bm{\mathsf{l}}} (-k, m) \big) \\
			& \qquad \qquad \qquad \times \displaystyle\prod_{k=1}^{\infty} \displaystyle\prod_{m=N+1}^{M+N} L_{y_{m-N}; s} \big( \bm{A}^{\bm{\mathsf{l}}} (-k, m), b^{\bm{\mathsf{l}}} (-k, m); \bm{C}^{\bm{\mathsf{l}}} (-k, m), d^{\bm{\mathsf{l}}} (-k, m) \big),
			\end{aligned}
		\end{flalign}
		
		\noindent where on the left side $\bm{\mathsf{L}}$ is sampled under the measure $\mathbb{P}_{\fG; n; s; \bm{x}; \bm{y}}^{\sigma}$, without any conditioning yet.
		
		Now, as in the statement of the theorem, we condition on the curves $ \mathsf{L}_k^{(c)} (m)$ for $c \in \llbracket 1, n \rrbracket$ and $(k, m) \notin \llbracket i+1, j \rrbracket \times \llbracket u, v-1 \rrbracket$. By \Cref{le2}, this amounts to conditioning on the restriction of $\mathcal{E}^{\bm{\mathsf{L}}}$ to the complement of $\llbracket -j,-i \rrbracket \times \llbracket u, v \rrbracket$. Hence, the factors on the right side of \eqref{lmu0} corresponding to $(k, m) \notin \llbracket i, j \rrbracket \times \llbracket u, v \rrbracket$ are deterministic and can thus be incorporated into the normalization constant, which gives (recalling $i_0 = \max \{ i, 1 \}$)
		\begin{flalign*}
			\mathbb{P} \big[ \bm{\mathsf{L}} & = \bm{\mathsf{l}} \big] = \widetilde{\mathcal{Z}}^{-1} \cdot\displaystyle\prod_{k=i_0}^j \displaystyle\prod_{\substack{m \in \llbracket u, v \rrbracket \\ m \le N}} \displaystyle\frac{1-sx_m}{x_m - s} \cdot L_{x_m; s} \big( \bm{A}^{\bm{\mathsf{l}}} (-k, m), b^{\bm{\mathsf{l}}} (-k, m); \bm{C}^{\bm{\mathsf{l}}} (-k, m), d^{\bm{\mathsf{l}}} (-k, m) \big) \\
			& \qquad \qquad \qquad \times \displaystyle\prod_{k=i_0}^j \displaystyle\prod_{\substack{m \in \llbracket u, v \rrbracket \\ m > N}}  L_{y_{m-N};s} \big( \bm{A}^{\bm{\mathsf{l}}} (-k, m), b^{\bm{\mathsf{l}}} (-k, m); \bm{C}^{\bm{\mathsf{l}}} (-k, m), d^{\bm{\mathsf{l}}} (-k, m) \big), 
		\end{flalign*} 
	
		\noindent for some normalization constant $\widetilde{\mathcal{Z}}$. Similarly incorporating the product $\prod_{m=1}^N (1-sx_m) (x_m - s)^{-1}$ into the normalization constant gives \eqref{llprobability}. 
	\end{proof}

	\subsection{Color Merging} 
	
	\label{ColorL} 
	
	In this section we describe several \emph{color merging} properties, which enable us to obtain a system with $n-1$ colors by merging two colors (say $1$ and $2$) in a corresponding system with $n$ colors. Some of the proofs in this section will only be outlined, since analogous color merging phenomena have been discussed extensively in the literature already; see, for example, \cite[Section 2.4]{CSVMST}, \cite[Proposition 4.11]{SVMP}, and \cite[Sections 2.3 and 5.2]{CFVMSF}. Throughout this section, we will define several functions that all have the effect of merging colors $1$ and $2$. They will each be denoted by $\vartheta$, which should not cause confusion since they act on different spaces. 
	
	First define $\vartheta : \llbracket 0, n \rrbracket \rightarrow \llbracket 0, n-1 \rrbracket$ by setting 
	\begin{flalign}
		\label{121}
		\vartheta (0) = 0, \quad \vartheta (1) = 1, \quad \vartheta(2) = 1, \quad \text{and} \quad \vartheta (i) = i - 1, \quad \text{for each $i \in \llbracket 3, n \rrbracket$}.
	\end{flalign}

	\noindent Also define its action on $n$-tuples of integers $\vartheta : \mathbb{Z}_{\ge 0}^n \rightarrow \mathbb{Z}_{\ge 0}^{n-1}$ by setting
	\begin{flalign}
		\label{12i1}
		\vartheta (\bm{I}) = (I_1 + I_2, I_3, \ldots , I_n), \qquad \text{for any $\bm{I} = (I_1, I_2, \ldots , I_n) \in \mathbb{Z}_{\ge 0}^n$}.
	\end{flalign}

	With this notation, it is quickly verified\footnote{This also follows from applying fusion (see \Cref{Fusion}) below to \cite[Proposition 4.3]{SVMP}.} that the weights from \Cref{lzdefinition} satisfy the following color-merging property for $n \ge 2$. Fix integers $\breve{b}, \breve{d} \in \llbracket 0, n - 1 \rrbracket$ and $(n-1)$-tuples $\breve{\bm{A}}, \breve{\bm{C}} \in \mathbb{Z}_{\ge 0}^{n-1}$, as well as an integer $b \in \llbracket 0, n \rrbracket$ and an $n$-tuple $\bm{A} \in \mathbb{Z}_{\ge 0}^n$, such that $\vartheta (b) = \breve{b}$ and $\vartheta (\bm{A}) = \breve{\bm{A}}$. Then, 
		\begin{flalign}
			\label{2lmerge} 
			\displaystyle\sum_{\substack{\bm{C} \in \mathbb{Z}_{\ge 0}^n \\ \vartheta(\bm{C}) = \breve{\bm{C}}}} \displaystyle\sum_{\substack{d \in \llbracket 0, n \rrbracket \\ \vartheta(d) = \breve{d}}} L_{x;s}^{(n)} (\bm{A}, b; \bm{C}, d) = L_{x;s}^{(n-1)} ( \breve{\bm{A}}, \breve{b}; \breve{\bm{C}}, \breve{d});
		\end{flalign}
		
		\noindent the analogous statement also holds for the $\widehat{L}$-weights. The equality \eqref{2lmerge} indicates that identifying colors $1$ and $2$ in an $n$-color $L$-weight yields an $(n-1)$-color one.
		
		The next lemma states that merging colors $1$ and $2$ in either the $n$-color $f$ or $G$ yields the same function, but on $n-1$ colors. 
		In the below, we define the action of $\vartheta$ on $n$-compositions $\vartheta : \Comp_n \rightarrow \Comp_{n-1}$ by for any $\bm{\mu} = \big( \mu^{(1)} \mid \mu^{(2)} \mid \cdots \mid \mu^{(n)} \big) \in \Comp_n$, setting 
		\begin{flalign}
			\label{mu121}
			\vartheta (\bm{\mu}) = \big( \mu^{(1)} \cup \mu^{(2)} \mid \mu^{(3)} \mid \cdots \mid \mu^{(n)} \big),
		\end{flalign}
		
		\noindent where $\mu^{(1)} \cup \mu^{(2)}$ is the signature obtained by taking the disjoint union of $\mu^{(1)}$ and $\mu^{(2)}$, and sorting its parts in non-increasing order. Moreover, given any function $\varsigma : \llbracket 1, N \rrbracket \rightarrow \llbracket 1, n \rrbracket$ further define the function $\vartheta (\varsigma) : \llbracket 1, N \rrbracket \rightarrow \llbracket 1, n-1 \rrbracket$ by setting 
		\begin{flalign}
			\label{1210} 
			\vartheta(\varsigma) (i) = \vartheta \big( \varsigma(i) \big), \quad \text{for each $i \in \llbracket 1, N \rrbracket$, and set} \quad \breve{\sigma} = \vartheta(\sigma),
		\end{flalign}
		
		\noindent where we recall $\sigma : \llbracket 1, N \rrbracket \rightarrow \llbracket 1, n \rrbracket$ that was fixed in the beginning of \Cref{L0}. 	
		
		\begin{lem} 
		
		\label{mergemunu}
	
		Fix an integer $k \ge 1$; $(n-1)$-compositions $\breve{\mu}, \breve{\nu}, \breve{\kappa} \in \Comp_{n-1}$; and an $n$-composition $\nu \in \Comp_n$, such that $\vartheta (\nu) = \breve{\nu}$. We have 
		\begin{flalign*}
			\displaystyle\sum_{\substack{\mu \in \Comp_n \\ \vartheta (\mu) = \breve{\mu}}} f_{\mu/\nu; s}^{\sigma} (\bm{x}) = f_{\breve{\mu} / \breve{\nu}; s}^{\breve{\sigma}} (\bm{x}); \qquad \displaystyle\sum_{\substack{\kappa \in \Comp_n \\ \vartheta (\kappa) = \breve{\kappa}}} G_{\nu / \kappa; s} (\bm{y}) = G_{\breve{\nu} / \breve{\kappa}; s} (\bm{y}).
		\end{flalign*} 
	
		\end{lem}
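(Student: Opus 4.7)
The plan is to establish both identities simultaneously by realizing each side as a partition function over colored higher spin path ensembles (as in \Cref{fg}), and then showing that the color-merging map $\vartheta$ from \eqref{121}, \eqref{12i1}, and \eqref{1210} pushes the $n$-color partition function onto the $(n-1)$-color one. More precisely, given any $(n-1)$-color ensemble $\breve{\mathcal{E}} \in \mathfrak{P}_f(\breve{\mu}/\breve{\nu}; \breve{\sigma})$ (respectively, $\mathfrak{P}_G(\breve{\mu}/\breve{\nu})$), I would argue that the sum of $\widehat{L}_{\bm{x};s}(\mathcal{E})$ (respectively, $L_{\bm{x};s}(\mathcal{E})$) over all $n$-color ensembles $\mathcal{E}$ that project onto $\breve{\mathcal{E}}$ under $\vartheta$ (applied edge-by-edge) equals $\widehat{L}_{\bm{x};s}(\breve{\mathcal{E}})$ (respectively, $L_{\bm{x};s}(\breve{\mathcal{E}})$). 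Summing this equality over $\breve{\mathcal{E}}$ then yields the lemma, since the boundary constraints on $\mu, \nu, \kappa$ defining each $\mathfrak{P}$ correspond precisely under $\vartheta$ to those of the corresponding $(n-1)$-color $\mathfrak{P}$.

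The local tool is the color-merging identity \eqref{2lmerge}, which states that summing an $n$-color $L$-weight over the output edge configurations $(\bm{C}, d)$ that project onto a given $(n-1)$-color output $(\breve{\bm{C}}, \breve{d})$ yields the corresponding $(n-1)$-color $L$-weight at the same inputs. The analogous identity for $\widehat{L}_{x;s}$ is immediate, since by \eqref{lzij2} it is obtained from $L_{x;s}$ by a scalar rescaling that is independent of the arrow configuration. To leverage \eqref{2lmerge} globally, I would process the vertices of the strip $\mathcal{D}_N$ in a bottom-to-top, right-to-left order so that each vertex's two input edges (the horizontal edge immediately to its right and the vertical edge immediately below) either lie on a portion of the boundary already fixed by $\sigma$ or $\nu$ (whose $n$-color colors automatically satisfy $\vartheta(\sigma) = \breve{\sigma}$ and $\vartheta(\nu) = \breve{\nu}$) or coincide with output edges of vertices processed in earlier steps, whose summed-over $n$-color configurations inductively project onto the fixed $(n-1)$-color values of $\breve{\mathcal{E}}$.

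At each newly processed vertex $v$ with input $(\bm{A}(v), b(v))$ satisfying $\vartheta(\bm{A}(v)) = \breve{\bm{A}}(v)$ and $\vartheta(b(v)) = \breve{b}(v)$, the partial sum one carries along factors out a term $L^{(n)}_{x;s}(\bm{A}(v), b(v); \bm{C}(v), d(v))$ that is still summed over output lifts. Collecting these sums yields at $v$ precisely the left side of \eqref{2lmerge}, which equals $L^{(n-1)}_{x;s}(\breve{\bm{A}}(v), \breve{b}(v); \breve{\bm{C}}(v), \breve{d}(v))$. Iterating this reduction over all vertices (only finitely many of which contribute nontrivially, by the finite-support conventions implicit in \Cref{dwe} and \Cref{fg}), we obtain the claimed equality between the lifted partition function and the $(n-1)$-color weight $L_{\bm{x};s}(\breve{\mathcal{E}})$ or $\widehat{L}_{\bm{x};s}(\breve{\mathcal{E}})$.

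The main obstacle is the bookkeeping required to verify that this vertex-by-vertex reduction is self-consistent: one must confirm that every $n$-color ensemble $\mathcal{E}$ projecting onto $\breve{\mathcal{E}}$ is recovered exactly once as the nested output sums are carried out, and that at each intermediate stage the input colors at a new vertex are genuinely summed only over lifts compatible with the fixed $(n-1)$-color ensemble. Since arrow conservation in an $n$-color ensemble is already encoded in the support of $L^{(n)}_{x;s}$, this amounts to a careful enumeration confirming that the chosen sweep order neither double-counts nor omits any configuration. Once this is in place, the argument applies uniformly to $f$ (using the $\widehat{L}$-analogue of \eqref{2lmerge} in the bulk together with the $L$-version at the $y$-axis of $\mathcal{D}_N$) and to $G$ (using only \eqref{2lmerge}), the two cases differing only in their horizontal boundary data.
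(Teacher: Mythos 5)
Your approach is the same as the paper's: view $f$ and $G$ as partition functions over path ensembles and inductively apply the vertex-local color-merging identity \eqref{2lmerge} to collapse each group of $n$-color lifts of a fixed $(n-1)$-color ensemble onto the corresponding $(n-1)$-color weight, using the $\widehat{L}$-version at bulk vertices for $f$. The paper's proof is itself only an outline with exactly this reduction, so the two match; the one thing to watch is a slip in your description of the sweep — in the paper's convention (\Cref{fg}, \Cref{fgfunctions}) arrows travel up-right on $\mathbb{Z}_{\le 0}\times\llbracket 1,N\rrbracket$, so the horizontal \emph{input} to a vertex is the edge to its \emph{left} (with $\sigma$-data entering at $-\infty$), not to its right; the induction you describe — peeling off one vertex at a time, applying \eqref{2lmerge} to sum over its outputs, and invoking the inductive hypothesis that the tail sum is independent of which lift feeds into it — is sound once the orientation is fixed.
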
 
	
		\begin{proof}[Proof (Outline)] 
			
			Recalling the definitions \eqref{fgmunu} of $f$ and $G$ as partition functions (under the $\widehat{L}$-weights and $L$-weights, respectively), this follows quickly from inductively applying the color merging \eqref{2lmerge} of vertex weights. See also \cite[Proposition 4.11]{SVMP} (which can in fact be seen to directly imply \Cref{mergemunu} using fusion, defined in \Cref{Fusion}), for a very similar argument; we omit further details.
		\end{proof} 
		
	The next lemma describes the effect of merging colors $1$ and $2$ in a random $n$-composition sampled from the measure $\mathbb{P}_{\fG; n; s; \bm{x}; \bm{y}}^{\sigma}$ (recall \Cref{measurefg}). In the following, given a sequence of $n$-compositions $\bm{\mu} = \big( \mu(0), \mu(1), \ldots , \mu(M+N) \big)$, we set 
	\begin{flalign}
		\label{12mu0} 
		\vartheta (\bm{\mu}) = \Big( \vartheta \big( \mu(0) \big), \vartheta \big( \mu(1) \big), \ldots , \vartheta \big( \mu(M+N) \big) \Big), 
	\end{flalign}

	\noindent which is an $(M; \breve{\sigma})$-ascending sequence of $(n-1)$-compositions.
	\begin{lem}
	
	\label{nn1mu} 
	
	 If an $(M; \sigma)$-ascending sequence of $n$-compositions $\bm{\mu}$ is sampled from $\mathbb{P}_{\fG; n; s; \bm{x}; \bm{y}}^{\sigma}$, then the $(M; \breve{\sigma})$-ascending sequence $\vartheta(\bm{\mu})$ of $(n-1)$-compositions has law $\mathbb{P}_{\fG; n-1; s; \bm{x}; \bm{y}}^{\breve{\sigma}}$. 

	\end{lem}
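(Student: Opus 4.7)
The approach is a direct inductive computation using the explicit product form of $\mathbb{P}_{\fG}^{\sigma}$ given by \eqref{fgprobabilitymu} and iterated application of Lemma \ref{mergemunu}. Since the normalization $\mathcal{Z}_{\bm{x};\bm{y}}$ in \eqref{zxys} depends only on $q$, $\bm{x}$, and $\bm{y}$ (not on $n$), it suffices to prove, for each $(M;\breve{\sigma})$-ascending sequence $\breve{\bm{\mu}}$ of $(n-1)$-compositions, the partition-function identity
\[
\sum_{\substack{\bm{\mu}\\ \vartheta(\bm{\mu}) = \breve{\bm{\mu}}}} \prod_{j=1}^N f_{\mu(j)/\mu(j-1);s}^{\sigma(j)}(x_j) \prod_{i=N+1}^{M+N} G_{\mu(i-1)/\mu(i);s}(y_{i-N})
= \prod_{j=1}^N f_{\breve{\mu}(j)/\breve{\mu}(j-1);s}^{\breve{\sigma}(j)}(x_j) \prod_{i=N+1}^{M+N} G_{\breve{\mu}(i-1)/\breve{\mu}(i);s}(y_{i-N}),
\]
with the outer sum running over $(M;\sigma)$-ascending sequences of $n$-compositions.

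I would establish this by induction on $k \in \llbracket 0, M+N \rrbracket$. Let $U_k(\breve{\bm{\mu}}^{[0,k]})$ denote the left side with all products and summations truncated to indices $\le k$ (with the ascending constraint $\vartheta(\mu(j)) = \breve{\mu}(j)$ for $j \le k$), and let $\breve{U}_k(\breve{\bm{\mu}}^{[0,k]})$ denote the analogous $(n-1)$-colored quantity. The claim is $U_k = \breve{U}_k$. The base case $k = 0$ is immediate since $\mu(0) = \emptyset = \breve{\mu}(0)$ are forced. For the inductive step, isolate the sum over $\mu(k)$ subject to $\vartheta(\mu(k)) = \breve{\mu}(k)$; the outermost factor involves only $\mu(k-1)$ and $\mu(k)$, namely $f^{\sigma(k)}_{\mu(k)/\mu(k-1);s}(x_k)$ if $k \le N$, or $G_{\mu(k-1)/\mu(k);s}(y_{k-N})$ if $k > N$. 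Lemma \ref{mergemunu} (applied with the single variable $x_k$ or $y_{k-N}$) collapses this inner sum to the corresponding $(n-1)$-color factor, which, crucially, depends on $\mu(k-1)$ only through $\vartheta(\mu(k-1)) = \breve{\mu}(k-1)$. It therefore factors out of the remaining sum over $\bm{\mu}^{[0,k-1]}$, which by the inductive hypothesis equals $U_{k-1}(\breve{\bm{\mu}}^{[0,k-1]}) = \breve{U}_{k-1}(\breve{\bm{\mu}}^{[0,k-1]})$, matching the identical recursion satisfied by $\breve{U}_k$. Taking $k = M+N$ yields the displayed identity, noting that $\mu(M+N) = (0^{\ell_1} \mid \cdots \mid 0^{\ell_n})$ and $\breve{\mu}(M+N) = (0^{\ell_1+\ell_2} \mid 0^{\ell_3} \mid \cdots \mid 0^{\ell_n})$ are forced by Definition \ref{mnmu} and do satisfy $\vartheta(\mu(M+N)) = \breve{\mu}(M+N)$.

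The main obstacle is a purely bookkeeping check: verifying that $\vartheta$ indeed sends $(M;\sigma)$-ascending sequences to $(M;\breve{\sigma})$-ascending ones, so that every term in the left-hand sum contributes to a legal $(n-1)$-colored configuration and the inductive recursion matches that for $\breve{U}_k$. This amounts to checking the length conditions and the single-color-increment property \eqref{moq} in Definition \ref{mnmu} under the identifications \eqref{121}, \eqref{12i1}, \eqref{mu121}, and \eqref{1210}, using $\breve{\bm{\ell}} = (\ell_1+\ell_2, \ell_3, \ldots, \ell_n)$. The only mildly delicate point is that $\mathfrak{q}_{\bm{\mu}}(k,i) \in \{1,2\}$ in $\bm{\mu}$ collapses to $\mathfrak{q}_{\vartheta(\bm{\mu})}(k,i) = 1$, while $\mathfrak{q}_{\bm{\mu}}(k,i) = c \ge 3$ becomes $c-1$, so the uniqueness of the color-increment index is preserved in both cases.
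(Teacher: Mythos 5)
Your proposal is correct and takes essentially the same route as the paper: both prove the partition-function identity by applying Lemma \ref{mergemunu} inductively to the product formula \eqref{fgprobabilitymu}, with the key observation that the merged factor $f_{\breve{\mu}(k)/\breve{\mu}(k-1);s}^{\breve{\sigma}(k)}(x_k)$ (or $G_{\breve{\mu}(k-1)/\breve{\mu}(k);s}(y_{k-N})$) depends on $\mu(k-1)$ only through $\breve{\mu}(k-1)$, so it pulls out of the remaining sum. Your version is just a more explicit unpacking of the paper's terse ``inductively applying these equalities,'' and the bookkeeping remarks (the $\vartheta$-compatibility of Definition \ref{mnmu} and the forced terminal composition) correctly identify the points the paper leaves implicit.
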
 
	
	\begin{proof}
		
		Fix an $(M; \breve{\sigma})$-ascending sequence $\breve{\bm{\mu}} = \big( \breve{\mu}(0), \breve{\mu}(1), \ldots,  \breve{\mu} (M+N) \big)$ of $(n-1)$-compositions. For any integers $j \in \llbracket 1, N \rrbracket$ and $i \in \llbracket N+1, M + N \rrbracket$, and $n$-compositions $\mu(j-1), \mu(i-1) \in \Comp_n$ such that $\vartheta \big( \mu(j-1) \big) = \breve{\mu}(j-1)$ and $\vartheta \big( \mu(i-1) \big) = \breve{\mu}(i-1)$, we have by \Cref{mergemunu} that
		\begin{flalign*}
		& \displaystyle\sum_{\substack{\mu(j) \in \Comp_n \\ \vartheta(\mu(j)) = \breve{\mu}(j)}} f_{\mu(j) / \mu(j-1); s}^{\sigma(j)} (x_j) = f_{\breve{\mu}(j) / \breve{\mu}(j-1); s}^{\breve{\sigma}(j)} (x_j); \\ 
		& \displaystyle\sum_{\substack{\mu(i) \in \Comp_n \\ \vartheta (\mu(i)) = \breve{\mu}(i)}} G_{\mu(i-1) / \mu(i); s} (y_{i-N}) = G_{\breve{\mu}(i-1) / \breve{\mu} (i); s} (y_{i-N}). 
		\end{flalign*}
		
		\noindent Inductively applying these equalities, and using \Cref{measurefg}, it follows that 
		\begin{flalign*}
			\mathbb{P}_{\fG; n; s; \bm{x}; \bm{y}}^{\sigma} \big[ \vartheta(\bm{\mu}) =  \breve{\bm{\mu}} \big] & = \mathcal{Z}_{\bm{x}; \bm{y}}^{-1} \cdot \displaystyle\sum_{\vartheta (\bm{\mu}) = \breve{\bm{\mu}}} \displaystyle\prod_{j=1}^N f_{\mu(j) / \mu(j-1); s}^{\sigma(j)} (x_j) \displaystyle\prod_{i=N+1}^{M+N} G_{\mu(i-1)/\mu(i); s} (y_{i-N}) \\
			& = \mathcal{Z}_{\bm{x}; \bm{y}}^{-1} \cdot \displaystyle\prod_{j=1}^N f_{\breve{\mu}(j) / \breve{\mu} (j-1); s}^{\breve{\sigma}(j)}(x_j) \displaystyle\prod_{i=N+1}^{M+N} G_{\breve{\mu}(i-1) / \breve{\mu}(i); s} (y_{i-N}) \\
			& = \mathbb{P}_{\fG; n-1; s; \bm{x}; \bm{y}}^{\breve{\sigma}} [\breve{\bm{\mu}}],
		\end{flalign*}
		
		\noindent which yields the lemma. 	
	\end{proof} 

	We conclude this section with the following proposition indicating that the marginal joint law of all line ensembles but the second\footnote{This could be replaced by the $k$-th, for any $k \in \llbracket 2, n \rrbracket$, through a similar proof.} in a colored line ensemble $\bm{\mathsf{L}}$ sampled under $\mathbb{P}_{\scL; n; \bm{x}; \bm{y}}^{\sigma}$ (recall \Cref{llmu0}) is equal to that of a colored line ensemble (with $n-1$ colors) sampled under $\mathbb{P}_{\scL; n-1; \bm{x}; \bm{y}}^{\breve{\sigma}}$ (where we recall $\breve{\sigma} = \vartheta(\sigma)$ from \eqref{1210}).  

	\begin{prop}
		
		\label{lmerge}
		
		Sample $\bm{\mathsf{L}} = \big( \bm{\mathsf{L}}^{(1)}, \bm{\mathsf{L}}^{(2)}, \ldots , \bm{\mathsf{L}}^{(n)} \big)$ under $\mathbb{P}_{\scL; n; s; \bm{x}; \bm{y}}^{\sigma}$. Then the joint law of the colored line ensemble $\breve{\bm{\mathsf{L}}} = \big( \bm{\mathsf{L}}^{(1)}, \bm{\mathsf{L}}^{(3)}, \bm{\mathsf{L}}^{(4)}, \ldots , \bm{\mathsf{L}}^{(n)} \big)$ (with $n-1$ colors) is given by $\mathbb{P}_{\scL; n-1; s; \bm{x}; \bm{y}}^{\breve{\sigma}}$. 
			
	\end{prop}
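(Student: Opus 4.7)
The plan is to reduce the statement directly to the color-merging statement for the $\mathbb{P}_{\fG}$ measure established in \Cref{nn1mu}, by tracking how the association from \Cref{lmu} between ascending sequences of $n$-compositions and simple colored line ensembles interacts with the merging map $\vartheta$. Concretely, let $\bm{\mu}$ be an $(M;\sigma)$-ascending sequence of $n$-compositions and set $\breve{\bm{\mu}} = \vartheta(\bm{\mu})$ as in \eqref{12mu0}; I would first verify that the simple $(n-1)$-colored line ensemble $\bm{\mathsf{L}}_{\breve{\bm{\mu}}}$ associated with $\breve{\bm{\mu}}$ via \Cref{lmu} coincides with the truncated ensemble $\big(\bm{\mathsf{L}}_{\bm{\mu}}^{(1)},\bm{\mathsf{L}}_{\bm{\mu}}^{(3)},\ldots,\bm{\mathsf{L}}_{\bm{\mu}}^{(n)}\big)$.

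The verification is a direct computation from the definition \eqref{kclmu}. Writing $\breve{\ell}_{c'}$ and $\breve{\mu}^{(c')}$ for the data attached to the $(n-1)$-composition $\breve{\bm{\mu}}$, the merging rule \eqref{mu121} gives $\breve{\ell}_1 = \ell_1+\ell_2$ and $\breve{\ell}_{c'}=\ell_{c'+1}$ for $c'\ge 2$, and similarly $\mathfrak{m}_k(\breve{\mu}^{(1)}) = \mathfrak{m}_k(\mu^{(1)}) + \mathfrak{m}_k(\mu^{(2)})$ while $\mathfrak{m}_k(\breve{\mu}^{(c')}) = \mathfrak{m}_k(\mu^{(c'+1)})$ for $c'\ge 2$. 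Summing yields $\breve{\ell}_{[1,n-1]} = \ell_{[1,n]}$ and $\mathfrak{m}_{\le k-1}^{\ge 1}(\breve{\mu}(i)) = \mathfrak{m}_{\le k-1}^{\ge 1}(\mu(i))$, and for $c'\ge 2$ one gets $\breve{\ell}_{[c',n-1]} = \ell_{[c'+1,n]}$ and $\mathfrak{m}_{\le k-1}^{\ge c'}(\breve{\mu}(i)) = \mathfrak{m}_{\le k-1}^{\ge c'+1}(\mu(i))$. Plugging these into \eqref{kclmu} gives $\mathsf{L}_{k;\breve{\bm{\mu}}}^{(1)}(i) = \mathsf{L}_{k;\bm{\mu}}^{(1)}(i)$ and $\mathsf{L}_{k;\breve{\bm{\mu}}}^{(c')}(i) = \mathsf{L}_{k;\bm{\mu}}^{(c'+1)}(i)$ for $c'\ge 2$, which is exactly the claimed identification.

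With this deterministic identification in hand, the proposition is immediate: if $\bm{\mu}$ is sampled under $\mathbb{P}_{\fG;n;s;\bm{x};\bm{y}}^{\sigma}$ then by \Cref{nn1mu} the sequence $\vartheta(\bm{\mu})$ has law $\mathbb{P}_{\fG;n-1;s;\bm{x};\bm{y}}^{\breve{\sigma}}$, and \Cref{llmu0} then identifies the law of $\bm{\mathsf{L}}_{\vartheta(\bm{\mu})} = \breve{\bm{\mathsf{L}}}$ as $\mathbb{P}_{\scL;n-1;s;\bm{x};\bm{y}}^{\breve{\sigma}}$, yielding the proposition.

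There is essentially no obstacle here beyond bookkeeping; the only care needed is to correctly track the two-fold role played by the merging map, both on the indexing composition $\bm{\ell}$ (so that $\ell_{[c,n]}$ in the constant term of \eqref{kclmu} matches up) and on the multiplicity counts. Both are handled by the same shift $c \mapsto c+1$ for indices $c\ge 2$ and the identification of the first slot, so no nontrivial Yang--Baxter input is required at this stage — the algebraic content has already been absorbed into \Cref{nn1mu} (and hence ultimately into \eqref{2lmerge} and \Cref{mergemunu}).
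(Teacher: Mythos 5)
Your proposal is correct and follows exactly the route the paper uses: identify $\bm{\mathsf{L}} = \bm{\mathsf{L}}_{\bm{\mu}}$ via \Cref{llmu0}, observe that the truncated ensemble is the one associated with $\vartheta(\bm{\mu})$, and conclude with \Cref{nn1mu}. The only difference is that the paper asserts the deterministic identification $\bm{\mathsf{L}}_{\vartheta(\bm{\mu})} = \breve{\bm{\mathsf{L}}}_{\bm{\mu}}$ by citing \Cref{lmu}, whereas you spell out the bookkeeping with \eqref{kclmu}, \eqref{mu121}, and the shifts $\breve{\ell}_{[c',n-1]} = \ell_{[c'+1,n]}$ (and $\breve{\ell}_{[1,n-1]}=\ell_{[1,n]}$) — a welcome but inessential elaboration.
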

	
	\begin{proof}
		
		Sample $\bm{\mu}$ under $\mathbb{P}_{\fG; n; s; \bm{x}; \bm{y}}^{\sigma}$ (recall \Cref{measurefg}). By \Cref{llmu0}, we may identify $\bm{\mathsf{L}}$ as the colored line ensemble $\bm{\mathsf{L}}_{\bm{\mu}}$ associated with $\bm{\mu}$. By \Cref{lmu}, the $n-1$ line ensembles $\breve{\bm{\mathsf{L}}} = \big( \bm{\mathsf{L}}^{(1)}, \bm{\mathsf{L}}^{(3)}, \bm{\mathsf{L}}^{(4)}, \ldots , \bm{\mathsf{L}}^{(n)} \big)$ in $\bm{\mathsf{L}} = \bm{\mathsf{L}}_{\bm{\mu}}$ constitute the colored line ensemble associated with $\vartheta (\bm{\mu})$, which has law $\mathbb{P}_{\fG; n-1; s; \bm{x}; \bm{y}}^{\breve{\sigma}}$ by \Cref{nn1mu}. Hence, again by \Cref{llmu0}, $\breve{\bm{\mathsf{L}}}$ has law  $\mathbb{P}_{\scL; n-1; s; \bm{x}; \bm{y}}^{\breve{\sigma}}$, thereby establishing the proposition.
	\end{proof}

	\begin{rem} 
		
		\label{12interval} 
		
		Throughout this section, we have merged the colors $1$ and $2$. It is more generally possible to merge several (disjoint) intervals of colors; see \cite[Sections 4.4 and 4.9]{SVMP} and \cite[Sections 2.3 and 5.2]{CFVMSF} for similar setups. This would correspond in \Cref{lmerge} to omitting line ensembles $\bm{\mathsf{L}}^{(i)}$, for $i \in \llbracket 2, n \rrbracket$ arbitrary (depending on the corresponding merged color intervals), in $\bm{\mathsf{L}}$. 
		
	\end{rem}

	\section{Examples} 
	
	\label{ExampleL} 
	
	In this section we examine the colored line ensembles introduced in \Cref{L0} under two specializations; in both, we take $s=0$ and refer to \Cref{tz} for a depiction of the $L_{x;0}$ weights. We investigate the case $n=1$ in \Cref{LExamplen1} and the case $q=0$ in \Cref{Lq0Example} (where we also prove \Cref{ensemblevertexq0}); we also restrict to the \emph{homogeneous} cases of both, when the entries in $\bm{x}$ (and in $\bm{y}$) are all equal. Throughout this section, we fix integers $n, M, N \ge 1$; a composition $\bm{\ell} = (\ell_1, \ell_2, \ldots , \ell_n)$ of $N$; a function $\sigma : \llbracket 1, N \rrbracket \rightarrow \llbracket 1, n \rrbracket$, such that for each $i \in \llbracket 1, n \rrbracket$ we have $\ell_i = \# \big\{ \sigma^{-1} (i) \big\}$; and real numbers $x, y \in (0, 1)$ with $y < x$. We also assume that $q \in (0, 1)$ and define the sequences $\bm{x} = (x, x, \ldots , x)$ and $\bm{y} = (y, y, \ldots , y)$, where $x$ and $y$ appear with multiplicities $N$ and $M$, respectively. We also recall the notation from \Cref{Ensemble} throughout.

 	\begin{figure}
 		\begin{center}
 			\begin{tikzpicture}[
 				>=stealth,
 				scale = .95
 				]	
 				\draw[-, black] (-7.5, 3.1) -- (7.5, 3.1);
 				\draw[-, black] (-7.5, -2.1) -- (7.5, -2.1);
 				\draw[-, black] (-7.5, -1.1) -- (7.5, -1.1);
 				\draw[-, black] (-7.5, -.4) -- (7.5, -.4);
 				\draw[-, black] (-7.5, 2.4) -- (7.5, 2.4);
 				\draw[-, black] (-7.5, -2.1) -- (-7.5, 3.1);
 				\draw[-, black] (7.5, -2.1) -- (7.5, 3.1);
 				\draw[-, black] (-5, -2.1) -- (-5, 2.4);
 				\draw[-, black] (5, -2.1) -- (5, 3.1);
 				\draw[-, black] (-2.5, -2.1) -- (-2.5, 2.4);
 				\draw[-, black] (2.5, -2.1) -- (2.5, 2.4);
 				\draw[-, black] (0, -2.1) -- (0, 3.1);
 				\draw[->, thick, blue] (-6.3, .1) -- (-6.3, 1.9); 
 				\draw[->, thick, green] (-6.2, .1) -- (-6.2, 1.9); 
 				\draw[->, thick, blue] (-3.8, .1) -- (-3.8, 1) -- (-2.85, 1);
 				\draw[->, thick, green] (-3.7, .1) -- (-3.7, 1.9);
 				\draw[->, thick, blue] (-1.35, .1) -- (-1.35, 1.9);
 				\draw[->, thick, green] (-1.25, .1) -- (-1.25, 1.9);
 				\draw[->, thick,  orange] (-2.15, 1.1) -- (-1.15, 1.1) -- (-1.15, 1.9);
 				
 				\draw[->, thick, red] (.35, 1) -- (1.15, 1) -- (1.15, 1.9);
 				\draw[->, thick, blue] (1.25, .1) -- (1.25, 1.9);
 				\draw[->, thick, green] (1.35, .1) -- (1.35, 1.1) -- (2.15, 1.1);
 				\draw[->, thick, blue] (3.65, .1) -- (3.65, 1) -- (4.65, 1);
 				\draw[->, thick, green] (3.75, .1) -- (3.75, 1.9);
 				\draw[->, thick, orange] (2.85, 1.1) -- (3.85, 1.1) -- (3.85, 1.9); 
 				\draw[->, thick, red] (5.35, 1) -- (7.15, 1); 
 				\draw[->, thick, blue] (6.2, .1) -- (6.2, 1.9);
 				\draw[->, thick, green] (6.3, .1) -- (6.3, 1.9); 
 				\filldraw[fill=gray!50!white, draw=black] (-2.85, 1) circle [radius=0] node [black, right = -1, scale = .7] {$i$};
 				\filldraw[fill=gray!50!white, draw=black] (2.15, 1) circle [radius=0] node [black, right = -1, scale = .7] {$j$};
 				\filldraw[fill=gray!50!white, draw=black] (4.65, 1) circle [radius=0] node [black, right = -1, scale = .7] {$i$};
 				\filldraw[fill=gray!50!white, draw=black] (7.15, 1) circle [radius=0] node [black, right = -1, scale = .7] {$i$};
 				\filldraw[fill=gray!50!white, draw=black] (5.35, 1) circle [radius=0] node [black, left = -1, scale = .7] {$i$};
 				\filldraw[fill=gray!50!white, draw=black] (2.85, 1) circle [radius=0] node [black, left = -1, scale = .7] {$j$};
 				\filldraw[fill=gray!50!white, draw=black] (.35, 1) circle [radius=0] node [black, left = -1, scale = .7] {$i$};
 				\filldraw[fill=gray!50!white, draw=black] (-2.15, 1) circle [radius=0] node [black, left = -1, scale = .7] {$i$};
 				\filldraw[fill=gray!50!white, draw=black] (-6.25, 1.9) circle [radius=0] node [black, above = -1, scale = .7] {$\bm{A}$};
 				\filldraw[fill=gray!50!white, draw=black] (-3.75, 1.9) circle [radius=0] node [black, above = -1, scale = .7] {$\bm{A}_i^-$};
 				\filldraw[fill=gray!50!white, draw=black] (-1.25, 1.9) circle [radius=0] node [black, above = -1, scale = .7] {$\bm{A}_i^+$};
 				\filldraw[fill=gray!50!white, draw=black] (1.25, 1.9) circle [radius=0] node [black, above = -1, scale = .65] {$\bm{A}_{ij}^{+-}$};
 				\filldraw[fill=gray!50!white, draw=black] (3.75, 1.9) circle [radius=0] node [black, above = -1, scale = .65] {$\bm{A}_{ji}^{+-}$};
 				\filldraw[fill=gray!50!white, draw=black] (6.25, 1.9) circle [radius=0] node [black, above = -1, scale = .7] {$\bm{A}$};
 				\filldraw[fill=gray!50!white, draw=black] (-6.25, .1) circle [radius=0] node [black, below = -1, scale = .7] {$\bm{A}$};
 				\filldraw[fill=gray!50!white, draw=black] (-3.75, .1) circle [radius=0] node [black, below = -1, scale = .7] {$\bm{A}$};
 				\filldraw[fill=gray!50!white, draw=black] (-1.25, .1) circle [radius=0] node [black, below = -1, scale = .7] {$\bm{A}$};
 				\filldraw[fill=gray!50!white, draw=black] (1.25, .1) circle [radius=0] node [black, below = -1, scale = .7] {$\bm{A}$};
 				\filldraw[fill=gray!50!white, draw=black] (3.75, .1) circle [radius=0] node [black, below = -1, scale = .7] {$\bm{A}$};
 				\filldraw[fill=gray!50!white, draw=black] (6.25, .1) circle [radius=0] node [black, below = -1, scale = .7] {$\bm{A}$};
 				\filldraw[fill=gray!50!white, draw=black] (-3.75, 2.75) circle [radius=0] node [black] {$1 \le i \le n$};
 				\filldraw[fill=gray!50!white, draw=black] (2.5, 2.75) circle [radius=0] node [black] {$1 \le i < j \le n$}; 
 				\filldraw[fill=gray!50!white, draw=black] (6.25, 2.75) circle [radius=0] node [black] {};
 				\filldraw[fill=gray!50!white, draw=black] (-6.25, -.75) circle [radius=0] node [black, scale = .8] {$(\bm{A}, 0; \bm{A}, 0)$};
 				\filldraw[fill=gray!50!white, draw=black] (-3.75, -.75) circle [radius=0] node [black, scale = .8] {$\big( \bm{A}, 0; \bm{A}_i^-, i \big)$};
 				\filldraw[fill=gray!50!white, draw=black] (-1.25, -.75) circle [radius=0] node [black, scale = .8] {$\big( \bm{A}, i; \bm{A}_i^+, 0 \big)$};
 				\filldraw[fill=gray!50!white, draw=black] (1.25, -.75) circle [radius=0] node [black, scale = .8] {$\big( \bm{A}, i; \bm{A}_{ij}^{+-}, j \big)$};
 				\filldraw[fill=gray!50!white, draw=black] (3.75, -.75) circle [radius=0] node [black, scale = .8] {$\big( \bm{A}, j; \bm{A}_{ji}^{+-}, i \big)$};
 				\filldraw[fill=gray!50!white, draw=black] (6.25, -.75) circle [radius=0] node [black, scale = .8] {$(\bm{A}, i; \bm{A}, i)$};
 				\filldraw[fill=gray!50!white, draw=black] (-6.25, -1.6) circle [radius=0] node [black, scale = .9] {$1$};
 				\filldraw[fill=gray!50!white, draw=black] (-3.75, -1.6) circle [radius=0] node [black, scale = .8] {$(1 - q^{A_i}) q^{A_{[i+1, n]}} x$};
 				\filldraw[fill=gray!50!white, draw=black] (-1.25, -1.6) circle [radius=0] node [black, scale = .9] {$1$};
 				\filldraw[fill=gray!50!white, draw=black] (1.25, -1.6) circle [radius=0] node [black, scale = .8] {$(1 - q^{A_j}) q^{A_{[j+1,n]}} x$};
 				\filldraw[fill=gray!50!white, draw=black] (3.75, -1.6) circle [radius=0] node [black, scale = .9] {$0$};
 				\filldraw[fill=gray!50!white, draw=black] (6.25, -1.6) circle [radius=0] node [black, scale = .9] {$q^{A_{[i+1,n]}} x$};
 			\end{tikzpicture}
 		\end{center}
 		
 		\caption{\label{tz} The $L_{x;0}$ weights are depicted above.} 
 	\end{figure}

	\subsection{The Case $n=1$} 
	
	\label{LExamplen1}
	
	Throughout this section, we set $n=1$, $s = 0$, and sample $\bm{\mathsf{L}} = \bm{\mathsf{L}}^{(1)}$ under the probability measure $\mathbb{P}_{\scL; 1}^{\sigma} = \mathbb{P}_{\scL; 1; 0; \bm{x}; \bm{y}}^{\sigma}$. We will analyze how $\bm{\mathsf{L}}$, and in particular its Gibbs property, behaves. Since $n=1$, we omit the superscripts indexing the color $1$ from the notation in what follows, writing $\bm{\mathsf{L}} = (\mathsf{L}_1, \mathsf{L}_2, \ldots )$. Define for any integers $(k, i) \in \mathbb{Z}_{>0} \times \llbracket 0, M+N \rrbracket$ and line ensemble $\bm{\mathsf{l}} = (\mathsf{l}_1, \mathsf{l}_2, \ldots )$ on $\llbracket 0, M+N \rrbracket$ the quantity
	\begin{flalign}
		\label{deltai} 
			\Delta_{k; \bm{\mathsf{l}}} (i) = \mathsf{l}_k (i) - \mathsf{l}_{k+1} (i),
	\end{flalign}

	\noindent where by definition $\mathsf{L}_0 (i) = \mathsf{l}_0 (i) = \infty$ for each $i \in \llbracket 1, M+N \rrbracket$.
	
	Given this notation, we have the following proposition explaining the Gibbs property (\Cref{conditionl}) in this setting when we resample one curve (say that $i$-th one $\mathsf{L}_i$) of $\bm{\mathsf{L}}$. 
	
	\begin{prop} 
		
		\label{n1conditionl} 
		
		Adopt the above notation and assumptions. Let $j \ge i \ge 1$ and $u, v \in \llbracket 0, M + N \rrbracket$ be integers with $v \ge u$, and condition on the curves $\mathsf{L}_k (m)$ for $(k, m) \notin \llbracket i, j \rrbracket \times \llbracket u, v \rrbracket$; set $u_0 = \max \{ u-1, N \}$ and $v_0 = \min \{ v+1, N \}$. For any simple line ensemble $\bm{\mathsf{l}} = (\mathsf{l}_1, \mathsf{l}_2, \ldots )$ that is $\{ i \} \times \llbracket u, v \rrbracket$-compatible with $\bm{\mathsf{L}}$, we have 
		\begin{flalign} 
			\label{llv0u0}
			\begin{aligned} 
			\mathbb{P} [\bm{\mathsf{L}} = \bm{\mathsf{l}}] = \mathcal{Z}^{-1}  \cdot \displaystyle\prod_{k=i}^j x^{ - \mathsf{l}_k (v_0)} y^{\mathsf{l}_k (u_0)} & \displaystyle\prod_{k = i-1}^j \displaystyle\prod_{m = u}^{v+1} \big( 1 - q^{\Delta_k (m-1)} \cdot \mathbbm{1}_{\Delta_k m-1) - \Delta_k (m) = 1} \big),
			\end{aligned} 
		\end{flalign}
		
		\noindent for some normalization constant $\mathcal{Z} > 0$, where we have abbreviated $\Delta_k = \Delta_{k;\bm{\mathsf{l}}}$. 
		
	\end{prop}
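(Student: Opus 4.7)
The strategy is to specialize the general Gibbs property of \Cref{conditionl} to the current homogeneous, $n=1$, $s=0$ setting, and then to repackage the resulting product of $L$-weights into the form claimed in \eqref{llv0u0}. I would apply \Cref{conditionl} with its parameters $(i', j', u', v')$ taken to be $(i-1, j, u, v+1)$, so that its resampled window $\llbracket i'+1, j' \rrbracket \times \llbracket u', v'-1 \rrbracket$ matches $\llbracket i, j \rrbracket \times \llbracket u, v \rrbracket$ and its $i_0$ coincides with $\max\{i-1,1\}$. This yields that the conditional law of $\bm{\mathsf{L}}$ is proportional, as $\bm{\mathsf{l}}$ varies over admissible simple line ensembles, to the product of $L_{x_m'; 0}$-weights at vertices $(-k, m)$ with $k \in \llbracket \max\{i-1,1\}, j \rrbracket$ and $m \in \llbracket u, v+1 \rrbracket$, where $x_m' = x$ if $m \le N$ and $x_m' = y$ if $m > N$.

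Next, I would enumerate the $n=1$, $s=0$ weights at a single vertex in terms of the differences $\delta_k(m) = \mathsf{l}_k(m-1) - \mathsf{l}_k(m) \in \{0,1\}$. Using \Cref{le2} to translate into arrow configurations and \Cref{lzdefinition} to read off the resulting weights, the only non-zero cases are indexed by the pair $\big(\delta_{k+1}(m), \delta_k(m)\big) \in \{0,1\}^2$ and carry weights $1$, $(1 - q^{\Delta_k(m-1)}) x_m'$, $1$, and $x_m'$ for $(0,0)$, $(0,1)$, $(1,0)$, $(1,1)$ respectively. The key observation is that the non-trivial $q$-factor appears exactly when $\big(\delta_{k+1}(m), \delta_k(m)\big) = (0,1)$, which is equivalent to $\Delta_k(m-1) - \Delta_k(m) = 1$. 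Consequently the $q$-dependent part of the total weight is precisely the inner product $\prod_{k=i-1}^{j} \prod_{m=u}^{v+1}\big(1 - q^{\Delta_k(m-1)} \mathbbm{1}_{\Delta_k(m-1) - \Delta_k(m) = 1}\big)$ in \eqref{llv0u0}; the extra factors at $k = 0$ (occurring only when $i = 1$) are each identically $1$ under the convention $\mathsf{L}_0 \equiv \infty$ from \eqref{deltai}, consistent with \Cref{conditionl} contributing only from $k = i_0 = 1$ onward in that case.

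It remains to show that the $x_m'$-factors assemble to $\prod_{k=i}^{j} x^{-\mathsf{l}_k(v_0)} y^{\mathsf{l}_k(u_0)}$ up to a factor independent of the resampled values. Since an $x_m'$-factor is produced exactly in the two cases with $\delta_k(m) = 1$, the contribution from row $k$ is $\prod_{m=u}^{v+1} x_m'^{\delta_k(m)}$; for $k = i-1$ this depends only on the conditioned curve $\mathsf{l}_{i-1}$ and is absorbed into $\mathcal{Z}$. For $k \in \llbracket i, j \rrbracket$, splitting the product at $m = N$ and telescoping each half gives
\begin{flalign*}
\prod_{m=u}^{v+1} x_m'^{\delta_k(m)} = x^{\mathsf{l}_k(u-1) - \mathsf{l}_k(\min\{v+1, N\})} \, y^{\mathsf{l}_k(\max\{u-1, N\}) - \mathsf{l}_k(v+1)},
\end{flalign*}
and since $\min\{v+1, N\} = v_0$, $\max\{u-1, N\} = u_0$, and both $\mathsf{l}_k(u-1)$ and $\mathsf{l}_k(v+1)$ are fixed by the conditioning (as $u-1, v+1 \notin \llbracket u, v \rrbracket$), the factors $x^{\mathsf{l}_k(u-1)}$ and $y^{-\mathsf{l}_k(v+1)}$ absorb into $\mathcal{Z}$, leaving exactly $x^{-\mathsf{l}_k(v_0)} y^{\mathsf{l}_k(u_0)}$.

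The main obstacle is the telescoping step and its edge cases $v+1 \le N$ or $u > N$, in which one of the two split products is empty; in each such subcase one checks directly that the corresponding factor $x^{-\mathsf{l}_k(v_0)}$ or $y^{\mathsf{l}_k(u_0)}$ is an evaluation of $\mathsf{l}_k$ at a conditioned site, hence a deterministic constant that can be absorbed into $\mathcal{Z}$, so that \eqref{llv0u0} continues to hold. Combining the $q$-part with the $xy$-part then yields the proposition.
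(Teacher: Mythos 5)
Your proof follows essentially the same route as the paper's: apply \Cref{conditionl} with the shifted parameters $(i-1, j; u, v+1)$, identify the four nonzero $L_{x;0}$ weights in terms of $\delta_k(m)=\mathsf{l}_k(m-1)-\mathsf{l}_k(m)$, separate the $q$-dependent factor (activated precisely at $\Delta_k(m-1)-\Delta_k(m)=1$) from the monomial $x_m'^{\delta_k(m)}$, telescope the latter across $m\le N$ and $m>N$, and absorb the resulting $x^{\mathsf{l}_k(u-1)}$, $y^{-\mathsf{l}_k(v+1)}$, and the whole $k=i-1$ row into $\mathcal{Z}$. Your explicit case-by-case enumeration of the weights is equivalent to the paper's compact identities $L_{x;0}(A,b;C,d)=x^d\,L_{1;0}(A,b;C,d)$ and $L_{1;0}(A,b;C,d)=1-q^A\mathbbm{1}_{d-b=1}$, and you correctly flag the boundary cases where one half of the telescoping product is empty, which the paper handles with explicit indicators $\mathbbm{1}_{u\le N}$ and $\mathbbm{1}_{v\ge N}$; in those cases both $\mathsf{l}_k(v_0)$ and $\mathsf{l}_k(u_0)$ become conditioned values so the asserted form still holds after modifying $\mathcal{Z}$. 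The proof is correct.
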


	\begin{proof} 
		
		To make use of \Cref{conditionl}, we must first understand how the quantities on the right side of \eqref{llprobability} behave. By \Cref{le2} (and the fact that $n=1$), we have for any vertex $(-k, m) \in \mathbb{Z}_{\le -1} \times \llbracket 1, M + N \rrbracket$ that 
	\begin{flalign}
		\label{abcdn1} 
		\begin{aligned}
		& A^{\bm{\mathsf{l}}} (-k, m) = \mathsf{l}_k (m-1) - \mathsf{l}_{k+1} (m-1) = \Delta_k (m-1); \qquad b^{\bm{\mathsf{l}}} (-k, m) = \mathsf{l}_{k+1} (m-1) - \mathsf{l}_{k+1} (m); \\
		& C^{\bm{\mathsf{l}}} (-k, m) = \mathsf{l}_k (m) - \mathsf{l}_{k+1} (m) = \Delta_k (m); \qquad \qquad \qquad \quad d^{\bm{\mathsf{l}}} (-k, m) = \mathsf{l}_k (m-1) - \mathsf{l}_k (m),
		\end{aligned} 
	\end{flalign}
	
	\noindent where we omit the subscript indexing the color $1$ (as $n=1$). Further observe (see \Cref{tz}) that, if $A+b = C + d$, then
	\begin{flalign*} 
		L_{x;0} (A, b; C, d) = x^d \cdot L_{1;0} (A, b; C, d); \qquad L_{1;0} (A, b; C, d) = 1 - q^A \cdot \mathbbm{1}_{d-b = 1},
	\end{flalign*} 

	\noindent and (by \eqref{abcdn1} and \eqref{deltai}) that
	\begin{flalign*} 
		d^{\bm{\mathsf{l}}} (-k, m) - b^{\bm{\mathsf{l}}} (-k, m) = \mathsf{l}_k (m-1) - \mathsf{l}_k (m) - \big( \mathsf{l}_{k+1} (m-1) - \mathsf{l}_{k+1} (m) \big) = \Delta_k (m-1) - \Delta_k (m).
	\end{flalign*} 

	\noindent Inserting these into \Cref{conditionl} (with the $(i,j; u, v)$ there equal to $(i-1, j; u, v+1)$ here), yields 
	\begin{flalign}
		\label{lldeltaprobability}
		\begin{aligned}
		\mathbb{P} [\bm{\mathsf{L}} = \bm{\mathsf{l}}] = \mathcal{Z}^{-1} & \cdot  \displaystyle\prod_{k=i-1}^j \displaystyle\prod_{\substack{m \in \llbracket u, v+1 \rrbracket \\ m \le N}} x^{\mathsf{l}_k (m-1) - \mathsf{l}_k (m)} \displaystyle\prod_{\substack{m \in \llbracket u, v+1 \rrbracket \\ m > N}} y^{\mathsf{l}_k (m-1) - \mathsf{l}_k (m)} \\
		&  \times \displaystyle\prod_{k=i-1}^j \displaystyle\prod_{m = u}^{v+1} \big( 1 - q^{\Delta_k (m-1)} \cdot \mathbbm{1}_{\Delta_k (m-1) - \Delta_k (m) = 1} \big),
		\end{aligned} 
	\end{flalign}

	\noindent for some normalization constant $\mathcal{Z} > 0$. 
	
	Next observe since $u_0 = \max \{ u - 1, N \}$ and $v_0 = \min \{ v+1, N \}$ that 
	\begin{flalign*} 
		& \displaystyle\sum_{m = u}^{\min \{ v+1, N \}} \big( \mathsf{l}_k (m-1) - \mathsf{l}_k (m) \big) = \big( \mathsf{l}_k (u-1) - \mathsf{l}_k  (v_0) \big) \cdot \mathbbm{1}_{u \le N}; \\ 
		& \displaystyle\sum_{m=\max \{ N + 1, u \}}^{v+1} \big(\mathsf{l}_k (m-1) - \mathsf{l}_k (m) \big) = \big( \mathsf{l}_k (u_0) - \mathsf{l}_k (v+1) \big) \cdot \mathbbm{1}_{v \ge N}.
	\end{flalign*} 

	\noindent As we conditioned on $\mathsf{L}_{i-1} (m) = \mathsf{l}_{i-1} (m)$ for all $m$, and on $\mathsf{L}_k (m) = \mathsf{l}_k (m)$ and $\mathsf{L}_k (m) = \mathsf{l}_k (m)$ for all $k$ and $m \notin \llbracket u, v \rrbracket$, we may incorporate the factors $x^{(\mathsf{l}_{i-1} (u-1) - \mathsf{l}_{i-1} (v_0)) \cdot \mathbbm{1}_{u \le N}} \cdot y^{(\mathsf{l}_{i-1} (u_0) - \mathsf{l}_{i-1} (v+1)) \cdot \mathbbm{1}_{v \ge N}}$ and $x^{\mathsf{l}_k (u-1) \cdot \mathbbm{1}_{u \le N} + \mathsf{l}_k (v_0) \cdot \mathbbm{1}_{u > N}} \cdot y^{- \mathsf{l}_i (v+1) \cdot \mathbbm{1}_{v \ge N} - \mathsf{l}_i (u_0) \cdot \mathbbm{1}_{v < N}}$ for $k \in \llbracket i, j \rrbracket$ appearing in the right side of \eqref{lldeltaprobability} into the normalization constant $\mathcal{Z}$ (where we used the facts that if $u > N$ then $v > N$ and so $v_0 = N \le u-1$, and that if $v < N$ then $u < N$ and so $u_0 = N \ge v+1$). This gives  
	\begin{flalign*}
		\mathbb{P} [\bm{\mathsf{L}} = \bm{\mathsf{l}}] = \mathcal{Z}^{-1} & \cdot \displaystyle\prod_{k=i}^j x^{ - \mathsf{l}_k (v_0)} y^{\mathsf{l}_k (u_0)} \displaystyle\prod_{k=i-1}^j \displaystyle\prod_{m = u}^{v+1} \big( 1 - q^{\Delta_k (m-1)} \cdot \mathbbm{1}_{\Delta_k (m-1) - \Delta_k (m) = 1} \big),
	\end{flalign*}

	\noindent after altering $\mathcal{Z}$ if necessary, which yields the proposition.
	\end{proof}

	The Gibbs property described by \Cref{n1conditionl} coincides the Hall--Littlewood Gibbs property introduced in \cite[Definition 3.4]{TFSVM}.\footnote{To properly observe the match, one must read first our line ensemble in reverse (from right to left), and then apply a gauge transformation to its weights that does not affect its Gibbs property (namely, multiply them by $(q; q)_{\Delta_i (m-1)}^{-1} (q; q)_{\Delta_{i-1} (m-1)}^{-1} (q; q)_{\Delta_i (m)} (q; q)_{\Delta_{i-1} (m)}$).} Moreover, setting $n = 1$ in \Cref{lmu2} yields that the law of $\mathsf{L}_1$ has the same law as the height function of the (uncolored) stochastic six-vertex model introduced in \cite{SVMRS}; this had been shown earlier in \cite[Theorem 5.5]{SSVMP}. More generally, it can be shown (although we do not do so here) that the $n=1$ case of $\bm{\mathsf{L}}$ coincides (up to an affine transformation) with the Hall--Littlewood Gibbsian line ensemble introduced in \cite[Section 3.2]{TFSVM}. This is to be expected, since taking the $n=1$ case of our arguments in the preceding sections would essentially yield the content described in \cite[Section 5.7]{SSVMP} and \cite[Proposition 3.9]{TFSVM}.

	\begin{rem}
	
	\label{xyn1} 
	
	Observe that \eqref{llv0u0} only depends on $x$ and $y$ through their ratio $x^{-1} y$. Indeed, if $u-1 \ge N$ or $N \ge v + 1$, then the factor $x^{-\mathsf{l}_i (v_0)} y^{\mathsf{l}_i (u_0)} = x^{-\mathsf{l}_i (v+1)} y^{\mathsf{l}_i (u-1)}$ on the right side of \eqref{llv0u0} is fixed by the conditioning and can therefore be incorporated into the normalization constant. Otherwise, $N \in \llbracket u, v \rrbracket$, and so the factor $x^{-\mathsf{l}_i (v_0)} y^{\mathsf{l}_i (u_0)} = (x^{-1} y)^{\mathsf{l}_i (N)}$ only depends on $x^{-1} y$.
	\end{rem}

	\subsection{The Case $q=0$}
	
	\label{Lq0Example}
	
	Throughout this section, we set $q=0$, $s = 0$, and sample the colored line ensemble $\bm{\mathsf{L}} = \big( \bm{\mathsf{L}}^{(1)}, \bm{\mathsf{L}}^{(2)}, \ldots , \bm{\mathsf{L}}^{(n)} \big)$ from the measure $\mathbb{P}_{\scL}^{\sigma} = \mathbb{P}_{\scL; n; 0; \bm{x}; \bm{y}}^{\sigma}$. The following proposition explains the Gibbs property (\Cref{conditionl}) for $\bm{\mathsf{L}}$. Below, we restrict to the scenario when $\llbracket u, v \rrbracket$ does not contain $N$, since the Gibbs property will be simplest to state in this situation (as, analogously to \Cref{xyn1}, it will not depend on $x$ or $y$).
	
	\begin{prop} 
		
		\label{q0l} 
		
	Adopt the above notation and assumptions, and let $j \ge i \ge 1$ and $u, v \in \llbracket 0, M+N \rrbracket$ be integers such that $u \le v$ and $N \notin \llbracket u, v \rrbracket$. Then, the following two statements hold. 
	\begin{enumerate}
		\item For any $(c, k, m) \in \llbracket 1, n \rrbracket \times \mathbb{Z}_{>0} \times \llbracket 1, M+N \rrbracket$ such that $\mathsf{L}_k^{(c+1)} (m) > \mathsf{L}_{k+1}^{(c+1)} (m)$, we almost surely have
	\begin{flalign}
		\label{l23}
		\mathsf{L}_k^{(c)} (m-1) - \mathsf{L}_k^{(c)} (m) = \mathsf{L}_k^{(c+1)} (m-1) - \mathsf{L}_k^{(c+1)} (m),
	\end{flalign} 
	
		\item Condition on the curves $\mathsf{L}_k^{(c)} (m)$ for all $c \in \llbracket 1, n \rrbracket$ and $(k, m) \notin \llbracket i, j \rrbracket \times \llbracket u, v \rrbracket$. Then the law of $\bm{\mathsf{L}}$ is uniform over all simple colored line ensembles $\bm{\mathsf{l}} = \big( \bm{\mathsf{l}}^{(1)}, \bm{\mathsf{l}}^{(2)}, \ldots , \bm{\mathsf{l}}^{(n)} \big)$ that are $\llbracket i, j \rrbracket \times \llbracket u, v \rrbracket$-compatible with $\bm{\mathsf{L}}$ such that, for any $(c, k, m) \in \llbracket 1, n \rrbracket \times \mathbb{Z}_{>0} \times \llbracket 1, M+N \rrbracket$ with $\mathsf{l}_k^{(c+1)} (m) > \mathsf{l}_{k+1}^{(c+1)} (m)$, we have
		\begin{flalign}
			\label{lkc0} 
			\mathsf{l}_k^{(c)} (m-1) - \mathsf{l}_k^{(c)} (m) = \mathsf{l}_k^{(c+1)} (m-1) - \mathsf{l}_k^{(c+1)} (m).
		\end{flalign}
	\end{enumerate}

	\end{prop}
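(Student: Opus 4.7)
The plan is to derive both parts of the proposition from \Cref{conditionl}, by exploiting the special form of the $L_{z;0}$ weights at $q = 0$. We first translate the proposition's constraint into a local condition on arrow configurations in the ensemble $\mathcal{E}^{\bm{\mathsf{L}}}$ of \Cref{le2}. By \Cref{lmu1}(2), the hypothesis $\mathsf{L}_k^{(c+1)}(m) > \mathsf{L}_{k+1}^{(c+1)}(m)$ is equivalent to $C_{[c+1,n]}^{\bm{\mathsf{L}}}(-k, m) > 0$, and by \Cref{lmu1}(3) the conclusion \eqref{lkc0} amounts to $d^{\bm{\mathsf{L}}}(-k, m) \neq c$. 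Quantifying over $c \in \llbracket 1, n \rrbracket$, the proposition's constraint is equivalent to the following \emph{support condition} at every vertex $(-k, m)$: setting $d = d^{\bm{\mathsf{L}}}(-k, m)$, either $d = 0$, or $C_{[d+1,n]}^{\bm{\mathsf{L}}}(-k, m) = 0$.

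Direct inspection of the five cases in \Cref{tz} at $q = s = 0$ shows that the weight $L_{z;0}(\bm{A}, b; \bm{C}, d)$ vanishes unless precisely this support condition is satisfied, in which case it equals $z^{\mathbbm{1}_{d>0}}$ (namely, $1$ when $d = 0$ and $z$ otherwise). Claim~(1) is then immediate: the ensemble $\mathcal{E}^{\bm{\mathsf{L}}}$ has positive probability only at configurations where every weight is nonzero, so almost surely every vertex obeys the support condition, which is exactly \eqref{l23}.

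For Claim~(2), we apply \Cref{conditionl} with parameters $(i-1, j, u, v+1)$ in place of $(i, j, u, v)$, so that the resampling window coincides with $\llbracket i, j \rrbracket \times \llbracket u, v \rrbracket$. The conditional probability of $\bm{\mathsf{L}} = \bm{\mathsf{l}}$ is then proportional to the product of $L_{z_m;0}$ weights over vertices $(-k, m)$ with $k \in \llbracket \max\{i-1, 1\}, j \rrbracket$ and $m \in \llbracket u, v+1 \rrbracket$, where $z_m = x$ if $m \le N$ and $z_m = y$ if $m > N$. The hypothesis $N \notin \llbracket u, v \rrbracket$ forces $z_m$ to take a single common value $z \in \{x, y\}$ throughout this range. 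Substituting the simplification above, the product factors as
\[
\mathbbm{1}_{\{\text{support holds at every vertex in the window}\}} \cdot z^{\mathcal{N}(\bm{\mathsf{l}})}, \qquad \text{where } \mathcal{N}(\bm{\mathsf{l}}) = \sum_{k, m} \mathbbm{1}_{d^{\bm{\mathsf{l}}}(-k, m) > 0}.
\]

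The key observation is that $\mathcal{N}(\bm{\mathsf{l}})$ is the same for every compatible $\bm{\mathsf{l}}$. Indeed, \Cref{lmu1}(3) with $c = 1$ yields $\mathbbm{1}_{d^{\bm{\mathsf{l}}}(-k, m) > 0} = \mathsf{l}_k^{(1)}(m-1) - \mathsf{l}_k^{(1)}(m)$, so the sum over $m \in \llbracket u, v+1 \rrbracket$ telescopes to $\mathsf{l}_k^{(1)}(u-1) - \mathsf{l}_k^{(1)}(v+1)$ (with the natural convention at $u = 0$), which is fixed by the conditioning since $u - 1, v+1 \notin \llbracket u, v \rrbracket$. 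Therefore $z^{\mathcal{N}(\bm{\mathsf{l}})}$ is a constant that can be absorbed into the normalization, leaving only the indicator that the support condition holds at every vertex of the window. By our translation, this indicator is exactly that of \eqref{lkc0}, so the conditional law is uniform over compatible $\bm{\mathsf{l}}$ satisfying \eqref{lkc0}. The main technical point is the case-by-case verification that the support of $L_{z;0}$ at $q = 0$ matches the constraint \eqref{lkc0}; once this is in place, the remainder of the argument reduces to the telescoping identity above.
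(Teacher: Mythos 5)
Your proof is correct and follows essentially the same route as the paper: factoring $L_{z;0} = z^{\mathbbm{1}_{d>0}}\cdot L_{1;0}$, observing that $L_{1;0}$ at $q=0$ is the indicator $1 - \mathbbm{1}_{d>0}\mathbbm{1}_{C_{[d+1,n]}>0}$ (your ``support condition,'' which is the content of the paper's \Cref{lweight0e}), applying \Cref{conditionl} with parameters shifted to $(i-1,j;u,v+1)$, and then telescoping $\sum_m \mathbbm{1}_{d^{\bm{\mathsf{l}}}(-k,m)>0} = \mathsf{l}_k^{(1)}(u-1)-\mathsf{l}_k^{(1)}(v+1)$ to absorb the power of $z$ into the normalization since $N\notin\llbracket u,v\rrbracket$. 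The only minor imprecision is that the translation of the constraint into the identities $C_{[c+1,n]}^{\bm{\mathsf{l}}}(-k,m) = \mathsf{l}_k^{(c+1)}(m)-\mathsf{l}_{k+1}^{(c+1)}(m)$ and $d^{\bm{\mathsf{l}}}(-k,m)$ should be cited to \Cref{le2} (which applies to arbitrary simple colored line ensembles $\bm{\mathsf{l}}$) rather than to \Cref{lmu1}, whose statements are phrased for the ensemble $\bm{\mathsf{L}}_{\bm{\mu}}$ built from an ascending sequence.
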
 

	To prove \Cref{q0l}, we require the next lemma that explains why $\mathsf{l}_k^{(c+1)} (m) > \mathsf{l}_{k+1}^{(c+1)} (m)$ should imply \eqref{lkc0}. 
	
	\begin{lem} 
		
		\label{lweight0e}
	
		Let $\bm{\mathsf{l}} = \big( \bm{\mathsf{l}}^{(1)}, \bm{\mathsf{l}}^{(2)}, \ldots , \bm{\mathsf{l}}^{(n)} \big)$ denote a simple colored line ensemble on $\llbracket 0, M + N \rrbracket$; $\mathcal{E}^{\bm{\mathsf{l}}}$ denote the associated higher spin path ensemble; and $\big( A^{\bm{\mathsf{l}}} (-k, m), b^{\bm{\mathsf{l}}} (-k, m); \bm{C}^{\bm{\mathsf{l}}} (-k, m), d^{\bm{\mathsf{l}}} (-k, m) \big)$ denote the arrow configuration in $\mathcal{E}^{\bm{\mathsf{l}}}$ at any $(-k, m) \in \mathbb{Z}_{\ge 0} \times \llbracket 1, M + N \rrbracket$. Then, for any $(-k, m) \in \mathbb{Z}_{\ge 0} \times \llbracket 1, M + N \rrbracket$, we have that $L_{1;0} \big( \bm{A}^{\bm{\mathsf{l}}} (-k, m), b^{\bm{\mathsf{l}}} (-k, m); \bm{C}^{\bm{\mathsf{l}}} (-k, m), d^{\bm{\mathsf{l}}} (-k, m) \big) = 1$ if and only if, for any $c \in \llbracket 1, n \rrbracket$ with $\mathsf{l}_k^{(c+1)} (m) > \mathsf{l}_{k+1}^{(c+1)} (m)$, \eqref{lkc0} holds. Otherwise, we have that the weight $L_{1;0} \big( \bm{A}^{\bm{\mathsf{l}}} (-k, m), b^{\bm{\mathsf{l}}} (-k, m); \bm{C}^{\bm{\mathsf{l}}} (-k, m), d^{\bm{\mathsf{l}}} (-k, m) \big) = 0$.
	
	\end{lem}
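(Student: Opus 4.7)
The plan is to proceed by a direct case analysis on the six possible arrow configurations $(\bm{A}, b; \bm{C}, d)$ allowed by \Cref{lzdefinition}, translating the condition on $\bm{\mathsf{l}}$ in the lemma into a condition on $(\bm{C}, d)$ at the vertex $v = (-k, m)$. First, I would specialize the $L_{x;s}$ weights to $x = 1$, $s = 0$, $q = 0$ (with $q^0 = 1$) and observe that each one is an indicator function valued in $\{0, 1\}$: the configurations $(\bm{A}, 0; \bm{A}, 0)$ and $(\bm{A}, i; \bm{A}_i^+, 0)$ always have weight $1$; the ``color swap'' configuration $(\bm{A}, j; \bm{A}_{ji}^{+-}, i)$ with $i < j$ always has weight $0$; and in the remaining three configurations (namely $(\bm{A}, 0; \bm{A}_i^-, i)$, $(\bm{A}, i; \bm{A}, i)$, and $(\bm{A}, i; \bm{A}_{ij}^{+-}, j)$), the weight equals $\mathbbm{1}_{A_{[d+1, n]} = 0}$, where $d \in \{i, j\}$ is the outgoing horizontal color. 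In particular, the set of possible nonzero values of $L_{1;0}$ is $\{0, 1\}$, establishing the ``otherwise'' clause.

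Next, I would reformulate the condition in the lemma as a condition on the arrow configuration at $v$. From \Cref{le2} and simpleness, the map $c \mapsto \mathsf{l}_k^{(c)}(m-1) - \mathsf{l}_k^{(c)}(m)$ is non-increasing and takes values in $\{0, 1\}$, so it equals $\mathbbm{1}_{d^{\bm{\mathsf{l}}}(v) \ge c}$. Hence \eqref{lkc0} is equivalent to $\mathbbm{1}_{d^{\bm{\mathsf{l}}}(v) \ge c} = \mathbbm{1}_{d^{\bm{\mathsf{l}}}(v) \ge c+1}$, that is, $d^{\bm{\mathsf{l}}}(v) \ne c$. Moreover, summing the identity $C_{c'}^{\bm{\mathsf{l}}}(v) = \Lambda_k^{(c')}(m) - \Lambda_{k+1}^{(c')}(m)$ over $c' \ge c+1$ and using the convention $\mathsf{l}_k^{(n+1)} \equiv 0$ gives $\mathsf{l}_k^{(c+1)}(m) - \mathsf{l}_{k+1}^{(c+1)}(m) = C_{[c+1, n]}^{\bm{\mathsf{l}}}(v)$. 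Therefore, the condition in the lemma is equivalent to requiring that for every $c \in \llbracket 1, n-1 \rrbracket$ with $C_{[c+1, n]}^{\bm{\mathsf{l}}}(v) \ge 1$ one has $d^{\bm{\mathsf{l}}}(v) \ne c$, which is to say: either $d^{\bm{\mathsf{l}}}(v) = 0$, or $C_{[d^{\bm{\mathsf{l}}}(v) + 1, n]}^{\bm{\mathsf{l}}}(v) = 0$.

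Finally, I would match this reformulated condition on $(\bm{C}, d)$ against the $L_{1;0}$ values computed in the first step, in each of the six cases. The two configurations with $d = 0$ have weight $1$ and the condition is vacuous, so both sides agree. For the swap configuration $(\bm{A}, j; \bm{A}_{ji}^{+-}, i)$ with $i < j$, one has $C_j = A_j + 1 \ge 1$ yet $d = i < j$, so $C_{[d+1, n]} \ge 1$ and the condition is violated, matching the weight $0$. In the remaining three cases, $\bm{C}$ differs from $\bm{A}$ only in coordinates that are at most $d$, so $C_{[d+1, n]} = A_{[d+1, n]}$, and the condition $C_{[d+1, n]} = 0$ coincides with the indicator $\mathbbm{1}_{A_{[d+1, n]} = 0}$ appearing in the weight formula; the additional requirement that the arrow configuration is admissible (for instance $A_i \ge 1$ in case $(\bm{A}, 0; \bm{A}_i^-, i)$) is automatic from the assumption that $\bm{\mathsf{l}}$ is a simple colored line ensemble with $\bm{A} = \bm{A}^{\bm{\mathsf{l}}}(v)$, $\bm{C} = \bm{C}^{\bm{\mathsf{l}}}(v)$. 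I do not anticipate a real obstacle here; the result is a routine bookkeeping exercise, and the only point warranting care is the convention $q^0 = 1$ when simplifying the factors $1 - q^{A_i}$ and $q^{A_{[i+1, n]}}$ in \Cref{lzdefinition}.
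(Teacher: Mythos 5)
Your proposal is correct and follows essentially the same route as the paper: translate the condition of the lemma into a statement about $(d^{\bm{\mathsf{l}}}, C_{[d^{\bm{\mathsf{l}}}+1,n]}^{\bm{\mathsf{l}}})$ via \Cref{le2} and compare against the specialized weight $L_{1;0}$. The only difference is presentational — you verify the weight formula by an explicit case analysis over the six admissible configurations, whereas the paper simply states the closed form $L_{1;0}(\bm{A},b;\bm{C},d)=1-\mathbbm{1}_{d>0}\cdot\mathbbm{1}_{C_{[d+1,n]}>0}$ (your reduction from $A_{[d+1,n]}$ to $C_{[d+1,n]}$ in the last step recovers exactly this) — and your flagging of the admissibility factors $\mathbbm{1}_{A_i\ge1}$ is the right caveat and is handled correctly.
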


	\begin{proof} 
		
		By \Cref{le2}, we have for any $c \in \llbracket 1, n \rrbracket$ that 
	\begin{flalign}
		\label{cd} 
		\begin{aligned} 
		& C_c^{\bm{\mathsf{l}}} (-k, m) = \mathsf{l}_k^{(c)} (m) - \mathsf{l}_{k+1}^{(c)} (m) - \big( \mathsf{l}_k^{(c+1)} (m) - \mathsf{l}_{k+1}^{(c+1)} (m) \big);  \\
		& d^{\bm{\mathsf{l}}} = \max \big\{ d \in \llbracket 1, n \rrbracket : \mathsf{l}_k^{(d)} (m-1) - \mathsf{l}_k^{(d)} (m) = 1 \big\},
		\end{aligned} 
	\end{flalign}
		
	\noindent with $d^{\bm{\mathsf{l}}} = 0$ if no such $d \in \llbracket 1, n \rrbracket$ exists. Further observe (see \Cref{tz}) that for $q = 0$, if $\bm{A} + \bm{e}_b = \bm{C} + \bm{e}_d$, then
	\begin{flalign*}
		 L_{1;0} (\bm{A}, b; \bm{C}, d) =  1  -  \mathbbm{1}_{d > 0} \cdot \mathbbm{1}_{C_{[d+1,n]} > 0}.
	\end{flalign*} 
	
	\noindent By \eqref{cd}, we have for any $d \in \llbracket 1, n \rrbracket$ that 
	\begin{flalign*} 
		C_{[d+1, n]}^{\bm{\mathsf{l}}} (-k, m) = \mathsf{l}_k^{(d+1)} (m) - \mathsf{l}_{k+1}^{(d+1)} (m).
	\end{flalign*} 

	\noindent Also by \eqref{cd}, we have $d^{\bm{\mathsf{l}}} (-k, m) = d \ge 1$ if and only if $\mathsf{l}_k^{(d)} (m-1) - \mathsf{l}_k^{(d)} (m) - \big( \mathsf{l}_k^{(d+1)} (m-1) - \mathsf{l}_k^{(d+1)} (m) \big) > 0$, that is, if and only if \eqref{lkc0} holds. 
	
	Hence, $L_{1;0} \big( \bm{A}^{\bm{\mathsf{l}}} (-k, m), b^{\bm{\mathsf{l}}} (-k, m); \bm{C}^{\bm{\mathsf{l}}} (-k, m), d^{\bm{\mathsf{l}}} (-k, m) \big) = 1$ if and only if, for any $c \in \llbracket 1, n \rrbracket$ such that $\mathsf{l}_k^{(c+1)} (m) > \mathsf{l}_{k+1}^{(c+1)} (m)$, \eqref{lkc0} holds. Otherwise, this weight is equal to $0$. 
	\end{proof} 

	Now we can establish \Cref{q0l}. 
	
	\begin{proof}[Proof of \Cref{q0l}]
		
		Observe (see \Cref{tz}) for any $\bm{A}, \bm{C} \in \mathbb{Z}_{\ge 0}^n$ and $b, d \in \llbracket 0, n \rrbracket$ with $\bm{A} + \bm{e}_b = \bm{C} + \bm{e}_d$ that
		\begin{flalign}
			\label{l2x} 
		\begin{aligned} 
		& L_{x;0} (\bm{A}, b; \bm{C}, d) = x^{\mathbbm{1}_{d > 0}} \cdot L_{1;0} (\bm{A}, b; \bm{C}, d); \qquad L_{1;0} (\bm{A}, b; \bm{C}, d) =  1  -  \mathbbm{1}_{d > 0} \cdot \mathbbm{1}_{C_{[d+1,n]} > 0}.
		 \end{aligned} 
		\end{flalign} 
	
		\noindent Thus, since $\bm{\mathsf{L}}$ was sampled according to the measure $\mathbb{P}_{\scL}^{\sigma}$, \eqref{lele}, \eqref{fgmunu}, \eqref{fgprobabilitymu}, and \Cref{llmu} together imply that 
		\begin{flalign*} 
			\mathbb{P}_{\scL}^{\sigma} [\bm{\mathsf{L}}] = \mathcal{Z}_{\bm{x}; \bm{y}}^{-1} & \cdot \displaystyle\prod_{k=1}^{\infty}  \displaystyle\prod_{m=1}^N \widehat{L}_{x_m;0} \big( \bm{A}^{\bm{\mathsf{L}}} (-k, m), b^{\bm{\mathsf{L}}} (-k, m); \bm{C}^{\bm{\mathsf{L}}} (-k, m), d^{\bm{\mathsf{L}}} (-k, m) \big) \\
			& \qquad \times \displaystyle\prod_{k=1}^{\infty} \displaystyle\prod_{m=N+1}^{M+N} L_{y_{m-N}; 0} \big( \bm{A}^{\bm{\mathsf{L}}} (-k, m), b^{\bm{\mathsf{L}}} (-k, m); \bm{C}^{\bm{\mathsf{L}}} (-k, m), d^{\bm{\mathsf{L}}} (-k, m) \big),
		\end{flalign*} 
	
		\noindent which by the first statement of \eqref{l2x} (and the fact that $x, y \ne 0$) is nonzero if and only if $L_{1;0} (\bm{A}^{\bm{\mathsf{L}}} (-k, m), b^{\bm{\mathsf{L}}} (-k,m); \bm{C}^{\bm{\mathsf{L}}} (-k, m), d^{\bm{\mathsf{L}}} (-k,m) \big) \ne 0$ for all $(-k, m) \in \mathbb{Z}_{\le 0} \times \llbracket 1, M + N \rrbracket$. By \Cref{lweight0e}, this is true if and only if, for any $(c, k, m) \in \llbracket 1, n \rrbracket \times \mathbb{Z}_{>0} \times \llbracket 1, M + N \rrbracket$ such that $\mathsf{L}_k^{(c+1)} (m) > \mathsf{L}_{k+1}^{(c+1)} (m)$, \eqref{l23} holds. This confirms the first statement of the proposition.	
		
		To verify the second, we first apply \Cref{conditionl} (with the $(i, j; u, v)$ there equal to $(i-1, j; u, v+1)$ here) and \eqref{l2x} to deduce for some normalization constant $\mathcal{Z} > 0$ that 
		\begin{flalign} 
			\label{llq0} 
			\begin{aligned} 
			\mathbb{P}_{\scL}^{\sigma} [\bm{\mathsf{L}} = \bm{\mathsf{l}}] = \mathcal{Z}^{-1} & \cdot \displaystyle\prod_{k=i-1}^j  \displaystyle\prod_{\substack{m \in \llbracket u, v+1 \rrbracket \\ m \le N}} x^{\mathbbm{1}_{d^{\bm{\mathsf{l}}} (-k, m) > 0}} \cdot L_{1;0} \big( \bm{A}^{\bm{\mathsf{l}}} (-k, m), b^{\bm{\mathsf{l}}} (-k, m); \bm{C}^{\bm{\mathsf{l}}} (-k, m), d^{\bm{\mathsf{l}}} (-k, m) \big) \\
			& \times \displaystyle\prod_{k=i-1}^j  \displaystyle\prod_{\substack{m \in \llbracket u, v+1 \rrbracket \\ m > N}} y^{\mathbbm{1}_{d^{\bm{\mathsf{l}}} (-k, m) > 0}} \cdot L_{1;0} \big( \bm{A}^{\bm{\mathsf{l}}} (-k, m), b^{\bm{\mathsf{l}}} (-k, m); \bm{C}^{\bm{\mathsf{l}}} (-k, m), d^{\bm{\mathsf{l}}} (-k, m) \big).
			\end{aligned}
		\end{flalign} 
	
		\noindent Moreover, by \Cref{le2} (with the fact that $\bm{\mathsf{l}}^{(c)}$ is simple), we have $\mathbbm{1}_{d^{\bm{\mathsf{l}}} (-k, m) > 0} = \mathsf{l}_{k+1}^{(1)} (m-1) - \mathsf{l}_{k+1}^{(1)} (m)$. Hence, setting $u_0 = \max \{ u-1, N \}$ and $v_0 = \min \{ v+1, N \}$, we have 
		\begin{flalign*}
			\displaystyle\prod_{\substack{m \in \llbracket u, v + 1 \rrbracket \\ m \le N}} x^{\mathbbm{1}_{d^{\bm{\mathsf{l}}} (-k, m) > 0}} = x^{(\mathsf{l}_k^{(1)} (u-1) - \mathsf{l}_k^{(1)} (v_0)) \cdot \mathbbm{1}_{u \le N}}; \quad \displaystyle\prod_{\substack{m \in \llbracket u, v + 1 \rrbracket \\ m > N}} y^{\mathbbm{1}_{d^{\bm{\mathsf{l}}} (-k, m) > 0}} = y^{(\mathsf{l}_k^{(1)} (u_0) - \mathsf{l}_k^{(1)} (v+1) ) \cdot \mathbbm{1}_{v \ge N}}.
		\end{flalign*}
	
		\noindent Since $u-1, u_0, v_0, v+1 \notin \llbracket u, v \rrbracket$ (as $N \notin \llbracket u, v \rrbracket$), the above factors are fixed by the conditioning. Thus, on the right side of \eqref{llq0}, we may incorporate them into the normalization constant $\mathcal{Z}$ to obtain (after altering $\mathcal{Z}$ if necessary) 
		\begin{flalign*} 
			\mathbb{P}_{\scL}^{\sigma} [\bm{\mathsf{L}} = \bm{\mathsf{l}}] = \mathcal{Z}^{-1} & \cdot \displaystyle\prod_{k=i-1}^j  \displaystyle\prod_{\substack{m \in \llbracket u, v+1 \rrbracket \\ m \le N}} L_{1;0} \big( \bm{A}^{\bm{\mathsf{l}}} (-k, m), b^{\bm{\mathsf{l}}} (-k, m); \bm{C}^{\bm{\mathsf{l}}} (-k, m), d^{\bm{\mathsf{l}}} (-k, m) \big) \\
			& \times \displaystyle\prod_{k=i-1}^j  \displaystyle\prod_{\substack{m \in \llbracket u, v+1 \rrbracket \\ m > N}} L_{1;0} \big( \bm{A}^{\bm{\mathsf{l}}} (-k, m), b^{\bm{\mathsf{l}}} (-k, m); \bm{C}^{\bm{\mathsf{l}}} (-k, m), d^{\bm{\mathsf{l}}} (-k, m) \big).
		\end{flalign*} 
		
		\noindent By \Cref{lweight0e}, the above product is equal to $1$ if and only if, for any $(c, k, m) \in \llbracket 1, n \rrbracket \times \mathbb{Z}_{>0} \times \llbracket 1, M + N \rrbracket$ with $\mathsf{l}_k^{(c+1)} (m) > \mathsf{l}_{k+1}^{(c+1)} (m)$, \eqref{lkc0} holds; otherwise it is equal to $0$. This confirms the second part of the proposition.
	\end{proof} 
	
	Now we can quickly establish \Cref{ensemblevertexq0}. 
	
	\begin{proof}[Proof of \Cref{ensemblevertexq0}]
		
		The first part of this theorem follows from the $q=0$ case of \Cref{lmu2}; the second and third follow from \Cref{q0l}.
	\end{proof}

	\section{Fusion} 
	
	\label{Fusion} 
	
	Until now, we have used colored six-vertex or higher spin path ensembles; they only allowed at most one arrow to occupy any horizontal edge. In this section we remove that restriction using the fusion procedure originally introduced in \cite{ERT}, and describe the counterparts to the statements from \Cref{ZFunction} and \Cref{Line} when horizontal edges may accommodate more than one arrow. Since such ideas have been used repeatedly throughout the literature, and since many statements in this section are similar to those in \Cref{ZFunction} and \Cref{Line}, we will sometimes only outline (or omit) the proofs of the below results. Throughout this section, we fix an integer $n \ge 1$.
	
	\subsection{Stochastic Fused Vertex Models} 
	
	\label{FusedPath}

	A \emph{colored fused arrow configuration} is a quadruple $(\bm{A}, \bm{B}; \bm{C}, \bm{D})$ of elements in $\mathbb{Z}_{\ge 0}^n$. We view this as an assignment of directed up-right colored arrows to a vertex $v \in \mathbb{Z}^2$, in which a (horizontal or vertical) edge can accommodate arbitrarily many arrows. In particular, for each $k \in \llbracket 1, n \rrbracket$, the numbers $A_k$, $B_k$, $C_k$, and $D_k$ denote the numbers of arrows of color $k$ vertically entering, horizontally entering, vertically exiting, and horizontally exiting $v$, respectively. 
	
	As in \Cref{ModelVertex} and \Cref{FunctionsZ}, a \emph{colored fused path ensemble} on a domain $\mathcal{D} \subseteq \mathbb{Z}^2$ is a consistent assignment of a colored fused arrow configuration $\big( \bm{A}(v), \bm{B} (v); \bm{C} (v), \bm{D} (v) \big)$ to each vertex $v \in \mathcal{D}$. Observe that the arrows in a colored fused path ensemble on $\mathcal{D}$ form colored up-right directed paths (that can share horizontal and vertical edges) connecting vertices of $\mathcal{D}$. Associated with a colored fused path ensemble $\mathcal{E}$ on some domain $\mathcal{D} \subseteq \mathbb{Z}^2$ are height functions $\mathfrak{h}_c^{\rightarrow}, \mathfrak{h}_c^{\leftarrow}, \mathfrak{h}_{\ge c}^{\rightarrow}, \mathfrak{h}_{\le c}^{\leftarrow}: \mathbb{Z}^2 \rightarrow \mathbb{Z}$, which are defined in the same way as in \Cref{FunctionsZ}.

	The probability measure on random colored fused path ensembles on $\mathbb{Z}_{> 0}^2$ that we next define depends on four sequences of complex parameters $\bm{x} = (x_1, x_2, \ldots )$; $\bm{r} = (r_1, r_2, \ldots )$; $\bm{y} = (y_1, y_2, \ldots )$; and $\bm{s} = (s_1, s_2, \ldots )$. We view $x_j$ and $r_j$ as associated with the $j$-th row, so they are called row rapidity and spin parameters, respectively; we view $y_i$ and $s_i$ as associated with the $i$-th column, so they are called column rapidity and fusion parameters, respectively. The specific forms of these probability measures are expressed through certain weights $U_{x_j / y_i; r_j, s_i} (\bm{A}, \bm{B}; \bm{C}, \bm{D})$ associated with each vertex $v = (i, j) \in \mathbb{Z}_{> 0}^2$ (analogously to \Cref{rzabcd}). We use the following ones, due to \cite[Equation (7.10)]{STRR} (though our notation is closer to \cite{CSVMST}\footnote{The $(r, s)$ in \Cref{wzabcd} are the $(q^{-\mathsf{L}/2}, q^{-\mathsf{M}/2})$ from \cite{CSVMST}.}), that satisfy the Yang--Baxter equation (see \Cref{equationrank} below). In the following, we define the function $\varphi : \mathbb{R}^n \times \mathbb{R}^n \rightarrow \mathbb{R}$ by setting
	\begin{flalign*}
		\varphi (\bm{X}, \bm{Y}) = \displaystyle\sum_{1 \le i < j \le n} X_i Y_j, \qquad \text{for any $\bm{X}, \bm{Y} \in \mathbb{R}^n$}. 
	\end{flalign*}

	\begin{definition}[{\cite[Equation (C.1.4)]{CSVMST}}] 
		
		\label{wzabcd}
		
		Fix $r, s, z \in \mathbb{C}$ and $\bm{A}, \bm{B}, \bm{C}, \bm{D} \in \mathbb{Z}_{\ge 0}^n$, and let $t = |\bm{T}|$ for each index $T \in \{ A, B, C, D \}$. Define the vertex weight
		\begin{flalign}
			\label{weightu} 
			\begin{aligned}
				& U_{z; r, s} (\bm{A}, \bm{B}; \bm{C}, \bm{D}) \\	
				& \quad = z^{d-b} r^{2(c-a)} s^{2d} q^{\varphi (\bm{D}, \bm{C})} \displaystyle\frac{(r^2; q)_d}{(r^2; q)_b} \displaystyle\sum_{p = 0}^{\min \{ b, c \}} (r^{-2} z)^p \displaystyle\frac{(r^{-2} s^2 z; q)_{c-p} (r^2 z^{-1}; q)_p (z; q)_{b-p}}{(s^2 z; q)_{c+d-p}}  \\
				& \qquad \qquad \quad \times \mathbbm{1}_{\bm{A} + \bm{B} = \bm{C} + \bm{D}} \cdot \displaystyle\sum_{\substack{\bm{P} \le \bm{B}, \bm{C} \\ |\bm{P}| = p}} q^{\varphi (\bm{B} - \bm{D} - \bm{P}, \bm{P})} \displaystyle\prod_{i=1}^n \displaystyle\frac{(q; q)_{C_i + D_i - P_i}}{(q; q)_{D_i} (q; q)_{C_i - P_i}} \displaystyle\frac{(q; q)_{B_i}}{(q; q)_{P_i} (q; q)_{B_i - P_i}},
			\end{aligned} 
		\end{flalign}
		
		\noindent where the sum is over all $\bm{P} = (P_1, \ldots , P_n) \in \mathbb{Z}_{\ge 0}^n$ such that $|\bm{P}| = p$ and $0 \le P_i \le \min \{ B_i, C_i \}$ for each $i \in \llbracket 1, n \rrbracket$. 
		
	\end{definition} 
	
	\begin{rem} 
		
		\label{stochasticu} 
		
		As in \Cref{1rsum}, the $U$-weights from \Cref{wzabcd} are stochastic in the sense that $\sum_{\bm{C}, \bm{D} \in \mathbb{Z}_{\ge 0}^n} U_{z; r, s} (\bm{A}, \bm{B}; \bm{C}, \bm{D}) = 1$, for each $r, s, z \in \mathbb{C}$ and $\bm{A}, \bm{B} \in \mathbb{Z}_{\ge 0}^n$; see \cite[Proposition C.1.2]{CSVMST}.
		
	\end{rem}

	We can now use these $U$-weights to describe (similarly to \Cref{ModelVertex}) how to sample a random colored fused path ensemble on $\mathbb{Z}_{> 0}^2$. We first define probability measures $\mathbb{P}_{\FV}^n$ on the set of colored fused ensembles whose vertices are all contained in the triangles $\mathbb{T}_N = \big\{ (x, y) \in \mathbb{Z}_{> 0}^2: x + y \le N \big\}$. The initial measure $\mathbb{P}_{\FV}^0$ is supported by the empty ensemble. 
	
	For each integer $N \ge 1$, we will define $\mathbb{P}_{\FV}^{N + 1}$ from $\mathbb{P}_{\FV}^N$ by first using $\mathbb{P}_{\FV}^N$ to sample a colored fused path ensemble $\mathcal{E}_N$ on $\mathbb{T}_N$. This yields arrow configurations for all vertices in the triangle $\mathbb{T}_{N - 1}$. To extend this to a colored six-vertex ensemble on $\mathbb{T}_{N + 1}$, we must prescribe arrow configurations to all vertices on the diagonal $\mathbb{D}_N = \big\{ (x, y) \in \mathbb{Z}_{> 0}^2: x + y = N \big\}$. Since $\mathcal{E}_N$ and the initial data prescribe the first two coordinates $(\bm{A}, \bm{B})$ of the arrow configuration to each vertex in $\mathbb{D}_N$, it remains to explain how to assign the second two coordinates $(\bm{C}, \bm{D})$ of the arrow configuration at any vertex $(i, j) \in \mathbb{D}_N$, given its first two coordinates $(\bm{A}, \bm{B})$. This is done according to the transition probability $\mathbb{P}_{\FV}^N \big[ (\bm{C}, \bm{D})  \big| (\bm{A}, \bm{B}) \big] = U_{y_i / x_j; r_j, s_i} (\bm{A}, \bm{B}; \bm{C}, \bm{D})$. We assume that the parameters $(\bm{x}; \bm{y}; \bm{r}; \bm{s}; q)$ are chosen so that these probabilities are all nonnegative; the stochasticity of the $U$-weights (\Cref{stochasticu}) then ensures that $\mathbb{P}_{\FV}^N$ indeed defines a probability measure.
	
	Choosing $(\bm{C}, \bm{D})$ according to the above transition probabilities yields a random colored fused path ensemble $\mathcal{E}_{N + 1}$, now defined on $\mathbb{T}_{N + 1}$; the probability distribution of $\mathcal{E}_{N + 1}$ is then denoted by $\mathbb{P}_{\FV}^{N + 1}$. Taking the limit as $n$ tends to $\infty$ yields a probability measure on colored fused path ensembles on the quadrant. We refer to it as the \emph{colored stochastic fused vertex model}; observe that it may also be sampled on any rectangle $\mathcal{D} \subset \mathbb{Z}^2$ in the same way as it was above on the quadrant.

	\subsection{Yang--Baxter Equation for  Fused Weights} 
	
	\label{UWW} 
	
	In this section we state the Yang--Baxter equation for the $U$-weights from \Cref{wzabcd}, with another family of weights given by the $W$ and $\widehat{W}$ ones below (which serve as analogs of the $L$ and $\widehat{L}$ weights from \Cref{lzdefinition}).
	
	\begin{definition} 
		
		\label{wabcdw} 
		
		Adopting the notation of \Cref{wzabcd}, define the weight 
		\begin{flalign*}
			W_{x; r, s} (\bm{A}, \bm{B}; \bm{C}, \bm{D}) = (-s)^{-d} \cdot U_{x/s; r, s} (\bm{A}, \bm{B}; \bm{C}, \bm{D}),
		\end{flalign*} 
	
		\noindent so that 
		\begin{flalign}
			\label{wweight0} 
			\begin{aligned}
			& W_{x; r, s} (\bm{A}, \bm{B}; \bm{C}, \bm{D}) \\
			& \quad = (-1)^d x^{d-b} r^{2(c-a)} s^b q^{\varphi (\bm{D}, \bm{C})} \displaystyle\frac{(r^2; q)_d}{(r^2; q)_b} \displaystyle\sum_{p = 0}^{\min \{ b, c \}} (r^2 s)^{-p} x^p \displaystyle\frac{(r^{-2} s x; q)_{c-p} (r^2 s x^{-1}; q)_p (s^{-1} x; q)_{b-p}}{(s x; q)_{c+d-p}}  \\
			& \qquad \qquad \quad \times \mathbbm{1}_{\bm{A} + \bm{B} = \bm{C} + \bm{D}} \cdot \displaystyle\sum_{\substack{\bm{P} \le \bm{B}, \bm{C} \\ |\bm{P}| = p}} q^{\varphi (\bm{B} - \bm{D} - \bm{P}, \bm{P})} \displaystyle\prod_{i=1}^n \displaystyle\frac{(q; q)_{C_i + D_i - P_i}}{(q; q)_{D_i} (q; q)_{C_i - P_i}} \displaystyle\frac{(q; q)_{B_i}}{(q; q)_{P_i} (q; q)_{B_i - P_i}}.
			\end{aligned} 
		\end{flalign}
		
		\noindent We also set $W_{x;r, 0} (\bm{A}, \bm{B}; \bm{C}, \bm{D}) = \lim_{s \rightarrow 0} W_{x; r, s} (\bm{A}, \bm{B}, \bm{C}, \bm{D})$, where the existence (and an explicit form) of this limit follows from \Cref{wweight} below. Additionally, if there exists an integer $\mathrm{R} \ge 1$ for which $r = q^{-\mathrm{R} /2}$, then set 
		\begin{flalign}
			\label{wzr2}
			\widehat{W}_{x; r, s} (\bm{A}, \bm{B}; \bm{C}, \bm{D}) = W_{x; r, s} (\bm{A}, \bm{B}; \bm{C}, \bm{D}) \cdot (-s)^{-\mathrm{R}} \displaystyle\frac{(sx; q)_{\mathrm{R}}}{(s^{-1} x; q)_{\mathrm{R}}}.
		\end{flalign}
		
	\end{definition}

	\begin{rem} 
		
		\label{uabcdac0} 
		
		Observe for any $r, s, z \in \mathbb{C}$ and $\bm{B} \in \mathbb{Z}_{\ge 0}^n$ that 
		\begin{flalign*}
			U_{z; r, s} (\bm{e}_0, \bm{B}; \bm{e}_0, \bm{B}) = s^{2b} \cdot \displaystyle\frac{(z; q)_b}{(s^2 z; q)_b}; \qquad W_{z; r, s} (\bm{e}_0, \bm{B}; \bm{e}_0, \bm{B}) = (-s)^b \cdot \displaystyle\frac{(s^{-1} z; q)_b}{(sz; q)_b},
		\end{flalign*}
		
		\noindent which quickly follow from the fact that the sums in \eqref{weightu} and \eqref{wweight0} are supported on the term $\bm{P} = \bm{e}_0$ (as $\bm{C} = \bm{e}_0$). 
		
	\end{rem} 

		\begin{rem} 
		
		\label{a0c0u} 
		
		Let us explain the sense in which the $\widehat{W}$-weight from \Cref{wabcdw} is analogous to the $\widehat{L}$ one from \eqref{lzij2}. The latter was chosen to be the (unique) normalization of $L_x$ such that $\widehat{L}_x (\bm{e}_0, k; \bm{e}_0, k) = 1$ for each $k \in \llbracket 1, n \rrbracket$; the arrow configuration $(\bm{e}_0, k; \bm{e}_0, k)$ could be viewed as ``horizontally saturated,'' since horizontal edges could accommodate at most one arrow under the $L$-weights. The analog of this constraint would be to make $\widehat{W}_{z; r, s}$ a normalization of $W_{z; r, s}$ such that $\widehat{W}_{z;r, s} = 1$ at a fused arrow configuration that is ``horizontally saturated'' in one color. One way to make sense of ``horizontal saturation'' for fused arrow configurations is to impose a threshold $\mathrm{R} \in \mathbb{Z}_{>0}$ for the number of arrows that can occupy a horizontal edge; this is done by setting $r = q^{-\mathrm{R}/2}$ (as then the factor of $(r^2; q)_d$ in the $W$-weight \eqref{wweight0} is equal to $0$ if $d > \mathrm{R}$). In this case, the normalization condition would be for 
		\begin{flalign}
			\label{rw0}
			\widehat{W}_{z; r, s} (\bm{e}_0, \mathrm{R} \bm{e}_k; \bm{e}_0, \mathrm{R} \bm{e}_k) = 1, \qquad \text{for each $k \in \llbracket 1, n \rrbracket$},
		\end{flalign} 
		
		\noindent so that $\widehat{W}_{z; r, s} (\bm{A}, \bm{B}; \bm{C}, \bm{D}) = W_{z; r, s} (\bm{A}, \bm{B}; \bm{C}, \bm{D}) \cdot W_{z; r, s} (\bm{e}_0, \mathrm{R} \bm{e}_k; \bm{e}_0, \mathrm{R} \bm{e}_k)^{-1}$, which by \Cref{uabcdac0} yields \eqref{wzr2}.
		
		Let us also briefly mention that another way of imposing ``horizontal saturation'' would be to have infinitely many arrows of some color $k \in \llbracket 1, n \rrbracket$ travel along rows of the model. One should then track how many arrows of color $k$ leave a row (as well as how many arrows of the other colors enter it), that is, one ``complements'' the arrow configuration in the color $k$. This should enable one to remove the restriction that $r = q^{-\mathrm{R}/2}$ for some integer $\mathrm{R} \ge 1$; similar ideas were also used in \cite[Section 3.1.3]{CFSSVM} and \cite[Section 17.7.3]{CFVMSF}. However, we will not pursue this direction here,\footnote{See, however, \Cref{VertexPolymer} below, which implements a version of this complementation to degenerate the colored stochastic fused vertex model to the log-gamma polymer.} and keep ourselves constrained to the case when $r^2 \in q^{\mathbb{Z}_{<0}}$ whenever using the $\widehat{W}$-weights. 
		
	\end{rem}

	The following proposition states that the $U$ and $W$ weights (of \Cref{wzabcd} and \Cref{a0c0u}) satisfy the Yang-Baxter equation; it is due to \cite[Equation (3.20)]{STRR} (though, as stated below, it appears in \cite{CSVMST}\footnote{In \cite[Theorem C.1.1]{CSVMST}, it was assumed that $r^2, s^2, t^2 \in q^{\mathbb{Z}_{< 0}}$, but this assumption can be removed by using analytic continuation (with the fact that the $U$-weights are rational in $r$, $s$, and $t$).}); it is a fused generalization of \Cref{rrr1}.
	
	\begin{lem}[{\cite[Theorem C.1.1]{CSVMST}}] 
		
		\label{equationrank} 
		
		Fix $x, y, z, r, s, t \in \mathbb{C}$ and $\bm{I}_1, \bm{J}_1, \bm{K}_1, \bm{I}_3, \bm{J}_3, \bm{K}_3 \in \mathbb{Z}_{\ge 0}$. Then, 
		\begin{flalign*}
			\begin{aligned}
				& \displaystyle\sum_{\bm{I}_2, \bm{J}_2, \bm{K}_2 \in \mathbb{Z}_{\ge 0}^n}  U_{x/y; r, s}  ( \bm{I}_1, \bm{J}_1; \bm{I}_2, \bm{J}_2 ) U_{x/z; r, t}  ( \bm{K}_1, \bm{J}_2; \bm{K}_2, \bm{J}_3) U_{y/z; s, t}  ( \bm{K}_2, \bm{I}_2; \bm{K}_3, \bm{I}_3) \\
				& \quad = \displaystyle\sum_{\bm{I}_2, \bm{J}_2, \bm{K}_2 \in \mathbb{Z}_{\ge 0}^n}   U_{y/z; s, t}  ( \bm{K}_1, \bm{I}_1; \bm{K}_2, \bm{I}_2 ) U_{x/z; r, t}  ( \bm{K}_2, \bm{J}_1; \bm{K}_3, \bm{J}_2 ) U_{x/y; r, s}  ( \bm{I}_2, \bm{J}_2; \bm{I}_3, \bm{J}_3).
			\end{aligned}
		\end{flalign*}	
		
		\noindent Therefore, if there exists an integer $\mathrm{R} > 0$ such that $r = q^{-\mathrm{R}/2}$, then 
		\begin{flalign*}
			\begin{aligned}
				& \displaystyle\sum_{\bm{I}_2, \bm{J}_2, \bm{K}_2 \in \mathbb{Z}_{\ge 0}^n}  U_{x/y; r, s}  ( \bm{I}_1, \bm{J}_1; \bm{I}_2, \bm{J}_2 ) \widehat{W}_{x/z; r, t}  ( \bm{K}_1, \bm{J}_2; \bm{K}_2, \bm{J}_3) W_{y/z; s}  ( \bm{K}_2, \bm{I}_2; \bm{K}_3, \bm{I}_3) \\
				& \quad = \displaystyle\sum_{\bm{I}_2, \bm{J}_2, \bm{K}_2 \in \mathbb{Z}_{\ge 0}^n}   U_{y/z; s, t}  ( \bm{K}_1, \bm{I}_1; \bm{K}_2, \bm{I}_2 ) \widehat{W}_{x/z; r, t}  ( \bm{K}_2, \bm{J}_1; \bm{K}_3, \bm{J}_2 ) W_{x/y; r}  ( \bm{I}_2, \bm{J}_2; \bm{I}_3, \bm{J}_3).
			\end{aligned}
		\end{flalign*}	
		
	\end{lem}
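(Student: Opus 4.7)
My plan is to treat the first identity as essentially known and derive the second from it by a gauge computation. The first identity, involving only $U$-weights, is the classical Yang--Baxter relation for the $R$-matrix of $U_q(\widehat{\mathfrak{sl}}_{n+1})$ acting on symmetric tensor representations, indexed by the fusion parameters $(r, s, t)$. I would cite \cite[Equation (3.20)]{STRR} and \cite[Theorem C.1.1]{CSVMST} for this statement. Since the identity is rational in $r, s, t$, it suffices to verify it for $r^2, s^2, t^2 \in q^{\mathbb{Z}_{<0}}$ (where the fused weights literally arise as partition functions of smaller $L$-type vertex configurations), in which case it follows by iterated application of the unfused Yang--Baxter equation \eqref{llrijk} together with the fusion construction of $U$ from $L$; analytic continuation in $r, s, t$ then extends it to all complex values.

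For the second identity, the key observation is that $W$ and $\widehat{W}$ are gauge modifications of $U$ whose gauge factors depend only on the total number $|\bm{D}|$ of horizontally outgoing arrows. From \Cref{wabcdw},
\begin{flalign*}
W_{x;r,s}(\bm{A}, \bm{B}; \bm{C}, \bm{D}) = (-s)^{-|\bm{D}|} \, U_{x/s; r, s}(\bm{A}, \bm{B}; \bm{C}, \bm{D}),
\end{flalign*}
and from \eqref{wzr2}, $\widehat{W}_{x;r,s}$ differs from $W_{x;r,s}$ by a scalar that is independent of the arrow configuration. I would therefore substitute these identities into both sides of the desired second equation and rewrite everything in terms of $U$-weights. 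The scalar normalization from \eqref{wzr2} is common to both sides and cancels. What remains is to show that the product of the configuration-dependent gauge factors $(-s)^{-|\bm{D}|}$ (and their analog for the $\widehat{W}$-vertex) matches on the two sides, so that the identity reduces to the first Yang--Baxter equation with appropriately rescaled spectral parameters.

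The matching of gauge factors is where the arrow conservation law does the work. On the left-hand side, the $W$- and $\widehat{W}$-vertices are the second and third of the three, with horizontal outputs of sizes $|\bm{J}_3|$ and $|\bm{I}_3|$; on the right-hand side, the same $W$ and $\widehat{W}$ weights appear as the first and second vertices, with horizontal outputs of sizes $|\bm{I}_2|$ and $|\bm{J}_2|$. Using arrow conservation at the middle vertex of each side, together with the overall boundary conservation $|\bm{I}_1| + |\bm{J}_1| + |\bm{K}_1| = |\bm{I}_3| + |\bm{J}_3| + |\bm{K}_3|$, one finds that the product of the exponents in the $(-s)$- and $(-t)$-type gauge factors is the same on both sides, so that these prefactors factor out uniformly. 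What remains after this cancellation is exactly the first identity, with $U_{x/z;r,t}$ replaced by $U_{(x/z)\cdot t; r, t}$ and $U_{y/z;s,t}$ replaced by $U_{(y/z)\cdot t; s, t}$ (i.e., by the spectral shifts dictated by the definition of $W$), which is a special case of the first identity.

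The main obstacle will be the careful bookkeeping of these gauge factors: one must confirm that the shifts in spectral parameters induced by passing from $U$ to $W$ are compatible across the three vertices of each side (so that the resulting identity really is a specialization of the first Yang--Baxter equation, rather than requiring additional input). This is a standard but somewhat delicate computation of the sort done in \cite[Appendix C]{CSVMST}, and no new ideas are needed beyond tracking the exponents with care.
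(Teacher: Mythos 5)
Your overall strategy matches what the paper does: \cite[Theorem C.1.1]{CSVMST} (going back to \cite{STRR}) is cited for the first identity, the restriction $r^2, s^2, t^2 \in q^{\mathbb{Z}_{<0}}$ is removed by analytic continuation (the $U$-weights being rational in those variables), and the second identity is then deduced from the first by unwinding the gauge relations $W_{x;r,s} = (-s)^{-d} U_{x/s;r,s}$ and $\widehat W_{x;r,s} = W_{x;r,s} \cdot (-s)^{-\mathrm{R}} (sx;q)_{\mathrm{R}} / (s^{-1}x;q)_{\mathrm{R}}$, using arrow conservation to show that the total $(-t)$-power is the same on both sides. The paper itself does not spell the second step out (it only writes ``Therefore''), but your argument is the intended one.

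Two corrections to your bookkeeping. First, the direction of the spectral shift is reversed: from $W_{w; r, s} = (-s)^{-d}\, U_{w/s; r, s}$, we have $W_{x/z; r, t} = (-t)^{-d}\, U_{x/(zt); r, t}$ and $W_{y/z; s, t} = (-t)^{-d}\, U_{y/(zt); s, t}$, so after substitution one lands on the first identity with $z$ replaced by $zt$ (that is, $U_{x/(zt); r, t}$ and $U_{y/(zt); s, t}$), not $(x/z) \cdot t$ and $(y/z) \cdot t$. Since both are instances of the first identity with freely chosen $z$, this does not break the proof, but the exponent you would have to carry through the computation is wrong as written. Second, your reference to ``$(-s)$-type gauge factors'' is a red herring: the cross $U_{x/y; r, s}$ appears unchanged on both sides (as it must, since it does not involve the $z$-line), so the only gauge prefactors that appear are $(-t)$-powers from the two column vertices $\widehat W_{x/z; r, t}$ and $W_{y/z; s, t}$. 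Arrow conservation at those two vertices on each side gives $\bm{I}_2 + \bm{J}_2 = \bm{K}_1 + \bm{I}_1 + \bm{J}_1 - \bm{K}_3 = \bm{I}_3 + \bm{J}_3$, which is exactly what makes the $(-t)$-powers match.

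You also implicitly correct what appears to be a notational slip in the lemma as stated: comparing with the unfused version \eqref{llrijk2}, the right-hand side should read $W_{y/z; s, t}(\bm{K}_1, \bm{I}_1; \bm{K}_2, \bm{I}_2)\, \widehat W_{x/z; r, t}(\bm{K}_2, \bm{J}_1; \bm{K}_3, \bm{J}_2)\, U_{x/y; r, s}(\bm{I}_2, \bm{J}_2; \bm{I}_3, \bm{J}_3)$, i.e., the cross $U_{x/y; r, s}$ lands in the third position and $W_{y/z; s, t}$ in the first. Your sentence ``the same $W$ and $\widehat W$ weights appear as the first and second vertices'' reflects this corrected reading, which is the one your gauge argument establishes.
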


	Before proceeding, we record the following results that evaluate the $W_{x;r,0}$ weights.

	\begin{lem} 
		
		\label{wweight} 
		
		Adopting the notation of \Cref{wabcdw}, we have
		\begin{flalign*}
			\begin{aligned} 
				W_{z; r, 0} (\bm{A}, \bm{B}; \bm{C}, \bm{D}) & = (-1)^{b-d} z^d r^{2(c-a)} q^{\varphi (\bm{D}, \bm{C}) + \binom{b}{2}} \displaystyle\frac{(r^2; q)_d}{(r^2; q)_b} \cdot \mathbbm{1}_{\bm{A} + \bm{B} = \bm{C} + \bm{D}} \\
				& \qquad \times \displaystyle\prod_{i=1}^n \Bigg( \displaystyle\sum_{p=0}^{\min \{ B_i, C_i \}} (-r^{-2})^p q^{\binom{p+1}{2} -p (B_{[i,n]} + D_{[1,i-1]})} \\
				&  \qquad \qquad \qquad \times \displaystyle\frac{(q; q)_{C_i + D_i - p}}{(q; q)_{C_i - p} (q; q)_{D_i}} \displaystyle\frac{(q; q)_{B_i}}{(q; q)_{B_i - p} (q; q)_p} \Bigg).
			\end{aligned} 
		\end{flalign*}	
		
	\end{lem}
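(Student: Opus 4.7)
The plan is to obtain this formula by a direct $s \to 0$ degeneration of the defining expression \eqref{wweight0} for $W_{x;r,s}$. First I would catalog every factor involving $s$: the prefactor $s^b$, the powers $(r^2 s)^{-p}$ in the sum, and the four $q$-Pochhammer symbols $(r^{-2}sx;q)_{c-p}$, $(r^2 s x^{-1};q)_p$, $(sx;q)_{c+d-p}$, and $(s^{-1}x;q)_{b-p}$. The first three Pochhammers tend to $1$ as $s\to 0$ because every factor $1 - (\text{const})\cdot s q^j$ approaches $1$. The only delicate factor is $(s^{-1}x;q)_{b-p}$, which I would handle by pulling out $-s^{-1}x q^j$ from each of its $b-p$ factors to get
\begin{flalign*}
(s^{-1}x;q)_{b-p} = (-s^{-1}x)^{b-p} q^{\binom{b-p}{2}} \cdot (s x^{-1} q^{1-(b-p)};q)_{b-p},
\end{flalign*}
where the last Pochhammer tends to $1$ as $s\to 0$.

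Next I would substitute these limits back into \eqref{wweight0} and observe that the factors of $s$ cancel exactly: $s^b \cdot s^{-p}\cdot s^{-(b-p)} = 1$. What remains of the sum is $(-r^{-2})^{p} q^{\binom{b-p}{2}}$ times the inner sum over $\bm{P}$, multiplied by $(-1)^{d} x^{d} r^{2(c-a)} q^{\varphi(\bm{D},\bm{C})} (r^2;q)_d / (r^2;q)_b$ from outside. Since the $p$-dependent factor depends on $\bm{P}$ only through $p = |\bm{P}|$, I would absorb the outer sum over $p$ into the sum over $\bm{P}$, collapsing them into a single sum ranging over all $\bm{P} \in \mathbb{Z}_{\ge 0}^n$ with $\bm{P} \le \bm{B}$ and $\bm{P} \le \bm{C}$.

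The last step, which I expect to be the main obstacle, is purely a bookkeeping exercise to show that the integrand factorizes into a product over $i$. Using the identity $\binom{b-p}{2} = \binom{b}{2} - bp + \binom{p+1}{2}$, together with
\begin{flalign*}
\binom{p+1}{2} = \sum_i \binom{P_i+1}{2} + \sum_{i<j} P_i P_j, \qquad bp = \sum_j B_{[1,n]} P_j,
\end{flalign*}
I would combine these with the expansion of $\varphi(\bm{B}-\bm{D}-\bm{P},\bm{P}) = \sum_{i<j}(B_i - D_i - P_i)P_j$. The quadratic cross terms $\sum_{i<j} P_i P_j$ coming from $\binom{p+1}{2}$ cancel the analogous cross terms appearing with opposite sign in $\varphi(\bm{B}-\bm{D}-\bm{P},\bm{P})$, and rearranging $-\sum_j P_j B_{[1,n]} + \sum_j P_j (B_{[1,j-1]} - D_{[1,j-1]})$ into $-\sum_j P_j(B_{[j,n]} + D_{[1,j-1]})$ gives precisely the single-index $q$-exponent in the claimed formula. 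Once the exponent is separated in this way, every factor in the summand depends on only one coordinate $P_i$, so the sum over $\bm{P}$ decouples as $\prod_i \sum_{P_i}$, producing the stated product. The $(-1)^{d}\cdot (-1)^{b-p}$ collecting into $(-1)^{b-d}$ (after absorbing $(-1)^p$ into $(-r^{-2})^p$) completes the identification with the lemma.
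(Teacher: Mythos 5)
Your proposal is correct and follows essentially the same route as the paper's proof: take the $s\to 0$ limit termwise (the only delicate Pochhammer being $(s^{-1}x;q)_{b-p}$, which you handle by the same factor extraction underlying the paper's $\lim_{s\to 0}s^{b-p}(s^{-1}z;q)_{b-p}=(-1)^{b-p}q^{\binom{b-p}{2}}z^{b-p}$), then reorganize the $q$-exponent via the identity $\binom{b-p}{2}=\binom{b}{2}-bp+\binom{p+1}{2}$ and bilinearity of $\varphi$ until the summand factorizes over $i$. The intermediate algebra is arranged slightly differently (you expand $bp$ and $\varphi$ directly in coordinates, the paper writes $\varphi(\bm{B},\bm{P})-bp=-\sum_i B_iP_i-\varphi(\bm{P},\bm{B})$), but the computation is the same.
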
 
	
	\begin{proof} 
		
		Throughout this proof, we assume that $\bm{A} + \bm{B} = \bm{C} + \bm{D}$, as otherwise $W_{z; r, 0} (\bm{A}, \bm{B}; \bm{C}, \bm{D})= 0$ (as $U_{z; r, s} (\bm{A}, \bm{B}; \bm{C}, \bm{D}) = 0$ for any $s \in \mathbb{C}$, by \Cref{wzabcd}). Inserting the equalities 
		\begin{flalign*}
			\displaystyle\lim_{s \rightarrow 0} s^{b-p} (s^{-1} z; q)_{b-p} = (-1)^{b-p} q^{\binom{b-p}{2}} z^{b-p}; \qquad \displaystyle\lim_{s \rightarrow 0} \displaystyle\frac{(r^{-2} sz; q)_{c-p} (r^2 sz^{-1}; q)_p}{(sz; q)_{c+d-p}} = 1, 
		\end{flalign*}
		
		\noindent into \eqref{weightu}, we obtain 
		\begin{flalign*}
			W_{z; r, 0} (\bm{A}, \bm{B}; \bm{C}, \bm{D})  = (-1)^{b-d} z^d r^{2(c-a)}  q^{\varphi (\bm{D}, \bm{C})} \displaystyle\frac{(r^2; q)_d}{(r^2; q)_b} & \displaystyle\sum_{p = 0}^{\min \{ b, c \}} (-r^{-2})^p  q^{\binom{b-p}{2}}   \displaystyle\sum_{\substack{\bm{P} \le \bm{B}, \bm{C} \\ |\bm{P}| = p}} q^{\varphi (\bm{B} - \bm{D} - \bm{P}, \bm{P})} \\
			& \times \displaystyle\prod_{i=1}^n \displaystyle\frac{(q; q)_{C_i + D_i - P_i}}{(q; q)_{D_i} (q; q)_{C_i - P_i}} \displaystyle\frac{(q; q)_{B_i}}{(q; q)_{P_i} (q; q)_{B_i - P_i}}.
		\end{flalign*}
		
		\noindent Also since 
		\begin{flalign*}
			\binom{b-p}{2} & = \binom{b}{2} - bp + \binom{p+1}{2}; \qquad  \binom{p+1}{2} - \varphi (\bm{P}, \bm{P}) = \displaystyle\sum_{i=1}^n \binom{P_i+1}{2}; \\
			& \varphi (\bm{B}, \bm{P}) - bp = -\displaystyle\sum_{1 \le i \le j \le n} P_i B_j = -\displaystyle\sum_{i=1}^n B_i P_i - \varphi (\bm{P}, \bm{B}); 
		\end{flalign*} 
		
		\noindent and since $\varphi (\bm{B} - \bm{D} - \bm{P}, \bm{P}) = \varphi (\bm{B}, \bm{P}) - \varphi (\bm{D}, \bm{P}) - \varphi (\bm{P}, \bm{P})$ (by the bilinearity of $\varphi$), we have 
		\begin{flalign}
			\label{wzr0} 
			\begin{aligned} 
				W_{z; r, 0} & (\bm{A}, \bm{B}; \bm{C}, \bm{D}) \\	
				& = (-1)^{b-d} z^d r^{2(c-a)}  q^{\varphi (\bm{D}, \bm{C}) + \binom{b}{2}} \displaystyle\frac{(r^2; q)_d}{(r^2; q)_b} \displaystyle\prod_{i=1}^n \displaystyle\frac{(q;q)_{B_i}}{(q; q)_{D_i}} \\
				& \quad \times \displaystyle\sum_{p = 0}^{\min \{ b, c \}} (-r^{-2})^p \displaystyle\sum_{\substack{\bm{P} \le \bm{B}, \bm{C} \\ |\bm{P}| = p}} q^{- \varphi (\bm{P}, \bm{B}) - \varphi (\bm{D}, \bm{P})} \displaystyle\prod_{i=1}^n q^{\binom{P_i+1}{2} - B_i P_i} \displaystyle\frac{(q; q)_{C_i + D_i - P_i}}{(q; q)_{C_i - P_i} (q; q)_{P_i} (q; q)_{B_i - P_i}}.
			\end{aligned} 	
		\end{flalign}
		
		\noindent Now observe for any complex number $w \in \mathbb{C}$; $n$-tuples $\bm{X}, \bm{Y}, \bm{Z} \in \mathbb{Z}^n$; and functions $f_1, f_2, \ldots , f_n : \mathbb{Z}_{\ge 0} \rightarrow \mathbb{C}$ we have 
		\begin{flalign*}
			\displaystyle\sum_{\substack{\bm{P} \in \mathbb{Z}_{\ge 0}^n \\ \bm{P} \le \bm{Z}}} w^{|\bm{P}|} q^{\varphi (\bm{P}, \bm{X}) + \varphi (\bm{Y}, \bm{P})} \displaystyle\prod_{i=1}^n f_i (P_i) = \displaystyle\prod_{i=1}^n \Bigg( \displaystyle\sum_{p=0}^{Z_i} q^{p (X_{[i+1, n]} + Y_{[1, i-1]})} w^p f_i (p) \Bigg),
		\end{flalign*}
		
		\noindent by expanding the product on the right side. Applying this in \eqref{wzr0} with
		\begin{flalign*}
			& w = -r^{-2}; \qquad \bm{X} = -\bm{B}; \qquad \bm{Y} = -\bm{D}; \qquad \bm{Z} = \min \{ \bm{B}, \bm{C} \}; \\
			& \qquad \quad f_i (k) = q^{\binom{k+1}{2} - B_i k} \displaystyle\frac{(q; q)_{C_i + D_i - k}}{(q; q)_{C_i - k} (q; q)_k (q; q)_{B_i - k}},
		\end{flalign*} 
		
		\noindent yields the lemma (where $\min \{ \bm{B}, \bm{C} \}$ denotes the entry-wise minimum of $\bm{B}$ and $\bm{C}$).
	\end{proof}
	
	\begin{cor}
		
		\label{w01}
		
		Adopting the notation of \Cref{wabcdw}, we have $W_{z; r, 0} (\bm{A}, \bm{B}; \bm{A} + \bm{B}, \bm{e}_0) = 1$. 
	\end{cor}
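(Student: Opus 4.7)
The plan is to compute $W_{z;r,0}(\bm{A}, \bm{B}; \bm{A} + \bm{B}, \bm{e}_0)$ directly from the closed-form expression provided by \Cref{wweight}, specializing $\bm{C} = \bm{A}+\bm{B}$ and $\bm{D} = \bm{e}_0$, and then showing that the resulting expression collapses to $1$. Since $\bm{D} = \bm{e}_0$ gives $d = 0$ and $\varphi(\bm{D}, \bm{C}) = 0$, while $\bm{C} = \bm{A}+\bm{B}$ gives $c - a = b$ and $D_{[1,i-1]} = 0$, the prefactor in \Cref{wweight} immediately simplifies to $(-1)^b r^{2b} q^{\binom{b}{2}}/(r^2;q)_b$. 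In each color's factor, the numerator $(q;q)_{C_i + D_i - p}$ cancels with $(q;q)_{D_i}(q;q)_{C_i - p} = (q;q)_{A_i + B_i - p}$, and the summation range $p \in \llbracket 0, \min\{B_i, C_i\}\rrbracket$ collapses to $\llbracket 0, B_i \rrbracket$.

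The next step is to recognize each color's sum as a $q$-binomial theorem instance. Using $\binom{p+1}{2} = \binom{p}{2} + p$, the sum for color $i$ can be rewritten as
\begin{flalign*}
\sum_{p = 0}^{B_i} \binom{B_i}{p}_{\!q} \bigl(-r^{-2} q^{1-B_{[i,n]}}\bigr)^p q^{\binom{p}{2}} = \bigl( r^{-2} q^{1 - B_{[i,n]}}; q \bigr)_{B_i},
\end{flalign*}
by the standard identity $\sum_{p=0}^{N} \binom{N}{p}_q (-w)^p q^{\binom{p}{2}} = (w;q)_N$.

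The main obstacle is then to verify that the product over $i \in \llbracket 1, n \rrbracket$ of these $q$-Pochhammer symbols combines neatly enough to cancel the prefactor. The key observation is a telescoping one: writing $S_i = B_{[i,n]}$ and reindexing $k = S_i - 1 - j$, one finds $(r^{-2} q^{1-S_i}; q)_{B_i} = \prod_{k = S_{i+1}}^{S_i - 1}(1 - r^{-2} q^{-k})$, and since the intervals $\llbracket S_{i+1}, S_i - 1 \rrbracket$ partition $\llbracket 0, b - 1 \rrbracket$, the full product is $\prod_{k=0}^{b-1}(1 - r^{-2} q^{-k})$.

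Finally, using the algebraic identity $1 - r^{-2} q^{-k} = -r^{-2} q^{-k}(1 - r^2 q^k)$ converts this to $(-1)^b r^{-2b} q^{-\binom{b}{2}}(r^2; q)_b$, which exactly cancels the prefactor $(-1)^b r^{2b} q^{\binom{b}{2}}/(r^2;q)_b$ to yield $1$. The overall argument is thus a bookkeeping computation, with the combinatorial telescoping being the one nontrivial step.
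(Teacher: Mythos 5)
Your proof is correct and follows essentially the same route as the paper's: specialize Lemma~\ref{wweight} at $\bm{C} = \bm{A}+\bm{B}$, $\bm{D}=\bm{e}_0$, reduce each color's sum to a $q$-binomial identity, observe that the resulting product telescopes to $\prod_{k=0}^{b-1}(1 - r^{-2}q^{-k}) = (r^{-2};q^{-1})_b$, and cancel the prefactor. The only cosmetic difference is that you apply the $q$-binomial theorem in the form $\sum_p \binom{N}{p}_q q^{\binom{p}{2}} x^p = (-x;q)_N$ directly, whereas the paper first rewrites $\binom{B_i}{p}_q$ in terms of $(q^{-B_i};q)_p/(q;q)_p$; both land at the same per-color factor $(r^{-2}q^{1-B_{[i,n]}};q)_{B_i}$.
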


	\begin{proof}
		
		By \Cref{wweight} and the facts that $(q; q)_{B_i} (q; q)_{B_i - p}^{-1} = (-1)^p q^{B_i p - \binom{p}{2}} (q^{-B_i}; q)_{B_i}$; that $B_i - B_{[i, n]} = -B_{[i+1, n]}$; and that $\binom{p+1}{2} - \binom{p}{2} = p$, we have 
		\begin{flalign}
			\label{wabab}
			W_{z; r, 0} (\bm{A}, \bm{B}; \bm{A} + \bm{B}, \bm{e}_0) = (-1)^b r^{2b} q^{\binom{b}{2}} (r^2; q)_b^{-1} \cdot \displaystyle\prod_{i=1}^n \Bigg( \displaystyle\sum_{p=0}^{B_i} (r^{-2} q^{1 - B_{[i + 1,n]}})^p \displaystyle\frac{(q^{-B_i}; q)_p}{(q; q)_p} \Bigg).
		\end{flalign}
	
		\noindent From the $q$-binomial theorem, we have for each $i \in \llbracket 1, n \rrbracket$ that 
		\begin{flalign*} 
				\displaystyle\sum_{p=0}^{B_i} (r^{-2} q^{1 - B_{[i+1,n]}})^p \displaystyle\frac{(q^{-B_i}; q)_p}{(q;q)_p} = (r^{-2} q^{1-B_{[i,n]}}; q)_{B_i},
		\end{flalign*}
	
		\noindent and hence 
		\begin{flalign*}
			\displaystyle\prod_{i=1}^n \Bigg( \displaystyle\sum_{p=0}^{B_i} (r^{-2} q^{1 - B_{[i + 1,n]}})^p \displaystyle\frac{(q^{-B_i}; q)_p}{(q; q)_p} \Bigg) & = \displaystyle\prod_{i=1}^n (r^{-2} q^{1 - B_{[i, n]}}; q)_{B_i} \\
			& = (r^{-2}; q^{-1})_b = (-1)^b r^{-2b} q^{-\binom{b}{2}} (r^2; q)_b.
		\end{flalign*}
	
		\noindent Inserting this into \eqref{wabab} yields the corollary.
	\end{proof}

	\begin{rem}
	
	An alternative (and perhaps more conceptual) way of proving \Cref{w01} would be through fusion, using the fact that it holds at $\mathrm{R} = 1$ by \eqref{lyi0}; we will not provide further details on that route here.
	
	\end{rem}

	\subsection{Fused Nonsymmetric Functions}
	
	\label{FusedfG}
	
	In this section we formulate the fused analogs of the functions $f$ and $G$ (from \Cref{fg}), as well as some of their properties. We begin with the following definition, which is parallel to \Cref{dwe}. 
	
	\begin{definition}
		
		\label{fusedde}
		
		Fix an integer $N \ge 1$; a complex number $s \in \mathbb{C}$; sequences of complex numbers $\bm{r} = (r_1, r_2, \ldots , r_N)$ and $\bm{x} = (x_1, x_2, \ldots , x_N)$; and a colored fused path ensemble $\mathcal{E}$ on $\mathcal{D}_N = \mathbb{Z}_{\le 0} \times \llbracket 1, N \rrbracket$, whose arrow configuration at any $v \in \mathcal{D}_N$ is denoted by $\big( \bm{A}(v), \bm{B} (v); \bm{C} (v), \bm{D} (v) \big)$. Set
		\begin{flalign*}
			& W_{\bm{x}; \bm{r}; s} (\mathcal{E}) = \displaystyle\prod_{k = 1}^{\infty} \displaystyle\prod_{j=1}^N  W_{x_j; r_j, s} \big( \bm{A} (-k, j), \bm{B} (-k, j); \bm{C} (-k, j), \bm{D} (-k, j) \big) \\
			& \qquad \qquad \qquad \times \displaystyle\prod_{j=1}^N W_{x_j; r_j, 0} \big( \bm{A} (0, j), \bm{B} (0, j); \bm{C} (0, j), \bm{D} (0, j) \big); \\
			& \widehat{W}_{\bm{x}; \bm{r}; s} (\mathcal{E}) = \displaystyle\prod_{k = 1}^{\infty} \displaystyle\prod_{j=1}^N \widehat{W}_{x_j; r_j, s} \big( \bm{A} (-k, j), \bm{B} (-k, j); \bm{C} (-k, j), \bm{D} (-k, j) \big) \\
			& \qquad \qquad \qquad \times \displaystyle\prod_{j=1}^N W_{x_j; r_j, 0} \big( \bm{A} (0, j), \bm{B} (0, j); \bm{C} (0, j), \bm{D} (0, j) \big), 
		\end{flalign*}
		
		\noindent where in the second equality it is assumed that $r_j^2 \in q^{\mathbb{Z}_{< 0}}$, for each $j \in \llbracket 1, N \rrbracket$.
		
	\end{definition}
	
	Now we can define the following (non)symmetric functions, in a way parallel to \Cref{fg}, but with three differences. First, the vertex models are fused path ensembles, instead of higher spin ones. Second, they have weights the $W$ and $\widehat{W}$, instead of the $L$ and $\widehat{L}$. Third, in defining $\mathbbm{f}$ below, we have multiple arrows entering all rows, instead of only one, with the purpose of fully saturating them (see \Cref{fgfunctionsfused}).

 	\begin{definition}
 		
 		\label{fgfused} 
 		
 		Fix an integer $N \ge 1$; a sequence of positive integers $\bm{\mathrm{R}} = (\mathrm{R}_1, \mathrm{R}_2, \ldots , \mathrm{R}_N)$; two $n$-compositions $\mu, \nu \in \Comp_n$; and a function  $\sigma : \llbracket 1, N \rrbracket \rightarrow \llbracket 1, n \rrbracket$. 
 		
 		If $\ell (\mu) = \ell (\nu) + \mathrm{R}_{[1,N]}$, then let $\mathfrak{P}_{\mathbbm{f}} (\mu/\nu; \sigma; \bm{\mathrm{R}})$ denote the set of colored fused path ensembles on $\mathcal{D}_N = \mathbb{Z}_{\le 0} \times \llbracket 1, N \rrbracket$ with the following boundary data.  
 		
 		\begin{enumerate}
 			\item For each $j \in \llbracket 1, N \rrbracket$, $\mathrm{R}_j$ arrows of color $\sigma (j)$ horizontally enters $\mathcal{D}_N$ through $(-\infty, j)$. 
 			\item For each $k \ge 0$ and $c \in \llbracket 1, n \rrbracket$, $\mathfrak{m}_k \big( \nu^{(c)} \big)$ arrows of color $c$ vertically enter $\mathcal{D}_N$ through $(-k, 1)$. 
 			\item For each $k \ge 0$ and $c \in \llbracket 1, n \rrbracket$, $\mathfrak{m}_k \big(\mu^{(c)} \big)$ arrows of color $c$ vertically exit $\mathcal{D}_N$ through $(-k, N)$.  
 		\end{enumerate}
 		
 		\noindent See the left side of \Cref{fgfunctionsfused} for a depiction when $\mu = (7, 2, 0 \mid 5, 5, 4, 1, 1, 0 \mid 5, 2, 2)$; $\nu = (\emptyset \mid 6 \mid 6, 5)$; $\big( \sigma (1), \sigma(2), \sigma(3), \sigma(4), \sigma(5) \big) = (1, 3, 2, 1, 2)$; and $\bm{\mathrm{R}} = (2, 1, 3, 1, 2)$. There, red, green, and blue are colors $1$, $2$, and $3$, respectively. 
 		
 		Similarly, if $\ell (\mu) = \ell (\nu)$, then let $\mathfrak{P}_{\mathbb{G}} (\mu / \nu)$ denote the set of colored fused path ensembles on $\mathcal{D}_N = \mathbb{Z}_{\le 0} \times \llbracket 1, N \rrbracket$, with the following boundary data. 
 		
 		\begin{enumerate} 
 			\item For each $j \in \llbracket 1, N \rrbracket$, no arrow horizontally enters or exits $\mathcal{D}_N$ through the $j$-th row.
 			\item For each $k \ge 0$ and $c \in \llbracket 1, n \rrbracket$, $\mathfrak{m}_k \big( \mu^{(c)} \big)$ arrows of color $c$ vertically enter $\mathcal{D}_N$ through $(-k, 1)$.
 			\item For each $k \ge 0$ and $c \in \llbracket 1, n \rrbracket$, $\mathfrak{m}_k \big( \nu^{(c)} \big)$ arrows of color $c$ vertically exit $\mathcal{D}_N$ through $(-k, N)$.
 		\end{enumerate} 
 		
 		\noindent  See the right side of \Cref{fgfunctionsfused} for a depiction when $\mu = (7, 5 \mid 7, 6 \mid 6, 4)$ and $\nu = (5, 2 \mid 5, 1 \mid 3, 2)$.  
 		
 		For any complex number $s \in \mathbb{C}$, and sequences of complex numbers $\bm{r} = (r_1, r_2, \ldots , r_N)$ and $\bm{x} = (x_1, x_2, \ldots , x_N)$, let
 		\begin{flalign}
 			\label{fgmunufused} 
 			\mathbbm{f}_{\mu / \nu; s}^{\sigma} (\bm{x}; \bm{r}) = \displaystyle\sum_{\mathfrak{P}_f (\mu/ \nu; \sigma; \bm{\mathrm{R}})} \widehat{W}_{\bm{x}; \bm{r}; s}(\mathcal{E}); \qquad \mathbbm{G}_{\mu/\nu; s} (\bm{x}; \bm{r}) = \displaystyle\sum_{\mathfrak{P}_G (\mu/\nu)} W_{\bm{x}; \bm{r}; s} (\mathcal{E}),
 		\end{flalign}
 		
 		\noindent where in the first equality it is assumed that $r_j^{-2} = q^{-\mathrm{R}_j}$ for each $j \in \llbracket 1, N \rrbracket$ (and $\bm{r}$ can be arbitrary in the second). If $\nu = \emptyset$ is empty, we write $\mathbbm{f}_{\mu; s}^{\sigma} (\bm{x}; \bm{r}) = \mathbbm{f}_{\mu/\emptyset; s}^{\sigma} (\bm{x}; \bm{r})$ and $\mathbb{G}_{\mu; s} (\bm{x}; \bm{r}) = \mathbb{G}_{\mu/0^N; s} (\bm{x}; \bm{r})$. If $N = 1$, we may write $\mathbbm{f}_{\mu/\nu; s}^{\sigma(1)}$ in place of $\mathbbm{f}_{\mu/\nu; s}^{\sigma}$. 
 	\end{definition}

 	\begin{figure}
 		\begin{center}
 			\begin{tikzpicture}[
 				>=stealth, 
 				scale = .75]{				
 					
 					\draw[->, very thick] (8, 0) -- (8, 6.5);
 					\draw[->, very thick] (8, 0) -- (0, 0);
 					
 					\draw[->, thick, red] (0, .95) node[above = 6, right = 2, black,  scale = .6]{$\sigma(1)$} -- (1, .95);
 					\draw[->, thick, red] (0, 1.05) -- (1, 1.05);
 					\draw[->, thick, blue] (0, 2) node[above = 4, right = 2, black,  scale = .6]{$\sigma(2)$} -- (1, 2);
 					\draw[->, thick, green] (0, 3) -- (1, 3);
 					\draw[->, thick, green] (0, 3.1) -- (1, 3.1);
 					\draw[->, thick, green] (0, 2.9) -- (1, 2.9);
 					\draw[->, thick, red] (0, 4) -- (1, 4);
 					\draw[->, thick, green] (0, 5.05) node[above = 4, right = 1, black,  scale = .6]{$\sigma(N)$} -- (1, 5.05);
 					\draw[->, thick, green] (0, 4.95) -- (1, 4.95);
 					
 					\draw[->, thick, blue] (1.95, 0) -- (1.95, 1);
 					\draw[->, thick, green] (2.05, 0) -- (2.05, 1);
 					\draw[->, thick, blue] (3, 0) -- (3, 1);
 					
 					\draw[->, thick, red] (1, 5) -- (1, 6);
 					\draw[->, thick, green] (8.05, 5) -- (8.05, 6);
 					\draw[->, thick, blue] (2.9, 5) -- (2.9, 6); 
 					\draw[->, thick, green] (3, 5) -- (3, 6);
 					\draw[->, thick, green] (4, 5) -- (4, 6);
 					\draw[->, thick, red] (5.9, 5)-- (5.9, 6);
 					\draw[->, thick, blue] (6, 5) -- (6, 6);
 					\draw[->, thick, red] (7.95, 5) -- (7.95, 6);
 					\draw[->, thick, blue] (6.1, 5) -- (6.1, 6);
 					\draw[->, thick, green] (3.1, 5) -- (3.1, 6); 
 					\draw[->, thick, green] (7.05, 5) -- (7.05, 6);
 					\draw[->, thick, green] (6.95, 5) -- (6.95, 6);
 					
 					\draw[ultra thick, gray, dashed] (1, 1) -- (1, 5);
 					\draw[ultra thick, gray, dashed] (2, 1) -- (2, 5);
 					\draw[ultra thick, gray, dashed] (3, 1) -- (3, 5);
 					\draw[ultra thick, gray, dashed] (4, 1) -- (4, 5);
 					\draw[ultra thick, gray, dashed] (5, 1) -- (5, 5);
 					\draw[ultra thick, gray, dashed] (6, 1) -- (6, 5);
 					\draw[ultra thick, gray, dashed] (7, 1) -- (7, 5);
 					
 					\draw[ultra thick, gray, dashed] (1, 1) -- (8, 1);
 					\draw[ultra thick, gray, dashed] (1, 2) -- (8, 2);
 					\draw[ultra thick, gray, dashed] (1, 3) -- (8, 3);
 					\draw[ultra thick, gray, dashed] (1, 4) -- (8, 4);
 					\draw[ultra thick, gray, dashed] (1, 5) -- (8, 5);

 					\draw[]  (0, 1) circle [radius = 0] node[left, scale = .65]{$(x_1, r_1)$};
 					\draw[]  (0, 2) circle [radius = 0] node[left, scale = .65]{$(x_2, r_2)$};
 					\draw[]  (-.15, 3.5) circle [radius = 0] node[left, scale = .8]{$\vdots$};
 					\draw[]  (0, 5) circle [radius = 0] node[left, scale = .65]{$(x_N, r_N)$};
 					
 					\draw[-] (1, -.25) -- (1, -.75) -- (7, -.75) -- (7, -.25);
 					\draw[-] (1, 6.25) -- (1, 6.5) -- (7, 6.5) -- (7, 6.25);
 					
 					\draw[] (4, -.75) circle[radius = 0] node[below, scale = .7]{$\nu$};
 					\draw[] (4, 6.5) circle[radius = 0] node[above, scale = .7]{$\mu$};
 					\draw[] (4, 7.5) circle[radius = 0] node[above, scale = .8]{$\mathbbm{f}_{\mu / \nu}$};
 					
 					\draw[->, thick, red] (10.95, 0) -- (10.95, 1);
 					\draw[->, thick, red] (12.95, 0) -- (12.95, 1);
 					\draw[->, thick, blue] (11.95, 0) -- (11.95, 1);
 					\draw[->, thick, blue] (14, 0) -- (14, 1);
 					\draw[->, thick, green] (11.05, 0) -- (11.05, 1); 
 					\draw[->, thick, green] (12.05, 0) -- (12.05, 1);
 					
 					\draw[->, thick, red] (12.95, 5) -- (12.95, 6); 
 					\draw[->, thick, red] (15.95, 5) -- (15.95,6);
 					\draw[->, thick, blue] (15, 5) -- (15, 6);
 					\draw[->, thick, blue] (16.05, 5) -- (16.05, 6);
 					\draw[->, thick, green] (13.05, 5) -- (13.05, 6); 
 					\draw[->, thick, green] (17, 5) -- (17, 6);
 					
 					\draw[ultra thick, gray, dashed] (11, 1) -- (11, 5);
 					\draw[ultra thick, gray, dashed] (12, 1) -- (12, 5);
 					\draw[ultra thick, gray, dashed] (13, 1) -- (13, 5);
 					\draw[ultra thick, gray, dashed] (14, 1) -- (14, 5);
 					\draw[ultra thick, gray, dashed] (15, 1) -- (15, 5);
 					\draw[ultra thick, gray, dashed] (16, 1) -- (16, 5);
 					\draw[ultra thick, gray, dashed] (17, 1) -- (17, 5);
 					
 					\draw[ultra thick, gray, dashed] (11, 1) -- (18, 1);
 					\draw[ultra thick, gray, dashed] (11, 2) -- (18, 2);
 					\draw[ultra thick, gray, dashed] (11, 3) -- (18, 3);
 					\draw[ultra thick, gray, dashed] (11, 4) -- (18, 4);
 					\draw[ultra thick, gray, dashed] (11, 5) -- (18, 5);
 					
 					\draw[->, very thick] (18, 0) -- (18, 6.5);
 					\draw[->, very thick] (18, 0) -- (10, 0);
 					
 					\draw[]  (11, 1) circle [radius = 0] node[left, scale = .65]{$(x_1, r_1)$};
 					\draw[]  (11, 2) circle [radius = 0] node[left, scale = .65]{$(x_2, r_2)$};
 					\draw[]  (11, 3.5) circle [radius = 0] node[left = 10, scale = .65]{$\vdots$};
 					\draw[]  (11, 5) circle [radius = 0] node[left, scale = .65]{$(x_N, r_N)$};

 					\draw[-] (11, -.25) -- (11, -.75) -- (17, -.75) -- (17, -.25);
 					\draw[-] (11, 6.25) -- (11, 6.5) -- (17, 6.5) -- (17, 6.25);
 					
 					\draw[] (14, -.75) circle[radius = 0] node[below, scale = .7]{$\mu$};
 					\draw[] (14, 6.5) circle[radius = 0] node[above, scale = .7]{$\nu$};
 					\draw[] (14, 7.5) circle[radius = 0] node[above, scale = .8]{$G_{\mu / \nu}$};
 				}
 			\end{tikzpicture}
 		\end{center}		
 		\caption{\label{fgfunctionsfused} Depicted to the left and right are vertex models for $\mathbbm{f}_{\mu / \nu; s}^{\sigma}$ and $\mathbb{G}_{\mu / \nu; s}$, respectively.} 
 		
 	\end{figure}

 	Observe that the quantity $\widehat{W}_{\bm{x}; \bm{r}; s} (\mathcal{E})$ appearing as the summand in \eqref{fgmunufused} defining $\mathbbm{f}_{\mu/\nu}^{\sigma}$ is bounded, since all but finitely many vertices in any ensemble $\mathcal{E} \in \mathfrak{P}_{\mathbbm{f}} (\mu/\nu; \sigma; \bm{\mathrm{R}})$ have arrow configurations of the form $(\bm{e}_0, \mathrm{R}_j  \bm{e}_{\sigma(j)}; \bm{e}_0, \mathrm{R}_j  \bm{e}_{\sigma(j)}) $ for some $j \in \llbracket 1, N \rrbracket$, and $\widehat{W}_{x_j; r_j, s} (\bm{e}_0, \mathrm{R}_j  \bm{e}_{\sigma(j)}; \bm{e}_0,  \mathrm{R}_j \bm{e}_{\sigma(j)}) = 1$ for $r_j = q^{-\mathrm{R}_j/2}$, by \eqref{rw0}. Similarly, $W_{\bm{x}; \bm{r}; s} (\mathcal{E})$ appearing as the summand in \eqref{fgmunufused} defining $\mathbb{G}_{\mu/\nu; s}$ is bounded, since all but finitely many vertices in any $\mathcal{E} \in \mathfrak{P}_{\mathbb{G}} (\mu/\nu)$ have arrow configurations of the form $(\bm{e}_0, \bm{e}_0; \bm{e}_0, \bm{e}_0)$, and we have $W_{x; r, s} (\bm{e}_0, 0; \bm{e}_0, 0) = 1$ by \Cref{uabcdac0}. 	
 	
 	\begin{rem} 
 		
 		\label{fr}
 		
 		It may be possible to analytically continue the $\mathbbm{f}$  functions in the parameters $\bm{r}$ (so as to avoid imposing the assumption that each $r_j \in q^{\mathbb{Z}_{<0}}$), by following the complementation procedure outlined at the end of \Cref{a0c0u}. However, we will not pursue this here. 
 	\end{rem} 
 	
 	\subsection{Properties of $\mathbbm{f}$ and $\mathbb{G}$} 
 	
 	\label{IdentitiesFusedfG}
 	
 	In this section we provide properties of the $\mathbbm{f}$ and $\mathbb{G}$ functions from \Cref{fgfused}. The first is the \emph{fusion} property that relates these to the $f$ and $G$ functions from \Cref{fg}, when the parameters of the latter are specialized to unions of geometric progressions. We omit its proof, as very similar statements have appeared repeatedly throughout the literature; see \cite[Section 6E]{CSVMPL}, \cite[Theorem 6.2]{SVMP}, and \cite[Proposition 7.2.3]{CFVMSF} for references in the colored case and \cite[Proposition 5.5]{HSVMSRF} in the uncolored one.
 	
 	\begin{lem}
 		
 		\label{fgfg}
 		
 		Adopt the notation of \Cref{fgfused}, and assume $r_j = q^{-\mathrm{R}_j/2}$ for each $j \in \llbracket 1, N \rrbracket$. Define $\omega : \llbracket 1, \mathrm{R}_{[1,N]} \rrbracket \rightarrow \llbracket 1, n \rrbracket$ by for each $j \in \llbracket 1, \mathrm{R}_{[1,N]} \rrbracket$ setting $\omega (j) = \sigma(j')$, where $j' \in \llbracket 1, N \rrbracket$ is the unique index satisfying $\mathrm{R}_{[1, j' - 1]} + 1 \le j \le \mathrm{R}_{[1, j']}$; also define the $\mathrm{R}_{[1,N]}$-tuple of complex numbers
 		\begin{flalign*} 
 			\bm{z} = (x_1, qx_1, \ldots ,q^{\mathrm{R}_1-1} x_1, x_2, qx_2, \ldots , q^{\mathrm{R}_2-1} x_2, \ldots , x_N, q x_N, \ldots , q^{\mathrm{R}_N-1} x_n).
 		\end{flalign*} 
 	
 		\noindent Then, we have $\mathbbm{f}_{\mu/\nu;s}^{\sigma} (\bm{x}; \bm{r}) = f_{\mu/\nu; s}^{\omega} (\bm{z})$ and $\mathbb{G}_{\mu/\nu; s} (\bm{x}; \bm{r}) = G_{\mu/\nu; s} (\bm{z})$.  
 		
 	\end{lem}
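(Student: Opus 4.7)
The plan is to use the classical fusion procedure originally due to \cite{ERT}: each row of the vertex model defining $\mathbbm{f}$ or $\mathbb{G}$ (with parameters $(x_j, r_j)$ where $r_j = q^{-\mathrm{R}_j/2}$) is replaced by a stack of $\mathrm{R}_j$ rows of the unfused model (with parameters $q^{i-1} x_j$ for $i \in \llbracket 1, \mathrm{R}_j \rrbracket$), so that the resulting partition function becomes one of the form computed by $f_{\mu/\nu;s}^{\omega}(\bm{z})$ or $G_{\mu/\nu;s}(\bm{z})$. The key algebraic input is the ``KRS fusion identity,'' which asserts that the $W_{x;r,s}$ weights (respectively $\widehat{W}_{x;r,s}$ weights) from \Cref{wabcdw} arise as a specific entry of a product of $\mathrm{R}$ copies of the $L$-weights (respectively $\widehat{L}$-weights) with spectral parameters in geometric progression along $(x, qx, \ldots, q^{\mathrm{R}-1}x)$; this is by now a standard fact in the literature (see the references given above the lemma).

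Concretely, I would first set $\bm{z}$ as in the lemma statement and consider the partition functions on the right side, $f_{\mu/\nu;s}^\omega (\bm{z})$ and $G_{\mu/\nu;s}(\bm{z})$, as sums over colored higher spin path ensembles on $\mathbb{Z}_{\le 0} \times \llbracket 1, \mathrm{R}_{[1,N]} \rrbracket$ (under the $\widehat{L}$ and $L$ weights, respectively). Grouping the rows of this domain into $N$ consecutive blocks of sizes $\mathrm{R}_1, \mathrm{R}_2, \ldots, \mathrm{R}_N$, the $j$-th block consists of $\mathrm{R}_j$ rows each with spectral parameter in the geometric progression $(x_j, qx_j, \ldots, q^{\mathrm{R}_j-1} x_j)$. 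For the $\mathbbm{f}$ case, the boundary condition $\omega$ arranges that all $\mathrm{R}_j$ rows in the $j$-th block receive an incoming arrow of the same color $\sigma(j)$; for the $\mathbb{G}$ case there are no horizontal arrows at all. Either way, the boundary data is invariant under permutations within each block.

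Next I would apply the fusion identity block by block. For a single block of $\mathrm{R}_j$ rows with rapidities $(x_j, qx_j, \ldots, q^{\mathrm{R}_j-1} x_j)$, summing the product of $L$-weights (or $\widehat{L}$-weights) along the block over all internal horizontal states, given that the $\mathrm{R}_j$ incoming arrows on the left are all of color $\sigma(j)$ (or all absent, in the $\mathbb{G}$ case), collapses to the single vertex weight $W_{x_j; r_j, s}$ (or $\widehat{W}_{x_j; r_j, s}$). Here the role of $W_{x_j; r_j, 0}$ on the column $\{x=0\}$ is played by the corresponding stack of $L_{x_j; 0}$ weights on the boundary; the computation at $s=0$ needed on the boundary is essentially the one in \Cref{w01}. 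Carrying out this block-by-block reduction for every column $\{x = -k\}$ and using that the vertical boundary data in $\mu$ and $\nu$ matches on the two sides gives the desired equality.

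The principal obstacle is the fusion identity itself, namely the matrix element collapse of a stack of $L$-weights with geometric rapidities into a single $W$-weight. However, this identity is well documented in the exact form needed here (e.g., \cite[Section 6E]{CSVMPL}, \cite[Theorem 6.2]{SVMP}, and \cite[Proposition 7.2.3]{CFVMSF}, with \cite[Proposition 5.5]{HSVMSRF} in the uncolored case), so the proof reduces to quoting it and verifying that the boundary conventions (including the special $s=0$ vertices along the $y$-axis, and the normalization defining $\widehat{W}$ from $W$ in \eqref{wzr2}) match up precisely. Given the symmetry in the geometric rapidities within each block and \Cref{gsymmetric} in the $\mathbb{G}$ case, no further ordering subtleties arise, and the two identities in the lemma follow simultaneously.
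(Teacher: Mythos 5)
Your proposal is correct and takes essentially the same route as the paper, which omits the proof and simply cites the same references for the fusion identity (\cite[Section 6E]{CSVMPL}, \cite[Theorem 6.2]{SVMP}, \cite[Proposition 7.2.3]{CFVMSF}, \cite[Proposition 5.5]{HSVMSRF}); in particular your check that the $\widehat{L} \to \widehat{W}$ normalizations match via $\prod_{i=1}^{\mathrm{R}_j} \frac{1 - s q^{i-1} x_j}{q^{i-1} x_j - s} = (-s)^{-\mathrm{R}_j} \frac{(s x_j; q)_{\mathrm{R}_j}}{(s^{-1} x_j; q)_{\mathrm{R}_j}}$, which reproduces \eqref{wzr2}, is exactly the compatibility that must be observed. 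The only mild imprecision is the phrase that the stack ``collapses'' to a single $W$-weight upon summing over internal horizontal states — strictly, the fusion identity is stated conditionally on $q$-exchangeable input distributions (which here are automatic because inputs are either empty or monochromatic) and produces the $W$-weight as a marginal over bundle orderings — but since you explicitly defer to the cited references for that mechanism, this does not constitute a gap.
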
 
 
 	We next provide symmetry properties, branching statements, and Cauchy identities for the $\mathbbm{f}$ and $\mathbb{G}$ functions, which are parallel to their counterparts (\Cref{gsymmetric}, \Cref{ffgg}, and \Cref{fg2}, respectively) for the $f$ and $g$ functions. They are quick consequences of the latter, together with \Cref{fgfg} and analytic continuation. 
 	
 	\begin{lem} 
 		
 		\label{gsymmetricfused} 
 		
 		Adopt the notation of \Cref{fgfused}, and let $\varsigma : \llbracket 1, M \rrbracket \rightarrow \llbracket 1, M \rrbracket$ denote a permutation. We have $\mathbb{G}_{\mu/\nu; s} (\bm{y}; \bm{r}) = \mathbb{G}_{\mu/\nu; s} \big( \varsigma(\bm{y}); \varsigma (\bm{r}) \big)$, where $\varsigma(\bm{y}) = (y_{\varsigma(1)}, y_{\varsigma(2)}, \ldots , y_{\varsigma(M)} \big)$ and $\varsigma (\bm{r}) = (r_{\varsigma(1)}, r_{\varsigma(2)}, \ldots , r_{\varsigma(N)})$. 
 		
 	\end{lem}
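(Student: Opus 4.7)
The plan is to reduce the symmetry of $\mathbb{G}$ to the already established symmetry of $G$ (\Cref{gsymmetric}) via the fusion identity \Cref{fgfg}, and then extend from the discrete set $r_j \in q^{-\mathbb{Z}_{>0}/2}$ to arbitrary $\bm{r}$ by analytic continuation.

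First, I would restrict attention to the case $r_j = q^{-\mathrm{R}_j/2}$ for positive integers $\mathrm{R}_j$, $j\in \llbracket 1,M\rrbracket$. Under this specialization, \Cref{fgfg} identifies $\mathbb{G}_{\mu/\nu;s}(\bm{y};\bm{r}) = G_{\mu/\nu;s}(\bm{z})$, where $\bm{z}$ is the concatenation of the geometric progressions $(y_j, qy_j, \ldots, q^{\mathrm{R}_j-1}y_j)$ for $j \in \llbracket 1,M\rrbracket$. Applying the permutation $\varsigma$ to $(\bm{y},\bm{r})$ has the effect of rearranging these progressions as blocks, producing a tuple $\varsigma(\bm{z})$ that is an honest permutation of $\bm{z}$ in the sense of \Cref{gsymmetric}. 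That lemma then gives $G_{\mu/\nu;s}(\bm{z}) = G_{\mu/\nu;s}(\varsigma(\bm{z}))$, and a second application of \Cref{fgfg} (to the permuted $(\varsigma(\bm{y}),\varsigma(\bm{r}))$) identifies the right-hand side with $\mathbb{G}_{\mu/\nu;s}(\varsigma(\bm{y});\varsigma(\bm{r}))$. This establishes the identity on the prescribed discrete set of $\bm{r}$.

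Next, I would upgrade this to arbitrary $\bm{r} \in \mathbb{C}^M$. From the explicit expression \eqref{wweight0}, each weight $W_{y;r,s}(\bm{A},\bm{B};\bm{C},\bm{D})$ is a rational function of $r$ (a polynomial divided by the fixed Pochhammer $(r^2;q)_b$). Since all but finitely many vertices in any ensemble $\mathcal{E} \in \mathfrak{P}_{\mathbb{G}}(\mu/\nu)$ have the trivial configuration $(\bm{e}_0,\bm{e}_0;\bm{e}_0,\bm{e}_0)$ (whose weight is identically $1$ in $r$), each summand $W_{\bm{y};\bm{r};s}(\mathcal{E})$ in the definition of $\mathbb{G}_{\mu/\nu;s}(\bm{y};\bm{r})$ is a rational function of $r_1,\ldots,r_M$, and the sum is finite. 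Hence both sides of the claimed identity are rational functions of $\bm{r}$ that agree on the infinite subset $\{q^{-\mathrm{R}_j/2} : \mathrm{R}_j \in \mathbb{Z}_{>0}\}^M$; they therefore coincide as rational functions, yielding the lemma in full generality.

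The main (and only nontrivial) subtlety will be verifying that the permuted geometric progression $\varsigma(\bm{z})$ is genuinely a permutation of $\bm{z}$, so that \Cref{gsymmetric} applies directly; this is a routine bookkeeping check using the block structure of $\bm{z}$. The analytic continuation step is standard given the rationality of $W_{y;r,s}$ in $r$.
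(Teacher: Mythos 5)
Your proposal is correct and takes essentially the same route as the paper: specialize to $r_j = q^{-\mathrm{R}_j/2}$, invoke the fusion identity \Cref{fgfg} to reduce to \Cref{gsymmetric}, and extend to general $\bm{r}$ by rationality in $\bm{r}$. Your write-up is a somewhat more detailed unfolding (explicitly identifying the block-permutation of $\bm{z}$ and justifying finiteness of the ensemble sum) of the argument the paper gives in a single sentence.
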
    
 
 	\begin{proof}
 		
 		If there exist positive integers $(\mathrm{R}_1, \mathrm{R}_2, \ldots , \mathrm{R}_N)$ such that $r_j = q^{-\mathrm{R}_j/2}$ for each $j \in \llbracket 1, N \rrbracket$, then this follows from \Cref{fgfg} and \Cref{gsymmetric}. The fact that the lemma holds for an arbitrary set $\bm{r}$ of complex numbers then follows from uniqueness of analytic continuation, as $\mathbb{G}$ is a rational function in $\bm{r}$ (since the $W$-weights from \Cref{wabcdw} are).  		
 	\end{proof}
 	
 	\begin{lem}
 		
 		\label{ffggfused} 
 		
 		Adopting the notation of \Cref{fgfused}; letting $\ell = \ell (\nu)$; and fixing an integer $k \in \llbracket 1, N \rrbracket$, we have  
 		\begin{flalign*}
 			& \mathbbm{f}_{\mu / \nu; s}^{\sigma} (\bm{x}; \bm{r}) = \displaystyle\sum_{\kappa \in \Comp_n (\ell + k)} \mathbbm{f}_{\kappa / \nu; s}^{\sigma |_{\llbracket 1,k \rrbracket}} \big(\bm{x}_{[1, k]}; \bm{r}_{[1,k]} \big) \mathbbm{f}_{\mu / \kappa; s}^{\sigma |_{\llbracket k+1, N \rrbracket}} \big( \bm{x}_{[k+1, N]};  \bm{r}_{[k+1,N]} \big); \\ 
 			& \mathbb{G}_{\mu / \nu; s} (\bm{x}; \bm{r}) = \displaystyle\sum_{\kappa \in \Comp_n (\ell)} \mathbb{G}_{\mu / \kappa; s} \big( \bm{x}_{[1, k]}; \bm{r}_{[1,k]} \big) \mathbb{G}_{\kappa / \nu; s} \big( \bm{x}_{[k+1, N]}; \bm{r}_{[k+1, N]} \big).
 		\end{flalign*}
 		
 		\noindent where in the first equality we assume that $r_j = q^{-\mathrm{R}_j/2}$ for each $j \in \llbracket 1, N \rrbracket$ (and $\bm{r}$ can be arbitrary in the second). Here, we have defined the variable sets $\bm{x}_{[1, k]} = (x_1, x_2, \ldots , x_k)$, $\bm{x}_{[k+1,N]} = (x_{k+1}, x_{k+2}, \ldots , x_N)$, $\bm{r}_{[1,k]} = (r_1, r_2, \ldots , r_k)$, and $\bm{r}_{[k+1, N]} = (r_{k+1}, r_{k+2}, \ldots , r_N)$. For any interval $I = \big\llbracket i_0 + 1, i_0 + |I| \big\rrbracket \subset \llbracket 1, N \rrbracket$, we have also defined the function $\sigma |_I : \big\llbracket 1, |I| \big\rrbracket \rightarrow \llbracket 1, n \rrbracket$ by setting $\sigma |_I (i) = \sigma (i + i_0)$ for each $i \in \big\llbracket 1, |I| \big\rrbracket$. 
 		
 	\end{lem}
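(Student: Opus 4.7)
My plan is to establish both identities by directly ``cutting'' the vertex models from \Cref{fgfunctionsfused} along the horizontal line $\{y = k + \tfrac{1}{2}\}$, mirroring the cutting argument that the authors note already proves \Cref{ffgg} in the higher-spin (unfused) setting.

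For the $\mathbb{G}$ identity, I would start from the partition-function definition in \eqref{fgmunufused}. Because no arrows enter or exit horizontally in any $\mathcal{E} \in \mathfrak{P}_{\mathbb{G}}(\mu/\nu)$, arrow conservation forces the vertical arrows crossing the cut to index an $n$-composition $\kappa \in \Comp_n(\ell)$ with $\ell = \ell(\nu)$. Restricting $\mathcal{E}$ to $\mathbb{Z}_{\le 0} \times \llbracket 1, k \rrbracket$ and to $\mathbb{Z}_{\le 0} \times \llbracket k+1, N \rrbracket$ produces a pair of ensembles in $\mathfrak{P}_{\mathbb{G}}(\mu/\kappa)$ and $\mathfrak{P}_{\mathbb{G}}(\kappa/\nu)$, respectively, and this restrict-then-glue operation is a bijection. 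Since $W_{\bm{x};\bm{r};s}(\mathcal{E})$ from \Cref{fusedde} is a product over rows (the boundary weight $W_{x_j;r_j,0}$ at $x = 0$ being naturally associated with its own row $j$), the weight factors accordingly, and summing over $\kappa$ yields the claimed identity.

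For the $\mathbbm{f}$ identity, I would apply the same cut to ensembles in $\mathfrak{P}_{\mathbbm{f}}(\mu/\nu;\sigma;\bm{\mathrm{R}})$. Since $\mathrm{R}_j$ arrows of color $\sigma(j)$ enter horizontally in row $j$, the number of vertical arrows crossing the cut is $\ell(\nu) + \mathrm{R}_{[1,k]}$, so the intermediate datum $\kappa$ is an $n$-composition of length $\ell + \mathrm{R}_{[1,k]}$ (I suspect that the ``$\Comp_n(\ell+k)$'' in the statement is a typo for $\Comp_n(\ell + \mathrm{R}_{[1,k]})$, since otherwise the inner $\mathbbm{f}_{\kappa/\nu}^{\sigma|_{\llbracket 1,k\rrbracket}}$ has no associated $\mathfrak{P}_{\mathbbm{f}}$ set of ensembles). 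The bottom strip lies in $\mathfrak{P}_{\mathbbm{f}}(\kappa/\nu;\sigma|_{\llbracket 1,k\rrbracket}; \bm{\mathrm{R}}_{[1,k]})$ and the top in $\mathfrak{P}_{\mathbbm{f}}(\mu/\kappa;\sigma|_{\llbracket k+1,N\rrbracket}; \bm{\mathrm{R}}_{[k+1,N]})$. The normalized weight $\widehat{W}_{\bm{x};\bm{r};s}(\mathcal{E})$ still factors row-by-row: the normalization factor $(-s)^{-\mathrm{R}_j} (sx_j;q)_{\mathrm{R}_j} / (s^{-1}x_j;q)_{\mathrm{R}_j}$ from \eqref{wzr2} only involves row-$j$ data. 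Summing over all choices of $\kappa$ then gives the identity.

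The main obstacle I anticipate is bookkeeping --- tracking row labels, arrow multiplicities, and the correct length of $\kappa$ --- rather than substantive mathematical content. An alternative, perhaps conceptually cleaner, route would be to reduce both identities to \Cref{ffgg} via \Cref{fgfg}: specialize $\bm{x}$ to the concatenated geometric progression $\bm{z}$ from \Cref{fgfg}, apply \Cref{ffgg} at splitting position $\mathrm{R}_{[1,k]}$, recognize that $\bm{z}_{[1,\mathrm{R}_{[1,k]}]}$ is precisely the fused $\bm{z}$ associated with $(\bm{x}_{[1,k]};\bm{r}_{[1,k]})$ and similarly for the upper half, and then invoke \Cref{fgfg} in reverse. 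For the $\mathbb{G}$ statement, one would additionally use rationality of $\mathbb{G}$ in $\bm{r}$ to extend from $r_j^2 \in q^{\mathbb{Z}_{<0}}$ to arbitrary $\bm{r}$, exactly as in the proof of \Cref{gsymmetricfused}.
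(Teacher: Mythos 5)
Your proposal is correct, and it in fact contains two distinct routes. Your primary route — cutting the vertex models of \Cref{fgfunctionsfused} along $\{y = k + \tfrac12\}$ and using the row-by-row factorization of $W_{\bm{x};\bm{r};s}$ and $\widehat{W}_{\bm{x};\bm{r};s}$ from \Cref{fusedde} — is a perfectly valid direct argument. The paper, however, takes exactly the route you offer as the ``alternative, perhaps conceptually cleaner'' one: it applies \Cref{fgfg} to reduce the fused functions to unfused ones evaluated at the geometric-progression specialization $\bm{z}$, cuts via the unfused branching identity \Cref{ffgg} at position $\mathrm{R}_{[1,k]}$, then applies \Cref{fgfg} in reverse, and finally invokes uniqueness of analytic continuation (rationality of $\mathbb{G}$ in $\bm{r}$, as in \Cref{gsymmetricfused}) to pass from $r_j^2 \in q^{\mathbb{Z}_{<0}}$ to arbitrary $\bm{r}$ in the second identity. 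The two approaches are roughly comparable in length here; the fusion route has the advantage of requiring no new bookkeeping once \Cref{fgfg} is in hand, while your direct cutting argument is self-contained and avoids the analytic-continuation step (since it is valid for generic $\bm{r}$ from the start, at least for $\mathbb{G}$). Your observation that ``$\Comp_n(\ell+k)$'' in the first identity should read $\Comp_n(\ell + \mathrm{R}_{[1,k]})$ is also correct: the set $\mathfrak{P}_{\mathbbm{f}}(\kappa/\nu;\sigma|_{\llbracket 1,k\rrbracket};\bm{\mathrm{R}}_{[1,k]})$ requires $\ell(\kappa) = \ell(\nu) + \mathrm{R}_{[1,k]}$, which reduces to $\ell + k$ only in the unfused case $\mathrm{R}_j \equiv 1$; as written, the sum would be incomplete when some $\mathrm{R}_j > 1$.
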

 
 	\begin{proof}
 		The first equality in the lemma follows from \Cref{ffgg} and \Cref{fgfg}. If $r_j = q^{-\mathrm{R}_j/2}$ for each $j \in \llbracket 1, N \rrbracket$, the second follows from the same two statements; for general $\bm{r}$, it then follows from uniqueness of analytic continuation, as $\mathbb{G}$ is rational in $\bm{r}$. 
 	\end{proof}

 	\begin{lem}
 		
 		\label{fg2fused}
 		
 		Fix integers $n, M, N \ge 1$; a sequence of positive integers $\bm{\mathrm{R}} = (\mathrm{R}_1, \mathrm{R}_2, \ldots , \mathrm{R}_N)$; sequences of complex variables $\bm{r} = (r_1, r_2, \ldots,  r_N)$, $\bm{t} = (t_1, t_2, \ldots , t_M)$, $\bm{x} = (x_1, x_2, \ldots , x_N)$, and $\bm{y} = (y_1, y_2, \ldots , y_M)$; and a function $\sigma : \llbracket 1, N \rrbracket \rightarrow \llbracket 1, n \rrbracket$. Assume for each $j \in \llbracket 1, N \rrbracket$ that $r_j = q^{-\mathrm{R}_j/2}$ and that 
 		\begin{flalign}
 			\label{rtxy} 
 			\displaystyle\max_{\substack{1 \le i \le M \\ 1 \le j \le N}} \displaystyle\sup_{\substack{(b, b') \in \mathbb{Z}_{\ge 0} \times \llbracket 0, \mathrm{R}_j \rrbracket \\ (b, b') \ne (0, \mathrm{R}_j)}} \bigg| (-s)^b \displaystyle\frac{(s^{-1} y_j; q)_b}{(sy_j; q)_b} \cdot (-s)^{b' - \mathrm{R}_j} \displaystyle\frac{(s^{-1} x_i; q)_{b'}}{(sx_i; q)_{b'}} \displaystyle\frac{(sx_i; q)_{\mathrm{R}_j}}{(s^{-1} x_i; q)_{\mathrm{R}_j}} \bigg|  < 1.
 		\end{flalign}
 		
 		\noindent Then,  
 		\begin{flalign*}
 			\displaystyle\sum_{\mu \in \Comp_n (\mathrm{R}_{[1,N]})} \mathbbm{f}_{\mu; s}^{\sigma} (\bm{x}; \bm{r}) \mathbb{G}_{\mu; s} (\bm{y}; \bm{t}) = \displaystyle\prod_{i=1}^M \displaystyle\prod_{j=1}^N \displaystyle\frac{(t_i^2 x_j y_i^{-1}; q)_{\mathrm{R}_j}}{ t_i^{2 \mathrm{R}_j} (x_j y_i^{-1}; q)_{\mathrm{R}_j}}.
 		\end{flalign*}
 		
 	\end{lem}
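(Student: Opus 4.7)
The plan is to adapt the proof of \Cref{fg2} directly to the fused setting, replacing the $R$-weights with the $U$-weights of \Cref{wzabcd}, the $L$- and $\widehat{L}$-weights with the $W$- and $\widehat{W}$-weights of \Cref{wabcdw}, and the Yang--Baxter equation \eqref{llrijk2} with its fused analog from \Cref{equationrank}. Concretely, I will consider a vertex model on the union of the half-strip $\mathbb{Z}_{\le 0} \times \llbracket 1, M+N \rrbracket$ with an $M \times N$ cross attached to its right (the fused analog of \Cref{z1xy}). In the top $N$ rows of the strip, use $\widehat{W}_{x_j; r_j, s}$ at $(-k, M+j)$ for $k \ge 1$ and $W_{x_j; r_j, 0}$ at $(0, M+j)$; in the bottom $M$ rows use $W_{y_i; t_i, s}$ at $(-k, i)$ for $k \ge 1$ and $W_{y_i; t_i, 0}$ at $(0, i)$; and at the intersection of the $(x_j, r_j)$- and $(y_i, t_i)$-lines of the cross, use $U_{x_j/y_i; r_j, t_i}$. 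The boundary data will require $\mathrm{R}_j$ arrows of color $\sigma(j)$ to horizontally enter row $M + j$ for each $j \in \llbracket 1, N \rrbracket$, with no other horizontal entries, and with $\ell_c = \sum_{j : \sigma(j) = c} \mathrm{R}_j$ arrows of color $c$ exiting through the $y$-axis. This configuration is frozen: the $\mathrm{R}_j$ arrows of color $\sigma(j)$ in row $M+j$ pass through every cross vertex empty (weight $1$ by \Cref{uabcdac0}), enter the strip saturated (with weight $\widehat{W} = 1$ by \eqref{rw0}), exit the $y$-axis (with weight $1$ by \Cref{w01}), and the bottom $M$ rows are entirely empty (weight $1$). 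Hence $\mathcal{Z} = 1$.

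Next, apply the second form of \Cref{equationrank} a total of $MN$ times to slide the cross from the right of the strip to the left, producing the fused analog of \Cref{z2xy}. In the moved configuration, each cross vertex $(i, j)$ freezes with arrow configuration $(\bm{e}_0, \mathrm{R}_j \bm{e}_{\sigma(j)}; \bm{e}_0, \mathrm{R}_j \bm{e}_{\sigma(j)})$, contributing by \Cref{uabcdac0}
\begin{equation*}
U_{x_j / y_i; r_j, t_i}\bigl(\bm{e}_0, \mathrm{R}_j \bm{e}_{\sigma(j)}; \bm{e}_0, \mathrm{R}_j \bm{e}_{\sigma(j)}\bigr) = t_i^{2\mathrm{R}_j} \cdot \frac{(x_j y_i^{-1}; q)_{\mathrm{R}_j}}{(t_i^2 x_j y_i^{-1}; q)_{\mathrm{R}_j}}.
\end{equation*}
Meanwhile, the half-strip to the right of the moved cross decomposes into two pieces joined by an intermediate $n$-composition $\mu \in \Comp_n(\mathrm{R}_{[1, N]})$ recording the $x$-coordinates at which colored arrows cross the horizontal boundary between the top $N$ and bottom $M$ rows; by \Cref{fgfused}, the top piece sums to $\mathbbm{f}^{\sigma}_{\mu; s}(\bm{x}; \bm{r})$, and the bottom piece sums to $\mathbb{G}_{\mu; s}(\overleftarrow{\bm{y}}; \overleftarrow{\bm{t}}) = \mathbb{G}_{\mu; s}(\bm{y}; \bm{t})$ by \Cref{gsymmetricfused}. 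Combining these contributions and equating with $\mathcal{Z} = 1$ yields the desired identity upon rearrangement.

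The main obstacle will be justifying absolute convergence of the partition function of the moved configuration under the hypothesis \eqref{rtxy}, and confirming that only the saturated/empty pattern in $\mathcal{R}_2'$/$\mathcal{R}_3'$ (respectively the top $N$ rows and bottom $M$ rows of the moved strip) survives. By \Cref{uabcdac0}, the weight at an $(\bm{e}_0, \bm{B}; \bm{e}_0, \bm{B})$ vertex in a bottom row $i$ of $\mathcal{R}_3'$ equals $(-s)^b (s^{-1} y_i; q)_b / (s y_i; q)_b$ with $b = |\bm{B}|$, while the weight at such a vertex in a top row $j$ of $\mathcal{R}_2'$, using \Cref{wzr2}, equals $(-s)^{b' - \mathrm{R}_j}(s^{-1} x_j; q)_{b'}/(s x_j; q)_{b'} \cdot (s x_j; q)_{\mathrm{R}_j}/(s^{-1} x_j; q)_{\mathrm{R}_j}$ with $b' = |\bm{B}|$. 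The ratio of a non-standard $(b, b') \ne (0, \mathrm{R}_j)$ pair to the standard pattern is precisely the factor inside the supremum of \eqref{rtxy}; following the argument of \cite[Theorem 3.2.3]{CSVMST} referenced in the proof of \Cref{fg2}, this geometric bound forces any non-frozen deviation to involve only finitely many vertices (with total weight summing absolutely), so the limit is well defined and the formal identity derived above holds. Matching up the explicit forms of the fused weights with the combinatorial bookkeeping is more intricate than in the rank-one case, but no genuinely new ideas are required.
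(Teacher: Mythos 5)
Your proposal follows essentially the same route as the paper: set up the frozen vertex model with the cross attached on the right (partition function $1$ by \Cref{uabcdac0}, \eqref{rw0}, and \Cref{w01}), slide the cross across $\mathbb{Z}_{\le 0} \times \llbracket 1, M + N \rrbracket$ via the fused Yang--Baxter equation of \Cref{equationrank}, observe that the cross freezes to the saturated configuration $(\bm{e}_0, \mathrm{R}_j \bm{e}_{\sigma(j)}; \bm{e}_0, \mathrm{R}_j \bm{e}_{\sigma(j)})$ with the stated weight, and decompose the remaining half-strip into $\mathbbm{f}$- and $\mathbb{G}$-pieces glued along $\mu$, with \eqref{rtxy} controlling absolute convergence. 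Two small points of hygiene: your phrase "pass through every cross vertex empty" should instead say that the cross is entirely empty (the $\sigma(j)$-arrows turn up at the $y$-axis before ever reaching it), and in the moved configuration the $\mathbbm{f}$-piece sits in the \emph{bottom} $N$ rows and the $\mathbb{G}$-piece in the \emph{top} $M$ rows (you have them interchanged), but since you only use the product $\mathbbm{f}_{\mu}^{\sigma}\,\mathbb{G}_{\mu}$ this labeling slip does not affect the conclusion.
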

 	
 	\begin{proof}[Proof (Outline)]
 		
 		The proof is analogous to that of \Cref{fg2}, so we only briefly outline it. We will use the equality of partition functions depicted in \Cref{uvfused} (which is similar to the equality of the partition functions depicted in \Cref{z1xy} and \Cref{z2xy}). On both sides of that figure, $\mathrm{R}_j$ arrows of color $\sigma(j)$ enter horizontally through the $(M+j)$-th row (from the bottom) for each $j \in \llbracket 1, N \rrbracket$, and all arrows exit vertically through the $y$-axis. The weights on both sides are assigned as follows. In the crosses, we use $U_{x_j / y_i; r_j, t_i}$ at the intersection of the $j$-th row (from the bottom) and $i$-th column (from the left). Along the $y$-axis, we use $W_{x_j; r_j, 0}$ or $W_{y_i; t_i, 0}$, depending on whether the row is marked by $(x_j, t_j)$ or $(y_i, s_i)$ in \Cref{uvfused}. In $\mathbb{Z}_{< 0} \times \llbracket 1, M + N \rrbracket$, we use $\widehat{W}_{x_j; r_j, s}$ or $W_{y_i; t_i, s}$, again depending on the marking of the row. The equality of partition functions depicted in \Cref{uvfused} is then a consequence of the Yang--Baxter equation \Cref{equationrank}; denote this partition function by $\mathcal{Z}$. 
 		
 		The vertex model on the left side of \Cref{uvfused} is frozen; it is quickly verified that it has weight 
 		\begin{flalign}
 			\label{0z1} 
 			\mathcal{Z} = 1,
 		\end{flalign} 
 	
 		\noindent using \Cref{uabcdac0} (at $b=0$), \eqref{rw0}, and \Cref{w01}. To analyze the right side of \Cref{uvfused}, first observe by \eqref{rtxy}, \Cref{uabcdac0}, and \eqref{wzr2} that, for any $(i, j) \in \llbracket 1, M \rrbracket \times \llbracket 1, N \rrbracket$, we have 
 		\begin{flalign*}
 			  \displaystyle\sup_{\substack{(b, b') \in \mathbb{Z}_{\ge 0} \times \llbracket 0, \mathrm{R}_j \rrbracket \\ (b, b') \ne (0, \mathrm{R}_j)}} \displaystyle\max_{\substack{\bm{B}, \bm{B}' \in \mathbb{Z}_{\ge 0}^n \\ |\bm{B}| = b, |\bm{B}'| = b'}} \bigg| \displaystyle\frac{\widehat{W}_{x_j; r_j, s} (\bm{e}_0, \bm{B}'; \bm{e}_0, \bm{B}')}{\widehat{W}_{x_j; r_j, s} (\bm{e}_0, \mathrm{R}_j \bm{e}_{\sigma(j)}; \bm{e}_0, \mathrm{R}_j \bm{e}_{\sigma(j)})} \cdot \displaystyle\frac{W_{y_i; t_i, s} (\bm{e}_0, \bm{B}; \bm{e}_0, \bm{B})}{W_{y_i; t_i, s} (\bm{e}_0, \bm{e}_0; \bm{e}_0, \bm{e}_0)} \bigg| < 1.
 		\end{flalign*}
 		
 		\noindent Using this bound, one can verify (see, for example, the proof of \cite[Proposition 6.2.2]{CFVMSF} for a very similar argument) that the vertex model on the right side of \Cref{uvfused} has nonzero weight only if all but finitely many vertices in rows marked by $(y_i, t_i)$ for some $i \in \llbracket 1, M \rrbracket$ have arrow configuration $(\bm{e}_0, \bm{e}_0; \bm{e}_0, \bm{e}_0)$. This implies that the cross on the right side of \Cref{uvfused} is frozen to have arrow configuration $(\bm{e}_0, \mathrm{R}_j \bm{e}_{\sigma(j)}; \bm{e}_0, \mathrm{R}_j \bm{e}_{\sigma(j)} \big)$ at each vertex in its $j$-th row. 
 		
 		Hence, the weight of the cross is 
 		\begin{flalign*}
 			\displaystyle\prod_{i=1}^M \displaystyle\prod_{j=1}^N U_{x_j/y_i; r_j, t_i} (\bm{e}_0, \mathrm{R}_j \bm{e}_{\sigma(j)}; \bm{e}_0, \mathrm{R}_j \bm{e}_{\sigma(j)}) = \displaystyle\sum_{i=1}^M \displaystyle\prod_{j=1}^N \displaystyle\frac{t_i^{2 \mathrm{R}_j} (x_j y_i^{-1}; q)_{\mathrm{R}_j}}{(t_i^2 x_j y_i^{-1}; q)_{\mathrm{R}_j}}.
 		\end{flalign*}
 	
 		\noindent Moreover, by \eqref{fgmunufused}, the weight of the part of the right side of \Cref{uvfused} on $\mathbb{Z}_{\le 0} \times \llbracket 1, M + N \rrbracket$ is given by $\sum_{\mu \in \Comp_n (\mathrm{R}_{[1,N]})} \mathbbm{f}_{\mu}^{\sigma} (\bm{x}; \bm{r}) \mathbb{G}_{\mu} (\bm{y}; \bm{t})$. Hence,
 		\begin{flalign*}
 			\mathcal{Z} = \displaystyle\prod_{i=1}^M \displaystyle\prod_{j=1}^N \displaystyle\frac{t_i^{2 \mathrm{R}_j} (x_j y_i^{-1}; q)_{\mathrm{R}_j}}{(t_i^2 x_j y_i^{-1}; q)_{\mathrm{R}_j}} \cdot \displaystyle\sum_{\mu \in \Comp_n (\mathrm{R}_{[1,N]})} \mathbbm{f}_{\mu}^{\sigma} (\bm{x}; \bm{r}) \mathbb{G}_{\mu} (\bm{y}; \bm{t}).
 		\end{flalign*}
 	
 		\noindent This, together with \eqref{0z1}, yields the lemma.
 	\end{proof}

 	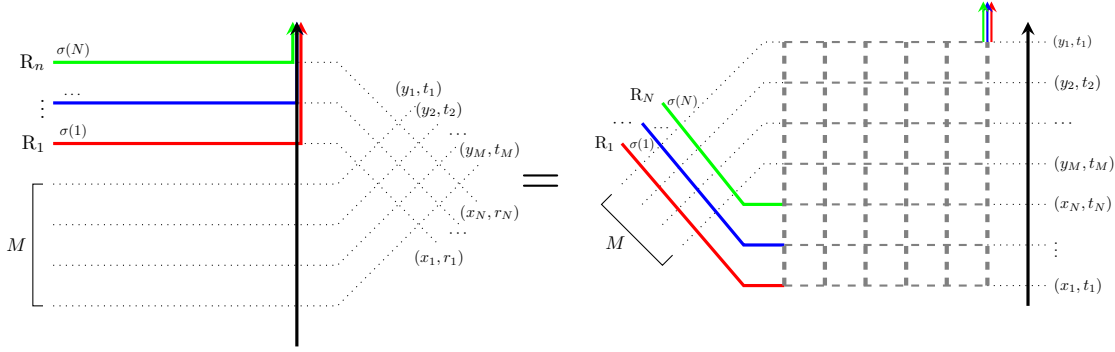
\begin{figure}
 		\begin{center}
 			\begin{tikzpicture}[
 				>=stealth, 
 				scale = .54]{
 					\draw[dotted] (-1, 0) -- (6, 0) -- (9.5, 3.5);
 					\draw[] (9.75, 3.5) circle[radius=0] node[above, scale = .6]{$(y_M, t_M)$};
 					\draw[dotted] (-1, 1) -- (6, 1) -- (9, 4) node[above, scale = .6]{$\cdots$};
 					\draw[dotted] (-1, 2) -- (6, 2) -- (8.5, 4.5) node[above, scale = .6]{$(y_2, t_2)$};
 					\draw[dotted] (-1, 3) -- (6, 3) -- (8, 5) node[above, scale = .6]{$(y_1, t_1)$};
 					
 					\draw[dotted] (4.1, 4) -- (6, 4) -- (8.5, 1.5) node[below, scale = .6]{$(x_1, r_1)$};
 					\draw[dotted] (4.1, 5) -- (6, 5) -- (9, 2) node[below, scale = .6]{$\cdots$};
 					\draw[dotted] (4.1, 6) -- (6, 6) -- (9.5, 2.5);
 					\draw[] (9.75, 2.625) circle[radius = 0] node[below, scale = .6]{$(x_N, r_N)$};
 					
 					\draw[red, very thick, ->] (-1, 4) node[left, black, scale = .75]{$\mathrm{R}_1$} -- (5.1, 4) -- (5.1, 7);
 					\draw[blue, very thick, ->] (-1, 5) node[left, black, scale = .75]{$\vdots$} -- (5, 5) -- (5, 7);  
 					\draw[green, very thick, ->] (-1, 6) node[left, black, scale = .75]{$\mathrm{R}_n$} -- (4.9, 6) -- (4.9, 7);
 					\draw[] (-.5, 4) circle[radius = 0] node[above, scale = .55]{$\sigma(1)$};
 					\draw[] (-.5, 5) circle[radius = 0] node[above, scale = .55]{$\cdots$};
 					\draw[] (-.5, 6) circle[radius = 0] node[above, scale = .55]{$\sigma(N)$};
 					\draw[->, very thick] (5, -1) -- (5, 7);
 					
 					\draw[-] (-1.25, 0) -- (-1.5, 0) -- (-1.5, 3) -- (-1.25, 3);
 					\draw[] (-1.5, 1.5) circle [radius = 0] node[left, scale = .7]{$M$};
 					
 					\draw[] (11, 3) circle[radius=0] node[scale = 2]{$=$};
 					
 					\draw[dotted] (14.5, 1.5) -- (16.5, 3.5) -- (17, 3.5);
 					\draw[dotted] (22, 3.5) -- (23.5, 3.5) node[right, scale = .6]{$(y_M, t_M)$};
 					\draw[dotted] (14, 2) -- (16.5, 4.5) -- (17, 4.5);
 					\draw[dotted] (22, 4.5) -- (23.5, 4.5) node[right, scale = .6]{$\cdots$};
 					\draw[dotted] (13.5, 2.5) -- (16.5, 5.5) -- (17, 5.5);
 					\draw[dotted] (22, 5.5) -- (23.5, 5.5) node[right, scale = .6]{$(y_2, t_2)$};
 					\draw[dotted] (13, 3) -- (16.5, 6.5) -- (17, 6.5);
 					\draw[dotted] (22, 6.5) -- (23.5, 6.5) node[right, scale = .5]{$(y_1, t_1)$};

 					\draw[red, very thick] (13, 4) node[left, black, scale = .65]{$\mathrm{R}_1$} -- (16, .5) -- (17, .5);
 					\draw[blue, very thick] (13.5, 4.5) node[left, black, scale = .65]{$\cdots$} -- (16, 1.5) -- (17, 1.5);
 					\draw[green, very thick] (14, 5) node[above = 3, left, black, scale = .65]{$\mathrm{R}_N$} -- (16, 2.5) -- (17, 2.5);
 					\draw[black] (13.5, 3.7) circle[radius = 0] node[scale = .5, above]{$\sigma(1)$};
 					\draw[black] (14, 4.2) circle[radius = 0] node[scale = .5, above]{$\cdots$};
 					\draw[black] (14.5, 4.75) circle[radius = 0] node[scale = .5, above]{$\sigma(N)$};
 					
 					\draw[dotted] (22, .5) -- (23.5, .5) node[right, scale = .6]{$(x_1, t_1)$};
 					\draw[dotted] (22, 1.5) -- (23.5, 1.5) node[right, scale = .6]{$\vdots$};
 					\draw[dotted] (22, 2.5) -- (23.5, 2.5) node[right, scale = .6]{$(x_N, t_N)$};
 					
 					\draw[->, very thick] (23, 0) -- (23, 7);
 					
 					\draw[-] (14.25, 1.25) -- (14, 1) -- (12.5, 2.5) -- (12.75, 2.75);
 					\draw[] (13.25, 1.75) circle [radius = 0] node[below = 3, left, scale = .7]{$M$};
 					
 					\draw[red, thick, ->] (22.1, 6.5) -- (22.1, 7.5);
 					\draw[blue, thick, ->] (22, 6.5) -- (22, 7.5); 
 					\draw[green, thick, ->]	(21.9, 6.5) -- (21.9, 7.5);
 					
 					\draw[gray, ultra thick, dashed] (17, .5) -- (17, 6.5);
 					\draw[gray, ultra thick, dashed] (18, .5) -- (18, 6.5);
 					\draw[gray, ultra thick, dashed] (19, .5) -- (19, 6.5);
 					\draw[gray, ultra thick, dashed] (20, .5) -- (20, 6.5);
 					\draw[gray, ultra thick, dashed] (21, .5) -- (21, 6.5);
 					\draw[gray, ultra thick, dashed] (22, .5) -- (22, 6.5);
 					
 					\draw[gray, thick, dashed] (17, .5) -- (22, .5);
 					\draw[gray, thick, dashed] (17, 1.5) -- (22, 1.5);
 					\draw[gray, thick, dashed] (17, 2.5) -- (22, 2.5);
 					\draw[gray, thick, dashed] (17, 3.5) -- (22, 3.5);
 					\draw[gray, thick, dashed] (17, 4.5) -- (22, 4.5);
 					\draw[gray, thick, dashed] (17, 5.5) -- (22, 5.5);
 					\draw[gray, thick, dashed] (17, 6.5) -- (22, 6.5);
 					
 				}
 			\end{tikzpicture}
 		\end{center}	
 		
 		\caption{\label{uvfused} Depicted above is the equality of partition functions used in the proof of \Cref{fg2fused}.}
 	\end{figure}

 	\subsection{Ascending $\mathbbm{f}\mathbb{G}$ Measures} 
 	
 	\label{AscendingfGFused}

 	In this section we introduce the fused variants of the probability measures from \Cref{measurefg}. Throughout, we fix integers $M, N \ge 1$; a sequence of positive integers $\bm{\mathrm{R}} = (\mathrm{R}_1, \mathrm{R}_2, \ldots , \mathrm{R}_N)$; a composition $\bm{\ell} = (\ell_1, \ell_2, \ldots , \ell_n)$ of $\mathrm{R}_{[1,N]}$; and a function $\sigma : \llbracket 1, N \rrbracket \rightarrow \llbracket 1, n \rrbracket$ such that $\ell_i = \sum_{j \in \sigma^{-1} (i)} \mathrm{R}_j$ for each $i \in \llbracket 1, n \rrbracket$. We must first introduce the relevant family of $n$-compositions.
 	
 	\begin{definition} 
 		
 		\label{mnmufused}

 		A sequence $\bm{\mu} = \big( \mu (0), \mu (1), \ldots , \mu (M+N) \big)$ of $n$-compositions is called \emph{$(M; \sigma; \bm{\mathrm{R}})$-ascending} if the following hold.  
 		
 		\begin{enumerate}
 			\item We have $\mu (0) = (\emptyset \mid \cdots \mid \emptyset)$ and $\mu(M+N) = (0^{\ell_1} \mid \cdots \mid 0^{\ell_n})$. 
 			\item 
 			\begin{enumerate} 
 				\item For all $j \in \llbracket 0, N \rrbracket$ and $c \in \llbracket 1, n \rrbracket$, we have $\ell \big( \mu^{(c)} (j) \big) = \sum_{k=1}^j \mathrm{R}_k \cdot \mathbbm{1}_{\sigma(k) = c}$. Thus, $\ell \big( \mu(j) \big) = \mathrm{R}_{[1,j]}$. 
 				\item For all $i \in \llbracket N, M+N \rrbracket$ and $c \in \llbracket 1, n \rrbracket$, we have $\ell \big( \mu^{(c)} (i) \big) = \ell_c$. Thus, $\ell \big( \mu(i) \big) = \mathrm{R}_{[1,N]}$.
 			\end{enumerate} 
 
 			\item For each $(c, k, i) \in \llbracket 1, n \rrbracket \times \mathbb{Z}_{>0} \times \llbracket 1, M+N \rrbracket$, we have 
 			\begin{flalign}
 				\label{qmucfused} 
 				 \mathfrak{Q}_c^{\bm{\mu}} (k, i) = \mathfrak{m}_{\le k-1} \big( \mu^{(c)} (i) \big) - \mathfrak{m}_{\le k-1} \big( \mu^{(c)} (i-1) \big) \ge 0.
 			\end{flalign}
 		
			\noindent For each $(k, i) \in \mathbb{Z}_{>0} \llbracket 1, M + N \rrbracket$, we set $\bm{\mathfrak{Q}}^{\bm{\mu}} (k, i) = \big( \mathfrak{Q}_1^{\bm{\mu}} (k, i), \mathfrak{Q}_2^{\bm{\mu}} (k, i), \ldots , \mathfrak{Q}_n^{\bm{\mu}} (k, i) \big) \in \mathbb{Z}_{\ge 0}^n$. 
		\end{enumerate} 
	
 		 \noindent Let us also define the sequence $\mathscr{Q} (\bm{\mu}) = \big( \bm{\mathfrak{Q}}^{\bm{\mu}} (1, 1), \bm{\mathfrak{Q}}^{\bm{\mu}} (1, 2), \ldots , \bm{\mathfrak{Q}}^{\bm{\mu}} (1, M+N) \big)$. 
 	\end{definition} 
 	
 	\begin{rem} 
 		
 		\label{mnmurow2fused}
 		
 		Given an $(M; \sigma; \bm{\mathrm{R}})$-ascending sequence of compositions $\bm{\mu}$ as in \Cref{mnmufused}, we will often view the $n$-composition $\mu(i)$ as indexing the positions (as in \Cref{mumuistate}) of the colored arrows exiting the row $\{ y = i \}$, in a vertex model on $\mathbb{Z}_{\le 0} \times \llbracket 1, M + N \rrbracket$. This yields a colored fused path ensemble on $\mathbb{Z}_{\le 0} \times \llbracket 1, M + N \rrbracket$, that we will denote by $\mathcal{E}_{\bm{\mu}}$. In this way, the $c$-th entry in $\bm{\mathfrak{Q}}^{\bm{\mu}} (k, i)$ denotes the number of color $c$ arrows in $\mathcal{E}_{\bm{\mu}}$ along the edge connecting $(-k, i)$ to $(1-k, i)$. Therefore, $\bm{\mathfrak{Q}}$ records the colors of the arrows (from top to bottom) along the horizontal edges in $\mathcal{E}_{\bm{\mu}}$ joining the $(-1)$-st column to the $0$-th one.
 		
 		The boundary data for this ensemble is described as follows. For each $j \in \llbracket 1, N \rrbracket$, it has $\mathrm{R}_j$ arrows of color $\sigma(j)$ horizontally entering the row $\{ y = j \}$, and it has no other arrows horizontally entering or exiting any other row of the model. For each $c \in \llbracket 1, n \rrbracket$, it has $\ell_c$ arrows of color $c$ vertically exiting the $y$-axis $\{ x = 0 \}$, and it has no other arrows horizontally entering or exiting any other column of the model. We denote by $\mathfrak{P}_{\mathbbm{f}\mathbb{G}} (M; \sigma; \bm{\mathrm{R}})$ the set of colored fused path ensembles on $\mathbb{Z}_{\le 0} \times \llbracket 1, M + N \rrbracket$ with these boundary conditions, as any $\mathcal{E}_{\bm{\mu}} \in \mathfrak{P}_{\mathbbm{f}\mathbb{G}} (M; \sigma; \bm{\mathrm{R}})$ can be thought of an ensemble from $\mathfrak{P}_{\mathbb{G}} (\mu / \emptyset)$ that is juxtaposed above one from $\mathfrak{P}_{\mathbbm{f}} (\mu / \emptyset; \sigma; \bm{\mathrm{R}})$ (recall \Cref{fgfused}), for some $n$-composition $\mu \in \Comp_n (\mathrm{R}_{[1, N]})$. It is quickly verified that the above procedure is a bijection between $\mathfrak{P}_{\mathbbm{f}\mathbb{G}} (M; \sigma; \bm{\mathrm{R}})$ and $(M; \sigma; \bm{\mathrm{R}})$-ascending sequences $\bm{\mu}$ of $n$-compositions. 
 		
 		See \Cref{00lmufused} for a depiction, where there $(n, M, N) = (2, 3, 4)$, $(\mathrm{R}_1, \mathrm{R}_2, \mathrm{R}_3, \mathrm{R}_4) = (3, 1, 2, 2)$, $(\ell_1, \ell_2) = (5, 3)$, $\big( \sigma (1), \sigma(2), \sigma(3), \sigma(4) \big) = (1, 2, 2, 1)$, and 
 		\begin{flalign*} 
 			&   \mu (1) = (2, 2, 0 \mid \emptyset), \qquad \mu (2) = (2, 0, 0 \mid 3), \qquad \mu (3) = (2, 0, 0 \mid 4, 1, 0), \\
 			\mu (4) = ( & 5, 3, 0, 0, 0 \mid 3, 0, 0), \quad \mu (5) = (2, 1, 0, 0, 0 \mid 3, 0, 0), \quad \mu (6) = (1, 0, 0, 0, 0 \mid 1, 0, 0).
 		\end{flalign*} 
 		
 	\end{rem}

 	\begin{figure} 
 		
 		\begin{center}
 			\begin{tikzpicture}[
 				>=stealth, 
 				scale = .65]{

 					\draw[thick] (23, -.25) -- (23, 7.25);
 					
 					\draw[black, thick, dotted] (15, .5) -- (15, 6.5);
 					\draw[black, thick, dotted] (16, .5) -- (16, 6.5);
 					\draw[black, thick, dotted] (17, .5) node[below = 4, scale = .65]{$\cdots$} -- (17, 6.5);
 					\draw[black, thick, dotted] (18, .5) node[below = 2, scale = .65]{$-5$} -- (18, 6.5);
 					\draw[black, thick, dotted] (19, .5) node[below = 2, scale = .65]{$-4$}  -- (19, 6.5);
 					\draw[black, thick, dotted] (20, .5) node[below = 2, scale = .65]{$-3$} -- (20, 6.5);
 					\draw[black, thick, dotted] (21, .5) node[below = 2, scale = .65]{$-2$} -- (21, 6.5);
 					\draw[black, thick, dotted] (22, .5) node[below = 2, scale = .65]{$-1$} -- (22, 6.5);
 					
 					\draw[black, thick, dotted] (15, .5) -- (23, .5);
 					\draw[black, thick, dotted] (15, 1.5) -- (23, 1.5);
 					\draw[black, thick, dotted] (15, 2.5) -- (23, 2.5);
 					\draw[black, thick, dotted] (15, 3.5) -- (23, 3.5);
 					\draw[black, thick, dotted] (15, 4.5) -- (23, 4.5);
 					\draw[black, thick, dotted] (15, 5.5) -- (23, 5.5);
 					\draw[black, thick, dotted] (15, 6.5) -- (23, 6.5);
 					
 					\draw[->, red, thick] (14, .5) node[left, black, scale = .75]{$\sigma(1)$} -- (15, .5) -- (21.05, .5) -- (21.05, 1.5) -- (22.975, 1.5) -- (22.975, 7.5);
 					\draw[->, blue, thick] (14, 1.5) node[left, black, scale = .75]{$\sigma(2)$} -- (15, 1.5) -- (20, 1.5) -- (20, 2.55) -- (22, 2.55) -- (22, 3.55) -- (23.175, 3.55) -- (23.175, 7.5);
 					\draw[->, blue, thick] (14, 2.55) node[left, black, scale = .75]{$\cdots$} -- (15, 2.55) -- (19, 2.55) -- (19, 3.55) -- (20.05, 3.55) -- (20.05, 5.55) -- (22.05, 5.55) -- (22.05, 6.55) -- (22, 6.55) -- (23.075, 6.55) -- (23.075, 7.5);
 					\draw[->, red, thick] (14, .575) -- (20.95, .575) -- (20.95, 3.45) -- (22.925, 3.45) -- (22.925, 7.5); 
 					\draw[->, red, thick] (14, .425) -- (23.025, .425) -- (23.025, 7.5); 
 					\draw[->, blue, thick] (14, 2.45) -- (23.125, 2.45) -- (23.125, 7.5);  
 					\draw[->, red, thick] (14, 3.55) node[left, black, scale = .75]{$\sigma(N)$} -- (15, 3.55) -- (18, 3.55) -- (18, 4.55) -- (21, 4.55) -- (21, 5.45) -- (21.95, 5.45) -- (21.95, 6.45) -- (22.825, 6.45) -- (22.825, 7.5);
 					\draw[->, red, thick] (14, 3.45) -- (19.95, 3.45) -- (19.95, 4.45) -- (22, 4.45) -- (22, 5.5) -- (22.875, 5.5) -- (22.875, 7.5); 
 				}
 			\end{tikzpicture}
 		\end{center}
 		
 		\caption{\label{00lmufused} Depicted above is the colored fused path ensemble associated with the sequence $\bm{\mu}$ in the example at the end of \Cref{mnmurow2fused}. Here, red and blue are colors $1$ and $2$, respectively.} 
 		
 	\end{figure}

 	Next we define the following probability measure on sequences of ascending compositions.

 	\begin{definition}
 		
 		\label{measurefgfused}
 		
 		Fix a complex number $s \in \mathbb{C}$ and four sequences $\bm{r} = (r_1, r_2, \ldots , r_N)$; $\bm{t} = (t_1, t_2, \ldots , t_M)$; $\bm{x} = (x_1, x_2, \ldots , x_N)$; and $\bm{y} = (y_1, y_2, \ldots , y_M)$ of complex numbers, with $r_j = q^{-\mathrm{R}_j/2}$ for each $j \in \llbracket 1, n \rrbracket$. Define the probability measure $\mathbb{P}_{\mathbbm{f}\mathbb{G}}^{\sigma} = \mathbb{P}_{\mathbbm{f}\mathbb{G}; n; s; \bm{x}; \bm{y}; \bm{r}; \bm{t}}^{\sigma}$ on $(M; \sigma; \bm{\mathrm{R}})$-ascending sequences of $n$-compositions, by setting
 		\begin{flalign}
 			\label{fgprobabilitymufused}
 			\mathbb{P}_{\mathbbm{f}\mathbb{G}}^{\sigma} [\bm{\mu}] = \mathcal{Z}_{\bm{x}; \bm{y}; \bm{r}; \bm{t}}^{-1} \cdot \displaystyle\prod_{j=1}^N \mathbbm{f}_{\mu (j) / \mu (j-1); s}^{\sigma (j)} (x_j; r_j) \displaystyle\prod_{i=N+1}^{M+N} \mathbb{G}_{\mu (i-1) / \mu (i); s} (y_{i-N}; t_{i-N}).
 		\end{flalign}
 		
 		\noindent for each $(M; \sigma; \bm{\mathrm{R}})$-ascending sequence $\bm{\mu} = \big( \mu (0), \mu (1), \ldots , \mu (M+N) \big)$, where 
 		\begin{flalign}
 			\label{zxysfused}
 			\mathcal{Z}_{\bm{x}; \bm{y}; \bm{r}; \bm{t}} =  \displaystyle\prod_{i=1}^M \displaystyle\prod_{j=1}^N \displaystyle\frac{(t_i^2 x_j y_i^{-1}; q)_{\mathrm{R}_j}}{t_i^{2 \mathrm{R}_j} (x_j y_i^{-1}; q)_{\mathrm{R}_j}}.
 		\end{flalign}
 		
 		\noindent Here, we implicitly assume that $s$, $\bm{x}$, $\bm{y}$, $\bm{\mathrm{R}}$, and $\bm{t}$ are such that the left side of \eqref{fgprobabilitymufused} is nonnegative and \eqref{rtxy} holds. The fact that these probabilities sum to one follows from \Cref{sumfg1fused} below.
 		
 	\end{definition}

 	The proof of the below lemma, given \Cref{ffggfused} and \Cref{fg2fused}, is entirely analogous to that of \Cref{sumfg1}, given \Cref{ffgg} and \Cref{fg2}; it is therefore omitted.

 	\begin{lem}
 		
 		\label{sumfg1fused}
 		
 		Under the notation and assumptions of \Cref{measurefgfused}, we have 
 		\begin{flalign*}
 			\displaystyle\sum_{\bm{\mu}} \displaystyle\prod_{j=1}^N \mathbbm{f}_{\mu(j) / \mu(j-1); s}^{\sigma (j)} (x_j; r_j) \cdot \displaystyle\prod_{i=N+1}^{M+N} \mathbb{G}_{\mu(i-1) / \mu(i); s} (y_{i-N}; t_{i-N}) = \mathcal{Z}_{\bm{x}; \bm{y}; \bm{r}; \bm{s}},
 		\end{flalign*} 
 		
 		\noindent where the sum on the left side is over all $(M; \sigma; \bm{\mathrm{R}})$-ascending sequences of $n$-compositions $\bm{\mu}$.

 	\end{lem}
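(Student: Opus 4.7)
The plan is to follow the strategy of the unfused analog (\Cref{sumfg1}) verbatim, substituting the fused branching identity (\Cref{ffggfused}) and the fused Cauchy identity (\Cref{fg2fused}) for their unfused counterparts. The key observation is that an $(M;\sigma;\bm{\mathrm{R}})$-ascending sequence $\bm{\mu}=\big(\mu(0),\mu(1),\ldots,\mu(M+N)\big)$ naturally splits into an ``$\mathbbm{f}$-portion'' indexed by $j \in \llbracket 0, N \rrbracket$ (with $\mu(0) = \emptyset$ and $\ell\big(\mu(j)\big) = \mathrm{R}_{[1,j]}$) and a ``$\mathbb{G}$-portion'' indexed by $i \in \llbracket N, M+N \rrbracket$ (with $\mu(M+N) = (0^{\ell_1} \mid \cdots \mid 0^{\ell_n})$ and $\ell\big(\mu(i)\big) = \mathrm{R}_{[1,N]}$), which are coupled only through the common middle composition $\mu(N)\in\Comp_n(\mathrm{R}_{[1,N]})$.

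First, I would apply the fused branching identity (first equality of \Cref{ffggfused}) inductively in the variable $k$, starting from $k=N-1$ and moving down to $k=1$, to collapse the telescoping product
\[
\sum_{\bm{\mu}^{[0,N-1]}} \prod_{j=1}^{N} \mathbbm{f}_{\mu(j)/\mu(j-1);s}^{\sigma(j)}(x_j;r_j) = \mathbbm{f}_{\mu(N);s}^{\sigma}(\bm{x};\bm{r}),
\]
where the sum is over all intermediate $\bm{\mu}^{[0,N-1]} = \big(\mu(0),\mu(1),\ldots,\mu(N-1)\big)$ satisfying the constraints in the first two items of \Cref{mnmufused}. Simultaneously, the second equality of \Cref{ffggfused} yields
\[
\sum_{\bm{\mu}^{[N+1,M+N]}} \prod_{i=N+1}^{M+N} \mathbb{G}_{\mu(i-1)/\mu(i);s}(y_{i-N};t_{i-N}) = \mathbb{G}_{\mu(N);s}(\bm{y};\bm{t}).
\]
The third item in \Cref{mnmufused} (non-negativity of the $\mathfrak{Q}_c^{\bm{\mu}}(k,i)$) is automatically respected by these sums, since any $\bm{\mu}$ violating it contributes no summand in $\mathfrak{P}_{\mathbbm{f}}$ or $\mathfrak{P}_{\mathbb{G}}$ (hence the corresponding $\mathbbm{f}$ or $\mathbb{G}$ factor vanishes).

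Combining the two identities, the sum on the left side of the lemma becomes
\[
\sum_{\mu(N) \in \Comp_n(\mathrm{R}_{[1,N]})} \mathbbm{f}_{\mu(N);s}^{\sigma}(\bm{x};\bm{r}) \, \mathbb{G}_{\mu(N);s}(\bm{y};\bm{t}),
\]
which, by the fused Cauchy identity (\Cref{fg2fused}, whose hypothesis \eqref{rtxy} is in force by the standing assumption in \Cref{measurefgfused}), equals the product $\mathcal{Z}_{\bm{x};\bm{y};\bm{r};\bm{t}}$ defined in \eqref{zxysfused}.

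I do not anticipate a genuine obstacle here, as each step is a direct appeal to results already established; the only minor bookkeeping point is to verify that the ascending index conventions of \Cref{mnmufused} (with length jumps $\mathrm{R}_j$ rather than $1$) are exactly what the branching identity expects, which is immediate from the identification $\ell\big(\mu(j)\big) = \mathrm{R}_{[1,j]}$ in item (2a).
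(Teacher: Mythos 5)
Your proof is correct and takes essentially the same approach the paper intends: the paper omits the proof of this lemma precisely because it is, as you carry out, a direct transcription of the argument for \Cref{sumfg1} with the fused branching identity (\Cref{ffggfused}) and fused Cauchy identity (\Cref{fg2fused}) substituted for their unfused counterparts.
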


 	\subsection{Matching Between Colored Stochastic Six-Vertex Models and $\mathbb{P}_{\mathbbm{f}\mathbb{G}}^{\sigma}$} 
 	
 	\label{EqualityCDFused}
 	
 	In this section we provide a matching between the law of the $(M+N)$-tuple $\mathscr{Q} (\bm{\mu})$ of elements in $\mathbb{Z}_{\ge 0}^n$ (recall \Cref{mnmufused}) associated with a sequence of compositions sampled from $\mathbb{P}_{\mathbbm{f}\mathbb{G}}^{\sigma}$ (from \Cref{measurefgfused}), with a certain random variable associated with the colored fused vertex model (from \Cref{FusedPath}). We begin by defining the latter, which is analogous to \Cref{dece}. Throughout this section, we fix integers $M, N \ge 1$; a complex number $s \in \mathbb{C}$; sequence of positive integers $\bm{\mathrm{R}} = (\mathrm{R}_1, \mathrm{R}_2, \ldots , \mathrm{R}_N)$; sequences of complex numbers $\bm{r} = (r_1, r_2, \ldots , r_N)$, $\bm{t} = (t_1, t_2, \ldots , t_M)$, $\bm{x} = (x_1, x_2, \ldots , x_N)$, and $\bm{y} = (y_1, y_2, \ldots , y_M)$, satisfying \eqref{rtxy} and $r_j = q^{-\mathrm{R}_j/2}$ for each $j \in \llbracket 1, N \rrbracket$; and a function $\sigma : \llbracket 1, N \rrbracket \rightarrow \llbracket 1, n \rrbracket$.
 	 	
 	\begin{definition} 
 		
 		\label{decefused}
 		
 		Let $\mathcal{E}$ denote a colored fused path ensemble on the rectangle domain $\mathcal{D}_{M;N} = \llbracket 1, M \rrbracket \times \llbracket 1, N \rrbracket$. For each integer $i \in \llbracket 1, M \rrbracket$, let $\bm{C} (i) = \bm{C}^{\mathcal{E}} (i) \in \mathbb{Z}_{\ge 0}^n$ be such that the $k$-th entry $C_k (i)$ of $\bm{C} (i)$ denotes the number of color $k$ arrows in $\mathcal{E}$ vertically exiting $\mathcal{D}_{M;N}$ through $(i, N)$, for each $k \in \llbracket 1, n \rrbracket$. For each integer $j \in \llbracket 1, N \rrbracket$, let $\bm{D} (j) = \bm{D}^{\mathcal{E}} (j) \in \mathbb{Z}_{\ge 0}^n$ be such that the $k$-th entry $D_k (j)$ of $\bm{D} (j)$ denotes the number of color $k$ arrows in $\mathcal{E}$ horizontally exiting $\mathcal{D}_{M;N}$ through $(M, j)$. Then set $\mathscr{C} (\mathcal{E}) = \big( \bm{C}(1), \bm{C}(2), \ldots , \bm{C}(M) \big) \in (\mathbb{Z}_{\ge 0}^n)^M$ and $\mathscr{D} (\mathcal{E}) = \big(\bm{D}(1), \bm{D}(2), \ldots , \bm{D} (N) \big) \in (\mathbb{Z}_{\ge 0}^n)^N$. 
 		
 	\end{definition} 
 	
 	We next require notation for the colored stochastic fused vertex model (defined at the end of \Cref{FusedPath}) with specific boundary data.
 	
 	\begin{definition} 
 		
 		\label{probabilityvertexfused}

 		We say that a colored fused path ensemble on the rectangle domain $\mathcal{D}_{M;N} = \llbracket 1, M \rrbracket \times \llbracket 1, N \rrbracket$ has \emph{$(\sigma; \bm{\mathrm{R}})$-entrance data} if the following holds. For each $j \in \llbracket 1, N \rrbracket$, $\mathrm{R}_j$ paths of color $\sigma (j)$ horizontally enters $\mathcal{D}_{M;N}$ from the site $(j, 0)$ on the $y$-axis, and no path horizontally enters $\mathcal{D}_{M;N}$ from any site on the $x$-axis. Let $\mathbb{P}_{\FV}^{\sigma} = \mathbb{P}_{\FV; \bm{x}; \bm{y}; \bm{r}; \bm{t}}^{\sigma}$ denote the measure on colored fused path ensembles on $\mathcal{D}_{M;N}$ obtained by running the colored stochastic fused vertex model on $\mathcal{D}_{M;N}$ under $(\sigma; \bm{\mathrm{R}})$-boundary data, with weight $U_{y_i / x_j; r_j, t_i}$ (recall \Cref{wzabcd}) at any vertex $(i, j) \in \mathcal{D}_{M;N}$.
 		
 	\end{definition}

 	 The following proposition now provides a matching between the $(M+N)$-tuple $\mathscr{Q} (\bm{\mu})$ (recall \Cref{mnmufused}) sampled under $\mathbb{P}_{\mathbbm{f}\mathbb{G}}^{\sigma}$ of \Cref{measurefgfused} and the $(M+N)$-tuple $\mathscr{D}(\mathcal{E}) \cup \overleftarrow{\mathscr{C}}(\mathcal{E})$ sampled under $\mathbb{P}_{\FV}^{\sigma}$ of \Cref{probabilityvertexfused}. We omit its proof, which is very similar to that of \Cref{fgsv} (with the few necessary modifications already explained in the proof outline of \Cref{fg2fused}).

 	\begin{prop}
 		
 		\label{fgsvfused}
 		
 		Let $\mathscr{Q} = \big( \bm{\mathfrak{Q}} (1), \bm{\mathfrak{Q}} (2), \ldots , \bm{\mathfrak{Q}} (M+N) \big)$ denote an $(M+N)$-tuple of elements in $\mathbb{Z}_{\ge 0}^n$, and define the $M$-tuple $\mathscr{C} = \big( \bm{\mathfrak{Q}} (M+N), \bm{\mathfrak{Q}}(M+N-1), \ldots,  \bm{\mathfrak{Q}} (N+1) \big)$ and $N$-tuple $\mathscr{D} = \big( \bm{\mathfrak{Q}}(1), \bm{\mathfrak{Q}}(2), \ldots , \bm{\mathfrak{Q}} (N) \big)$. Then,  
 		\begin{flalign}
 			\label{ekmukfused} 
 			\mathbb{P}_{\FV}^{\sigma} \Big[ \{ \mathscr{C} (\mathcal{E}) = \mathscr{C} \big\} \cap \big\{ \mathscr{D} (\mathcal{E}) = \mathscr{D} \big\} \Big] = \mathbb{P}_{\mathbbm{f}\mathbb{G}}^{\sigma} \big[ \mathscr{Q}  (\bm{\mu}) = \mathscr{Q} \big]. 
 		\end{flalign}	
 		
 		\noindent Here, on the left side of \eqref{ekmukfused}, the colored fused path ensemble $\mathcal{E}$ is sampled under the colored stochastic fused vertex measure $\mathbb{P}_{\FV; \bm{x}; \bm{y}; \bm{r}; \bm{t}}^{\sigma}$. On the right side of \eqref{ekmukfused}, the $(M; \sigma; \bm{\mathrm{R}})$-ascending sequence $\bm{\mu}$ of colored compositions is sampled under the measure $\mathbb{P}_{\mathbbm{f}\mathbb{G}; n; s; \bm{x}; \bm{y}; \bm{r}; \bm{t}}^{\sigma}$.
 		
 	\end{prop}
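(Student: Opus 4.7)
The plan is to mimic the proof of \Cref{fgsv} in the fused setting, replacing the $R$-, $L$-, and $\widehat{L}$-weights by the $U$-, $W$-, and $\widehat{W}$-weights throughout, and invoking the fused Yang--Baxter equation \Cref{equationrank} in place of \Cref{rrr1}. Concretely, for a fixed $(M+N)$-tuple $\mathscr{Q}$, I would consider the partition function $\mathcal{Z}(\mathscr{Q})$ of a vertex model built on an $M \times N$ ``cross'' attached to the half-strip $\mathbb{Z}_{<0} \times \llbracket 1, M+N \rrbracket$, directly analogous to that in \Cref{fg3}. In the cross we use $U_{y_i/x_j;r_j,t_i}$, in the bottom $N$ rows of the half-strip we use $\widehat{W}_{x_j;r_j,s}$, and in the top $M$ rows we use $W_{y_i;t_i,s}$; the entrance data puts $\mathrm{R}_j$ arrows of color $\sigma(j)$ into the $j$-th row of the cross (with no vertical arrows entering and none entering on the top), and the exit data forces $\bm{\mathfrak{Q}}(i)$ to be the tuple of colored arrows exiting horizontally through $(-1,i)$ for each $i \in \llbracket 1, M+N \rrbracket$.

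Two independent evaluations of $\mathcal{Z}(\mathscr{Q})$ will give the two sides of \eqref{ekmukfused}. For the first evaluation, under the bound \eqref{rtxy} and the asymptotic formulas in \Cref{uabcdac0}, the half-strip region has nonzero weight only when all but finitely many of its vertices are frozen: saturated arrow configurations in the $\widehat{W}$ rows and empty ones in the $W$ rows. This freezes the cross to have arrow configuration $(\bm{e}_0, \mathrm{R}_j \bm{e}_{\sigma(j)}; \bm{e}_0, \mathrm{R}_j \bm{e}_{\sigma(j)})$ in its $j$-th row, contributing the product $\prod_{i,j} t_i^{2 \mathrm{R}_j}(x_j y_i^{-1}; q)_{\mathrm{R}_j}/(t_i^2 x_j y_i^{-1};q)_{\mathrm{R}_j}$ by \Cref{uabcdac0}, and leaves in the half-strip an arbitrary element of $\mathfrak{P}_{\mathbbm{f}\mathbb{G}}(M;\sigma;\bm{\mathrm{R}})$ with prescribed exit colors along the $(-1)$-st column. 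Modifying the vertex model to direct all arrows vertically through the $y$-axis (weights there being $W_{\cdot;\cdot,0}$) does not change the partition function because of \Cref{w01}. Summing over $(M;\sigma;\bm{\mathrm{R}})$-ascending sequences $\bm{\mu}$ with $\mathscr{Q}(\bm{\mu}) = \mathscr{Q}$, and identifying each row's weight with an $\mathbbm{f}$- or $\mathbb{G}$-factor by \Cref{fgfused}, yields $\mathcal{Z}(\mathscr{Q}) = \mathcal{Z}_{\bm{x};\bm{y};\bm{r};\bm{t}} \cdot \mathbb{P}_{\mathbbm{f}\mathbb{G}}^{\sigma}\big[\mathscr{Q}(\bm{\mu}) = \mathscr{Q}\big]$.

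For the second evaluation, I apply \Cref{equationrank} $MN$ times to commute the cross from the left of the half-strip to its right. The resulting model has a frozen half-strip (of $\widehat{W}$ and $W$ weights whose arrow configurations are all horizontally saturated in a single color, giving total weight $1$ by \eqref{rw0} and \Cref{w01}) together with the stochastic cross, into which $\mathrm{R}_j$ arrows of color $\sigma(j)$ enter on the $j$-th row and whose exit data is precisely $\mathscr{C} \cup \mathscr{D}$. By stochasticity of the $U$-weights (\Cref{stochasticu}) and the definition of the colored stochastic fused vertex model in \Cref{probabilityvertexfused}, the weight of the cross equals $\mathbb{P}_{\FV}^{\sigma}\big[\{\mathscr{C}(\mathcal{E}) = \mathscr{C}\} \cap \{\mathscr{D}(\mathcal{E}) = \mathscr{D}\}\big]$ divided by the normalizing product that accounts for the change from $U$-weights to stochastic transition probabilities; this normalization coincides with $\mathcal{Z}_{\bm{x};\bm{y};\bm{r};\bm{t}}$ in \eqref{zxysfused}. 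Equating the two evaluations and cancelling $\mathcal{Z}_{\bm{x};\bm{y};\bm{r};\bm{t}}$ yields \eqref{ekmukfused}.

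The main obstacle is the justification that the colored fused path ensembles on the half-strip are ``frozen at infinity'' as required; this uses \eqref{rtxy} to control the tails, exactly as in the proof of \Cref{fg2fused}, and requires carefully checking that on the bottom $N$ rows the only non-decaying configuration is the saturated one $(\bm{e}_0, \mathrm{R}_j \bm{e}_{\sigma(j)}; \bm{e}_0, \mathrm{R}_j \bm{e}_{\sigma(j)})$ while on the top $M$ rows it is the empty one $(\bm{e}_0, \bm{e}_0; \bm{e}_0, \bm{e}_0)$. Once this (and the corresponding finite-sum interpretation of $\mathbbm{f}$ and $\mathbb{G}$) is in place, every other step is a direct translation of the argument of \Cref{fgsv}, using the tools of \Cref{Fusion} in place of their higher spin counterparts.
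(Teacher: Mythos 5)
Your overall strategy is exactly the one the paper takes (this is why the authors omit the proof: it mirrors \Cref{fgsv} with the modifications from the proof outline of \Cref{fg2fused}), and all of the conceptual ingredients are in place --- the two-sided evaluation of a cross-plus-half-strip partition function, using \Cref{w01} to route all arrows through the $y$-axis without changing the weight, the decay argument under \eqref{rtxy}, and the Yang--Baxter commutation of the cross. However, your two stated evaluations of $\mathcal{Z}(\mathscr{Q})$ are not mutually consistent: setting $\mathcal{Z}_{\bm{x};\bm{y};\bm{r};\bm{t}} \cdot \mathbb{P}_{\mathbbm{f}\mathbb{G}}^{\sigma}\big[\mathscr{Q}(\bm{\mu})=\mathscr{Q}\big]$ equal to $\mathbb{P}_{\FV}^{\sigma}\big[\ldots\big]/\mathcal{Z}_{\bm{x};\bm{y};\bm{r};\bm{t}}$ and cancelling a single copy of $\mathcal{Z}_{\bm{x};\bm{y};\bm{r};\bm{t}}$ still leaves a residual factor, so the final step as written does not produce \eqref{ekmukfused}.

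The culprit is two compensating bookkeeping slips. In the first evaluation, the frozen cross contributes $\prod_{i,j} t_i^{2\mathrm{R}_j}(x_jy_i^{-1};q)_{\mathrm{R}_j}/(t_i^2 x_j y_i^{-1};q)_{\mathrm{R}_j} = \mathcal{Z}_{\bm{x};\bm{y};\bm{r};\bm{t}}^{-1}$ (as you correctly note), while by \eqref{fgprobabilitymufused} the half-strip sum $\sum_{\bm{\mu}:\, \mathscr{Q}(\bm{\mu})=\mathscr{Q}} \prod_j \mathbbm{f}_{\mu(j)/\mu(j-1);s}^{\sigma(j)}(x_j;r_j)\prod_i \mathbb{G}_{\mu(i-1)/\mu(i);s}(y_{i-N};t_{i-N})$ equals $\mathcal{Z}_{\bm{x};\bm{y};\bm{r};\bm{t}} \cdot \mathbb{P}_{\mathbbm{f}\mathbb{G}}^{\sigma}\big[\mathscr{Q}(\bm{\mu})=\mathscr{Q}\big]$. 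Multiplying the two pieces gives $\mathcal{Z}(\mathscr{Q}) = \mathbb{P}_{\mathbbm{f}\mathbb{G}}^{\sigma}\big[\mathscr{Q}(\bm{\mu})=\mathscr{Q}\big]$ with \emph{no} prefactor; you dropped the cross contribution when assembling the total. In the second evaluation, the $U$-weights are already stochastic (\Cref{stochasticu}) and $\mathbb{P}_{\FV}^{\sigma}$ is \emph{defined} directly through those $U$-weights, so there is no ``normalizing product that accounts for the change from $U$-weights to stochastic transition probabilities'': the cross weight is just $\mathbb{P}_{\FV}^{\sigma}\big[\{\mathscr{C}(\mathcal{E})=\mathscr{C}\}\cap\{\mathscr{D}(\mathcal{E})=\mathscr{D}\}\big]$. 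With both slips fixed, each evaluation gives $\mathcal{Z}(\mathscr{Q})$ with no extraneous factors, and \eqref{ekmukfused} follows at once.
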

 
 	The following corollary of \Cref{fgsvfused} equates the joint law of the height functions (recall \Cref{FunctionsZ}) evaluated along the exit sites of an $M \times N$ rectangle, sampled under the colored stochastic fused vertex model, with the joint law of the number of zero entries in a family $\bm{\mu}$ of $n$-compositions, sampled under the $\mathbb{P}_{\mathbbm{f}\mathbb{G}}^{\sigma}$ measure. We omit its proof, which given \Cref{fgsvfused} is entirely analogous to that of \Cref{hfg2} given \Cref{fgsv}.

 	\begin{cor} 
 		
 		\label{hfg2fused}
 		
 		The joint law of all the height functions 
 		\begin{flalign}
 			\label{hcijfused}
 			\bigcup_{c = 1}^n \big( \mathfrak{h}_{\ge c}^{\rightarrow} (M, 1), \mathfrak{h}_{\ge c}^{\rightarrow} (M, 2), \ldots , \mathfrak{h}_{\ge c}^{\rightarrow} (M, N), \mathfrak{h}_{\ge c}^{\rightarrow} (M-1, N), \ldots , \mathfrak{h}_{\ge c}^{\rightarrow} (0, N) \big),
 		\end{flalign} 
 		
 		\noindent is equal to the joint law of all zero-entry counts 
 		\begin{flalign}
 			\label{mcjfused}
 			\bigcup_{c = 1}^n \Big( \mathfrak{m}_0^{\ge c} \big(\mu(1) \big), \mathfrak{m}_0^{\ge c} \big( \mu (2) \big), \ldots , \mathfrak{m}_0^{\ge c} \big( \mu (N) \big), \mathfrak{m}_0^{\ge c} \big( \mu(N+1) \big), \ldots , \mathfrak{m}_0^{\ge c} \big( \mu(M+N) \big) \Big).
 		\end{flalign} 
 		
 		\noindent Here, the height functions in \eqref{hcijfused} are associated with a colored fused path ensemble sampled under $\mathbb{P}_{\FV; \bm{x}; \bm{y}; \bm{r}; \bm{t}}^{\sigma}$, and the zero-entry counts in \eqref{mcjfused} are associated with a $(M; \sigma; \bm{\mathrm{R}})$-ascending sequence of $n$-compositions $\bm{\mu} = \big( \mu (0), \mu(1), \ldots , \mu(M+N) \big)$ sampled under $\mathbb{P}_{\mathbbm{f}\mathbb{G}; n; s; \bm{x}; \bm{y}; \bm{r}; \bm{t}}^{\sigma}$.
 		
 	\end{cor}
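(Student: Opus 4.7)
The plan is to follow the argument of \Cref{hfg2} essentially verbatim, with the only substantive change being that the one-arrow quantities $\mathfrak{d}_a, \mathfrak{c}_b \in \llbracket 0, n \rrbracket$ are now replaced by vector-valued quantities $\bm{D}(a), \bm{C}(b) \in \mathbb{Z}_{\ge 0}^n$ that record arrow multiplicities by color. The matching \Cref{fgsvfused} provides the joint-law equality between $\big(\mathscr{D}(\mathcal{E}), \overleftarrow{\mathscr{C}}(\mathcal{E})\big)$ and the sequence $\mathscr{Q}(\bm{\mu}) = \big(\bm{\mathfrak{Q}}^{\bm{\mu}}(1,1), \ldots, \bm{\mathfrak{Q}}^{\bm{\mu}}(1, M+N)\big)$, so the content of the corollary is that both sides of \eqref{hcijfused} and \eqref{mcjfused} are deterministic functions of these matched tuples, given by the same formula.

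First I will express the height functions in \eqref{hcijfused} in terms of $\mathscr{D}(\mathcal{E})$ and $\mathscr{C}(\mathcal{E})$. By arrow conservation for the colored fused path ensemble, for any $c \in \llbracket 1, n \rrbracket$, $j \in \llbracket 1, N \rrbracket$, and $i \in \llbracket 1, M-1 \rrbracket$,
\begin{flalign*}
\mathfrak{h}_{\ge c}^{\rightarrow}(M, j) = \sum_{a=1}^{j} \sum_{k=c}^{n} D_k(a), \qquad \mathfrak{h}_{\ge c}^{\rightarrow}(M-i, N) = \sum_{a=1}^{N} \sum_{k=c}^{n} D_k(a) + \sum_{b=1}^{i} \sum_{k=c}^{n} C_k(M-b+1),
\end{flalign*}
exactly as in \eqref{hcmj}, with scalar indicators replaced by sums over colors $k \ge c$ of the entries of the vectors $\bm{D}(a), \bm{C}(M-b+1)$.

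Next I will express the zero-entry counts in \eqref{mcjfused} in terms of $\mathscr{Q}(\bm{\mu})$. Setting $k = 1$ in \eqref{qmucfused} gives $\mathfrak{Q}_c^{\bm{\mu}}(1, i) = \mathfrak{m}_0\big(\mu^{(c)}(i)\big) - \mathfrak{m}_0\big(\mu^{(c)}(i-1)\big)$, and since $\mu(0)$ is empty, telescoping yields $\mathfrak{m}_0\big(\mu^{(c)}(i)\big) = \sum_{a=1}^{i} \mathfrak{Q}_c^{\bm{\mu}}(1, a)$. Summing over $c' \ge c$ and splitting at $a = N$ in the case $i \in \llbracket N, M+N \rrbracket$ gives
\begin{flalign*}
\mathfrak{m}_0^{\ge c}\big(\mu(j)\big) = \sum_{a=1}^{j} \sum_{c'=c}^{n} \mathfrak{Q}_{c'}^{\bm{\mu}}(1, a), \qquad \mathfrak{m}_0^{\ge c}\big(\mu(N+i)\big) = \sum_{a=1}^{N} \sum_{c'=c}^{n} \mathfrak{Q}_{c'}^{\bm{\mu}}(1, a) + \sum_{b=1}^{i} \sum_{c'=c}^{n} \mathfrak{Q}_{c'}^{\bm{\mu}}(1, N+b),
\end{flalign*}
which is the exact analog of \eqref{m0cmuj}.

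Finally, \Cref{fgsvfused} asserts that the joint law of $\big(\bm{D}(1), \ldots, \bm{D}(N), \bm{C}(M), \bm{C}(M-1), \ldots, \bm{C}(1)\big)$ agrees with that of $\big(\bm{\mathfrak{Q}}^{\bm{\mu}}(1,1), \ldots, \bm{\mathfrak{Q}}^{\bm{\mu}}(1, M+N)\big)$. Combining this with the two displayed formulas above (applied coordinate-wise in color and summed over $c' \ge c$) yields the desired equality of joint laws. No step should present a genuine obstacle, since \Cref{fgsvfused} already does the analytic work; the only care required is in verifying the coloured arrow-conservation identities that express the height functions as partial sums of the vectors $\bm{D}, \bm{C}$, which is routine once one notes that every fused arrow contributes to exactly one $D$- or $C$-vector along the northeast boundary.
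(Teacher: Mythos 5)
Your proposal is correct and is precisely the argument the paper has in mind (the paper explicitly omits the proof, noting it is entirely analogous to that of \Cref{hfg2} given \Cref{fgsv}). The two displayed formulas — expressing the height functions as partial sums of the color-counts in $\bm{D}(a), \bm{C}(b)$, and expressing $\mathfrak{m}_0^{\ge c}(\mu(i))$ as telescoping sums of $\mathfrak{Q}_{c'}^{\bm{\mu}}(1, a)$ via the $k = 1$ case of \eqref{qmucfused} — are the right fused analogs of \eqref{hcmj} and \eqref{m0cmuj}, and your use of the indexing in \Cref{fgsvfused} (in particular the reversal $\bm{C}(M-b+1) \leftrightarrow \bm{\mathfrak{Q}}^{\bm{\mu}}(1, N+b)$) is handled correctly.
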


 	\section{Colored Line Ensembles for Fused Vertex Models} 
 	
 	\label{L0Fused}
 	
 	This section may be viewed as the fused counterpart of \Cref{L0}, in which we describe the colored line ensembles associated with fused stochastic vertex models. Unlike in \Cref{Ensemble}, the line ensembles we obtain will no longer be simple, which is a manifestation of the fact that horizontal edges in the associated stochastic fused vertex model can accommodate more than one arrow. Outside of this difference, the content in this section will closely follow that in \Cref{L0}. Throughout this section, we fix integers $n, M, N \ge 1$; a complex number $s \in \mathbb{C}$; a sequence of positive integers $\bm{\mathrm{R}} = (\mathrm{R}_1, \mathrm{R}_2, \ldots , \mathrm{R}_N)$; sequences of complex numbers $\bm{r} = (r_1, r_2, \ldots , r_N)$, $\bm{t} = (t_1, t_2, \ldots , t_M)$, $\bm{x} = (x_1, x_2, \ldots , x_N)$ and $\bm{y} = (y_1, y_2, \ldots , y_M)$, satisfying \eqref{rtxy} and $r_j = q^{-\mathrm{R}_j/2}$ for each $j \in \llbracket 1, N \rrbracket$; a composition $\bm{\ell} = (\ell_1, \ell_2, \ldots , \ell_n)$ of $\mathrm{R}_{[1,N]}$; and a function $\sigma : \llbracket 1, N \rrbracket \rightarrow \llbracket 1, n \rrbracket$, such that $\ell_i = \sum_{j\in \sigma^{-1} (i)} \mathrm{R}_j$ for each $i \in \llbracket 1, n \rrbracket$.

 	\subsection{Fused Colored Line Ensembles and Ascending Sequences} 
 	
 	\label{EnsembleFused}
 	
 	In this section, given an $(M; \sigma; \bm{\mathrm{R}})$-ascending sequence $\bm{\mu}$ of $n$-compositions, we associate a colored line ensemble (which, unlike in \Cref{lmu} and \Cref{lmu1}, need not be simple). This is done through the following definition (where in the below we recall the notion of a colored line ensemble from \Cref{cl}).
 	
 	\begin{definition} 
 		
 		\label{lmufused} 
 		
 		Let $\bm{\mu} = \big( \mu (0), \mu(1), \ldots , \mu(M+N) \big)$ denote an $(M; \sigma; \bm{\mathrm{R}})$-ascending sequence of $n$-compositions. The associated colored line ensemble $\mathds{L} = \mathds{L}_{\bm{\mu}} = \big( \mathds{L}^{(1)}, \mathds{L}^{(2)}, \ldots , \mathds{L}^{(n)} \big)$ on $\llbracket 0, M+N \rrbracket$ is defined as follows. For each $c \in \llbracket 1, n \rrbracket$ let $\mathds{L}^{(c)} = \mathds{L}_{\bm{\mu}}^{(c)} = \big( \mathds{L}_1^{(c)}, \mathds{L}_2^{(c)}, \ldots \big)$, where for each $k \ge 1$ the function $\mathds{L}_k^{(c)} = \mathds{L}_{k;\bm{\mu}}^{(c)} : \llbracket 0, M+N \rrbracket \rightarrow \mathbb{Z}$ is prescribed by setting  
 		\begin{flalign}
 			\label{lkcifused} 
 			\mathds{L}_k^{(c)} (i) = \ell_{[c,n]}  -\mathfrak{m}_{\le k - 1}^{\ge c} \big( \mu (i) \big), \qquad \text{for each  $i \in \llbracket 0, M+N \rrbracket$}.
 		\end{flalign}
 		
 		\noindent The fact that this defines a colored line ensemble follows from \Cref{lmu1fused} below. We moreover set the differences $\bm{\Lambda}^{(c)} = \big( \Lambda_1^{(c)}, \Lambda_2^{(c)}, \ldots \big)$ of $\mathds{L}$ by $\Lambda_k^{(c)} (i) = \mathds{L}_k^{(c)} (i)  - \mathds{L}_k^{(c+1)} (i)$, for each $(c, k, i) \in \llbracket 1, n \rrbracket \times \mathbb{Z}_{>0} \times \llbracket 0, M + N \rrbracket$, where $\mathds{L}_k^{(n+1)} : \llbracket 0, M + N \rrbracket \rightarrow \mathbb{Z}$ is defined by setting $\mathds{L}_k^{(n+1)} (i) = 0$ for each $(k, i) \in \mathbb{Z}_{>0} \times  \llbracket 0, M + N \rrbracket$. 
 		
 	\end{definition}

 	\begin{figure} 
 		
 		\begin{center}
 			\begin{tikzpicture}[
 				>=stealth, 
 				scale = .65]{
 					
 					\draw[dotted] (0, 2.5) -- (7, 2.5) node[right, scale = .75]{$0$};
 					\draw[dotted] (0, 3.5) -- (7, 3.5) node[right, scale = .75]{$1$};
 					\draw[dotted] (0, 4.5) -- (7, 4.5) node[right, scale = .75]{$2$};
 					\draw[dotted] (0, 5.5) -- (7, 5.5) node[right, scale = .75]{$3$};
 					
 					\draw[dotted] (0, 2.5) node[below = 2, scale = .75]{$0$} -- (0, 5.5);
 					\draw[dotted] (1, 2.5) node[below = 2, scale = .75]{$1$} -- (1, 5.5);
 					\draw[dotted] (2, 2.5) node[below = 2, scale = .75]{$2$} -- (2, 5.5);
 					\draw[dotted] (3, 2.5) node[below = 2, scale = .75]{$3$} -- (3, 5.5);
 					\draw[dotted] (4, 2.5) node[below = 2, scale = .75]{$4$} -- (4, 5.5);
 					\draw[dotted] (5, 2.5) node[below = 2, scale = .75]{$5$} -- (5, 5.5);
 					\draw[dotted] (6, 2.5) node[below = 2, scale = .75]{$6$} -- (6, 5.5);
 					\draw[dotted] (7, 2.5) node[below = 2, scale = .75]{$7$} -- (7, 5.5);
 					
					\draw[thick, blue] (0, 5.65) -- (2, 5.65) -- (3, 4.65) -- (4, 3.65) -- (6, 3.65) -- (7, 2.65);
					\draw[thick, blue] (0, 5.575) -- (2, 5.575) -- (3, 3.575) -- (5, 3.575) -- (6, 2.575) -- (7, 2.575);
					\draw[thick, blue] (0, 5.5) -- (2, 5.5) -- (3, 3.5) -- (5, 3.5) -- (6, 2.5) -- (7, 2.5);
					\draw[thick, blue] (0, 5.425) -- (1, 5.425) -- (2, 4.425) -- (3, 3.424) -- (4, 2.425) -- (7, 2.425);
					\draw[thick, blue] (0, 5.35) -- (1, 5.35) -- (2, 4.35) -- (3, 2.35) -- (7, 2.35);
					
					\draw[] (3.25, 4.5) circle [radius = 0] node[right, scale = .75]{$\mathds{L}_1^{(2)}$};
					\draw[] (2.5, 3.5) circle [radius = 0] node[left = 1, scale = .75]{$\mathds{L}_5^{(2)}$};

 					\draw[dotted] (12, 0) node[below = 2, scale = .75]{$0$} -- (12, 8);
 					\draw[dotted] (13, 0) node[below = 2, scale = .75]{$1$} -- (13, 8);
 					\draw[dotted] (14, 0) node[below = 2, scale = .75]{$2$} -- (14, 8);
 					\draw[dotted] (15, 0) node[below = 2, scale = .75]{$3$} -- (15, 8);
 					\draw[dotted] (16, 0) node[below = 2, scale = .75]{$4$} -- (16, 8);
 					\draw[dotted] (17, 0) node[below = 2, scale = .75]{$5$} -- (17, 8);
 					\draw[dotted] (18, 0) node[below = 2, scale = .75]{$6$} -- (18, 8);
 					\draw[dotted] (19, 0) node[below = 2, scale = .75]{$7$} -- (19, 8);
 					
 					\draw[thick, violet] (12, 8.15) -- (15, 5.15) -- (16, 3.15) -- (17, 3.15) -- (18, 2.15) -- (19, .15);
 					\draw[thick, violet] (12, 8.075) -- (14, 6.075) -- (15, 4.075) -- (16, 3.075) -- (17, 2.075) -- (18, .075) -- (19, .075);
 					\draw[thick, violet] (12, 8) -- (13, 5) -- (14, 5) -- (15, 3) -- (16, 3) -- (17, 1) -- (18, 0) -- (19, 0);
 					\draw[thick, violet] (12, 7.925) -- (13, 4.925) -- (14, 3.925) -- (15, 2.925) -- (16, .925) -- (17, -.075) -- (19, -.075);
 					\draw[thick, violet] (12, 7.85) -- (13, 4.85) -- (14, 3.85) -- (15, 1.85) -- (16, .85) -- (17, -.15) -- (19, -.15);
 					
 					\draw[dotted] (12, 0) -- (19, 0) node[right, scale = .75]{$0$};
 					\draw[dotted] (12, 1) -- (19, 1) node[right, scale = .75]{$1$};
 					\draw[dotted] (12, 2) -- (19, 2) node[right, scale = .75]{$2$};
 					\draw[dotted] (12, 3) -- (19, 3) node[right, scale = .75]{$3$};
 					\draw[dotted] (12, 4) -- (19, 4) node[right, scale = .75]{$4$};
 					\draw[dotted] (12, 5) -- (19, 5) node[right, scale = .75]{$5$};
 					\draw[dotted] (12, 6) -- (19, 6) node[right, scale = .75]{$6$};
 					\draw[dotted] (12, 7) -- (19, 7) node[right, scale = .75]{$7$};
 					\draw[dotted] (12, 8) -- (19, 8) node[right, scale = .75]{$8$};
 					
 					\draw[] (14, 6.35) circle [radius = 0] node[right, scale = .75]{$\mathds{L}_1^{(1)}$}; 
 					\draw[] (12.9, 5.35) circle [radius = 0] node[left, scale = .75]{$\mathds{L}_5^{(1)}$};
 				}
 			\end{tikzpicture}
 		\end{center}
 		
 		\caption{\label{00lmufused2} Depicted above is the colored line ensemble associated with the fused path ensemble in \Cref{00lmufused}.} 
 		
 	\end{figure}
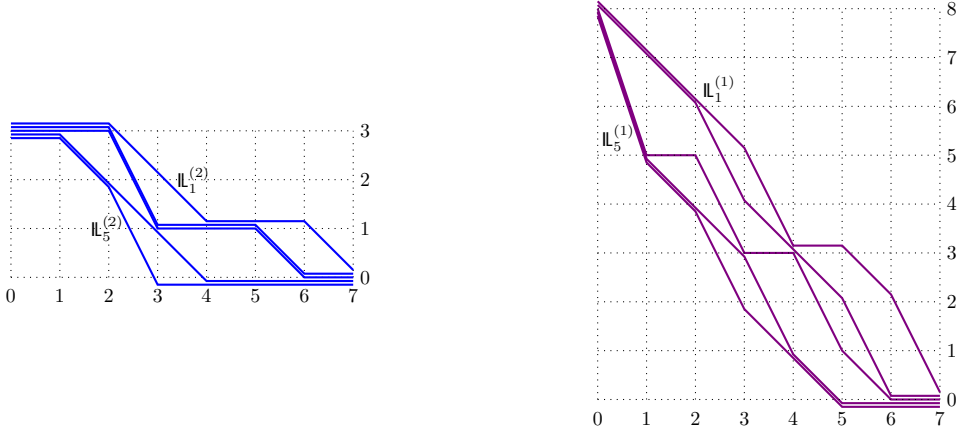

 	\begin{lem} 
 		
 		\label{lmu1fused} 
 		
 		Adopting the notation and assumptions of \Cref{lmufused}, $\mathds{L}_{\bm{\mu}}$ is a colored line ensemble, which satisfies the following three properties for any $c \in \llbracket 1, n \rrbracket$, $k \in \mathbb{Z}_{>0}$, and $i \in \llbracket 0, M + N \rrbracket$.   
 		\begin{enumerate}
 			\item We have $\mathds{L}_1^{(c)} (i) \ge \mathds{L}_2^{(c)} (i) \ge \cdots $ and $\mathds{L}_k^{(1)} (i) \ge \mathds{L}_k^{(2)} (i) \ge \cdots$.  
 			\item We have $\Lambda_k^{(c)} (i) - \Lambda_{k+1}^{(c)} (i) = \mathfrak{m}_k \big( \mu^{(c)} (i) \big)$. 
 			\item If $i \ge 1$, we have $\Lambda_k^{(c)} (i-1) - \Lambda_k^{(c)} (i) = \mathfrak{Q}_c^{\bm{\mu}} (k, i)$. 
 		\end{enumerate} 
 		
 	\end{lem}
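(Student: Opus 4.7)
The plan is to follow closely the proof of \Cref{lmu1}, since the only structural difference in the fused setting is that the indicators $\mathbbm{1}_{\mathfrak{q}_{\bm{\mu}}(k,i) \ge c}$ appearing there are replaced by the integer-valued counts $\mathfrak{Q}_c^{\bm{\mu}}(k,i)$ recording how many arrows of color $c$ traverse the horizontal edge from $(-k, i)$ to $(1-k, i)$. All three claimed properties fall out of direct bookkeeping with \eqref{lkcifused}, and the line-ensemble axioms of \Cref{cl} are then immediate consequences.

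First I would verify property 1. Writing out \eqref{lkcifused},
\begin{flalign*}
\mathds{L}_k^{(c)}(i) - \mathds{L}_{k+1}^{(c)}(i) = \mathfrak{m}_{\le k}^{\ge c}\big(\mu(i)\big) - \mathfrak{m}_{\le k-1}^{\ge c}\big(\mu(i)\big) = \mathfrak{m}_k^{\ge c}\big(\mu(i)\big) \ge 0,
\end{flalign*}
and similarly
\begin{flalign*}
\mathds{L}_k^{(c)}(i) - \mathds{L}_k^{(c+1)}(i) = \big(\ell_{[c,n]} - \ell_{[c+1,n]}\big) + \mathfrak{m}_{\le k-1}^{\ge c+1}\big(\mu(i)\big) - \mathfrak{m}_{\le k-1}^{\ge c}\big(\mu(i)\big) = \ell_c - \mathfrak{m}_{\le k-1}\big(\mu^{(c)}(i)\big),
\end{flalign*}
which is nonnegative by the length bound $\ell\big(\mu^{(c)}(i)\big) \le \ell_c$ from the second constraint in \Cref{mnmufused}. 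Next, for property 2, telescoping in $c$ and using the identity just displayed yields
\begin{flalign*}
\Lambda_k^{(c)}(i) - \Lambda_{k+1}^{(c)}(i) = \Big(\ell_c - \mathfrak{m}_{\le k-1}\big(\mu^{(c)}(i)\big)\Big) - \Big(\ell_c - \mathfrak{m}_{\le k}\big(\mu^{(c)}(i)\big)\Big) = \mathfrak{m}_k\big(\mu^{(c)}(i)\big) \ge 0.
\end{flalign*}

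For property 3, observe directly from \eqref{lkcifused} that
\begin{flalign*}
\mathds{L}_k^{(c)}(i-1) - \mathds{L}_k^{(c)}(i) = \mathfrak{m}_{\le k-1}^{\ge c}\big(\mu(i)\big) - \mathfrak{m}_{\le k-1}^{\ge c}\big(\mu(i-1)\big) = \sum_{c' = c}^{n} \mathfrak{Q}_{c'}^{\bm{\mu}}(k, i),
\end{flalign*}
where the last equality uses the definition \eqref{qmucfused}; this is nonnegative by the third constraint in \Cref{mnmufused}. Subtracting this identity for $c$ from the analogous one for $c+1$ gives $\Lambda_k^{(c)}(i-1) - \Lambda_k^{(c)}(i) = \mathfrak{Q}_c^{\bm{\mu}}(k,i) \ge 0$, which is precisely property 3. (This also matches the colored fused path ensemble interpretation in \Cref{mnmurow2fused}: the number of color $c$ arrows on the relevant horizontal edge is $\mathfrak{Q}_c^{\bm{\mu}}(k,i)$, consistent with arrow conservation.)

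Finally I would assemble the line ensemble conclusion. Property 1 gives the vertical monotonicity $\mathds{L}_k^{(c)}(i) \ge \mathds{L}_{k+1}^{(c)}(i)$, while the displayed identity in the proof of property 3 shows the horizontal monotonicity $\mathds{L}_k^{(c)}(i-1) \ge \mathds{L}_k^{(c)}(i)$, so each $\mathds{L}^{(c)}$ is a line ensemble in the sense of \Cref{l}. Properties 2 and 3 then give the two monotonicities for $\bm{\Lambda}^{(c)}$, so each difference is a line ensemble as well, confirming via \Cref{cl} that $\mathds{L}$ is a colored line ensemble. There is no real obstacle here: unlike in \Cref{lmu1} (where simplicity required the indicator identity to hold), no simplicity claim appears in \Cref{lmu1fused}, reflecting exactly the fact that fused horizontal edges may carry several arrows. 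The entire argument is a translation of multiplicity bookkeeping, with all positivity built into the definition of an $(M;\sigma;\bm{\mathrm{R}})$-ascending sequence.
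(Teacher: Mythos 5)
Your proof is correct and follows essentially the same route as the paper: parts (1) and (2) by the same direct multiplicity bookkeeping from \eqref{lkcifused} as in \Cref{lmu1}, and part (3) by telescoping with \eqref{qmucfused}, after which the line-ensemble axioms of \Cref{cl} follow. (The phrase ``subtracting this identity for $c$ from the analogous one for $c+1$'' is transposed — one subtracts the $c+1$ identity from the $c$ one — but the displayed conclusion is right.)
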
 
 
 	\begin{proof}
 		
 		The proofs of the first two parts of the lemma are very similar to those of \Cref{lmu1} and are therefore omitted. The third follows from the fact that 
 		\begin{flalign*}
 			\Lambda_k^{(c)} (i-1) - \Lambda_k^{(c)} (i) & = \big( \mathds{L}_k^{(c)} (i-1) - \mathds{L}_k^{(c)} (i) \big) - \big( \mathds{L}_k^{(c+1)} (i-1) - \mathds{L}_k^{(c+1)} (i) \big) \\ 
 			& = \mathfrak{m}_{\le k-1}^{\ge c} \big( \mu(i-1) \big) - \mathfrak{m}_{\le k-1}^{\ge c+1} \big( \mu(i) \big) \\
 			& = \mathfrak{Q}_{[c,n]}^{\bm{\mu}} (k, i) - \mathfrak{Q}_{[c+1, n]}^{\bm{\mu}} (k, i) = \mathfrak{Q}_c^{\bm{\mu}} (k, i) \ge 0,
 		\end{flalign*}
 	
 		\noindent where in the first statement we used the definition of $\bm{\Lambda}^{(c)}$ from \Cref{lmufused}; in the second we used \eqref{lkcifused}; and in the third we used \eqref{qmucfused}. The second and third statements of the lemma together imply that each $\bm{\Lambda}^{(c)}$ is a line ensemble, and thus $\mathds{L}_{\bm{\mu}}$ is a colored line ensemble by \Cref{cl}. 
  	\end{proof}

 	\begin{rem} 
 		
 		As in \Cref{mnmurow2fused}, we may interpret $\bm{\mu}$ as associated with a colored fused path ensemble $\mathcal{E}_{\bm{\mu}} \in \mathfrak{P}_{\mathbbm{f}\mathbb{G}} (M; \sigma; \bm{\mathrm{R}})$ on $\mathbb{Z}_{\le 0} \times \llbracket 1, M + N \rrbracket$. Then $\mathds{L}_k^{(c)} (i) = \mathfrak{h}_{\ge c}^{\leftarrow} (-k, i)$, where the height function $\mathfrak{h}_{\ge c}^{\leftarrow}$ is with respect to $\mathcal{E}_{\bm{\mu}}$; stated alternatively, $\mathds{L}_k^{(c)} (i)$ denotes the number of arrows with color at least $c$ that horizontally exit the column $\{ x = -k \}$ strictly above the vertex $(-k, i)$. See \Cref{00lmufused2} for a depiction of the colored line ensemble in the example at the end of \Cref{mnmurow2fused} (and shown in \Cref{00lmufused}). 
 		
 	\end{rem}

 	By \Cref{lmu1fused}, \Cref{lmufused} associates a colored line ensemble to a given $(M; \sigma; \bm{\mathrm{R}})$-ascending sequence of $n$-compositions. Since the former are in bijection with colored higher spin path ensembles in $\mathfrak{P}_{\mathbbm{f}\mathbb{G}} (M; \sigma; \bm{\mathrm{R}})$ by \Cref{mnmurow2fused}, this associates a colored line ensemble with any element of $\mathfrak{P}_{\mathbbm{f}\mathbb{G}} (M; \sigma; \bm{\mathrm{R}})$. The following definition is towards the reverse direction; it associates a colored fused path ensemble with a colored line ensemble $\mathds{L}$.

 	\begin{definition} 
 		
 		\label{le2fused}
 		
 		Fix a colored line ensemble $\mathds{L} = \big(\mathds{L}^{(1)}, \mathds{L}^{(2)}, \ldots , \mathds{L}^{(n)} \big)$, and for each $c \in \llbracket 1, n \rrbracket$ denote $\mathds{L}^{(c)} = \big( \mathds{L}_1^{(c)}, \mathds{L}_2^{(c)}, \ldots \big)$. For any $v = (-k, i) \in \mathbb{Z}_{\le 0} \times \llbracket 1, M + N \rrbracket$, define the arrow configuration $\big( \bm{A}^{\mathds{L}} (v), \bm{B}^{\mathds{L}} (v); \bm{C}^{\mathds{L}} (v), \bm{D}^{\mathds{L}} (v) \big)$ as follows. For each $c \in \llbracket 1, n \rrbracket$ and set 
 		\begin{flalign*}
 			& A_c^{\mathds{L}} (v) = \Lambda_k^{(c)} (i-1) - \Lambda_{k+1}^{(c)} (i-1) = \mathds{L}_k^{(c)} (i-1) - \mathds{L}_{k+1}^{(c)} (i-1) - \big( \mathds{L}_k^{(c+1)} (i-1) - \mathds{L}_{k+1}^{(c+1)} (i-1) \big); \\ 
 			& B_c^{\mathds{L}} (v) = \Lambda_{k+1}^{(c)} (i-1) - \Lambda_{k+1}^{(c)} (i) = \mathds{L}_{k+1}^{(c)} (i-1) - \mathds{L}_{k+1}^{(c)} (i) - \big( \mathds{L}_{k+1}^{(c+1)} (i-1) - \mathds{L}_{k+1}^{(c+1)} (i) \big); \\
 			& C_c^{\mathds{L}} (v) = \Lambda_k^{(c)} (i) - \Lambda_{k+1}^{(c)} (i) = \mathds{L}_k^{(c)} (i) - \mathds{L}_{k+1}^{(c)} (i) - \big( \mathds{L}_k^{(c+1)} (i) - \mathds{L}_{k+1}^{(c+1)} (i) \big); \\
 			& D_c^{\mathds{L}} (v) = \Lambda_k^{(c)} (i-1) - \Lambda_k^{(c)} (i) = \mathds{L}_k^{(c)} (i-1) - \mathds{L}_k^{(c)} (i) - \big( \mathds{L}_k^{(c+1)} (i-1) - \mathds{L}_k^{(c+1)} (i) \big),
 		\end{flalign*}
 		
 		\noindent where we observe that all four quantities are nonnegative since $\bm{\Lambda}^{(c)}$ is a line ensemble. This assignment of arrow configurations is consistent and therefore defines a colored fused path ensemble $\mathcal{E}^{\mathds{L}}$ associated with the colored line ensemble $\mathds{L}$. 
 		
 	\end{definition} 
 	
 	The following lemma indicates that the assocations from \Cref{lmufused} and \Cref{le2fused} are compatible; we omit its proof, which is a quick verification using the second and third properties of \Cref{lmu1fused}.

 	\begin{lem} 
 		
 		\label{llmufused}

 		If $\mathcal{E}^{\mathds{L}} = \mathcal{E}_{\bm{\mu}}$ for some $(M; \sigma; \bm{\mathrm{R}})$-ascending sequence $\bm{\mu}$ of $n$-compositions, then $\mathds{L}$ is associated with $\bm{\mu}$ in the sense of \Cref{lmufused}. 
 		
 	\end{lem}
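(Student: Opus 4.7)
The plan is to translate the hypothesis $\mathcal{E}^{\mathds{L}} = \mathcal{E}_{\bm{\mu}}$, which is an equality of colored fused path ensembles, into a set of equations on the differences $\Lambda$ of the line ensemble $\mathds{L}$; match these to the identities satisfied by $\mathds{L}_{\bm{\mu}}$ in \Cref{lmu1fused}; and finally upgrade equality of differences to equality of values using the boundary behavior at the top row.

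I would begin by applying \Cref{le2fused} to write the $c$-th components $C_c^{\mathds{L}}(-k, i)$ and $D_c^{\mathds{L}}(-k, i)$ of the arrow configuration at each $(-k, i) \in \mathbb{Z}_{\le 0} \times \llbracket 1, M + N \rrbracket$ as $\Lambda_k^{(c)}(i) - \Lambda_{k+1}^{(c)}(i)$ and $\Lambda_k^{(c)}(i-1) - \Lambda_k^{(c)}(i)$, respectively. By \Cref{mnmurow2fused}, the same components read off from $\mathcal{E}_{\bm{\mu}}$ are $\mathfrak{m}_k(\mu^{(c)}(i))$ and $\mathfrak{Q}_c^{\bm{\mu}}(k, i)$. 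Parts (2) and (3) of \Cref{lmu1fused} give exactly the same two identities with $\Lambda$ replaced by $\Lambda_{\bm{\mu}}$, so the assumption $\mathcal{E}^{\mathds{L}} = \mathcal{E}_{\bm{\mu}}$ yields that $\Lambda$ and $\Lambda_{\bm{\mu}}$ have matching finite differences in both $k$ and $i$. Since the nearest-neighbor differences of $\Lambda_k^{(c)}(i) - \Lambda_{k; \bm{\mu}}^{(c)}(i)$ in both directions vanish, this quantity depends only on $c$.

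Finally, to eliminate this residual constant, I would evaluate at $i = M + N$: since $\mathcal{E}_{\bm{\mu}} \in \mathfrak{P}_{\mathbbm{f}\mathbb{G}}(M; \sigma; \bm{\mathrm{R}})$ has all arrows exiting through the $y$-axis and none passing above the top row, one has $\mathds{L}_k^{(c)}(M+N) = 0 = \mathds{L}_{k; \bm{\mu}}^{(c)}(M+N)$ for each $k \ge 1$ and $c \in \llbracket 1, n \rrbracket$ (the latter by direct computation from \Cref{lmufused}), and therefore $\Lambda_k^{(c)}(M+N) = 0 = \Lambda_{k; \bm{\mu}}^{(c)}(M+N)$, forcing the constant to vanish. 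Summing $\mathds{L}_k^{(c)}(i) = \sum_{c'=c}^n \Lambda_k^{(c')}(i)$ (using $\mathds{L}_k^{(n+1)} = 0$) and likewise for $\mathds{L}_{\bm{\mu}}$ then gives $\mathds{L} = \mathds{L}_{\bm{\mu}}$. The step requiring the most care is the boundary identification: since the map $\mathds{L} \mapsto \mathcal{E}^{\mathds{L}}$ is only sensitive to local differences of $\mathds{L}$, it is the global structure of $\mathfrak{P}_{\mathbbm{f}\mathbb{G}}(M; \sigma; \bm{\mathrm{R}})$ — specifically, the absence of paths above row $M+N$ — that upgrades the local matching of differences to the full equality of line ensembles.
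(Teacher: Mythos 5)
Your proposal correctly identifies the paper's intended approach: read off the arrow-configuration data of $\mathcal{E}^{\mathds{L}}$ via \Cref{le2fused}, equate it with the corresponding data for $\mathcal{E}_{\bm{\mu}}$ via \Cref{mnmurow2fused}, and match both against parts (2) and (3) of \Cref{lmu1fused}. The conclusion that $\Lambda_k^{(c)}(i) - \Lambda_{k;\bm{\mu}}^{(c)}(i)$ has vanishing discrete gradient in both $k$ and $i$, hence is a constant depending only on $c$, is correct.

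The final step, however, has a gap. You assert that the absence of paths above the top row forces $\mathds{L}_k^{(c)}(M+N) = 0$ for the \emph{given} line ensemble $\mathds{L}$, but the map $\mathds{L} \mapsto \mathcal{E}^{\mathds{L}}$ in \Cref{le2fused} only records the discrete gradients $\Lambda_k^{(c)}(i) - \Lambda_{k+1}^{(c)}(i)$ and $\Lambda_k^{(c)}(i-1) - \Lambda_k^{(c)}(i)$; it cannot see absolute values. Concretely, if $m_1, m_2, \ldots, m_n, m_{n+1} = 0$ are any integers and one sets $\mathds{L}_k^{\prime(c)}(i) = \mathds{L}_{k;\bm{\mu}}^{(c)}(i) + m_c$, then $\Lambda_k^{\prime(c)} = \Lambda_{k;\bm{\mu}}^{(c)} + (m_c - m_{c+1})$ has identical gradients, so $\mathcal{E}^{\mathds{L}'} = \mathcal{E}^{\mathds{L}_{\bm{\mu}}} = \mathcal{E}_{\bm{\mu}}$ while $\mathds{L}' \neq \mathds{L}_{\bm{\mu}}$. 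Since \Cref{cl} imposes no normalization on the curves (only the monotonicity inequalities and $\mathds{L}_k^{(n+1)} \equiv 0$), the path-ensemble equality alone cannot rule this out. The claim that ``the global structure of $\mathfrak{P}_{\mathbbm{f}\mathbb{G}}(M;\sigma;\bm{\mathrm{R}})$ upgrades local matching to full equality'' is therefore a non sequitur: what you derive from $\mathcal{E}_{\bm{\mu}}$ having no arrows above row $M+N$ is that $\Lambda_k^{(c)}(M+N) - \Lambda_{k+1}^{(c)}(M+N) = 0$ for $k \geq 1$ (a statement about the gradient), not that $\mathds{L}_k^{(c)}(M+N) = 0$. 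To close the argument you would need to either add a normalization hypothesis on $\mathds{L}$ (say, that $\mathds{L}_k^{(c)}$ vanishes at the terminal endpoint, or that $\lim_{k\to\infty}\Lambda_k^{(c)} = 0$) or observe that the lemma is only invoked in the paper in the direction where $\mathds{L} = \mathds{L}_{\bm{\mu}}$ is given and the arrow configurations are deduced, which sidesteps the ambiguity entirely.
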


 	\subsection{Properties of Random Fused Colored Line Ensembles} 
 	
 	\label{ConditionWFused}
 	
 	In this section we discuss some properties of colored line ensembles $\mathds{L}_{\bm{\mu}}$ associated (recall \Cref{lmufused}) with an $(M; \sigma; \bm{\mathrm{R}})$-ascending sequence $\bm{\mu}$ of $n$-compositions sampled from the measure $\mathbb{P}_{\mathbbm{f}\mathbb{G}}^{\sigma}$ (recall \Cref{measurefgfused}). Let us first give notation to this law on colored line ensembles.
 	
 	\begin{definition} 
 		
 		\label{llmu0fused} 
 		
 		Let $\mathbb{P}_{\cL}^{\sigma} = \mathbb{P}_{\cL; n; s; \bm{x}; \bm{y}; \bm{r}; \bm{t}}^{\sigma}$ denote the law of a colored line ensemble $\mathds{L}_{\bm{\mu}}$ associated with a random $(M; \sigma; \bm{\mathrm{R}})$-ascending sequence $\bm{\mu}$ of $n$-compositions (as in \Cref{lmufused}) sampled from the measure $\mathbb{P}_{\mathbbm{f}\mathbb{G}; n; s; \bm{x}; \bm{y}; \bm{r}; \bm{t}}^{\sigma}$. 
 		
 	\end{definition}

 	The following result, which is a quick consequence of \Cref{hfg2fused}, provides under this setup a matching in law between the top curves of $\mathds{L}$ (under $\mathbb{P}_{\cL}^{\sigma}$) and the height functions for a colored stochastic fused vertex model (recall \Cref{probabilityvertexfused}). 
 	
 	\begin{thm} 
 		
 		\label{lmu2fused} 
 		
 		Sample a colored line ensemble $\mathds{L}$ on $\llbracket 0, M+N \rrbracket$ from the measure $\mathbb{P}_{\cL; n; s; \bm{x}; \bm{y}; \bm{r}; \bm{t}}^{\sigma}$, and sample a random colored fused path ensemble $\mathcal{E}$ under $\mathbb{P}_{\FV; \bm{x}; \bm{y}; \bm{r}; \bm{t}}^{\sigma}$.  For each $c \in \llbracket 1, n \rrbracket$, define the function $H_c : \llbracket 0, M + N \rrbracket \rightarrow \mathbb{Z}$ by setting  
 		\begin{flalign*} 
 			H_c (k) = \mathfrak{h}_{\ge c}^{\leftarrow} (M, k), \quad \text{if $k \in \llbracket 0, N \rrbracket$}; \qquad H_c (k) = \mathfrak{h}_{\ge c}^{\leftarrow} (M+N-k,  N), \quad \text{if $k \in \llbracket N, M+N \rrbracket$},
 		\end{flalign*} 
 		
 		\noindent where $\mathfrak{h}_{\ge c}^{\leftarrow}$ is the height function associated with $\mathcal{E}$. Then, the joint law of $\big( \mathds{L}_1^{(1)}, \mathds{L}_1^{(2)}, \ldots , \mathds{L}_1^{(n)} \big)$ is the same as that of $(H_1, H_2, \ldots,  H_n)$.
 		
 	\end{thm}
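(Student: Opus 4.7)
The plan is to mirror the proof of \Cref{lmu2} in the fused setting, using \Cref{hfg2fused} (which is the fused analog of \Cref{hfg2}) together with the explicit definition of $\mathds{L}$ from \Cref{lmufused}. Since the colored line ensemble $\mathds{L}$ is constructed out of $\bm{\mu} \sim \mathbb{P}_{\mathbbm{f}\mathbb{G};n;s;\bm{x};\bm{y};\bm{r};\bm{t}}^{\sigma}$ via \eqref{lkcifused}, and since \Cref{hfg2fused} already equates the joint law of height functions of the colored stochastic fused vertex model sampled under $\mathbb{P}_{\FV;\bm{x};\bm{y};\bm{r};\bm{t}}^{\sigma}$ with the joint law of zero-entry counts $\mathfrak{m}_0^{\ge c}\big(\mu(\cdot)\big)$, the theorem should follow by simply identifying the top curves $\mathds{L}_1^{(c)}$ with an affine shift of these zero-entry counts.

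First, I would specialize \eqref{lkcifused} to $k=1$ to obtain the identity $\mathds{L}_1^{(c)}(i) = \ell_{[c,n]} - \mathfrak{m}_0^{\ge c}\big(\mu(i)\big)$ for each $(c,i) \in \llbracket 1,n \rrbracket \times \llbracket 0, M+N \rrbracket$. Next, I would observe that along the northeast boundary of $\llbracket 0, M \rrbracket \times \llbracket 0, N \rrbracket$, namely at the vertices
\begin{equation*}
(M,0),(M,1),\ldots,(M,N),(M-1,N),\ldots,(0,N),
\end{equation*}
the identity $\mathfrak{h}_{\ge c}^{\leftarrow}(i,j) = \ell_{[c,n]} - \mathfrak{h}_{\ge c}^{\rightarrow}(i,j)$ holds, because all $\ell_{[c,n]}$ paths of color at least $c$ that enter through the west boundary of the rectangle must cross any such horizontal/vertical section exactly once. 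This identifies $H_c(k)$ with an affine shift of the corresponding $\mathfrak{h}_{\ge c}^{\rightarrow}$-value.

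Finally, I would apply \Cref{hfg2fused} to conclude that the joint law of the family $\bigcup_{c=1}^n \big(\mathfrak{h}_{\ge c}^{\rightarrow}(M,1),\ldots,\mathfrak{h}_{\ge c}^{\rightarrow}(M,N),\mathfrak{h}_{\ge c}^{\rightarrow}(M-1,N),\ldots,\mathfrak{h}_{\ge c}^{\rightarrow}(0,N)\big)$ equals that of $\bigcup_{c=1}^n \big(\mathfrak{m}_0^{\ge c}(\mu(1)),\ldots,\mathfrak{m}_0^{\ge c}(\mu(M+N))\big)$, where $\bm{\mu} \sim \mathbb{P}_{\mathbbm{f}\mathbb{G};n;s;\bm{x};\bm{y};\bm{r};\bm{t}}^{\sigma}$. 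Combining this with the two identities above yields the desired equality in law between $(H_1,\ldots,H_n)$ and $(\mathds{L}_1^{(1)},\ldots,\mathds{L}_1^{(n)})$.

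There is essentially no real obstacle here, as all the substantive work (the Yang--Baxter manipulations behind the matching and the fusion identities relating $\mathbbm{f},\mathbb{G}$ to $f,G$) has been absorbed into \Cref{fgsvfused} and \Cref{hfg2fused}. The only mild point to be careful about is bookkeeping the boundary identity $\mathfrak{h}_{\ge c}^{\leftarrow} = \ell_{[c,n]} - \mathfrak{h}_{\ge c}^{\rightarrow}$ at $k=N$, where the two definitions of $H_c(N)$ must agree (which they do, since at $(M,N)$ both prescriptions coincide). Thus the proof should be short, matching in length the proof of \Cref{lmu2}.
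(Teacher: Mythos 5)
Your proof is correct and follows essentially the same route as the paper: the $k=1$ specialization of \eqref{lkcifused}, the boundary identity $\mathfrak{h}_{\ge c}^{\leftarrow}(i,j) = \ell_{[c,n]} - \mathfrak{h}_{\ge c}^{\rightarrow}(i,j)$ along the northeast boundary, and \Cref{hfg2fused} are precisely the ingredients the paper invokes, with the minor additional observation that $H_c(0)$ and $\mathds{L}_1^{(c)}(0)$ are both deterministically equal to $\ell_{[c,n]}$. The paper states this proof nearly verbatim, so there is no substantive difference.
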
 
 	
 	\begin{proof}
 		
 		Since $\mathfrak{h}_{\ge c}^{\leftarrow} (i, j) = \ell_{[c, n]} - \mathfrak{h}_{\ge c}^{\rightarrow} (i, j)$ holds for any integer $c \in \llbracket 1, n \rrbracket$ and vertex $(i, j) \in \big\{ (M, 0), (M, 1), \ldots , (M,N), (M, N-1), \ldots , (0, N) \big\}$ along the northeast boundary of $\llbracket 0, M \rrbracket \times \llbracket 0, N \rrbracket$, this theorem follows from \Cref{lmufused} and \Cref{hfg2fused}. 
 	\end{proof}

 	The next theorem explains the effect of conditioning on some of the curves in $\mathds{L}$ (the Gibbs property), if $\bm{\mu}$ is sampled under the $\mathbb{P}_{\cL}^{\sigma}$ measure, which is given by the following theorem; its proof is omitted, as it is very similar to that of \Cref{conditionl}. Below, we recall the vertex weights $W_{x; r, s}$ from \Cref{wabcdw}; the association of a colored line ensemble with an ascending sequence of $n$-compositions from \Cref{lmufused}; and the notation from \Cref{le2fused}.

 	\begin{thm} 
 		
 		\label{conditionlfused}
 		
 		Sample $\mathds{L} = \mathds{L}_{\bm{\mu}}$ under $\mathbb{P}_{\cL; n; s; \bm{x}; \bm{y}; \bm{r}; \bm{t}}^{\sigma}$. Fix integers $j > i \ge 0$ and $u, v \in \llbracket 0, M + N \rrbracket$ with $u < v$; set $i_0 = \max \{ i, 1 \}$; and condition on the curves $\mathds{L}_k^{(c)} (m)$ for all $c \in \llbracket 1, n \rrbracket$ and $(k, m) \in \big( \mathbb{Z}_{> 0} \times \llbracket 0, M+N \rrbracket \big) \setminus \big( \llbracket i+1, j \rrbracket \times \llbracket u, v-1 \rrbracket \big)$. For any colored line ensemble $\bm{\bm{\mathsf{l}}}$ that is $\llbracket i+1, j \rrbracket \times \llbracket u, v-1 \rrbracket$-compatible with $\mathds{L}$, we have
 		\begin{flalign}
 			\label{llprobabilityfused}
 			\begin{aligned}
 				\mathbb{P} [\mathds{L} = \bm{\bm{\mathsf{l}}}] & = \mathcal{Z}^{-1} \cdot \displaystyle\prod_{k=i_0}^j \displaystyle\prod_{\substack{m \in \llbracket u, v \rrbracket \\ m \le N}} W_{x_m; r_m, s} \big( \bm{A}^{\bm{\bm{\mathsf{l}}}} (-k, m), \bm{B}^{\bm{\bm{\mathsf{l}}}} (-k, m); \bm{C}^{\bm{\bm{\mathsf{l}}}} (-k, m), \bm{D}^{\bm{\bm{\mathsf{l}}}} (-k, m) \big) \\
 				& \qquad \times \displaystyle\prod_{k=i_0}^j \displaystyle\prod_{\substack{m \in \llbracket u, v \rrbracket \\ m > N}} W_{y_{m-N}; t_{m-N}, s} \big( \bm{A}^{\bm{\bm{\mathsf{l}}}} (-k, m), \bm{B}^{\bm{\bm{\mathsf{l}}}} (-k, m); \bm{C}^{\bm{\bm{\mathsf{l}}}} (-k, m), \bm{D}^{\bm{\bm{\mathsf{l}}}} (-k, m) \big).
 			\end{aligned}
 		\end{flalign}
 		
 		\noindent Here, the probability on the left side of \eqref{llprobabilityfused} is with respect to the conditional law of $\mathds{L}$. Moreover, $\mathcal{Z}$ is a normalizing constant defined so that the sum of the right side of \eqref{llprobabilityfused}, over all colored line ensembles $\bm{\bm{\mathsf{l}}}$ that are $\llbracket i+1, j \rrbracket \times \llbracket u, v-1 \rrbracket$-compatible with $\mathds{L}_{\bm{\mu}}$, is equal to $1$.
 		
 	\end{thm}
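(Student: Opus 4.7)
The plan is to mimic the proof of Theorem \ref{conditionl}, replacing each use of \Cref{measurefg}, \eqref{fgmunu}, \Cref{llmu}, and the identity $\widehat{L}_{x;s} = (1-sx)(x-s)^{-1} L_{x;s}$ by its fused counterpart: \Cref{measurefgfused}, \eqref{fgmunufused}, \Cref{llmufused}, and \eqref{wzr2}, respectively. First, letting $\bm{\mu}$ be distributed according to $\mathbb{P}_{\mathbbm{f}\mathbb{G}; n; s; \bm{x}; \bm{y}; \bm{r}; \bm{t}}^{\sigma}$, I would combine \eqref{fgprobabilitymufused}, \eqref{fgmunufused}, and \Cref{fusedde} to write
\begin{align*}
\mathbb{P}[\bm{\mu}] & = \mathcal{Z}_{\bm{x}; \bm{y}; \bm{r}; \bm{t}}^{-1} \cdot \displaystyle\prod_{k=1}^{\infty} \displaystyle\prod_{m=1}^N \widehat{W}_{x_m; r_m, s} \big( \bm{A}_{\mathcal{E}_{\bm{\mu}}}(-k,m), \bm{B}_{\mathcal{E}_{\bm{\mu}}}(-k,m); \bm{C}_{\mathcal{E}_{\bm{\mu}}}(-k,m), \bm{D}_{\mathcal{E}_{\bm{\mu}}}(-k,m) \big) \\
& \qquad \times \displaystyle\prod_{k=1}^{\infty} \displaystyle\prod_{m=N+1}^{M+N} W_{y_{m-N}; t_{m-N}, s} \big( \bm{A}_{\mathcal{E}_{\bm{\mu}}}(-k,m), \bm{B}_{\mathcal{E}_{\bm{\mu}}}(-k,m); \bm{C}_{\mathcal{E}_{\bm{\mu}}}(-k,m), \bm{D}_{\mathcal{E}_{\bm{\mu}}}(-k,m) \big).
\end{align*}
The restriction of the outer products to $k \ge 1$ uses the fact that, at each vertex $(0,m)$ on the $y$-axis, every incoming arrow must exit vertically in $\mathcal{E}_{\bm{\mu}}$, so the corresponding arrow configuration has the form $(\bm{A}, \bm{B}; \bm{A}+\bm{B}, \bm{e}_0)$ and its $W_{x_m; r_m, 0}$-weight equals $1$ by \Cref{w01}. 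This is the only new ingredient relative to the proof of Theorem \ref{conditionl}, where the analogous reduction relied on $L_{z;0}(\bm{A}, b; \bm{A}_b^+, 0) = 1$.

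Next, by \Cref{llmufused}, each arrow configuration appearing above coincides with $\big( \bm{A}^{\bm{\mathsf{l}}}(-k,m), \bm{B}^{\bm{\mathsf{l}}}(-k,m); \bm{C}^{\bm{\mathsf{l}}}(-k,m), \bm{D}^{\bm{\mathsf{l}}}(-k,m) \big)$ when $\mathds{L} = \bm{\mathsf{l}}$. Applying the conversion \eqref{wzr2} to each factor in the first product to replace $\widehat{W}_{x_m; r_m, s}$ by $W_{x_m; r_m, s}$, at the cost of a deterministic factor $(-s)^{-\mathrm{R}_m} (sx_m; q)_{\mathrm{R}_m} / (s^{-1} x_m; q)_{\mathrm{R}_m}$ that depends only on the column index $m$, yields an expression for $\mathbb{P}[\mathds{L} = \bm{\mathsf{l}}]$ as a normalization constant times a product of $W$-weights over all $(k, m) \in \mathbb{Z}_{>0} \times \llbracket 1, M+N \rrbracket$.

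Finally, I would condition on the curves $\mathds{L}_k^{(c)}(m)$ for $c \in \llbracket 1, n \rrbracket$ and $(k,m) \notin \llbracket i+1, j \rrbracket \times \llbracket u, v-1 \rrbracket$. By \Cref{le2fused}, this is equivalent to conditioning on the restriction of $\mathcal{E}^{\mathds{L}}$ to the complement of $\llbracket -j, -i \rrbracket \times \llbracket u, v \rrbracket$, so every factor $W_{\bullet; \bullet, s}\big(\bm{A}^{\bm{\mathsf{l}}}, \bm{B}^{\bm{\mathsf{l}}}; \bm{C}^{\bm{\mathsf{l}}}, \bm{D}^{\bm{\mathsf{l}}}\big)(-k,m)$ with $(k,m) \notin \llbracket i_0, j \rrbracket \times \llbracket u, v \rrbracket$ is deterministic and may be absorbed into the normalization constant, producing the doubly-restricted product structure of \eqref{llprobabilityfused}. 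The leftover conditioning-independent prefactor $\prod_{m=1}^N (-s)^{-\mathrm{R}_m} (sx_m; q)_{\mathrm{R}_m} / (s^{-1} x_m; q)_{\mathrm{R}_m}$ from the previous step is also absorbed into $\mathcal{Z}$, giving the stated identity. I do not anticipate a serious obstacle: the key verification is simply that \Cref{w01} plays the role that $L_{z;0}(\bm{A}, b; \bm{A}_b^+, 0) = 1$ played in the non-fused case, and this has already been established in \Cref{UWW}.
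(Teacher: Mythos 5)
Your proposal is correct and mirrors the paper's intended approach (the paper omits this proof, citing its similarity to Theorem \ref{conditionl}), with each ingredient of that proof replaced by its fused counterpart exactly as you describe. You correctly identify the one new verification required, namely that the $y$-axis factor $W_{x_m; r_m, 0}(\bm{A}, \bm{B}; \bm{A}+\bm{B}, \bm{e}_0) = 1$ via Corollary \ref{w01}, playing the role that \eqref{lyi0} played in the proof of Theorem \ref{conditionl}.
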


 	Let us also describe color merging properties for line ensembles sampled according to $\mathbb{P}_{\cL}^{\sigma}$, which will be parallel to those discussed in \Cref{ColorL}. To that end, we have the following proposition generalizing \Cref{lmerge}, where below we recall the definition of $\vartheta$ from \eqref{mu121} (as a function on $\Comp_n$), \eqref{1210} (as a functional on the set of functions $\varsigma : \llbracket 1, N \rrbracket \rightarrow \llbracket 1, n \rrbracket$), and \eqref{12mu0} (as a function on sequences of $n$-compositions); we also recall that $\breve{\sigma} = \vartheta (\sigma)$ from \eqref{1210}.
	
	\begin{prop}
		
		\label{lmergefused}
		
		Sample $\mathds{L} = \big( \mathds{L}^{(1)}, \mathds{L}^{(2)}, \ldots , \mathds{L}^{(n)} \big)$ under $\mathbb{P}_{\cL; n; s; \bm{x}; \bm{y}; \bm{r}; \bm{t}}^{\sigma}$. Then the joint law of the colored line ensemble $\breve{\mathds{L}} = \big( \mathds{L}^{(1)}, \mathds{L}^{(3)}, \mathds{L}^{(4)}, \ldots , \mathds{L}^{(n)} \big)$ (with $n-1$ colors) is given by $\mathbb{P}_{\cL; n-1; s; \bm{x}; \bm{y}; \bm{r}; \bm{t}}^{\breve{\sigma}}$. 
		
	\end{prop}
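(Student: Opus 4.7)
The plan is to mirror the proof of \Cref{lmerge}, by first establishing fused analogs of \Cref{mergemunu} and \Cref{nn1mu}, and then deducing the proposition from \Cref{llmu0fused} and the observation that color-merging on $n$-compositions corresponds to the omission of the second line ensemble under the association of \Cref{lmufused}.

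The first step is to derive fused color-merging identities for the $\mathbbm{f}$ and $\mathbb{G}$ functions. Specifically, for any $(n-1)$-compositions $\breve{\mu}, \breve{\nu}, \breve{\kappa} \in \Comp_{n-1}$ and any $n$-composition $\nu \in \Comp_n$ with $\vartheta(\nu) = \breve{\nu}$, I would show
\begin{flalign*}
	\displaystyle\sum_{\substack{\mu \in \Comp_n \\ \vartheta (\mu) = \breve{\mu}}} \mathbbm{f}_{\mu/\nu; s}^{\sigma} (\bm{x}; \bm{r}) = \mathbbm{f}_{\breve{\mu} / \breve{\nu}; s}^{\breve{\sigma}} (\bm{x}; \bm{r}); \qquad \displaystyle\sum_{\substack{\kappa \in \Comp_n \\ \vartheta (\kappa) = \breve{\kappa}}} \mathbb{G}_{\nu / \kappa; s} (\bm{y}; \bm{t}) = \mathbb{G}_{\breve{\nu} / \breve{\kappa}; s} (\bm{y}; \bm{t}).
\end{flalign*}
Since $r_j = q^{-\mathrm{R}_j/2}$, both identities would follow by combining \Cref{mergemunu} with the fusion identity \Cref{fgfg}, which expresses $\mathbbm{f}$ and $\mathbb{G}$ as specializations of $f$ and $G$ at geometric progressions determined by $(\bm{x}, \bm{\mathrm{R}})$ and $(\bm{y}, \bm{\mathrm{R}})$, respectively. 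For the $\mathbb{G}$-identity, since $\bm{t}$ is not assumed to lie in $q^{\mathbb{Z}_{<0}/2}$, I would first obtain the equality at $\bm{t}$ of this form via fusion, then extend to general $\bm{t}$ by analytic continuation, using that both sides are rational functions of $\bm{t}$ (as is visible from the explicit form of the $W$-weights in \Cref{wabcdw}).

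From these color-merging identities, the fused analog of \Cref{nn1mu} follows by essentially the same calculation: summing \eqref{fgprobabilitymufused} over all $(M; \sigma; \bm{\mathrm{R}})$-ascending $\bm{\mu}$ with $\vartheta(\bm{\mu}) = \breve{\bm{\mu}}$, and noting that the normalization $\mathcal{Z}_{\bm{x}; \bm{y}; \bm{r}; \bm{t}}$ in \eqref{zxysfused} does not depend on the number of colors, one deduces that $\vartheta(\bm{\mu})$ is distributed as $\mathbb{P}_{\mathbbm{f}\mathbb{G}; n-1; s; \bm{x}; \bm{y}; \bm{r}; \bm{t}}^{\breve{\sigma}}$. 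To conclude the proposition, I would sample $\bm{\mu}$ under $\mathbb{P}_{\mathbbm{f}\mathbb{G}; n; s; \bm{x}; \bm{y}; \bm{r}; \bm{t}}^{\sigma}$, identify $\mathds{L}$ with $\mathds{L}_{\bm{\mu}}$ via \Cref{llmu0fused}, and observe (directly from \eqref{lkcifused}, which only involves the tail multiplicities $\mathfrak{m}_{\le k-1}^{\ge c}$) that collapsing the parts $\mu^{(1)}$ and $\mu^{(2)}$ of each $\mu(i)$ into their disjoint union leaves the curves $\mathds{L}^{(c)}$ for $c \ge 3$ and $\mathds{L}^{(1)}$ unchanged. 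Hence $\breve{\mathds{L}} = \big( \mathds{L}^{(1)}, \mathds{L}^{(3)}, \ldots , \mathds{L}^{(n)} \big)$ is the $(n-1)$-colored line ensemble associated (in the sense of \Cref{lmufused}) with $\vartheta(\bm{\mu})$, so by \Cref{llmu0fused} it has law $\mathbb{P}_{\cL; n-1; s; \bm{x}; \bm{y}; \bm{r}; \bm{t}}^{\breve{\sigma}}$. The main technical obstacle is ensuring the analytic-continuation argument for the $\mathbb{G}$-merging is valid uniformly in $\bm{t}$; this is routine given that both sides of the desired identity are rational in $\bm{t}$, but it is the only place where one cannot simply quote the unfused statement verbatim.
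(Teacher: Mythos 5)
Your proposal is correct and takes essentially the same route as the paper: it establishes fused analogs of \Cref{mergemunu} and \Cref{nn1mu} (via \Cref{fgfg} plus analytic continuation in $\bm{t}$ for the $\mathbb{G}$-identity), then concludes via \Cref{llmu0fused} and \Cref{lmufused}. One small remark: the paper's stated \Cref{mergemunufused} sums over $\varsigma$ and the lower composition with the upper one fixed, whereas you state the direct fused analog of \Cref{mergemunu} (upper composition summed, lower and $\sigma$ fixed); your form is the one that actually feeds into the proof of the $\mathbbm{f}\mathbb{G}$-measure merging \Cref{nn1mufused} exactly as in the unfused case, so this is not a gap in your argument.
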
 	
 	
 	To establish \Cref{lmergefused}, we require the following lemma that is parallel to \Cref{mergemunu}. 
 	
 	\begin{lem} 
 		
 		\label{mergemunufused}
 		
 		Fix an integer $k \ge 1$; $(n-1)$-compositions $\breve{\mu}, \breve{\nu}, \breve{\kappa} \in \Comp_{n-1}$; and $n$-compositions $\kappa, \mu \in \Comp_n$, such that $\vartheta (\kappa) = \breve{\kappa}$ and $\vartheta (\mu) = \breve{\mu}$. We have 
 		\begin{flalign}
 			\label{fgmerge} 
 			\displaystyle\sum_{\substack{\varsigma : \llbracket 1, N \rrbracket \rightarrow \llbracket 1, n \rrbracket \\ \vartheta (\varsigma) = \breve{\sigma}}} \displaystyle\sum_{\substack{\nu \in \Comp_n \\ \vartheta(\nu) = \breve{\nu}}} \mathbbm{f}_{\mu/\nu; s}^{\varsigma} (\bm{x}; \bm{r}) = \mathbbm{f}_{\breve{\mu} / \breve{\nu}; s}^{\breve{\sigma}} (\bm{x}; \bm{r}); \qquad \displaystyle\sum_{\substack{\nu \in \Comp_n \\ \vartheta(\nu) = \breve{\nu}}} \mathbb{G}_{\nu / \kappa; s} (\bm{y}; \bm{t}) = \mathbb{G}_{\breve{\nu} / \breve{\kappa}; s} (\bm{y}; \bm{t}).
 		\end{flalign}
 		
 	\end{lem}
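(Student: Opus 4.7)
The plan is to adapt the color-merging argument of \Cref{mergemunu} to the fused setting, replacing the $L$- and $\widehat{L}$-weights by their fused counterparts $W$ and $\widehat{W}$ from \Cref{wabcdw}. The preliminary step is to establish the fused version of the vertex-level color-merging identity \eqref{2lmerge}: for any $x, r, s \in \mathbb{C}$, any $\bm{A}, \bm{B} \in \mathbb{Z}_{\ge 0}^n$, and any $\breve{\bm{C}}, \breve{\bm{D}} \in \mathbb{Z}_{\ge 0}^{n-1}$,
\[
\sum_{\substack{\bm{C}, \bm{D} \in \mathbb{Z}_{\ge 0}^n \\ \vartheta(\bm{C}) = \breve{\bm{C}},\, \vartheta(\bm{D}) = \breve{\bm{D}}}} W^{(n)}_{x;r,s}(\bm{A}, \bm{B}; \bm{C}, \bm{D}) = W^{(n-1)}_{x;r,s}(\vartheta(\bm{A}), \vartheta(\bm{B}); \breve{\bm{C}}, \breve{\bm{D}}),
\]
with the analogous statement for $\widehat{W}$ following from the normalization \eqref{wzr2}. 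I would derive this from \eqref{2lmerge} by fusion: in view of \Cref{fgfg}, the $W$-weight is obtained from an $\mathrm{R}$-fold product of $L$-weights along a geometric progression of rapidities after symmetrizing over the horizontal arrow orderings, and the merging map $\vartheta$ is compatible with this symmetrization since it acts entrywise on color multiplicities. Alternatively, one may read the identity off directly from \eqref{wweight0} by reindexing the inner sum over $\bm{P}$ according to the merge classes $\vartheta(\bm{P})$.

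With the vertex identity in hand, both equations of \Cref{mergemunufused} would follow by the same propagation used in the outline of \Cref{mergemunu}. Writing $\mathbbm{f}_{\mu/\nu;s}^{\varsigma}(\bm{x};\bm{r})$ and $\mathbb{G}_{\nu/\kappa;s}(\bm{y};\bm{t})$ as partition functions \eqref{fgmunufused} over colored fused path ensembles on $\mathbb{Z}_{\le 0} \times \llbracket 1, N \rrbracket$, I would group ensembles $\mathcal{E}$ by their images $\vartheta(\mathcal{E})$ under color-merging. The sum on the left side of each equation is precisely the sum, for each fixed $(n-1)$-color ensemble $\breve{\mathcal{E}}$ consistent with the prescribed merged boundary data, over all $n$-color lifts of $\breve{\mathcal{E}}$; the sum over $\nu$ (and, for $\mathbbm{f}$, the sum over $\varsigma$) encodes the choice of lift for the arrows on the bottom (resp.\ left) boundary. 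Iterating the vertex identity along the lattice collapses the $n$-color weight at each vertex to the $(n-1)$-color weight of its merged image, yielding the right side of \Cref{mergemunufused}.

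The main obstacle is the fused vertex identity itself. The fusion-based derivation from \eqref{2lmerge} involves a delicate combinatorial bookkeeping: each $\mathrm{R}$-fold symmetrization must be split according to the color-merge classes, and one must verify that the resulting sum of $L$-weight products reorganizes into the symmetrized $W$-weight at the $(n-1)$-color level. A secondary subtlety is that the sums in \Cref{mergemunufused} are over the input boundaries $\nu$ and $\varsigma$, whereas \Cref{mergemunu} sums over the output boundaries $\mu$ and $\kappa$; arrow conservation $\bm{A} + \bm{B} = \bm{C} + \bm{D}$ at each vertex means the vertex identity is equally adapted to propagate in the reverse direction, but the inductive order in the lattice must be chosen accordingly, processing vertices from the fixed output boundary back toward the summed input boundary.
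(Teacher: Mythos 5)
The paper's proof is considerably more economical than what you propose. It simply combines \Cref{mergemunu} (the unfused color-merging lemma, already established) with the fusion identity \Cref{fgfg} (which expresses $\mathbbm{f}_{\mu/\nu;s}^{\sigma}(\bm{x};\bm{r}) = f_{\mu/\nu;s}^{\omega}(\bm{z})$ and $\mathbb{G}_{\mu/\nu;s}(\bm{x};\bm{r}) = G_{\mu/\nu;s}(\bm{z})$ for a suitable geometric-progression rapidity set $\bm{z}$), obtaining the fused identities directly whenever the spin parameters lie in $q^{\mathbb{Z}_{<0}}$. The $\mathbb{G}$ identity is then extended to arbitrary $\bm{t}$ by uniqueness of analytic continuation, since the $W$-weights of \Cref{wabcdw} are rational in $\bm{t}$. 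No new vertex-level identity is proved; everything is transferred at the partition-function level.

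Your route is genuinely different. You propose to reprove the merging phenomenon from scratch at the fused weight level — establishing a $W$-weight analogue of \eqref{2lmerge} and then iterating it across the lattice. This can be made to work, and it would avoid the analytic-continuation step if the vertex identity were proved directly from the explicit formula \eqref{wweight0}. But it is strictly more work than what the paper does, since it requires proving (and carefully formulating) an intermediate lemma that the paper never needs. There is also a subtlety you flag but then dispose of too casually: \eqref{2lmerge}, and any fused analogue derived from it, sums over \emph{output} configurations for fixed inputs. Appealing to arrow conservation $\bm{A}+\bm{B}=\bm{C}+\bm{D}$ does not by itself yield the "backward" vertex identity (sum over inputs for fixed output); those are distinct claims, and the backward one would need its own argument. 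The paper avoids this orientation issue entirely by working at the partition-function level, where \Cref{mergemunu} is applied once (in its native orientation) and transported across the fusion bijection of \Cref{fgfg}.
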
 
 	
 	\begin{proof}[Proof] 
 		
 		Combining \Cref{mergemunu} with \Cref{fgfg} yields the first statement of \eqref{fgmerge}, as well as the second one if $t_i^2 \in q^{\mathbb{Z}_{< 0}}$ for each $i \in \llbracket 1, M \rrbracket$. The fact that the second statement in \eqref{fgmerge} holds for arbitrary $\bm{t}$ then follows from uniqueness of analytic continuation, as $\mathbb{G}$ is rational in $\bm{t}$ (since the $W$-weights of \Cref{wabcdw} are). 
 	\end{proof} 
 	
 	The next lemma is parallel to \Cref{nn1mu}. Its proof given \Cref{mergemunufused} is entirely analogous to that of \Cref{nn1mu} given \Cref{mergemunu} and is therefore omitted. 

 	\begin{lem}
 		
 		\label{nn1mufused} 
 		
 		If an $(M; \sigma; \bm{\mathrm{R}})$-ascending sequence of $n$-compositions $\bm{\mu}$ is sampled from $\mathbb{P}_{\mathbbm{f}\mathbb{G}; n; s; \bm{x}; \bm{y}}^{\sigma}$, then the $(M; \breve{\sigma})$-ascending sequence $\vartheta(\bm{\mu})$ of $(n-1)$-compositions has law $\mathbb{P}_{\mathbbm{f}\mathbb{G}; n-1; s; \bm{x}; \bm{y}}^{\breve{\sigma}}$. 
 		
 	\end{lem}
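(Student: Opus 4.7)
The plan is to mirror the proof of \Cref{nn1mu}, using \Cref{mergemunufused} in place of \Cref{mergemunu}. Fix an $(M; \breve{\sigma}; \bm{\mathrm{R}})$-ascending sequence $\breve{\bm{\mu}} = \big( \breve{\mu}(0), \breve{\mu}(1), \ldots , \breve{\mu}(M+N) \big)$ of $(n-1)$-compositions. Since the normalization $\mathcal{Z}_{\bm{x}; \bm{y}; \bm{r}; \bm{t}}$ from \eqref{zxysfused} depends only on $\bm{\mathrm{R}}$ and the spectral and fusion parameters (not on the number of colors or on $\sigma$), the same factor appears in both the $n$-color and $(n-1)$-color measures. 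By \eqref{fgprobabilitymufused},
\begin{flalign*}
& \mathbb{P}_{\mathbbm{f}\mathbb{G}; n; s; \bm{x}; \bm{y}; \bm{r}; \bm{t}}^{\sigma} \big[ \vartheta(\bm{\mu}) = \breve{\bm{\mu}} \big] = \mathcal{Z}_{\bm{x}; \bm{y}; \bm{r}; \bm{t}}^{-1} \sum_{\bm{\mu}: \vartheta(\bm{\mu}) = \breve{\bm{\mu}}} \prod_{j=1}^N \mathbbm{f}_{\mu(j)/\mu(j-1); s}^{\sigma(j)}(x_j; r_j) \prod_{i=N+1}^{M+N} \mathbb{G}_{\mu(i-1)/\mu(i); s}(y_{i-N}; t_{i-N}),
\end{flalign*}
where the sum ranges over $n$-composition sequences $\bm{\mu}$ projecting coordinatewise onto $\breve{\bm{\mu}}$.

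The main step is then to reduce this sum, level by level, to the product appearing on the right side of \eqref{fgprobabilitymufused} applied to the $(n-1)$-color measure with $\breve{\sigma}$ in place of $\sigma$. This will follow from two identities: for each $j \in \llbracket 1, N \rrbracket$ and $i \in \llbracket N+1, M+N \rrbracket$, and $n$-compositions $\mu(j-1), \mu(i-1)$ with $\vartheta\big(\mu(j-1)\big) = \breve{\mu}(j-1)$ and $\vartheta\big(\mu(i-1)\big) = \breve{\mu}(i-1)$,
\begin{flalign*}
& \sum_{\mu(j) : \vartheta(\mu(j)) = \breve{\mu}(j)} \mathbbm{f}_{\mu(j)/\mu(j-1); s}^{\sigma(j)}(x_j; r_j) = \mathbbm{f}_{\breve{\mu}(j)/\breve{\mu}(j-1); s}^{\breve{\sigma}(j)}(x_j; r_j); \\
& \sum_{\mu(i) : \vartheta(\mu(i)) = \breve{\mu}(i)} \mathbb{G}_{\mu(i-1)/\mu(i); s}(y_{i-N}; t_{i-N}) = \mathbb{G}_{\breve{\mu}(i-1)/\breve{\mu}(i); s}(y_{i-N}; t_{i-N}).
\end{flalign*}
Iteratively summing over $\mu(1), \mu(2), \ldots , \mu(M+N-1)$ (the endpoints $\mu(0)$ and $\mu(M+N)$ being pinned by \Cref{mnmufused}) and applying these identities collapses each factor from its $n$-color to its $(n-1)$-color form, yielding $\mathbb{P}_{\mathbbm{f}\mathbb{G}; n-1; s; \bm{x}; \bm{y}; \bm{r}; \bm{t}}^{\breve{\sigma}}[\breve{\bm{\mu}}]$.

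The main obstacle will be extracting the two level-by-level identities above in the precise form required for the iteration. As stated, the first equation of \Cref{mergemunufused} fixes the exit composition and sums over the entry (together with an auxiliary sum over $\varsigma$), whereas the argument above needs the opposite convention: summing over the exit $\mu(j)$ while fixing the entry $\mu(j-1)$ and the color $\sigma(j)$. The cleanest way around this should be to invoke fusion via \Cref{fgfg}, which rewrites $\mathbbm{f}^{\sigma(j)}$ as a specialization of the unfused $f^{\omega}$ at geometric progressions in $q$; the needed identity then follows from \Cref{mergemunu} (whose summation convention matches what is required) and transfers back to $\mathbbm{f}$, paralleling the proof of \Cref{mergemunufused} itself. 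Once these level-by-level identities are in hand, the remainder is a direct translation of the proof of \Cref{nn1mu}.
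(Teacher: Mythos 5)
Your proposal is essentially the approach the paper intends: iterate the color-merging identities factor by factor exactly as in the proof of \Cref{nn1mu}, with the unfused $f$ and $G$ replaced by $\mathbbm{f}$ and $\mathbb{G}$. You also correctly note that the normalization constant $\mathcal{Z}_{\bm{x};\bm{y};\bm{r};\bm{t}}$ from \eqref{zxysfused} is independent of $n$ and $\sigma$, so it cancels cleanly on both sides.

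The most valuable thing in your write-up is the observation about summation conventions. The identities the iteration needs fix the \emph{entry} composition (and the colors $\sigma(j)$) and sum over the \emph{exit}; that is the convention of \Cref{mergemunu}, which mirrors the vertex-level merging \eqref{2lmerge} (fix inputs, sum over outputs). The statement of \Cref{mergemunufused} as printed does the opposite: for $\mathbbm{f}$ it fixes $\mu$ and sums over $\nu$ and over $\varsigma$, and for $\mathbb{G}$ it fixes $\kappa$ and sums over $\nu$. This cannot be what is used, since the ``reverse'' merging (fix outputs, sum over inputs) already fails at the level of a single $R$-vertex for generic $z$. The intended statement is surely the direct analogue of \Cref{mergemunu}, namely $\sum_{\mu:\vartheta(\mu)=\breve{\mu}} \mathbbm{f}^{\sigma}_{\mu/\nu;s}(\bm{x};\bm{r}) = \mathbbm{f}^{\breve{\sigma}}_{\breve{\mu}/\breve{\nu};s}(\bm{x};\bm{r})$ for fixed $\nu$ with $\vartheta(\nu)=\breve{\nu}$, and similarly $\sum_{\kappa:\vartheta(\kappa)=\breve{\kappa}} \mathbb{G}_{\nu/\kappa;s}(\bm{y};\bm{t}) = \mathbb{G}_{\breve{\nu}/\breve{\kappa};s}(\bm{y};\bm{t})$. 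Your proposed workaround---obtain these corrected identities by applying \Cref{fgfg} to reduce to the unfused $f^{\omega}$ and $G$, then invoke \Cref{mergemunu}, then transfer back---is exactly what the paper's own (omitted) proof of \Cref{mergemunufused} does, so this is not a detour; it is the argument. In short, your proof is correct, it matches the paper's intended route, and your diagnosis of the mismatch in \Cref{mergemunufused} looks like a genuine typo in the source.
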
 

 	Now we can establish \Cref{lmergefused}.
 	
 	\begin{proof}[Proof of \Cref{lmergefused}]
 		
 		Sample $\bm{\mu}$ under $\mathbb{P}_{\mathbbm{f}\mathbb{G}; n; s; \bm{x}; \bm{y}; \bm{r}; \bm{t}}^{\sigma}$ (recall \Cref{measurefgfused}). By \Cref{llmu0fused}, we may identify $\mathds{L}$ as the colored line ensemble $\mathds{L}_{\bm{\mu}}$ associated with $\bm{\mu}$. By \Cref{lmufused}, the $n-1$ line ensembles $\breve{\mathds{L}} = \big( \mathds{L}^{(1)}, \mathds{L}^{(3)}, \ldots , \mathds{L}^{(n)} \big)$ in $\mathds{L} = \mathds{L}_{\bm{\mu}}$ constitute the colored line ensemble associated with $\vartheta (\bm{\mu})$, which has law $\mathbb{P}_{\mathbbm{f}\mathbb{G}; n-1; s; \bm{x}; \bm{y}; \bm{r}; \bm{t}}^{\breve{\sigma}}$ by \Cref{nn1mufused}. Hence, again by \Cref{llmu0fused}, $\breve{\mathds{L}}$ has law  $\mathbb{P}_{\cL; n-1; s; \bm{x}; \bm{y}; \bm{r}; \bm{t}}^{\breve{\sigma}}$, thereby establishing the proposition.
 	\end{proof}
 	
 	\begin{rem}
 		
 		The above discussion describes the merging of colors $1$ and $2$. As in \Cref{12interval}, it is more generally possible to merge several (disjoint) intervals of colors, which would correspond in \Cref{lmergefused} to omitting line ensembles $\mathds{L}^{(i)}$, for $i \in \llbracket 2, n \rrbracket$ arbitrary (depending on the corresponding merged color intervals), in $\mathds{L}$. 
 		
 	\end{rem}

 	\section{Line Ensembles for Discrete Time $q$-Boson Models}
 	
 	\label{L2} 
 	
 	In this section we specialize the results of \Cref{L0Fused} to the discrete time $q$-boson model, which involves setting each $\mathrm{R}_j$ there equal to $1$. Throughout this section, we adopt the notation of \Cref{L0Fused} (where here we do not necessarily assume that \eqref{rtxy} holds), and set $\mathrm{R}_j = 1$ for all $j \in \llbracket 1, n \rrbracket$, so that each $r_j = q^{-1/2}$.

 	\subsection{Colored Stochastic Higher Spin Vertex and $q$-Boson Models}
 	
 	\label{LModelDiscrete} 
 	
 	In this section we describe the $\mathrm{R} = 1$ case of the stochastic fused vertex models from \Cref{FusedPath}. Recall that we have set each $\mathrm{R}_j = 1$, and thus each $r_j = q^{-1/2}$. 
 	
 	Under this specialization, it is quickly verified that the $U_{z; q^{-1/2}, s}$-weights (recall \Cref{wzabcd}) permit at most one arrow along any horizontal edge, that is, $U_{z; q^{-1/2}, s} (\bm{A}, \bm{B}; \bm{C}, \bm{D}) = 0$ unless there exist indices $b, d \in \llbracket 0, n \rrbracket$ such that $\bm{B} = \bm{e}_b$ and $\bm{D} = \bm{e}_d$. Therefore, the associated colored stochastic fused vertex model (recall \Cref{FusedPath}) is called the \emph{colored stochastic higher spin vertex model}; it originally appeared in \cite[Appendix A]{SRM}. For any $\bm{A}, \bm{C} \in \mathbb{Z}_{\ge 0}^n$ and $b, d \in \llbracket 0, n \rrbracket$, we denote these specialized stochastic weights by 
 	\begin{flalign*} 
 		U_{z; s}^{\hs} (\bm{A}, b; \bm{C}, d) = U_{z; q^{-1/2}, s} (\bm{A}, \bm{e}_b; \bm{C}, \bm{e}_d).
 	\end{flalign*} 

	\noindent These weights are depicted in the second to last row of \Cref{qweight}. We also denote the associated probability measure on path ensembles (recall \Cref{probabilityvertexfused}) by $\mathbb{P}_{\hs; \bm{x}; \bm{y}; \bm{t}}^{\sigma} = \mathbb{P}_{\hs; \bm{x}; \bm{y}; q^{-1/2}; \bm{t}}^{\sigma}$.

 	\begin{figure} 
 		\begin{center}
 			
 			\begin{tikzpicture}[
 				>=stealth,
 				scale = .9
 				]	
 				\draw[-, black] (-7.5, 3.1) -- (7.5, 3.1);
 				\draw[-, black] (-7.5, -3.1) -- (7.5, -3.1);
 				\draw[-, black] (-7.5, -1.1) -- (7.5, -1.1);
 				\draw[-, black] (-7.5, -.4) -- (7.5, -.4);
 				\draw[-, black] (-7.5, 2.4) -- (7.5, 2.4);
 				\draw[-, black] (-7.5, -2.1) -- (7.5, -2.1); 
 				\draw[-, black] (-7.5, -3.1) -- (-7.5, 3.1);
 				\draw[-, black] (7.5, -3.1) -- (7.5, 3.1);
 				\draw[-, black] (-5, -3.1) -- (-5, 2.4);
 				\draw[-, black] (5, -3.1) -- (5, 3.1);
 				\draw[-, black] (-2.5, -3.1) -- (-2.5, 2.4);
 				\draw[-, black] (2.5, -3.1) -- (2.5, 2.4);
 				\draw[-, black] (0, -3.1) -- (0, 3.1);
 				
 				\draw[->, thick, blue] (-6.3, .1) -- (-6.3, 1.9); 
 				\draw[->, thick, green] (-6.2, .1) -- (-6.2, 1.9); 
 				\draw[->, thick, blue] (-3.8, .1) -- (-3.8, 1) -- (-2.85, 1);
 				\draw[->, thick, green] (-3.7, .1) -- (-3.7, 1.9);
 				\draw[->, thick, blue] (-1.35, .1) -- (-1.35, 1.9);
 				\draw[->, thick, green] (-1.25, .1) -- (-1.25, 1.9);
 				\draw[->, thick,  orange] (-2.15, 1.1) -- (-1.15, 1.1) -- (-1.15, 1.9);
 				
 				\draw[->, thick, red] (.35, 1) -- (1.15, 1) -- (1.15, 1.9);
 				\draw[->, thick, blue] (1.25, .1) -- (1.25, 1.9);
 				\draw[->, thick, green] (1.35, .1) -- (1.35, 1.1) -- (2.15, 1.1);
 				\draw[->, thick, blue] (3.65, .1) -- (3.65, 1) -- (4.65, 1);
 				\draw[->, thick, green] (3.75, .1) -- (3.75, 1.9);
 				\draw[->, thick, orange] (2.85, 1.1) -- (3.85, 1.1) -- (3.85, 1.9); 
 				\draw[->, thick, red] (5.35, 1) -- (7.15, 1); 
 				\draw[->, thick, blue] (6.2, .1) -- (6.2, 1.9);
 				\draw[->, thick, green] (6.3, .1) -- (6.3, 1.9); 
 				\filldraw[fill=gray!50!white, draw=black] (-2.85, 1) circle [radius=0] node [black, right = -1, scale = .7] {$i$};
 				\filldraw[fill=gray!50!white, draw=black] (2.15, 1) circle [radius=0] node [black, right = -1, scale = .7] {$j$};
 				\filldraw[fill=gray!50!white, draw=black] (4.65, 1) circle [radius=0] node [black, right = -1, scale = .7] {$i$};
 				\filldraw[fill=gray!50!white, draw=black] (7.15, 1) circle [radius=0] node [black, right = -1, scale = .7] {$i$};
 				\filldraw[fill=gray!50!white, draw=black] (5.35, 1) circle [radius=0] node [black, left = -1, scale = .7] {$i$};
 				\filldraw[fill=gray!50!white, draw=black] (2.85, 1) circle [radius=0] node [black, left = -1, scale = .7] {$j$};
 				\filldraw[fill=gray!50!white, draw=black] (.35, 1) circle [radius=0] node [black, left = -1, scale = .7] {$i$};
 				\filldraw[fill=gray!50!white, draw=black] (-2.15, 1) circle [radius=0] node [black, left = -1, scale = .7] {$i$};
 				\filldraw[fill=gray!50!white, draw=black] (-6.25, 1.9) circle [radius=0] node [black, above = -1, scale = .7] {$\bm{A}$};
 				\filldraw[fill=gray!50!white, draw=black] (-3.75, 1.9) circle [radius=0] node [black, above = -1, scale = .7] {$\bm{A}_i^-$};
 				\filldraw[fill=gray!50!white, draw=black] (-1.25, 1.9) circle [radius=0] node [black, above = -1, scale = .7] {$\bm{A}_i^+$};
 				\filldraw[fill=gray!50!white, draw=black] (1.25, 1.9) circle [radius=0] node [black, above = -1, scale = .65] {$\bm{A}_{ij}^{+-}$};
 				\filldraw[fill=gray!50!white, draw=black] (3.75, 1.9) circle [radius=0] node [black, above = -1, scale = .65] {$\bm{A}_{ji}^{+-}$};
 				\filldraw[fill=gray!50!white, draw=black] (6.25, 1.9) circle [radius=0] node [black, above = -1, scale = .7] {$\bm{A}$};
 				\filldraw[fill=gray!50!white, draw=black] (-6.25, .1) circle [radius=0] node [black, below = -1, scale = .7] {$\bm{A}$};
 				\filldraw[fill=gray!50!white, draw=black] (-3.75, .1) circle [radius=0] node [black, below = -1, scale = .7] {$\bm{A}$};
 				\filldraw[fill=gray!50!white, draw=black] (-1.25, .1) circle [radius=0] node [black, below = -1, scale = .7] {$\bm{A}$};
 				\filldraw[fill=gray!50!white, draw=black] (1.25, .1) circle [radius=0] node [black, below = -1, scale = .7] {$\bm{A}$};
 				\filldraw[fill=gray!50!white, draw=black] (3.75, .1) circle [radius=0] node [black, below = -1, scale = .7] {$\bm{A}$};
 				\filldraw[fill=gray!50!white, draw=black] (6.25, .1) circle [radius=0] node [black, below = -1, scale = .7] {$\bm{A}$};
 				\filldraw[fill=gray!50!white, draw=black] (-3.75, 2.75) circle [radius=0] node [black] {$1 \le i \le n$};
 				\filldraw[fill=gray!50!white, draw=black] (2.5, 2.75) circle [radius=0] node [black] {$1 \le i < j \le n$}; 
 				\filldraw[fill=gray!50!white, draw=black] (6.25, 2.75) circle [radius=0] node [black] {};
 				\filldraw[fill=gray!50!white, draw=black] (-6.25, -.75) circle [radius=0] node [black, scale = .8] {$(\bm{A}, 0; \bm{A}, 0)$};
 				\filldraw[fill=gray!50!white, draw=black] (-3.75, -.75) circle [radius=0] node [black, scale = .8] {$\big( \bm{A}, 0; \bm{A}_i^-, i \big)$};
 				\filldraw[fill=gray!50!white, draw=black] (-1.25, -.75) circle [radius=0] node [black, scale = .8] {$\big( \bm{A}, i; \bm{A}_i^+, 0 \big)$};
 				\filldraw[fill=gray!50!white, draw=black] (1.25, -.75) circle [radius=0] node [black, scale = .8] {$\big( \bm{A}, i; \bm{A}_{ij}^{+-}, j \big)$};
 				\filldraw[fill=gray!50!white, draw=black] (3.75, -.75) circle [radius=0] node [black, scale = .8] {$\big( \bm{A}, j; \bm{A}_{ji}^{+-}, i \big)$};
 				\filldraw[fill=gray!50!white, draw=black] (6.25, -.75) circle [radius=0] node [black, scale = .8] {$(\bm{A}, i; \bm{A}, i)$};
 				
 				\filldraw[fill=gray!50!white, draw=black] (-6.25, -2.6) circle [radius=0] node [black, scale = .8] {$\displaystyle\frac{1 + \nu q^{A_{[1, n]}}}{1 + \nu}$};
 				\filldraw[fill=gray!50!white, draw=black] (-3.75, -2.6) circle [radius=0] node [black, scale = .75] {$\displaystyle\frac{\nu (1 - q^{A_i}) q^{A_{[i+1, n]}}}{1+\nu}$};
 				\filldraw[fill=gray!50!white, draw=black] (-1.25, -2.6) circle [radius=0] node [black, scale = .8] {$\displaystyle\frac{1 + \nu q^{A_{[1, n]}}}{1+ \nu}$};
 				\filldraw[fill=gray!50!white, draw=black] (1.25, -2.6) circle [radius=0] node [black, scale = .75] {$\displaystyle\frac{\nu (1 - q^{A_j}) q^{A_{[j+1,n]}}}{1+ \nu}$};
 				\filldraw[fill=gray!50!white, draw=black] (3.75, -2.6) circle [radius=0] node [black, scale = .75] {$\displaystyle\frac{\nu (1 - q^{A_i}) q^{A_{[i+1,n]}}}{1+\nu}$};
 				\filldraw[fill=gray!50!white, draw=black] (6.25, -2.6) circle [radius=0] node [black, scale = .75] {$\displaystyle\frac{\nu (1- q^{A_i})q^{A_{[i+1,n]}}}{1+\nu}$};
 				
 				\filldraw[fill=gray!50!white, draw=black] (-6.25, -1.6) circle [radius=0] node [black, scale = .8] {$\displaystyle\frac{1 - s^2 x q^{A_{[1, n]}}}{1 - s^2 x}$};
 				\filldraw[fill=gray!50!white, draw=black] (-3.75, -1.6) circle [radius=0] node [black, scale = .7] {$\displaystyle\frac{s^2 x (q^{A_i}-1) q^{A_{[i+1, n]}}}{1 - s^2 x}$};
 				\filldraw[fill=gray!50!white, draw=black] (-1.25, -1.6) circle [radius=0] node [black, scale = .8] {$\displaystyle\frac{1 - s^2 q^{A_{[1, n]}}}{1 - s^2 x}$};
 				\filldraw[fill=gray!50!white, draw=black] (1.25, -1.6) circle [radius=0] node [black, scale = .7] {$\displaystyle\frac{s^2 x (q^{A_j} - 1) q^{A_{[j+1,n]}}}{1 - s^2 x}$};
 				\filldraw[fill=gray!50!white, draw=black] (3.75, -1.6) circle [radius=0] node [black, scale = .75] {$\displaystyle\frac{s^2 (q^{A_i} - 1) q^{A_{[i+1,n]}}}{1 - s^2 x}$};
 				\filldraw[fill=gray!50!white, draw=black] (6.25, -1.6) circle [radius=0] node [black, scale = .75] {$\displaystyle\frac{s^2 (q^{A_i} - x)q^{A_{[i+1,n]}}}{1 - s^2 x}$};
 			\end{tikzpicture}
 		\end{center}	
 		\caption{\label{qweight} Depicted in the second to last row are the weights for the colored stochastic higher spin vertex model, and depicted in the last row are those for the discrete time $q$-boson model.} 
 	\end{figure}

 		Given an $M$-tuple of positive parameters $\bm{\nu} = (\nu_1, \nu_2, \ldots , \nu_M)$, the (Bernoulli) \emph{discrete time $q$-boson model} is the special case of the colored stochastic higher spin vertex model when 
 		\begin{flalign}
 			\label{zij1} 
 			 x_j y_i^{-1} = 1, \quad \text{and} \quad \nu_i = -t_i^2, \qquad \text{for each $(i, j) \in \llbracket 1, M \rrbracket \times \llbracket 1, N \rrbracket$}.
 		\end{flalign} 
 		
 		\noindent We denote the associated stochastic weights 
 		\begin{flalign*} 
 			U_{\nu}^{\dqb} (\bm{A}, b; \bm{C}, d) = U_{1; \sqrt{-\nu}}^{\hs} (\bm{A}, b; \bm{C}, d),
 		\end{flalign*} 
 		
 		\noindent which satisfy
 		\begin{flalign*}
 			U_{\nu}^{\dqb} (\bm{A}, i; \bm{A}_i^+, 0) = \displaystyle\frac{1 + \nu q^{A_{[1,n]}}}{1 + \nu}, \quad \text{and} \quad U_{\nu}^{\dqb} (\bm{A}, i; \bm{A}_{ij}^{+-}, j) = \displaystyle\frac{\nu}{1+\nu} \cdot (1 - q^{A_i}) q^{A_{[i+1,n]}},
 		\end{flalign*}	
 		
 		\noindent for any $i, j \in \llbracket 0, n \rrbracket$ and $\bm{A} \in \mathbb{Z}_{\ge 0}^n$; see the last row of \Cref{qweight}. The colorless ($n=1$) case of this model was introduced in \cite[Definition 1.6]{DT}. We denote the associated probability measure by $\mathbb{P}_{\dqb; \bm{\nu}}^{\sigma}$ (which is the specialization of $\mathbb{P}_{\hs; \bm{x}; \bm{y}; \bm{t}}^{\sigma}$ under \eqref{zij1}). 
 		
 		\begin{rem} 
 			
 			Fix parameters $\bm{\alpha} = (\alpha_1, \alpha_2, \ldots )$. Suppose we consider the discrete time $q$-boson model, set $\nu_j = \varepsilon \alpha_j$ for each $j \in \llbracket 1, M \rrbracket$, scale $N$ by $\varepsilon^{-1}$, let $M$ tend to $\infty$, and let $\varepsilon$ tend to $0$. This gives rise to a continuous-time Markov process on $\mathbb{Z}_{> 0}$, in which a particle of color $c \in \llbracket 1, n \rrbracket$ at site $k \in \mathbb{Z}_{>0}$ jumps to the right according to an exponential clock of rate $\alpha_k q^{A_{[i+1, n]} (k)} (1 - q^{A_i (k)})$, where $A_j (k)$ denotes the number of particles of color $j$ at site $k$; see \cite[Proposition 12.4.1]{CSVMST}. This model is called the \emph{colored $q$-boson model} or the \emph{colored $q$-deformed totally asymmetric zero range process} (TAZRP). The colorless ($n=1$) case of this model was introduced in \cite[Equation (2.6)]{ERDM}, which under a change of variables is equivalent to the $q$-TASEP introduced in \cite[Definition 3.3.7]{P}.
 			
 		\end{rem}

 		\subsection{$q$-Hahn Weights} 
 		
 		\label{Weightss} 
 		
 		We eventually seek to degenerate \Cref{conditionlfused} to the ($\mathrm{R} = 1$) colored stochastic higher spin vertex model case. Recall that this result involves the $W$-weights from \eqref{wweight0} with an arbitrary choice of the parameter $s$, which are in general a bit intricate. So, in this section we discuss a specific choice for $s$ that simplifies these weights considerably.\footnote{This is a colored generalization of what was done in \cite[Section 7.3]{RFSVM}, where a similar idea was implemented to match the height function of the uncolored stochastic higher spin vertex model with the length of a partition sampled under the spin Hall-Littlewood / spin Whittaker measure.} This corresponds to setting $s = x$, a special case that has appeared numerous times in the prior literature \cite{SRF,SP,SRM} and is sometimes known as the $q$-Hann specialization; it is given as the below lemma. 
 		
		\begin{lem}
			
			\label{sw}
			
			Adopting the notation of \Cref{wabcdw}, we have for any $s, t \in \mathbb{C}$ that  
			\begin{flalign*}
				W_{s; t, s} (\bm{A}, \bm{B}; \bm{C}, \bm{D}) & = (-st^{-2})^d  q^{\varphi (\bm{D}, \bm{A} - \bm{D})} \cdot \displaystyle\frac{(t^{-2} s^2; q)_{a-d} (t^2; q)_d}{(s^2; q)_a} \cdot \displaystyle\prod_{i=1}^n \displaystyle\frac{(q; q)_{A_i}}{(q; q)_{A_i - D_i} (q; q)_{D_i}} \\
				& \qquad \times  \mathbbm{1}_{\bm{A} + \bm{B} = \bm{C} + \bm{D}} \cdot \mathbbm{1}_{\bm{A} \ge \bm{D}}.
			\end{flalign*}
		\end{lem}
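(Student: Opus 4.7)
The plan is to establish this formula by direct specialization of the defining expression \eqref{wweight0} for $W_{x;r,s}$ at $x = s$, $r = t$, and then exploit two collapses: one in the outer sum over $p$ and one in the inner sum over $\bm{P}$. Starting from \eqref{wweight0} with $x = s$, the crucial factor $(s^{-1} x; q)_{b-p} = (1; q)_{b-p}$ appears in the summand, and since $(1;q)_k = 0$ for $k \ge 1$ while $(1;q)_0 = 1$, only the term $p = b$ contributes; in particular we need $b \le c$ for a nonzero answer.

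Having fixed $p = b$, the inner sum over $\bm{P} \in \mathbb{Z}_{\ge 0}^n$ with $\bm{P} \le \bm{B}$, $\bm{P} \le \bm{C}$, and $|\bm{P}| = b = |\bm{B}|$ has the unique entry $\bm{P} = \bm{B}$ (so it also requires $\bm{B} \le \bm{C}$; combined with $\bm{A} + \bm{B} = \bm{C} + \bm{D}$, this is equivalent to $\bm{A} \ge \bm{D}$, which explains the indicator). Substituting $\bm{P} = \bm{B}$ one obtains $\varphi(\bm{B} - \bm{D} - \bm{P}, \bm{P}) = -\varphi(\bm{D}, \bm{B})$ and, using the arrow conservation identity $C_i + D_i - B_i = A_i$, the product simplifies to
\begin{flalign*}
\prod_{i=1}^n \frac{(q;q)_{C_i+D_i-B_i}}{(q;q)_{D_i}(q;q)_{C_i-B_i}} \cdot \frac{(q;q)_{B_i}}{(q;q)_{B_i}(q;q)_0} = \prod_{i=1}^n \frac{(q;q)_{A_i}}{(q;q)_{D_i}(q;q)_{A_i-D_i}}.
\end{flalign*}

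At this point the remaining simplifications are purely algebraic bookkeeping: combining $(-1)^d s^{d-b+b} \cdot t^{2(c-a)-2b}$ using $c + d = a + b$ yields $(-st^{-2})^d$; combining $q^{\varphi(\bm{D}, \bm{C})}$ with the $q^{-\varphi(\bm{D}, \bm{B})}$ from the collapsed inner sum and applying bilinearity of $\varphi$ gives $q^{\varphi(\bm{D}, \bm{C} - \bm{B})} = q^{\varphi(\bm{D}, \bm{A} - \bm{D})}$; and the surviving Pochhammer symbols $(t^{-2} s^2; q)_{c-b}$, $(s^2; q)_{c+d-b}$ become $(t^{-2} s^2; q)_{a-d}$ and $(s^2; q)_a$ respectively, again via $c - b = a - d$ and $c + d - b = a$. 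The $(t^2; q)_d / (t^2; q)_b$ factor from \eqref{wweight0} contributes the $(t^2; q)_d$ in the numerator since the $(t^2; q)_b$ there cancels against the $(t^2;q)_b$ that appears when one recognizes the $p=b$ term of the $q$-Hahn-type sum (this cancellation is built into the derivation above).

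I expect no real obstacle beyond careful exponent-tracking; the only subtlety is recognizing that the $(1;q)_{b-p}$ factor is what triggers the $q$-Hahn collapse, which is the conceptual content of choosing $x = s$. Everything else is a routine (if delicate) rearrangement of the Pochhammer and $\varphi$ terms using arrow conservation.
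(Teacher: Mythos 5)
Your proposal is correct and follows the same route as the paper's proof: specialize \eqref{wweight0} at $x=s$, $r=t$, use the $(1;q)_{b-p}$ factor to collapse the outer sum to $p=b$, use $|\bm{P}|=b$ and $\bm{P}\le\bm{B}$ to force $\bm{P}=\bm{B}$ (hence $\bm{B}\le\bm{C}$, i.e.\ $\bm{A}\ge\bm{D}$), and then rewrite via $\bm{A}+\bm{B}=\bm{C}+\bm{D}$. The exponent tracking and Pochhammer cancellations you describe, including $(t^2;q)_b$ and $t^{2(c-a)-2b}=t^{-2d}$, all check out.
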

	
		\begin{proof}
			
			We assume throughout this proof that $\bm{A} + \bm{B} = \bm{C} + \bm{D}$, as otherwise $W_{s; t, s} (\bm{A}, \bm{B}; \bm{C}, \bm{D}) = 0$ by \Cref{wabcdw}. Due to the factor of $(s^{-1} x; q)_{b-p}$ in \eqref{wweight0}, any nonzero summand on the right side of \eqref{wweight0} must at $x = s$ satisfy $p = b$. Together with the fact that $\bm{P} \le \bm{B}$ in this sum, it follows that the only nonzero summand is given by $\bm{P} = \bm{B}$; in particular (since $\bm{P} \le \bm{C}$), we must have $\bm{B} \le \bm{C}$ for $W_{s; t, s} (\bm{A}, \bm{B}; \bm{C}, \bm{D})$ to be nonzero. We assume this in what follows, meaning (as $\bm{A} + \bm{B} = \bm{C} + \bm{D}$) that $\bm{A} \ge \bm{D}$. Then, \eqref{wweight0} yields
			\begin{flalign*}
				W_{s; t, s} (\bm{A}, \bm{B}; \bm{C}, \bm{D}) & = (-1)^d s^d t^{2(c-a)} q^{\varphi (\bm{D}, \bm{C})} \displaystyle\frac{(t^2; q)_d}{(t^2; q)_b} \cdot t^{-2b} \displaystyle\frac{(t^{-2} s^2; q)_{c-b} (t^2; q)_b}{(s^2; q)_{c+d-b}} \\
				& \qquad \times q^{-\varphi(\bm{D}, \bm{B})} \displaystyle\prod_{i=1}^n \displaystyle\frac{(q; q)_{C_i + D_i - B_i}}{(q; q)_{D_i} (q; q)_{C_i - B_i}}.
			\end{flalign*} 
		
			\noindent This, together with the fact that $\bm{A} + \bm{B} = \bm{C} + \bm{D}$ (and hence $a + b = c + d$) yields the lemma.
		\end{proof}
	
		\begin{rem}
			
			\label{fsx} 
			
			Observe for any $\mu, \kappa \in \Comp_n$ that, if $y_i = s$ for each $\in \llbracket 1, n \rrbracket$, then $\mathbb{G}_{\mu/\kappa; s} (\bm{y}; \bm{t})$ (from \Cref{fgfused}) is nonzero only if $\max \mu - \max \kappa \le k$. Indeed, if $\max \mu \ge \max \kappa + k + 1$, then any fused path ensemble $\mathcal{E} \in \mathfrak{P}_{\mathbb{G}} (\mu/\kappa)$ must have at least one arrow configuration $(\bm{A}, \bm{B}; \bm{C}, \bm{D})$ that does not satisfy $\bm{B} \le \bm{C}$ (equivalently, that does not satisfy $\bm{D} \ge \bm{A}$). Hence, \Cref{sw} implies that $W_{\bm{y}; \bm{t}, s} (\mathcal{E}) = 0$ for each $\mathcal{E} \in \mathfrak{P}_{\mathbb{G}} (\mu/\kappa)$, meaning by \eqref{fgmunufused} that $\mathbb{G}_{\mu/\kappa; s} (\bm{y}; \bm{t}) = 0$. 
		
		\end{rem} 
		
		\begin{rem} 
		
		\label{fgsx} 
		
		By \Cref{fsx}, if $y_i = s$ for each $i \in \llbracket 1, M \rrbracket$, then the probability measure $P_{\hs; \bm{x}; \bm{y}; \bm{t}}^{\sigma}$ from \Cref{LModelDiscrete} (see also \eqref{fgprobabilitymufused}) is supported on $(M; \sigma; \bm{\mathrm{R}})$-ascending sequences of $n$-compositions $\bm{\mu} = \big( \mu(0), \mu(1), \ldots , \mu(M+N) \big)$ satisfying $\max \mu(N) \le M$. It follows that this measure is supported on only finitely many such sequences $\bm{\mu}$, and so by analytic continuation \eqref{fgprobabilitymufused} defines a probability measure for any\footnote{It can be verified that the quantity $\prod_{j=1}^N \mathbbm{f}_{\mu(j)/\mu(j-1); s}^{\sigma(j)} (x_j; q^{-1/2}) \cdot \prod_{i=1}^M \prod_{j=1}^N (1 - x_j s^{-1})$ appearing in the $\mathrm{R}_j = 1$ case of \eqref{fgprobabilitymufused} is a polynomial in $\bm{x}$ whenever $\max \mu(N) \le M$ (using the fact that $(1 - x s^{-1}) \cdot \widehat{W}_{x; q^{-1/2}, s}$ is a polynomial in $x$), and so this measure has no singularities in $\bm{x}$.} choice of the parameters $\bm{x}$ and $\bm{t}$, even if \eqref{rtxy} is not satisfied. In particular, the results from \Cref{EqualityCDFused} and \Cref{L0Fused} continue to hold when all $y_i = s$, without assuming \eqref{rtxy}.
		
		\end{rem} 
	
		\subsection{Colored Line Ensembles}
		
		\label{Ensembleq} 
		
		In this section we establish the following proposition describing a colored line ensemble for the discrete time $q$-Boson model (from \Cref{LModelDiscrete}).\footnote{Proposition \ref{nuensemble} below examines this line ensemble on the interval $\llbracket N, M + N \rrbracket$, which corresponds to the behavior of the discrete time $q$-boson model along the north boundary of the rectangle $\llbracket 1, M \rrbracket \times \llbracket 1, N \rrbracket$. One can also formulate more general statements about this line ensemble on its full domain $\llbracket 0, M + N \rrbracket$, but we will not do so here.} In the below, we recall the notation from \Cref{le2fused}, and we define the vertex weights (see \Cref{sw})
		\begin{flalign}
			\label{wnu} 
			\mathcal{W}_{\nu} (\bm{A}, \bm{B}; \bm{C}, \bm{D}) =  q^{\varphi (\bm{D}, \bm{A} - \bm{D})} \nu^{-d} (-\nu; q)_d \cdot  \displaystyle\prod_{i=1}^n \displaystyle\frac{(q;q)_{A_i}}{(q; q)_{A_i  -D_i} (q; q)_{D_i}} \cdot  \mathbbm{1}_{\bm{A} + \bm{B} = \bm{C} + \bm{D}} \cdot \mathbbm{1}_{\bm{A} \ge \bm{D}},
		\end{flalign}
	
		\noindent for any $\nu \in \mathbb{C}$ and $\bm{A}, \bm{B}, \bm{C}, \bm{D} \in \mathbb{Z}_{\ge 0}^n$, where we have set $d = |\bm{D}|$.
		
		\begin{prop} 
		
			\label{nuensemble} 
			
			Fix $\bm{\nu} = (\nu_1, \nu_2, \ldots , \nu_M) \in \mathbb{R}_{> 0}^M$; sample a colored path ensemble $\mathcal{E}$ on $\llbracket 1, M \rrbracket \times \llbracket 1, N \rrbracket$ according to the discrete time $q$-boson model $\mathbb{P}_{\dqb; \bm{\nu}}^{\sigma}$; and for each $c \in \llbracket 1, n \rrbracket$ define $H_c : \llbracket N, M + N \rrbracket \rightarrow \mathbb{Z}$ by setting $H_c (k) = \mathfrak{h}_{\ge c}^{\leftarrow} (M + N - k, N)$ for each $k \in \llbracket N, M + N \rrbracket$, where $\mathfrak{h}_{\ge c}^{\leftarrow}$ is the height function with respect to $\mathcal{E}$.  There exists a random colored line ensemble $\mathds{L} = \big( \mathds{L}^{(1)}, \mathds{L}^{(2)}, \ldots , \mathds{L}^{(n)} \big)$ on $\llbracket 0, M + N \rrbracket$ satisfying the following properties.  
			
			\begin{enumerate}
				\item The joint law of the functions $\big( \mathds{L}_1^{(1)} |_{\llbracket N, M+N \rrbracket}, \mathds{L}_1^{(2)} |_{\llbracket N, M + N \rrbracket}, \ldots , \mathds{L}_1^{(n)} |_{\llbracket N, M + N \rrbracket} \big)$ is the same as that of $(H_1, H_2, \ldots , H_n)$.
				\item We almost surely have $\bm{A}^{\mathds{L}} (-k, i) \ge \bm{D}^{\mathds{L}} (-k, i)$ for each $(-k, i) \in \mathbb{Z}_{\le 0} \times \llbracket N, M + N \rrbracket$. 
				\item Fix integers $j > i \ge 0$ and $u, v \in \llbracket N + 1, M + N \rrbracket$ with $u < v$; set $i_0 = \max \{ i, 1 \}$; and condition on the curves $\mathds{L}_k^{(c)} (m)$ for all $c \in \llbracket 1, n \rrbracket$ and $(k, m) \notin \llbracket i+1, j \rrbracket \times \llbracket u, v-1 \rrbracket$. For any colored line ensemble $\bm{\mathsf{l}}$ that is $\llbracket i+1, j \rrbracket \times \llbracket u, v-1 \rrbracket$-compatible with $\mathds{L}$, we have 
				\begin{flalign}
					\label{llq} 
					\mathbb{P} [\mathds{L} = \bm{\mathsf{l}}] = \mathcal{Z}^{-1} \cdot \displaystyle\prod_{k=i_0}^j \displaystyle\prod_{m = u}^v \mathcal{W}_{\nu_{m-N}} \big( \bm{A}^{\bm{\mathsf{l}}} (-k, m), \bm{B}^{\bm{\mathsf{l}}} (-k, m); \bm{C}^{\bm{\mathsf{l}}} (-k, m), \bm{D}^{\bm{\mathsf{l}}} (-k, m) \big).
				\end{flalign} 
			
				\noindent Here, the probability on the left side of \eqref{llq} is with respect to the conditional law of $\mathds{L}$. Moreover, $\mathcal{Z}$ is a normalizing constant defined so that the sum of the right side of \eqref{llq}, over all colored line ensembles $\bm{\mathsf{l}}$ that are $\llbracket i+1, j \rrbracket \times \llbracket u, v-1 \rrbracket$-compatible with $\mathds{L}$, is equal to $1$.
			\end{enumerate}	
		\end{prop}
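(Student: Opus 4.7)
The proof will proceed by specializing Theorems \ref{lmu2fused} and \ref{conditionlfused} to the discrete time $q$-boson regime. Concretely, set $\mathrm{R}_j = 1$ (and hence $r_j = q^{-1/2}$) for every $j \in \llbracket 1, N \rrbracket$, impose $x_j = y_i = y$ for all $i, j$, and let $t_i^2 = -\nu_i$. Further choose the spin parameter $s = y$; since $y_i = s$ for each $i$, \Cref{fgsx} guarantees that $\mathbb{P}_{\cL; n; s; \bm{x}; \bm{y}; \bm{r}; \bm{t}}^{\sigma}$ is a well-defined probability measure without needing \eqref{rtxy}. Under this specialization the stochastic fused vertex measure $\mathbb{P}_{\FV; \bm{x}; \bm{y}; \bm{r}; \bm{t}}^{\sigma}$ coincides with $\mathbb{P}_{\dqb; \bm{\nu}}^{\sigma}$, and we let $\mathds{L}$ denote the colored line ensemble drawn from $\mathbb{P}_{\cL; n; y; \bm{x}; \bm{y}; \bm{r}; \bm{t}}^{\sigma}$.

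Part (1) is then immediate from \Cref{lmu2fused}, upon restricting the joint law of the top curves to $\llbracket N, M+N \rrbracket$. Part (2) follows from \Cref{sw}, which asserts that $W_{y; t, y}(\bm{A}, \bm{B}; \bm{C}, \bm{D}) = 0$ unless $\bm{A} \geq \bm{D}$: any $\bm{\mathsf{l}}$ with positive mass under the Gibbs formula of \Cref{conditionlfused} must satisfy $\bm{A}^{\bm{\mathsf{l}}}(-k, m) \geq \bm{D}^{\bm{\mathsf{l}}}(-k, m)$ at every vertex with $m \in \llbracket N+1, M+N \rrbracket$, since those are exactly the vertices where $W_{y; t_{m-N}, y}$ is the relevant vertex weight.

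For part (3), the restriction $u, v \in \llbracket N+1, M+N \rrbracket$ makes the first product in \eqref{llprobabilityfused} empty, so the Gibbs formula of \Cref{conditionlfused} reduces to $\mathbb{P}[\mathds{L} = \bm{\mathsf{l}}] \propto \prod_{k=i_0}^{j}\prod_{m=u}^{v} W_{y; t_{m-N}, y}(\bm{A}^{\bm{\mathsf{l}}}, \bm{B}^{\bm{\mathsf{l}}}; \bm{C}^{\bm{\mathsf{l}}}, \bm{D}^{\bm{\mathsf{l}}})$. Substituting $t_{m-N}^2 = -\nu_{m-N}$ into \Cref{sw} yields
\[
W_{y; t_{m-N}, y}(\bm{A}, \bm{B}; \bm{C}, \bm{D}) = \mathcal{W}_{\nu_{m-N}}(\bm{A}, \bm{B}; \bm{C}, \bm{D}) \cdot y^{|\bm{D}|} \cdot \frac{(-y^2/\nu_{m-N}; q)_{|\bm{A}| - |\bm{D}|}}{(y^2; q)_{|\bm{A}|}}.
\]
To obtain \eqref{llq} it suffices to show that the product of the scalar factors over $(k, m) \in \llbracket i_0, j \rrbracket \times \llbracket u, v \rrbracket$ depends only on curves outside the resampled region $\llbracket i+1, j \rrbracket \times \llbracket u, v-1 \rrbracket$, so that it can be absorbed into the overall normalization. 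Because $|\bm{A}|, |\bm{B}|, |\bm{C}|, |\bm{D}|$ at a vertex depend only on the total-arrow curve $\mathds{L}^{(1)}$, this reduces to a purely scalar identity; the plan is to telescope the $y^{|\bm{D}|}$ contribution column-wise using $\bm{A}(-k, m) = \bm{C}(-k, m-1)$, together with row-wise telescoping via $\bm{B}(-k, m) = \bm{D}(-(k+1), m)$, and to rewrite the $q$-Pochhammer ratio using $(x; q)_{a+b} = (x; q)_a (xq^a; q)_b$ so that it likewise factors as $f(\bm{A})g(\bm{C})/(h(\bm{B})e(\bm{D}))$ up to boundary terms.

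The main obstacle lies in this final combinatorial step: the $q$-Pochhammer ratio $(-y^2/\nu_{m-N}; q)_{|\bm{A}| - |\bm{D}|}/(y^2; q)_{|\bm{A}|}$ couples the indices $|\bm{A}|$ and $|\bm{D}|$ and, through $\nu_{m-N}$, varies with the row. The heart of the argument is thus to identify an explicit gauge that realizes this ratio as a multiplicative ``flux'' across the column and row boundaries of the resampling rectangle; depending on how cleanly this gauge can be written down, one may either produce the identity directly or argue it by independently computing the normalization $\widetilde{\mathcal{Z}}$ of \Cref{conditionlfused} and matching it against the normalization of the putative $\mathcal{W}_{\nu_{m-N}}$-Gibbs formula.
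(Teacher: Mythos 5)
Your setup and the factorization
\[
W_{y;\,t_{m-N},\,y}(\bm{A},\bm{B};\bm{C},\bm{D})
= \mathcal{W}_{\nu_{m-N}}(\bm{A},\bm{B};\bm{C},\bm{D})\cdot y^{|\bm{D}|}\cdot\frac{(-y^2/\nu_{m-N};q)_{|\bm{A}|-|\bm{D}|}}{(y^2;q)_{|\bm{A}|}}
\]
are correct, and parts (1) and (2) of your argument go through. The genuine gap lies in part (3), exactly where you flag it: with the spin parameter held fixed at $s=y>0$, the $q$-Pochhammer ratio is \emph{not} determined by the conditioning, and no gauge factorization of the form $f(\bm{A})g(\bm{C})/(h(\bm{B})e(\bm{D}))$ exists. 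The obstruction is concrete. Writing $a(-k,m)=\mathds{L}_k^{(1)}(m-1)-\mathds{L}_{k+1}^{(1)}(m-1)$ and $d(-k,m)=\mathds{L}_k^{(1)}(m-1)-\mathds{L}_k^{(1)}(m)$, the quantity $a(-k,m)-d(-k,m)=\mathds{L}_k^{(1)}(m)-\mathds{L}_{k+1}^{(1)}(m-1)$ does not telescope in either index, and the base $-y^2/\nu_{m-N}$ of the numerator varies with the row $m$, so the product over $m$ of $(-y^2/\nu_{m-N};q)_{a-d}$ cannot collapse to a boundary expression. Row-wise telescoping via $\bm{B}(-k,m)=\bm{D}(-(k+1),m)$ is unavailable here because the ratio depends on $a$ and $a-d$, not on $b$ and $d$ alone; column-wise telescoping via $\bm{A}(-k,m)=\bm{C}(-k,m-1)$ fails because the denominator $(y^2;q)_{a(-k,m)}=(y^2;q)_{c(-k,m-1)}$ still involves the interior curves at $m\in\llbracket u,v-1\rrbracket$ where the $c$-values change under resampling.

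The missing idea is to let the spin parameter tend to $0$ rather than fix it at $y$. Taking $x_j=y_i=s$ for all $i,j$ (which still yields the $q$-boson weights, since those depend only on $x_j/y_i$ and $t_i$) and sending $s\to 0$, the ratio $(-s^2/\nu_{m-N};q)_{a-d}/(s^2;q)_a$ converges to $1$ (as $(0;q)_k=1$), so only the factor $s^{|\bm{D}^{\bm{\mathsf{l}}}(-k,m)|}$ survives. That factor \emph{does} telescope row-wise: $\sum_{m=u}^{v}|\bm{D}^{\bm{\mathsf{l}}}(-k,m)|=\mathds{L}_k^{(1)}(u-1)-\mathds{L}_k^{(1)}(v)$, which is fixed by the conditioning since $u-1$ and $v$ lie outside $\llbracket u,v-1\rrbracket$. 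Absorbing that factor into the normalization before the limit, and then using $\lim_{s\to 0} s^{-|\bm{D}|}\,W_{s;t,s}(\bm{A},\bm{B};\bm{C},\bm{D})=\mathcal{W}_\nu(\bm{A},\bm{B};\bm{C},\bm{D})$, yields \eqref{llq}. Parts (1) and (2), as you noted, are insensitive to $s$ (the matching \Cref{fgsvfused} is $s$-independent, and \Cref{sw} gives the support condition for all $s$), so they persist in the $s\to 0$ limit. Your fixed-$s$ route, by contrast, would require the ``gauge identity'' you sketch at the end, and that identity does not hold.
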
 
	
		\begin{proof}
			
			Let $s > 0$ be a small real number; set $x_j = s = y_i$ for each $(i, j) \in \llbracket 1, M \rrbracket \times \llbracket 1, N \rrbracket$ (recalling that $r_j = q^{-1/2}$ for each $j \in \llbracket 1, N \rrbracket$); and set $t_i^2 = -\nu_i$ for each $i \in \llbracket 1, M \rrbracket$. Observe from \eqref{zij1} that this specialization sends the stochastic fused vertex model to the discrete time $q$-boson model. Sample a colored line ensemble $\mathds{L} = \big( \mathds{L}^{(1)}, \mathds{L}^{(2)}, \ldots , \mathds{L}^{(n)} \big)$ from the measure $\mathbb{P}_{\cL; n; s; \bm{x}; \bm{y}; \bm{r}; \bm{t}}^{\sigma}$ (recall \Cref{llmu0fused}); we will show that the proposition holds for the limit\footnote{This limit exists since that of $\mathbb{P}_{\mathbbm{f}\mathbb{G}; n; s; \bm{x}; \bm{y}; \bm{r}; \bm{t}}^{\sigma}$ does as $s$ tends to $0$, as it can be quickly verified that the products $s^{|\mu(N)|} \cdot \prod_{j=1}^N \mathbbm{f}_{\mu(j)/\mu(j-1);s}^{\sigma(j)} (x_j; q^{-1/2})$ and $s^{-|\mu(N)|} \cdot \prod_{i=1}^N \mathbb{G}_{\mu(i-1)/\mu(i);s} (s; t_{i-N})$ appearing in \eqref{fgprobabilitymufused} remain nonsingular as $s$ tends to $0$ (since the weights $s^{\mathbbm{1}_{j>0}} \cdot \widehat{W}_{x;q^{-1/2},s} (\bm{A}, i; \bm{A}_{ij}^{+-}, j)$ and $s^{|\bm{D}|} \cdot W_{s; t, s} (\bm{A}, \bm{B}; \bm{C}, \bm{D})$ do).} of $\mathds{L}$, as $s$ tends to $0$. 
			
			The fact that $\mathds{L}$ (for any $s \ge 0$) satisfies the first statement of the proposition follows from \Cref{lmu2fused} (and \Cref{fgsx}, so that we need not assume that \eqref{rtxy} holds). That it satisfies the second (also for any $s \ge 0$) follows from \eqref{llprobabilityfused} (and \Cref{fgsx}), together with the fact by \Cref{sw} that for $m > N$ and $y_{m-N} = s$ we have 
			\begin{flalign*} 
				W_{y_{m-N}; t_{m-N}, s} \big( \bm{A}^{\bm{\mathsf{l}}} (-k, m), \bm{B}^{\bm{\mathsf{l}}} (-k, m); \bm{C}^{\bm{\mathsf{l}}} (-k, m), \bm{D}^{\bm{\mathsf{l}}} (-k, m) \big) = 0, 
			\end{flalign*} 
		
			\noindent unless $\bm{A}^{\bm{\mathsf{l}}} (-k, m) \ge \bm{D}^{\bm{\mathsf{l}}} (-k, m)$. 
			
			To verify the third, observe by \Cref{conditionlfused} (and \Cref{fgsx}) that, for any $s \ge 0$,
			\begin{flalign}
				\label{lls2} 
				\begin{aligned} 
				\mathbb{P} [\mathds{L} = \bm{\mathsf{l}}] & = \mathcal{Z}_0^{-1} \cdot \displaystyle\prod_{k=i_0}^j \displaystyle\prod_{m=u}^v s^{|\bm{D}^{\bm{\mathsf{l}}} (-k, m)|} \\
				& \quad \times \displaystyle\prod_{k=i_0}^j \displaystyle\prod_{m=u}^v s^{-|\bm{D}^{\bm{\mathsf{l}}} (-k, m)|} \cdot W_{s; t_{m-N}; s} \big( \bm{A}^{\bm{\mathsf{l}}} (-k, m), \bm{B}^{\bm{\mathsf{l}}} (-k, m); \bm{C}^{\bm{\mathsf{l}}} (-k, m), \bm{D}^{\bm{\mathsf{l}}} (-k, m) \big),
				\end{aligned}
			\end{flalign}
		
			\noindent for some normalization constant $\mathcal{Z}_0 > 0$. Next, we have  
			\begin{flalign*}
				\displaystyle\sum_{m=u}^v \big| \bm{D}^{\bm{\mathsf{l}}} (-k, m) \big| = \displaystyle\sum_{m=u}^v \big( \bm{\mathsf{l}}_k^{(1)} (m-1) - \bm{\mathsf{l}}_k^{(1)} (m) \big) = \mathsf{l}_k^{(1)} (u-1) - \mathsf{l}_k^{(1)} (v) = \mathds{L}_k^{(1)} (u-1) - \mathds{L}_k^{(1)} (v),
			\end{flalign*}
		
			\noindent where the first equality holds by \Cref{le2fused}, the second by performing the sum, and the third by the fact that $\bm{\mathsf{l}}$ is $\llbracket u, v-1 \rrbracket \times \llbracket i + 1, j \rrbracket$-compatible with $\mathds{L}$. This quantity is fixed by the conditioning, so the first product on the right side of \eqref{lls2} can be incorporated into the normalization constant $\mathcal{Z}_0$. This gives
				\begin{flalign*}
				\mathbb{P} [\mathds{L} = \bm{\mathsf{l}}] = \mathcal{Z}_1^{-1} \cdot \displaystyle\prod_{k=i_0}^j \displaystyle\prod_{m=u}^v s^{-|\bm{D}^{\bm{\mathsf{l}}} (-k, m)|} \cdot W_{s; t_{m-N}; s} \big( \bm{A}^{\bm{\mathsf{l}}} (-k, m), \bm{B}^{\bm{\mathsf{l}}} (-k, m); \bm{C}^{\bm{\mathsf{l}}} (-k, m), \bm{D}^{\bm{\mathsf{l}}} (-k, m) \big),
		\end{flalign*}				
			   
			\noindent for some normalization constant $\mathcal{Z}_1 > 0$. Letting $s$ tend to $0$, together with the fact by \Cref{sw} and \eqref{wnu} (and as $t_i^2 = -\nu_i$) that 
			\begin{flalign*} 
				\lim_{s \rightarrow 0} s^{-|\bm{D}|} \cdot W_{s; t_i, s} (\bm{A}, \bm{B}; \bm{C}, \bm{D}) = \mathcal{W}_{\nu_i} (\bm{A}, \bm{B}; \bm{C}, \bm{D}),
			\end{flalign*} 
		
			\noindent yields the third part of the proposition.
		\end{proof}

	\appendix 
	
	\section{Degeneration to the Log-Gamma Polymer} 
	
	\label{VertexPolymer}
	
	In this section we explain how to recover the log-gamma polymer (introduced in \cite{SDPBC}) from the colored stochastic fused vertex model (recall \Cref{FusedPath}); this involves a complementation procedure (of the type alluded to in \Cref{a0c0u}), an analytic continuation, and limit degeneration. We begin by describing the complementation procedure and evaluating the associated complemented weights in Section \ref{WeightsComplement}. We then analyze (limits of) anaytic continuations of these weights in Section \ref{LimitU} and Section \ref{WeightD}, and analyze the behavior of them as $q$ tends to $1$ in Section \ref{Weightsq1}, Section \ref{ProofSum2}, and Section \ref{WeightAq1}. We conclude in Section \ref{Polymerq1} by explaining convergence of the vertex model with these degenerated weights to the log-gamma polymer (and also describing the reason for choosing this specialization in \Cref{q0}). In what follows, given a real number $\theta > 0$, we say that $X$ is a $\mathrm{Gamma} (\theta)$ random variable if $\mathbb{P} [x \le X \le x + dx] = \Gamma(\theta)^{-1} \cdot x^{\theta - 1} e^{-x} dx$ for any $x \in \mathbb{R}$. Throughout this section, $n \ge 1$ is an integer, and we assume that $q \in (0, 1)$.  
	
	\subsection{Complemented Fused Weights}
	
	\label{WeightsComplement} 
	
	In this section we implement (a case of) the complementation procedure mentioned in \Cref{a0c0u}, which will be necessary to degenerate the fused stochastic vertex model of \Cref{FusedPath} to the log-gamma polymer. To that end, we require some additional notation. Let $\mathrm{L} \in \mathbb{Z}_{\ge 0}$ be an integer, and let
	\begin{flalign}
		\label{x1x} 
		\overline{\bm{X}} = (X_1, X_2, \ldots , X_{n-1}) \in \mathbb{Z}_{\ge 0}^{n-1}; \qquad \text{for any $\bm{X} \in \mathbb{Z}_{\ge 0}^n$, and set} \quad \overline{x} = |\overline{\bm{X}}|. 
	\end{flalign} 
	
	\noindent For any $\bm{B}, \bm{D} \in \mathbb{Z}_{\ge 0}^n$ such that $|\bm{B}| \le \mathrm{L}$ and $\bm{D} \le \mathrm{L}$, we set 
	\begin{flalign}
		\label{bnlbn}
		\mathsf{B}_n = \mathrm{L} - B_n; \qquad \mathsf{b} = \mathrm{L} - |\bm{B}|; \qquad \mathsf{D}_n = \mathrm{L} - D_n; \qquad \mathsf{d} = \mathrm{L} - |\bm{D}|.
	\end{flalign}
	
	\noindent In this way, we have $\bm{B} = (\overline{\bm{B}}, \mathrm{L} - \mathsf{B}_n)$ and $\bm{D} = (\overline{\bm{D}}, \mathrm{L} - \mathsf{D}_n)$, so that 
	\begin{flalign}
		\label{bbdd}
		\mathsf{b} = \mathsf{B}_n - \overline{b}; \qquad \mathsf{d} = \mathsf{D}_n - \overline{d}. 
	\end{flalign}
	
	We will evaluate the weight $U_{z; r, s} (\bm{A}, \bm{B}; \bm{C}, \bm{D})$ when $r^2 = q^{-\mathrm{L}}$ (which is the reason we imposed the conditions $|\bm{B}|, |\bm{D}| \le \mathrm{L}$ above), $s^2 = q^{-\mathfrak{M}}$, and $z = q^{\mathfrak{N} - \mathrm{L} + 1}$ (for some complex numbers $\mathfrak{M}, \mathfrak{N} \in \mathbb{C}$). This weight will happen to be rational in $q^{\mathrm{L}}$, so let us define it when the integer $\mathrm{L}$ is replaced by an arbitrary complex number $\mathfrak{L}$. In what follows, for any integers $m, k \ge 0$, and complex numbers $z \in \mathbb{C}$, $a_1, a_2, \ldots , a_k \in \mathbb{C}$, and $b_1, b_2, \ldots , b_k \in \mathbb{C}$, we denote the terminating basic hypergeometric series by
	\begin{flalign}
		\label{43abcd}
		{_{k+1} \varphi_k} \bigg( \begin{array}{ccc} q^{-m}; a_1, a_2, \ldots , a_k \\ b_1, b_2, \ldots , b_k  \end{array} \bigg| q, z \bigg) = \displaystyle\sum_{i=0}^m z^i \displaystyle\frac{(q^{-m}; q)_i}{(q;q)_i} \cdot \displaystyle\prod_{j=1}^k \displaystyle\frac{(a_j; q)_i}{(b_j; q)_i}.
	\end{flalign}
	
	\begin{definition} 
		
		\label{2u}
		
		Let $\mathfrak{L}, \mathfrak{M}, \mathfrak{N} \in \mathbb{C}$ be complex numbers; $\mathsf{B}_n, \mathsf{D}_n \in \mathbb{Z}_{\ge 0}$ be integers; $\overline{\bm{B}}, \overline{\bm{D}} \in \mathbb{Z}_{\ge 0}^n$ be $(n-1)$-tuples, such that $\overline{b} = |\bm{\overline{B}}| \le \mathsf{B}_n$ and $\overline{d} = |\overline{\bm{D}}| \le \mathsf{D}_n$; and $\bm{A}, \bm{C} \in \mathbb{Z}_{\ge 0}^n$ be $n$-tuples. Setting $a = |\bm{A}|$ and $c = |\bm{C}|$; letting $\mathsf{b}, \mathsf{d} \in \mathbb{Z}_{\ge 0}$ be as in \eqref{bbdd}, and recalling \eqref{x1x}, define the weight
		\begin{flalign*}
			& \mathcal{U}_{q^{\mathfrak{L}}; q^{\mathfrak{M}}; q^{\mathfrak{N}}} \big( \bm{A}, (\overline{\bm{B}}, \mathsf{B}_n); \bm{C}, (\overline{\bm{D}}, \mathsf{D}_n) \big) \\
			& \quad= (-1)^{C_n} q^{\varphi (\overline{\bm{D}}, \overline{\bm{C}}) + \binom{C_n+1}{2} + \overline{c} \mathsf{d} + \mathfrak{L}C_n} \cdot \mathbbm{1}_{\overline{\bm{A}} + \overline{\bm{B}} = \overline{\bm{C}} + \overline{\bm{D}}} \cdot \mathbbm{1}_{\mathsf{b} - a = \mathsf{d} - c}\\
			& \qquad \times \displaystyle\frac{(q; q)_{\mathsf{b}}}{(q; q)_{\mathsf{d}}} \displaystyle\frac{(q^{\mathfrak{M}-\mathfrak{N}-c}; q)_{\mathsf{d}}}{(q; q)_{C_n}} \displaystyle\frac{(q^{\mathsf{D}_n - C_n -\mathfrak{L}}; q)_{C_n}}{(q^{-\mathfrak{N}}; q)_{\mathsf{b}}} \displaystyle\frac{(q^{-\mathfrak{N}}; q)_{\infty}}{(q^{\mathfrak{L}-\mathfrak{N}}; q)_{\infty}} \displaystyle\frac{(q^{\mathfrak{L}+\mathfrak{M}-\mathfrak{N}}; q)_{\infty}}{(q^{\mathfrak{M}-\mathfrak{N}}; q)_{\infty}} \cdot \displaystyle\prod_{j=1}^{n-1} \displaystyle\frac{(q; q)_{B_j}}{(q; q)_{D_j}} \\
			& \qquad \times \displaystyle\sum_{\overline{\bm{P}} \le \overline{\bm{B}}, \overline{\bm{C}}} (-1)^{\overline{p}} q^{\varphi (\overline{\bm{B}} - \overline{\bm{D}} - \overline{\bm{P}}, \overline{\bm{P}})} \displaystyle\frac{(q^{-\mathfrak{N}-1}; q)_{\overline{p}} (q^{\mathfrak{M}-\mathfrak{N}+\mathsf{d}-c}; q)_{\overline{p}}}{(q^{\mathfrak{M}-\mathfrak{N}-c}; q)_{\overline{p}} (q^{\mathsf{b}-\mathfrak{N}}; q)_{\overline{p}}} \cdot \displaystyle\prod_{j=1}^{n-1} \displaystyle\frac{(q; q)_{C_j + D_j - P_j}}{(q; q)_{C_j - P_j} (q; q)_{P_j} (q; q)_{B_j - P_j}} \\
			& \qquad \qquad \quad \times q^{\overline{p} (\mathsf{b} - \mathsf{d} +1) + \binom{\overline{p}}{2}} \cdot {_4 \varphi_3} \bigg( \begin{array}{cc} q^{-C_n}; q^{\mathsf{B}_n - \mathfrak{L}}, q^{\overline{p}-\mathfrak{N}+1}, q^{\mathfrak{M}-\mathfrak{N}+\mathsf{d}-c+\overline{p}} \\ q^{\mathfrak{M}-\mathfrak{N}-c+\overline{p}}, q^{\mathsf{b}-\mathfrak{N}+\overline{p}}, q^{\mathsf{D}_n - C_n - \mathfrak{L}} \end{array} \bigg| q, q \bigg).
		\end{flalign*}
		
	\end{definition} 
	
	\begin{rem} 
		
		\label{43fused}
		
	The appearance of the ${_4 \varphi_3}$ basic hypergeometric series in the $\mathcal{U}$-weights defined above is a common phenomenon for fused vertex weights in the colorless $n=1$ case; see, for example, \cite[Equation (1.1)]{ESVM} and \cite[Theorem 3.15]{SHSVML}.
	
	\end{rem} 

	The following lemma provides an expression for the $U_z$-weight at $z = q^{\mathfrak{N} - \mathrm{L} + 1}$ in terms of $\mathcal{U}$.

	\begin{lem} 
		
		\label{rqnl1} 
		
		Fix an integer $\mathrm{L} \ge 1$ and complex number $\mathfrak{M}, \mathfrak{N} \in \mathbb{C}$. Let $\bm{A}, \bm{B}, \bm{C}, \bm{D} \in \mathbb{Z}_{\ge 0}^n$, let $a = |\bm{A}|$ and $c = |\bm{C}|$, and adopt the notation in \eqref{x1x} and \eqref{bnlbn}. For any $\mathfrak{N} \in \mathbb{C}$, we have
		\begin{flalign*} 
			U_{q^{\mathfrak{N}-\mathrm{L}+1}; q^{-\mathrm{L}/2}, q^{-\mathfrak{M}/2}} (\bm{A}, \bm{B}; \bm{C}, \bm{D}) = \mathcal{U}_{q^{\mathrm{L}}; q^{\mathfrak{M}}; q^{\mathfrak{N}}} \big(\bm{A}, (\overline{\bm{B}}, \mathsf{B}_n); \bm{C}, (\overline{\bm{D}}, \mathsf{D}_n) \big). 
		\end{flalign*} 
		
	\end{lem}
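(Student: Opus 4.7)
The plan is to start directly from the formula for $U_{z;r,s}$ in Definition \ref{wzabcd}, substitute $z = q^{\mathfrak{N}-\mathrm{L}+1}$, $r^2 = q^{-\mathrm{L}}$, $s^2 = q^{-\mathfrak{M}}$, and then split the summation index $\bm{P} \in \mathbb{Z}_{\ge 0}^n$ as $\bm{P} = (\overline{\bm{P}}, P_n)$ so that $p = \overline{p} + P_n$. Since $\varphi$ is bilinear, the exponent $\varphi(\bm{B}-\bm{D}-\bm{P},\bm{P})$ decomposes as $\varphi(\overline{\bm{B}}-\overline{\bm{D}}-\overline{\bm{P}},\overline{\bm{P}}) + P_n(\overline{b}-\overline{d}-\overline{p})$, and $\varphi(\bm{D},\bm{C})$ decomposes as $\varphi(\overline{\bm{D}},\overline{\bm{C}}) + \overline{d}\,C_n$. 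The factor $\mathbbm{1}_{\bm{A}+\bm{B}=\bm{C}+\bm{D}}$ similarly splits into $\mathbbm{1}_{\overline{\bm{A}}+\overline{\bm{B}}=\overline{\bm{C}}+\overline{\bm{D}}}$ together with $A_n + B_n = C_n + D_n$, which after substitution in \eqref{bbdd} becomes $\mathsf{b}-a = \mathsf{d}-c$.

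Next, for fixed $\overline{\bm{P}}$, I would collect all $P_n$-dependent factors (these are $(r^{-2}z)^{P_n}$, the Pochhammers $(r^{-2}s^2z;q)_{c-p}$, $(r^2z^{-1};q)_p$, $(z;q)_{b-p}$, $(s^2z;q)_{c+d-p}$, as well as the combinatorial factors $\frac{(q;q)_{C_n+D_n-P_n}}{(q;q)_{C_n-P_n}(q;q)_{P_n}(q;q)_{B_n-P_n}}$ and $q^{P_n(\overline{b}-\overline{d}-\overline{p})}$). Each Pochhammer of the form $(a;q)_{N-P_n}$ is rewritten as $(a;q)_N$ divided by a new Pochhammer of length $P_n$ using the basic reflection identity
\begin{equation*}
\frac{(a;q)_{N-k}}{(a;q)_N} = \frac{(-q/a)^k\, q^{\binom{k}{2}-Nk}}{(q^{1-N}/a;q)_k},
\end{equation*}
and similarly for the $(q;q)$ ratios. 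After these shifts, the $P_n$-sum is written as a sum indexed from $0$ to $\min\{B_n,C_n\}$ whose $P_n$-th term has the form $q^{P_n}\cdot(q^{-C_n};q)_{P_n}/(q;q)_{P_n}$ times three pairs of Pochhammer numerators/denominators, which is precisely the $_4\varphi_3$ in Definition \ref{2u}; matching the parameters $q^{\mathsf{B}_n-\mathfrak{L}},\, q^{\overline{p}-\mathfrak{N}+1},\, q^{\mathfrak{M}-\mathfrak{N}+\mathsf{d}-c+\overline{p}}$ and $q^{\mathfrak{M}-\mathfrak{N}-c+\overline{p}},\, q^{\mathsf{b}-\mathfrak{N}+\overline{p}},\, q^{\mathsf{D}_n-C_n-\mathfrak{L}}$ is a direct calculation (the termination at $P_n = \min\{B_n,C_n\}$ is consistent because $(q^{\mathsf{B}_n-\mathrm{L}};q)_{P_n} = (q^{-B_n};q)_{P_n}$ vanishes for $P_n > B_n$).

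The remaining task is to identify the $P_n$-independent prefactor. The $q$-powers $\binom{C_n+1}{2}$, $\overline{c}\mathsf{d}$, and $\mathfrak{L}C_n$ in Definition \ref{2u} arise from the accumulated shift factors $(-q/a)^{C_n} q^{\binom{C_n}{2}-NC_n}$ produced by the above reflection identity applied to $(r^{-2}s^2z;q)_{c-p}$, $(z;q)_{b-p}$, $(s^2z;q)_{c+d-p}$, $(q;q)_{B_n-P_n}$, and $(q;q)_{C_n-P_n}$, together with the prefactors $z^{d-b}r^{2(c-a)}s^{2d}$ and $q^{\varphi(\bm{D},\bm{C})}$ once expressed in terms of $\mathfrak{L}, \mathfrak{M}, \mathfrak{N}, \overline{c}, \overline{d}, \mathsf{b}, \mathsf{d}, C_n$. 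The two infinite-product factors $(q^{-\mathfrak{N}};q)_\infty/(q^{\mathfrak{L}-\mathfrak{N}};q)_\infty$ and $(q^{\mathfrak{L}+\mathfrak{M}-\mathfrak{N}};q)_\infty/(q^{\mathfrak{M}-\mathfrak{N}};q)_\infty$ are obtained by telescoping the specialized Pochhammers $(q^{-\mathrm{L}};q)_d/(q^{-\mathrm{L}};q)_b$ and $(q^{\mathfrak{N}-\mathfrak{M}+1};q)_c/(q^{\mathfrak{N}-\mathfrak{M}-\mathrm{L}+1};q)_{c+d}$ via the identity $(q^a;q)_m = (q^a;q)_\infty/(q^{a+m};q)_\infty$ (analytically continued from integer $\mathrm{L}$ to complex $\mathfrak{L}$).

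The main obstacle will be Step~3 above, namely the careful bookkeeping of all the $P_n$-shift factors and of the ambient $q$-power corrections. There is no conceptual difficulty, but one must track several contributions of the form $q^{\binom{k}{2}-Nk}$, combine them with the exponents $(\mathfrak{N}+1)P_n$ and $P_n(\overline{b}-\overline{d}-\overline{p})$, and verify that after cancellation the surviving $P_n$-exponent is exactly $+1$ per step (so that the series is $_4\varphi_3(\cdots\mid q,q)$ and not $(\cdots\mid q, q^\alpha)$ for some nontrivial $\alpha$); the remaining $q$-power is the one recorded in the prefactor of Definition~\ref{2u}. Once this bookkeeping is settled, the equality $U_{q^{\mathfrak{N}-\mathrm{L}+1}; q^{-\mathrm{L}/2}, q^{-\mathfrak{M}/2}} = \mathcal{U}_{q^{\mathrm{L}};q^{\mathfrak{M}};q^{\mathfrak{N}}}$ follows term by term at $\mathfrak{L} = \mathrm{L}$.
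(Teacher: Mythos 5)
Your proposal is correct and takes essentially the same approach as the paper: both proofs perform the direct substitution $z=q^{\mathfrak{N}-\mathrm{L}+1}$, $r^2=q^{-\mathrm{L}}$, $s^2=q^{-\mathfrak{M}}$ into \eqref{weightu}, separate $\mathrm{L}$, $P_n$, and the complemented indices out of the $q$-Pochhammer subscripts via reflection identities, and then recognize the inner $P_n$-sum as the stated $_4\varphi_3$ series with argument $q$. The only difference is the order of operations (you propose splitting $\bm{P}=(\overline{\bm{P}},P_n)$ at the outset, while the paper first extracts $\mathrm{L}$ and performs the $(u;q^{-1})_k\to(u';q)_k$ conversion before splitting off $P_n$), which does not change the substance of the argument.
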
 
	
	\begin{proof}
		
		Let us assume throughout this proof that $\bm{A} + \bm{B} = \bm{C} + \bm{D}$, or equivalently that $\overline{\bm{A}} + \overline{\bm{B}} = \overline{\bm{C}} + \overline{\bm{D}}$ and $\mathsf{b} - a = \mathrm{L} - |\bm{A}| - |\bm{B}| = \mathrm{L} - |\bm{C}| - |\bm{D}| = \mathsf{d} - c$; otherwise, \eqref{weightu} implies that $U_{q^{\mathfrak{N}-\mathrm{L}+1}; q^{-\mathrm{L}/2}, q^{-\mathfrak{M}/2}} (\bm{A}, \bm{B}; \bm{C}, \bm{D}) = 0$, which matches with $\mathcal{U}_{q^{\mathfrak{L}}; q^{\mathfrak{M}}; q^{\mathfrak{N}}} (\bm{A}, \bm{B}; \bm{C}, \bm{D})$, by \Cref{2u}. Next, inserting \eqref{bnlbn} into \eqref{weightu} (and using the facts that $a-c = |\bm{A}| - |\bm{C}| = |\bm{D}| - |\bm{B}| = \mathsf{b} - \mathsf{d}$ and that $\varphi (\bm{B}, \bm{X}) = \varphi (\overline{\bm{B}}, \bm{X})$ and $\varphi (\bm{D}, \bm{X}) = \varphi (\overline{\bm{D}}, \bm{X})$ for any $\bm{X} \in \mathbb{Z}_{\ge 0}^n$), we obtain
		\begin{flalign}
			\label{rqn0} 
			\begin{aligned} 
				& U_{q^{\mathfrak{N}-\mathrm{L}+1}; q^{-\mathrm{L}/2}, q^{-\mathfrak{M}/2}} (\bm{A}, \bm{B}; \bm{C}, \bm{D}) \\
				& \quad  = q^{(\mathfrak{N}+1)(\mathsf{b}-\mathsf{d}) - (\mathrm{L}-\mathsf{d})\mathfrak{M} + \varphi (\overline{\bm{D}}, \bm{C})} \displaystyle\frac{(q^{-\mathrm{L}}; q)_{\mathrm{L}-\mathsf{d}}}{(q^{-\mathrm{L}}; q)_{\mathrm{L}-\mathsf{b}}} \displaystyle\frac{(q; q)_{\mathrm{L}-\mathsf{B}_n}}{(q; q)_{\mathrm{L}-\mathsf{D}_n}} \displaystyle\prod_{j=1}^{n-1} \displaystyle\frac{(q; q)_{B_j}}{(q; q)_{D_j}} \\
				& \qquad \times \displaystyle\sum_{p=0}^{\min \{ \mathrm{L}-\mathsf{b}, c \}} q^{(\mathfrak{N}+1)p} \displaystyle\frac{(q^{\mathfrak{N}-\mathfrak{M}+1}; q)_{c-p}}{(q^{\mathfrak{N}-\mathrm{L}-\mathfrak{M}+1}; q)_{\mathrm{L} + c - \mathsf{d}-p}} (q^{-\mathfrak{N}-1}; q)_p (q^{\mathfrak{N}-\mathrm{L}+1}; q)_{\mathrm{L} - \mathsf{b} -p} \\
				& \qquad \times \displaystyle\sum_{\substack{\bm{P} \le \bm{B}, \bm{C} \\ |\bm{P}| = p}} q^{\varphi (\overline{\bm{B}} - \overline{\bm{D}} - \overline{\bm{P}}, \bm{P})} \displaystyle\frac{(q; q)_{\mathrm{L}+C_n - \mathsf{D}_n - P_n}}{(q; q)_{C_n - P_n} (q; q)_{P_n} (q; q)_{\mathrm{L} - \mathsf{B}_n - P_n}} \displaystyle\prod_{j=1}^{n-1} \displaystyle\frac{(q; q)_{C_j + D_j - P_j}}{(q; q)_{C_j - P_j} (q; q)_{P_j} (q; q)_{B_j - P_j}}.
			\end{aligned}
		\end{flalign} 
		
		\noindent We will next separate the parameter $\mathrm{L}$ (as well as $p$ and $P_n$) from the other ones in the subscripts of the $q$-Pochhammer symbols. To that end, observe that 
		\begin{flalign}
			\label{qlbn0} 
			\displaystyle\frac{(q;q)_{\mathrm{L}-\mathsf{B}_n}}{(q; q)_{\mathrm{L}-\mathsf{D}_n}} \displaystyle\frac{(q^{-\mathrm{L}}; q)_{\mathrm{L}-\mathsf{d}}}{(q^{-\mathrm{L}}; q)_{\mathrm{L}-\mathsf{b}}} = (-1)^{\mathsf{b} + \mathsf{B}_n - \mathsf{d} - \mathsf{D}_n} q^{\binom{\mathsf{B}_n}{2} - \binom{\mathsf{D}_n}{2} + \mathrm{L} (\mathsf{D}_n - \mathsf{B}_n) + \binom{\mathsf{d}+1}{2} - \binom{\mathsf{b}+1}{2}} \displaystyle\frac{(q^{-\mathrm{L}}; q)_{\mathsf{D}_n}}{(q^{-\mathrm{L}}; q)_{\mathsf{B}_n}} \displaystyle\frac{(q; q)_{\mathsf{b}}}{(q; q)_{\mathsf{d}}},
		\end{flalign} 
		
		\noindent due to the identities  
		\begin{flalign*}
			& \displaystyle\frac{(q; q)_{\mathrm{L}-\mathsf{B}_n}}{(q; q)_{\mathrm{L}-\mathsf{D}_n}} = \displaystyle\frac{(q^\mathrm{L}; q^{-1})_{\mathsf{D}_n}}{(q^\mathrm{L}; q^{-1})_{\mathsf{B}_n}} = (-1)^{\mathsf{B}_n - \mathsf{D}_n} q^{\binom{\mathsf{B}_n}{2} - \binom{\mathsf{D}_n}{2} + \mathrm{L}(\mathsf{D}_n - \mathsf{B}_n)} \displaystyle\frac{(q^{-\mathrm{L}}; q)_{\mathsf{D}_n}}{(q^{-\mathrm{L}}; q)_{\mathsf{B}_n}}; \\
			& \displaystyle\frac{(q^{-\mathrm{L}}; q)_{\mathrm{L}-\mathsf{d}}}{(q^{-\mathrm{L}}; q)_{\mathrm{L}-\mathsf{b}}} = \displaystyle\frac{(q^{-\mathsf{b}}; q)_{\mathsf{b}}}{(q^{-\mathsf{d}}; q)_{\mathsf{d}}} = (-1)^{\mathsf{b}-\mathsf{d}} q^{\binom{\mathsf{d}+1}{2} - \binom{\mathsf{b}+1}{2}} \displaystyle\frac{(q;q)_{\mathsf{b}}}{(q;q)_{\mathsf{d}}},
		\end{flalign*} 
		
		\noindent where in the first we used the fact that $(u; q^{-1})_k = (-u)^k q^{-\binom{k}{2}} (u^{-1}; q)_k$ for any $k \in \mathbb{Z}_{\ge 0}$ and $u \in \mathbb{C}$. We further have that 
		\begin{flalign}
			\label{qnm1} 
			\begin{aligned} 
				\displaystyle\frac{(q^{\mathfrak{N}-\mathfrak{M}+1}; q)_{c-p}}{(q^{\mathfrak{N}-\mathrm{L}-\mathfrak{M}+1};q)_{\mathrm{L}+c-\mathsf{d}-p}}  (q^{\mathfrak{N}-\mathrm{L}+1}; q)_{\mathrm{L}-\mathsf{b}-p} & = \displaystyle\frac{(q^{\mathfrak{N}-\mathfrak{M}+1}; q)_c}{(q^{\mathfrak{N}-\mathfrak{M}+c}; q^{-1})_p} \displaystyle\frac{(q^{\mathfrak{N}-\mathrm{L}+1}; q)_\mathrm{L}}{(q^{\mathfrak{N}}; q^{-1})_{\mathsf{b}+p}} \displaystyle\frac{(q^{\mathfrak{N} - \mathfrak{M}}; q^{-1})_{\mathsf{d} - c - p}}{(q^{\mathfrak{N}-\mathfrak{M}-\mathrm{L}+1}; q)_\mathrm{L}}; \\
				\displaystyle\frac{(q;q)_{\mathrm{L}+C_n-\mathsf{D}_n-P_n}}{(q;q)_{C_n-P_n} (q; q)_{\mathrm{L}-\mathsf{B}_n - P_n}} & = \displaystyle\frac{(q^\mathrm{L}; q^{-1})_{\mathsf{B}_n+P_n} (q^{C_n}; q^{-1})_{P_n}}{(q;q)_{C_n} (q^\mathrm{L}; q^{-1})_{\mathsf{D}_n - C_n + P_n}},
			\end{aligned} 	
		\end{flalign}
		
		\noindent due the identities
		\begin{flalign*}
			& (q^{\mathfrak{N}-\mathrm{L}+1}; q)_{\mathrm{L}-\mathsf{b}-p} = \displaystyle\frac{(q^{\mathfrak{N}-\mathrm{L}+1}; q)_\mathrm{L}}{(q^\mathfrak{N}; q^{-1})_{\mathsf{b}+p}}; \qquad (q^{\mathfrak{N}-\mathfrak{M}+1}; q)_{c-p} = \displaystyle\frac{(q^{\mathfrak{N}-\mathfrak{M}+1}; q)_c}{(q^{\mathfrak{N}-\mathfrak{M}+c}; q^{-1})_p}; \\
			& (q^{\mathfrak{N}-\mathfrak{M}-\mathrm{L}+1}; q)_{\mathrm{L}+c-\mathsf{d}-p} = \displaystyle\frac{(q^{\mathfrak{N}-\mathfrak{M}-\mathrm{L}+1}; q)_\mathrm{L}}{(q^{\mathfrak{N}-\mathfrak{M}}; q^{-1})_{\mathsf{d}-c+p}}; \qquad (q; q)_{\mathrm{L}-\mathsf{B}_n-P_n} = \displaystyle\frac{(q; q)_\mathrm{L}}{(q^\mathrm{L}; q^{-1})_{\mathsf{B}_n + P_n}}; \\
			& (q; q)_{\mathrm{L}+C_n-\mathsf{D}_n-P_n} = \displaystyle\frac{(q; q)_\mathrm{L}}{(q^\mathrm{L}; q^{-1})_{\mathsf{D}_n - C_n + P_n}}; \qquad (q; q)_{C_n - P_n} = \displaystyle\frac{(q; q)_{C_n}}{(q^{C_n}; q^{-1})_{P_n}}.
		\end{flalign*}
		
		\noindent Inserting \eqref{qlbn0} and \eqref{qnm1} into \eqref{rqn0}, we obtain 	
		\begin{flalign}
			\label{rqnl5}
			\begin{aligned}
				& U_{q^{\mathfrak{N}-\mathrm{L}+1}; q^{-\mathrm{L}/2}, q^{-\mathfrak{M}/2}} (\bm{A}, \bm{B}; \bm{C}, \bm{D}) \\
				& \quad  = (-1)^{\mathsf{b} + \mathsf{B}_n -\mathsf{d} - \mathsf{D}_n} q^{(\mathfrak{N}+1)(b-\mathsf{d}) - (\mathrm{L}-\mathsf{d})\mathfrak{M} + \varphi (\overline{\bm{D}}, \bm{C})} q^{\binom{\mathsf{B}_n}{2} - \binom{\mathsf{D}_n}{2} + \mathrm{L} (\mathsf{D}_n - \mathsf{B}_n)} q^{\binom{\mathsf{d}+1}{2} - \binom{\mathsf{b}+1}{2}} \\
				& \qquad \times \displaystyle\frac{(q; q)_{\mathsf{b}}}{(q; q)_{\mathsf{d}}} \displaystyle\frac{(q^{\mathfrak{N}-\mathfrak{M}+1}; q)_c}{(q; q)_{C_n}} \displaystyle\frac{(q^{-\mathrm{L}}; q)_{\mathsf{D}_n}}{(q^{-\mathrm{L}}; q)_{\mathsf{B}_n}} \displaystyle\frac{(q^{\mathfrak{N}-\mathrm{L}+1}; q)_\mathrm{L}}{(q^{\mathfrak{N}-\mathfrak{M}-\mathrm{L}+1}; q)_\mathrm{L}} \displaystyle\prod_{j=1}^{n-1} \displaystyle\frac{(q; q)_{B_j}}{(q; q)_{D_j}} \\
				& \qquad \times \displaystyle\sum_{p=0}^{\min \{ b, c \}} q^{(\mathfrak{N}+1)p} \displaystyle\frac{(q^{-\mathfrak{N}-1}; q)_p (q^{\mathfrak{N}-\mathfrak{M}}; q^{-1})_{\mathsf{d}-c+p}}{(q^{\mathfrak{N}-\mathfrak{M}+c}; q^{-1})_p (q^\mathfrak{N}; q^{-1})_{\mathsf{b}+p}} \\
				& \qquad \quad \times \displaystyle\sum_{\substack{\bm{P} \le \bm{B}, \bm{C} \\ |\bm{P}| = p}} q^{\varphi (\overline{\bm{B}} - \overline{\bm{D}} - \overline{\bm{P}}, \bm{P})} \displaystyle\frac{(q^\mathrm{L}; q^{-1})_{\mathsf{B}_n + P_n} (q^{C_n}; q^{-1})_{P_n}}{(q; q)_{P_n} (q^\mathrm{L}; q^{-1})_{\mathsf{D}_n - C_n + P_n}} \displaystyle\prod_{j=1}^{n-1} \displaystyle\frac{(q; q)_{C_j + D_j - P_j}}{(q; q)_{C_j - P_j} (q; q)_{P_j} (q; q)_{B_j - P_j}}.
			\end{aligned} 
		\end{flalign}

		Next observe, as $(u; q)_k = (-u)^k q^{\binom{k}{2}} (q^{1-k} u^{-1}; q)_k$ for any $k \in \mathbb{Z}_{\ge 0}$ and $u \in \mathbb{C}$, that 
		\begin{flalign}
			\label{qnl2}
			& \displaystyle\frac{(q^{\mathfrak{N}-\mathrm{L}+1}; q)_\mathrm{L}}{(q^{\mathfrak{N}-\mathrm{L}-\mathfrak{M}+1}; q)_\mathrm{L}} = q^{\mathrm{L}\mathfrak{M}} \displaystyle\frac{(q^{-\mathfrak{N}}; q)_\mathrm{L}}{(q^{\mathfrak{M}-\mathfrak{N}}; q)_\mathrm{L}}; \qquad (q^{\mathfrak{N}-\mathfrak{M}+1}; q)_c = (-1)^c q^{(\mathfrak{N}-\mathfrak{M})c + \binom{c+1}{2}} (q^{\mathfrak{M}-\mathfrak{N}-c}; q)_c.
		\end{flalign}
		
		\noindent We further convert the quantities of the form $(u; q^{-1})_k$ on the right side of \eqref{rqnl5} into ones of the form $(u'; q)_k$. To do so, we repeatedly use the identity $(u; q^{-1})_k = (-u)^k q^{-\binom{k}{2}} (u^{-1}; q)_k$ for any $k \in \mathbb{Z}_{\ge 0}$ and $u \in \mathbb{C}$, which gives   
		\begin{flalign}
			\label{qnl3} 
			\begin{aligned} 
				\displaystyle\frac{(q^{\mathfrak{N}-\mathfrak{M}}; q^{-1})_{\mathsf{d}-c+p}}{(q^\mathrm{L}; q^{-1})_{\mathsf{D}_n - C_n + P_n}} & = (-1)^{\mathsf{d}-c + p - \mathsf{D}_n + C_n - P_n} q^{(\mathfrak{N}-\mathfrak{M})(\mathsf{d}-c+p) - \mathrm{L} (\mathsf{D}_n - C_n + P_n)} q^{\binom{\mathsf{D}_n - C_n + P_n}{2} - \binom{\mathsf{d}-c+p}{2}} \\
				& \qquad \times  \displaystyle\frac{(q^{\mathfrak{M}-\mathfrak{N}}; q)_{\mathsf{d}-c+p}}{(q^{-\mathrm{L}}; q)_{\mathsf{D}_n - C_n + P_n}};
			\end{aligned} 
		\end{flalign}
		
		\noindent and 
		\begin{flalign}
			\label{qnl4} 
			\begin{aligned}
				& \displaystyle\frac{(q^\mathrm{L}; q^{-1})_{\mathsf{B}_n + P_n}}{(q^\mathfrak{N}; q^{-1})_{\mathsf{b}+p}} = (-1)^{\mathsf{B}_n + P_n - b - p} q^{\mathrm{L} (\mathsf{B}_n + P_n) - \mathfrak{N}(\mathsf{b}+p)} q^{\binom{\mathsf{b}+p}{2} - \binom{\mathsf{B}_n + P_n}{2}} \displaystyle\frac{(q^{-\mathrm{L}}; q)_{\mathsf{B}_n + P_n}}{(q^{-\mathfrak{N}}; q)_{\mathsf{b}+p}}; \\
				& \displaystyle\frac{(q^{C_n}; q^{-1})_{P_n}}{(q^{\mathfrak{N}-\mathfrak{M}+c}; q^{-1})_p} = (-1)^{p+P_n} q^{P_n C_n + (\mathfrak{M}-\mathfrak{N}-c)p} q^{\binom{p}{2} - \binom{P_n}{2}} \displaystyle\frac{(q^{-C_n}; q)_{P_n}}{(q^{\mathfrak{M}-\mathfrak{N}-c}; q)_p}.
			\end{aligned}
		\end{flalign}
		
		\noindent Inserting \eqref{qnl2}, \eqref{qnl3}, and \eqref{qnl4} into \eqref{rqnl5} gives 
		\begin{flalign*}
			& U_{q^{\mathfrak{N}-\mathrm{L}+1}; q^{-\mathrm{L}/2}, q^{-\mathfrak{M}/2}} (\bm{A}, \bm{B}; \bm{C}, \bm{D}) \\
			& \quad  = (-1)^{\mathsf{b} + \mathsf{B}_n -\mathsf{d} - \mathsf{D}_n + c} q^{(\mathfrak{N}+1)(\mathsf{b}-\mathsf{d}) +\mathsf{d}\mathfrak{M} + \varphi (\overline{\bm{D}}, \bm{C})} q^{\binom{\mathsf{B}_n}{2} - \binom{\mathsf{D}_n}{2} + \mathrm{L} (\mathsf{D}_n - \mathsf{B}_n)} q^{\binom{\mathsf{d}+1}{2} - \binom{\mathsf{b}+1}{2} + c(\mathfrak{N}-\mathfrak{M}) + \binom{c+1}{2}} \\
			& \qquad \times \displaystyle\frac{(q; q)_{\mathsf{b}}}{(q; q)_{\mathsf{d}}} \displaystyle\frac{(q^{\mathfrak{M}-\mathfrak{N}-c}; q)_c}{(q; q)_{C_n}} \displaystyle\frac{(q^{-\mathrm{L}}; q)_{\mathsf{D}_n}}{(q^{-\mathrm{L}}; q)_{\mathsf{B}_n}} \displaystyle\frac{(q^{-\mathfrak{N}}; q)_\mathrm{L}}{(q^{\mathfrak{M}-\mathfrak{N}}; q)_\mathrm{L}} \displaystyle\prod_{j=1}^{n-1} \displaystyle\frac{(q; q)_{B_j}}{(q; q)_{D_j}} \\
			& \qquad \times \displaystyle\sum_{p=0}^{\min \{ \mathrm{L}-\mathsf{b}, c \}} \displaystyle\sum_{\bm{P} \le \bm{B}, \bm{C}} (-1)^{\mathsf{d}-c+p-\mathsf{D}_n-C_n+P_n} q^{(\mathfrak{N}+1)p} q^{(\mathfrak{N}-\mathfrak{M})(\mathsf{d}-c+p) - \mathrm{L} (\mathsf{D}_n - C_n + P_n)} \\
			& \qquad \qquad \times q^{\binom{\mathsf{D}_n - C_n + P_n}{2} - \binom{\mathsf{d}-c+p}{2}} (-1)^{\mathsf{B}_n + P_n - \mathsf{b} -p} q^{\mathrm{L}(\mathsf{B}_n+P_n) - \mathfrak{N}(\mathsf{b}+p)} q^{\binom{\mathsf{b}+p}{2} - \binom{\mathsf{B}_n+P_n}{2}} \\
			& \qquad \qquad \times \displaystyle\frac{(q^{-\mathfrak{N}-1}; q)_p (q^{\mathfrak{M}-\mathfrak{N}}; q)_{\mathsf{d}-c+p}}{(q^{\mathfrak{M}-\mathfrak{N}-c}; q)_p (q^{-\mathfrak{N}}; q)_{\mathsf{b}+p}} (-1)^{p + P_n} q^{P_n C_n + p(\mathfrak{M}-\mathfrak{N}-c)} q^{\binom{p}{2} - \binom{P_n}{2}} q^{\varphi (\overline{\bm{B}} - \overline{\bm{D}} - \overline{\bm{P}}, \bm{P})} \\ 
			& \qquad \qquad \times \displaystyle\frac{(q^{-\mathrm{L}}; q)_{\mathsf{B}_n + P_n} (q^{-C_n}; q)_{P_n}}{(q; q)_{P_n} (q^{-\mathrm{L}}; q)_{\mathsf{D}_n - C_n + P_n}} \displaystyle\prod_{j=1}^{n-1} \displaystyle\frac{(q; q)_{C_j + D_j - P_j}}{(q; q)_{C_j - P_j} (q; q)_{P_j} (q; q)_{B_j - P_j}}.
		\end{flalign*} 
		
		\noindent Before proceeding, we next simplify the powers of $-1$ and $q$ appearing on the right side. Doing so directly yields 
		\begin{flalign*}
			& U_{q^{\mathfrak{N}-\mathrm{L}+1}; q^{-\mathrm{L}/2}, q^{-\mathfrak{M}/2}} (\bm{A}, \bm{B}; \bm{C}, \bm{D}) \\
			& \quad  =  (-1)^{C_n} q^{\varphi (\overline{\bm{D}}, \bm{C})} q^{\mathrm{L}C_n + \binom{\mathsf{B}_n}{2} - \binom{\mathsf{D}_n}{2}} q^{\binom{\mathsf{d}+1}{2} - \binom{\mathsf{b}+1}{2} + \mathsf{b} - \mathsf{d} + \binom{c+1}{2}} \\
			& \qquad \times \displaystyle\frac{(q; q)_{\mathsf{b}}}{(q; q)_{\mathsf{d}}} \displaystyle\frac{(q^{\mathfrak{M}-\mathfrak{N}-c}; q)_c}{(q; q)_{C_n}} \displaystyle\frac{(q^{-\mathrm{L}}; q)_{\mathsf{D}_n}}{(q^{-\mathrm{L}}; q)_{\mathsf{B}_n}} \displaystyle\frac{(q^{-\mathfrak{N}}; q)_\mathrm{L}}{(q^{\mathfrak{M}-\mathfrak{N}}; q)_\mathrm{L}} \displaystyle\prod_{j=1}^{n-1} \displaystyle\frac{(q; q)_{B_j}}{(q; q)_{D_j}} \\
			& \qquad \times \displaystyle\sum_{p=0}^{\min \{ \mathrm{L} - \mathsf{b}, c \}} \displaystyle\sum_{\bm{P} \le \bm{B}, \bm{C}} (-1)^{p + P_n} q^{\binom{\mathsf{D}_n - C_n + P_n}{2} + P_n C_n - \binom{P_n}{2} - \binom{\mathsf{d}-c+p}{2} + \binom{p}{2} + p - pc} q^{\binom{\mathsf{b}+p}{2} - \binom{\mathsf{B}_n+P_n}{2}} \\
			& \qquad \qquad \qquad \qquad \quad  \times q^{\varphi (\overline{\bm{B}} - \overline{\bm{D}} - \overline{\bm{P}}, \bm{P})} \displaystyle\frac{(q^{-\mathfrak{N}-1}; q)_p (q^{\mathfrak{M}-\mathfrak{N}}; q)_{\mathsf{d}-c+p}}{(q^{\mathfrak{M}-\mathfrak{N}-c}; q)_p (q^{-\mathfrak{N}}; q)_{\mathsf{b}+p}} \displaystyle\frac{(q^{-\mathrm{L}}; q)_{\mathsf{B}_n + P_n} (q^{-C_n}; q)_{P_n}}{(q; q)_{P_n} (q^{-\mathrm{L}}; q)_{\mathsf{D}_n - C_n + P_n}} \\
			& \qquad \qquad \qquad \qquad \quad  \times \displaystyle\prod_{j=1}^{n-1} \displaystyle\frac{(q; q)_{C_j + D_j - P_j}}{(q; q)_{C_j - P_j} (q; q)_{P_j} (q; q)_{B_j - P_j}}.
		\end{flalign*} 
		
		\noindent Let us continue to simplify the power of $q$ above. Since 
		\begin{flalign*}
			& \binom{\mathsf{D}_n - C_n + P_n}{2} + P_n C_n - \binom{P_n}{2} = \binom{\mathsf{D}_n - C_n}{2} + P_n \mathsf{D}_n; \\ 
			& \binom{\mathsf{B}_n + P_n}{2} = \binom{\mathsf{B}_n}{2} + \binom{P_n}{2} + \mathsf{B}_n P_n; \\
			&  \binom{\mathsf{d}-c+p}{2} - \binom{p}{2} + pc = \binom{\mathsf{d}-c}{2} + p \mathsf{d}; \qquad \binom{\mathsf{b}+p}{2} = \binom{\mathsf{b}+1}{2} - \mathsf{b} + \binom{p}{2} + \mathsf{b} p, 
		\end{flalign*}
		
		\noindent it follows that
		\begin{flalign}
			\label{qrn5}
			\begin{aligned}
				& U_{q^{\mathfrak{N}-\mathrm{L}+1}; q^{-\mathrm{L}/2}, q^{-\mathfrak{M}/2}} (\bm{A}, \bm{B}; \bm{C}, \bm{D}) \\
				& \quad  =  (-1)^{C_n} q^{\varphi (\overline{\bm{D}}, \bm{C})} q^{\binom{\mathsf{D}_n - C_n}{2} - \binom{\mathsf{d}-c}{2} + \mathrm{L}C_n - \binom{\mathsf{D}_n}{2}} q^{\binom{\mathsf{d}+1}{2} - \mathsf{d} + \binom{c+1}{2}} \\
				& \qquad \times \displaystyle\frac{(q; q)_{\mathsf{b}}}{(q; q)_{\mathsf{d}}} \displaystyle\frac{(q^{\mathfrak{M}-\mathfrak{N}-c}; q)_c}{(q; q)_{C_n}} \displaystyle\frac{(q^{-\mathrm{L}}; q)_{\mathsf{D}_n}}{(q^{-\mathrm{L}}; q)_{\mathsf{B}_n}} \displaystyle\frac{(q^{-\mathfrak{N}}; q)_\mathrm{L}}{(q^{\mathfrak{M}-\mathfrak{N}}; q)_\mathrm{L}} \displaystyle\prod_{j=1}^{n-1} \displaystyle\frac{(q; q)_{B_j}}{(q; q)_{D_j}} \\
				& \qquad \times \displaystyle\sum_{p=0}^{\min \{ \mathrm{L} - \mathsf{b}, c \}} \displaystyle\sum_{\bm{P} \le \bm{B}, \bm{C}} (-1)^{p + P_n} q^{P_n (\mathsf{D}_n - \mathsf{B}_n) + p (\mathsf{b} - \mathsf{d} + 1) + \binom{p}{2} - \binom{P_n}{2}} \\
				& \qquad \qquad \qquad \qquad \quad  \times q^{\varphi (\overline{\bm{B}} - \overline{\bm{D}} - \overline{\bm{P}}, \bm{P})} \displaystyle\frac{(q^{-\mathfrak{N}-1}; q)_p (q^{\mathfrak{M}-\mathfrak{N}}; q)_{\mathsf{d}-c+p}}{(q^{\mathfrak{M}-\mathfrak{N}-c}; q)_p (q^{-\mathfrak{N}}; q)_{\mathsf{b}+p}} \displaystyle\frac{(q^{-\mathrm{L}}; q)_{\mathsf{B}_n + P_n} (q^{-C_n}; q)_{P_n}}{(q; q)_{P_n} (q^{-\mathrm{L}}; q)_{\mathsf{D}_n - C_n + P_n}} \\
				& \qquad \qquad \qquad \qquad \quad  \times \displaystyle\prod_{j=1}^{n-1} \displaystyle\frac{(q; q)_{C_j + D_j - P_j}}{(q; q)_{C_j - P_j} (q; q)_{P_j} (q; q)_{B_j - P_j}}.
			\end{aligned} 
		\end{flalign} 
		
		\noindent We additionally have by \eqref{bbdd} and the equality $p = \overline{p} + P_n$ that 
		\begin{flalign*} 
			& \binom{\mathsf{d}+1}{2} - \mathsf{d} = \binom{\mathsf{d}}{2}; \qquad \varphi (\overline{\bm{B}} - \overline{\bm{D}} - \overline{\bm{P}}, \bm{P}) = \varphi (\overline{\bm{B}} + \overline{\bm{D}} - \overline{\bm{P}}, \overline{\bm{P}}) + (\overline{b} - \overline{d} - \overline{p}) P_n; \\ 
			& \binom{p}{2} - \binom{P_n}{2} - \overline{p} P_n = \binom{\overline{p}}{2}; \qquad p (\mathsf{b}-\mathsf{d}+1) = \overline{p} (\mathsf{b} - \mathsf{d} + 1) + P_n (\mathsf{b} - \mathsf{d}) + P_n; \\
			& P_n (\mathsf{b} - \mathsf{d} + \mathsf{D}_n - \mathsf{B}_n + \overline{b} - \overline{d}) = 0; \qquad \binom{\mathsf{D}_n-C_n}{2} - \binom{\mathsf{D}_n}{2} = \binom{C_n+1}{2} - C_n \mathsf{D}_n; \\
			& \varphi (\overline{\bm{D}}, \bm{C}) = \varphi (\overline{\bm{D}}, \overline{\bm{C}}) + C_n \overline{d}; \qquad \binom{c+1}{2} + \binom{\mathsf{d}}{2} - \binom{\mathsf{d}-c}{2} = c \mathsf{d} = \overline{c} \mathsf{d} +C_n (\mathsf{D}_n-\overline{d}).
		\end{flalign*} 
		
		\noindent Together with \eqref{bbdd} and \eqref{qrn5}, these give
		\begin{flalign*}
			U_{q^{\mathfrak{N}-\mathrm{L}+1}; q^{-\mathrm{L}/2}, q^{-\mathfrak{M}/2}} (\bm{A}, \bm{B}; \bm{C}, \bm{D}) 
			& = (-1)^{C_n} q^{\varphi (\overline{\bm{D}}, \overline{\bm{C}}) + \binom{C_n+1}{2} + \overline{c} \mathsf{d} + \mathrm{L}C_n} \\
			& \qquad \times \displaystyle\frac{(q; q)_{\mathsf{b}}}{(q; q)_{\mathsf{d}}} \displaystyle\frac{(q^{\mathfrak{M}-\mathfrak{N}-c}; q)_c}{(q; q)_{C_n}} \displaystyle\frac{(q^{-\mathrm{L}}; q)_{\mathsf{D}_n}}{(q^{-\mathrm{L}}; q)_{\mathsf{B}_n}} \displaystyle\frac{(q^{-\mathfrak{N}}; q)_\mathrm{L}}{(q^{\mathfrak{M}-\mathfrak{N}}; q)_\mathrm{L}} \displaystyle\prod_{j=1}^{n-1} \displaystyle\frac{(q; q)_{B_j}}{(q; q)_{D_j}} \\
			& \qquad \times \displaystyle\sum_{p=0}^{\min \{ \mathrm{L} - \mathsf{b}, c \}} \displaystyle\sum_{\bm{P} \le \bm{B}, \bm{C}}(-1)^{\overline{p}} q^{\overline{p} (\mathsf{b}-\mathsf{d}+1) + P_n + \binom{\overline{p}}{2}} q^{\varphi (\overline{\bm{B}} - \overline{\bm{D}} - \overline{\bm{P}}, \overline{\bm{P}})}\\
			& \qquad \qquad  \times \displaystyle\frac{(q^{\mathfrak{M}-\mathfrak{N}}; q)_{\mathsf{d}-c+p}}{(q^{\mathfrak{M}-\mathfrak{N}-c}; q)_p} \displaystyle\frac{(q^{-\mathrm{L}}; q)_{\mathsf{B}_n + P_n} (q^{-C_n}; q)_{P_n}}{(q; q)_{P_n} (q^{-\mathrm{L}}; q)_{\mathsf{D}_n - C_n + P_n}} \\
			& \qquad \qquad  \times  \displaystyle\frac{(q^{-\mathfrak{N}-1}; q)_p}{(q^{-\mathfrak{N}}; q)_{\mathsf{b}+p}} \displaystyle\prod_{j=1}^{n-1} \displaystyle\frac{(q; q)_{C_j + D_j - P_j}}{(q; q)_{C_j - P_j} (q; q)_{P_j} (q; q)_{B_j - P_j}}.
		\end{flalign*} 
		
		\noindent Next, observe by \eqref{bbdd} and the equality $p = \overline{p} + P_n$ that  
		\begin{flalign*}
			& (q^{\mathfrak{M}-\mathfrak{N}-c}; q)_c (q^{\mathfrak{M}-\mathfrak{N}}; q)_{\mathsf{d}-c+p}  = (q^{\mathfrak{M}-\mathfrak{N}-c}; q)_{\mathsf{d}} (q^{\mathfrak{M} - \mathfrak{N} + \mathsf{d} - c}; q)_{\overline{p}} (q^{\mathfrak{M} - \mathfrak{N} + \mathsf{d} - c + \overline{p}}; q)_{P_n}; \\
			& (q^{\mathfrak{M}-\mathfrak{N}-c}; q)_p = (q^{\mathfrak{M}-\mathfrak{N}-c}; q)_{\overline{p}} (q^{\mathfrak{M}-\mathfrak{N}-c+\overline{p}}; q)_{P_n}; \quad \displaystyle\frac{(q^{-\mathrm{L}}; q)_{\mathsf{D}_n}}{(q^{-\mathrm{L}}; q)_{\mathsf{D}_n - C_n + P_n}} = \displaystyle\frac{(q^{\mathsf{D}_n - C_n - \mathrm{L}}; q)_{C_n}}{(q^{\mathsf{D}_n - C_n - \mathrm{L}}; q)_{P_n}}; \\
			& \displaystyle\frac{(q^{-\mathrm{L}}; q)_{\mathsf{B}_n + P_n}}{(q^{-\mathrm{L}}; q)_{\mathsf{B}_n}} = (q^{\mathsf{B}_n - \mathrm{L}}; q)_{P_n}; \quad \displaystyle\frac{(q^{-\mathfrak{N}-1}; q)_p}{(q^{-\mathfrak{N}}; q)_{\mathsf{b}+p}} = \displaystyle\frac{(q^{-\mathfrak{N}-1}; q)_{\overline{p}} (q^{\overline{p}-\mathfrak{N}-1}; q)_{P_n}}{(q^{-\mathfrak{N}}; q)_{\mathsf{b}} (q^{\mathsf{b}-\mathfrak{N}}; q)_{\overline{p}} (q^{\mathsf{b} - \mathfrak{N} + \overline{p}}; q)_{P_n}},
		\end{flalign*}
		
		\noindent from which it follows that 
		\begin{flalign*}
			& U_{q^{\mathfrak{N}-\mathrm{L}+1}; q^{-\mathrm{L}/2}, q^{-\mathfrak{M}/2}} (\bm{A}, \bm{B}; \bm{C}, \bm{D}) \\
			& \quad= (-1)^{C_n} q^{\varphi (\overline{\bm{D}}, \overline{\bm{C}}) + \binom{C_n+1}{2} + \overline{c} \mathsf{d} + \mathrm{L}C_n} \displaystyle\frac{(q; q)_{\mathsf{b}}}{(q; q)_{\mathsf{d}}} \displaystyle\frac{(q^{\mathfrak{M}-\mathfrak{N}-c}; q)_{\mathsf{d}}}{(q; q)_{C_n}} \displaystyle\frac{(q^{\mathsf{D}_n - C_n -\mathrm{L}}; q)_{C_n}}{(q^{-\mathfrak{N}}; q)_{\mathsf{b}}} \displaystyle\frac{(q^{-\mathfrak{N}}; q)_\mathrm{L}}{(q^{\mathfrak{M}-\mathfrak{N}}; q)_\mathrm{L}} \displaystyle\prod_{j=1}^{n-1} \displaystyle\frac{(q; q)_{B_j}}{(q; q)_{D_j}} \\
			& \qquad \times \displaystyle\sum_{\overline{\bm{P}} \le \overline{\bm{B}}, \overline{\bm{C}}} (-1)^{\overline{p}} q^{\varphi (\overline{\bm{B}} - \overline{\bm{D}} - \overline{\bm{P}}, \overline{\bm{P}})} \displaystyle\frac{(q^{-\mathfrak{N}-1}; q)_{\overline{p}} (q^{\mathfrak{M}-\mathfrak{N}+\mathsf{d}-c}; q)_{\overline{p}}}{(q^{\mathfrak{M}-\mathfrak{N}-c}; q)_{\overline{p}} (q^{\mathsf{b}-\mathfrak{N}}; q)_{\overline{p}}} \displaystyle\prod_{j=1}^{n-1} \displaystyle\frac{(q; q)_{C_j + D_j - P_j}}{(q; q)_{C_j - P_j} (q; q)_{P_j} (q; q)_{B_j - P_j}} \\
			& \qquad \quad \times q^{\overline{p} (\mathsf{b}-\mathsf{d}+1) + \binom{\overline{p}}{2}} \displaystyle\sum_{P_n = 0}^{C_n} q^{P_n} \displaystyle\frac{(q^{\overline{p} - \mathfrak{N} - 1}; q)_{P_n}}{(q^{\mathfrak{M}-\mathfrak{N}-c+\overline{p}}; q)_{P_n}} \displaystyle\frac{(q^{\mathfrak{M}-\mathfrak{N}+\mathsf{d}-c+\overline{p}}; q)_{P_n}}{(q^{\mathsf{b}-\mathfrak{N}+\overline{p}}; q)_{P_n}}	\displaystyle\frac{(q^{\mathsf{B}_n-\mathrm{L}}; q)_{P_n} (q^{-C_n}; q)_{P_n}}{(q; q)_{P_n} (q^{\mathsf{D}_n - C_n - \mathrm{L}}; q)_{P_n}},
		\end{flalign*} 
		
		\noindent where we used the fact that any summands on the right side with $P_n > B_n = \mathrm{L} - \mathsf{B}_n$ are equal to $0$, due to the factor of $(q^{\mathsf{B}_n - \mathrm{L}}; q)_{P_n}$. This, with $(q^{-\mathfrak{N}}; q)_\mathrm{L} = (q^{-\mathfrak{N}}; q)_{\infty} (q^{\mathrm{L} - \mathfrak{N}}; q)_{\infty}^{-1}$, $(q^{\mathfrak{M}-\mathfrak{N}}; q)_\mathrm{L}^{-1} = (q^{\mathrm{L} + \mathfrak{M} - \mathfrak{N}}; q)_{\infty} (q^{\mathfrak{M} - \mathfrak{N}}; q)_{\infty}^{-1}$, and \eqref{43abcd}, yields the lemma.
	\end{proof}

	\subsection{Degenerations of the Fused Complemented Weights} 
	
	\label{LimitU}  
	
	We next proceed to take limits of the weights from \Cref{2u} that will eventually lead us to the log-gamma polymer. The first is to let $q^{\mathfrak{M}}$ and $q^{\mathfrak{N}}$ tend to infinity, in such a way that ($\mathfrak{L}$ and) $q^{\mathfrak{M} - \mathfrak{N}} = q^{-\mathfrak{L}} \gamma$ (for some constant $\gamma \in \mathbb{C}$) remains fixed.

	\begin{lem} 
		
		\label{ufusedqmqn} 
		
		Adopting the notation of \Cref{rqnl1}, we have for any complex number $\gamma \in \mathbb{C}$ that 
		\begin{flalign}
			\label{qmnlimit0} 
			\begin{aligned}
				& \displaystyle\lim_{q^{\mathfrak{M}} \rightarrow \infty} \mathcal{U}_{q^{\mathfrak{L}}; q^{\mathfrak{M}}; q^{\mathfrak{M}+\mathfrak{L}} / \gamma} \big(\bm{A}, (\overline{\bm{B}}, \mathsf{B}_n); \bm{C}, (\overline{\bm{D}}, \mathsf{D}_n) \big) \\
				& \quad= (-1)^{C_n} q^{\varphi (\overline{\bm{D}}, \overline{\bm{C}}) + \binom{C_n+1}{2} + \overline{c} \mathsf{d} + \mathfrak{L}C_n} \cdot \displaystyle\prod_{j=1}^{n-1} \displaystyle\frac{(q;q)_{B_j}}{(q;q)_{D_j}} \cdot \mathbbm{1}_{\overline{\bm{A}} + \overline{\bm{B}} = \overline{\bm{C}} + \overline{\bm{D}}} \cdot \mathbbm{1}_{\mathsf{b} - a = \mathsf{d} - c}\\
				& \qquad \times \displaystyle\frac{(q; q)_{\mathsf{b}}}{(q; q)_{\mathsf{d} - C_n}} \displaystyle\frac{(q^{-c-\mathfrak{L}} \gamma; q)_{\mathsf{d}}}{(q; q)_{C_n}} \displaystyle\frac{(q^{\mathsf{B}_n -\mathfrak{L}}; q)_{C_n} (\gamma; q)_{\infty}}{(q^{\mathfrak{L} + c - C_n - \overline{p} + 1} \gamma^{-1}; q)_{C_n} (q^{-\mathfrak{L}} \gamma; q)_{\infty}} \\
				& \qquad \times \displaystyle\sum_{\overline{\bm{P}} \le \overline{\bm{B}}, \overline{\bm{C}}} (-1)^{\overline{p}} q^{\varphi (\overline{\bm{B}} - \overline{\bm{D}} - \overline{\bm{P}}, \overline{\bm{P}})} \displaystyle\frac{(q^{\mathsf{d}-c-\mathfrak{L}} \gamma; q)_{\overline{p}}}{(q^{-\mathfrak{L}-c} \gamma; q)_{\overline{p}}} \displaystyle\prod_{j=1}^{n-1} \displaystyle\frac{(q; q)_{C_j + D_j - P_j}}{(q; q)_{C_j - P_j} (q; q)_{P_j} (q; q)_{B_j - P_j}} \\
				& \qquad \qquad \quad \times q^{\overline{p} (\mathsf{b}-\mathsf{d}+1) + \binom{\overline{p}}{2}} \cdot  {_3 \varphi_2} \bigg( \begin{array}{cc} q^{-C_n}; q^{-A_n}, q^{\mathfrak{L} - C_n + c - \overline{p} + 1} \gamma^{-1} \\ q^{\mathsf{d}-C_n+1}, q^{\mathfrak{L} - \mathsf{B}_n - C_n + 1} \end{array} \bigg| q, q \bigg).
			\end{aligned} 	
		\end{flalign} 
		
	\end{lem}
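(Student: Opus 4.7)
The plan is a direct computation starting from Definition \ref{2u}. First I would substitute $q^{\mathfrak{N}} = q^{\mathfrak{M}+\mathfrak{L}}\gamma^{-1}$ (so $q^{\mathfrak{M}-\mathfrak{N}} = q^{-\mathfrak{L}}\gamma$ is held fixed) and then analyze how each $q$-Pochhammer symbol behaves as $q^{\mathfrak{M}} \to \infty$, equivalently $\mathfrak{M} \to -\infty$ since $q \in (0, 1)$. The quantities that stay finite in this limit are those whose arguments depend only on $q^{\mathfrak{M} - \mathfrak{N}} = q^{-\mathfrak{L}}\gamma$, such as $(q^{\mathfrak{M}-\mathfrak{N}-c}; q)_{\mathsf{d}} = (q^{-c-\mathfrak{L}}\gamma; q)_{\mathsf{d}}$, $(q^{\mathfrak{L}+\mathfrak{M}-\mathfrak{N}}; q)_{\infty} = (\gamma; q)_{\infty}$, and $(q^{\mathfrak{M}-\mathfrak{N}}; q)_{\infty} = (q^{-\mathfrak{L}}\gamma; q)_{\infty}$. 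In contrast, quantities of the form $(q^{-\mathfrak{N}}; q)_{k}$, $(q^{-\mathfrak{N}-1}; q)_{\overline{p}}$, $(q^{\mathsf{b}-\mathfrak{N}}; q)_{\overline{p}}$, and the infinite products $(q^{-\mathfrak{N}}; q)_{\infty}$ and $(q^{\mathfrak{L}-\mathfrak{N}}; q)_{\infty}$ all tend to $1$, since their arguments tend to $0$. These elementary limits account for every prefactor outside the ${_4\varphi_3}$ in Definition \ref{2u}, and directly produce the prefactor structure of \eqref{qmnlimit0} up to the replacement of the ${_4\varphi_3}$ by a ${_3\varphi_2}$ of the claimed form.

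The heart of the argument is the degeneration of the ${_4\varphi_3}$ series. In the limit $q^{\mathfrak{M}} \to \infty$, the numerator parameter $q^{\overline{p} - \mathfrak{N} + 1}$ and the denominator parameter $q^{\mathsf{b} - \mathfrak{N} + \overline{p}}$ of the ${_4\varphi_3}$ both tend to $0$, so that a naive termwise limit produces a ${_3\varphi_2}$ whose parameters do not immediately match those stated. The reconciliation is to apply a Sears-type transformation of the balanced ${_4\varphi_3}$ before taking the limit; the balance condition $a_1 a_2 a_3 q^{1-n} = b_1 b_2 b_3$ is only satisfied asymptotically, but the transformation has a well-defined limit as one argument in each row tends to $0$. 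The transformation generates a prefactor of the form $(q^{\mathsf{B}_n - \mathfrak{L}}; q)_{C_n} / (q^{\mathfrak{L} + c - C_n - \overline{p} + 1}\gamma^{-1}; q)_{C_n}$ in exactly the shape appearing in the right side of \eqref{qmnlimit0}, and converts the surviving hypergeometric expression into the stated ${_3\varphi_2}$ with numerator parameters $(q^{-C_n}; q^{-A_n}, q^{\mathfrak{L} - C_n + c - \overline{p} + 1}\gamma^{-1})$ and denominator parameters $(q^{\mathsf{d} - C_n + 1}, q^{\mathfrak{L} - \mathsf{B}_n - C_n + 1})$.

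The main obstacle will be this last step: identifying the precise Sears (or Watson) transformation that converts the ${_4\varphi_3}$ of \Cref{2u} into a form whose termwise limit matches \eqref{qmnlimit0}, and then keeping careful track of the prefactor $q$-Pochhammer ratios that it generates. Once the correct transformation is in place, matching it against the other prefactors inherited from Definition \ref{2u} reduces to elementary identities such as $(q^{-k}u; q)_n = (-u)^n q^{-kn + \binom{n}{2}} (q^{1-n+k} u^{-1}; q)_n$ and the reflection $(u; q^{-1})_n = (-u)^n q^{-\binom{n}{2}} (u^{-1}; q)_n$. These identities, combined with the arrow-conservation identities $A_n - C_n = \mathsf{B}_n - \mathsf{D}_n$ and $\mathsf{b} - a = \mathsf{d} - c$ enforced by the indicator $\mathbbm{1}_{\overline{\bm{A}} + \overline{\bm{B}} = \overline{\bm{C}} + \overline{\bm{D}}} \cdot \mathbbm{1}_{\mathsf{b}-a=\mathsf{d}-c}$, convert all surviving Pochhammer factors into the form required by the right-hand side of \eqref{qmnlimit0}.
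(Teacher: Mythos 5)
Your prefactor analysis is correct and essentially identical to the paper: the Pochhammer symbols whose exponent depends only on $\mathfrak{M}-\mathfrak{N}$ survive, while $(q^{-\mathfrak{N}};q)_{\mathsf{b}}$, $(q^{-\mathfrak{N}-1};q)_{\overline{p}}$, $(q^{\mathsf{b}-\mathfrak{N}};q)_{\overline{p}}$, $(q^{-\mathfrak{N}};q)_\infty$, and $(q^{\mathfrak{L}-\mathfrak{N}};q)_\infty$ all tend to $1$, and the numerator and denominator parameters $q^{\overline{p}-\mathfrak{N}+1}$ and $q^{\mathsf{b}-\mathfrak{N}+\overline{p}}$ of the ${_4\varphi_3}$ vanish, leaving a ${_3\varphi_2}$.

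The gap is in your treatment of the ${_4\varphi_3}$. You propose applying the Sears transformation to the ${_4\varphi_3}$ of \Cref{2u} before passing to the limit, noting the balance condition ``is only satisfied asymptotically.'' But this is not the case: comparing $q^{1-C_n}\cdot q^{\mathsf{B}_n-\mathfrak{L}}\cdot q^{\overline{p}-\mathfrak{N}+1}\cdot q^{\mathfrak{M}-\mathfrak{N}+\mathsf{d}-c+\overline{p}}$ with $q^{\mathfrak{M}-\mathfrak{N}-c+\overline{p}}\cdot q^{\mathsf{b}-\mathfrak{N}+\overline{p}}\cdot q^{\mathsf{D}_n-C_n-\mathfrak{L}}$, all of the $\mathfrak{M},\mathfrak{N}$-dependence cancels and (using $\mathsf{b}=\mathsf{B}_n-\overline{b}$, $\mathsf{d}=\mathsf{D}_n-\overline{d}$) the imbalance factor is $q^{2+\overline{b}-\overline{d}}$. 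This is a fixed quantity, not something that tends to $1$, so the ${_4\varphi_3}$ is genuinely not balanced, and the balanced Sears identity never applies to it — not before the limit and not ``asymptotically.'' The paper's route, which is the one you need here, is to first take the termwise limit (yielding a ${_3\varphi_2}$) and then apply a \emph{separately derived} ${_3\varphi_2}$ identity: one takes the balanced Sears identity for a generic ${_4\varphi_3}$ and sends two parameters $A,E\to 0$ while keeping $A/E=q^{m-1}DF/(BC)$ fixed so that balance is preserved along the limit; the resulting ${_3\varphi_2}$ identity involves only $B,C,D,F$ and carries no balance restriction, and it can then be applied unconditionally to the ${_3\varphi_2}$ you obtain from the termwise limit. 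Your plan conflates the non-balanced ${_4\varphi_3}$ of \Cref{2u} with the auxiliary balanced ${_4\varphi_3}$ used only to produce this ${_3\varphi_2}$ transformation; separating the two is what makes the argument go through.
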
 
	
	\begin{proof} 
		
		We assume throughout this proof that $\overline{\bm{A}} + \overline{\bm{B}} = \overline{\bm{C}} + \overline{\bm{D}}$ and $\mathsf{b} - a = \mathsf{d} - c$, for otherwise both sides of \eqref{qmnlimit0} are equal to $0$ by \Cref{rqnl1}. Then letting $q^\mathfrak{N}$ and $q^\mathfrak{M}$ tend to $\infty$, while keeping $q^{\mathfrak{M}-\mathfrak{N}} = q^{-\mathfrak{L}} \gamma$ fixed, in \Cref{rqnl1} yields 
		\begin{flalign} 
			\label{qrn6} 
			\begin{aligned}
				& \displaystyle\lim_{q^{\mathfrak{M}} \rightarrow \infty} \mathcal{U}_{q^{\mathfrak{L}}; q^{\mathfrak{M}}; q^{\mathfrak{M} + \mathfrak{L}} / \gamma} \big(\bm{A}, (\overline{\bm{B}}, \mathsf{B}_n); \bm{C}, (\overline{\bm{D}}, \mathsf{D}_n) \big) \\
				& \quad= (-1)^{C_n} q^{\varphi (\overline{\bm{D}}, \overline{\bm{C}}) + \binom{C_n+1}{2} + \overline{c} \mathsf{d} + \mathfrak{L}C_n} \displaystyle\frac{(q; q)_{\mathsf{b}}}{(q; q)_{\mathsf{d}}} \displaystyle\frac{(q^{-c-\mathfrak{L}} \gamma; q)_{\mathsf{d}}}{(q; q)_{C_n}} \displaystyle\frac{(q^{\mathsf{D}_n - C_n -\mathfrak{L}}; q)_{C_n} (\gamma; q)_{\infty}}{(q^{-\mathfrak{L}} \gamma; q)_{\infty}} \displaystyle\prod_{j=1}^{n-1} \displaystyle\frac{(q; q)_{B_j}}{(q; q)_{D_j}} \\
				& \qquad \times \displaystyle\sum_{\overline{\bm{P}} \le \overline{\bm{B}}, \overline{\bm{C}}} (-1)^{\overline{p}} q^{\varphi (\overline{\bm{B}} - \overline{\bm{D}} - \overline{\bm{P}}, \overline{\bm{P}})} \displaystyle\frac{(q^{\mathsf{d}-c-\mathfrak{L}} \gamma; q)_{\overline{p}}}{(q^{-\mathfrak{L}-c} \gamma; q)_{\overline{p}}} \displaystyle\prod_{j=1}^{n-1} \displaystyle\frac{(q; q)_{C_j + D_j - P_j}}{(q; q)_{C_j - P_j} (q; q)_{P_j} (q; q)_{B_j - P_j}} \\
				& \qquad \qquad \quad \times q^{\overline{p} (\mathsf{b}-\mathsf{d}+1) + \binom{\overline{p}}{2}} \cdot {_3 \varphi_2} \bigg( \begin{array}{cc} q^{-C_n}; q^{\mathsf{B}_n - \mathfrak{L}}, q^{\mathsf{d}-c+\overline{p}-\mathfrak{L}} \gamma \\ q^{\overline{p} - \mathfrak{L} - c} \gamma, q^{\mathsf{D}_n - C_n - \mathfrak{L}} \end{array} \bigg| q, q \bigg).
			\end{aligned}
		\end{flalign} 
		
		Next recall from the Sears identity  \cite[Equation (3.2.9)]{BHS} that, for any integer $m \ge 0$ and complex numbers $A, B, C, D, E, F \in \mathbb{C}$ with $q^{1-m} ABC = DEF$, we have 
		\begin{flalign*}
			{_4 \varphi_3} \bigg( \begin{array}{cccc} q^{-m}; A, B, C \\ D, E, F \end{array} \bigg| q, q \bigg) = \displaystyle\frac{(B; q)_m \big( \frac{DE}{AB}; q \big)_m \big( \frac{DE}{BC}; q \big)_m}{(D; q)_m (E; q)_m \big( \frac{DE}{ABC}; q \big)_m} \cdot {_4 \varphi_3} \bigg( \begin{array}{cccc} q^{-m}; \frac{D}{B}, \frac{E}{B}, \frac{DE}{ABC} \\ \frac{DE}{AB}, \frac{DE}{BC}, q^{1-m} B^{-1} \end{array} \bigg| q, q \bigg).
		\end{flalign*}
		
		\noindent Letting $A$ and $E$ tend to $0$ in such a way that $\frac{A}{E} = q^{m-1} \frac{DF}{BC}$ remains fixed, we deduce (under no restrictions on $B, C, D, F \in \mathbb{C}$) that
		\begin{flalign*}
			{_3 \varphi_2} \bigg( \begin{array}{cccc} q^{-m}; B, C \\ D, F \end{array} \bigg| q, q \bigg) = \displaystyle\frac{(B; q)_m (q^{1-m} C F^{-1}; q)_m}{(D; q)_m (q^{1-m} F^{-1}; q)_m} \cdot {_3 \varphi_2} \bigg( \begin{array}{cccc} q^{-m}; \frac{D}{B}, q^{1-m} F^{-1} \\ q^{1-m} C F^{-1}, q^{1-m} B^{-1} \end{array} \bigg| q, q \bigg).
		\end{flalign*}
		
		\noindent Taking $m = C_n$ and $(B, C; D, F) = (q^{\mathsf{B}_n - \mathfrak{L}}, q^{\mathsf{d} - c + \overline{p} - \mathfrak{L}} \gamma; q^{\mathsf{D}_n - C_n - \mathfrak{L}}, q^{\overline{p} - \mathfrak{L} - c} \gamma)$ (and using the fact that $\mathsf{B}_n - A_n = \mathsf{D}_n - C_n$, by \eqref{bbdd} and the equalities $\mathsf{b} - a = \mathsf{d} - c$ and $\overline{\bm{A}} + \overline{\bm{B}} = \overline{\bm{C}} + \overline{\bm{D}}$), we obtain   
		\begin{flalign*}
			{_3 \varphi_2} \bigg( \begin{array}{cc} q^{-C_n}; q^{\mathsf{B}_n - \mathfrak{L}}, q^{\mathsf{d}-c+\overline{p}-\mathfrak{L}} \gamma \\ q^{\overline{p}-\mathfrak{L}-c} \gamma, q^{\mathsf{D}_n - C_n - \mathfrak{L}} \end{array} \bigg| q, q \bigg) & = \displaystyle\frac{(q^{\mathsf{B}_n - \mathfrak{L}}; q)_{C_n} (q^{\mathsf{d} - C_n + 1}; q)_{C_n}}{(q^{\mathsf{D}_n - C_n - \mathfrak{L}}; q)_{C_n} (q^{\mathfrak{L}+c-C_n - \overline{p}+1} \gamma^{-1}; q)_{C_n}} \\
			& \qquad \times  {_3 \varphi_2} \bigg( \begin{array}{cc} q^{-C_n}; q^{-A_n}, q^{\mathfrak{L}-C_n + c - \overline{p} + 1} \gamma^{-1} \\ q^{\mathsf{d}-C_n + 1}, q^{\mathfrak{L}-C_n - \mathsf{B}_n + 1}  \end{array} \bigg| q, q \bigg).
		\end{flalign*} 
		
		\noindent Inserting this into \eqref{qrn6}, and using the fact that $(q^{\mathsf{d} - C_n + 1}; q)_{C_n} (q; q)_{\mathsf{d}}^{-1} = (q; q)_{\mathsf{d} - C_n}^{-1}$, gives the lemma.
	\end{proof}
	
	We next take the further limit in \Cref{ufusedqmqn} as $q^{\mathfrak{L}}$ tends to $\infty$. The below definition provides this limit; see the lemma that follows.
	
	\begin{definition} 
		
		\label{ugamma}
		
		Adopting the notation of \Cref{2u}, define for any $\gamma \in \mathbb{C}$ the \emph{$q$-discrete polymer weight} $\mathcal{U}_{\gamma}^{\qdp; n} \big( \bm{A}, (\overline{\bm{B}}, \mathsf{B}_n); \bm{C}, (\overline{\bm{D}}, \mathsf{D}_n) \big)= \mathcal{U}_{\gamma}^{\qdp} \big( \bm{A}, (\overline{\bm{B}}, \mathsf{B}_n); \bm{C}, (\overline{\bm{D}}, \mathsf{D}_n) \big)$ by
		\begin{flalign}
			\label{uqdp}
			\begin{aligned} 
				\mathcal{U}_{\gamma}^{\qdp} & \big(\bm{A}, (\overline{\bm{B}}, \mathsf{B}_n); \bm{C}, (\overline{\bm{D}}, \mathsf{D}_n) \big) \\
				& = q^{\varphi (\overline{\bm{D}}, \overline{\bm{C}}) + \overline{c} ( \mathsf{B}_n - A_n - \overline{d})} (\gamma; q)_{\infty} \displaystyle\frac{(q;q)_{\mathsf{b}}}{(q;q)_{\mathsf{B}_n - A_n - \overline{d}}} \cdot \mathbbm{1}_{\overline{\bm{A}} + \overline{\bm{B}} = \overline{\bm{C}} + \overline{\bm{D}}} \cdot \mathbbm{1}_{\mathsf{b} - a = \mathsf{d} - c} \cdot \displaystyle\prod_{j=1}^{n-1} \displaystyle\frac{(q;q)_{C_j+D_j}}{(q;q)_{C_j} (q;q)_{D_j}} \\
				& \qquad \times \displaystyle\sum_{k=0}^{C_n} \displaystyle\frac{\gamma^{C_n - k}}{(q; q)_{C_n-k}} \displaystyle\frac{(q^{A_n-k+1}; q)_k}{(q; q)_k (q^{\mathsf{B}_n - A_n - \overline{d}+1}; q)_k} q^{k(\mathsf{B}_n - A_n + \overline{c} +k)}\\
				& \qquad \quad \times \displaystyle\sum_{\overline{\bm{P}} \le \overline{\bm{B}}, \overline{\bm{C}}} q^{\overline{p} (A_n - k + 1) + \varphi (\overline{\bm{P}}, \overline{\bm{D}} - \overline{\bm{B}})} \displaystyle\prod_{j=1}^{n-1} \displaystyle\frac{(q^{-B_j}; q)_{P_j} (q^{-C_j}; q)_{P_j}}{(q;q)_{P_j} (q^{-C_j - D_j}; q)_{P_j}}.
			\end{aligned}
		\end{flalign}
		
	\end{definition}

	\begin{rem}
		
		\label{n1qdp}
		
		If $n = 1$, then the $\mathcal{U}^{\qdp}$ weights from \Cref{ugamma} coincide with those of the geometric $q$-PushTASEP introduced in \cite[Section 6.3]{RCRP}, which degenerates to the log-gamma polymer \cite[Theorem 8.7]{RCRP}. This is the reason behind the term, ``$q$-discrete polymer weight'' in \Cref{ugamma}. 
		
	\end{rem}

	\begin{lem}
		
		\label{ufusedql} 
		
		Under the notation of \Cref{ufusedqmqn}, we have
		\begin{flalign*}
			\displaystyle\lim_{q^{\mathfrak{L}} \rightarrow \infty} \bigg( \displaystyle\lim_{q^{\mathfrak{M}} \rightarrow \infty} \mathcal{U}_{q^{\mathfrak{L}}; q^{\mathfrak{M}}; q^{\mathfrak{M} + \mathfrak{L}} / \gamma} \big( \bm{A}, (\overline{\bm{B}}, \mathsf{B}_n); \bm{C}, (\overline{\bm{D}}, \mathsf{D}_n) \big) \bigg) = \mathcal{U}_{\gamma}^{\qdp} \big( \bm{A}, (\overline{\bm{B}}, \mathsf{B}_n); \bm{C}, (\overline{\bm{D}}, \mathsf{D}_n) \big). 
		\end{flalign*} 
		
	\end{lem}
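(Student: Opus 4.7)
The plan is to perform a direct limit computation in the formula from \Cref{ufusedqmqn}, treating $Q = q^{\mathfrak{L}}$ as a parameter and letting $Q \to \infty$ with $q \in (0,1)$ and $\gamma$ held fixed. First, I would catalogue the $q$-Pochhammer factors in \Cref{ufusedqmqn} that become trivial: $(Q^{-1}\gamma; q)_\infty \to 1$, $(Q^{-1} q^{\mathsf{B}_n}; q)_{C_n} \to 1$, $(Q^{-1}q^{-c}\gamma; q)_{\mathsf{d}} \to 1$, and the ratio $(Q^{-1}q^{\mathsf{d}-c}\gamma; q)_{\overline{p}}\big/(Q^{-1}q^{-c}\gamma; q)_{\overline{p}} \to 1$. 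Then I would extract leading-order asymptotics from the remaining $\mathfrak{L}$-dependent factors: the prefactor piece $(Qq^{c-C_n-\overline{p}+1}\gamma^{-1}; q)_{C_n}^{-1}$ behaves as $(-1)^{C_n} Q^{-C_n} q^{-C_n(c-C_n-\overline{p}+1)-\binom{C_n}{2}} \gamma^{C_n}$, and inside the ${_3\varphi_2}$, the ratio $(Qq^{c-C_n-\overline{p}+1}\gamma^{-1}; q)_k\big/(Q q^{-\mathsf{B}_n-C_n+1}; q)_k$ converges to $q^{k(c-\overline{p}+\mathsf{B}_n)}\gamma^{-k}$. The $q^{\mathfrak{L}C_n}=Q^{C_n}$ prefactor exactly cancels the divergent $Q^{-C_n}$, and the two sign factors $(-1)^{C_n}$ combine with the leading $(-1)^{C_n}$ of \Cref{ufusedqmqn} to yield $+1$, producing a finite limit containing a global $\gamma^{C_n}$.

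Next I would convert the remaining double sum into the form of \Cref{ugamma}. Using $(q^{-m}; q)_k = (-1)^k q^{\binom{k}{2}-mk}(q;q)_m/(q;q)_{m-k}$ for $m \in \{A_n, C_n\}$ and $(q^{\mathsf{d}-C_n+1}; q)_k = (q;q)_{\mathsf{d}-C_n+k}/(q;q)_{\mathsf{d}-C_n}$, together with the identity $\mathsf{d}-C_n = \mathsf{B}_n-A_n-\overline{d}$ (which follows from $\mathsf{B}_n-A_n = \mathsf{D}_n-C_n$ after imposing $\overline{\bm{A}}+\overline{\bm{B}}=\overline{\bm{C}}+\overline{\bm{D}}$ and $\mathsf{b}-a=\mathsf{d}-c$), the $k$-sum rewrites as
\begin{equation*}
\frac{1}{(q;q)_{\mathsf{B}_n-A_n-\overline{d}}} \sum_{k=0}^{C_n} \frac{\gamma^{C_n-k} (q^{A_n-k+1};q)_k \, q^{k(k+\mathsf{B}_n-A_n+\overline{c}-\overline{p})}}{(q;q)_{C_n-k}(q;q)_k (q^{\mathsf{B}_n-A_n-\overline{d}+1};q)_k},
\end{equation*}
cancelling the $1/(q;q)_{\mathsf{d}-C_n}$ and $1/(q;q)_{C_n}$ pieces of the prefactor. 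The surplus $q^{-k\overline{p}}$ factor is absorbed into the $\overline{\bm{P}}$-sum exponent, leaving the target $k$-exponent $k(k+\mathsf{B}_n-A_n+\overline{c})$. Symmetrically, I would rewrite the target's product $\prod_j (q^{-B_j};q)_{P_j}(q^{-C_j};q)_{P_j}\big/[(q;q)_{P_j}(q^{-C_j-D_j};q)_{P_j}]$ by the same identity to extract $(-1)^{\overline{p}} \prod_j q^{\binom{P_j}{2}+(D_j-B_j)P_j} (q;q)_{B_j}(q;q)_{C_j}/(q;q)_{C_j+D_j}$, leaving behind the same inner product $\prod_j (q;q)_{C_j+D_j-P_j}/[(q;q)_{B_j-P_j}(q;q)_{C_j-P_j}(q;q)_{P_j}]$ that appears in my limit. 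The accompanying $\prod_j (q;q)_{B_j}(q;q)_{C_j}/(q;q)_{C_j+D_j}$ factor promotes my prefactor's $\prod_j (q;q)_{B_j}/(q;q)_{D_j}$ into the target's $\prod_j (q;q)_{C_j+D_j}/[(q;q)_{C_j}(q;q)_{D_j}]$, and the signs $(-1)^{\overline{p}}$ cancel.

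The main obstacle will be matching the $q$-exponents inside the $\overline{\bm{P}}$-sum. The needed identity reduces to
\begin{equation*}
\varphi(\overline{\bm{B}}-\overline{\bm{D}}-\overline{\bm{P}},\overline{\bm{P}}) + \overline{p}(\mathsf{b}-\mathsf{d}+C_n-A_n) + \binom{\overline{p}}{2} = \varphi(\overline{\bm{P}},\overline{\bm{D}}-\overline{\bm{B}}) + \sum_j \binom{P_j}{2} + \sum_j (D_j-B_j)P_j,
\end{equation*}
which I plan to prove using bilinearity of $\varphi$, the symmetrization $\varphi(\bm{X},\bm{Y})+\varphi(\bm{Y},\bm{X}) = |\bm{X}||\bm{Y}|-\sum_i X_i Y_i$, the combinatorial identity $\binom{\overline{p}}{2} - \varphi(\overline{\bm{P}},\overline{\bm{P}}) = \sum_j \binom{P_j}{2}$, and the relation $\mathsf{b}-\mathsf{d}+C_n-A_n = \overline{d}-\overline{b}$ coming from $\mathsf{b}-\mathsf{d}=a-c$ combined with $\mathsf{B}_n-\mathsf{D}_n = A_n-C_n$. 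A parallel bookkeeping task is to verify the prefactor exponent simplification $\binom{C_n+1}{2}+\binom{C_n}{2}+\overline{c}\mathsf{d}-C_n c = \overline{c}(\mathsf{B}_n-A_n-\overline{d})$, which reduces via $c = C_n+\overline{c}$ and $\mathsf{d}-C_n = \mathsf{B}_n-A_n-\overline{d}$ to the trivial check $C_n^2-C_n^2=0$. Once these two exponent identities are verified, every factor matches \Cref{ugamma}.
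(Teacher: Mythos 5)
Your proposal is correct and follows essentially the same path as the paper's own argument: catalogue the $q$-Pochhammer factors whose $q^{\mathfrak{L}}$-dependence trivializes, extract leading asymptotics of the surviving $\mathfrak{L}$-dependent Pochhammers (turning the ${}_3\varphi_2$ into a ${}_2\varphi_1$), expand the hypergeometric sum, convert Pochhammers via $(q^{-m};q)_k$- and $(q^{m+1};q)_k$-type identities, and then verify two bookkeeping identities for the $q$-exponents. The identity $\mathsf{d}-C_n = \mathsf{B}_n - A_n - \overline{d}$, the symmetrization of $\varphi$, the relation $\binom{\overline{p}}{2} - \varphi(\overline{\bm{P}},\overline{\bm{P}}) = \sum_j \binom{P_j}{2}$, and the prefactor check (using $\binom{C_n+1}{2}+\binom{C_n}{2}=C_n^2$) all line up with the paper's computations.
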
 
	
	\begin{proof} 
		
		Throughout this proof, we assume that $\overline{\bm{A}} + \overline{\bm{B}} = \overline{\bm{C}} + \overline{\bm{D}}$ and $\mathsf{b} - a = \mathsf{d} - c$, for otherwise the lemma holds by \Cref{ufusedqmqn} and \Cref{ugamma}. Since $\lim_{a \rightarrow \infty} a^{-k} (ab; q)_k = (-b)^k q^{\binom{k}{2}}$ for any $b \in \mathbb{C}$ and $k \in \mathbb{Z}_{\ge 0}$, we have  
		\begin{flalign*}
			& \displaystyle\lim_{q^{\mathfrak{L}} \rightarrow \infty} (-1)^{C_n} q^{\mathfrak{L} C_n + \binom{C_n+1}{2}} (q^{\mathfrak{L} + c - C_n - \overline{p} + 1} \gamma^{-1}; q)_{C_n}^{-1} = q^{C_n (C_n - c + \overline{p})} \gamma^{C_n}; \\
			& \displaystyle\lim_{q^{\mathfrak{L}} \rightarrow \infty} {_3 \varphi_2} \bigg( \begin{array}{cc} q^{-C_n}; q^{-A_n}, q^{\mathfrak{L} - C_n + c - \overline{p} + 1} \gamma^{-1} \\ q^{\mathsf{d} - C_n + 1}, q^{\mathfrak{L} - \mathsf{B}_n - C_n + 1} \end{array} \bigg| q, q \bigg) = {_2 \varphi_1} \bigg( \begin{array}{cc} q^{-C_n}; q^{-A_n} \\ q^{\mathsf{d} - C_n + 1} \end{array} \bigg| q, q^{\mathsf{B}_n + c - \overline{p} + 1 } \gamma^{-1} \bigg),
		\end{flalign*}
		
		\noindent where in the second statement we used \eqref{43abcd}. Inserting these (with the fact that $C_n - c = -\overline{c}$), into the limit as $q^{\mathfrak{L}}$ tends to $\infty$ of \eqref{ufusedqmqn}, we find	
		\begin{flalign}
			\label{qrn7} 
			\begin{aligned}
				\displaystyle\lim_{q^{\mathfrak{L}} \rightarrow \infty} & \bigg( \displaystyle\lim_{q^{\mathfrak{M}} \rightarrow \infty} \mathcal{U}_{q^{\mathfrak{L}}; q^{\mathfrak{M}}; q^{\mathfrak{M} + \mathfrak{L}} / \gamma} \big( \bm{A}, (\overline{\bm{B}}, \mathsf{B}_n); \bm{C}, (\overline{\bm{D}}, \mathsf{D}_n) \big) \bigg) \\
				& \quad= q^{\varphi (\overline{\bm{D}}, \overline{\bm{C}}) + \overline{c} (\mathsf{d} - C_n)} \gamma^{C_n} \displaystyle\frac{(q; q)_{\mathsf{b}}}{(q; q)_{\mathsf{d}-C_n} (q; q)_{C_n}} (\gamma; q)_{\infty} \cdot \displaystyle\prod_{j=1}^{n-1} \displaystyle\frac{(q; q)_{B_j}}{(q; q)_{D_j}} \\
				& \qquad \times \displaystyle\sum_{\overline{\bm{P}} \le \overline{\bm{B}}, \overline{\bm{C}}} (-1)^{\overline{p}} q^{\varphi (\overline{\bm{B}} - \overline{\bm{D}} - \overline{\bm{P}}, \overline{\bm{P}})} \cdot \displaystyle\prod_{j=1}^{n-1} \displaystyle\frac{(q; q)_{C_j + D_j - P_j}}{(q; q)_{C_j - P_j} (q; q)_{P_j} (q; q)_{B_j - P_j}} \\
				& \qquad \qquad \quad \times q^{\overline{p} (\mathsf{b}-\mathsf{d}+1) + \binom{\overline{p}}{2}} q^{\overline{p} C_n} \cdot {_2 \varphi_1} \bigg( \begin{array}{cc} q^{-C_n}, q^{-A_n} \\ q^{\mathsf{d}-C_n+1} \end{array} \bigg| q, q^{\mathsf{B}_n + c - \overline{p} + 1} \gamma^{-1} \bigg).
			\end{aligned} 
		\end{flalign} 
		
		\noindent Next we separate the $P_j$ subscripts in the $q$-Pochhammer symbols on the right side of \eqref{qrn7} from the other ones. To that end, observe since $(q; q)_{m-k} = (-1)^k q^{\binom{k}{2} - mk} (q; q)_m (q^{-m}; q)_k^{-1}$ that we have
		\begin{flalign}
			\label{qcdq} 
			& \displaystyle\frac{(q; q)_{C_j + D_j - P_j}}{(q; q)_{C_j - P_j} (q; q)_{B_j - P_j}} = (-1)^{P_j} q^{P_j (B_j - D_j) - \binom{P_j}{2}} \displaystyle\frac{(q; q)_{C_j + D_j}}{(q; q)_{C_j} (q; q)_{B_j}} \displaystyle\frac{(q^{-C_j}; q)_{P_j} (q^{-B_j}; q)_{P_j}}{(q^{-C_j - D_j}; q)_{P_j}}.
		\end{flalign}
		
		\noindent We also have 
		\begin{flalign}
			\label{sumq0} 
			\begin{aligned} 
				\varphi (\overline{\bm{B}} - \overline{\bm{D}} - \overline{\bm{P}}, \overline{\bm{P}}) + \overline{p} (\mathsf{b} - \mathsf{d} + C_n + 1) + \binom{\overline{p}}{2} + \displaystyle\sum_{j=1}^{n-1} \Bigg( & P_j (B_j - D_j) - \binom{P_j}{2} \Bigg) \\
				& = \overline{p} (A_n + 1) + \varphi (\overline{\bm{P}}, \overline{\bm{D}} - \overline{\bm{B}}),
			\end{aligned}
		\end{flalign}
		
		\noindent since 
		\begin{flalign*}
			& \binom{\overline{p}}{2} = \varphi (\overline{\bm{P}}, \overline{\bm{P}}) + \displaystyle\sum_{j=1}^{n-1} \binom{P_j}{2}; \qquad \varphi (\overline{\bm{B}} - \overline{\bm{D}}, \overline{\bm{P}}) + \overline{p} (\overline{d} - \overline{b}) + \displaystyle\sum_{j=1}^{n-1} P_j (B_j - D_j) = \varphi (\bm{\overline{P}}, \bm{\overline{D}} - \bm{\overline{B}}); \\
			& \overline{p} (\overline{b} - \overline{d}) + \overline{p} (\mathsf{b}-\mathsf{d}) + \overline{p} C_n = \overline{p} (\mathsf{B}_n - \mathsf{D}_n) + \overline{p} C_n = \overline{p} A_n.
		\end{flalign*}
		
		\noindent Inserting \eqref{qcdq} and \eqref{sumq0} into \eqref{qrn7} yields
		\begin{flalign*} 
			\displaystyle\lim_{q^{\mathfrak{L}} \rightarrow \infty} & \bigg( \displaystyle\lim_{q^{\mathfrak{M}} \rightarrow \infty} \mathcal{U}_{q^{\mathfrak{L}}; q^{\mathfrak{M}}; q^{\mathfrak{M} + \mathfrak{L}} / \gamma} \big( \bm{A}, (\overline{\bm{B}}, \mathsf{B}_n); \bm{C}, (\overline{\bm{D}}, \mathsf{D}_n) \big) \bigg) \\
			& = q^{\varphi (\overline{\bm{D}}, \overline{\bm{C}}) + \overline{c} (\mathsf{d}-C_n)} \gamma^{C_n} \displaystyle\frac{(q; q)_{\mathsf{b}}}{(q; q)_{\mathsf{d}-C_n} (q; q)_{C_n}} (\gamma; q)_{\infty} \cdot \displaystyle\prod_{j=1}^{n-1} \displaystyle\frac{(q; q)_{C_j + D_j}}{(q; q)_{C_j} (q; q)_{D_j}} \\
			& \qquad \times \displaystyle\sum_{\overline{\bm{P}} \le \overline{\bm{B}}, \overline{\bm{C}}} q^{\overline{p} (A_n + 1) + \varphi (\overline{\bm{P}}, \overline{\bm{D}} - \overline{\bm{B}})} \cdot \displaystyle\prod_{j=1}^{n-1} \displaystyle\frac{(q^{-C_j}; q)_{P_j} (q^{-B_j}; q)_{P_j}}{(q^{-C_j - D_j}; q)_{P_j} (q; q)_{P_j}} \\
			& \qquad \qquad \quad \times {_2 \varphi_1} \bigg( \begin{array}{cc} q^{-C_n}, q^{-A_n} \\ q^{\mathsf{d}-C_n+1} \end{array} \bigg| q, q^{\mathsf{B}_n + c - \overline{p} + 1} \gamma^{-1} \bigg).
		\end{flalign*}
		
		\noindent Thus, using \eqref{43abcd} to write the ${_2 \varphi_1}$ basic hypergeometric series as a sum, we obtain
		\begin{flalign*}
			\displaystyle\lim_{q^{\mathfrak{L}} \rightarrow \infty} & \bigg( \displaystyle\lim_{q^{\mathfrak{M}} \rightarrow \infty} \mathcal{U}_{q^{\mathfrak{L}}; q^{\mathfrak{M}}; q^{\mathfrak{M} + \mathfrak{L}} / \gamma} \big( \bm{A}, (\overline{\bm{B}}, \mathsf{B}_n); \bm{C}, (\overline{\bm{D}}, \mathsf{D}_n) \big) \bigg) \\
			& = q^{\varphi (\overline{\bm{D}}, \overline{\bm{C}}) + \overline{c} (\mathsf{d}-C_n)} (\gamma; q)_{\infty} \displaystyle\frac{(q;q)_{\mathsf{b}}}{(q;q)_{\mathsf{d}-C_n}} \displaystyle\prod_{j=1}^{n-1} \displaystyle\frac{(q;q)_{C_j+D_j}}{(q;q)_{C_j} (q;q)_{D_j}} \\
			& \qquad \times \displaystyle\sum_{k=0}^{C_n} \displaystyle\frac{\gamma^{C_n - k}}{(q; q)_{C_n}} \displaystyle\frac{(q^{-C_n}; q)_k (q^{-A_n}; q)_k}{(q; q)_k (q^{\mathsf{d}-C_n+1}; q)_k} q^{k(\mathsf{B}_n + c +1)} \\
			& \qquad \qquad \times \displaystyle\sum_{\overline{\bm{P}} \le \overline{\bm{B}}, \overline{\bm{C}}} q^{\overline{p} (A_n - k + 1) + \varphi (\overline{\bm{P}}, \overline{\bm{D}} - \overline{\bm{B}})} \displaystyle\prod_{j=1}^{n-1} \displaystyle\frac{(q^{-B_j}; q)_{P_j} (q^{-C_j}; q)_{P_j}}{(q;q)_{P_j} (q^{-C_j - D_j}; q)_{P_j}}.
		\end{flalign*}
		
		\noindent Together with the facts that 
		\begin{flalign*} 
			(q^{-C_n}; q)_k (q; q)_{C_n}^{-1} = (-1)^k q^{\binom{k}{2} - C_n k} (q; q)_{C_n - k}^{-1}; \qquad (q^{-A_n}; q)_k = (-1)^k q^{\binom{k}{2} - A_n k} (q^{A_n - k + 1}; q)_k,
		\end{flalign*} 
		
		\noindent that $c - C_n = \overline{c}$, and that $\mathsf{d} - C_n =\mathsf{D}_n - \overline{d} - C_n = \mathsf{B}_n - A_n - \overline{d}$, this yields the lemma.
	\end{proof}

	\begin{rem} 
		
		\label{stochasticqdp}
		
		By \Cref{stochasticu}, \Cref{rqnl1}, \Cref{ufusedqmqn}, \Cref{ufusedql}, and uniqueness of analytic continuation (with the fact that the $\mathcal{U}^{\qdp}$ weights are rational in $q^{\mathfrak{L}}$, to extend from $\mathfrak{L} \in \mathbb{Z}_{\ge 0}$ to $\mathfrak{L} \in \mathbb{C}$) the $\mathcal{U}^{\qdp}$ weights are stochastic, in the sense that for any $\bm{A} \in \mathbb{Z}_{\ge 0}^n$ and $(\overline{\bm{B}}, \mathsf{B}_n) \in \mathbb{Z}_{\ge 0}^{n-1} \times \mathbb{Z}_{\ge 0}$, we have $\sum \mathcal{U}_{\gamma} \big( \bm{A}, (\overline{\bm{B}}, \mathsf{B}_n); \bm{C}, (\overline{\bm{D}}, \mathsf{D}_n) \big) = 1$, where we sum over all $\bm{C} \in \mathbb{Z}_{\ge 0}^n$ and $(\overline{\bm{D}}, \mathsf{D}_n) \in \mathbb{Z}_{\ge 0}^{n-1} \times \mathbb{Z}_{\ge 0}$.

	\end{rem}

	\subsection{An Additional Degenerated Weight} 
	
	\label{WeightD} 
	
	In this section we provide an additional degeneration of the complemented $U_{z; r, s}$-weight, which will eventually serve as boundary weights for the polymer model. We begin with the following lemma from \cite{SVMP} providing a specialization of the $U_{z; r, s}$-weights from \Cref{wzabcd}.
	
	\begin{lem}[{\cite[Lemma 6.8]{SVMP}}]
		
		\label{qd1} 
		
		For any integer $\mathrm{L} \ge 1$ and $n$-tuples $\bm{A}, \bm{C}, \bm{D} \in \mathbb{Z}_{\ge 0}^n$, we have
		\begin{flalign*}
			\displaystyle\lim_{s \rightarrow 0} U_{z/s^2; q^{-\mathrm{L}/2}, s} ( \bm{A}, \mathrm{L} \bm{e}_n; \bm{C}, \bm{D}) = \displaystyle\frac{(zq^{\mathrm{L}})^{|\bm{D}|}}{(z; q)_{\mathrm{L}}} \displaystyle\frac{(q^{-\mathrm{L}}; q)_{|\bm{D}|}}{(q; q)_{|\bm{D}|}} \cdot \mathbbm{1}_{\bm{A} + \mathrm{L} \cdot \bm{e}_n = \bm{C} + \bm{D}} \cdot \mathbbm{1}_{\bm{D} = |\bm{D}| \cdot \bm{e}_n}.
		\end{flalign*}
		
	\end{lem}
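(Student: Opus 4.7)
The plan is to prove the lemma by direct substitution into the explicit formula \eqref{weightu} for $U_{z;r,s}$, followed by asymptotic analysis in $s$. Since $\bm{B} = \mathrm{L}\bm{e}_n$, the constraint $\bm{P} \le \bm{B}$ collapses the inner sum to $\bm{P} = p\bm{e}_n$ with $0 \le p \le \min\{\mathrm{L},c\}$; the factors indexed by $i < n$ then simplify to $\prod_{i<n} \binom{C_i+D_i}{D_i}_q$, independent of $p$. Substituting $r = q^{-\mathrm{L}/2}$ and $z \mapsto z/s^2$, and expanding each $q$-Pochhammer in $s$ (the only singular one being $(z/s^2;q)_{\mathrm{L}-p}$, whose leading term is $(-z/s^2)^{\mathrm{L}-p} q^{\binom{\mathrm{L}-p}{2}}$), a routine count of $s$-powers shows that every $p$-term is of order $s^0$. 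The resulting leading $p$-sum can be organized as a terminating basic hypergeometric series whose arguments depend on the data $(C_n, D_n, |\bm{D}|-D_n, c, \mathrm{L})$.

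Arrow conservation $\bm{A} + \mathrm{L}\bm{e}_n = \bm{C} + \bm{D}$ is immediate from the indicator in \eqref{weightu}; the nontrivial point is the additional constraint $\bm{D} = |\bm{D}|\bm{e}_n$. When this holds (so $|\bm{D}| - D_n = 0$), the $p$-sum becomes terminating $q$-Chu--Vandermonde-summable and evaluates to a clean product. Reconciling with the prefactors using the identity $(q^{-\mathrm{L}};q)_{\mathrm{L}} = (-1)^{\mathrm{L}} q^{-\binom{\mathrm{L}+1}{2}}(q;q)_{\mathrm{L}}$ and the observation that $\varphi(\bm{D},\bm{C}) = 0$ when $\bm{D} = |\bm{D}|\bm{e}_n$ (since $\varphi$ only pairs $i < j$ and $D_i = 0$ for $i<n$), the accumulated powers of $(-1)$, $q$, and $z$ collapse to produce exactly the claimed expression $(zq^{\mathrm{L}})^{|\bm{D}|} (q^{-\mathrm{L}};q)_{|\bm{D}|}/\bigl((z;q)_{\mathrm{L}} (q;q)_{|\bm{D}|}\bigr)$.

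The main obstacle is the vanishing when $\bm{D} \ne |\bm{D}|\bm{e}_n$ (equivalently, $|\bm{D}| - D_n \ge 1$), where the finite $s^0$ contributions from different $p$ must cancel identically. The cleanest route is to recast the $p$-sum as a terminating ${_3\varphi_2}$ with one of its lower parameters equal to $q^{|\bm{D}|-D_n+\text{(shift)}}$, and then apply a Sears transformation of the type used in the proof of \Cref{ufusedqmqn} to produce an explicit factor of $(q^{D_n-|\bm{D}|};q)_k$ (with $1 \le k \le |\bm{D}|-D_n$), which vanishes by construction. Identifying the precise Sears (or alternatively, terminating $q$-Saalsch\"utz) instance --- that is, matching the five hypergeometric parameters so that the correct zero factor appears --- is the delicate step. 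A cleaner alternative, which avoids hypergeometric algebra altogether, is to use the fusion realization of $U_{z;r,s}$ as an $\mathrm{L}\times 1$ stack of $L$-weights with spectral parameters $z, qz, \ldots, q^{\mathrm{L}-1}z$ (compare \Cref{fgfg}): under $z \mapsto z/s^2$ and $s \to 0$, the ``color-swapping'' configurations of type $(\bm{A}, n; \bm{A}_{ni}^{+-}, i)$ with $i<n$ have $L$-weight scaling as $s \cdot (sx)^{-1}(1+o(1)) = O(s)$ relative to the non-swapping ones, so any stack with a swap contributes $O(s)$ to the fused weight. Hence only stacks with all horizontal arrows remaining color $n$ survive, which both forces $\bm{D}=|\bm{D}|\bm{e}_n$ and reduces the computation to the $n=1$ case, where the formula is already known.
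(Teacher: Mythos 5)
The paper does not prove this lemma --- it cites it verbatim as \cite[Lemma 6.8]{SVMP} and supplies no argument --- so there is no ``paper proof'' to compare your attempt against. I will therefore evaluate your proposal on its own terms.

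Your first approach (direct substitution into \eqref{weightu}, $s$-power bookkeeping, and a terminating hypergeometric identity) has the right skeleton: with $\bm{B} = \mathrm{L}\bm{e}_n$ the inner sum does collapse to $\bm{P} = p\bm{e}_n$, the color-$i$ factors for $i<n$ do become $p$-independent $q$-binomials, and the $s$-exponent of each $p$-term does come out to $0$ after the substitution $z\mapsto z/s^2$ (the $(z/s^2;q)_{\mathrm{L}-p}$ factor contributes $s^{-2(\mathrm{L}-p)}$, which cancels against $s^{2\mathrm{L}}\cdot s^{-2p}$ from the prefactors and $(r^{-2}z/s^2)^p$). But the actual burden of the lemma is then concentrated entirely in the claim that, when $|\bm{D}| - D_n > 0$, the resulting $p$-sum --- which by your own reduction is a terminating sum whose only $p$-dependence beyond the ``$n=1$'' template is the extra factor $q^{-p(|\bm{D}|-D_n)}$ coming from $\varphi(\bm{B}-\bm{D}-\bm{P},\bm{P})$ --- vanishes identically in $z$ and $q$. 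You acknowledge this is the delicate step and gesture at a Sears transformation, but you do not identify the instance, so the argument is not complete as written. This is a genuine gap, not merely a deferred routine calculation: the vanishing is the whole content of the $\mathbbm{1}_{\bm{D}=|\bm{D}|\bm{e}_n}$ indicator in the statement.

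Your alternative fusion argument is the more attractive route and is likely closer in spirit to how \cite{SVMP} proves the statement, but as sketched it also needs repair. The scaling claim ``$s\cdot(sx)^{-1}(1+o(1)) = O(s)$'' is not a well-formed comparison: $s\cdot(sx)^{-1}$ is $O(1)$ in $s$, and the conclusion only comes out right if one compares the swap weight $L_{q^kz/s^2;s}(\bm{A},n;\bm{A}_{ni}^{+-},i)\sim -s^2(1-q^{A_i})q^{A_{[i+1,n]}}/(q^kz)$ against the \emph{absorption} weight $L_{q^kz/s^2;s}(\bm{A},n;\bm{A}_n^+,0)\sim -s/(q^kz)$ (ratio $O(s)$) rather than against the pass-through weight $L_{q^kz/s^2;s}(\bm{A},n;\bm{A},n)\sim -1/s$ (ratio $O(s^3)$). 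More importantly, the fusion of \Cref{fgfg} relates $\widehat{W}$-weights (not $U$-weights directly) to stacks of $\widehat{L}$-weights, so the passage $U\leftrightarrow W\leftrightarrow\widehat{W}$ and $L\leftrightarrow\widehat{L}$ introduces $s$-dependent normalizing factors (the $(-s)^{-d}$ in \Cref{wabcdw} and $(-s)^{-\mathrm{R}}(sx;q)_{\mathrm{R}}/(s^{-1}x;q)_{\mathrm{R}}$ in \eqref{wzr2}) that must be folded into the count. Until that bookkeeping is done, the claim ``any stack with a swap contributes $O(s)$ to the fused weight'' is not established; the per-row ratios alone do not control the fused weight because the fused sum ranges over all internal vertical configurations and the normalization of the fusion itself is $s$-dependent. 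The overall strategy is sound --- and, as a bonus, reducing to the $n=1$ formula once color swaps are ruled out is exactly the right move --- but it requires tracking the normalizations explicitly before the scaling argument becomes a proof.
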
 
	
	\noindent We will next implement the complementation procedure explained in Section \ref{WeightsComplement} on the weights specialized as in \Cref{qd1}. The below definition provides the eventual form they will take, which is shown in the corollary following it.
	
	\begin{definition}
		
		\label{u3}
		
		For any complex numbers $z, \mathfrak{L} \in \mathbb{C}$ and integer $k \ge 0$, define the \emph{$q$-discrete boundary weight} 
		\begin{flalign*}
			\mathcal{U}_{z; q^{\mathfrak{L}}}^{\bq; n} (k) = \mathcal{U}_{z; q^{\mathfrak{L}}}^{\bq} (k) = z^{-k} \displaystyle\frac{(z^{-1}; q)_{\infty}}{(q^{-\mathfrak{L}} z^{-1}; q)_{\infty}} \displaystyle\frac{(q^{-\mathfrak{L}}; q)_{k}}{(q; q)_{k}}. 
		\end{flalign*}
		
		\noindent We further set 
		\begin{flalign*} 
			\mathcal{U}_z^{\bq; n} (k) = \mathcal{U}_z^{\bq} (k) = \mathcal{U}_{z; \infty}^{\bq} (k) = z^{-k} \displaystyle\frac{(z^{-1}; q)_k}{(q; q)_k}.
		\end{flalign*} 
	
		\noindent Observe that both the $\mathcal{U}_{z; q^{\mathfrak{L}}}^{\bq}$-weights and the $\mathcal{U}_z^{\bq}$-weights are stochastic, by the $q$-binomial theorem.
	\end{definition}
	
	\begin{cor}
		
		\label{qd2}
		
		Adopt the notation of \Cref{qd1}, and set $\mathsf{D}_n = \mathrm{L} - |\bm{D}|$. We have 
		\begin{flalign}
			\label{qd4}
			\displaystyle\lim_{s \rightarrow 0} U_{z/s^2; q^{-\mathrm{L}/2}, s} (\bm{A}, \mathrm{L} \bm{e}_n; \bm{C}, \bm{D}) = \mathcal{U}_{z/q; q^{\mathrm{L}}}^{\bq} (\mathsf{D}_n) \cdot \mathbbm{1}_{\bm{A} + \mathsf{D}_n \cdot \bm{e}_n = \bm{C}} \cdot \mathbbm{1}_{\bm{D} = (\mathrm{L} - \mathsf{D}_n) \cdot \bm{e}_n}.
		\end{flalign}
	\end{cor}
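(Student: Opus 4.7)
The plan is to substitute the conclusion of Lemma \ref{qd1} directly into the left-hand side of \eqref{qd4}, verify that the indicator constraints match, and then show the scalar prefactors agree via a short $q$-Pochhammer calculation.

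First I would dispose of the indicators. The factor $\mathbbm{1}_{\bm{D}=|\bm{D}|\bm{e}_n}$ in Lemma \ref{qd1} forces $D_n = |\bm{D}|$, so by the definition $\mathsf{D}_n = \mathrm{L}-D_n$ we get $|\bm{D}| = \mathrm{L}-\mathsf{D}_n$ and $\bm{D} = (\mathrm{L}-\mathsf{D}_n)\bm{e}_n$, which is the second indicator on the right-hand side of \eqref{qd4}. Inserting this into the conservation relation $\bm{A}+\mathrm{L}\bm{e}_n = \bm{C}+\bm{D}$ gives $\bm{A}+\mathsf{D}_n\bm{e}_n = \bm{C}$, the first indicator.

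Setting $k=\mathsf{D}_n$, the remaining content of the corollary is the scalar identity
\[
\frac{(zq^{\mathrm{L}})^{\mathrm{L}-k}}{(z;q)_{\mathrm{L}}}\,\frac{(q^{-\mathrm{L}};q)_{\mathrm{L}-k}}{(q;q)_{\mathrm{L}-k}} \;=\; \Big(\frac{z}{q}\Big)^{-k}\,\frac{(q/z;q)_{\infty}}{(q^{1-\mathrm{L}}/z;q)_{\infty}}\,\frac{(q^{-\mathrm{L}};q)_{k}}{(q;q)_{k}},
\]
which I would verify by reducing both sides to the same expression involving the Gauss binomial $\binom{\mathrm{L}}{k}_q=(q;q)_\mathrm{L}/((q;q)_k(q;q)_{\mathrm{L}-k})$. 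On the left I split $(q^{-\mathrm{L}};q)_\mathrm{L} = (q^{-\mathrm{L}};q)_{\mathrm{L}-k}(q^{-k};q)_k$ and apply the inversion formula $(q^{-m};q)_m=(-1)^m q^{-\binom{m+1}{2}}(q;q)_m$ to both $(q^{-\mathrm{L}};q)_\mathrm{L}$ and $(q^{-k};q)_k$. On the right I use $(q/z;q)_\infty/(q^{1-\mathrm{L}}/z;q)_\infty = 1/(q^{1-\mathrm{L}}/z;q)_\mathrm{L}$ together with the reflection identity $(q^{1-\mathrm{L}}/z;q)_\mathrm{L} = (-1)^\mathrm{L} z^{-\mathrm{L}} q^{-\binom{\mathrm{L}}{2}}(z;q)_\mathrm{L}$, and express $(q^{-\mathrm{L}};q)_k/(q;q)_k = (-1)^k q^{-\mathrm{L}k+\binom{k}{2}}\binom{\mathrm{L}}{k}_q/(q;q)_{\mathrm{L}-k}\cdot(q;q)_{\mathrm{L}-k}$ after another application of the inversion formula. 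Both sides then collapse to $(-1)^{\mathrm{L}-k}\,z^{\mathrm{L}-k}(z;q)_\mathrm{L}^{-1}\binom{\mathrm{L}}{k}_q$ multiplied by a common power of $q$.

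There is no real obstacle here: once the indicators are identified, the statement is a finite $q$-Pochhammer identity and the verification is mechanical. The only place requiring care is tracking the signs and the $q$-exponents $\mathrm{L}(\mathrm{L}-k)+\binom{k+1}{2}-\binom{\mathrm{L}+1}{2}$ (from the left) against $k+\binom{\mathrm{L}}{2}-\mathrm{L}k+\binom{k}{2}$ (from the right), whose difference is identically $0$; a one-line bookkeeping check closes the argument.
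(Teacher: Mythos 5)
Your proposal is correct and follows essentially the same route as the paper's proof: feed the conclusion of Lemma \ref{qd1} into the left side, observe that the indicators match, and reduce to a finite $q$-Pochhammer identity verified by standard inversion/reflection formulas (the paper handles the Pochhammer ratios directly rather than routing through the Gauss binomial, but this is only a cosmetic difference). The exponent bookkeeping you record is accurate, so the argument closes.
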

	
	\begin{proof}
		
		We may assume throughout this proof that $\bm{A} + \mathrm{L} \bm{e}_n = \bm{C} + \bm{D}$ and that $\bm{D} = |\bm{D}| \bm{e}_n = (\mathrm{L} - \mathsf{D}_n) \bm{e}_n$, for otherwise both sides of \eqref{qd4} are equal to $0$ (by \Cref{qd1}). Then, combining the equalities  
		\begin{flalign*} 
			& (zq^{\mathrm{L}})^{|\bm{D}|} = (zq^{\mathrm{L}})^{\mathrm{L} - \mathsf{D}_n}; \qquad (z; q)_{\mathrm{L}} = (-z)^{\mathrm{L}} q^{\binom{\mathrm{L}}{2}} (q^{1-\mathrm{L}} z^{-1}; q)_{\mathrm{L}}; \\ 
			& (q^{-\mathrm{L}}; q)_{\mathrm{L} - \mathsf{D}_n} = (-1)^{\mathrm{L} - \mathsf{D}_n} q^{\binom{\mathsf{D}_n+1}{2} - \binom{\mathrm{L} + 1}{2}} \displaystyle\frac{(q; q)_{\mathrm{L}}}{(q; q)_{\mathsf{D}_n}}; \qquad (q; q)_{\mathrm{L} - \mathsf{D}_n} = (-1)^{\mathsf{D}_n} q^{\binom{\mathsf{D}_n}{2} -\mathrm{L} \mathsf{D}_n} \displaystyle\frac{(q; q)_{\mathrm{L}}}{(q^{-\mathrm{L}}; q)_{\mathsf{D}_n}},
		\end{flalign*} 
		
		\noindent with \Cref{qd2}, we deduce 
		\begin{flalign*}
			\displaystyle\lim_{s \rightarrow 0} U_{z/s^2; q^{-\mathrm{L}/2}, s} (\bm{A}, \mathrm{L} \bm{e}_n; \bm{C}, \bm{D}) = \displaystyle\frac{(zq^{\mathrm{L}})^{\mathrm{L} - \mathsf{D}_n}}{(z; q)_{\mathrm{L}}} \displaystyle\frac{(q^{-\mathrm{L}}; q)_{\mathrm{L} - \mathsf{D}_n}}{(q; q)_{\mathrm{L} - \mathsf{D}_n}} = \displaystyle\frac{(qz^{-1})^{\mathsf{D}_n}}{(q^{1 - \mathrm{L}} z^{-1}; q)_{\mathrm{L}}} \displaystyle\frac{(q^{-\mathrm{L}}; q)_{\mathsf{D}_n}}{(q; q)_{\mathsf{D}_n}}.
		\end{flalign*}
		
		\noindent This, with \Cref{u3}, yields the corollary.
	\end{proof}

	\subsection{The $q \rightarrow 1$ Limit }
	
	\label{Weightsq1} 
	
	In this section we analyze the limit as $q$ tends to $1$ of the $q$-discrete polymer weights $\mathcal{U}^{\qdp}$ from \Cref{ugamma} (and their boundary counterparts $\mathcal{U}^{\bq}$ from \Cref{u3}). Throughout this section, we fix a real number $\theta > 0$ and $n$-tuples of nonnegative real numbers $(\mathfrak{a}_1, \mathfrak{a}_2, \ldots , \mathfrak{a}_n)$ and $(\mathfrak{b}_1, \mathfrak{b}_2, \ldots , \mathfrak{b}_n)$. Moreover, $\varepsilon \in (0, 1)$ will be a parameter that we view as tending to $0$. If for some functions $f$ and $g$ of $\varepsilon$ we have $\lim_{\varepsilon \rightarrow 0} \big| f(\varepsilon) - g(\varepsilon) \big| = 0$, then we write $f \sim g$. We further let $\mathsf{B}_n = \mathsf{B}_n^{\varepsilon} \in \mathbb{Z}_{\ge 0}$ be an integer, $\bm{\overline{B}} = \bm{\overline{B}}^{\varepsilon} \in \mathbb{Z}_{\ge 0}^{n-1}$ be an $(n-1)$-tuple, and $\bm{A} = \bm{A}^{\varepsilon} \in \mathbb{Z}_{\ge 0}^n$ be an $n$-tuple (all dependent on $\varepsilon$), with 
	\begin{flalign}
		\label{anaj} 
		\varepsilon A_n -  \log \varepsilon^{-1} \sim  \mathfrak{a}_n; \qquad \varepsilon A_j \sim  \mathfrak{a}_j; \qquad \varepsilon \mathsf{B}_n - \log \varepsilon^{-1} = \mathfrak{b}_n; \qquad \varepsilon B_j \sim \mathfrak{b}_j, 
	\end{flalign}
	
	\noindent for each $j \in \llbracket 1, n-1 \rrbracket$. Also define $q = q_{\varepsilon} \in (0, 1)$ and $\gamma = \gamma_{\varepsilon, \theta} \in (0, 1)$ by setting
	\begin{flalign}
		\label{qgammaq}
		q_{\varepsilon} = e^{-\varepsilon}; \qquad \gamma = q^{\theta}.
	\end{flalign}
	
	Recalling from \Cref{stochasticqdp} that the $\mathcal{U}^{\qdp}$ weights are stochastic, the following proposition describes the limiting law of the $n$-tuple $\bm{C}$ sampled\footnote{We do not verify here that the $\mathcal{U}_{\gamma}^{\qdp}$-weights are all nonnegative for fixed $q \in (0, 1)$. However, it can quickly be deduced from the proof of \Cref{sumpdb} below that the sum of the absolute values of all negative $\mathcal{U}_{\gamma}^{\qdp}$-weights tends to $0$, as $\varepsilon$ tends to $0$. Together with the stochasticity of the $\mathcal{U}^{\qdp}$ weights, this implies that one can view $\mathcal{U}^{\qdp}$ as prescribing a probability measure, as $\varepsilon$ tends to $0$.} according to $\mathcal{U}_{\gamma}^{\qdp} (\bm{A}, \bm{B}; \bm{C}, \bm{D})$ as $\varepsilon$ tends to $0$; it indicates that this law can be expressed through a single $\mathrm{Gamma}(\theta)$ random variable $\mathfrak{Y}$.	
	
	\begin{prop} 
		
		\label{abcgamma} 
		
		Let $q = q_{\varepsilon} \in (0, 1)$ and $\gamma = \gamma_{\varepsilon, \theta} \in (0, 1)$ be as in \eqref{qgammaq}; also let $\mathsf{B}_n \in \mathbb{Z}_{\ge 0}$, $\overline{\bm{B}} \in \mathbb{Z}_{\ge 0}^{n-1}$, and $\bm{A} \in \mathbb{Z}_{\ge 0}^n$ be as in \eqref{anaj}. Sample $\big( \bm{C}, (\overline{\bm{D}}, \mathsf{D}_n) \big) \in \mathbb{Z}_{\ge 0}^n \times \mathbb{Z}_{\ge 0}^{n-1} \times \mathbb{Z}_{\ge 0}$ with probability $\mathcal{U}_{\gamma}^{\qdp} \big(\bm{A}, (\overline{\bm{B}}, \mathsf{B}_n); \bm{C}, (\overline{\bm{D}}, \mathsf{D}_n) \big)$, and denote 
		\begin{flalign*}
			\mathfrak{c}_n = \varepsilon C_n - \log \varepsilon^{-1}, \quad \text{and} \quad \mathfrak{c}_j = \varepsilon C_j, \qquad \text{for each $j \in \llbracket 1, n-1 \rrbracket$}. 
		\end{flalign*} 
		
		\noindent The joint law of $(\mathfrak{c}_{[j, n]})_{j \in \llbracket 1, n \rrbracket}$ converges to that of $\big( \log (e^{\mathfrak{a}_{[j,n]} - \mathfrak{b}_n + \mathfrak{b}_{[j,n-1]}} + 1) - \log \mathfrak{Y} \big)_{j \in \llbracket 1, n \rrbracket}$ as $\varepsilon$ tends to $0$, where $\mathfrak{Y} \in \mathbb{R}_{> 0}$ is a $\mathrm{Gamma}(\theta)$ random variable. 
		
	\end{prop}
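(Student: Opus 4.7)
The plan is to analyze the weight $\mathcal{U}_{\gamma}^{\qdp}$ given by \eqref{uqdp} under the scalings \eqref{anaj} and \eqref{qgammaq} via a direct asymptotic (saddle-point) argument. Observe first that the indicators in \Cref{ugamma}---namely $\overline{\bm A}+\overline{\bm B}=\overline{\bm C}+\overline{\bm D}$ and $\mathsf b-a=\mathsf d-c$---pin $\overline{\bm D}$ and $\mathsf D_n$ in terms of the free coordinates $\bm C=(C_1,\ldots,C_n)$; in particular $A_n+\mathsf B_n=C_n+\mathsf D_n$. Setting $\mathfrak{c}_j=\varepsilon C_j$ for $j<n$ and $\mathfrak{c}_n=\varepsilon C_n-\log\varepsilon^{-1}$, the telescoping differences $\mathfrak{c}_{[j,n]}-\mathfrak{c}_{[j+1,n]}=\mathfrak{c}_j$ in the claim are deterministic for $j<n$, so the proposition equivalently asserts (i) concentration of each $\mathfrak{c}_j$ for $j<n$ on an explicit value, and (ii) convergence of the marginal law of $(e^{\mathfrak{a}_n-\mathfrak{b}_n}+1)e^{-\mathfrak{c}_n}$ to $\mathrm{Gamma}(\theta)$.

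The bulk of the proof is a Laplace-method asymptotic of the doubly-indexed sum in \eqref{uqdp}. For the inner $\overline{\bm P}$-sum, introduce rescaled variables $p_j=\varepsilon P_j$ and use the standard dilogarithmic asymptotic $\log(e^{\varepsilon\beta};q)_N\sim\varepsilon^{-1}\bigl[\mathrm{Li}_2(e^{\varepsilon\beta}q^N)-\mathrm{Li}_2(e^{\varepsilon\beta})\bigr]$ valid for $q=e^{-\varepsilon}$ and $N=\nu/\varepsilon$. This converts each of the three $q$-Pochhammer symbols $(q^{-B_j};q)_{P_j}$, $(q^{-C_j};q)_{P_j}$, $(q^{-C_j-D_j};q)_{P_j}$ into an exponential with a known dilogarithmic exponent, so the sum is $\varepsilon^{-(n-1)/2}\int e^{\varepsilon^{-1}\Phi_k(\bm p;\bm{\mathfrak a},\bm{\mathfrak b},\bm{\mathfrak c})}\,d\bm p$ to leading order for an explicit $\Phi_k$. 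Solving $\partial_{p_j}\Phi_k=0$ gives a unique critical point $\bm p^\star$, the non-degeneracy of the Hessian follows from convexity of $\mathrm{Li}_2$ restrictions, and the saddle-point equations algebraically force $\mathfrak{c}_j$ (for $j<n$) onto precisely the deterministic value $\log(e^{\mathfrak{a}_{[j,n]}-\mathfrak{b}_n+\mathfrak{b}_{[j,n-1]}}+1)-\log(e^{\mathfrak{a}_{[j+1,n]}-\mathfrak{b}_n+\mathfrak{b}_{[j+1,n-1]}}+1)$ predicted by the statement.

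After substituting the saddle-point evaluation of the $\overline{\bm P}$-sum back into \eqref{uqdp}, the outer $k$-sum reduces to exactly the shape of the $n=1$ weight (recall \Cref{n1qdp}). Use Stirling-type asymptotics for $(\gamma;q)_\infty$ and $(q;q)_{C_n-k}$ (together with $(q^{A_n-k+1};q)_k\to 1$ on the relevant scale, since $q^{A_n}\sim\varepsilon e^{-\mathfrak{a}_n}$), and change variables via $\mathfrak{Y}=(e^{\mathfrak{a}_n-\mathfrak{b}_n}+1)e^{-\mathfrak{c}_n}$. One then verifies that the $k$-sum becomes a Riemann sum for the density $\Gamma(\theta)^{-1}\mathfrak{Y}^{\theta-1}e^{-\mathfrak{Y}}$, yielding the desired $\mathrm{Gamma}(\theta)$ limit for $\mathfrak{c}_n$. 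Combined with step (i), this produces the joint limit law of $(\mathfrak{c}_{[j,n]})_{j\in\llbracket 1,n\rrbracket}$ claimed in the proposition.

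The main obstacle will be the uniformity of the $\overline{\bm P}$-saddle-point analysis as $k$ ranges over $\llbracket 0,C_n\rrbracket$ (a set of size $\sim\varepsilon^{-1}\log\varepsilon^{-1}$), since one must exchange the saddle limit with the $k$-sum and simultaneously control the contributions of both localized and boundary $\overline{\bm P}$-regimes. A cleaner but more rigid alternative would be to evaluate the inner $\overline{\bm P}$-sum in closed form via a multi-variable $q$-Pfaff--Saalschutz or Jackson-type identity---the shape of the summand (specifically, the quadratic factor $q^{\varphi(\overline{\bm P},\overline{\bm D}-\overline{\bm B})}$ paired with the product of three $q$-Pochhammers) suggests that a telescoping collapse to a single ${}_3\varphi_2$ should exist, which would reduce the problem directly to the already-established $n=1$ case of \cite[Theorem 8.7]{RCRP} and eliminate the multi-dimensional saddle analysis entirely. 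Identifying (or ruling out) such an identity is where I would begin.
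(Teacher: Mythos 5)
Your high-level intuition is right: the proposition does split into a deterministic concentration in the directions $\mathfrak{c}_j$ for $j<n$ plus a single Gamma random variable carried by $\mathfrak{c}_n$, and the Gamma part is essentially the $n=1$ story of \cite[Theorem 8.7]{RCRP}. The paper realizes this cleanly by immediately reading off from \eqref{uqdp} a \emph{convolution decomposition} $\mathcal{U}^{\qdp}_\gamma=\sum_k \Phi_\varepsilon(C_n-k)\Psi_\varepsilon(k;\overline{\bm C})$ (\Cref{psi}--\Cref{usumpsi}), so that the $\mathrm{Gamma}(\theta)$ limit comes entirely from the independent factor $\Phi_\varepsilon$ via \Cref{gamma0}, and the remaining task is a pure concentration statement for $\Psi_\varepsilon$ (\Cref{sumpdb}). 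You come close to this when you say the $k$-sum ``reduces to the $n=1$ weight,'' but you do not isolate the exact convolution structure, which is what makes the independence of the Gamma random variable from the other coordinates manifest without further work.

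The genuine gap is in your treatment of the inner $\overline{\bm P}$-sum. You propose rescaling $p_j=\varepsilon P_j$ and running a multi-dimensional dilogarithmic Laplace/saddle argument with exponent $\varepsilon^{-1}\Phi_k(\bm p)$. This cannot work as stated: the summand carries the factor $q^{\overline p(A_n-k+1)}$, and under \eqref{anaj} one has $\varepsilon(A_n-k+1)\ge \tfrac12\log\varepsilon^{-1}+O(1)$ on the relevant range $k\le (2\varepsilon)^{-1}\log\varepsilon^{-1}$, so this factor is of size $\varepsilon^{\overline p/3}$. That is \emph{super-exponentially small in the Laplace scale $e^{-c/\varepsilon}$}; it does not fit into an exponent of the form $\varepsilon^{-1}\Phi_k$, so the ansatz ``$\int e^{\varepsilon^{-1}\Phi_k}d\bm p$'' is not the correct leading form. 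In fact the sum has no interior critical point: it collapses to the single boundary term $\overline{\bm P}=\bm e_0$ and simply tends to $1$ (this is \eqref{sumplimit} in the paper). Your Hessian-nondegeneracy and dilogarithm setup would therefore not find the right contribution, and if pushed through naively it would give the wrong answer. After this trivialization, the concentration of the remaining $C_i$-dependent factors is handled by the paper through a \emph{one-dimensional} strictly concave Laplace argument for each $C_i$ separately (the functions $G$ and $G_i$ in the proof of \Cref{sumpdb}); there is no multi-dimensional saddle to solve, and the uniformity-in-$k$ issue you flag largely evaporates because $k$ is eliminated up front by the quadratic $q^{k(k+\cdots)}$ decay. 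Finally, the alternative you mention---collapsing the $\overline{\bm P}$-sum exactly via a $q$-Pfaff--Saalsch\"utz or Jackson identity---is not what the paper does and is not needed; the quadratic phase $q^{\varphi(\overline{\bm P},\overline{\bm D}-\overline{\bm B})}$ does not match any of the standard balanced/well-poised hypergeometric summation templates, and more to the point, once you observe the $\varepsilon^{\overline p/3}$ decay from $q^{\overline p(A_n-k+1)}$ there is nothing left to sum.
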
 

	\begin{rem} 
		
		\label{dabcgamma} 
		
		 Denoting $(\mathfrak{d}_1, \mathfrak{d}_2, \ldots , \mathfrak{d}_n) = (\varepsilon D_1, \varepsilon D_2, \ldots , \varepsilon D_{n-1}, \varepsilon \mathsf{D}_n - \log \varepsilon^{-1})$ in \Cref{abcgamma}, the joint limiting law of $(\mathfrak{d}_{[j,n]})_{j \in \llbracket 1, n \rrbracket}$ is determined by that of $(\mathfrak{c}_{[j,n]})_{j \in \llbracket 1, n \rrbracket}$ by arrow conservation. 
		
	\end{rem} 
	
	To establish \Cref{abcgamma}, we will express the right side of \eqref{uqdp} as a convolution of the following two functions, which we will then analyze separately. 
	
	\begin{definition} 
		
		\label{psi} 
		
		Define the function $\Phi_{\varepsilon}: \mathbb{Z}_{\ge 0} \rightarrow \mathbb{R}$ by setting, for any integer $\ell \ge 0$, 
		\begin{flalign}
			\label{gammaq} 
			\Phi_{\varepsilon} (\ell) = (\gamma; q)_{\infty} \cdot \displaystyle\frac{\gamma^{\ell}}{(q;q)_{\ell}}.
		\end{flalign}
		
		\noindent Also define the function $\Psi_{\varepsilon}: \mathbb{Z}_{\ge 0} \times \mathbb{Z}_{\ge 0}^{n-1}$ by setting, for any integer $k \ge 0$ and $(n-1)$-tuple $\overline{\bm{C}} \in \mathbb{Z}_{\ge 0}^{n-1}$,
		\begin{flalign}
			\label{psik}
			\begin{aligned}
				\Psi_{\varepsilon} ( k; \overline{\bm{C}})  & = q^{\varphi (\overline{\bm{D}}, \overline{\bm{C}}) + \overline{c} (\mathsf{B}_n - A_n - \overline{d})} \displaystyle\frac{(q;q)_{\mathsf{b}}}{(q;q)_{\mathsf{B}_n - A_n -\overline{d}}} \cdot  \displaystyle\prod_{j=1}^{n-1} \displaystyle\frac{(q;q)_{C_j+D_j}}{(q;q)_{C_j} (q;q)_{D_j}} \cdot q^{k(\mathsf{B}_n - A_n + \overline{c} + k)} \\
				& \quad \times \displaystyle\frac{(q^{A_n-k+1}; q)_k}{(q; q)_k (q^{\mathsf{B}_n - A_n - \overline{d}+1}; q)_k} \displaystyle\sum_{\overline{\bm{P}} \le \overline{\bm{B}}, \overline{\bm{C}}} q^{\overline{p} (A_n - k + 1) + \varphi (\overline{\bm{P}}, \overline{\bm{D}} - \overline{\bm{B}})} \displaystyle\prod_{j=1}^{n-1} \displaystyle\frac{(q^{-B_j}; q)_{P_j} (q^{-C_j}; q)_{P_j}}{(q;q)_{P_j} (q^{-C_j - D_j}; q)_{P_j}},
			\end{aligned}
		\end{flalign}
		
		\noindent where $\overline{\bm{D}} = \overline{\bm{A}} + \overline{\bm{B}} - \overline{\bm{C}}$.
		
	\end{definition} 
	
	\begin{rem}
		
		\label{usumpsi} 
		
		By \Cref{ugamma}, we have 
		\begin{flalign}
			\label{upsi} 
			\mathcal{U}_{\gamma}^{\qdp} \big( \bm{A}, (\overline{\bm{B}}, \mathsf{B}_n); \bm{C}, (\overline{\bm{D}}, \mathsf{D}_n) \big) = \displaystyle\sum_{k=0}^{C_n} \Phi_{\varepsilon} (C_n - k) \cdot \Psi_{\varepsilon} (k; \overline{\bm{C}}).
		\end{flalign}
		
		\noindent Hence, to sample $\bm{C}$ as in \Cref{abcgamma}, we may first sample $\ell$ and $(k; \overline{\bm{C}})$ under the density functions $\Phi_{\varepsilon}$ and $\Psi_{\varepsilon}$ (the fact that these give stochastic weights is due to \Cref{sum1} below), respectively, and then set $\bm{C} = (\overline{\bm{C}}, k + \ell)$. 
		
	\end{rem}

	\begin{lem} 
		
		\label{sum1} 
		
		We have  
		\begin{flalign}
			\label{psisum1}
			\displaystyle\sum_{\ell=0}^{\infty} \Phi_{\varepsilon} (\ell) = 1; \qquad \displaystyle\sum_{\substack{k \ge 0 \\ \overline{\bm{C}} \in \mathbb{Z}_{\ge 0}^{n-1}}} \Psi_{\varepsilon} (k; \overline{\bm{C}}) = 1. 
		\end{flalign}
		
	\end{lem}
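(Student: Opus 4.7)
The plan is to derive both identities from a single stochasticity input, together with the decomposition \eqref{upsi} and one classical $q$-series identity.

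First I would establish the first equality $\sum_{\ell \ge 0} \Phi_\varepsilon(\ell) = 1$ directly from the $q$-exponential theorem, which asserts that $\sum_{\ell \ge 0} \gamma^\ell / (q;q)_\ell = 1/(\gamma; q)_\infty$ for $|\gamma|, |q| < 1$. Plugging into \eqref{gammaq} gives the claim immediately; this is a one-line verification and not where any substance lies.

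The main step is the second equality. Here I would invoke the stochasticity of the $\mathcal{U}^{\qdp}$-weights, as recorded in \Cref{stochasticqdp}: for any fixed $\bm{A}$ and $(\overline{\bm{B}}, \mathsf{B}_n)$,
\begin{equation*}
\sum_{\bm{C}, (\overline{\bm{D}}, \mathsf{D}_n)} \mathcal{U}_\gamma^{\qdp}\bigl(\bm{A}, (\overline{\bm{B}}, \mathsf{B}_n); \bm{C}, (\overline{\bm{D}}, \mathsf{D}_n)\bigr) = 1.
\end{equation*}
Next I would observe that the indicators $\mathbbm{1}_{\overline{\bm{A}} + \overline{\bm{B}} = \overline{\bm{C}} + \overline{\bm{D}}}$ and $\mathbbm{1}_{\mathsf{b} - a = \mathsf{d} - c}$ appearing in \eqref{uqdp} uniquely determine $(\overline{\bm{D}}, \mathsf{D}_n)$ from $\bm{C}$ (given the fixed input data $\bm{A}, \overline{\bm{B}}, \mathsf{B}_n$), so the above sum collapses to one over $\bm{C} \in \mathbb{Z}_{\ge 0}^n$ only. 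Plugging in the convolution decomposition \eqref{upsi} from \Cref{usumpsi} and decomposing $\bm{C} = (\overline{\bm{C}}, C_n)$, the left side becomes
\begin{equation*}
\sum_{\overline{\bm{C}} \in \mathbb{Z}_{\ge 0}^{n-1}} \sum_{C_n \ge 0} \sum_{k=0}^{C_n} \Phi_\varepsilon(C_n - k) \, \Psi_\varepsilon(k; \overline{\bm{C}}).
\end{equation*}
Changing variables via $\ell = C_n - k$ and swapping sums yields
\begin{equation*}
\Biggl( \sum_{\substack{k \ge 0 \\ \overline{\bm{C}} \in \mathbb{Z}_{\ge 0}^{n-1}}} \Psi_\varepsilon(k; \overline{\bm{C}}) \Biggr) \cdot \Biggl( \sum_{\ell \ge 0} \Phi_\varepsilon(\ell) \Biggr) = 1.
\end{equation*}
Combining with the first identity $\sum_\ell \Phi_\varepsilon(\ell) = 1$ gives the second identity.

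There is essentially no obstacle here beyond verifying that the indicator constraints indeed force $(\overline{\bm{D}}, \mathsf{D}_n)$ to be uniquely determined by $\bm{C}$, which is a direct bookkeeping check from \eqref{bbdd} and the arrow-conservation indicators in \eqref{uqdp}; one should also check that the inner sums converge absolutely so that the interchange of summations is justified, which follows because both $\Phi_\varepsilon$ and $\Psi_\varepsilon$ arise from the (absolutely convergent) expansion of a stochastic weight. The real content is thus entirely packaged into \Cref{stochasticqdp}.
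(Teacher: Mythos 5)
Your proposal is correct and is essentially the paper's own argument, just written in the opposite direction: the paper starts from $\sum_{k,\overline{\bm{C}}}\Psi_\varepsilon$, multiplies by $\sum_\ell \Phi_\varepsilon(\ell)=1$, and recombines via \eqref{upsi} into the stochastic sum of $\mathcal{U}_\gamma^{\qdp}$, whereas you start from that stochastic sum, decompose via \eqref{upsi}, and factor out $\sum_\ell\Phi_\varepsilon(\ell)=1$. Both use exactly the same three ingredients (the $q$-binomial theorem, the convolution \eqref{upsi}, and \Cref{stochasticqdp}), so there is no substantive difference.
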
 
	
	\begin{proof} 
		
		The first statement in \eqref{psisum1} holds by the $q$-binomial theorem. The second holds since  
		\begin{flalign*}
			\displaystyle\sum_{\substack{k \ge 0 \\ \overline{\bm{C}} \in \mathbb{Z}_{\ge 0}^{n-1}}} \Psi_{\varepsilon} (k; \overline{\bm{C}}) = \displaystyle\sum_{\substack{k, \ell \ge 0 \\ \overline{\bm{C}} \in \mathbb{Z}_{\ge 0}^{n-1}}} \Phi_{\varepsilon} (\ell) \cdot \Psi_{\varepsilon} (k; \overline{\bm{C}}) = \displaystyle\sum_{\substack{C_n \ge 0 \\ \overline{\bm{C}} \in \mathbb{Z}_{\ge 0}^{n-1} \\ \bm{D} \in \mathbb{Z}_{\ge 0}^n}} \mathcal{U}_{\gamma}^{\qdp} \big( \bm{A}, (\mathsf{B}_n, \overline{\bm{B}}); (\overline{\bm{C}}, C_n), (\mathsf{D}_n, \overline{\bm{D}}) \big) = 1, 
		\end{flalign*}
		
		\noindent where in the first equality we used the first statement of \eqref{psisum1}; in the second we used \eqref{upsi}; and in the third we used the stochasticity of $\mathcal{U}^{\qdp}$ (from \Cref{stochasticqdp}).
	\end{proof}
	
	We next have the following two lemmas that describe the probability measures arising as the limit when $\varepsilon$ tends to $0$ of those with distribution functions $\Phi_{\varepsilon}$ and $\Psi_{\varepsilon}$. The first shows that the former (as well as the $\mathcal{U}_z^{\bq}$ weights from \Cref{u3}) gives rise to the $\mathrm{Gamma}(\theta)$ random variable from \Cref{abcgamma}; it is due to \cite{SWP}. The second shows that the latter is given by the delta distribution at a specific pair of real number and $(n-1)$-tuple; we outline its proof (which is similar to previous results, such as \cite[Lemma 8.17]{RCRP} and \cite[Proposition 6.19]{SVMP}) in Section \ref{ProofSum2} below.

	\begin{lem}[{\cite[Lemma 2.1]{SWP}}]
		
		\label{gamma0} 
		
		The following two statements hold.
		
		\begin{enumerate} 
			\item Sample $\ell_{\varepsilon} \in \mathbb{Z}_{\ge 0}$ according to the distribution $\mathbb{P} [\ell_{\varepsilon} = \ell] = \Phi_{\varepsilon} (\ell)$. Then, $\varepsilon^{-1} \cdot e^{-\varepsilon \ell_{\varepsilon}}$ converges in distribution to a $\mathrm{Gamma} (\theta)$ random variable, as $\varepsilon$ tends to $0$. 
			
			\item Sample $\ell_{\varepsilon} \in \mathbb{Z}_{\ge 0}$ according to the distribution $\mathbb{P} [\ell_{\varepsilon} = \ell] = \mathcal{U}_{1/\gamma}^{\bq} (\ell)$. Then, $\varepsilon^{-1} \cdot e^{-\varepsilon \ell_{\varepsilon}}$ converges in distribution to a $\mathrm{Gamma} (\theta)$ random variable, as $\varepsilon$ tends to $0$. 
		\end{enumerate} 
	\end{lem}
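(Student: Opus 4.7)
The plan is to reduce both parts to a pointwise convergence statement for the scaled mass function under the bijection $y \leftrightarrow \ell$ given by $y = \varepsilon^{-1} e^{-\varepsilon \ell}$, and then to upgrade pointwise convergence of densities to weak convergence of $y_\varepsilon := \varepsilon^{-1} e^{-\varepsilon \ell_\varepsilon}$ using Scheff\'e's lemma (permissible since both PMFs under consideration are nonnegative and sum to $1$, and the target $\mathrm{Gamma}(\theta)$ density is continuous).

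The two analytic inputs I would isolate first are: (i) the convergence of the $q$-Gamma function $\Gamma_q(\theta) := (1-q)^{1-\theta}(q;q)_\infty/(q^\theta;q)_\infty$ to $\Gamma(\theta)$ as $q = e^{-\varepsilon} \to 1^-$, which is classical; and (ii) the pointwise asymptotic $(q^{a+\ell};q)_\infty \to e^{-q^a y}$ for any fixed $a \in \mathbb{R}$, valid under the scaling $q^{\ell} \sim \varepsilon y$ corresponding to the bijection above. I would prove (ii) from the logarithmic expansion
\[
-\log (q^{a+\ell};q)_\infty = \sum_{k\ge 1} \frac{q^{k(a+\ell)}}{k(1-q^k)},
\]
observing that the $k=1$ term is $q^{a+\ell}/(1-q) \sim q^a y$ while for each $k \ge 2$ the numerator $q^{k(a+\ell)}$ is $O(\varepsilon^k)$ and the denominator is $O(\varepsilon)$, so the tail contribution is $O(\varepsilon)$.

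For part (1), I would rewrite
\[
\Phi_\varepsilon(\ell) = \frac{(q^\theta;q)_\infty}{(q;q)_\infty} \cdot \gamma^\ell \cdot (q^{\ell+1};q)_\infty = \frac{(1-q)^{1-\theta}}{\Gamma_q(\theta)} \gamma^\ell (q^{\ell+1};q)_\infty
\]
via the $q$-Gamma identity. Under the substitution, the three factors satisfy $(1-q)^{1-\theta} \sim \varepsilon^{1-\theta}$, $\gamma^\ell = e^{-\varepsilon \theta \ell} = (\varepsilon y)^\theta$, and $(q^{\ell+1};q)_\infty \to e^{-y}$ by input (ii), so $\Phi_\varepsilon(\ell) \sim \varepsilon y^\theta e^{-y}/\Gamma(\theta)$. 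Dividing by the Jacobian $|dy/d\ell| = \varepsilon y$ produces exactly $y^{\theta-1}e^{-y}/\Gamma(\theta)$, the $\mathrm{Gamma}(\theta)$ density.

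Part (2) follows the same blueprint after the factorization
\[
\mathcal{U}_{1/\gamma}^{\bq}(\ell) = \gamma^\ell \cdot \frac{(\gamma;q)_\infty}{(q;q)_\infty} \cdot \frac{(q^{\ell+1};q)_\infty}{(q^{\theta+\ell};q)_\infty},
\]
together with the $q$-Gamma rewriting of $(\gamma;q)_\infty/(q;q)_\infty$ used above. The hard part will be the ratio $(q^{\ell+1};q)_\infty/(q^{\theta+\ell};q)_\infty$: both factors have the same leading asymptotic $e^{-y}$ by input (ii), so a naive substitution loses the $e^{-y}$ needed to produce the $\mathrm{Gamma}(\theta)$ density. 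The resolution I would pursue is to track the subleading terms in the series expansion of $-\log(q^{\ell+1};q)_\infty$ and $-\log(q^{\theta+\ell};q)_\infty$ simultaneously, so that the difference of the $k=1$ contributions combines with the correction from $q^\theta = 1 - \theta \varepsilon + O(\varepsilon^2)$ to supply the required exponential factor at the correct order. Once this refined asymptotic is in place, combining with the Jacobian as in part (1) yields the same $\mathrm{Gamma}(\theta)$ density, and Scheff\'e's lemma completes both statements.
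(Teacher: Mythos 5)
Your Part (1) argument is correct: the rewriting of $\Phi_\varepsilon(\ell)$ via $(q;q)_\ell^{-1} = (q^{\ell+1};q)_\infty/(q;q)_\infty$, the $q$-Gamma identity $(q^\theta;q)_\infty/(q;q)_\infty = (1-q)^{1-\theta}/\Gamma_q(\theta)$, the asymptotic $(q^{\ell+1};q)_\infty\to e^{-y}$ along the lattice $q^\ell=\varepsilon y$, the Jacobian factor $\varepsilon y$, and Scheff\'e together deliver the $\mathrm{Gamma}(\theta)$ limit. Since the paper cites \cite[Lemma~2.1]{SWP} without reproducing a proof, there is nothing internal to compare against, but your Part (1) is a legitimate self-contained argument.

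Part (2), however, has a real gap, and the resolution you sketch for the ``hard part'' does not hold up. You correctly identify that $(q^{\ell+1};q)_\infty/(q^{\theta+\ell};q)_\infty\to 1$ at leading order, and you assert that tracking subleading terms will supply the missing $e^{-y}$. Carrying out that computation shows it does not: with $q^\ell=\varepsilon y$,
\[
\log\frac{(q^{\ell+1};q)_\infty}{(q^{\theta+\ell};q)_\infty}
= \sum_{k\ge 1}\frac{q^{k\ell}\,(q^{k\theta}-q^k)}{k(1-q^k)},
\]
and the $k=1$ term equals $q^\ell\cdot\dfrac{q^\theta-q}{1-q}=\varepsilon y\,(1-\theta)+O(\varepsilon^2)$, while the $k\ge 2$ terms are $O(\varepsilon^k)$. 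Every summand vanishes, so the ratio tends to $1$ including the subleading order, and no $e^{-y}$ appears. Combining with the rest of your factorization then gives $\mathcal{U}_{1/\gamma}^{\bq}(\ell)/(\varepsilon y)\to y^{\theta-1}/\Gamma(\theta)$, which is not a probability density (it is non-integrable at $+\infty$); the proposed route therefore cannot succeed as written.

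There is also a warning sign in the setup you should have caught: by the $q$-binomial theorem, $\sum_{k\ge 0}z^{-k}\,(z^{-1};q)_k/(q;q)_k = (z^{-2};q)_\infty/(z^{-1};q)_\infty$, which is not $1$ for generic $z$, so the formula displayed for $\mathcal{U}_z^{\bq}$ in Definition~\ref{u3} does not actually define a stochastic weight in the form you used it. Before attempting to patch Part (2) you should consult \cite[Lemma~2.1]{SWP} to pin down the intended form of $\mathcal{U}_{1/\gamma}^{\bq}$; once the correct weight is in hand, the Part (1) machinery should transfer, but at present Part (2) is not established.
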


	\begin{lem}
		
		\label{sumpdb} 
		
		Sample an $(n-1)$-tuple and integer $(\overline{\bm{C}}; k) \in \mathbb{Z}_{\ge 0}^{n-1} \times \mathbb{Z}_{\ge 0}$ with probability $\Psi_{\varepsilon} (\overline{\bm{C}}; k)$; denote $\bm{\mathfrak{c}} = (\mathfrak{c}_1, \mathfrak{c}_2, \ldots , \mathfrak{c}_{n-1}) = \varepsilon \cdot \overline{\bm{C}} \in \mathbb{R}_{\ge 0}^{n-1}$ and $\mathfrak{k} = \varepsilon k \in \mathbb{R}$. As $\varepsilon$ tends to $0$, we have that
		\begin{flalign}
			\label{kcj0} 
			e^{\mathfrak{k} + \mathfrak{c}_{[j,n-1]}} \quad \text{converges in probability to the constant} \quad e^{\mathfrak{a}_{[j,n]} - \mathfrak{b}_n + \mathfrak{b}_{[j,n-1]}} + 1, \quad \text{for each $j \in \llbracket 1, n \rrbracket$}.
		\end{flalign}
		
	\end{lem}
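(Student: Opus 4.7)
Since convergence in probability to a deterministic constant is equivalent to weak convergence to a Dirac mass, it suffices to show that the probability measure $\Psi_\varepsilon$ concentrates, as $\varepsilon \to 0$, at the unique point $(\mathfrak{k}^*, \bm{\mathfrak{c}}^*)$ satisfying the relations \eqref{kcj0}. The strategy is to apply Laplace's method to the explicit product formula \eqref{psik}: after rewriting $\Psi_\varepsilon$ in a sign-positive form, its logarithm becomes a Riemann-sum approximation of a concave functional of the rescaled variables, and the maximum of that functional yields the announced concentration point. The stochasticity of $\Psi_\varepsilon$ (\Cref{sum1}) pins the unique point of concentration to this maximum.

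\textbf{Step~1 (sign-positive form).} The inner sum in \eqref{psik} involves the factors $(q^{-B_j};q)_{P_j}$ and $(q^{-C_j};q)_{P_j}$, which can take large positive and negative values. Applying the identity $(q^{-m};q)_p = (-1)^p q^{-mp + \binom{p}{2}}\,(q;q)_m / (q;q)_{m-p}$ to all such symbols converts the inner sum into one of manifestly nonnegative summands consisting of ratios of standard $q$-Pochhammer symbols $(q;q)_\bullet$, multiplied by explicit powers of $q$. After this recombination, the $\bm{A},\bm{B}$-dependent prefactor and all powers of $q$ in $\Psi_\varepsilon$ are products of such $q$-factorials (together with some extra scalar powers of $q$).

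\textbf{Step~2 (continuum asymptotics).} Use the uniform expansion $\log(q;q)_m = \varepsilon^{-1}\mathcal{L}(\varepsilon m) + O(\log(1+\varepsilon m))$, where $\mathcal{L}(x) = \int_0^x \log(1-e^{-u})\,du$, valid for $q = e^{-\varepsilon}$. Introduce the rescaled variables $\mathfrak{k} = \varepsilon k$, $\mathfrak{c}_j = \varepsilon C_j$, and $\mathfrak{p}_j = \varepsilon P_j$, noting from \eqref{anaj} that $q^{A_n}, q^{\mathsf{B}_n} \sim \varepsilon e^{-\mathfrak{a}_n}, \varepsilon e^{-\mathfrak{b}_n}$ produce genuine shifts $\log\varepsilon^{-1}$ in the arguments of $\mathcal{L}$. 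Summing over $\overline{\bm{P}}$ then becomes a Riemann sum approximating
\[
\varepsilon^{-(n-1)} \int \exp\!\bigl(\varepsilon^{-1}\mathcal{G}(\bm{\mathfrak{p}};\mathfrak{k},\bm{\mathfrak{c}})\bigr)\,d\bm{\mathfrak{p}}
\]
for an explicit $\mathcal{G}$ assembled from $\mathcal{L}$-values and linear terms. Evaluating the inner integral by Laplace's method at its (unique, by strict concavity of $\mathcal{L}$) critical point in $\bm{\mathfrak{p}}$ and substituting back yields $\Psi_\varepsilon(k;\overline{\bm{C}}) \asymp \varepsilon^{O(1)}\exp(\varepsilon^{-1}\mathcal{F}(\mathfrak{k},\bm{\mathfrak{c}}))$ for a deterministic function $\mathcal{F}$.

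\textbf{Step~3 (identifying the limit).} By \Cref{sum1}, $\sum_{k,\overline{\bm{C}}} \Psi_\varepsilon = 1$, so the effective rate function $-\mathcal{F}$ is nonnegative and vanishes exactly on the support of the limiting measure. Writing out the critical-point equations of $\mathcal{F}$ in $(\mathfrak{k},\bm{\mathfrak{c}})$: each partial derivative couples the variables through $\log(1 + e^{\bullet})$ expressions in which the arrow-conservation identity $\overline{\bm{D}} = \overline{\bm{A}}+\overline{\bm{B}} - \overline{\bm{C}}$ produces a telescoping structure. Rearranging yields exactly the relations $e^{\mathfrak{k} + \mathfrak{c}_{[j,n-1]}} = e^{\mathfrak{a}_{[j,n]} - \mathfrak{b}_n + \mathfrak{b}_{[j,n-1]}} + 1$. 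Strict concavity of $\mathcal{F}$ (inherited from that of $\mathcal{L}$) gives uniqueness, so $\Psi_\varepsilon$ concentrates at this point; this is \eqref{kcj0}.

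\textbf{Main obstacle.} The technical heart of the argument is Step~2: verifying that the inner saddle-point integration in $\bm{\mathfrak{p}}$ commutes with the outer summation uniformly in $(\mathfrak{k},\bm{\mathfrak{c}})$, with no pathological contributions from the boundary of the summation region $\overline{\bm{P}} \le \overline{\bm{B}}, \overline{\bm{C}}$ or from summands with $\varepsilon\overline{\bm{C}}$ or $\varepsilon k$ escaping to infinity. Controlling these tails (by Gaussian comparison away from the saddle and by strict concavity of $\mathcal{L}$ near the boundary) is what converts the formal Laplace approximation into an honest concentration statement. Once this is in place, the telescoping identification of the maximizer in Step~3 is a routine computation.
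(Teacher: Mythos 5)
Your overall strategy — identify a concave rate function in the rescaled variables $(\mathfrak{k},\bm{\mathfrak{c}})$ via the asymptotics $\varepsilon\log(q;q)_m \to \int_0^{\varepsilon m}\log(1-e^{-u})\,du$, show strict concavity, and solve the critical-point equations to locate the concentration point — is the same as the paper's. The final solve in Step~3, telescoping the system using $\overline{\bm{D}}=\overline{\bm{A}}+\overline{\bm{B}}-\overline{\bm{C}}$, also matches the paper (which splits the rate function into one-variable pieces $G(\mathfrak{k})$ and $G_i(\mathfrak{c}_i)$, but this is only a bookkeeping difference). However, your treatment of the inner $\overline{\bm{P}}$-sum in Steps~1 and~2 has a genuine gap.

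First, the sign-cancellation claim in Step~1 is false. Applying $(q^{-m};q)_p=(-1)^p q^{-mp+\binom{p}{2}}(q;q)_m/(q;q)_{m-p}$ to $(q^{-B_j};q)_{P_j}$, $(q^{-C_j};q)_{P_j}$ in the numerator and $(q^{-C_j-D_j};q)_{P_j}$ in the denominator produces $(-1)^{P_j}\cdot(-1)^{P_j}/(-1)^{P_j}=(-1)^{P_j}$; the inner sum remains an alternating sum, not one of nonnegative summands. A direct Laplace/Riemann-sum argument does not apply to an alternating sum without first controlling cancellations.

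Second, and more importantly, the Laplace picture for $\overline{\bm{P}}$ at scale $\varepsilon^{-1}$ in Step~2 is structurally wrong. From \eqref{anaj} one has $A_n\sim\varepsilon^{-1}(\log\varepsilon^{-1}+\mathfrak{a}_n)$. After restricting to $k\lesssim(2\varepsilon)^{-1}\log\varepsilon^{-1}$ (a reduction you omit, needed so that $A_n-k$ is still $\gtrsim\tfrac{1}{2}\varepsilon^{-1}\log\varepsilon^{-1}$), the factor $q^{\overline p(A_n-k+1)}$ satisfies $q^{\overline p(A_n-k+1)}\le\varepsilon^{\overline p/3}$ for small $\varepsilon$. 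This means the summands decay super-exponentially at scale $\overline p=O(1)$, while the remaining $\overline{\bm{P}}$-dependent factors grow at most exponentially in $\overline p$ with a uniformly bounded base. Consequently the sum is dominated by the single term $\overline{\bm{P}}=\bm{e}_0$ (which equals $1$) and converges to $1$; there is no continuum saddle at positive $\bm{\mathfrak{p}}=\varepsilon\overline{\bm{P}}$. Your proposed $\mathcal{G}(\bm{\mathfrak{p}};\mathfrak{k},\bm{\mathfrak{c}})$ would necessarily contain a term $-\mathfrak{p}\log\varepsilon^{-1}$ and therefore would not be $\varepsilon$-independent, so $\varepsilon^{-(n-1)}\int\exp(\varepsilon^{-1}\mathcal{G})\,d\bm{\mathfrak{p}}$ is not a standard Laplace integral; its mass collapses to $\bm{\mathfrak{p}}=0$ trivially. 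The paper handles this by directly bounding the non-$\bm{e}_0$ terms and replacing the $\overline{\bm{P}}$-sum by $1$ before any rate-function analysis. Once you make that replacement, your Step~3 (and the subsequent maximization in $\mathfrak{k}$ and $\bm{\mathfrak{c}}$) goes through as in the paper.
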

	
	Now we can quickly establish \Cref{abcgamma}. 
	
	\begin{proof}[Proof of \Cref{abcgamma}]
		
		This follows from \Cref{usumpsi}, \Cref{gamma0}, and \Cref{sumpdb}.	
	\end{proof} 
	
	\subsection{Proof Outline of \Cref{sumpdb}}
	
	\label{ProofSum2}
	
	In this section we outline the proof of \Cref{sumpdb}.

	\begin{proof}[Proof of \Cref{sumpdb} (Outline)]
		
		First, it is quickly verified that the contribution to the second sum in \eqref{psisum1} coming from $k \ge (2 \varepsilon^{-1}) \log \varepsilon^{-1}$ becomes negligible as $\varepsilon$ tends to $0$, due to the factor of $q^{k (k + \mathsf{B}_n - A_n + \overline{c})}$ on the right side of \eqref{psik} (namely, the fact that the exponent of $q$ is quadratic in $k$). We may therefore restrict our attention to when $k \le (2 \varepsilon)^{-1} \log \varepsilon^{-1}$, in which case it can be confirmed that
		\begin{flalign}
			\label{sumplimit} 
			\displaystyle\lim_{\varepsilon \rightarrow 0} \displaystyle\sum_{\overline{\bm{P}} \le \overline{\bm{B}}, \overline{\bm{C}}} q^{\overline{p} (A_n - k + 1) + \varphi (\overline{\bm{P}}, \overline{\bm{D}} - \overline{\bm{B}})} \displaystyle\prod_{j=1}^{n-1} \displaystyle\frac{(q^{-B_j}; q)_{P_j} (q^{-C_j}; q)_{P_j}}{(q;q)_{P_j} (q^{-C_j - D_j}; q)_{P_j}} = 1.
		\end{flalign}
		
		\noindent Indeed, for sufficiently small $\varepsilon$, we have $q^{\overline{p} (A_n - k + 1)} \le \varepsilon^{\overline{p}/3}$ (recalling \eqref{anaj}, \eqref{qgammaq}, and the fact that $k \le (2 \varepsilon)^{-1} \log \varepsilon^{-1}$), while the remaining factors in \eqref{sumplimit} grow at most exponentially in $\overline{p}$ (with a uniformly bounded base). Therefore the sum on the left side of \eqref{sumplimit} is asymptotically supported on the term $\bm{P} = \bm{e}_0$, for which $p=0$, which gives \eqref{sumplimit}. 
		
		The remaining factors on the right side of \eqref{psik} are nonnegative, since $q \in (0, 1)$. Let us analyze how their $k$-dependent parts 
		\begin{flalign}
			\label{kcf} 
			F_{\varepsilon} (k; \bm{\overline{C}}) = q^{k (k + \mathsf{B}_n - A_n + \overline{c})} \displaystyle\frac{(q^{A_n-k+1}; q)_k}{(q;q)_k (q^{\mathsf{B}_n - A_n - \overline{d} + 1}; q)_k},
		\end{flalign} 
		
		\noindent behave as $\varepsilon$ tends to $0$. Observe for any real numbers $y, z \in \mathbb{R}$ (with $y \ge 0$) that
		\begin{flalign}
			\label{qqa} 
			\varepsilon \cdot \log (q^{z/\varepsilon}; q)_{\lfloor y/\varepsilon \rfloor} = \varepsilon \displaystyle\sum_{j=0}^{\lfloor y/\varepsilon \rfloor} \log (1 - q^{z/\varepsilon + j}) = \varepsilon \displaystyle\sum_{x \in [0, y] \cap \varepsilon \cdot \mathbb{Z}} \log (1 - e^{-x-z}) \sim \displaystyle\int_0^y \log (e^{-x-z}) dx,
		\end{flalign}
		
		\noindent as $\varepsilon$ tends to $0$. Applying this in \eqref{kcf}, and setting $\mathfrak{k} = \varepsilon k$ and $\bm{\overline{\mathfrak{c}}} = (\mathfrak{c}_1, \mathfrak{c}_2, \ldots , \mathfrak{c}_{n-1}) = \varepsilon \cdot \overline{\bm{C}}$, we obtain 
		\begin{flalign}
			\label{0f}
			\varepsilon \cdot \log F_{\varepsilon} (k; \overline{\bm{C}}) & \sim \mathfrak{k} (\mathfrak{a}_n - \mathfrak{b}_n - \mathfrak{k} - \mathfrak{c}_{[1,n-1]}) - \displaystyle\int_0^{\mathfrak{k}} \log (1 - e^{-x}) dx \\
			& \qquad - \displaystyle\int_0^{\mathfrak{k}} \log (1 - e^{\mathfrak{a}_{[1,n]} + \mathfrak{b}_{[1,n-1]} - \mathfrak{b}_n -\mathfrak{c}_{[1,n-1]} - x}) dx,
		\end{flalign}
		
		\noindent where we used the facts that $\overline{d} = \overline{a} + \overline{b} - \overline{c}$ and that $\varepsilon \cdot \log (q^{A_n-k+1}; q)_k \sim 0$ (as $\varepsilon A_n$ grows faster than any constant as $\varepsilon$ tends to $0$, by \eqref{anaj}). Denoting the right side of \eqref{0f} by $G(\mathfrak{k})$, we find  
		\begin{flalign*}
			& G' (\mathfrak{k}) = \mathfrak{a}_n - \mathfrak{b}_n -\mathfrak{c}_{[1,n-1]} - 2 \mathfrak{k} - \log (1 - e^{-\mathfrak{k}}) - \log (1 - e^{\mathfrak{a}_{[1,n]} + \mathfrak{b}_{[1,n-1]} - \mathfrak{b}_n - \mathfrak{c}_{[1,n-1]} - \mathfrak{k}}); \\
			& G'' (\mathfrak{k}) = -2 - \displaystyle\frac{e^{-\mathfrak{k}}}{1 - e^{-\mathfrak{k}}} - \displaystyle\frac{e^{\mathfrak{a}_{[1,n]} + \mathfrak{b}_{[1,n-1]} - \mathfrak{b}_n - \mathfrak{c}_{[1,n-1]} - \mathfrak{k}}}{1 - e^{\mathfrak{a}_{[1,n]} + \mathfrak{b}_{[1,n-1]} - \mathfrak{b}_n - \mathfrak{c}_{[1,n-1]} - \mathfrak{k}}} \le -2 < 0.
		\end{flalign*}
		
		\noindent Due to the negativity of $G''$, we deduce that $G$ is maximized at the solution $\mathfrak{k}$ to the equation $G' (\mathfrak{k}) = 0$, or equivalently to the unique nonnegative solution of  
		\begin{flalign}
			\label{k0abc}
			(e^{\mathfrak{k}}-1) (e^{\mathfrak{k}} - e^{\mathfrak{a}_{[1,n]} - \mathfrak{b}_n + \mathfrak{b}_{[1,n-1]} - \mathfrak{c}_{[1,n-1]}}) = e^{\mathfrak{a}_n - \mathfrak{b}_n - \mathfrak{c}_{[1,n-1]}}.
		\end{flalign}
		
		\noindent Hence, as $\varepsilon$ tends to $0$, the function $F_{\varepsilon} (k; \overline{\bm{C}})$, and thus $\Psi_{\varepsilon} (k; \overline{\bm{C}})$ is maximized when $\varepsilon k = \mathfrak{k}$. By the strict concavity of $G$, these functions decay exponentially in $\varepsilon^{-1/2} \cdot |\varepsilon k - \mathfrak{k}|$. Hence, sampling $(k; \overline{\bm{C}})$ under $\Psi$, we find that \eqref{k0abc} must hold as $\varepsilon$ tends to $0$.
		
		We next analyze the terms in $\Psi_{\varepsilon} (k; \bm{\overline{C}})$ that asymptotically depend on a given $C_i$. Denoting
		\begin{flalign*}
			R = \mathsf{B}_n - A_n - \overline{a} - \overline{b} + k,
		\end{flalign*} 
		
		\noindent this quantity is given by (using \eqref{sumplimit} with the fact that $\overline{\bm{D}} = \overline{\bm{A}} + \overline{\bm{B}} - \overline{\bm{C}}$)  
		\begin{flalign*}
			F_{\varepsilon; i} (k; \overline{\bm{C}}) = q^{\varphi (\overline{\bm{A}} + \overline{\bm{B}} - \overline{\bm{C}}, \overline{\bm{C}}) + \overline{c} (R + \overline{c})} \displaystyle\frac{1}{(q; q)_{R + \overline{c}}} \displaystyle\frac{1}{(q; q)_{C_i} (q; q)_{A_j + B_j - C_i}}.
		\end{flalign*}
		
		\noindent Using \eqref{qqa}, and setting 
		\begin{flalign}
			\label{rk} 
			\mathfrak{r} =  \mathfrak{b}_n - \mathfrak{a}_{[1,n]} - \mathfrak{b}_{[1,n-1]} + \mathfrak{k} \sim \varepsilon R; \qquad \mathfrak{p}_j = \mathfrak{a}_j + \mathfrak{b}_j,
		\end{flalign} 
		
		\noindent for each $j \in \llbracket 1, n-1 \rrbracket$, it follows that 
		\begin{flalign}
			\label{0fi} 
			\begin{aligned} 
				\varepsilon \cdot \log F_{\varepsilon; i} (k; \overline{\bm{C}}) & \sim \displaystyle\sum_{1 \le j < h \le n-1} \mathfrak{c}_j \mathfrak{c}_h - \displaystyle\sum_{j=1}^{n-1} \mathfrak{p}_{[1,j-1]} \mathfrak{c}_j - \mathfrak{c}_{[1,n-1]} \mathfrak{r} - \mathfrak{c}_{[1,n-1]}^2 - \displaystyle\int_0^{\mathfrak{r} + \mathfrak{c}_{[1,n-1]}} \log (1 - e^{-x}) dx \\
				& \qquad - \displaystyle\int_0^{\mathfrak{c}_i} \log (1 - e^{-x}) dx - \displaystyle\int_0^{\mathfrak{p}_i - \mathfrak{c}_i} \log (1 - e^{-x}) dx.
			\end{aligned} 	
		\end{flalign}
		
		\noindent Denoting the right side of \eqref{0fi} (as a function in $\mathfrak{c}_i$) by $G_i (\mathfrak{c}_i)$, we find that 
		\begin{flalign*}
			G_i' (\mathfrak{c}_i) & = \displaystyle\sum_{j \ne i} \mathfrak{c}_i + \mathfrak{p}_{[1,i-1]} - \mathfrak{r} - 2 \mathfrak{c}_{[1,n-1]} - \log (1 - e^{-\mathfrak{c}_i}) - \log (1 - e^{-\mathfrak{r} - \mathfrak{c}_{[1,n-1]}}) + \log (1 - e^{\mathfrak{c}_i - \mathfrak{p}_i}) \\
			& = \log (1 - e^{\mathfrak{c}_i - \mathfrak{p}_i}) - \mathfrak{p}_{[1, i-1]} - \log (e^{\mathfrak{c}_i}-1) - \log (e^{\mathfrak{r} + \mathfrak{c}_{[1,n-1]}}-1),
		\end{flalign*}
		
		\noindent and hence
		\begin{flalign*}
			G_i'' (\mathfrak{c}_i) = - \displaystyle\frac{e^{\mathfrak{c}_i-\mathfrak{p}_i}}{1-e^{\mathfrak{c}_i - \mathfrak{p}_i}} - \displaystyle\frac{e^{\mathfrak{c_i}}}{e^{\mathfrak{c}_i}-1} - \displaystyle\frac{e^{\mathfrak{r} + \mathfrak{c}_{[1,n-1]}}}{e^{\mathfrak{r} + \mathfrak{c}_{[1,n-1]}} - 1} < -2.
		\end{flalign*}
		
		Due to this negativity of $G_i''$, we deduce that $G_i$ is maximized at the solution $\mathfrak{c}_i$ to the equation $G_i' (\mathfrak{c}_i) = 0$, or equivalently the solution of 
		\begin{flalign*}
			\displaystyle\frac{\mathfrak{e}^{\mathfrak{p}_i}- e^{\mathfrak{c_i}}}{e^{\mathfrak{c}_i} - 1} = e^{\mathfrak{p}_{[1,i]}} \cdot (e^{\mathfrak{r} + \mathfrak{c}_{[1,n-1]}} - 1).
		\end{flalign*} 
		
		\noindent Setting
		\begin{flalign}
			\label{etaeta} 
			\eta = \displaystyle\frac{(e^{\mathfrak{r} + \mathfrak{c}_{[1,n-1]}} - 1) e^{\mathfrak{p}_{[1,n-1]}}}{(e^{\mathfrak{r} + \mathfrak{c}_{[1,n-1]}} - 1) e^{\mathfrak{p}_{[1,n-1]}} + 1}, \qquad \text{so that} \qquad \displaystyle\frac{1-\eta}{\eta} = e^{-\mathfrak{p}_{[1,n-1]}} (e^{\mathfrak{r} + \mathfrak{c}_{[1,n-1]}} - 1)^{-1},
		\end{flalign}
		
		\noindent it follows that $\mathfrak{c}_i$ solves 
		\begin{flalign}
			\label{psiciequation}
			\displaystyle\frac{e^{\mathfrak{c}_i} - 1}{e^{\mathfrak{p}_i} - e^{\mathfrak{c}_i}} = e^{\mathfrak{p}_{[i+1, n-1]}} \cdot \displaystyle\frac{1-\eta}{\eta}, \qquad \text{or equivalently} \qquad e^{\mathfrak{c}_i} = \displaystyle\frac{\eta + e^{\mathfrak{p}_{[i,n-1]}} (1 - \eta)}{\eta + e^{\mathfrak{p}_{[i+1,n-1]}} (1-\eta)}.
		\end{flalign}
		
		\noindent Hence, as $\varepsilon$ tends to $0$, the function $F_{\varepsilon; i} (k; \overline{\bm{C}})$, and thus $\Psi_{\varepsilon} (k; \overline{\bm{C}})$, is maximized when $\varepsilon C_i = \mathfrak{c}_i$ solves the equation \eqref{psiciequation}. By the strict concavity of $G_i$, these functions decay exponentially in $\varepsilon^{-1/2} \cdot |\varepsilon C_i - \mathfrak{c}_i|$. Hence, sampling $(k; \overline{\bm{C}})$ under $\Psi$, we find that \eqref{psiciequation} must hold for all $i \in \llbracket 1, n - 1 \rrbracket$, as $\varepsilon$ tends to $0$; recall that \eqref{k0abc} also must hold. 
		
		Let us now solve these equations. Fixing some $j \in \llbracket 1, n - 1 \rrbracket$, and taking the product in \eqref{psiciequation} over all $i \in \llbracket j,n-1 \rrbracket$, we find that 
		\begin{flalign}
			\label{j1nc} 
			e^{\mathfrak{c}_{[j,n-1]}} =  \eta + e^{\mathfrak{p}_{[j,n-1]}} (1 - \eta), \qquad \text{for all $j \in \llbracket 1, n-1 \rrbracket$}.
		\end{flalign}
		
		\noindent At $j=1$, \eqref{j1nc} gives (using \eqref{etaeta} and \eqref{rk})
		\begin{flalign}
			\label{ke} 
			e^{\mathfrak{c}_{[1,n-1]}} = \displaystyle\frac{e^{\mathfrak{r} + \mathfrak{c}_{[1,n-1]} + \mathfrak{p}_{[1,n-1]}}}{(e^{\mathfrak{r} + \mathfrak{c}_{[1,n-1]}} - 1) e^{\mathfrak{p}_{[1,n-1]}} + 1}, \qquad \text{so} \qquad e^{\mathfrak{b}_n - \mathfrak{a}_n + \mathfrak{k}} = e^{\mathfrak{r} + \mathfrak{p}_{[1,n-1]}} = \displaystyle\frac{e^{\mathfrak{p}_{[1,n-1]}} - 1}{e^{\mathfrak{c}_{[1,n-1]}} - 1}.
		\end{flalign}
		
		\noindent Inserting this into \eqref{k0abc} yields
		\begin{flalign*}
			\bigg( \displaystyle\frac{e^{\mathfrak{a}_n - \mathfrak{b}_n} (e^{\mathfrak{p}_{[1,n-1]}} - 1)}{e^{\mathfrak{c}_{[1,n-1]}} - 1} - 1 \bigg)\cdot e^{\mathfrak{a}_n - \mathfrak{b}_n} \bigg( \displaystyle\frac{e^{\mathfrak{p}_{[1,n-1]}} - 1}{e^{\mathfrak{c}_{[1,n-1]}} - 1} - e^{\mathfrak{p}_{[1,n-1]} - \mathfrak{c}_{[1,n-1]}} \bigg) = e^{\mathfrak{a}_n - \mathfrak{b}_n - \mathfrak{c}_{[1,n-1]}},
		\end{flalign*}
		
		\noindent and thus 
		\begin{flalign*}
			e^{\mathfrak{a}_n - \mathfrak{b}_n} (e^{\mathfrak{p}_{[1,n-1]}} - 1) (e^{\mathfrak{p}_{[1,n-1]} - \mathfrak{c}_{[1,n-1]}} - 1) = (1 - e^{-\mathfrak{c}_{[1,n-1]}}) (e^{\mathfrak{p}_{[1,n-1]}} - 1).
		\end{flalign*}
		
		\noindent Hence, 
		\begin{flalign}
			\label{cn1k}
			e^{\mathfrak{c}_{[1,n-1]}} = \displaystyle\frac{e^{\mathfrak{a}_n - \mathfrak{b}_n + \mathfrak{p}_{[1,n-1]}} + 1}{e^{\mathfrak{a}_n-\mathfrak{b}_n} + 1}, \qquad \text{so} \qquad q^{\mathfrak{k}} = e^{\mathfrak{a}_n - \mathfrak{b}_n} + 1.
		\end{flalign}
		
		\noindent where in the last equality we applied \eqref{ke}. Together with \eqref{rk}, it follows that 
		\begin{flalign*} 
			e^{\mathfrak{r} + \mathfrak{c}_{[1,n-1]}} = e^{\mathfrak{b}_n - \mathfrak{a}_n - \mathfrak{p}_{[1,n-1]} + \mathfrak{k} + \mathfrak{c}_{[1,n-1]}} = e^{\mathfrak{b}_n - \mathfrak{a}_n - \mathfrak{p}_{[1,n-1]}} + 1,
		\end{flalign*} 
		
		\noindent and hence \eqref{etaeta} implies  
		\begin{flalign*}
			\eta = \displaystyle\frac{e^{\mathfrak{b}_n - \mathfrak{a}_n}}{e^{\mathfrak{b}_n - \mathfrak{a}_n} + 1}, \qquad \text{so} \qquad e^{\mathfrak{c}_{[j,n-1]}} = \displaystyle\frac{e^{\mathfrak{a}_n - \mathfrak{b}_n + \mathfrak{p}_{[j,n-1]}} + 1}{e^{\mathfrak{a}_n - \mathfrak{b}_n} + 1}, \quad \text{for any $j \in \llbracket 1, n-1 \rrbracket$},
		\end{flalign*}
		
		\noindent where in the last equality we used \eqref{j1nc}. This, together with \eqref{cn1k} yields \eqref{kcj0}.
	\end{proof}

	\subsection{The $q \rightarrow 1$ Limiting Weights in Transitionary Rows} 
	
	\label{WeightAq1}
	
	Recall that in Section \ref{Weightsq1} we analyzed the limits of the $\mathcal{U}_{\gamma}^{\qdp}$ weight (from \Cref{ugamma}), under the conditions \eqref{anaj} of its arguments. While \eqref{anaj} may hold in several rows of our eventual fused vertex model converging to the log-gamma polymer, it will not hold in all of them. In particular, there will be some rows in which $A_n = 0$, such as the lowest one in which an arrow of color $n$ enters the system. Such rows may be viewed as ``transitionary'' regions where the ``dominant color'' changes from $n-1$ to $n$, in the sense that vertices output about $\varepsilon^{-1} \log \varepsilon^{-1}$ vertical arrows of color $n-1$ in the previous row, but they output about $\varepsilon^{-1} \log \varepsilon^{-1}$ vertical arrows of color $n$ in this one. 
	
	In this section we analyze the limit as $q$ tends to $1$ of the $\mathcal{U}_{\gamma}^{\qdp}$ weight, under this regime $A_n = 0$. Throughout this section, we fix a real number $\theta > 0$ and nonnegative real numbers $(\mathfrak{a}_1, \mathfrak{a}_2, \ldots , \mathfrak{a}_{n-1})$ and $(\mathfrak{b}_1, \mathfrak{b}_2, \ldots , \mathfrak{b}_n)$. Moreover, let $\varepsilon \in (0, 1)$ be a real number; set $q = q_{\varepsilon}$ and $\gamma = \gamma_{\varepsilon, \theta}$ as in \eqref{qgammaq}; let $\mathsf{B}_n = \mathsf{B}_n^{\varepsilon} \in \mathbb{Z}_{\ge 0}$ be an integer; and let $\overline{\bm{A}} = \overline{\bm{A}}^{\varepsilon}$ and $\overline{\bm{B}} = \overline{\bm{B}}^{\varepsilon} \in \mathbb{Z}_{\ge 0}^{n-1}$ be $(n-1)$-tuples of integers such that for some (uniformly bounded) integer $\beta \ge 1$ we have 
	\begin{flalign}
		\label{anj}
		\begin{aligned} 
			& \qquad \qquad \qquad \qquad \quad \varepsilon A_j \sim \mathfrak{a}_j, \quad \text{and} \quad \varepsilon B_j \sim \mathfrak{b}_j, \quad \text{for each $j \in \llbracket 1, n - 2 \rrbracket$}; \\		
			& \varepsilon A_{n-1} - \log \varepsilon^{-1} \sim \mathfrak{a}_{n-1}; \quad A_n = 0; \quad  \varepsilon B_{n-1} - (\beta-1) \log \varepsilon^{-1} \sim \mathfrak{b}_{n-1}; \quad \varepsilon \mathsf{B}_n - \beta \log \varepsilon^{-1} \sim \mathfrak{b}_n.
		\end{aligned} 
	\end{flalign}
	
	The following lemma describes the limiting law of the $n$-tuple $\bm{C}$ sampled according to the stochastic weight $\mathcal{U}_{\gamma}^{\qdp}$ (from \Cref{ugamma}), as $\varepsilon$ tends to $0$. 
	
	\begin{lem} 
		
		\label{ulimitgamma2} 
		
		Let $q = q_{\varepsilon} \in (0, 1)$ and $\gamma = \gamma_{\varepsilon, \theta} \in (0, 1)$ be as in \eqref{qgammaq}; also let $\mathsf{B}_n \in \mathbb{Z}_{\ge 0}$, $\overline{\bm{B}} \in \mathbb{Z}_{\ge 0}^{n-1}$, and $\bm{A} \in \mathbb{Z}_{\ge 0}^n$ be as in \eqref{anj}. Sample $\big( \bm{C}, (\overline{\bm{D}}, \mathsf{D}_n) \big) \in \mathbb{Z}_{\ge 0}^n \times \mathbb{Z}_{\ge 0}^{n-1} \times \mathbb{Z}_{\ge 0}$ with probability $\mathcal{U}_{\gamma}^{\qdp} \big( \bm{A}, (\overline{\bm{B}}, \mathsf{B}_n); \bm{C}, (\overline{\bm{D}}, \mathsf{D}_n) \big)$, and denote 
		\begin{flalign*}
			\mathfrak{c}_n = \varepsilon C_n - \log \varepsilon^{-1}, \quad \text{and} \quad \mathfrak{c}_j = \varepsilon C_j, \quad \text{for each $j \in \llbracket 1, n-1 \rrbracket$}.
		\end{flalign*}
		
		\noindent The the joint law of $(\mathfrak{c}_{[j,n]})_{j \in \llbracket 1, n \rrbracket}$ converges to that of $\big( \log (e^{\mathfrak{a}_{[j,n-1]} - \mathfrak{b}_n + \mathfrak{b}_{[j,n-1]}} + 1) - \log \mathfrak{Y} \big)_{j \in \llbracket 1, n-1 \rrbracket}$ as $\varepsilon$ tends to $0$, where $\mathfrak{Y} \in \mathbb{R}_{> 0}$ is a $\mathrm{Gamma} (\theta)$ random variable.
	\end{lem}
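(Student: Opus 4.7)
The plan is to mirror the strategy of \Cref{abcgamma} while exploiting the constraint $A_n = 0$ from \eqref{anj} to simplify the convolution structure of $\mathcal{U}_{\gamma}^{\qdp}$. First I would invoke the decomposition \eqref{upsi} and observe a cancellation special to this regime: the factor $(q^{A_n - k + 1}; q)_k$ appearing in $\Psi_{\varepsilon}(k; \overline{\bm{C}})$ from \eqref{psik} becomes $(q^{1-k}; q)_k$, which vanishes for every $k \geq 1$ since it contains the factor $1 - q^0 = 0$. Hence the sum in \eqref{upsi} collapses to the single $k = 0$ term, yielding the exact factorization $\mathcal{U}_{\gamma}^{\qdp}\big(\bm{A}, (\overline{\bm{B}}, \mathsf{B}_n); \bm{C}, (\overline{\bm{D}}, \mathsf{D}_n)\big) = \Phi_{\varepsilon}(C_n) \cdot \Psi_{\varepsilon}(0; \overline{\bm{C}})$. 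In particular, $C_n$ and $\overline{\bm{C}}$ are independent under this measure.

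Next I would apply \Cref{gamma0}(1) to conclude that $\mathfrak{c}_n = \varepsilon C_n - \log \varepsilon^{-1}$ converges in distribution to $-\log \mathfrak{Y}$ for $\mathfrak{Y} \sim \mathrm{Gamma}(\theta)$, producing the gamma factor in the claimed limit. It then remains to show that, under the marginal $\Psi_{\varepsilon}(0; \overline{\bm{C}})$, the partial sums $\mathfrak{c}_{[j,n-1]}$ concentrate at explicit deterministic values. Following the Laplace-method analysis in the proof of \Cref{sumpdb}, I would identify the leading-order asymptotics of $\varepsilon \log \Psi_{\varepsilon}(0; \overline{\bm{C}})$ via the identity $\varepsilon \log(q^{z/\varepsilon}; q)_{\lfloor y/\varepsilon \rfloor} \sim \int_0^y \log(1 - e^{-x-z})\,dx$ and verify that this logarithm converges to a strictly concave function of the scaled coordinates $(\mathfrak{c}_i)$. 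The resulting saddle-point equations have the same structure as \eqref{psiciequation}, now with $\mathfrak{k} = 0$ forced a priori and with the analog of $\mathfrak{r}$ in \eqref{rk} equal to $\mathfrak{b}_n - \mathfrak{p}_{[1,n-1]}$ under the scaling \eqref{anj}. The telescoping argument of \eqref{j1nc} then reduces the system to a single equation at $j=1$, whose explicit solution determines $\eta$ and, in turn, closed-form expressions for each $\mathfrak{c}_{[j,n-1]}$ in terms of $\mathfrak{b}_n$ and the partial sums $\mathfrak{p}_{[j,n-1]}$; combining with the $-\log \mathfrak{Y}$ shift from $\mathfrak{c}_n$ yields the asserted joint distribution.

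The principal obstacle is the asymptotic treatment of the inner sum over $\overline{\bm{P}}$ in $\Psi_{\varepsilon}(0; \overline{\bm{C}})$. In the proof of \Cref{sumpdb}, that sum was shown to collapse to the single term $\overline{\bm{P}} = \bm{0}$ because of the decay factor $q^{\overline{p}(A_n - k + 1)}$; in our regime this factor degenerates to $q^{\overline{p}}$, so concentration at $\bm{0}$ no longer holds. To handle this, I would either bound the $\overline{\bm{P}}$-sum by a quantity subexponential (polynomial) in $\varepsilon^{-1}$ so that it cannot shift the $\overline{\bm{C}}$-saddle-point equations at leading order, or alternatively evaluate it in closed form via a $q$-hypergeometric identity along the lines of the Sears $_4\varphi_3$ transformation used in the proof of \Cref{ufusedqmqn}, and then absorb its limit into the saddle-point analysis. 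This is the only genuinely new technical ingredient compared with \Cref{abcgamma}; once it is established, the remainder of the argument is a direct adaptation of the proof of \Cref{sumpdb} to the $\mathfrak{k} = 0$, $A_n = 0$ regime.
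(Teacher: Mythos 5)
Your high-level structure is correct and matches the paper: the constraint $A_n = 0$ forces $(q^{A_n - k + 1}; q)_k = (q^{1-k}; q)_k$ to vanish for $k \ge 1$, so the convolution \eqref{upsi} collapses to $\mathcal{U}_{\gamma}^{\qdp} = \Phi_{\varepsilon}(C_n) \cdot \Psi_{\varepsilon}(0; \overline{\bm{C}})$; then $C_n$ and $\overline{\bm{C}}$ decouple, $C_n$ gives the $\mathrm{Gamma}(\theta)$ piece via \Cref{gamma0}, and $\overline{\bm{C}}$ concentrates via a Laplace analysis parallel to \Cref{sumpdb} with $\mathfrak{k} = 0$.

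The gap is in your treatment of the $\overline{\bm{P}}$-sum, which you correctly flag as the new technical ingredient but then mishandle. You assert that since $q^{\overline{p}(A_n - k + 1)}$ degenerates to $q^{\overline{p}}$, ``concentration at $\bm{0}$ no longer holds.'' In fact the sum \emph{does} still concentrate on $\overline{\bm{P}} = \bm{e}_0$; the decay mechanism is simply different and depends on the transitionary-row scaling \eqref{anj}. First one restricts a priori to $\overline{c} = \mathcal{O}(\varepsilon^{-1})$ by showing $\big|\Psi_{\varepsilon}(0; \overline{\bm{C}})\big|$ decays at least like $e^{-\varepsilon C_{n-1}^2 / 3}$ otherwise (a step your sketch omits). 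On that event, $\varepsilon (D_{n-1} - B_{n-1}) = \varepsilon(A_{n-1} - C_{n-1}) \sim \log \varepsilon^{-1}$, so the factor $q^{\varphi(\overline{\bm{P}}, \overline{\bm{D}} - \overline{\bm{B}})}$ (which contains $P_i (D_{n-1} - B_{n-1})$ for each $i < n-1$) decays roughly like $\varepsilon^{P_i}$ and forces $P_j = 0$ for $j \le n-2$. The index $P_{n-1}$ does not appear in $\varphi(\overline{\bm{P}}, \overline{\bm{D}} - \overline{\bm{B}})$, so a separate estimate is needed: the quantity $\big| (q^{-B_{n-1}}; q)_{P_{n-1}} (q^{-C_{n-1}}; q)_{P_{n-1}} (q; q)_{P_{n-1}}^{-1} (q^{-A_{n-1}-B_{n-1}}; q)_{P_{n-1}}^{-1} \big|$ decays like $q^{P_{n-1}(A_{n-1} - C_{n-1})}$, again roughly $\varepsilon^{P_{n-1}}$, which forces $P_{n-1} = 0$. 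Neither of your proposed workarounds supplies this. Your option (1), a crude bound showing the sum is merely subexponential in $\varepsilon^{-1}$, does not obviously hold and would in any case be insufficient without a companion argument that the per-term growth cannot overwhelm it; and your option (2), evaluating the $\overline{\bm{P}}$-sum in closed form via a Sears-type transformation, is speculative and not used in the paper (the Sears identity appears only in the earlier $\mathrm{L}, \mathrm{M}, \mathrm{N}$-degeneration, not here). Once the concentration at $\overline{\bm{P}} = \bm{e}_0$ is established, the remainder of your saddle-point argument, including the $\mathfrak{k} = 0$ specialization and the explicit $\eta$-parametrization, is essentially sound.
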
 

	\begin{rem} 
		
		\label{dlimitugamma2} 
		
		Set $(\mathfrak{d}_1, \mathfrak{d}_2, \ldots , \mathfrak{d}_n) = \big(\varepsilon D_1, \varepsilon D_2, \ldots , \varepsilon D_{n-2}, \varepsilon D_{n-1} - \beta \log \varepsilon^{-1}, \varepsilon \mathsf{D}_n - (\beta + 1) \log \varepsilon^{-1} \big)$ in \Cref{ulimitgamma2}. Then, the joint limiting law of $(\mathfrak{d}_{[j,n]})_{j \in \llbracket 1, n \rrbracket}$ is determined by that of $(\mathfrak{c}_{[j,n]})_{j \in \llbracket 1, n \rrbracket}$ by arrow conservation.
		
	\end{rem} 

	\begin{rem} 
		
		\label{0beta} 
		
		Observe in \Cref{ulimitgamma2} that the limiting law of $(\mathfrak{c}_{[j,n]})_{j \in \llbracket 1, n \rrbracket}$ is independent of $\beta$, which will be useful in the proof of \Cref{convergegamma} below.
		 
	\end{rem} 
	
	\begin{proof}[Proof of \Cref{ulimitgamma2} (Outline)]
		
		The proof of this lemma is similar to that of \Cref{abcgamma}, and so we only briefly outline it. Recalling the notation from \Cref{psi}, \Cref{usumpsi} indicates that, to sample $\bm{C}$, we may first sample $\ell$ and $(k; \overline{\bm{C}})$ under the density functions $\Phi_{\varepsilon}$ and $\Psi_{\varepsilon}$, respectively, and then set $\bm{C} = (\overline{\bm{C}}; k + \ell)$. By \Cref{gamma0}, the law of $\varepsilon^{-1} \cdot e^{-\varepsilon \ell}$ converges to a $\mathrm{Gamma}(\theta)$ random variable $\mathfrak{Y}$, as $\varepsilon$ tends to $0$. Hence, it suffices to show that $\varepsilon C_{[j,n-1]}$ converges to $\log (e^{\mathfrak{a}_{[j,n-1]} - \mathfrak{b}_n + \mathfrak{b}_{[1,n-1]}} + 1)$ as $\varepsilon$ tends to $0$, if $\overline{\bm{C}} = (C_1, C_2, \ldots , C_{n-1}) \in \mathbb{Z}_{\ge 0}^{n-1}$ is sampled according to $\Psi_{\varepsilon}$. 
		
		By the factor of $(q^{A_n - k + 1}; q)_k$ on the right side of \eqref{psik}, and the fact that $A_n = 0$, we must have $k = 0$ almost surely under $\Psi_{\varepsilon}$. Then, recalling \eqref{bbdd} and the fact that $\overline{\bm{D}} = \overline{\bm{A}} + \overline{\bm{B}} - \overline{\bm{C}}$ in \eqref{psik}, it follows that 
		\begin{flalign}
			\label{psisump0}
			\begin{aligned} 
				\Psi_{\varepsilon} (0; \overline{\bm{C}}) & = q^{\varphi (\overline{\bm{A}} + \overline{\bm{B}} - \overline{\bm{C}}, \overline{\bm{C}}) + \overline{c} (\mathsf{B}_n - \overline{a} - \overline{b} + \overline{c})} \displaystyle\frac{(q; q)_{\mathsf{B}_n - \overline{b}}}{(q; q)_{\mathsf{B}_n - \overline{a} - \overline{b} + \overline{c}}} \cdot \displaystyle\prod_{j=1}^{n-1} \displaystyle\frac{(q; q)_{A_j + B_j}}{(q; q)_{C_j} (q; q)_{A_j + B_j - C_j}} \\
				& \qquad \times \displaystyle\sum_{\overline{\bm{P}} \le \overline{\bm{B}}, \overline{\bm{C}}} q^{\overline{p} + \varphi (\overline{\bm{P}}, \overline{\bm{D}} - \overline{\bm{B}})} \displaystyle\prod_{j=1}^{n-1} \displaystyle\frac{(q^{-B_j}; q)_{P_j} (q^{-C_j}; q)_{P_j}}{(q; q)_{P_j} (q^{-A_j - B_j}; q)_{P_j}}.
			\end{aligned} 
		\end{flalign}
		
		We first restrict to the case when $\overline{c} = \mathcal{O} (\varepsilon^{-1})$, by showing that otherwise $\big| \Psi_{\varepsilon} (0; \overline{\bm{C}}) \big|$ would decay as faster than $e^{-\varepsilon \overline{c}^2 / 3}$ and therefore become negligible. Indeed, since $C_j \le A_j + B_j = \mathcal{O} (\varepsilon^{-1})$ for $j \in \llbracket 1, n-2 \rrbracket$ by \eqref{anj}, it suffices to verify that $\big| \Psi_{\varepsilon} (0; \overline{\bm{C}}) \big| \le e^{\mathcal{O} (C_{n-1}) -\varepsilon C_{n-1}^2 / 2}$ if $\varepsilon C_{n-1}$ is sufficiently large. To that end, first observe since $\varepsilon (\mathsf{B}_n - \overline{a} - \overline{b}) = \mathcal{O} (1)$ by \eqref{anj}, we have $q^{\overline{c} (\mathsf{B}_n - \overline{a} - \overline{b} + \overline{c})} \le q^{\overline{c}^2 - \mathcal{O} (C_{n-1} / \varepsilon)} \le e^{\mathcal{O}(C_{n-1}) - \varepsilon C_{n-1}^2}$. Moreover (since $\big| (q^{-B_j}; q)_{P_j} \big| < \big| (q^{-A_j - B_j}; q)_{P_j} \big|$), the remaining terms on the right side of \eqref{psisump0} decay at most exponentially in $\varepsilon^{-1}$, except for the term $(q^{-C_{n-1}}; q)_{P_{n-1}} (q; q)_{P_{n-1}}^{-1}$, which is at most $e^{\mathcal{O} (C_{n-1}) -\varepsilon C_{n-1}^2/2}$. Therefore, $\big| \Psi_{\varepsilon} (0; \overline{\bm{C}}) \big| \le e^{\mathcal{O} (C_{n-1}) -\varepsilon C_{n-1}^2 / 2}$, which verifies the above-mentioned decay.
		
		Thus we may assume $\overline{c} = \mathcal{O} (\varepsilon^{-1})$, so $\varepsilon (D_{n-1} - B_{n-1}) = \varepsilon (A_{n-1} - C_{n-1}) = \log \varepsilon^{-1} - \mathcal{O} (1)$. Due to factor of $q^{\varphi (\overline{\bm{P}}, \overline{\bm{D}} - \overline{\bm{B}})}$ in the sum on the right side of \eqref{psisump0} (and the fact that the remaining terms in the sum grow at most exponentially in $|\overline{\bm{P}}|$), one can verify that this sum is asymptotically supported on the $\bm{P} \in \mathbb{Z}_{\ge 0}^{n-1}$ satisfying $P_j = 0$ for $j \in \llbracket 1, n-2 \rrbracket$. Moreover, due to the fact that $\big| (q^{-B_{n-1}}; q)_{P_{n-1}} (q^{-C_{n-1}}; q)_{P_{n-1}} (q; q)_{P_{n-1}}^{-1} (q^{-A_{n-1} - B_{n-1}}; q)_{P_{n-1}}^{-1} \big|$ decays as $q^{P_{n-1} (A_{n-1} - C_{n-1})}$, the asymptotic support of this sum also requires $P_{n-1} = 0$, and hence $\bm{P} = \bm{e}_0$. Thus, to understand the behavior of $(k; \overline{\bm{C}}) = (0; \overline{\bm{C}})$ sampled from the density function $\Psi_{\varepsilon}$, it suffices to understand the limiting behavior as $\varepsilon$ tends to $0$ of 
		\begin{flalign*}
			\widetilde{\Psi}_{\varepsilon} (0; \overline{\bm{C}}) & = q^{\varphi (\overline{\bm{A}} + \overline{\bm{B}} - \overline{\bm{C}}, \overline{\bm{C}}) + \overline{c} (\mathsf{B}_n - \overline{a} - \overline{b} + \overline{c})} \displaystyle\frac{(q; q)_{\mathsf{B}_n - \overline{b}}}{(q; q)_{\mathsf{B}_n - \overline{a} - \overline{b} + \overline{c}}} \cdot \displaystyle\prod_{j=1}^{n-1} \displaystyle\frac{(q; q)_{A_j + B_j}}{(q; q)_{C_j} (q; q)_{A_j + B_j - C_j}}.
		\end{flalign*}
		
		As in the proof of \Cref{sumpdb}, the $\overline{\bm{C}}$ that contribute to $\widetilde{\Psi}_{\varepsilon}$ will asymptotically be supported on a single value of $\overline{\bm{\mathfrak{c}}} = (\mathfrak{c}_1, \mathfrak{c}_2, \ldots , \mathfrak{c}_{n-1}) = \varepsilon \cdot \overline{\bm{C}}$. To understand which value, we take the logarithm of the part of $\widetilde{\Psi}_{\varepsilon}$ that depends on $\overline{\bm{C}}$, and multiply by $\varepsilon$ to obtain (using \eqref{qqa} and the facts that $\varepsilon (A_{n-1} + B_{n-1} - C_{n-1}) \ge \log \varepsilon^{-1} - \mathcal{O}(1)$) the function
		\begin{flalign*}
			G_{\varepsilon} (\overline{\bm{\mathfrak{c}}}) &  = \mathfrak{c}_{[1,n-1]} \cdot (\mathfrak{p}_{[1,n-1]} - \mathfrak{b}_n - \mathfrak{c}_{[1,n-1]}) - \displaystyle\sum_{j=1}^{n-1} \mathfrak{p}_{[1,j-1]} \mathfrak{c}_j - \displaystyle\sum_{j=1}^{n-1} \displaystyle\int_0^{\mathfrak{c}_j} \log (1 - e^{-x}) dx\\
			& \qquad + \displaystyle\sum_{1 \le j < h \le n-1} \mathfrak{c}_j \mathfrak{c}_h - \displaystyle\int_0^{\mathfrak{b}_n - \mathfrak{p}_{[1,n-1]} + \mathfrak{c}_{[1,n-1]}} \log (1 - e^{-x}) dx - \displaystyle\sum_{j=1}^{n-2} \displaystyle\int_0^{\mathfrak{p}_j - \mathfrak{c}_j} \log (1 - e^{-x}) dx,
		\end{flalign*}
		
		\noindent where we have denoted $\mathfrak{p}_j = \mathfrak{a}_j + \mathfrak{b}_j$ for each $j \in \llbracket 1, n-1 \rrbracket$. The $\overline{\bm{C}}$ that contributes to $\widetilde{\Psi}_{\varepsilon}$ is then obtained at the maximum of $G_{\varepsilon}$, which is the solution of $\partial_{\mathfrak{c}_i} G_{\varepsilon} (\overline{\bm{\mathfrak{c}}}) = 0$ for each $i \in \llbracket 1, n - 1 \rrbracket$. These equations are given by 
		\begin{flalign*}
			& \log (e^{\mathfrak{p}_i} - e^{\mathfrak{c}_i}) - \log (e^{\mathfrak{c}_i} - 1) - \mathfrak{p}_{[1,i]} - \log (e^{\mathfrak{b}_n + \mathfrak{c}_{[1,n-1]} - \mathfrak{p}_{[1,n-1]}} - 1) = 0, \qquad \text{for $i \in \llbracket 1, n - 2 \rrbracket$}; \\
			& - \log (e^{\mathfrak{c}_{n-1}} - 1) - \log (e^{\mathfrak{c}_{[1,n-1]} + \mathfrak{b}_n - \mathfrak{p}_{[1.n-1]}} - 1) - \mathfrak{p}_{[1,n-2]} = 0.
		\end{flalign*}
		
		\noindent Denoting 
		\begin{flalign}
			\label{eta2}
			\eta = \displaystyle\frac{e^{\mathfrak{b}_n + \mathfrak{c}_{[1,n-1]} - \mathfrak{p}_{[1,n-1]}}}{e^{\mathfrak{b}_n + \mathfrak{c}_{[1,n-1]} - \mathfrak{p}_{[1,n-1]}} - 1}, \qquad \text{so that} \qquad \displaystyle\frac{1}{\eta-1} = e^{\mathfrak{b}_n + \mathfrak{c}_{[1,n-1]} - \mathfrak{p}_{[1,n-1]}} - 1,
		\end{flalign} 
		
		\noindent it follows that 
		\begin{flalign*}
			e^{\mathfrak{c}_j} = e^{\mathfrak{p}_j} \cdot \displaystyle\frac{e^{\mathfrak{p}_{[1,j-1]}} + \eta - 1}{e^{\mathfrak{p}_{[1,j]}} + \eta - 1}, \quad \text{for each $j \in \llbracket 1, n - 2 \rrbracket$}; \qquad e^{\mathfrak{c}_{n-1}} = e^{-\mathfrak{p}_{[1,n-2]}} (\eta - 1 + e^{\mathfrak{p}_{[1,n-2]}}).
		\end{flalign*} 
		
		\noindent Thus, for each $j \in \llbracket 1, n-1 \rrbracket$, we have 
		\begin{flalign*}
			e^{\mathfrak{c}_{[j, n-1]}} = e^{-\mathfrak{p}_{[1,j-1]}} \cdot (e^{\mathfrak{p}_{[1,j-1]}} + \eta - 1),
		\end{flalign*}
		
		\noindent which taken at $j=1$ implies that $\eta = e^{\mathfrak{c}_{[1,n-1]}}$. Together with \eqref{eta2}, this yields $\eta = e^{\mathfrak{c}_{[1,n-1]}} = e^{\mathfrak{p}_{[1,n-1]} - \mathfrak{b}_n} + 1$, and therefore   
		\begin{flalign}
			\label{2cjn1}
			e^{\mathfrak{c}_{[j,n-1]}} = e^{\mathfrak{p}_{[j,n-1]} - \mathfrak{b}_n} + 1.
		\end{flalign}
		
		\noindent Hence, if we sample $\overline{\bm{C}}$ according to $\Psi_{\varepsilon}$ and set $\overline{\mathfrak{c}} = (\mathfrak{c}_1, \mathfrak{c}_2, \ldots , \mathfrak{c}_{n-1}) = \varepsilon \cdot \overline{\bm{C}}$, then \eqref{2cjn1} holds. As mentioned previously, this (with \Cref{usumpsi} and \Cref{gamma0}) implies the lemma. 
	\end{proof}

	\subsection{Convergence to the Log-Gamma Polymer} 
	
	\label{Polymerq1} 
	
	In this section we show convergence of a stochastic vertex model with weights $\mathcal{U}_{\gamma}^{\qdp}$ and $\mathcal{U}_{1/\gamma}^{\bq}$ (from \Cref{ugamma} and \Cref{u3}) to the log-gamma polymer. We begin by defining the latter; throughout this section, we fix positive real parameters $\bm{\theta} = (\theta_1, \theta_2, \ldots )$ and $\bm{\theta}' = (\theta_1', \theta_2', \ldots)$.  
	
	For each vertex $(i, j) \in \mathbb{Z}_{> 0}^2$, let $\mathfrak{Y}_{ij}$ denote a $\mathrm{Gamma}(\theta_i + \theta_j')$ random variable, with all $(\mathfrak{Y}_{ij})$ mutually independent over $(i, j) \in \mathbb{Z}_{> 0}^2$. For any vertices $u, v \in \mathbb{Z}_{> 0}^2$ such that $v - u \in \mathbb{Z}_{\ge 0}^2$, a directed path $\mathcal{P} = (w_0, w_1, \ldots , w_k) \in \mathbb{Z}_{> 0}$ from $u$ to $v$ is a sequence of vertices starting at $w_0 = u$, ending at $w_k = v$, and satisfying $w_i - w_{i-1} \in \big\{ (1, 0), (0, 1) \big\}$ for each $i \in \llbracket 1, k \rrbracket$. The \emph{polymer weight} of such a directed path is defined to be 
	\begin{flalign}
		\label{pz1} 
		\mathfrak{Z} (\mathcal{P}) = \displaystyle\prod_{(i, j) \in \mathcal{P}} \mathfrak{Y}_{ij}^{-1},
	\end{flalign}
	
	\noindent and the \emph{point-to-point partition function} from $u$ to $v$ is defined to be
	\begin{flalign}
		\label{zuv1} 
		\mathfrak{Z} (u \rightarrow v) = \displaystyle\sum_{\mathcal{P}} \mathfrak{Z} (\mathcal{P}),
	\end{flalign}
	
	\noindent where the sum is over all directed paths $\mathcal{P} \subset \mathbb{Z}_{> 0}^2$ from $u$ to $v$. 
	
	Now let us describe the stochastic vertex model that converges to the log-gamma polymer. To that end, let $\varepsilon \in (0, 1)$ denote a real number, and set
	\begin{flalign*}
		q = q_{\varepsilon} = e^{-\varepsilon}, \quad \text{and} \quad \gamma_{ij} = q^{\theta_i + \theta_j'}, \quad \text{for each $(i, j) \in \mathbb{Z}_{> 0}^2$}. 
	\end{flalign*}
	
	\noindent Fix (possibly infinite) integers $\mathrm{K}_1, \mathrm{K}_2, \ldots \ge 1$. At any vertex $(i, j) \in \mathbb{Z}_{> 0}^2$, we will sample a random colored (complemented) arrow configuration $\big( \bm{A} (i, j), (\overline{\bm{B}} (i,j), \mathsf{B}_m (i,j)); \bm{C} (i,j), (\overline{\bm{D}} (i,j), \mathsf{D}_m (i, j)) \big)$, where $\mathsf{B}_m (i, j), \mathsf{D}_m (i,j) \in \mathbb{Z}_{\ge 0}$; $\overline{\bm{B}} (i,j), \overline{\bm{D}} (i,j) \in \mathbb{Z}_{\ge 0}^{m-1}$; and $\bm{A} (i,j), \bm{C} (i,j) \in \mathbb{Z}_{\ge 0}^m$. Here, $m = m(j) \ge 1$ is the (unique) positive integer such that $\mathrm{K}_{[1,m-1]} + 1 \le j \le \mathrm{K}_{[1,m]}$. These arrow configurations will be consistent, in the sense that $\bm{A} (i, j + 1) = \bm{C} (i, j)$ and $\big( \overline{\bm{B}} (i+1, j), \mathsf{B}_m (i + 1, j) \big) = \big( \overline{\bm{D}} (i, j), \mathsf{D}_m (i, j) \big)$ for each $(i, j) \in \mathbb{Z}_{> 0}^2$ (where if $j = \mathrm{K}_{[1,m]}$, the $m$-tuple $\bm{C} (i, j)$ is interpreted as an $(m+1)$-tuple by setting its $(m+1)$-th entry to $0$). 
	
	We will sample these arrow configurations recursively on triangles of the form $\mathbb{T}_N = \big\{ (x, y) \in \mathbb{Z}_{> 0}^2 : x + y \le N \big\}$. Given some integer $N \ge 1$, suppose that arrow configurations have been assigned to all vertices in $\mathbb{T}_{N-1}$; we will explain how to sample them on $\mathbb{T}_N$. Fix a vertex on the diagonal $(i, j) \in \mathbb{D}_N = \mathbb{T}_N \setminus \mathbb{T}_{N-1}$. First, set $\bm{A} (N-1, 1) = \bm{e}_0$, $\overline{\bm{B}} (1, N-1) = \bm{e}_0$, and $\mathsf{B}_{m(N-1)} (1,N-1) = 0$. Then, if $i = 1$ set $\overline{\bm{D}} (1, j) = \bm{e}_0$, and inductively define $\bm{C} (1, j) = \bm{A} (1, j) + \mathsf{D}_{m(j)} (1, j) \cdot \bm{e}_{m(j)}$, where $\mathsf{D}_m (1, j)$ is sampled according to the probability (recalling \Cref{u3})
	\begin{flalign}
		\label{ijmd} 
		\mathbb{P} \big[\mathsf{D}_m (1,j) = k \big] = \mathcal{U}_{1/\gamma_{1,j}}^{\bq; m} (k)
	\end{flalign}
	
	If $i > 1$, then the inputs $\bm{A} (i, j) = \bm{C} (i, j-1)$ and $\big( \overline{\bm{B}} (i,j), \mathsf{B}_m (i, j) \big) = \big( \overline{\bm{D}} (i-1, j), \mathsf{D}_m (i-1, j) \big)$ at $(i, j)$ have already been assigned (as they arise from arrow configurations at vertices in $\mathbb{T}_{N-1}$). Sample its outputs $\big( \bm{C} (i, j), (\overline{\bm{D}} (i,j), \mathsf{D}_m (i, j)) \big)$ according to the probability (recalling \Cref{ugamma}) 
	\begin{flalign*} 
		\mathbb{P} \bigg[ \Big(  \bm{C} (i, j), \big(\overline{\bm{D}} (& i,j), \mathsf{D}_m (i, j) \big) \Big) \bigg| \Big( \bm{A} (i, j), \big( \overline{\bm{B}} (i, j), \mathsf{B}_m (i, j) \big) \Big) \bigg] \\
		& = \mathcal{U}_{\gamma_{ij}}^{\bq; m} \Big( \bm{A} (i, j), \big(\overline{\bm{B}} (i,j), \mathsf{B}_m (i, j) \big); \bm{C} (i, j), (\overline{\bm{D}} (i,j), \mathsf{D}_m (i, j)) \Big).
	\end{flalign*} 
	
	\noindent This assigns a random arrow configuration to each vertex in $\mathbb{T}_N$; letting $N$ tend to $\infty$ yields a random ensemble of arrow configurations on the quadrant $\mathbb{Z}_{> 0}^2$.
	
	Observe in this way that, by \eqref{ijmd}, arrows of color $n$ begin to enter the system through its $(\mathrm{K}_{[1,n-1]} + 1)$-st row. As in Section \ref{WeightAq1}, we will sometimes refer to such rows as \emph{transitionary} (as they will be where the most frequent color transitions from $n-1$ to $n$). Moreover, due to the forms (from \Cref{ugamma}) of the weights $\mathcal{U}^{\qdp}$, arrow conservation in the ensemble sampled above appears slightly different from that in the colored fused path ensembles introduced in \Cref{FusedPath}. Here, it depends on the vertex $(i, j) \in \mathbb{Z}_{> 0}^2$, and in particular on the index\footnote{This is related to the fact that we are implicitly ``complementing'' the arrows of color $m(j)$ on the $j$-th row.} $m(j)$ associated with its row; it is given by 
	\begin{flalign}
		\label{abcdij} 
		\overline{\bm{A}} (i, j) + \overline{\bm{B}} (i, j) = \overline{\bm{C}} (i, j) + \overline{\bm{D}} (i, j); \qquad \mathsf{B}_{m(j)} (i, j) - A_{m(j)} (i, j) = \mathsf{D}_{m(j)} (i, j) - C_{m(i,j)} (i, j).
	\end{flalign}
	
	\noindent Still, we will explain how the above ensemble arises from the colored stochastic fused vertex model (from \Cref{FusedPath}) in \Cref{abcdpath} below.
	
	To state convergence of this model to the log-gamma polymer, we need to prescribe an associated height function. Given an integer $c \ge 1$, define the color (at least) $c$ height functions $\mathfrak{h}_c^{\rightarrow}, \mathfrak{h}_{\ge c}^{\rightarrow}: \mathbb{Z}_{\ge 0}^2 \rightarrow \mathbb{Z}$ by for any $(i, j) \in \mathbb{Z}_{\ge 0}^2$ setting (where we let $X_c = 0$ for any $\bm{X} \in \mathbb{Z}^n$ and $c > n$)
	\begin{flalign}
		\label{hc2ij}
		\mathfrak{h}_c^{\rightarrow} (i, j) = -\displaystyle\sum_{k = 1}^j \mathbbm{1}_{c = m(k)} \cdot \mathsf{D}_{m(k)} (1, k) - \displaystyle\sum_{k=2}^i C_c (k, j); \qquad \mathfrak{h}_{\ge c}^{\rightarrow} (i, j) = \displaystyle\sum_{c' = c}^{\infty} \mathfrak{h}_{c'} (i, j). 
	\end{flalign} 
	
	\begin{rem} 
		
		\label{hpath2} 
		
		The height functions are defined above by summing (negative) entries of arrow configurations along the specific up-right path from $(1, 1)$ to $(i, j)$ that first proceeds north to $(1, j)$ and then east to $(i, j)$. Using arrow conservation \eqref{abcdij}, one obtains the same result by replacing this by any up-right path. For instance, by instead considering the path that first proceeds east to $(i, 1)$ and then north to $(i, j)$, we also have
		\begin{flalign*}
			\mathfrak{h}_c^{\rightarrow} (i, j) = \displaystyle\sum_{k=1}^j \mathbbm{1}_{c < m(k)} \cdot D_c (i, k) - \displaystyle\sum_{k = 1}^j \mathbbm{1}_{c  = m(k)} \cdot \mathsf{D}_{m(k)} (i, k) - \displaystyle\sum_{k=2}^i \mathbbm{1}_{c \le m(i)} \cdot C_c (k, 1).
		\end{flalign*}
		
	\end{rem}  
	
	Given this notation, we have the following theorem. It indicates the convergence, as $\varepsilon$ tends to $0$, of the height functions for the above vertex model to the point-to-point polymer partition functions of the log-gamma polymer. Observe in this result that the colors are lost in the polymer degeneration, and they instead track one endpoint of the polymer.
	
	\begin{thm} 
		
		\label{convergegamma}
		
		Under the above setup, the random variables $\mathfrak{X}_c^{\varepsilon} (i, j) = \varepsilon^{i+j} \cdot e^{-\mathfrak{h}_{\ge c}^{\rightarrow} (i, j)}$ converge to the log-gamma polymer partition functions $\mathfrak{Z} \big( (1, \mathrm{K}_{[1,c-1]+1}) \rightarrow (i, j) \big)$, as $\varepsilon$ tends to $0$, jointly over all $(c, i, j)$ in compact subsets of $\mathbb{Z}_{> 0} \times \mathbb{Z}_{> 0} \times \mathbb{Z}_{> 0}$. 
		
	\end{thm}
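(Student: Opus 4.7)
The plan is to proceed by induction on $N = i + j$, establishing joint convergence of $\bigl(\mathfrak{X}_c^{\varepsilon}(i,j)\bigr)$ to the log-gamma polymer partition functions $\bigl(\mathfrak{Z}((1, \mathrm{K}_{[1,c-1]}+1) \to (i,j))\bigr)$ over all admissible triples $(c, i, j)$ with $i+j \le N$. For the base case $N = 2$ (so $(i,j) = (1,1)$), the height function $\mathfrak{h}_{\ge c}^{\rightarrow}(1,1)$ reduces to a function of the single boundary variable $\mathsf{D}_{m(1)}(1,1)$ drawn from $\mathcal{U}_{1/\gamma_{1,1}}^{\bq; m(1)}$, and the second statement of \Cref{gamma0} immediately yields convergence of $\mathfrak{X}_c^{\varepsilon}(1,1)$ to $\mathfrak{Y}_{1,1}^{-1} = \mathfrak{Z}_c(1,1)$ for every $c \le m(1)$.

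For the inductive step, fix $(i,j)$ with $i+j = N$ and set $n = m(j)$. The arrow configuration at $(i,j)$ ingests $\bm{A}(i,j) = \bm{C}(i,j-1)$ and $(\overline{\bm{B}}(i,j), \mathsf{B}_n(i,j)) = (\overline{\bm{D}}(i-1,j), \mathsf{D}_n(i-1,j))$, each a deterministic function of quantities sampled at vertices with $i'+j' < N$; the inductive hypothesis identifies their appropriately shifted $\varepsilon$-rescalings $\mathfrak{a}_c, \mathfrak{b}_c$ with logarithms of the relevant log-gamma partition functions and their differences. Apply \Cref{abcgamma} when $j \ne \mathrm{K}_{[1,n-1]}+1$ (a typical row, where $A_n(i,j)$ carries a $\log \varepsilon^{-1}$ shift compatible with \eqref{anaj}), and \Cref{ulimitgamma2} when $j = \mathrm{K}_{[1,n-1]}+1$ (the transitionary row, where $A_n(i,j) = 0$ and \eqref{anj} applies). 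In both cases the common conclusion is
\[ e^{\mathfrak{c}_{[c,n]}} \;=\; \frac{1 + e^{\mathfrak{a}_{[c,n]} + \mathfrak{b}_{[c,n-1]} - \mathfrak{b}_n}}{\mathfrak{Y}_{ij}}, \]
with $\mathfrak{Y}_{ij}$ a fresh $\mathrm{Gamma}(\theta_i + \theta_j')$ variable; Remark \ref{0beta} guarantees that the $\beta$-dependence of \Cref{ulimitgamma2} is irrelevant in the limit, preserving the shape of the inductive hypothesis across transitions.

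To close the induction, I would translate the joint limiting law of $\bm{C}(i,j)$ and $(\overline{\bm{D}}(i,j), \mathsf{D}_n(i,j))$ (using \Cref{dabcgamma} and \Cref{dlimitugamma2} for the latter via arrow conservation \eqref{abcdij}) into a recursion for $\mathfrak{X}_c^{\varepsilon}(i,j)$. By the alternative formula in \Cref{hpath2}, the increments $\mathfrak{h}_{\ge c}^{\rightarrow}(i,j) - \mathfrak{h}_{\ge c}^{\rightarrow}(i-1,j)$ and $\mathfrak{h}_{\ge c}^{\rightarrow}(i,j) - \mathfrak{h}_{\ge c}^{\rightarrow}(i,j-1)$ are explicit linear combinations of $C$'s and $D$'s at $(i,j)$. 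Substituting the asymptotics from \Cref{abcgamma}/\Cref{ulimitgamma2}, one verifies the algebraic identity
\[ 1 + e^{\mathfrak{a}_{[c,n]} + \mathfrak{b}_{[c,n-1]} - \mathfrak{b}_n} \;=\; \mathfrak{Z}_c(i-1,j) + \mathfrak{Z}_c(i,j-1), \]
after absorbing the $\log \varepsilon^{-1}$ shifts into the $\varepsilon^{i+j}$ normalization; dividing by $\mathfrak{Y}_{ij}$ yields exactly the log-gamma recursion $\mathfrak{Z}_c(i,j) = \mathfrak{Y}_{ij}^{-1}\bigl(\mathfrak{Z}_c(i-1,j) + \mathfrak{Z}_c(i,j-1)\bigr)$.

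The hard part is establishing joint (rather than marginal) convergence together with mutual independence of the Gamma variables $\mathfrak{Y}_{ij}$ across vertices. The crucial technical device is the convolution decomposition $\mathcal{U}_{\gamma}^{\qdp} = \Phi_\varepsilon * \Psi_\varepsilon$ of \Cref{usumpsi}, which isolates all the limiting randomness at a given vertex into the $\Phi_\varepsilon$ factor (converging to a single $\mathrm{Gamma}(\theta_i + \theta_j')$ by \Cref{gamma0}) while the $\Psi_\varepsilon$ factor concentrates deterministically on the values determined by $\bm{A}, \overline{\bm{B}}, \mathsf{B}_n$ (\Cref{sumpdb}). Because the vertex-by-vertex construction draws an independent $\Phi_\varepsilon$-sample at each site, the resulting Gamma variables are automatically independent of one another and of all previously sampled inputs, matching the i.i.d.\ structure of the log-gamma bulk weights. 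Additional care is required in tracking the color-dependent $\log \varepsilon^{-1}$ shifts across rows of different dominant colors and handling the boundary column $i = 1$ (where the limiting behavior of $\mathsf{D}_{m(j)}(1,j)$ from \Cref{gamma0} must be grafted onto the recursion as the initial condition $\mathfrak{Z}_c(1, \mathrm{K}_{[1,c-1]}+1) = \mathfrak{Y}_{1, \mathrm{K}_{[1,c-1]}+1}^{-1}$).
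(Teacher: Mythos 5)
Your proposal follows essentially the same strategy as the paper's proof: verify that $\mathfrak{X}_c^{\varepsilon}(i,j)$ asymptotically satisfies the same bilinear recursion that characterizes $\mathfrak{Z}_c(i,j)$, by inducting through the lattice and applying \Cref{abcgamma} at generic rows and \Cref{ulimitgamma2} at transitionary rows (with \Cref{0beta} ensuring the $\beta$-dependence washes out), and using the $\Phi_{\varepsilon}*\Psi_{\varepsilon}$ decomposition to produce fresh independent $\mathrm{Gamma}(\theta_i+\theta_j')$ weights at each vertex. Two small imprecisions worth noting: the displayed identity $1 + e^{\mathfrak{a}_{[c,n]} + \mathfrak{b}_{[c,n-1]} - \mathfrak{b}_n} = \mathfrak{Z}_c(i-1,j) + \mathfrak{Z}_c(i,j-1)$ is missing a factor of $\mathfrak{X}_c^0(i-1,j)$ on the left; the correct step (as in the paper's \eqref{xc0ij}) uses the height-function telescoping relation $\mathfrak{h}_{\ge c}^{\rightarrow}(i-1,j) = \mathfrak{h}_{\ge c}^{\rightarrow}(i,j-1) + A_{[c,m]} + B_{[c,m-1]} - \mathsf{B}_m$ from \eqref{0abhcij} to rewrite $\varepsilon^{i+j-1} e^{-\varepsilon \mathfrak{h}_{\ge c}^{\rightarrow}(i-1,j)}\big(e^{\varepsilon A_{[c,m]}+\varepsilon B_{[c,m-1]}-\varepsilon \mathsf{B}_m}+1\big)$ directly as $\mathfrak{X}_c^{\varepsilon}(i,j-1)+\mathfrak{X}_c^{\varepsilon}(i-1,j)$, avoiding any claimed identity for the exponential alone. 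Second, you should also separate the sub-case $j=\mathrm{K}_{[1,c-1]}+1$ with $i>1$ (in which the color-$c$ recursion degenerates to $\mathfrak{X}_c^0(i,j)=\mathfrak{Y}_{ij}^{-1}\mathfrak{X}_c^0(i-1,j)$ because the extra $\varepsilon$ factor from \eqref{abcdmm1kj} kills the second term), matching the boundary recursion $\mathfrak{Z}_c(i,\mathrm{K}_{[1,c-1]}+1)=\mathfrak{Y}_{i,\mathrm{K}_{[1,c-1]}+1}^{-1}\mathfrak{Z}_c(i-1,\mathrm{K}_{[1,c-1]}+1)$; this is distinct from the $i=1$ column initialization you already address. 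Beyond these, the only structural requirement you gesture at but do not spell out is the intermediate induction on the arrow-configuration growth rates \eqref{abcdmm1kj}, \eqref{xm1ym1ij}, \eqref{xycij}, which is needed to verify that the hypotheses \eqref{anaj} or \eqref{anj} of \Cref{abcgamma}/\Cref{ulimitgamma2} actually hold at each new vertex; the paper carries this along with the lexicographic-$(j,i)$ induction, and your antidiagonal induction would need to carry the same quantitative bookkeeping.
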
 
	
	\begin{proof}  
		
		For each $(i, j) \in \mathbb{Z}_{> 0}^2$, let $\mathfrak{Y}_{ij} = \mathfrak{Y}_{i,j}$ denote a $\mathrm{Gamma} (\theta_i + \theta_j')$ random variable, with all $(\mathfrak{Y}_{ij})$ mutually independent over $(i, j) \in \mathbb{Z}_{> 0}^2$. Further set $\mathfrak{Z}_c (i, j) = \mathfrak{Z} \big( (1, \mathrm{K}_{[1,c-1]} + 1) \rightarrow (i, j) \big)$ for each $c \in \mathbb{Z}_{> 0}$ and $(i, j) \in \mathbb{Z}_{> 0}^2$. Then, it follows from \eqref{pz1} and \eqref{zuv1} that for $c > 0$ the $\mathfrak{Z}_c (i, j)$ are determined by the recursive relations
		\begin{flalign}
			\label{zijij} 
			\begin{aligned} 
				& \mathfrak{Z}_c (i, j) = \mathfrak{Y}_{ij}^{-1} \cdot \big( \mathfrak{Z}_c (i-1, j) + \mathfrak{Z}_c (i, j-1) \big), \quad \text{for $(i, j) \in \mathbb{Z}_{> 1} \times \mathbb{Z}_{> \mathrm{K}_{[1,c-1]+1}}$}; \\
				& \mathfrak{Z}_c (1, j) = \mathfrak{Y}_{1j}^{-1} \cdot \mathfrak{Z}_c (1,j-1), \quad \text{for $j \ge \mathrm{K}_{[1,c-1]} + 1$}; \\
				&  \mathfrak{Z}_c (i, \mathrm{K}_{[1,c-1]} + 1) = \mathfrak{Y}_{i, \mathrm{K}_{[1,c-1]} + 1}^{-1} \cdot \mathfrak{Z}_c (i-1, \mathrm{K}_{[1,c-1]} + 1), \quad \text{for $i \ge 2$}; \\
				& \mathfrak{Z}_c (1, \mathrm{K}_{[1,c-1]} + 1) = \mathfrak{Y}_{1, \mathrm{K}_{[1,c-1]} + 1}^{-1}.
			\end{aligned} 
		\end{flalign}
		
		\noindent It therefore suffices to show that $\mathfrak{X}_c^{\varepsilon} (i, j)$ satisfies the same recursion, as $\varepsilon$ tends to $0$. 
		
		To that end let us analyze the behavior, as $\varepsilon$ tends to $0$, of the dynamics for the arrow configurations $\big( \bm{A} (i, j), (\overline{\bm{B}} (i, j), \mathsf{B}_{m(j)} (i, j)); \bm{C} (i, j), (\overline{\bm{D}} (i, j), \mathsf{D}_{m(j)} (i, j)) \big)$. First observe from \eqref{ijmd} and the second part of \Cref{gamma0} that for each $j \ge 1$ we have $B_c (1, j) = D_c (1, j) = 0$ for $c \ne m(j)$, and $\mathsf{B}_{m(j)} (1, j) = 0$ and $\mathsf{D}_{m(j)} (1, j) = \varepsilon^{-1} \log \varepsilon^{-1} + \mathcal{O} (\varepsilon^{-1})$; more specifically, we have  
		\begin{flalign}
			\label{dconverge}
			\varepsilon \cdot e^{\varepsilon \mathsf{D}_{m(j)} (1, j)} \qquad \text{converges in distribution to} \qquad \mathfrak{Y}_{1j}^{-1}, \qquad \text{as $\varepsilon$ tends to $0$}.
		\end{flalign}
		
		We will next understand the behavior of these arrow configurations for $(i, j) \in \mathbb{Z}_{> 1} \times \mathbb{Z}_{> 0}$. We will see that their behavior depends on whether $j$ is a transitionary row, that is, if $j = \mathrm{K}_{[1,m(j)-1]} + 1$. In particular, we will show the  following by induction on the lexicographic pair $(j, i)$; here, we abbreviate $m = m(j)$, and $\big(\mathfrak{z}_1 (i, j), \mathfrak{z}_2 (i, j), \ldots , \mathfrak{z}_m (i, j) \big)$ are some real numbers that are bounded by a random number independent of $\varepsilon$, for each index $\mathfrak{z} \in \{ \mathfrak{a}, \mathfrak{b}, \mathfrak{c}, \mathfrak{d} \}$. If $j = \mathrm{K}_{[1,m-1]} + 1$ indexes a transitionary row, then we will show
		\begin{flalign}
			\label{abcdmm1kj} 
			\begin{aligned} 
				& A_{m-1} (i, j) = \varepsilon^{-1} \log \varepsilon^{-1} + \varepsilon^{-1} \mathfrak{a}_{m-1} (i, j); \qquad B_{m-1} (i, j) = (i-2) \cdot \varepsilon^{-1} \log \varepsilon^{-1} + \varepsilon^{-1} \mathfrak{b}_{m-1} (i, j); \\
				& C_{m-1} (i, j) = \varepsilon^{-1} \mathfrak{c}_{m-1} (i, j) ; \qquad \qquad \qquad \qquad D_{m-1} (i, j) = (i-1) \cdot \varepsilon^{-1} \log \varepsilon^{-1} + \varepsilon^{-1} \mathfrak{d}_{m-1} (i, j); \\
				&  A_m (i, j) = 0; \qquad \qquad \qquad \qquad \qquad \qquad \qquad \mathsf{B}_m (i, j) = (i-1) \cdot \varepsilon^{-1} \log \varepsilon^{-1} + \varepsilon^{-1} \mathfrak{b}_m (i, j); \\
				&  C_m = \varepsilon^{-1} \log \varepsilon^{-1} + \varepsilon^{-1} \mathfrak{c}_m (i, j); \qquad \qquad \qquad \mathsf{D}_m (i, j) = i \cdot \varepsilon^{-1} \log \varepsilon^{-1} + \varepsilon^{-1} \mathfrak{d}_m (i, j).
			\end{aligned} 	
		\end{flalign}
		
		\noindent In this way, the transitionary row ``absorbs'' most color $m-1$ arrows from the previous row, and also ``emits'' arrows of color $m$; due to the arrow conservation \eqref{abcdij}, this leads to an accumulation of arrows of both colors $m-1$ and $m$ in this row. If instead $j > \mathrm{K}_{[1,m-1]} + 1$ does not index a transitionary row, then we will show
		\begin{flalign}
			\label{xm1ym1ij} 
			\begin{aligned} 
				& Z_{m-1} (i, j) = \varepsilon^{-1}  \mathfrak{z}_{m-1} (i, j), \qquad \qquad \qquad \text{if $Z \in \{ A, B, C, D \}$}; \\
				& Z_m (i, j) = \varepsilon^{-1} \log \varepsilon^{-1} + \varepsilon^{-1} \mathfrak{z}_m (i, j), \qquad \text{if $Z \in \{A, C \}$}; \\
				& \mathsf{Z}_m (i, j) = \varepsilon^{-1} \log \varepsilon^{-1} + \varepsilon^{-1} \mathfrak{z}_m (i, j), \qquad \text{if $\mathsf{Z} \in \{ \mathsf{B}, \mathsf{D} \}$}.
			\end{aligned} 
		\end{flalign}
		
		\noindent In both cases for $j$, we will also show for any index $Z \in \{ A, B, C, D  \}$ that
		\begin{flalign}
			\label{xycij} 
			Z_c (i, j) = \varepsilon^{-1} \mathfrak{z}_c (i, j), \qquad  \text{if $c \le m - 2$}.
		\end{flalign}
		
		Let us first verify that \eqref{abcdmm1kj} and \eqref{xycij} both hold if $j = \mathrm{K}_{[1,m-1]} + 1$ indexes a transitionary row. To that end, observe by the inductive hypothesis (and the previous discussion on the first column, if $i = 2$) that these statements for the entrance data $A_k (i,j)$, $B_k (i, j)$, and $\mathsf{B}_k (i, j)$ hold. The fact that it also holds for the exit data $C_k (i, j)$, $D_k (i, j)$, and $\mathsf{D}_m (i, j)$ then follows from (the $\beta = i-1$ case of) \Cref{ulimitgamma2}, together with \Cref{0beta} and arrow conservation \eqref{abcdij} (recall \Cref{dlimitugamma2}). We next verify that \eqref{xm1ym1ij} and \eqref{xycij} hold if $j > \mathrm{K}_{[1,m-1]} + 1$ does not index a transitionary row. In this case, again the inductive hypothesis (together with the previous discussion on the first column, if $i=2$, and \eqref{abcdmm1kj} if $j = \mathrm{K}_{[1,m-1]} + 2$ indexes a row directly above a transitionary one) verifies these statements for the entrance data $A_k (i, j)$ and $B_k (i, j)$. The fact that it also holds for the exit data $C_k (i, j)$, $D_k (i, j)$, and $\mathsf{D}_m (i, j)$ then follows from \Cref{abcgamma}, together with arrow conservation \eqref{abcdij}. This verifies \eqref{abcdmm1kj}, \eqref{xm1ym1ij}, and \eqref{xycij}; by \Cref{abcgamma} and \Cref{ulimitgamma2}, it also shows that 
		\begin{flalign}
			\label{convergeckm} 
			e^{\mathfrak{c}_{[k, m]} (i, j)} \qquad \text{converges in distribution to} \qquad \mathfrak{Y}_{ij}^{-1} \cdot (e^{\mathfrak{a}_{[k,m]} (i,j) - \mathfrak{b}_m (i, j) + \mathfrak{b}_{[k,m]} (i,j)} + 1),
		\end{flalign}
		
		\noindent  as $\varepsilon$ tends to $0$ (where we have set $\mathfrak{a}_n (i,j) = 0$ if $j = \mathrm{K}_{[1,m-1]} + 1$ indexes a transitionary row, due to the fact that $A_m = 0$ in \eqref{abcdmm1kj}).
		
		Now let us analyze the height function for this model. First observe for any $(i, j) \in \mathbb{Z}_{> 1} \times \mathbb{Z}_{> 0}$ and $c \in \llbracket 1, m-1 \rrbracket$, where $m = m(j)$, that the definition \eqref{hc2ij} of the height function, consistency of the ensemble, arrow conservation \eqref{abcdij}, and \Cref{hpath2} together imply that
		\begin{flalign}
			\label{hcijabc}
			\begin{aligned} 
				\mathfrak{h}_c^{\rightarrow} (i, j - 1) = \mathfrak{h}_c^{\rightarrow} (i-1 & , j-1) - A_c (i, j); \qquad \mathfrak{h}_c^{\rightarrow} (i-1, j) = \mathfrak{h}_c^{\rightarrow} (i-1, j-1) + B_c (i, j); \\
				& \mathfrak{h}_c^{\rightarrow} (i, j) = \mathfrak{h}_c^{\rightarrow} (i - 1, j) - C_c (i, j),
			\end{aligned} 
		\end{flalign}
		
		\noindent and that 
		\begin{flalign}
			\label{hm2} 
			\begin{aligned} 
				\mathfrak{h}_m^{\rightarrow} (i, j-1) = \mathfrak{h}_m^{\rightarrow} (i-1 & , j-1) - A_m (i, j); \qquad \mathfrak{h}_m^{\rightarrow} (i-1, j) = \mathfrak{h}_m (i-1, j-1) - \mathsf{B}_m (i, j); \\ 
				& \mathfrak{h}_m^{\rightarrow} (i, j) = \mathfrak{h}_m^{\rightarrow} (i-1, j) - C_m (i, j). 
			\end{aligned} 
		\end{flalign}
		
		\noindent Hence, for any integer $c \ge 1$, we have
		\begin{flalign}
			\label{0abhcij}
			\mathfrak{h}_{\ge c}^{\rightarrow} (i-1, j) = \mathfrak{h}_{\ge c}^{\rightarrow} (i, j-1) + A_{[c,m]} (i, j) + B_{[c,m-1]} (i, j) - \mathsf{B}_m (i, j).
		\end{flalign}
		
		Thus, setting $\mathfrak{X}_c^0 (i, j) = \lim_{\varepsilon \rightarrow 0} \mathfrak{X}_c^{\varepsilon} (i, j)$, we have if $i > 1$ and $j > \mathrm{K}_{[1,c-1]} + 1$ that 
		\begin{flalign}
			\label{xc0ij} 
			\begin{aligned} 
				\mathfrak{X}_c^0 (i, j) & = \displaystyle\lim_{\varepsilon \rightarrow 0} \varepsilon^{i+j} e^{-\varepsilon \mathfrak{h}_{\ge c}^{\rightarrow} (i, j)} \\
				& = \displaystyle\lim_{\varepsilon \rightarrow 0} \varepsilon^{i+j} e^{-\varepsilon \mathfrak{h}_{\ge c}^{\rightarrow} (i-1, j)} \cdot e^{ \varepsilon C_{[c,m]} (i, j)} \\
				& = \displaystyle\lim_{\varepsilon \rightarrow 0} \varepsilon^{i+j-1} e^{-\varepsilon \mathfrak{h}_{\ge c}^{\rightarrow} (i-1,j)} \cdot e^{\mathfrak{c}_{[c,m]} (i, j)} \\
				& = \displaystyle\lim_{\varepsilon \rightarrow 0} \varepsilon^{i+j-1} e^{-\varepsilon \mathfrak{h}_{[c,m]}^{\rightarrow} (i-1, j)} \cdot (e^{\mathfrak{a}_{[j,m]} (i, j) + \mathfrak{b}_{[j,m-1]} (i, j) - \mathfrak{b}_m (i, j)} + 1) \cdot \mathfrak{Y}_{ij}^{-1} \\
				& = \displaystyle\lim_{\varepsilon \rightarrow 0} \varepsilon^{i+j-1} e^{-\varepsilon \mathfrak{h}_{\ge c}^{\rightarrow} (i-1, j)} \cdot (e^{\varepsilon A_{[c,m]} (i, j) + \varepsilon B_{[c,m-1]} (i, j) - \varepsilon \mathsf{B}_m (i, j)} + 1) \cdot \mathfrak{Y}_{ij}^{-1} \\
				& = \displaystyle\lim_{\varepsilon \rightarrow 0} \varepsilon^{i + j-1}(e^{-\varepsilon \mathfrak{h}_{\ge c}^{\rightarrow} (i, j-1)} + e^{-\varepsilon \mathfrak{h}_{\ge c}^{\rightarrow} (i-1, j)}) \cdot \mathfrak{Y}_{ij}^{-1} = \mathfrak{Y}_{ij}^{-1} \cdot ( \mathfrak{X}_c^0 (i, j - 1) + \mathfrak{X}_c^0 (i - 1, j) \big),
			\end{aligned} 
		\end{flalign}
		
		\noindent where the first equality follows from the definition of $\mathfrak{X}_c^{\varepsilon} (i, j) = \varepsilon^{i+j} \cdot e^{-\mathfrak{h}_{\ge c} (i, j)}$; the second from the third statements in \eqref{hcijabc} and \eqref{hm2}; the third from the definition of $\mathfrak{c}_k$ in terms of $C_k$ from \eqref{abcdmm1kj}, \eqref{xm1ym1ij}, and \eqref{xycij}; the fourth from \eqref{convergeckm}; the fifth from the definitions of $\mathfrak{a}_k$ and $\mathfrak{b}_k$ in terms of $A_k$ and $B_k$, respectively, from \eqref{abcdmm1kj}, \eqref{xm1ym1ij}, and \eqref{xycij}; the sixth from \eqref{0abhcij}; and the seventh again from the definition of $\mathfrak{X}_c^{\varepsilon} (i, j)$. If instead $i > 1$ and $j = \mathrm{K}_{[1,c-1]} + 1$ (meaning that $c = m$ and $j$ indexes a transitionary row), then following \eqref{xc0ij} we obtain
		\begin{flalign*}
			\mathfrak{X}_c^0 (i, j) & = \displaystyle\lim_{\varepsilon \rightarrow 0} \varepsilon^{i+j-1} e^{-\varepsilon \mathfrak{h}_{[c,m]}^{\rightarrow} (i-1, j)} \cdot (e^{\mathfrak{a}_{[j,m]} (i, j) + \mathfrak{b}_{[j,m-1]} (i, j) - \mathfrak{b}_m (i, j)} + 1) \cdot \mathfrak{Y}_{ij}^{-1} \\
			& = \displaystyle\lim_{\varepsilon \rightarrow 0} \varepsilon^{i+j-1} e^{-\varepsilon \mathfrak{h}_{\ge c}^{\rightarrow} (i-1, j)} \cdot (\varepsilon e^{\varepsilon A_{[c,m]} (i, j) + \varepsilon B_{[c,m-1]} (i, j) - \varepsilon \mathsf{B}_m (i, j)} + 1) \cdot \mathfrak{Y}_{ij}^{-1} \\
			& = \displaystyle\lim_{\varepsilon \rightarrow 0} \varepsilon^{i + j-1} e^{-\varepsilon \mathfrak{h}_{\ge c}^{\rightarrow} (i-1, j)} \cdot \mathfrak{Y}_{ij}^{-1} = \mathfrak{Y}_{ij}^{-1} \cdot \mathfrak{X}_c^0 (i - 1, j),
		\end{flalign*} 
		
		\noindent where the second statement follows from \eqref{abcdmm1kj}; similarly, it is quickly verified that $\mathfrak{X}_c^0 (1, j) = \mathfrak{Y}_{1j}^{-1} \cdot \mathfrak{X}_c^0 (1, j-1)$ for $j > \mathrm{K}_{[1,c-1]} + 1$ and $\mathfrak{X}_c^0 (1, \mathrm{K}_{[1,c-1]} + 1) = \mathfrak{Y}_{1, \mathrm{K}_{[1,c-1]}}^{-1}$, using \eqref{dconverge}. These, with \eqref{xc0ij} confirm that $\mathfrak{X}_c^0$ satisfies the same recursion \eqref{zijij} determining the $\mathfrak{Z}_c (i, j)$. It follows that $\mathfrak{X}_c^0 (i, j) = \mathfrak{Z}_c (i, j)$, which establishes the theorem.
	\end{proof}

	\begin{rem} 
		
		\label{abcdpath} 
		
		Let us explain how the random ensemble described in this section arises as a specialization of the colored stochastic fused vertex model introduced in \Cref{FusedPath}. To that end, let $\mathfrak{M} \in \mathbb{R}$ be a real number and $\mathrm{L} \ge 1$ be an integer (that we will analytically continue in). The parameters $(x_j, r_j)$ associated with the $j$-th row of the model, and those $(y_i, s_i)$ associated with the $i$-th column, will be given by 
		\begin{flalign*}
			(x_j, r_j) = (q^{1 -\theta_j'}, q^{-\mathrm{L}/2}), \quad \text{for each $j \ge 1$}; \qquad (y_i, s_i) = (q^{\theta_i - \mathfrak{M}}, q^{-\mathfrak{M}/2}), \quad \text{for each $i \ge 1$}.
		\end{flalign*}
		
		Consider a colored stochastic fused vertex model on $\mathbb{Z}_{> 0}^2$ with these parameters, and with $\mathrm{L}$ arrows of color $m(j)$ entering through row\footnote{Since $m = m(j)$ is defined so that $j \in \llbracket \mathrm{K}_{[1,m-1]} + 1, \mathrm{K}_{[1,m]} \rrbracket$, this means that there are $\mathrm{K}_1, \mathrm{K}_2, \ldots $ rows inputting arrows of colors $1, 2, \ldots $ into the model, respectively.} $j$, for each $j \ge 1$. Let $\big( \bm{A} (i,j), \bm{B} (i,j); \bm{C} (i, j), \bm{D} (i, j) \big)$ denote the colored fused arrow configuration in this model at any vertex $(i, j) \in \mathbb{Z}_{> 0}^2$. We next ``complement'' arrows of color $m(j)$ in the $j$-th row, by setting $\bm{B} (i,j) = \big( \overline{\bm{B}} (i,j), \mathrm{L} - \mathsf{B}_{m(j)} (i, j) \big)$ and $\bm{D} (i, j) = \big( \overline{\bm{D}} (i, j), \mathrm{L} - \mathsf{D}_{m(j)} (i, j) \big)$ for each $i \ge 1$. We then track the complemented arrow configuration $\big( \bm{A} (i, j), (\overline{\bm{B}} (i, j), \mathsf{B}_{m(j)} (i, j)); \bm{C} (i, j), (\overline{\bm{D}} (i, j), \mathsf{D}_{m(j)} (i, j)) \big)$ over $(i, j) \in \mathbb{Z}_{> 0}^2$.
		
		Now analytically continue in $\mathrm{L}$, replacing it with a real number $\mathfrak{L} \ge 0$. Let $\mathfrak{M}$ tend to $\infty$, and then let $\mathfrak{L}$ tend to $\infty$. Since $x_j y_i^{-1} = q^{1 - \theta_i - \theta_j' + \mathfrak{M}} = q^{\mathfrak{M} + 1} \gamma_{ij}^{-1}$, \Cref{rqnl1}, \Cref{ufusedqmqn}, and \Cref{ufusedql} (with the $q^{\mathfrak{N}}$ in the first two equal to $q^{\mathfrak{M} + \mathrm{L}} \gamma_{ij}^{-1}$ here, so that $x_j y_i^{-1} = q^{\mathfrak{N} - \mathrm{L} + 1}$) implies that this procedure yields at $(i, j) \in \mathbb{Z}_{> 0}^2$ the $\mathcal{U}_{\gamma}^{\qdp; m(j)}$-weights from \Cref{ufusedql}. Since $x_j (y_i s_i^2)^{-1} = q \gamma_{ij}^{-1}$, it also at any vertex $(1, j)$ (in the first column) yields the $\mathcal{U}_{1/\gamma_{1j}}^{b;m(j)}$-weights from \Cref{u3}. Hence, this gives rise to the vertex model described above \Cref{convergegamma}.    
		
	\end{rem} 

	\begin{rem}
		
		\label{q0} 
		
		Let us briefly (and informally) explain the reason behind our parameter choices when degenerating the colored fused stochastic vertex model to the log-gamma polymer in \Cref{abcdpath}. In the uncolored case $n=1$, \cite[Proposition 7.26]{RFSVM} describes how to specialize the $U_q (\widehat{\mathfrak{sl}}_2)$ stochastic fused vertex model to the $q$-Hahn PushTASEP introduced in \cite{TPT}. However, the arrows in that vertex model are reversed relative to here; they are directed up-left instead of up-right \cite[Figure 19]{RFSVM}. To remedy this, one must complement those arrow configurations by tracking how many arrows its horizontal edges are from being saturated (as in \eqref{bnlbn}); this directs its paths up-right (and also imposes a change in the spectral parameters involved in the vertex weights). The vertex weights obtained in this way precisely coincide with the $n=1$ case of the complemented ones provided in \Cref{2u} (and \Cref{rqnl1}). The subsequent limits taken in \Cref{LimitU} correspond to degenerating the $q$-Hahn PushTASEP to the $q$-PushTASEP; those taken in \Cref{Weightsq1} correspond to degenerating the latter to the log-gamma polymer (as described in \cite{RCRP}).
		
		In the colored case $n > 1$, the setup is similar, though it admits a few differences. The main one is that we only complement arrows of the largest color when defining the complemented weights in Secion \ref{WeightsComplement} (a choice that is essentially forced by the step type boundary conditions we consider). This leads to a seemingly new presence of ``transitionary rows,'' where the largest color in the model changes, and the limiting behavior of the weights in these rows must be addressed in \Cref{Weightsq1}.  
		
	\end{rem}

	\section{Effective Convergence of the Six-Vertex Model to ASEP}
	
	\label{Converge}
	
	In \Cref{L0} we described colored line ensembles for the stochastic six-vertex model. It was shown in \cite{SSVM,CSSVMP} that under a certain limit degeneration that the latter, with weights $(b_1, b_2)$ as depicted in \Cref{rz2}, converges to the asymmetric simple exclusion process (ASEP), with left jump rate $L$ and right jump rate $R$. This degeneration takes $(b_1, b_2) = (\varepsilon L, \varepsilon R)$, scales the vertical coordinate by $\varepsilon^{-1}$, lets $\varepsilon$ tend to $0$, and observes the vertically exiting arrows along the main diagonal of $\mathbb{Z}^2$; this in particular makes time (the vertical coordinate) continuous and space (the horizontal coordinate) infinite. A similar limit can be taken on the associated colored line ensemble, and most of its relevant properties would be preserved, including its height function match \Cref{lmu2} (in this case, to the colored ASEP) and its Gibbs property \Cref{conditionl}. However, the domain of this colored line ensemble would be infinite, and therefore its boundary conditions would be lost. 
	
	When analyzing line ensembles, it is at times (including in the forthcoming work \cite{PP}) useful to keep these boundary conditions in tact. Thus, to understand the colored ASEP, it can be helpful not to directly study its line ensemble by letting $\varepsilon$ tend to $0$ in the colored stochastic six-vertex one, but instead to analyze the latter at $\varepsilon > 0$ (where its boundary conditions are present) and then let $\varepsilon$ tend to $0$ afterwards. This can require effective convergence rates of the stochastic six-vertex model to the ASEP, which were not proven in \cite{CSSVMP}. 
	
	In this section we provide such a convergence result. We state it in Section \ref{VertexColor} after introducing the colored ASEP in Section \ref{ParticleColor}; its proof is then given in Section \ref{Exponential} and Section \ref{ProofCouple0}. Throughout this section, we fix real numbers\footnote{The ASEP usually imposes the asymmetry condition $R \ne L$, but we will not require that here.} $R, L \ge 0$. The constants below might implicitly depend on $R$ and $L$, even when not stated explicitly.
	
	\subsection{Properties of the Colored ASEP}
	
	\label{ParticleColor} 
	
	The \emph{colored ASEP} is a continuous time Markov process that can be described as follows. Particles are initially, at time $0$, placed on $\mathbb{Z}$ in such a way that exactly one particle occupies any site. Assigned to each particle is a color, which is a nonnegative integer label that informally measures the ``priority'' of the particle (those of a larger color are viewed as having higher priority than those of a smaller one). Denote the color of the particle at site $x \in \mathbb{Z}$ and time $t \in \mathbb{R}_{\ge 0}$ by $\eta_t (x) \in \mathbb{Z}_{\ge 0}$; further denote the full state of the process at time $t$ by $\bm{\eta}_t = \big( \eta_t (x) \big)_{x \in \mathbb{Z}}$. 
	
	Associated with each site $x \in \mathbb{Z}$ are two independent exponential clocks, a ``left'' one of rate $L$ and a ``right'' one of rate $R$. If the left clock of $x$ rings at some time $s \in \mathbb{R}_{> 0}$, then the particle at site $x$ switches places with the one at site $x-1$ if $\eta_{s^-} (x) > \eta_{s^-} (x-1)$ (and does nothing otherwise). Similarly, if the right clock of $x$ rings at time $s$, then the particle at site $x$ switches places with the one at site $x+1$ if $\eta_{s^-} (x) > \eta_{s^-} (x+1)$ (and does nothing otherwise).
	
	We next recall from \cite{NIPML,ASVPGM} a graphical representation for the colored ASEP. For any $x \in \mathbb{Z}$, let $\bm{\mathsf{S}} (x) = \big( \mathsf{S}_1 (x), \mathsf{S}_2 (x), \ldots \big)$ and $\bm{\mathsf{T}} (x) = \big( \mathsf{T}_1 (x), \mathsf{T}_2 (x), \ldots \big)$ denote the ringing times (in increasing order) for the left and right clocks associated with site $x$, respectively. For each integer $i \ge 1$, draw a directed arrow on $\mathbb{Z} \times \mathbb{R}_{\ge 0}$ from $\big( x, \mathsf{S}_i (x) \big)$ to $\big( x-1, \mathsf{S}_i (x) \big)$ and from $\big( x, \mathsf{T}_i (x) \big)$ to $\big( x+1, \mathsf{T}_i (x) \big)$. The union of these arrows from a directed graph $\mathcal{G}$, which we call the ASEP time graph; its horizontal and vertical directions index space and time, respectively. Given $\mathcal{G}$, the dynamics of the colored ASEP are defined by having each particle remain at its site $x$ until it reaches a time $t$ at which there is an arrow in $\mathcal{G}$ connecting $(x, t)$ to some $(y, t)$ (for $y \in \{ x - 1, x + 1 \}$). At this time $t$, the particle at site $x$ switches locations with the one at site $y$ if either $\eta_{t^-} (x) > \eta_{t^-} (y)$ and the edge is directed from $(x, t)$ to $(y, t)$, or if $\eta_{t^-} (x) < \eta_{t^-} (y)$ and the edge is directed from $(y, t)$ to $(x, t)$; otherwise, the particle at $x$ stays in place. 
	
	Before proceeding, let us record the following lemma, which is sometimes known as a finite speed of discrepancy bound. It states that, if two colored ASEPs initially agree on a (sufficiently long) interval, then with high probability their dynamics can be coupled so as to agree on a shorter interval, up until a given time.

	\begin{lem} 
		
		\label{xietaexclusion} 
		
		There exists a constant $C > 1$ such that the following holds. Let $T \ge 0$ and $K > 1$ be real numbers; $U \le V$ be integers; and $\bm{\xi} = \big( \xi_t (x) \big)$ and $\bm{\eta} = \big( \eta_t (x) \big)$ denote two colored ASEPs whose initial data satisfy $\xi_0 (x) = \eta_0 (x)$ for each $x \in \llbracket U - CKT, V + CKT \rrbracket$. Then it is possible to couple $\bm{\xi}$ and $\bm{\eta}$ such that, with probability at least $1 - C e^{-K(T+1)}$, we have $\xi_t (x) = \eta_t (x)$ for each $(x, t) \in \llbracket U, V \rrbracket \times [0, T]$. 
		
	\end{lem}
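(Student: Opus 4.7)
The plan is to execute the standard coupling-plus-finite-speed-of-propagation argument for interacting particle systems. First, I would couple $\bm{\xi}$ and $\bm{\eta}$ via the \emph{basic coupling}, in which both processes are driven by the same realization of the ASEP time graph $\mathcal{G}$ from Section~\ref{ParticleColor}. For each $y \in \mathbb{Z}$, I then introduce the forward reachability region $\mathcal{F}(y,T) \subset \mathbb{Z} \times [0,T]$ consisting of all space-time points connected to $(y,0)$ by a chain of swap arrows of $\mathcal{G}$ directed forward in time. Under the basic coupling, the values $\xi_t(x)$ and $\eta_t(x)$ depend on the initial data only through its restriction to the random set $\{y : (x,t) \in \mathcal{F}(y,T)\}$, so a discrepancy at some $(x,t) \in \llbracket U, V \rrbracket \times [0,T]$ can occur only if $\mathcal{F}(y,T)$ meets this rectangle for some $y$ outside $\llbracket U - CKT, V + CKT \rrbracket$.

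Second, I would bound the extent of $\mathcal{F}(y,\cdot)$. Tracking its rightmost site as time increases, it advances from $z$ to $z+1$ precisely when $\mathcal{G}$ places an arrow between $z$ and $z+1$, an event of total rate $R + L$ (the right clock at $z$ contributing rate $R$ and the left clock at $z+1$ contributing rate $L$). Hence the rightward displacement of $\mathcal{F}(y,T)$ from $y$ is $\mathrm{Poisson}((R+L)T)$-distributed, and the analogous statement holds for its leftward displacement. Applying a union bound over $y$ outside $\llbracket U - CKT, V + CKT \rrbracket$, together with the Chernoff-type Poisson tail estimate $\mathbb{P}[\mathrm{Poisson}(\lambda) \geq k] \leq (e\lambda/k)^k$, I find that the probability of a discrepancy in $\llbracket U, V \rrbracket \times [0,T]$ is at most a constant multiple of $\sum_{k \geq \lceil CKT \rceil} \bigl( e(R+L)/(CK) \bigr)^k$, provided $C$ is chosen larger than a constant depending only on $R$ and $L$; this yields a bound of order $e^{-c\, CKT}$ for some absolute $c > 0$.

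The principal obstacle is converting the natural decay $e^{-c\, CKT}$ into the specific form $Ce^{-K(T+1)}$ required by the statement. For $T \geq 1$ this amounts to ensuring $c\, CT \geq T + 1$, which holds after enlarging $C$ and absorbs any multiplicative constant into the prefactor $C$. For $T < 1$ the comparison is more delicate, and one uses that the bound $1 - Ce^{-K(T+1)}$ is informative only in the regime where $K(T+1)$ dominates $\log C$, together with the trivial agreement of the two processes on $\llbracket U, V \rrbracket \times \{0\}$ (which follows from the assumed agreement on the larger interval $\llbracket U - CKT, V + CKT \rrbracket \supset \llbracket U, V \rrbracket$) to handle the degenerate small-$T$ regime. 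Combining these pieces after fixing $C$ sufficiently large in terms of $R$ and $L$ yields the claim.
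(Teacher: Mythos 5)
Your proposal follows essentially the same route as the paper's proof: both couple $\bm{\xi}$ and $\bm{\eta}$ under a common ASEP time graph $\mathcal{G}$, invoke finite speed of propagation to reduce to the event that a chain of arrows traverses from outside $\llbracket U - CKT, V + CKT\rrbracket$ into $\llbracket U, V\rrbracket$ within time $T$, and bound that event by a Chernoff tail for a Poisson process of rate $R + L$. The only structural difference is cosmetic: you sum a union bound over all source sites $y$ outside the agreement window, using the geometric decay of the Poisson tail, whereas the paper directly isolates the two boundary-crossing events (a leftward traversal chain and a rightward one) and applies the tail bound once to each; these are equivalent.

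You correctly flag the small-$T$ reconciliation as the delicate step, but your proposed fix does not quite close it. You argue that the bound is informative only when $K(T+1) > \log C$ and then appeal to agreement at $t = 0$, but this misses the regime where $K$ is large (so the bound \emph{is} informative) while $T$ is so small that $CKT < 1$, forcing the agreement window to degenerate to $\llbracket U, V\rrbracket$; there, a single boundary arrow (probability of order $(R+L)T$) can already create a discrepancy, and $(R+L)T$ is not dominated by $Ce^{-K(T+1)}$ when $T \ll 1 \ll K$ for any fixed $C$. This is a rough edge shared with the paper's own proof, whose traversal count $\lfloor CKT\rfloor$ likewise degenerates there, and it is harmless in context: \Cref{xietaexclusion} is applied in the proof of \Cref{convergexieta} with $T$ fixed and $K = \lceil\varepsilon^{-1/8}\rceil \to \infty$, placing one firmly in the Chernoff regime $CKT \to \infty$. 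A clean restatement of the lemma would add a hypothesis such as $CKT \geq 1$, or replace the exponent $K(T+1)$ by $\min\{K(T+1), CKT\}$.
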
 
	
	\begin{proof}
		
		Let $C > 1$ be a constant to be fixed later. Couple $\bm{\xi}$ and $\bm{\eta}$ under the same time graph $\mathcal{G}$. Then $\xi_{t_0} (x_0) \ne \eta_{t_0} (x_0)$ holds for some $(x_0, t_0) \in \llbracket U, V \rrbracket \times [0, T]$ only if there exists some $(x_1, t_1) \in  \big(\mathbb{Z} \setminus \llbracket U - CKT, V + CKT \rrbracket \big) \times [0, T]$, such that a particle at site $x_1$ at time $t_1$ could (for some trajectories of the remaining particles) enter the interval $\llbracket U, V \rrbracket$ sometime during $[0, T]$ under $\mathcal{G}$; this is contained in the union of two events. The first is that there exists a sequence of times $0 \le r_1 \le r_2 \le \ldots \le r_{\lfloor CKT \rfloor} \le T$ for which there exists an arrow in $\mathcal{G}$ connecting $\big( U - \lfloor CKT \rfloor + i - 1, r_i \big)$ and $\big( U - \lfloor CKT \rfloor +i, r_i \big)$ for each $i \in \llbracket 1, CKT \rrbracket$; the second is that there exists a sequence of times $0 \le r_1'\le r_2'\le \cdots \le r_{\lfloor CKT \rfloor}' \le T$ for which there exists an arrow in $\mathcal{G}$ connecting $\big( V + \lfloor CKT \rfloor - i +1, r_i')$ and $\big(V + \lfloor CKT \rfloor -i, r_i' \big)$ for each $i \in \llbracket 1, CKT \rrbracket$. 
		
		Now recall that the set of times at which arrows in $\mathcal{G}$ enter or exit a given column $\{ x \} \times \mathbb{R}_{\ge 0}$ has the same law as that of the ringing times of an exponential clock of rate $R + L$. Hence, the probabilty of each of the above events is bounded by the probability that of a sum of $T$ exponential random variables with parameter $R+L$ travels a distance of at least $CKT$. A Chernoff bound implies that the latter is at most $C e^{-K(T+1)}$ if $C$ is sufficiently large, establishing the lemma.
	\end{proof}

	\subsection{Properties of the Colored Stochastic Six-Vertex Model}
	
	\label{VertexColor}
	
	In this section we state the effective convergence of the stochastic six-vertex model to the colored ASEP, and also provide several properties of the former. Let $b_1, b_2 \in [0, 1]$ denote real numbers; set $q = b_1 b_2^{-1}$; consider the colored stochastic six-vertex model (as defined in \Cref{ModelVertex}) on the quadrant $\mathbb{Z}_{> 0}^2$, with spectral parameter $z_{i,j} = (1 - b_2) (1 - b_1)^{-1}$ at each $(i, j) \in \mathbb{Z}_{> 0}^2$; and assume that the largest color in this system is at most some integer $n \ge 0$. See \Cref{rz2} for the stochastic weights of this model. For any integer $(x, y) \in \mathbb{Z}_{> 0}^2$, let $\eta_y (x) \in \llbracket 0, n \rrbracket$ denote the color of the arrow vertically exiting $(x, y)$ in this model; also let $\bm{\eta}_y = \big( \eta_y (x) \big)_{x > 0}$ denote the full state of the process at vertical coordinate $y$.

	\begin{figure} 
		\begin{center}
			\begin{tikzpicture}[
				>=stealth,
				scale = .75
				]
				
				\draw[-, black] (-5, 3.1) -- (7.5, 3.1);
				\draw[-, black] (-5, -1.9) -- (7.5, -1.9);
				\draw[-, black] (-5, -1.1) -- (7.5, -1.1);
				\draw[-, black] (-5, -.4) -- (7.5, -.4);
				\draw[-, black] (-5, 2.4) -- (7.5, 2.4);
				\draw[-, black] (7.5, -1.9) -- (7.5, 3.1);
				\draw[-, black] (-5, -1.9) -- (-5, 3.1);
				\draw[-, black] (5, -1.9) -- (5, 2.4);
				\draw[-, black] (-2.5, -1.9) -- (-2.5, 3.1);
				\draw[-, black] (2.5, -1.9) -- (2.5, 2.4);
				\draw[-, black] (0, -1.9) -- (0, 2.4);			
				
				\draw[->, thick, blue] (3.75, .1) -- (3.75, 1) -- (4.65, 1);
				\draw[->, thick, red] (2.85, 1) -- (3.75, 1) -- (3.75, 1.9); 
				
				\draw[->, thick, blue] (-1.25, .1) -- (-1.25, 1.9);
				\draw[->, thick,  red] (-2.15, 1) -- (-.35, 1);
				
				\draw[->, thick, blue] (.35, 1) -- (2.15, 1);
				\draw[->, thick, red] (1.25, .1) -- (1.25, 1.9);
				
				\draw[->, thick, blue] (5.35, 1) -- (6.25, 1) -- (6.25, 1.9);
				\draw[->, thick, red] (6.25, .1) -- (6.25, 1) -- (7.15, 1); 
				
				\draw[->, thick, red] (-3.75, .1) -- (-3.75, 1.9);
				\draw[->, thick, red] (-4.65, 1) -- (-2.85, 1);		
				
				\filldraw[fill=gray!50!white, draw=black] (-2.85, 1) circle [radius=0] node [black, right = -1, scale = .75] {$i$};
				\filldraw[fill=gray!50!white, draw=black] (-.35, 1) circle [radius=0] node [black, right = -1, scale = .75] {$i$};
				\filldraw[fill=gray!50!white, draw=black] (2.15, 1) circle [radius=0] node [black, right = -1, scale = .75] {$j$};
				\filldraw[fill=gray!50!white, draw=black] (4.65, 1) circle [radius=0] node [black, right = -1, scale = .75] {$j$};
				\filldraw[fill=gray!50!white, draw=black] (7.15, 1) circle [radius=0] node [black, right = -1, scale = .75] {$i$};
				
				\filldraw[fill=gray!50!white, draw=black] (5.35, 1) circle [radius=0] node [black, left = -1, scale = .75] {$j$};
				\filldraw[fill=gray!50!white, draw=black] (2.85, 1) circle [radius=0] node [black, left = -1, scale = .75] {$i$};
				\filldraw[fill=gray!50!white, draw=black] (.35, 1) circle [radius=0] node [black, left = -1, scale = .75] {$j$};
				\filldraw[fill=gray!50!white, draw=black] (-2.15, 1) circle [radius=0] node [black, left = -1, scale = .75] {$i$};
				\filldraw[fill=gray!50!white, draw=black] (-4.65, 1) circle [radius=0] node [black, left = -1, scale = .75] {$i$};
				
				\filldraw[fill=gray!50!white, draw=black] (-3.75, 1.9) circle [radius=0] node [black, above = -1, scale = .75] {$i$};
				\filldraw[fill=gray!50!white, draw=black] (-1.25, 1.9) circle [radius=0] node [black, above = -1, scale = .75] {$j$};
				\filldraw[fill=gray!50!white, draw=black] (1.25, 1.9) circle [radius=0] node [black, above = -1, scale = .75] {$i$};
				\filldraw[fill=gray!50!white, draw=black] (3.75, 1.9) circle [radius=0] node [black, above = -1, scale = .75] {$i$};
				\filldraw[fill=gray!50!white, draw=black] (6.25, 1.9) circle [radius=0] node [black, above = -1, scale = .75] {$j$};
				
				\filldraw[fill=gray!50!white, draw=black] (-3.75, .1) circle [radius=0] node [black, below = -1, scale = .75] {$i$};
				\filldraw[fill=gray!50!white, draw=black] (-1.25, .1) circle [radius=0] node [black, below = -1, scale = .75] {$j$};
				\filldraw[fill=gray!50!white, draw=black] (1.25, .1) circle [radius=0] node [black, below = -1, scale = .75] {$i$};
				\filldraw[fill=gray!50!white, draw=black] (3.75, .1) circle [radius=0] node [black, below = -1, scale = .75] {$j$};
				\filldraw[fill=gray!50!white, draw=black] (6.25, .1) circle [radius=0] node [black, below = -1, scale = .75] {$i$};	
				
				\filldraw[fill=gray!50!white, draw=black] (-3.75, .1) circle [radius=0] node [black, below = -1, scale = .75] {$i$};
				\filldraw[fill=gray!50!white, draw=black] (-1.25, .1) circle [radius=0] node [black, below = -1, scale = .75] {$j$};
				\filldraw[fill=gray!50!white, draw=black] (1.25, .1) circle [radius=0] node [black, below = -1, scale = .75] {$i$};
				\filldraw[fill=gray!50!white, draw=black] (3.75, .1) circle [radius=0] node [black, below = -1, scale = .75] {$j$};
				\filldraw[fill=gray!50!white, draw=black] (6.25, .1) circle [radius=0] node [black, below = -1, scale = .75] {$i$};
				
				\filldraw[fill=gray!50!white, draw=black] (-3.75, 2.75) circle [radius=0] node [black] {$i \in \llbracket 0, n \rrbracket$};
				\filldraw[fill=gray!50!white, draw=black] (2.5, 2.75) circle [radius=0] node [black] {$0 \le i < j \le n$}; 
				
				\filldraw[fill=gray!50!white, draw=black] (-3.75, -.75) circle [radius=0] node [black] {$(i, i; i, i)$};
				\filldraw[fill=gray!50!white, draw=black] (-1.25, -.75) circle [radius=0] node [black] {$(j, i; j, i)$};
				\filldraw[fill=gray!50!white, draw=black] (1.25, -.75) circle [radius=0] node [black] {$(i, j; i, j)$};
				\filldraw[fill=gray!50!white, draw=black] (3.75, -.75) circle [radius=0] node [black] {$(j, i; i, j)$};
				\filldraw[fill=gray!50!white, draw=black] (6.25, -.75) circle [radius=0] node [black] {$(i, j; j, i)$};
				
				\filldraw[fill=gray!50!white, draw=black] (-3.75, -1.5) circle [radius=0] node [black] {$1$};
				\filldraw[fill=gray!50!white, draw=black] (-1.25, -1.5) circle [radius=0] node [black] {$b_1$};
				\filldraw[fill=gray!50!white, draw=black] (1.25, -1.5) circle [radius=0] node [black] {$b_2$};
				\filldraw[fill=gray!50!white, draw=black] (3.75, -1.5) circle [radius=0] node [black] {$1 - b_1$};
				\filldraw[fill=gray!50!white, draw=black] (6.25, -1.5) circle [radius=0] node [black] {$1 - b_2$};
			\end{tikzpicture}
		\end{center}
		
		\caption{\label{rz2} The weights for the colored stochastic six-vertex model in Section \ref{VertexColor} are depicted above.}
	\end{figure} 
	
	Next we state the effective\footnote{We made no effort to optimize the exponent $\varepsilon^{1/8}$ in the probability in \Cref{convergexieta}.} convergence result; it will be proven in Section \ref{Exponential} below. In what follows, given a function $\varpi : \mathbb{Z} \rightarrow \llbracket 0, n \rrbracket$, we say that the colored stochastic six-vertex model $\bm{\eta}$ on the quadrant $\mathbb{Z}_{> 0 }^2$ has boundary data $\varpi$ if the below holds. For each integer $x \le 0$, an arrow of color $\varpi (x)$ horizontally enters the quadrant through $(0, 1 - x)$ and, for each integer $x > 0$, an arrow of color $\varpi (x)$ vertically enters the quadrant through $(x, 0)$. We say that the boundary data for this model matches the initial condition for a colored ASEP $\bm{\xi} = \big( \xi_t (x) \big)$ if $\varpi = \xi_0$. 
	
	\begin{prop} 
		
		\label{convergexieta} 
		
		There exists a constant $C > 1$ such that the following holds. Let $\varepsilon \in (0 ,1)$ and $T \ge 0$ be real numbers; let $U \le V$ be integers; let $\mathcal{T} \subset [0, T]$ denote a finite set of real numbers; and let $\bm{\xi} = \big( \xi_t (x) \big)$ denote a colored ASEP with at most $n$ colors. Further let $\bm{\eta} = \big( \eta_y (x) \big)$ denote a colored stochastic six-vertex model on $\mathbb{Z}_{> 0}^2$ with parameters $(b_1, b_2) = (\varepsilon L, \varepsilon R)$, whose boundary data matches the initial data of $\bm{\xi}$. It is possible to couple $\bm{\xi}$ and $\bm{\eta}$ so that, with probability at least $1 - C (V-U+1) (T+1)^2 |\mathcal{T}| \varepsilon^{1/8}$, we have
		\begin{flalign}
			\label{xieta0}
			\xi_t (x) = \eta_{\lfloor t / \varepsilon \rfloor} \big(x + \lfloor t \varepsilon^{-1} \rfloor \big), \qquad \text{for each $(x, t) \in \llbracket U, V \rrbracket \times \mathcal{T}$ with $x + \lfloor t \varepsilon^{-1} \rfloor > 0$}.
		\end{flalign}

	\end{prop}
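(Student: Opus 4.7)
My strategy is to construct an explicit coupling between the colored ASEP $\bm{\xi}$ and the colored stochastic six-vertex model $\bm{\eta}$ by processing the rows of $\bm{\eta}$ sequentially and matching each row's randomness to an $\varepsilon$-length time increment of the ASEP clocks. The guiding observation is that under the scaling $(b_1, b_2) = (\varepsilon L, \varepsilon R)$, the dominant behavior within a single row is for the horizontal arrow entering from the west to swap with every vertical arrow it encounters (weights $1-b_1$ or $1-b_2$, both close to $1$), uniformly shifting the vertical configuration by one column to the right. This deterministic shift is exactly undone by the reparametrization $x \mapsto x + \lfloor t/\varepsilon \rfloor$. The only random perturbations come from ``non-swap'' (straight-pass) events, which occur with probability $b_1 = \varepsilon L$ or $b_2 = \varepsilon R$ per column depending on which side carries the larger color. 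A direct bookkeeping calculation shows that a single non-swap in row $y$ at column $z$ corresponds, in the diagonal frame $\zeta_y(x_0) = \eta_y(x_0 + y)$, to transposing $\zeta_{y-1}(x_0)$ and $\zeta_{y-1}(x_0+1)$ for $x_0 = z - y$ precisely when the larger-colored particle lies on the appropriate side, matching the ASEP transition rates $L$ and $R$ exactly.

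The first step will be to reduce to a finite spatial window. \Cref{xietaexclusion} bounds the ASEP dependence directly; an analogous finite-speed-of-discrepancy bound for the six-vertex model must be established by iterating a row-level argument, controlling the probability that a horizontal arrow undergoes many non-swap events before reaching the relevant column. This shows $\eta_y(x)$ is determined by the boundary data in a window of radius $M = O(T + \log \varepsilon^{-1})$ around $x - y$ except on a probability-$O(\varepsilon)$ event, reducing to configurations supported on a bounded window. Next, I construct a joint probability space using, for each (row $y$, column $z$) in the reduced window, an independent Bernoulli variable $\chi_{y,z}$ with parameter $p_{y,z} \in \{0, b_1, b_2\}$ that determines whether a non-swap occurs, together with a standard Bernoulli-to-Poisson coupling converting the sequence of $\chi_{y,z}$ at fixed diagonal position into the Poisson process driving the corresponding ASEP bond clock.

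The core estimate controls three bad events: (a) some row contains two or more non-swap events in the relevant window, with probability bounded by $O((V-U+T)^2 \cdot T \varepsilon)$ via a second moment estimate; (b) the Bernoulli-to-Poisson coupling fails at some bond, contributing $O((V-U+T) T \varepsilon)$; and (c) the finite-speed reduction fails. On the complement of the union, a direct row-by-row induction verifies that $\eta_{\lfloor t/\varepsilon \rfloor}(x + \lfloor t/\varepsilon \rfloor) = \xi_t(x)$ for all $(x, t) \in \llbracket U, V \rrbracket \times \mathcal{T}$. Optimizing $M$ against the various error terms, together with a union bound over $\mathcal{T}$, yields a polynomial error rate; crude bounds suffice to obtain $C(V-U+1)(T+1)^2 |\mathcal{T}| \varepsilon^{1/8}$.

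The main obstacle is that the non-swap probabilities $p_{y,z}$ depend on the current color of the traversing horizontal arrow, which itself depends on the history of non-swap events earlier in the same row. Consequently, the Bernoulli sequence driving $\bm{\eta}$ is not a priori independent of the configuration, and the coupling with independent Poisson clocks requires an inductive argument on the good event: whenever a row has at most one non-swap in the window, the horizontal arrow's color at the non-swap column is exactly determined by the prior vertical configuration and coincides with the color of the ASEP particle at the matching diagonal site, so $p_{y,z}$ is exactly $\varepsilon L$ or $\varepsilon R$ in agreement with the correct ASEP rate. Verifying this correspondence row by row, while simultaneously controlling the accumulation of coupling errors and rare multi-swap events, will constitute the main technical content.
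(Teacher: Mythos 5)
Your overall strategy—first cut off to a finite spatial window via finite-speed bounds, then couple the discrete randomness driving the six-vertex model with the Poisson clocks driving the ASEP, and finally verify agreement inductively on the good event where events are sparse and well-separated—is the same as the paper's. The diagonal-frame identification, the observation that a single no-swap at a vertex effects a neighbor transposition of $\zeta_{y-1}$, and the identification of the rates $\varepsilon L$ and $\varepsilon R$ with $L$ and $R$ are all correct. Your plan to establish the six-vertex finite-speed bound separately matches the paper's \Cref{xietaprocess}.

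There is, however, a real gap in the coupling construction as you describe it. You propose a \emph{single} Bernoulli variable $\chi_{y,z}$ per vertex with parameter $p_{y,z} \in \{0, b_1, b_2\}$, and then a ``standard Bernoulli-to-Poisson coupling'' of the sequence $(\chi_{y,z})$ along a diagonal to the relevant ASEP bond clock. But $p_{y,z}$ is configuration-dependent, so these variables cannot simultaneously be declared independent and have the stated marginals; moreover the sequence along a diagonal is not a fixed-parameter Bernoulli sequence, so the standard conversion to a Poisson process does not apply. You correctly identify this circularity, but the proposed remedy (``inductive argument on the good event'') is left too vague to know whether it closes the gap—in particular, you would still need to specify what is being coupled \emph{before} the induction begins.

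The paper resolves this by choosing the right configuration-independent randomness to couple. At each vertex there are \emph{two} independent Bernoulli variables, $\chi_{x,y}$ (parameter $b_1$) and $\psi_{x,y}$ (parameter $b_2$); the configuration only determines which one is read, not its value. Collecting the on-positions along diagonals gives two families of (geometric-spacing) sequences $\bm{\mathfrak{s}}(x), \bm{\mathfrak{t}}(x)$, which are coupled once and for all with the ASEP's left and right clock times $\bm{\mathsf{S}}(x), \bm{\mathsf{T}}(x)$ via \Cref{stcouple} (built from the geometric-exponential coupling \Cref{gcouplee} and the gap estimate \Cref{rtime}). The coupling is completely independent of the dynamics, so no circularity arises; the induction over ordered event times then just verifies agreement. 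If you replace your single configuration-dependent Bernoulli by the paper's pair of independent Bernoullis and couple each to its own clock, the remainder of your plan—bounding the probability of collisions within a row, bounding the coupling error, taking a union bound over $\mathcal{T}$—goes through essentially as you outline, and crude bounds indeed give $\varepsilon^{1/8}$.
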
 
	
	To prove \Cref{convergexieta}, it will be useful to introduce notation for the randomness defining the colored stochastic six-vertex model (this discussion also appears in a slightly different, though equivalent, form in \cite[Section 2.3]{LSLSSSVM} and \cite[Section 2.2]{CSDSVM}). To that end, for any vertex $(x, y) \in \mathbb{Z}_{\ge 0}$, we associate Bernoulli random variables $\chi_{x,y}, \psi_{x, y} \in \{ 0, 1 \}$ with $\mathbb{P} [\chi_{x, y} = 1] = b_1$ and $\mathbb{P}[\psi_{x, y} = 1] = b_2$. Given all of these random variables, the dynamics of the colored stochastic six-vertex model are defined as follows. Fix a vertex $(x, y) \in \mathbb{Z}_{> 0}^2$, and suppose that colored six-vertex arrow configurations have been assigned to each $(x', y') \in \mathbb{Z}_{> 0}^2$ with $x' + y' < x + y$; this fixes the colors $a = a(x, y)$ and $b = b(x, y)$ of the arrows vertically and horizontally entering $(x, y)$, respectively. Then define the colors $c = c(x, y)$ and $d = d(x,y)$ of the arrows vertically and horizontally exiting $(x,y)$, respectively, according to the below procedure. 
	\begin{enumerate} 
		\item If $a = b$, then set $c = d = a = b$.
		\item If $a > b$, then set $(c, d) = (a, b)$ if $\chi_{x, y} = 1$ and $(c, d) = (b, a)$ if $\chi_{x, y} = 0$. 
		\item If $a < b$, then set $(c, d) = (a, b)$ if $\psi_{x, y} = 1$ and $(c, d) = (b, a)$ if $\psi_{x, y} = 0$. 
	\end{enumerate}
	
	This provides a way of sampling the colored stochastic six-vertex model defined above. In what follows, for any $x \in \mathbb{Z}$, define the increasing integer sequences $\bm{\mathfrak{s}} (x) = \big( \mathfrak{s}_1 (x), \mathfrak{s}_2 (x), \ldots )$ and $\bm{\mathfrak{t}} (x) = \big( t_1 (x), t_2 (x), \ldots \big)$, such that $\mathfrak{s} \in \bm{\mathfrak{s}} (x)$ if and only if $\chi_{x+\mathfrak{s}-1, \mathfrak{s}} > 0$, and $\mathfrak{t} \in \bm{\mathfrak{t}} (x)$ if and only if $\psi_{x+\mathfrak{t}, \mathfrak{t}} > 0$ (here, the offsets $x + \mathfrak{s}$ and $x + \mathfrak{t}$ are introduced to match with \eqref{xieta0}). Observe that the $\big( \bm{\mathfrak{s}} (x) \big)$ and $\big( \bm{\mathfrak{t}} (x) \big)$ (over all $x \in \mathbb{Z}$) together determine all $(\chi_{x, y}, \psi_{x, y})_{x,y>0}$.
	
	Before proceeding, we record the following lemma that provides a finite speed of discrepancy bound for the colored stochastic six-vertex model (and is analogous to \Cref{xietaexclusion}). 
	
	\begin{lem} 
		
		\label{xietaprocess}
		
		There exists a constant $C > 1$ such that the following holds. Let $T \ge 0$, $K > 1$, and $\varepsilon \in (0, 1)$ be real numbers; set $(b_1, b_2) = (\varepsilon L, \varepsilon R)$; and let $U \le V$ be integers. Further let $\bm{\xi}_y = \big( \xi_y (x) \big)_{x > 0}$ and $\bm{\eta}_y = \big( \eta_y (x) \big)_{x > 0}$ denote two colored stochastic six-vertex models with parameters $(b_1, b_2)$ and boundary data $\varpi^{\bm{\xi}} : \mathbb{Z} \rightarrow \llbracket 0, n \rrbracket$ and $\varpi^{\bm{\eta}} : \mathbb{Z} \rightarrow \llbracket 0, n \rrbracket$, respectively. Assume that $\varpi^{\bm{\xi}} (x) = \varpi^{\bm{\eta}} (x)$ for each $x \in \llbracket U - CKT, V + CKT \rrbracket$. Then it is possible to couple $\bm{\xi}$ and $\bm{\eta}$ such that, with probability at least $1 - C e^{-K(T+1)}$, we have $\xi_y (x) = \eta_y (x)$ for each $(x, y) \in \mathbb{Z}_{> 0} \times \llbracket 1, T \varepsilon^{-1} \rrbracket$ satisfying $U  \le x-y \le V$.
		
	\end{lem}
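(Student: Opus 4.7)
The plan is to couple $\bm\xi$ and $\bm\eta$ via the basic coupling, using a common family of Bernoulli random variables $(\chi_{x,y}, \psi_{x,y})$ for both models as described in Section \ref{VertexColor}. First I would reduce to the single-species case by color projection: for each threshold $c \in \llbracket 1, n \rrbracket$ the configurations $\widetilde{\xi}_y^{(c)}(x) = \mathbbm{1}_{\xi_y(x) \ge c}$ and $\widetilde{\eta}_y^{(c)}(x) = \mathbbm{1}_{\eta_y(x) \ge c}$ are themselves stochastic six-vertex models with the same parameters $(b_1, b_2)$ and coupled by the same $(\chi,\psi)$. This is because the swap rule at any vertex depends only on whether the vertical input strictly exceeds the horizontal one (a property preserved under thresholding): projected inputs $(\tilde a, \tilde b) = (1,0)$ use $\chi$ with no-swap probability $b_1$, projected inputs $(0,1)$ use $\psi$ with no-swap probability $b_2$, and the remaining configurations are deterministic. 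Since $\xi_y(x) = \eta_y(x)$ if and only if $\widetilde{\xi}_y^{(c)}(x) = \widetilde{\eta}_y^{(c)}(x)$ for every $c \in \llbracket 1, n \rrbracket$, a union bound over $c$ reduces matters to the single-species case, at the cost of a factor $n$ absorbed into $C$.

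For a single-species pair with only one boundary disagreement at a site $s$, a direct case analysis of the vertex rule under basic coupling would show that the discrepancy forms a single ``particle'' occupying one edge of the lattice at a time. Starting from the boundary edge at $s$, the particle propagates as follows: a vertical-input discrepancy at $(x, y)$ becomes a horizontal-input discrepancy at $(x+1, y)$ with probability $1 - b_i$ and a vertical-input discrepancy at $(x, y+1)$ with probability $b_i$, where $i \in \{1, 2\}$ depends on the ambient configuration, and the symmetric rule applies to horizontal-input discrepancies. Consequently the particle almost deterministically alternates between vertical and horizontal edges, tracing an up-right staircase that preserves its starting diagonal coordinate $s$, with the diagonal shifting by $\pm 1$ only at ``stuck'' vertices (where $\chi_{x,y} = 1$ or $\psi_{x,y} = 1$), which occur with probability $b_1 + b_2 = O(\varepsilon)$ per vertex.

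To reach the strip $\{U \le x - y \le V\}$ from a source at diagonal distance $d > CKT$ within $M = O(T\varepsilon^{-1})$ vertex steps, the particle would need to accumulate at least $d$ stuck events of the appropriate sign. Stochastic domination by a Binomial random variable with $M$ trials and success probability $O(\varepsilon)$ gives mean $O(T)$, and a standard Chernoff bound then shows that this count exceeds $d$ with probability at most $e^{-c\, d\, \log(d/T)}$ for $d \ge eT$, which for $d \ge CKT$ with $CK$ a sufficiently large constant is dominated by $e^{-2K(T+1)}$. Summing this geometric-type bound over boundary source positions outside the window $\llbracket U - CKT, V + CKT \rrbracket$ (and over both south and west boundary sources, each contributing at most two per distance) would then produce the desired $Ce^{-K(T+1)}$ estimate.

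The main obstacle will be justifying the single-particle propagation claim of the second paragraph. One must verify inductively that throughout the evolution at most one of the two inputs at any vertex differs between $\bm\xi$ and $\bm\eta$, so that the discrepancy set never branches; this follows from the vertex rule because a single input discrepancy (with agreement on the other input) always produces exactly one output discrepancy, as a direct case analysis of the five weights in Figure \ref{rz2} confirms. A related subtlety is the treatment of multiple simultaneous boundary disagreements, where individual particles could a priori interact. This is handled by the backward implication that any discrepancy in the strip must, under basic coupling, trace back to at least one boundary site where $\varpi^{\bm{\xi}}$ and $\varpi^{\bm{\eta}}$ differ, so a union bound over such sources (each analyzed as an isolated single-source problem) recovers the required probability estimate.
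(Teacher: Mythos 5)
Your core strategy — couple $\bm\xi$ and $\bm\eta$ under the same Bernoulli field $(\chi_{x,y},\psi_{x,y})$, observe that the discrepancy set can only move across diagonals $x-y=\mathrm{const}$ when a $\chi$ or $\psi$ event occurs, and then apply a Chernoff bound to show that $\gtrsim CKT$ such events within $T\varepsilon^{-1}$ rows is exponentially unlikely — is exactly the paper's argument. The paper phrases the propagation-speed bound in terms of whether a colored path entering from outside the window ``could'' reach the strip given the realized $(\bm{\mathfrak{s}}(x),\bm{\mathfrak{t}}(x))$ (equivalently, given the positions of the $\chi=1$ and $\psi=1$ events); it then directly bounds the probability that there is any up-down-in-diagonal path $w_0,\dots,w_k$ traversing distance $CKT$ using only those event times, with one Chernoff estimate that never singles out individual boundary sources or tracks discrepancy edges.

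Your version adds two ingredients not in the paper. The color-projection reduction to single-species models via $\mathbbm{1}_{\xi_y(x)\ge c}$ coupled under the same $(\chi,\psi)$ is correct (this is color merging at the level of the sampling rule) and is a pleasant simplification, at the cost of a factor $n$ in $C$. The discrepancy-particle picture is also a reasonable way to organize the propagation-speed bound, but note it genuinely requires the care you flag in your last paragraph: at a vertex where \emph{both} inputs disagree (configurations $\{(0,1),(1,0)\}$ or $\{(0,0),(1,1)\}$), the two single-species models query different Bernoulli variables (one $\chi$, one $\psi$, or neither), so the outputs are not governed by the single-edge-discrepancy case analysis — they may both disagree (the ``particles'' pass through) or both agree (annihilation), and the trace-back must be chosen to continue along the input that does \emph{not} shift its diagonal. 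With that choice, diagonal shifts on the trace-back occur only at vertices with a single input discrepancy and the corresponding $\chi$ or $\psi$ equal to $1$, which is what your binomial domination needs. Once this is made explicit, the union bound over boundary sources is valid because the trace-back path's diagonal-shift count is dominated by the relevant ring-event count regardless of the ambient configuration. So the proposal is correct, with the collision case being a real (and resolvable) gap rather than a cosmetic one; the paper avoids it entirely by bounding the event ``some ring-event path crosses the window'' in one step rather than decomposing by source.
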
 
	
	\begin{proof}
		
		Couple $\bm{\xi}$ and $\bm{\eta}$ under the same processes $\big( \bm{\mathfrak{s}} (x) \big)$ and $\big( \bm{\mathfrak{t}} (x) \big)$, and define the strip 
		\begin{flalign*}
			\mathcal{R} = \big\{ (x, y) \in \mathbb{Z}_{> 0} \times \llbracket 1, T \varepsilon^{-1} \rrbracket : U \le x - y \le V \big\}.
		\end{flalign*} 
		
		\noindent Then $\xi_{y_0} (x_0) \ne \eta_{y_0} (x_0)$ holds for some $(x_0, y_0) \in \mathcal{R}$ only if there exists some $(x_1, y_1)$ on the $x$-axis or $y$-axis with $U - CKT \le x_1 - y_1 \le V + CKT$, such that a colored path entering the quadrant through $(x_1, y_1)$ could (for some trajectories of the remaining colored paths) enter $\mathcal{R}$ under the above randomness $\big( \bm{\mathfrak{s}} (x) \big)$ and $\big( \bm{\mathfrak{t}} (x) \big)$. This is contained in the event on which there exists an integer $k \ge 0$; a sequence of integers $w_0, w_1, \ldots , w_k$ with $|w_i - w_{i-1}| = 1$ for each $i \in \llbracket 1, k \rrbracket$; and a sequence of positive integers $0 \le \mathfrak{r}_0 \le \mathfrak{r}_1 \le \ldots \le \mathfrak{r}_k \le T \varepsilon^{-1}$ such that $\mathfrak{r}_i \in \bm{\mathfrak{s}} (w_i) \cup \bm{\mathfrak{t}} (w_i)$ for each $i \in \llbracket 1, k \rrbracket$, and one of the following two possibilities holds. The first is that $w_0 \le U - CKT \le U  \le w_k$; the second is that $w_0 \ge V + CKT \ge V \ge w_k$. 
		
		Now, observe that the entry differences of any $\bm{\mathfrak{s}} (x) \cup \bm{\mathfrak{t}} (x)$ are distributed as independent geometric random variables with parameter $1 - (1-\varepsilon L)(1 - \varepsilon R) \le \varepsilon (L + R)$. Hence, the probabilty of each of the above possibilities is bounded by the probability that of a sum of $\lfloor T \varepsilon^{-1} \rfloor$ geometric random variables with parameter $\varepsilon (R+L)$ travels a distance of at least $CKT$. A Chernoff bound implies that the latter is at most $C e^{-K(T+1)}$ if $C$ is sufficiently large, establishing the lemma.
	\end{proof}

	\subsection{Proof of \Cref{convergexieta}} 
	
	\label{Exponential} 
	
	In this section we establish \Cref{convergexieta}. Its proof will use the below lemma, which is shown in Section \ref{ProofCouple0} below. It indicates that the processes $\bm{\mathsf{S}}(x)$ and $\bm{\mathsf{T}}(x)$ from Section \ref{ParticleColor} can be coupled to nearly coincide (after scaling) with the $\bm{\mathfrak{s}} (x)$ and $\bm{\mathfrak{t}} (x)$ from Section \ref{VertexColor}, on a long interval, with high probability. In what follows, we recall the notation from those sections, assocating the parameters $(L; R)$ with the $(\bm{\mathsf{S}}; \bm{\mathsf{T}})$ processes and $(\varepsilon L; \varepsilon R)$ with the $(\bm{\mathfrak{s}}; \bm{\mathfrak{t}})$ ones. 
	
	\begin{lem}
		
		\label{stcouple} 
		
		There exists a coupling between $\big( \bm{\mathsf{S}} (x); \bm{\mathsf{T}} (x) \big)_{x \in \mathbb{Z}}$ and $\big( \bm{\mathfrak{s}} (x); \bm{\mathfrak{t}} (x) \big)_{x \in \mathbb{Z}}$, and a constant $C > 1$ such that, for any real numbers $U_0 \le V_0$, $\varepsilon \in (0, 1)$, and $T \ge 0$, the following statements all hold with probability at least $1 - C (V_0 - U_0 + 1)(T+1) \varepsilon^{1/4}$.
		
		\begin{enumerate} 
			\item For any index pair $(\mathsf{R}, \mathfrak{r}) \in \big\{ (\mathsf{S}, \mathfrak{s}), (\mathsf{T}, \mathfrak{t}) \big\}$ and each pair $(i, x) \in \mathbb{Z}_{\ge 1} \times \llbracket U_0, V_0 \rrbracket$ such that $\mathsf{R}_i (x) \le T$ or $\mathfrak{r}_i (x) \le T \varepsilon^{-1}$, we have $\varepsilon \mathfrak{r}_i (x) \le \mathsf{R}_i (x) \le \varepsilon \mathfrak{r}_i (x) + \varepsilon^{1/2} < T$. 
			\item For distinct triples $(\mathsf{R}, i, x), (\mathsf{R}', i', x') \in \{ \mathsf{S}, \mathsf{T} \} \times \mathbb{Z}_{\ge 1} \times \llbracket U_0, V_0 \rrbracket$ such that $\mathsf{R}_i (x), \mathsf{R}_{i'}' (x') < T$, we have $\big| \mathsf{R}_i (x) - \mathsf{R}_{i'}' (x') \big| > 2 \varepsilon^{1/2}$.
		\end{enumerate}

	\end{lem}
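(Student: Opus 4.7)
The plan is to build an explicit coupling column-by-column and clock-by-clock, then to conclude by a union bound. Fix $x \in \llbracket U_0, V_0 \rrbracket$ and a clock label with rate $\lambda \in \{L, R\}$. Partition the time axis into intervals $I_k = [\varepsilon k, \varepsilon(k+1))$ for $k \ge 1$, and let $N_k$ count the Poisson arrivals of rate $\lambda$ in $I_k$. Declare $\chi_k = 1$ either when $N_k \ge 1$, or when $N_k = 0$ and an independent auxiliary Bernoulli of parameter $(\varepsilon \lambda - (1 - e^{-\varepsilon \lambda}))/e^{-\varepsilon \lambda}$ rings; a direct computation gives $\chi_k \sim \mathrm{Bernoulli}(\varepsilon \lambda)$, so $\bm{\mathfrak{r}}(x) = \{k \ge 1 : \chi_k = 1\}$ has the correct marginal. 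When $\chi_k = 1$ is triggered by a genuine Poisson arrival, the matched arrival $\mathsf{R}_i$ satisfies $\varepsilon \mathfrak{r}_i \le \mathsf{R}_i < \varepsilon \mathfrak{r}_i + \varepsilon$, supplying the desired one-sided inequality.

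Call an interval $I_k$ \emph{bad} when $N_k \ge 2$ (two Poisson arrivals collide into one Bernoulli slot), when $N_k = 0$ but the auxiliary Bernoulli rings (a Bernoulli ring without a matching arrival), or---separately---when the boundary interval $[0, \varepsilon)$ contains any arrival. A Taylor expansion shows each event per interval has probability $O(\varepsilon^2)$ ($O(\varepsilon)$ for the boundary piece), so a union bound over $\lceil T/\varepsilon \rceil$ intervals gives a total bad probability of $O(T \varepsilon)$ per column--clock pair. On the complementary good event, Bernoulli rings and Poisson arrivals are in bijection up to time $T$ with each matched pair lying in the same $\varepsilon$-interval, hence $\mathsf{R}_i - \varepsilon \mathfrak{r}_i \in [0, \varepsilon) \subset [0, \varepsilon^{1/2})$. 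Union-bounding over the $2(V_0 - U_0 + 1)$ column--clock pairs yields the first condition with probability at least $1 - C(V_0 - U_0 + 1)(T + 1) \varepsilon$, well inside the target tolerance.

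For the separation condition, the union of all clock processes across $\llbracket U_0, V_0 \rrbracket \times \{\mathsf{S}, \mathsf{T}\}$ is, after relabeling, a Poisson process on $[0, T]$ of total rate $\lambda_{\mathrm{tot}} = (L + R)(V_0 - U_0 + 1)$. The expected number of pairs of its arrivals at distance at most $2\varepsilon^{1/2}$ is $O(\lambda_{\mathrm{tot}}^2 T \varepsilon^{1/2}) = O\big((V_0 - U_0 + 1)^2 (T + 1) \varepsilon^{1/2}\big)$, so by Markov's inequality the second condition fails with probability at most this quantity. When $C(V_0 - U_0 + 1)(T+1)\varepsilon^{1/4} \ge 1$ the lemma's conclusion is vacuous; otherwise $(V_0 - U_0 + 1)\varepsilon^{1/4} \le 1/(C(T+1))$, so the apparent quadratic factor in $V_0 - U_0 + 1$ collapses into the allowed linear term, yielding the stated bound.

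The main obstacle---and the reason for this specific coupling---is to simultaneously match the exact marginal $\chi_k \sim \mathrm{Bernoulli}(\varepsilon \lambda)$ and preserve the one-sided inequality $\varepsilon \mathfrak{r}_i \le \mathsf{R}_i$. An inverse-CDF coupling between $\mathrm{Geometric}(\varepsilon \lambda)$ and $\mathrm{Exp}(\lambda)$ yields a two-sided discrepancy of order $O(\varepsilon T)$ with no fixed sign (because $\varepsilon \lambda$ and $-\log(1 - \varepsilon \lambda)$ disagree at second order), while the raw indicator ``Poisson arrival in $I_k$'' has marginal $1 - e^{-\varepsilon\lambda} < \varepsilon\lambda$ and so cannot be used on its own; the auxiliary Bernoulli correction above is tuned precisely to close this $O(\varepsilon^2)$ gap without disturbing the ordering.
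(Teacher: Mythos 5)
Your cell-by-cell coupling is a valid and somewhat more explicit alternative to the paper's argument. The paper instead couples each inter-arrival exponential with a geometric via Lemma~\ref{gcouplee} (a maximal coupling giving the one-sided relation $\mathfrak{g}=\lfloor\delta^{-1}\mathfrak{e}\rfloor$ with probability $1-12A\delta$; your worry about two-sided drift applies to the naive inverse-CDF coupling, not to this one) and then sums the one-sided errors over the at most $\varepsilon^{-1/4}$ rings per column, after first restricting to an event on which this count is controlled via Lemma~\ref{rtime}. Your cell discretization, the auxiliary Bernoulli correction tuned to hit the exact marginal, the $O(\varepsilon^2)$ per-cell bad probability, the separation estimate via the superposed rate-$(R+L)(V_0-U_0+1)$ Poisson process, and the trick of absorbing the spare $(V_0-U_0+1)\varepsilon^{1/4}$ factor whenever the lemma is nonvacuous all check out.

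However, there is a genuine gap: you never establish the final strict inequality $\varepsilon\mathfrak{r}_i(x)+\varepsilon^{1/2}<T$ demanded by part (1). Your bijection gives $\varepsilon\mathfrak{r}_i(x)\le\mathsf{R}_i(x)<\varepsilon\mathfrak{r}_i(x)+\varepsilon$, but the conclusion must hold under the trigger ``$\mathsf{R}_i(x)\le T$ or $\mathfrak{r}_i(x)\le T\varepsilon^{-1}$'', and nothing in your good event prevents the last ring before time $T$ from having $\varepsilon\mathfrak{r}_i(x)\in(T-\varepsilon^{1/2},T]$, in which case the required chain of inequalities fails. The paper rules this out by observing that a Bernoulli ring lands within $\varepsilon^{-1/2}+1$ sites of $T\varepsilon^{-1}$ with probability at most $2(R+L)\varepsilon^{1/2}$ per column--clock pair. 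You need the analogous extra bad event in your framework: declare the coupling bad if any Poisson arrival falls in $(T-2\varepsilon^{1/2},T]$ (equivalently, if some $\chi_k=1$ with $\varepsilon k\in(T-\varepsilon^{1/2},T]$). That event has probability $O(\varepsilon^{1/2})$ per column--clock pair, hence $O((V_0-U_0+1)\varepsilon^{1/2})$ overall, still comfortably within the $\varepsilon^{1/4}$ budget, so the fix is routine but cannot be omitted.
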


	\begin{proof}[Proof of \Cref{convergexieta}]
		
		We will assume for notational convenience that the set of times $\mathcal{T} = \{ T \}$ (from which the proof of the proposition for general $\mathcal{T}$ quickly follows by a union bound). Set $K = \lceil \varepsilon^{-1/8} \rceil$, and let $C_0 > 1$ denote the maximum of the constants $C$ from \Cref{xietaexclusion}, \Cref{xietaprocess}, and \Cref{stcouple}. 
		
		We first use the finite speed of discrepancy bounds (\Cref{xietaexclusion} and \Cref{xietaprocess}) to ``cut off'' the initial data for $\bm{\xi}$ and $\bm{\eta}$. To that end, let $\bm{\xi}' = \big( \xi_t' (x) \big)$ denote a colored ASEP with initial data $\xi_0' (x) = \xi_0 (x) \cdot \mathbbm{1}_{x \in [U - C_0 KT, V + C_0 K T]}$, and let $\bm{\eta}' = \big( \eta_y'(x) \big)$ denote a colored stochastic six-vertex model with parameters $(b_1, b_2) = (\varepsilon L, \varepsilon R)$, whose boundary data matches the initial data of $\bm{\xi}'$. By \Cref{xietaexclusion} and \Cref{xietaprocess}, we may couple $\bm{\xi}$ and $\bm{\xi}'$, and also $\bm{\eta}$ and $\bm{\eta}'$, so that with probability at least $1 - 2 C_0 e^{-K} \ge 1 - 16 C_0 \varepsilon$ we have $\xi_t (x) = \xi_t' (x)$ for each $(x, t) \in \llbracket U, V \rrbracket \times [0, T]$, and $\eta_y (x) = \eta_y' (x)$ for each $(x, y) \in \mathbb{Z}_{> 0} \times \llbracket 1, T \varepsilon^{-1} \rrbracket$ with $U \le x - y \le V$. Hence, we may replace $\bm{\xi}$ and $\bm{\eta}$ by $\bm{\xi}'$ and $\bm{\eta}'$, respectively, so we will assume in what follows that $\bm{\xi} = \bm{\xi}'$ and $\bm{\eta} = \bm{\eta}'$. 
		
		Next define the process $\bm{\zeta} = \big( \zeta_y (x) \big)$ from $\bm{\eta}$ by setting $\zeta_y (x) = \eta_y (x+y)$ (see \eqref{xieta0}) for each $(x, y) \in \mathbb{Z}_{> 0}^2$. Recall from Section \ref{ParticleColor} and Section \ref{VertexColor} that $\bm{\xi}$ and $\bm{\zeta}$ are determined from the processes $\big( \bm{\mathsf{S}} (x); \bm{\mathsf{T}} (x) \big)_{x \in \mathbb{Z}}$ and $\big( \bm{\mathfrak{s}} (x); \bm{\mathfrak{t}} (x) \big)_{x \in \mathbb{Z}}$, respectively; it will be useful to restrict these processes to a bounded subset of $x \in \mathbb{Z}$. To that end, recall for any $x \in \mathbb{Z}$ that $\bm{\mathsf{S}} (x) \cup \bm{\mathsf{T}} (x)$ is given by the ringing times of an exponential clock of rate $L+R$. Hence, the trajectory of the rightmost particle of nonzero color in $\bm{\xi}$ is stochastically dominated by a random walk starting at $V + C_0 KT$, that jumps one space to the right whenever an exponential clock of rate $L+R$ rings. Therefore, a Chernoff bound implies (after increasing $C_0$ if necessary) that this particle remains left of $V + 2C_0 KT$ with probability at least $1 - C_0 e^{-K} \ge 1 - 8 C_0 \varepsilon$. Similarly, the leftmost particle of nonzero color in $\bm{\xi}$ remains right of $U - 2C_0 KT$ with probability at least $1 - 8 C_0 \varepsilon$. 
		
		We may apply analogous reasoning to $\bm{\zeta}$. In particular, for any $x \in \mathbb{Z}$, the differences of the entries in $\bm{\mathfrak{s}} (x) \cup \bm{\mathfrak{t}} (x)$ are given by mutually independent geometric random variables of parameter $1 - (1 - \varepsilon L) (1 - \varepsilon R) \le \varepsilon (L + R)$. Hence, recalling the diagonal shifts in the definitions of $\bm{\mathfrak{s}}$ and $\bm{\mathfrak{t}}$ from Section \ref{VertexColor}, the trajectory of the rightmost path (of nonzero color) in $\bm{\zeta}$ is stochastically dominated by a random walk starting at $V + C_0 KT$ that jumps to the right according to a geometric random variable of parameter $\varepsilon (L + R)$. Therefore, a Chernoff bound implies (after increasing $C_0$ if necessary) that this path remains to the left of $V + 2C_0 KT$ with probability at least $1 - C_0 e^{-K} \ge 1 - 8 C_0 \varepsilon$. Similarly, the leftmost path of nonzero color in $\bm{\zeta}$ remains right of $U - 2C_0 KT$ with probability at least $1 - 8 C_0 \varepsilon$. Together, these facts yield $\mathbb{P}[\mathscr{E}] \ge 1 - 32 C_0 \varepsilon$, where $\mathscr{E} = \mathscr{E}_1 \cap \mathscr{E}_2$ and we have denoted the events 
		\begin{flalign*} 
			& \mathscr{E}_1 = \big\{ \xi_t (x) = 0, \quad \text{for all $x \notin \llbracket U - 2C_0 KT, V + 2C_0 KT \rrbracket$ and $t \in [0, T]$} \big\}; \\ 
			& \mathscr{E}_2 = \big\{ \zeta_y (x) = 0, \quad \text{for all $(x, y) \in \llbracket U - 2C_0 KT, V + 2C_0 KT \rrbracket \times \llbracket 0, T \varepsilon^{-1} \rrbracket$ with $x + y > 0$} \big\}. 
		\end{flalign*} 
		
		Now, \Cref{stcouple} gives a coupling between $\big( \bm{\mathsf{S}} (x); \bm{\mathsf{T}} (x) \big)$ and $\big( \bm{\mathfrak{s}} (x); \bm{\mathfrak{t}} (x) \big)$ such that the following holds on an event $\mathscr{F}$ with $\mathbb{P} [\mathscr{F}] \ge 1 - C_0 (V - U + 4C_0 KT + 1)(T+1) \varepsilon^{1/4}$. First, for any index pair $(\mathsf{R}, \mathfrak{r}) \in \big\{ (\mathsf{S}, \mathfrak{s}), (\mathsf{T}, \mathfrak{t}) \big\}$ and each pair $(i, x) \in \mathbb{Z}_{\ge 1} \times \llbracket U - 2C_0 KT, V + 2C_0 KT \rrbracket$ with $\mathsf{R}_i (x) \le T$, we have 
		\begin{flalign}
			\label{rr0} 
			\varepsilon \mathfrak{r}_i (x) \le \mathsf{R}_i (x) \le \varepsilon \mathfrak{r}_i (x) + \varepsilon^{1/2} < T.
		\end{flalign} 
		
		\noindent Second, for any distinct triples $(\mathsf{R}, i, x), (\mathsf{R}', i', x') \in \{ \mathsf{S}, \mathsf{T} \} \times \mathbb{Z}_{\ge 1} \times \llbracket U - 2C_0 KT, V + 2C_0 KT \rrbracket$ with $\mathsf{R}_i (x), \mathsf{R}_{i'}' (x') \le T$, we have 
		\begin{flalign}
			\label{riri1} 
			\big| \mathsf{R}_i (x) - \mathsf{R}_{i'}' (x') \big| > 2 \varepsilon^{1/2}.
		\end{flalign}
		
		We claim on $\mathscr{E} \cap \mathscr{F}$ that the ASEP and stochastic six-vertex configurations coincide, namely,
		\begin{flalign}
			\label{xizeta0}
			\xi_T (x) = \zeta_{\lfloor T/\varepsilon \rfloor} (x), \qquad \text{for each $x \in \mathbb{Z}$ with $x + \lfloor T \varepsilon^{-1} \rfloor > 0$}.
		\end{flalign}
		
		\noindent To verify this, first observe on $\mathscr{E}$ that $(\bm{\zeta}_t)_{t \le T}$ and $(\bm{\xi}_y)_{y \le T/\varepsilon}$ only depend on $\big( \bm{\mathsf{S}} (x), \bm{T} (x) \big)$ and $\big( \bm{\mathfrak{s}} (x), \bm{\mathfrak{t}} (x) \big)$ for $x \in \llbracket U - 2C_0 KT, V + 2C_0 KT \rrbracket$, respectively. Let $\mathsf{R}_1 < \mathsf{R}_2 < \cdots < \mathsf{R}_m$  and $\mathfrak{r}_1 < \mathfrak{r}_2 < \cdots < \mathfrak{r}_{m'}$ be such that 
		\begin{flalign*} 
			& [0, T] \cap \bigcup_{x= \lceil U - 2C_0 KT \rceil}^{\lfloor V - 2C_0 KT \rfloor} \big( \bm{\mathsf{S}} (x) \cup \bm{T} (x) \big) = \{ \mathsf{R}_1, \mathsf{R}_2, \ldots , \mathsf{R}_m \}; \\
			& \llbracket 1, T \varepsilon^{-1} \rrbracket \cap \bigcup_{x = \lceil U - 2C_0 KT \rceil}^{\lfloor V - 2C_0 KT \rfloor} \big( \bm{\mathfrak{s}} (x) \cup \bm{\mathfrak{t}} (x) \big) = \{ \mathfrak{r}_1, \mathfrak{r}_2, \ldots , \mathfrak{r}_{m'} \}.
		\end{flalign*} 
		
		\noindent For each $i \in \llbracket 1, m \rrbracket$, let $w_i \in \llbracket U - 2C_0 KT, V + 2C_0 KT \rrbracket$ be such that $\mathsf{R}_i \in \bm{\mathsf{S}} (w_i) \cup \bm{\mathsf{T}} (w_i)$. Then, by \eqref{rr0} and \eqref{riri1}, we have that $m' = m$ and for each $i \in \llbracket 1, m \rrbracket$ that $\mathfrak{r}_i \in \bm{\mathfrak{s}} (w_i) \cup \bm{\mathfrak{t}} (w_i)$ and 
		\begin{flalign}
			\label{rii0} 
			\mathsf{R}_1 - \varepsilon^{1/2} \le \mathfrak{r}_1 \le \mathsf{R}_1 < \mathsf{R}_2 - \varepsilon^{1/2} \le \mathfrak{r}_2 \le \mathsf{R}_2 < \cdots \le \mathsf{R}_m - \varepsilon^{1/2} \le \mathfrak{r}_m \le \mathsf{R}_m < T - \varepsilon^{1/2}.
		\end{flalign}      
		
		\noindent Now, observe if $y \notin \{ \mathfrak{r}_1, \mathfrak{r}_2, \ldots , \mathfrak{r}_m \}$ then $\eta_{y+1} (x+1) = \eta_y (x)$ holds for all $x \in \mathbb{Z}$, as all paths in $\bm{\eta}$ proceed one step horizontally and one step vertically; thus, $\bm{\zeta}_{y+1} = \bm{\zeta}_y$. Similarly, if $t \notin \{ \mathsf{R}_1, \mathsf{R}_2, \ldots , \mathsf{R}_m \}$ then $\bm{\xi}_t = \bm{\xi}_{t^-}$. Hence, to show \eqref{xizeta0}, it suffices to show for each $(i, x) \in \llbracket 1, m \rrbracket \times \mathbb{Z}$ that $\xi_{\mathsf{R}_i} (x) = \zeta_{\lfloor \mathsf{R}_i / \varepsilon \rfloor} (x)$. 
		
		We do this by induction on $i \in \llbracket 0, m \rrbracket$, where we set $\mathsf{R}_0 = 0$. It holds it $i = 0$, since the boundary data of $\bm{\eta}$ matches that of $\bm{\xi}$. Hence, let us assume it holds for all $i \le m_0 - 1$ for some integer $m_0 \in \llbracket 1, m \rrbracket$ and verify it holds for $i = m_0$. By \eqref{rii0} and the inductive hypothesis (with the above discussion that $\bm{\zeta}_{y+1} = \bm{\zeta}_y$ if $y \notin \{ \mathfrak{r}_1, \mathfrak{r}_2, \ldots , \mathfrak{r}_m \}$ and $\bm{\xi}_t = \bm{\xi}_{t^-}$ if $t \notin \{ \mathsf{R}_1, \mathsf{R}_2, \ldots , \mathsf{R}_m \}$), we have $\xi_{\mathsf{R}_{m_0}^-} (x) = \xi_{\mathsf{R}_{m_0-1}} (x) = \zeta_{\lfloor \mathsf{R}_{m_0-1} / \varepsilon  \rfloor} (x) = \bm{\zeta}_{\lfloor \mathsf{R}_{m_0} / \varepsilon - 1 \rfloor} (x)$ for each $x \in \mathbb{Z}$. Next, if $\mathfrak{r}_{m_0} \in \bm{\mathfrak{s}} (w_{m_0})$, then it follows from the discussion in Section \ref{VertexColor} that $\eta_{\mathfrak{r}_{m_0}} (w_{m_0} + \mathfrak{r}_{m_0} - 1) = \eta_{\mathfrak{r}_{m_0} - 1} (w_{m_0} + \mathfrak{r}_{m_0} - 1)$, as then the path of color $\eta_{\mathfrak{r}_{m_0}-1} (w_{m_0} + \mathfrak{r}_{m_0} - 1)$ entering $(w_{m_0} + \mathfrak{r}_{m_0} - 1, \mathfrak{r}_{m_0})$ proceeds one step vertically and no steps horizontally; thus, 
		\begin{flalign}
			\label{zetarm0} 
			\zeta_{\lfloor \mathsf{R}_{m_0 - 1} / \varepsilon \rfloor} (w_{m_0}) = \zeta_{\mathfrak{r}_{m_0}-1} (w_{m_0}) = \zeta_{\mathfrak{r}_{m_0}} (w_{m_0} - 1) = \zeta_{\lfloor \mathsf{R}_{m_0} / \varepsilon \rfloor} (w_{m_0} - 1),
		\end{flalign}
		
		\noindent where in the first and last equality we again used \eqref{rii0}. By \eqref{rii0} and \eqref{riri1}, we then also have $\mathsf{R}_{m_0} \in \bm{\mathsf{S}} (w_{m_0})$, and so $\zeta_{\lfloor \mathsf{R}_{m_0-1} / \varepsilon \rfloor} (w_{m_0}) = \xi_{\mathsf{R}_{m_0-1}} (w_{m_0}) = \xi_{\mathsf{R}_{m_0}^-} (w_{m_0}) = \xi_{\mathsf{R}_{m_0}} (w_{m_0} - 1)$. This coincides with \eqref{zetarm0}; the proof that $\zeta_{\lfloor \mathsf{R}_{m_0} / \varepsilon \rfloor} (x) = \xi_{\mathsf{R}_{m_0}} (x)$ for all other $x \in \mathbb{Z}$ is entirely analogous. This verifies the statement at $i = m_0$ in this case. 
		
		If instead $\mathfrak{r}_{m_0} \in \bm{\mathfrak{t}} (w_{m_0})$, then $\eta_{\mathfrak{r}_{m_0}} (w_{m_0}+\mathfrak{r}_{m_0} + 1) = \eta_{\mathfrak{r}_{m_0} - 1} (w_{m_0} + \mathfrak{r}_{m_0} - 1)$, as then the path of color $\eta_{\mathfrak{r}_{m_0}-1} (w_{m_0} + \mathfrak{r}_{m_0} - 1)$ at $(w_{m_0} + \mathfrak{r}_{m_0} - 1, \mathfrak{r}_{m_0} - 1)$ proceeds two steps horizontally and one step vertically. So, 
		\begin{flalign} 
			\label{zetarm01} 
			\zeta_{\lfloor \mathsf{R}_{m_0-1} / \varepsilon \rfloor} (w_{m_0}) = \zeta_{\mathfrak{r}_{m_0} - 1} (w_{m_0}) = \zeta_{\mathfrak{r}_{m_0}} (w_{m_0}) = \zeta_{\lfloor \mathsf{R}_{m_0} / \varepsilon \rfloor} (w_{m_0} + 1).
		\end{flalign} 
		
		\noindent By \eqref{rii0} and \eqref{riri1}, we then also have $\mathsf{R}_{m_0} \in \bm{\mathsf{T}} (w_{m_0})$. Hence, $\zeta_{\lfloor \mathsf{R}_{m_0 - 1} / \varepsilon \rfloor} (w_{m_0}) = \xi_{\mathsf{R}_{m_0-1}} (w_{m_0}) = \xi_{\mathsf{R}_{m_0}^-} (w_{m_0}) = \xi_{\mathsf{R}_{m_0}} (w_{m_0} + 1)$. This again coincides with \eqref{zetarm01}, and the proof that $\zeta_{\lfloor \mathsf{R}_{m_0} / \varepsilon \rfloor} (x) = \xi_{\mathsf{R}_{m_0}} (x)$ for all other $x \in \mathbb{Z}$ is entirely analogous. This confirims \eqref{xizeta0}. 
		
		Thus, \eqref{xieta0} holds on $\mathscr{E} \cap \mathscr{F}$. Together with the fact that 
		\begin{flalign*}
			\mathbb{P} [\mathscr{E} \cap \mathscr{F}] & \ge 1 - 32 C_0 \varepsilon - C_0 (V - U + 4C_0 KT + 1) (T + 1) \varepsilon^{1/4} \\
			& \ge 1 - 40 C_0^2 (V - U + 1) (T + 1)^2 K \varepsilon^{1/4} \ge 1 - 80 C_0^2 (V - U + 1) (T + 1)^2 \varepsilon^{1/8},
		\end{flalign*}
		
		\noindent this establishes the proposition.
	\end{proof} 
	
	\subsection{Proof of \Cref{stcouple}} 
	
	\label{ProofCouple0} 
	
	In this section we establish \Cref{stcouple} as a quick consequence of the two lemmas below.
	
	\begin{lem}
		
		\label{rtime} 
		
		For any real number $A > 0$, there exists a constant $C = C(A) > 1$ such that, if $\bm{R} = (r_1, r_2, \ldots)$ denotes the ringing times in increasing order for an exponential clock with parameter $A$, then the following two statements hold.
		
		\begin{enumerate}
			\item For any real numbers $T \ge 0$ and $K \ge 1$, we have $\mathbb{P} [r_{\lfloor CK(T+1) \rfloor} \ge T] \ge 1 - Ce^{-K(T+1)}$. 
			\item For any real numbers $0 < \delta \le 1 \le B$, we have $\mathbb{P} \big[ \min_{1 \le i \le B} (r_{i+1} - r_i) \ge \delta \big] \ge 1 - AB \delta$.
		\end{enumerate}
	\end{lem}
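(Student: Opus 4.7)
The plan is to prove both parts by standard concentration arguments for Poisson processes, treating $\bm R$ as the jump times of a rate-$A$ Poisson process.

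For part (1), the event $\{r_{\lfloor CK(T+1)\rfloor} \ge T\}$ is precisely the event that the number $N$ of rings during $[0,T]$ satisfies $N < \lfloor CK(T+1)\rfloor$. Since $N$ is Poisson distributed with mean $AT$, I would apply the standard Chernoff tail bound $\mathbb{P}[N \ge M] \le e^{-AT}(eAT/M)^M$ (valid for $M > AT$) with $M = \lfloor CK(T+1)\rfloor$. If $C$ is chosen so that $C \ge e^2 A$, then $M \ge e^2 A T \ge e \cdot eAT$ for all $T \ge 0$, $K \ge 1$, so $\log(eAT/M) \le -1$ and hence $\mathbb{P}[N \ge M] \le e^{-M} \le e^{-CK(T+1)+1}$. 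Enlarging $C$ further so that $Ce^{-K(T+1)} \ge e^{-CK(T+1)+1}$ for all $K, T \ge 1, 0$ (which is immediate since $C \ge 1$ and $K(T+1) \ge 1$) yields the claimed bound.

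For part (2), the gaps $g_i = r_{i+1} - r_i$ are i.i.d.\ exponential random variables with rate $A$, so for any single $i$,
\begin{equation*}
\mathbb{P}[g_i < \delta] \;=\; 1 - e^{-A\delta} \;\le\; A\delta.
\end{equation*}
A union bound over $i \in \llbracket 1, \lfloor B \rfloor\rrbracket$ gives $\mathbb{P}[\min_{1\le i \le B} g_i < \delta] \le \lfloor B\rfloor A\delta \le AB\delta$, which is the desired estimate. Absorbing the constants so that the same $C = C(A)$ works for both parts completes the lemma.

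The whole argument is completely routine, so no real obstacle arises; the only mildly subtle point is calibrating the constant $C$ in part (1) to depend only on $A$ and not on $T$ or $K$, which is handled by using the factor $T+1$ (rather than $T$) so that the bound remains meaningful when $T = 0$.
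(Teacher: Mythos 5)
Your proposal is correct and is essentially the same as the paper's proof: part~(1) is a Chernoff bound (the paper phrases the event as a sum of $\lfloor CK(T+1)\rfloor$ i.i.d.\ exponentials being at least $T$, you phrase it as the Poisson count being small — these are the same event and the same Chernoff computation), and part~(2) is the identical exponential-gap-plus-union-bound argument. You spell out the Chernoff calculation more explicitly than the paper, which only cites it, but there is no difference in method.
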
 
	
	\begin{proof}
		
		First observe that the event on which $r_{\lfloor C(T+1) \rceil} \ge T$ is that on which the sum of $\big\lfloor CK(T+1) \big\rfloor$ independent exponential random variables is at least $T$. By a Chernoff bound, this is at least $1 - C e^{-K(T+1)}$ if $C = C(A) > 1$ is sufficiently large, which verifies the first statement of the lemma. Further observe for any integer $i \ge 1$ that, since $r_{i+1} - r_i$ is an exponential random variable of parameter $A$, we have $\mathbb{P}[r_{i+1} - r_i \le \delta] = 1 - e^{- A\delta} \le A\delta$. This, together with a union bound over $i \in \llbracket 1, B \rrbracket$, implies the second statement of the lemma.
	\end{proof}

	\begin{lem} 
		
		\label{gcouplee} 
		
		Let $A > 0$ and $\delta \in (0, 1)$ be real numbers; let $\mathfrak{g} \in \mathbb{Z}_{\ge 0}$ denote a geometric random variable with $\mathbb{P}[\mathfrak{g}=k] = A \delta (1 - A \delta)^k$ for each $k \in \mathbb{Z}_{\ge 0}$; and let $\mathfrak{e} \in \mathbb{R}_{\ge 0}$ denote an exponential random variable with $\mathbb{P}[\mathfrak{e} > x] = e^{-Ax}$ for each $x \in \mathbb{R}_{\ge 0}$. It is possible to couple $\mathfrak{e}$ and $\mathfrak{g}$ such that
		\begin{flalign*}
			\mathbb{P} \big[ \mathfrak{g} = \lfloor \delta^{-1} \mathfrak{e} \rfloor\big] \ge 1 - 12 A \delta.
		\end{flalign*}
		
	\end{lem}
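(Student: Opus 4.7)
The plan is to build the coupling in two stages, using the fact that if $\mathfrak{e}$ is exponential with rate $A$, then $Y := \lfloor \delta^{-1}\mathfrak{e}\rfloor$ is automatically geometric with parameter $p_1 := 1 - e^{-A\delta}$, as can be checked from $\mathbb{P}[Y = k] = \mathbb{P}[k\delta \le \mathfrak{e} < (k+1)\delta] = e^{-Ak\delta}(1 - e^{-A\delta})$. Since $\mathfrak{g}$ is geometric with parameter $p_2 := A\delta$ and $p_1 \le p_2$ (as $1 - e^{-x} \le x$ for $x \ge 0$), the task reduces to coupling two geometric random variables with parameters $p_1$ and $p_2$, and then promoting the one with parameter $p_1$ to a genuine exponential.

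First I would assume $A\delta \le 1/12$, since otherwise $1 - 12A\delta \le 0$ and the claim is vacuous. Then I would introduce an i.i.d.\ sequence $U_1, U_2, \ldots$ of uniforms on $[0,1]$ and set $Y' = \inf\{i \ge 1 : U_i \le p_1\} - 1$ and $\mathfrak{g} = \inf\{i \ge 1 : U_i \le p_2\} - 1$. These are geometric with parameters $p_1$ and $p_2$, respectively, and $\mathfrak{g} \le Y'$ since $p_1 \le p_2$. The key computation is that $Y' = \mathfrak{g}$ iff $U_{\mathfrak{g}+1} \le p_1$, which conditional on $U_{\mathfrak{g}+1} \le p_2$ has probability $p_1/p_2 = (1 - e^{-A\delta})/(A\delta)$. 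Using the elementary inequality $1 - e^{-x} \ge x - x^2/2$ for $x \in [0,1]$, this ratio is at least $1 - A\delta/2$, so $\mathbb{P}[Y' = \mathfrak{g}] \ge 1 - A\delta/2 \ge 1 - 12A\delta$.

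Finally I would promote $Y'$ to an exponential $\mathfrak{e}$ with $\lfloor \delta^{-1}\mathfrak{e}\rfloor = Y'$ almost surely, by sampling (conditionally independently given $Y'$) a random variable $Z$ on $[0,\delta)$ with density proportional to $Ae^{-Az}$, and setting $\mathfrak{e} = Y'\delta + Z$. A direct calculation of $\mathbb{P}[\mathfrak{e} \le k\delta + z]$ using the memoryless decomposition confirms $\mathfrak{e} \sim \mathrm{Exp}(A)$, while $\lfloor \delta^{-1}\mathfrak{e}\rfloor = Y'$ by construction. Combining, $\mathbb{P}[\mathfrak{g} = \lfloor \delta^{-1}\mathfrak{e}\rfloor] = \mathbb{P}[\mathfrak{g} = Y'] \ge 1 - 12A\delta$.

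All three steps are routine; there is no serious obstacle. The only care required is in verifying that the conditional construction of $\mathfrak{e}$ given $Y'$ reproduces the exponential marginal (a one-line memoryless computation), and in justifying the inequality $(1 - e^{-A\delta})/(A\delta) \ge 1 - A\delta/2$. The generous constant $12$ in the statement leaves considerable slack, which is convenient since the sharp constant from this approach is closer to $1/2$.
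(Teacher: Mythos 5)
Your proof is correct, and it takes a genuinely different route from the paper's. The paper proceeds in a total-variation style: it derives two-sided ratio bounds
\begin{flalign*}
e^{-A^2 S\delta^2}\,\mathbb{P}\bigl[S\delta \le \mathfrak{e} < (S+1)\delta\bigr] \;\le\; \mathbb{P}[\mathfrak{g}=S] \;\le\; (1+2A\delta)\,\mathbb{P}\bigl[S\delta \le \mathfrak{e} < (S+1)\delta\bigr],
\end{flalign*}
sums the resulting pointwise discrepancies over $S$ to get $\sum_S \bigl|\mathbb{P}[\mathfrak{g}=S] - \mathbb{P}[S\delta \le \mathfrak{e} < (S+1)\delta]\bigr| \le 12A\delta$, and then invokes the existence of a coupling achieving that bound. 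You instead observe that $\lfloor \delta^{-1}\mathfrak{e}\rfloor$ is itself geometric (parameter $p_1 = 1-e^{-A\delta}$), build the coupling \emph{explicitly} by thresholding a common i.i.d.\ uniform sequence at $p_1$ and $p_2 = A\delta$ (which immediately gives the monotone coupling $\mathfrak{g} \le Y'$ with $\mathbb{P}[\mathfrak{g}=Y']=p_1/p_2$), and then reconstruct the exponential from its integer part by appending an independent fractional part with the conditional density $Ae^{-Az}/(1-e^{-A\delta})$ on $[0,\delta)$. Each step checks out: the geometric identification of $Y$, the inequality $p_1 \le p_2$, the identity $\{Y'=\mathfrak{g}\} = \{U_{\mathfrak{g}+1}\le p_1\}$ yielding $\mathbb{P}[Y'=\mathfrak{g}]=p_1/p_2$, the lower bound $p_1/p_2 \ge 1 - A\delta/2$ via $1-e^{-x}\ge x - x^2/2$, and the density computation confirming $\mathfrak{e}\sim\mathrm{Exp}(A)$. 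Your construction is more transparent (no appeal to maximal couplings), yields the sharper constant $1-A\delta/2$ in place of $1-12A\delta$, and avoids the somewhat delicate two-sided ratio estimate \eqref{esdelta2}. The paper's approach has the mild advantage of fitting a standard ``compare probability mass functions, bound total variation'' template, but yours is both cleaner and quantitatively stronger here.
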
 
	
	\begin{proof}
		
		We may assume that $\delta < (2A)^{-1}$, for otherwise $1 - 12 A \delta < 0$. It suffices to show that
		\begin{flalign}
			\label{esdelta}
			e^{-A^2 S \delta^2} \cdot \mathbb{P} \big[ S \delta \le \mathfrak{e} < (S+1) \delta \big] \le  \mathbb{P} [\mathfrak{g} = S] \le (1 + 2 A \delta) \cdot \mathbb{P} \big[ S \delta \le \mathfrak{e} < (S+1) \delta \big],
		\end{flalign}
		
		\noindent for any integer $S \ge 0$, or equivalently that
		\begin{flalign}
			\label{esdelta2} 
			 e^{-A^2 S \delta^2} \le \displaystyle\frac{A \delta  (1 - A \delta)^S}{e^{-A S \delta} (1 - e^{-A \delta})} \le 1 + 2 A \delta.
		\end{flalign}
		
		\noindent Indeed, given \eqref{esdelta}, it would follow that 
		\begin{flalign*}
			\displaystyle\sum_{S = 0}^{\infty} \Big| \mathbb{P} \big[ S \delta \le \mathfrak{e} < (S+1) \delta \big] - \mathbb{P} [\mathfrak{g} = S] \Big| & \le 2 \displaystyle\sum_{S = 0}^{\infty} ( 1 - e^{-A^2 S \delta^2} + A\delta) \cdot e^{-A S \delta} (1 - e^{-A \delta}) \\
			& \le 2A^2 \delta^2 \displaystyle\sum_{S = 0}^{\infty} (A S \delta + 1) e^{-AS \delta} \\ 
			& \le \displaystyle\frac{2A^2 \delta^2}{1 - e^{-A \delta}} + \displaystyle\frac{2A^3 \delta^3}{(1 - e^{-A \delta})^2} \le 12 A \delta,
		\end{flalign*}
		
		\noindent where in the first equality we used the explicit probability distribution for $\mathfrak{e}$; in the second we used the facts that $1 - e^{-A^2 S \delta^2} \le A^2 S \delta^2$ and that $1 - e^{-A \delta} \le A \delta$; in the third we bounded the sums $\sum_{i = 0}^{\infty} r^i = (1 - r)^{-1}$ and $\sum_{i = 0}^{\infty} i r^i = r (1 - r)^{-2} \le (1 - r)^{-2}$ at $r = e^{-A \delta} \in (0, 1)$; and in the fourth we bounded $1 - e^{-A \delta} \ge A \delta / 2$ (as $\delta < (2A)^{-1}$). This implies that it is possible to couple $\mathfrak{e}$ and $\mathfrak{g}$ so that $\mathfrak{g} \le \delta^{-1} \mathfrak{e} < \mathfrak{g} + 1$, or equivalently that $\mathfrak{g} = \lfloor \delta^{-1} \mathfrak{e} \rfloor$, with probability at least $1 - 12 A \delta$.
		
		It therefore remains to confirm \eqref{esdelta2}. To that end, since $1 - e^{-A \delta} \le A \delta$ and $1 - e^{-A \delta} \ge A \delta (1 - A\delta) \ge A \delta (1 + 2 A \delta)^{-1}$ (the latter as $\delta < (2A)^{-1}$), observe that 
		\begin{flalign}
			\label{adelta0} 
			1 \le \displaystyle\frac{A\delta}{1 - e^{-A \delta}} \le 1 + 2 A \delta.
		\end{flalign}
		
		\noindent We also have since $\log (1-x) \le -x$ for each $x \ge 0$, since $\log (1 - x) \ge -x (1 + x)$ for $x \in ( 0, 1 / 2 )$, and since $\delta < (2A)^{-1}$ that
		\begin{flalign}
			\label{asdelta0}
			e^{-A^2 S \delta^2}  \le e^{AS \delta} (1 - A \delta)^S \le 1.
		\end{flalign}
		
		\noindent Combining \eqref{adelta0} and \eqref{asdelta0} yields \eqref{esdelta2} and thus the lemma.	
	\end{proof}

	\begin{proof}[Proof of \Cref{stcouple}]
		
		Let $C_0$ denote the constant $C$ from \Cref{rtime}; we will assume throughout this proof that $\varepsilon < C_0^{-4} (T + 1)^{-4}$, for otherwise $1 - C (V_0 - U_0 + 1) (T + 1) \varepsilon^{1/4} \le 0$ for any $C > C_0$. Let us first bound the cardinalities of $\bm{\mathsf{S}} (x) \cap [0, T]$ and $\bm{T} (x) \cap [0, T]$, with high probability. To that end, observe by the first part of \Cref{rtime} (with $A \in \{ L, R \}$ and $K = C_0^{-1} (T + 1)^{-1} \varepsilon^{-1/4} \ge 1$) that there exist constants $c > 0$ and $C_1 > 1$ such that $\mathbb{P} \big[ \mathsf{S}_{\lfloor \varepsilon^{-1/4} \rfloor} (x) \ge T \big] \ge 1 - c^{-1} e^{- c \varepsilon^{-1/4}} \ge 1 - C_1 \varepsilon$, and similarly $\mathbb{P} \big[ \mathsf{T}_{\lfloor \varepsilon^{-1/4} \rfloor} (x) \ge T \big] \ge 1 - C_1 \varepsilon$, both hold for any $x \in \mathbb{Z}$. Together with a union bound, it follows that $\mathbb{P} [\mathscr{A}_1] \ge 1 - 2C_1 \varepsilon (V_0 - U_0 + 1)$, where 
		\begin{flalign}
			\label{a1event}
			\mathscr{A}_1 = \bigcap_{x = \lceil U_0 \rceil}^{\lfloor V_0 \rfloor} \Big\{ \# \big\{  \bm{\mathsf{S}} (x) \cap [0, T] \big\} \le \varepsilon^{-1/4} \Big\} \cap \Big\{ \# \big\{ \bm{\mathsf{T}} (x) \cap [0, T] \big\} \le \varepsilon^{-1/4}  \Big\}.
		\end{flalign}
		
		\noindent We may therefore restrict to the event $\mathscr{A}_1$ in what follows.
		
		On $\mathscr{A}_1$, there are at most $\varepsilon^{-1/4}$ entries in any $\bm{\mathsf{S}} (x) \cap [0, T]$ or $\bm{\mathsf{T}} (x) \cap [0, T]$; their differences are exponential random variables with parameters $L$ and $R$, respectively. Moreover, the differences between consecutive entries of any $\bm{\mathfrak{s}} (x)$ and $\bm{\mathfrak{t}} (x)$ are geometric random variables of parameters $b_1 = 1 - \varepsilon L$ and $b_2 = 1 - \varepsilon R$, respectively. Hence, fixing an integer $x \in \llbracket U, V \rrbracket$ and an index $(\mathsf{R}, \mathfrak{r}) \in \big\{ (\mathsf{S}, \mathfrak{s}), (\mathsf{T}, \mathfrak{t}) \big\}$, and applying \Cref{gcouplee} to the at most $\varepsilon^{-1/4}$ differences in $\bm{\mathsf{R}} (x)$, it follows that we may couple $\bm{\mathsf{R}} (x) \cap [0, T] = \big( \mathsf{R}_1 (x), \mathsf{R}_2 (x), \ldots , \mathsf{\mathsf{R}}_m (x) \big)$ with $\bm{\mathfrak{r}} (x) \cap \llbracket 1, T \varepsilon^{-1} \rrbracket = \big( \mathfrak{r}_1 (x), \mathfrak{r}_2 (x), \ldots , \mathfrak{r}_{m'} (x) \big)$ so that with probability $1 - 12 (R+L) \varepsilon^{3/4}$ we have $m \le m'$ and $\varepsilon \mathfrak{r}_i (x) \le \mathsf{R}_i (x) \le \varepsilon \big( \mathfrak{r}_i (x) + \varepsilon^{-1/4} \big) \le \varepsilon \mathfrak{r}_i (x) + \varepsilon^{1/2}$, for each $i \in \llbracket 1, m \rrbracket$. After increasing $C_0$ if necessary, we further have that $ \varepsilon \mathfrak{r}_{m'} (x) < T - \varepsilon^{1/2}$ with probability at least $1 - C_0 \varepsilon^{1/2}$ (since the probability that $\mathfrak{r}_{m'} (x)$ lies in any fixed interval at size $\varepsilon^{-1/2} + 1$ is at most $(R+L) \varepsilon \cdot (\varepsilon^{-1/2} + 1)  = 2 (R+L) \varepsilon^{1/2}$), and so $m = m'$ and $\varepsilon \mathfrak{r}_i (x) \le \mathsf{R}_i (x) \le \varepsilon \mathfrak{r}_i (x) + \varepsilon^{1/2} < T$ for all $i \in \llbracket 1, m \rrbracket$. Applying a union bound over $x \in \llbracket U_0, V_0 \rrbracket$ gives the first statement of the lemma. 
		
		To establish the second, observe that the law of $\bigcup_{x = \lceil U_0 \rceil}^{\lfloor V_0 \rfloor} \big( \bm{\mathsf{S}} (x) \cup \bm{\mathsf{T}} (x) \big)$ coincides with that of the ringing times of an exponential clock of rate $(R + L) \cdot \big( \lfloor V_0 \rfloor - \lceil U_0 \rceil + 1 \big) \le (V_0 - U_0 + 1) (R + L)$. Hence the second part of \Cref{rtime} implies, for any real number $B \ge 1$, that with probability at least $1 - 2 (V_0 - U_0 + 1) (R + L) B \varepsilon^{1/2}$ we have $\big| \mathsf{R}_i (x) - \mathsf{R}_{i'}' (x') \big| > 2 \varepsilon^{1/2}$, for any distinct triples $(\mathsf{R}, i, x), (\mathsf{R}', i', x') \in \{ \mathsf{S}, \mathsf{T} \} \times \llbracket 1, B \rrbracket \times \llbracket U_0, V_0 \rrbracket$. Applying this at $B = \varepsilon^{-1/4}$, using our restriction to $\mathscr{A}_1$, and applying a union bound then yields the second part of the lemma.
	\end{proof}

\end{document}